\theoremstyle{plein}
\newtheorem{theorem}{Theorem}[section]
\newtheorem*{theorem*}{Theorem}
\newtheorem{lemma}[theorem]{Lemma}
\newtheorem{proposition}[theorem]{Proposition}
\newtheorem*{proposition*}{Proposition}
\newtheorem{corollary}[theorem]{Corollary}
\newtheorem*{corollary*}{Corollary}
\newtheorem{conjecture*}{Conjecture}
\theoremstyle{definition}
\newtheorem{construction}[theorem]{Construction}
\newtheorem{notation}[theorem]{Notation}
\newtheorem{example}[theorem]{Example}
\newtheorem{definition}[theorem]{Definition}
\newtheorem{definition*}{Definition}
\newtheorem{remark}[theorem]{Remark}
\newtheorem{remark*}{Remark}
\newcommand{\bE}{{\mathbb E}}
\renewcommand{\P}{{\mathbb P}}
\newcommand{\mA}{{\mathcal A}}
\newcommand{\mB}{{\mathcal B}}
\newcommand{\mC}{{\mathcal C}}
\newcommand{\mD}{{\mathcal D}}
\newcommand{\mE}{{\mathcal E}}
\newcommand{\mF}{{\mathcal F}}
\newcommand{\mH}{{\mathcal H}}
\newcommand{\mI}{{\mathcal I}}
\newcommand{\mJ}{{\mathcal J}}
\newcommand{\mK}{{\mathcal K}}
\newcommand{\mM}{{\mathcal M}}
\newcommand{\mN}{{\mathcal N}}
\newcommand{\mO}{{\mathcal O}}
\newcommand{\mP}{{\mathcal P}}
\newcommand{\mQ}{{\mathcal Q}}
\newcommand{\mS}{{\mathcal S}}
\newcommand{\mT}{{\mathcal T}}
\newcommand{\mU}{{\mathcal U}}
\newcommand{\mV}{{\mathcal V}}
\newcommand{\mW}{{\mathcal W}}
\newcommand{\mX}{{\mathcal X}}
\newcommand{\mY}{{\mathcal Y}}
\newcommand{\A}{{\mathrm A}}
\newcommand{\B}{{\mathrm B}}
\newcommand{\C}{{\mathrm C}}
\newcommand{\D}{{\mathrm D}}
\newcommand{\F}{{\mathrm F}}
\newcommand{\G}{{\mathrm G}}
\newcommand{\rH}{{\mathrm H}}
\newcommand{\K}{{\mathrm K}}
\renewcommand{\L}{{\mathrm L}}
\renewcommand{\P}{{\mathrm P}}
\newcommand{\Q}{{\mathrm Q}}
\newcommand{\R}{{\mathrm R}}
\newcommand{\rS}{{\mathrm S}}
\newcommand{\T}{{\mathrm T}}
\newcommand{\V}{{\mathrm V}}
\newcommand{\W}{{\mathrm W}}
\newcommand{\X}{{\mathrm X}}
\newcommand{\Y}{{\mathrm Y}}
\newcommand{\Z}{{\mathrm Z}}
\newcommand{\rc}{{\mathrm c}}
\newcommand{\bj}{{\mathrm j}}
\newcommand{\bi}{{\mathrm i}}
\newcommand{\m}{{\mathrm m}}
\newcommand{\bk}{{\mathrm k}}
\newcommand{\g}{{\mathrm g}}
\newcommand{\n}{{\mathrm n}}
\newcommand{\br}{{\mathrm r}}
\newcommand{\op}{\mathrm{op}}
\newcommand{\Ho}{\mathsf{Ho}}
\newcommand{\cc}{\mathsf{c}}
\newcommand{\Rex}{\mathrm{Rex}}
\newcommand{\Sp}{\mathrm{Sp}}
\newcommand{\y}{\mathsf{y}}
\newcommand{\f}{\mathsf{f}}
\newcommand{\colim}{\mathrm{colim}}
\newcommand{\Mod}{{\mathrm{Mod}}}
\newcommand{\LMod}{{\mathrm{LMod}}}
\newcommand{\RMod}{{\mathrm{RMod}}}
\newcommand{\ot}{\otimes}
\newcommand{\Ass}{  {\mathrm {   Ass  } }   }
\newcommand{\id}{\mathrm{id}}
\newcommand{\Cat}{\mathsf{Cat}}
\newcommand{\Set}{\mathsf{Set}}
\renewcommand{\Pr}{\mathrm{Pr}}
\newcommand{\Alg}{\mathrm{Alg}}
\newcommand{\Comm}{\mathrm{Comm}}
\newcommand{\Add}{\mathrm{Add}}
\newcommand{\Mon}{\mathrm{Mon}}
\newcommand{\Fun}{\mathrm{Fun}}
\newcommand{\tu}{{\mathbb 1}}
\newcommand{\Op}{{\mathrm{Op}}}
\newcommand{\ma}{{\mathrm{max}}}
\newcommand{\triv}{{\mathrm{triv}}}
\newcommand{\Fin}{{\mathrm{\mF in}}}
\newcommand{\St}{{\mathrm{St}}}
\newcommand{\rev}{{\mathrm{rev}}}
\newcommand{\lax}{{\mathrm{lax}}}
\newcommand{\oplax}{{\mathrm{oplax}}}
\newcommand{\Mul}{{\mathrm{Mul}}}
\newcommand{\ev}{{\mathrm{ev}}}
\newcommand{\Ind}{{\mathrm{Ind}}}
\newcommand{\LM}{{\mathrm{LM}}}
\newcommand{\Cmon}{{\mathrm{Cmon}}} 
\newcommand{\mi}{{\mathrm{min}}} 
\newcommand{\wc}{{\mathrm{w}}}
\newcommand{\Env}{{\mathrm{Env}}}
\newcommand{\Grp}{{\mathrm{Grp}}}  
\newcommand{\LinFun}{{\mathrm{LinFun}}}
\newcommand{\BMod}{{\mathrm{BMod}}}
\newcommand{\Enr}{{\mathrm{Enr}}} 
\newcommand{\Poset}{{\mathrm{Poset}}} 
\newcommand{\Mor}{{\mathrm{Mor}}}
\title{The higher algebra of weighted colimits}
\author{Hadrian Heine, \\ University of Oslo, Norway, \\ hadriah@math.uio.no}
\begin{document}
	
\maketitle

\begin{abstract}

We develop a theory of weighted colimits in the framework of weakly bienriched
$\infty$-categories, an extension of Lurie's notion of enriched $\infty$-categories.
We prove an existence result for weighted colimits, study weighted colimits 
of diagrams of enriched functors, express weighted colimits via enriched coends, characterize the enriched $\infty$-category of enriched presheaves as the free cocompletion under weighted colimits, prove a Bousfield-Kan formula for weighted colimits and an enriched adjoint functor theorem and develop a theory of universally adjoining weighted colimits to an enriched $\infty$-category.
We use the latter technique to construct for every presentably $\bE_{\bk+1}$-monoidal $\infty$-category $\mV$ for $1 \leq \bk \leq \infty$ and set $\mH$ of $\mV$-weights, with respect to which weighted colimits are defined, a presentably $\bE_\bk$-monoidal structure on the $\infty$-category of $\mV$-enriched $\infty$-categories that admit $\mH$-weighted colimits. Varying the set of weights $\mH$ this $\bE_\bk$-monoidal structure interpolates between the tensor product for $\mV$-enriched $\infty$-categories and the relative tensor product for $\infty$-categories presentably left tensored over $\mV$.
Studying functoriality in the set of $\mV$-weights we deduce that the functor of $\mV$-enriched presheaves is $\bE_\bk$-monoidal with respect to the tensor product on small $\mV$-enriched $\infty$-categories and the relative tensor product on $\infty$-categories presentably left tensored over $\mV,$ which leads to a $\mV$-enriched version of Day-convolution
and a new description of the relative tensor product of $\infty$-categories presentably left tensored over $\mV$ from the enriched perspective.
As key applications we construct for every $\n \geq 1 $ %several presentably monoidal structures: 
and set $\mK$ of $(\infty,\n)$-categories %we construct %tensor product for $\mV$-presentable $\infty$-categories that we identify with the relative tensor product for $\infty$-categories presentably left tensored over $\mV$, 
%give a new construction of the tensor product for $\infty$-categories presentably left tensored over $\mV$ as a $\mV$-enriched localization of Day-convolution.
%Moreover we give an explicite construction of the internal hom of the tensor product on $\Cat_\infty^\mV(\mH)$.As key applications we 
a tensor product for $(\infty,\n)$-categories that admit $\mK$-indexed (op)lax colimits, %a tensor product for $(\infty,\n)$-categories that admit (op)lax pushouts, 
a tensor product for Cauchy-complete $\mV$-enriched $\infty$-categories and tensor products for (Cauchy complete) $\n$-stable, $\n$-additive and $\n$-preadditive $(\infty,\n)$-categories.
%Moreover we exhibit the tensor product of stable $\infty$-categories as a localization of the tensor product of spectral $\infty$-categories.

%nd express the relative tensor product of left $\mV$-modules as a $\mV$-enriched functor $\infty$-category, which leads to an enriched version of Day-convolution.
%We prove that the tensor product for $\mV$-enriched $\infty$-categories that admit $\mH$-weighted colimits is closed by giving explicte constructions forthe internal hom in the Lurie model and the Gepner-Haugseng model for enriched $\infty$-categories. 

% and a further construction of the internal hom in terms of the relative tensor product over $\mV$ with the enveloping closed $\infty$-category.

\end{abstract}

\tableofcontents

%\section{Motivation and overview}

%\begin{enumerate}\item A theory of weighted colimits/lax colimits

%\item A tensor product for enriched $\infty$-categories with weighted colimits

%\item bienriched $\infty$-categories

%\item Internal homs for enriched $\infty$-categories

%\item A relative tensor product formula

%\item An enriched end formula for morphism objects of enriched functor $\infty$-categories

%\end{enumerate}

\section{Introduction}

It was long time a problem in algebraic topology to construct a symmetric monoidal structure on the $\infty$-category of spectra. %structure on the stable homotopy category. % extending the smash product of pointed spaces along the infinite suspension.
Lurie \cite{lurie.higheralgebra} gives a remarkable elegant solution for this problem:
%in two steps: he axiomatizes the essential properties of the $\infty$-category of spectra $\Sp$ to obtain the notion of stable $\infty$-category.
%He constructs a tensor product for stable presentable $\infty$-categories whose tensor unit is the stable $\infty$-category of spectra $\Sp$.Playing the role of the tensor unit the $\infty$-category of spectra $\Sp$ refines to the initial $\bE_\infty$-algebra for the tensor product of stable presentable $\infty$-categories, which is identified with a closed symmetric monoidal structure on $\Sp.$
% working with $\infty$-categories: he constructed a symmetric monoidal $\infty$-category of spectra in the following two steps:

\begin{enumerate}
\item He axiomatizes the essential properties of the $\infty$-category of spectra $\Sp$ to obtain the notion of stable $\infty$-category \cite[Definition 1.1.1.9.]{lurie.higheralgebra}.
\item He constructs a tensor product for stable presentable $\infty$-categories whose tensor unit is the stable $\infty$-category of spectra $\Sp$ \cite[4.8.2.]{lurie.higheralgebra}.
Playing the role of the tensor unit the $\infty$-category of spectra 
$\Sp$ refines to the initial $\bE_\infty$-algebra for the tensor product of stable presentable $\infty$-categories, which is identified with a closed symmetric monoidal structure on $\Sp.$
\end{enumerate}
To achieve (2) Lurie %extends the theory of presentable categories to the $\infty$-categorical world and 
builds a tensor product for presentable $\infty$-categories \cite[Proposition 4.8.1.17.]{lurie.higheralgebra} %, which constitutes a closed symmetric monoidal structureon the $\infty$-category $\Pr^\L$ of presentable $\infty$-categories and left adjoint functors, % with a closed symmetric monoidal structure. %, for which the tensor unit is the $\infty$-category of spaces $\mS.$
that may be thought of as a higher categorical analogue of the tensor product of abelian groups: the higher algebra for this tensor product is a higher categorical analogon of commutative algebra, where the $\infty$-category of spaces plays the role of the integers,
rings and commutative rings are replaced by monoidal and symmetric monoidal presentable $\infty$-categories and left, right and bi-modules correspond to
left, right and bitensored presentable $\infty$-categories.
%This analogy is especially fruitful in the stable case, i.e. for stable presentable $\infty$-categories, and is omnipresent in the theory of higher-categorical invariants like higher derived Brauer groups [1] and secondary K-theory [16]. %Here stable presentably left tensored ∞-categories serve as a tractable substitute for spectral presentable ∞-categories.
%is the starting point of  
To construct a symmetric monoidal $\infty$-category of spectra 
%Lurie endows the $\infty$-category $\Cat_\infty^{\rc\rc}$ of $\infty$-categories that admit small colimits and functors preservingsmall colimits with a closed symmetric monoidal structure that restricts to the full subcategory $\Pr^\L$ of presentable $\infty$-categories and whose tensor unit is the $\infty$-category of spaces $\mS$. In a second step he 
Lurie extends the notion of spectrum to a notion of spectrum object in any presentable $\infty$-category and proves that forming spectrum objects is the universal recipe to turn
%universally turns a 
presentable $\infty$-categories to stable presentable $\infty$-categories:
he proves that forming spectrum objects defines a symmetric monoidal localization on the $\infty$-category $\Pr^\L$ of presentable $\infty$-categories and left adjoint functors, whose local objects are the stable presentable $\infty$-categories \cite[Proposition 4.8.2.18.]{lurie.higheralgebra}.
%Much of the power of Lurie's tensor product of presentable $\infty$-categories comes from the fact that it admits an explicite description: for any two presentable $\infty$-categories $\mC,\mD$ the tensor product $\mC\ot \mD$ is the $\infty$-category of right adjoint functors $\mC^\op \to \mD.$
%This way Lurie exhibits stable presentable $\infty$-categories as the local objects of a symmetric monoidal localization on the $\infty$-category $\Pr^\L$ of presentable $\infty$-categories that sends $\mS$ to the $\infty$-category of spectra $\Sp.$
%Lurie uses this description to prove that the localization on $\Pr^\L$ by forming spectrum objects is symmetric monoidal. 
This guarantees that the full subcategory of local objects, the full subcategory of stable presentable $\infty$-categories, %, the $\infty$-category of commutative algebra objects in $\Pr^\L$, 
inherits a symmetric monoidal structure from $\Pr^\L,$ for which $\Sp$ is the tensor unit \cite[Corollary 4.8.2.19]{lurie.higheralgebra}. 
%Besides the importance of 
This way Lurie constructs a tensor product for spectra giving rise to brave new algebra, a homotopical version of commutative algebra, and a tensor product of 
stable $\infty$-categories giving rise to a higher categorified version of commutative algebra, which is omnipresent in the theory of higher-categorical invariants like higher derived Brauer groups \cite{MR3190610} or secondary K-theory \cite{mazelgee2021universal},
and serves as a starting point for tensor triangular geometry \cite{Aoki2020TensorTG}. 
%The resulting tensor product for stable $\infty$-categories higher algebra for this tensor product 
%Here stable presentably left tensored ∞-categories serve as a tractable substitute for spectral presentable ∞-categories.This way Lurie constructs a tensor product for stable $\infty$-categorieswhose 
Lurie's approach to a tensor product of stable homotopy types via a tensor product of stable $\infty$-categories is very robust and works in various other situations:
studying multiplicative infinite loop space machines Gepner-Groth-Nikolaus \cite[Theorem 4.6.]{Gepner_2015} adapt Lurie's strategy to preadditive and additive presentable $\infty$-categories: the authors produce symmetric monoidal structures on the $\infty$-categories $\Mon_{\bE_\infty}(\mS), \Grp_{\bE_\infty}(\mS)$ 
of $\bE_\infty$-spaces and grouplike $\bE_\infty$-spaces via constructing
tensor products for preadditive and additive presentable $\infty$-categories, respectively, for which the $\infty$-categories $\Mon_{\bE_\infty}(\mS)$ and $\Grp_{\bE_\infty}(\mS)$ are the respective tensor units \cite[Theorem 5.1.]{Gepner_2015}. 
In a similar fashion one can produce tensor products for compact spectra, compact grouplike $\bE_\infty$-spaces and compact $\bE_\infty$-spaces by constructing
tensor products for small stable $\infty$-categories, additive $\infty$-categories, preadditive $\infty$-categories, respectively,
%, for which $\Sp^\omega$ is the tensor unit. 
%Besides this way of producing a smash product of spectra there is a slightly different way: the $\infty$-category of spectra is compactly generated by the full subcategory $\Sp^\omega$ of compact spectra and a symmetric monoidal structure on $\Sp$ is determined by a symmetric monoidal structure on $\Sp^\omega$ extending by filtered colimits.Since $\Sp^\omega$ is a small stable $\infty$-category, alternatively to (2)one can construct a tensor product for small stable $\infty$-categories, with respect to which $\Sp^\omega$ is the tensor unit. 
which arise via localization from closed symmetric monoidal structures
on the $\infty$-category $\Cat_\infty^{\mathrm{rex}}$ of small $\infty$-categories having finite colimits  \cite[Lemma 8.15.]{HEINE2023108941} and the $\infty$-category %functors preserving finite colimits and 
$ \Cat_\infty^{\coprod}$ of small $\infty$-categories having finite coproducts \cite[Proposition 3.6.]{heine2020infinity}, respectively.
Lurie's constructs the symmetric monoidal structure on $\Pr^\L$ as the restriction of a symmetric monoidal structure on the larger $\infty$-category $\Cat_\infty^{\rc\rc}$ of large $\infty$-categories having small colimits, into which $\Pr^\L$ embeds.
Consequently, the tensor products of (presentable) stable, additive and preadditive $\infty$-categories are all derived from symmetric monoidal structures on the $\infty$-categories $\Cat_\infty^{\rc\rc}$, $\Cat_\infty^{\mathrm{rex}}$ and $\Cat_\infty^{\coprod}$, respectively, which are instances of the following powerful construction:
for any set $\mK$ of small $\infty$-categories Lurie \cite[Proposition 4.8.1.3.] {lurie.higheralgebra} constructs a closed symmetric monoidal structure on the subcategory $\Cat_\infty(\mK) \subset\Cat_\infty $ of small $\infty$-categories having $\mK$-indexed colimits and functors preserving $\mK$-indexed colimits.
This symmetric monoidal structure %arises from the cartesian symmetric monoidal structure on the $\infty$-category $\Cat_\infty$ of small $\infty$-categories and
is characterized by the following key properties:
\begin{enumerate}
\item The symmetric monoidal structure on $\Cat_\infty(\mK)$ is closed. The internal hom between two $\infty$-categories $\mC,\mD$ having $\mK$-indexed colimits is the full subcategory $\Fun^\mK(\mC,\mD) \subset \Fun(\mC,\mD) $ of functors preserving $\mK$-indexed colimits.
%\item The inclusion $\Cat_\infty(\mK) \subset \Cat_\infty$admits a symmetric monoidal left adjoint, where $\Cat_\infty$ carries the cartesian structure, that sends an $\infty$-category $\mC$ to the full subcategory $\mP_\mK(\mC)$ of the $\infty$-category of presheaves on $\mC$ generated by $\mC$ under $\mK$-indexed colimits.

\item For $\mK=\emptyset$ the empty set the symmetric monoidal structure on 
$\Cat_\infty(\emptyset) = \Cat_\infty$ is the cartesian structure.
\item For $ \mK' \subset \mK$ an inclusion of subsets of small $\infty$-categories the subcategory inclusion
$\Cat_\infty(\mK) \subset \Cat_\infty(\mK')$ admits a symmetric monoidal left adjoint. %is lax symmetric monoidal.
%\item The symmetric monoidal left adjoint of the inclusion $\Cat_\infty(\mK) \subset \Cat_\infty$ sends an $\infty$-category $\mC$ to the full subcategory $\mP_\mK(\mC)$ of the $\infty$-category of presheaves on $\mC$ generated by $\mC$ under $\mK$-indexed colimits.
\end{enumerate}
%By (2) $\mK$-indexed colimits preserving functors $\mC \ot \mD \to \mE$ 
%correspond to functors $\mC \times \mD \to \mE$ preserving $\mK$-indexed colimits in each component.
Taking $\mK$ to be the set of finite $\infty$-categories gives the closed symmetric monoidal structure on $\Cat_\infty^{\mathrm{rex}}$
leading to the tensor product of small stable $\infty$-categories, choosing $\mK$ to be the set of finite sets gives the closed symmetric monoidal structure on $\Cat_\infty^{\coprod}$ leading to the tensor products of small additive and preadditive $\infty$-categories.
%To obtain more examples one enlarges the universe to produce for any large collection $\mK$ of small $\infty$-categories a symmetric monoidal structure on the $\infty$-category $\widehat{\Cat}_\infty(\mK) \subset\widehat{\Cat}_\infty $ of $\infty$-categories that admit $\mK$-indexed colimits and functors preserving $\mK$-indexed colimits.
For $\mK$ the large collection of all small $\infty$-categories and the corresponding construction $\widehat{\Cat}_\infty(\mK)$ for not necessarily small $\infty$-categories one obtains %(after enlarging the universe) 
the closed symmetric monoidal structure on $\Cat^{\rc\rc}_\infty$ leading to the tensor product of presentable stable $\infty$-categories.
For general set $\mK$ of small $\infty$-categories the tensor product on $\widehat{\Cat}_\infty(\mK)$ is understood best as interpolating between
the tensor product on $\Cat_\infty^{\rc\rc}$ and the cartesian structure on 
the $\infty$-category $\widehat{\Cat}_\infty$ 
of not necessarily small $\infty$-categories: %an inclusion $\mK\subset \mK'$ gives a right adjoint lax symmetric monoidal inclusion $\widehat{\Cat}_\infty(\mK')\subset \widehat{\Cat}_\infty(\mK)$ and so 
the factorization
$\emptyset \subset \mK \subset \Cat_\infty$ gives a factorization
\begin{equation}\label{ul}
\Cat^{\rc\rc}_\infty = \widehat{\Cat}_\infty(\Cat_\infty) \subset \widehat{\Cat}_\infty(\mK)
\subset \widehat{\Cat}_\infty=\widehat{\Cat}_\infty(\emptyset),
\end{equation}
where both inclusions admit symmetric monoidal left adjoints.
The composite of both left adjoints assigns to any small $\infty$-category $\mC$ the presentable $\infty$-category $\mP(\mC)$ of presheaves on $\mC$
\cite[Theorem 5.1.5.6.]{lurie.higheralgebra} %, which therefore becomes a symmetric monoidal functor. 
and so 
%This gives in particular that the composite of both left adjoints 
restricts to a symmetric monoidal functor $\Cat_\infty \to \Pr^\L$.
% assigning presheaves and 
This proves in particular that the functor of presheaves is symmetric monoidal. 

%right adjoint to a symmetric monoidal functor of adjoining colimits.For $\mK$ the empty set and $\mK'$ the collection of all small $\infty$-categories,the respective functor $\widehat{\Cat}_\infty \to \Cat_\infty^{\rc\rc}$ assigns the $\infty$-category of presheaves to any small $\infty$-category,so that the symmetric monoidal left adjoint of the inclusion $\Cat_\infty(\mK')\subset \Cat_\infty(\mK)$may be viewed as assigning a generalized version of $\infty$-category of presheaves.

% of large $\infty$-categories having small colimits. %, which restricts to the full subcategory $\Pr^\L.$
% and functors preserving small colimits.
%The resulting symmetric monoidal structure on $\Cat^{\rc\rc}_\infty$ is especially important since it restricts to the full subcategory $\Pr^\L$ of presentable $\infty$-categories and makes $\Pr^\L$ a closed symmetric monoidal $\infty$-category with many applications to topology.
%For any further collection $\mK' \subset \mK$ the inclusion $\theta: \widehat{\Cat}_\infty(\mK) \subset  \widehat{\Cat}_\infty(\mK')$ admits a symmetric monoidal left adjoint and is thus lax symmetric monoidal.If we choose $\mK'$ to be empty and $\mK$ the large collection of all small $\infty$-categories, the left adjoint of $\theta: \Cat_\infty^{\rc\rc} \subset \widehat{\Cat}_\infty$is a symmetric monoidal functor $\widehat{\Cat}_\infty \to \Cat_\infty^{\rc\rc}$that restricted to small $\infty$-categories assigns the $\infty$-category of presheaves. This proves that taking presheaves is symmetric monoidal.
It is goal of this work to build a theory of colimits and limits in enriched
$\infty$-category theory, which is an $\infty$-categorical extension of the theory of weighted colimits and limits in classical enriched category theory,
and to %extend the above situation from $\infty$-category theory to 
%construct an analogon of the tensor product on $\Cat_\infty(\mK)$for enriched $\mK$-indexed 
construct an analogoue of Lurie's tensor product on $\Cat_\infty(\mK)$ in $\mV$-enriched $\infty$-category theory, %\cite{HEINE2023108941},\cite{GEPNER2015575},\cite{HINICH2020107129},
where $\mV$ is any presentably $\bE_2$-monoidal $\infty$-category.
To draw this analogy we follow 
%of Lurie's tensor product on $\Cat_\infty(\mK)$ via the 
the subsequent table of analogies, which we explain in the following:
% to extend the situation above:
\begin{center}
\begin{tabular}{ | l | l | }
\hline
Non-enriched & Enriched  \\ \hline
$\mS$ & $\mV$\\ \hline 
$\infty$-categories & $\mV$-enriched $\infty$-categories
 \\ \hline
colimits & weighted colimits \\
\hline
the cartesian structure on $\Cat_\infty$ & the tensor product of $\mV$-enriched $\infty$-categories %on $\Cat_\infty^\mV$ 
\\
\hline
the tensor product on $\Pr^\L$ & the relative tensor product on $\LMod_\mV(\Pr^\L)$ \\
\hline
the $\infty$-category $\mP(\mC)$ of presheaves & the $\mV$-enriched $\infty$-category $\mP_\mV(\mC)$ of $\mV$-enriched presheaves \\
\hline
a set $\mK$ of small $\infty$-categories & a set $\mH$ of $\mV$-enriched presheaves (=$\mV$-weights) \\
%\hlinethe $\infty$-category $\Cat_\infty(\mK)$ & the $\infty$-category $\Cat^\mV_\infty(\mH)$ \\
\hline
\end{tabular}
\end{center}

\vspace{1mm}
To understand this table, we first need to explain the concept of weighted colimit.
%To make this analogies formal, we build a theory of weighted colimits and weighted limits in the $\infty$-categorical setting, the analogon of colimits and limits in enriched category theory.
For that we draw an analogy to colimits in the non-enriched setting.
A cocone on a functor $\F: \mJ \to \mD$ between $\infty$-categories
is an object $\X \in \mD$ equipped with a natural transformation from $\F$ to
the constant diagram on $\X,$ which is the same datum as a map of
presheaves on $\mJ$ from the final presheaf to the presheaf $\mD(\F(-),\X).$
The colimit of a functor $\F: \mJ \to \mD$, if it exists, is a cocone
$ \colim(\F)$ on $\F$ inducing for every object $\Y$ of $\mD$ an
equivalence $\mD(\colim(\F),\Y) \to \lim(\mD(\F(-),\Y)).$
The notion of weighted colimit arises by replacing the notion of cocone by the notion of weighted cocone: if $\mJ,\mD$ are $\infty$-categories enriched in a presentably monoidal $\infty$-category $\mV$ and $\F$ is a $\mV$-enriched functor $\mJ \to \mD$, it is natural to replace presheaves on $\mJ$ by $\mV$-enriched presheaves on $\mJ$
and the presheaf $\mD(\F(-),\X)$ on $\mJ$ by the $\mV$-enriched presheaf 
$\Mor_\mD(\F(-),\X)$, where $\Mor_\mD(-,-)$ is the morphism object in $\mV.$
Moreover it is natural to replace the final presheaf on $\mJ$ by the
constant $\mV$-enriched presheaf on the tensor unit of $\mV.$
But in this situation the following problem arises: there is generally no constant
$\mV$-enriched presheaf on $\mJ$ since there is generally no $\mV$-enriched functor
from $\mJ$ to the $\mV$-enriched $\infty$-category $B\tu_\mV$ classifying objects.
%that serves as an analogon for the final presheaf in the enriched world. So one could try 
The solution to this problem is to view any enriched presheaf as a possible analogon for the final presheaf and to consider colimits relative to a $\mV$-enriched presheaf on $\mJ$, which one calls a $\mV$-weight on $\mJ$ in this context. 
In analogy to the case of colimits a $\mV$-enriched cocone on a $\mV$-enriched functor $\F: \mJ \to \mD$ is an object $\X \in \mD$ equipped with a map $\rH \to \Mor_\mD(\F(-),\X)$ of $\mV$-enriched presheaves on $\mJ$. To refer to $\rH$ we call such a $\mV$-enriched cocone on $\F$ a $\rH$-weighted cocone on $\F.$
The $\rH$-weighted colimit of a $\mV$-enriched functor $\F: \mJ \to \mD$ if it exists, is a $\rH$-weighted cocone $\colim^\rH(\F)$ on $\F$ that induces for every object $\X$ of $\mD$ an equivalence
$\Mor_\mD(\colim^\rH(\F),\Y) \to \Mor_{\mP_\mV(\mJ)}(\rH,\Mor_\mD(\F(-),\X))$.
We call a $\mV$-weight on a $\mV$-enriched $\infty$-category $\mJ$ small if
$\mJ$ is a small $\mV$-enriched $\infty$-category meaning that the space of objects is small. If any $\mV$-enriched functor $\F:\mJ \to \mD$ admits a $\rH$-weighted colimit, one says that $\mD$ admits $\rH$-weighted colimits. For any collection $\mH$ of $\mV$-weights we say that $\mD$ admits $\mH$-weighted colimits if it admits $\rH$-weighted colimits for every $\rH \in \mH.$
%In analogy to the non-enriched situation one asks when a $\mV$-enriched functor preserves a given $\rH$-weighted colimit, and considers the subcategory $\Cat_\infty^\mV(\mH)\subset \Cat_\infty^\mV$ of the $\infty$-category $\Cat_\infty^\mV$ of small $\mV$-enriched $\infty$-categories whose objects are $\mV$-enriched $\infty$-categories that admit $\mH$-weighted colimits and whose morphisms are $\mV$-enriched functors preserving $\mH$-weighted colimits.

In nature weighted colimits are ubiquitious.
To get a feeling for this concept, it is reasonable to consider two important classes of weighted colimits, which are more elementary to understand, and to think of arbitrary weighted colimits as an interpolation between these two classes.
The first sort of weighted colimits are conical colimits,
which are the natural enriched analogon of colimits of diagrams in an enriched $\infty$-category indexed by a non-enriched $\infty$-category.
%An important example of interest of conical colimits in homotopy theory is the following one: 
%For example any stable $\infty$-category $\mC$ has a canonical enrichment in spectra, and the loops of any object $\X$ in $\mC$, i.e. the pullback $0 \times_\X 0$ in $\mC$, is a conical pullback in the spectral $\infty$-category underlying $\mC$and so the invertible adjunction $\Sigma: \mC \rightleftarrows\mC:\Omega$ of suspension and loops refines to a spectral adjunction.
Given a $\mV$-enriched $\infty$-category $\mD$ and a functor
$\F$ from an $\infty$-category $\K$ to the underlying $\infty$-category of $\mD$
%arising by forgetting the enrichment, 
one can consider the colimit $\colim(\F)$
of $\F$ satisfying the universal property that the natural map on mapping spaces
$$ \mD(\colim(\F),\Y) \to \lim(\mD(\F(-),\Y)$$ is an equivalence.
%, which corepresents the functor$\mD \to \mS, \X \mapsto \mP(\mJ)(\ast, \mD(\F(-),\X)) \simeq \lim(\mD(\F(-),\X))$.
But one can also use the enrichment and ask %Since $\mD$ is a $\mV$-enriched $\infty$-category, it seems reasonable to use themorphism objects of $\mD$ and ask 
for the stronger condition that the natural morphism on morphism objects 
$$ \Mor_\mD(\colim(\F),\Y) \to \lim(\Mor_\mD(\F(-),\Y))$$ is an equivalence.
If this holds, we call $\F$ the conical colimit of $\F.$
For example any stable $\infty$-category $\mC$ has a canonical enrichment in spectra \cite[Theorem 8.1.]{HEINE2023108941}. The loops of any object $\X$ in $\mC$, i.e. the pullback $0 \times_\X 0$ in $\mC$, is a conical pullback in the spectral $\infty$-category underlying $\mC$, which implies that the invertible adjunction $\Sigma: \mC \rightleftarrows\mC:\Omega$ of suspension and loops refines to a spectral adjunction.

A functor $\F$ from an $\infty$-category $\K$ to the underlying $\infty$-category of $\mD$ %from an 
%which do not suffer from the problem that there is no constant $\mV$-enriched presheaf.
%which are exclusively defined for functors $\F$ from an 
%$\infty$-category $\K$ to the underlying $\infty$-category of a $\mV$-enriched $\infty$-category $\mD$ 
corresponds to a 
%, which arises from $\mD$ by forgetting the enrichment.
%Such a functor $\F$ may be equivalently seen as a 
$\mV$-enriched functor $\F': \K_\mV \to \mD$ from the $\mV$-enriched $\infty$-category $\K_\mV$ arising from $\K$ by tensoring the mapping spaces of $\K$ with the tensor unit of $\mV.$
Since $\mV$-enriched presheaves on $\K_\mV$ are identified with non-enriched $\mV$-valued presheaves on $\K$, there is a constant $\mV$-enriched presheaf on any object of $\mV$ and the conical colimit of $\F$ is the 
%which gives rise to the definition of conical colimit:
%Since there is a unique functor from $\K \to *$ to the final $\infty$-category,there is a canonical $\mV$-enriched functor $\K_\mV \to *_\mV = B\tu_\mV$giving rise to a constant $\mV$-enriched presheaf.the conical colimit of $\F$ is the 
colimit of $\F'$ weighted with respect to the constant $\mV$-enriched presheaf on the tensor unit of $\mV$.
%By definition the conical colimit of $\F$ corepresents the $\mV$-enriched functor$\mD \to \mV, \X \mapsto \Mor_{\mV^{\K^\op}}(\tu_\mV,\Mor_\mD(\F(-),\X)) \simeq \lim(\Mor_\mD(\F(-),\X))$, and so arises from the non-enriched definition of colimit by replacing the mapping spaces of $\mD$ in the definition of colimit by the morphism objects of $\mD.$
%An important example of interest of conical colimits in homotopy theory is the following one: any stable $\infty$-category $\mC$ has a canonical enrichment in spectra.The loops of any object $\X$ in $\mC$, i.e. the pullback $0 \times_\X 0$ in $\mC$,is a conical pullback in the spectral $\infty$-category underlying $\mC$and so the invertible adjunction $\Sigma: \mC \rightleftarrows\mC:\Omega$ of suspension and loops refines to a spectral adjunction.
%and so the invertible loops functor $\Omega:\mC \to \mC$ refines to an invertible spectral functor.

The second sort of weighted colimits are tensors,
%which do not suffer from the problem that there is no constant $\mV$-enriched presheaf.
exclusively defined for $\mV$-enriched functors $B \tu_\mV \to \mD$,
which classify objects of $\mD$. Similarly, $\mV$-enriched presheaves on $B\tu_\mV$ classify objects of $\mV$ and the tensor of an object $\V \in \mV$ with an object $\Y \in \mD$ is the colimit of the $\mV$-enriched functor $B \tu_\mV \to \mD$ classifying $\Y$ weighted with respect to the $\mV$-enriched presheaf on $B\tu_\mV$ classifying $\V.$ So by definition the tensor of $\V$ and $\Y$ corepresents the $\mV$-enriched functor
$\mD\to \mV, \X \mapsto \Mor_{\mV}(\V,\Mor_\mD(\Y,\X))$.
If all tensors exist, the $\mV$-enrichment endows the underlying $\infty$-category of $\mD$ with a closed left $\mV$-action whose internal hom recovers the $\mV$-enrichment \cite[Theorem 6.7.]{HEINE2023108941}.

Another example of weighted (co)limits appears in higher category theory
%the study of $(\infty,\n)$-categories for $2 \leq \n \leq \infty$ 
under the names lax and oplax (co)limits and lax and oplax pushouts and pullbacks (Definition \ref{Laax}, Definition \ref{Puush}).
%Classically called lax and oplax (co)limits and studied $\infty$-categorically for diagrams in the $(\infty,2)$-category $\Cat_\infty$ indexed by a non-enriched $\infty$-category, lax and oplax (co)limits are the natural extension of (co)limitsof diagrams in a higher category, in which non-invertible morphisms exist for $\n >1.$
Given functors $\alpha: \mA \to \mC, \beta: \mB \to \mC$ of $\infty$-categories %in the $(\infty,\n+1)$-category $\Cat_{(\infty,\n)}$ of $(\infty,\n)$-categories for $1 \leq \n \leq \infty$ 
one can consider the pullback $\mA \times_\mC \times \mB$ whose objects are
triples $(\A\in \mA,\B\in \mB, \gamma: \alpha(\A) \simeq \beta(\B))$.
% which only uses the underlying $(\infty,1)$-category of the $(\infty,\n+1)$-category $\Cat_{(\infty,\n)}$.
But one can also consider the lax pullback $\mA \times^\lax_\mC \times \mB$ whose objects are triples $(\A\in \mA,\B\in \mB, \gamma: \alpha(\A) \to \beta(\B))$
as well as the oplax pullback $\mA \times^\oplax_\mC \times \mB$ whose objects are triples $(\A\in \mA,\B\in \mB, \beta(\A) \to \alpha(\B)).$ 
In higher category theory the lax and oplax versions of the pullback are drastically more useful since they make use
%Compared to the pullback the lax and oplax pullbacks make 
of non-invertible morphisms:
the pullback of two functors $* \to \mC$ classifying objects $\A,\B$ of $\mC$ is the space of
equivalences between $\A $ and $\B$ while the lax pullback is the space of morphisms $\A \to \B$ in $\mC$ and the oplax pullback is the space of morphisms $\B \to \A$ in $\mC$.

%for example morphisms $(\A,\B, \gamma) \to (\A',\B',\gamma') $ in the lax pullback $\mA \times^\lax_\mC \times \mB$ are triples consisting of morphisms $\f:\A \to \A'$ in $\mA, \g:\B \to \B'$ in $\mB$ and a 2-morphism $\gamma' \circ \alpha(\f) \to \beta(\g)\circ \gamma$ in $\mC.$
%Since higher morphisms of the lax pullback use higher morphisms of $\mC$, the lax pullback uses the
(Op)lax pullbacks can be described as weighted colimits for a simple weight:
the (op)lax pullback $\mA \times^\lax_\mC \times \mB$ is the limit of
the diagram $\mA \to \mC, \mB \to \mC$ of $\infty$-categories corresponding to a functor $\F: \{0,1\}^\triangleright \to \Cat_{\infty}$ weighted with respect to the functor $\rH: \{0,1\}^\triangleright \to \Cat_{\infty}$ corresponding to the diagram $\{0\} \subset [1] \supset \{1\},$ where we view $\F$ and $\rH$ as functors enriched in
$\Cat_\infty$. % $\mathrm{Gray}^{(\op)\lax}_\n$. % $\mathrm{Gray}^\oplax_\n,$
Similarly, 
%Other examples in these lines are lax and 
the oplax (co)limit of a functor $\mJ \to \mC$ of
$(\infty,\n)$-categories for any $\n \geq 2$, studied in \cite{articles} for $\mJ$ an $\infty$-category, is the colimit weighted with respect to a $\Cat_\infty$-enriched weight on $\mJ$ that assigns to any object $\Z$ of $\mJ$ an (op)lax version of the slice $(\infty, \n-1)$-category $\mJ_{\Z/}.$

%can be expressed as weighted (co)limits in our framework.
%Another motivating example of interest of weighted (co)limits appears in the study of $(\infty,\n)$-categories for $2 \leq \n \leq \infty$.
%Classically called lax and oplax (co)limits and studied $\infty$-categorically for diagrams in the $(\infty,2)$-category $\Cat_\infty$ indexed by a non-enriched $\infty$-category, lax and oplax (co)limits are the natural extension of (co)limitsof diagrams in a higher category, in which non-invertible morphisms exist for $\n >1.$
%Having these examples in mind, we can explain table ....
%For any presentably monoidal $\infty$-category $\mV$ let $\Cat_\infty^\mV$be the $\infty$-category of $\mV$-enriched $\infty$-categoriesand 

In this work we build a theory of weighted colimits and limits that extends the classical theory of weighted (colimits) from enriched category theory to enriched $\infty$-category theory. 
Among other we prove the following key results:
\begin{enumerate}

\item We prove an existence result of weighted colimits that splits the existence
of weighted colimits in the existence of conical colimits and tensors (Proposition \ref{weeei}).

\item We prove that enriched left adjoints preserve weighted colimits, enriched right adjoints preserve weighted limits (Proposition \ref{remqalo}) and prove a converse: an enriched analogon of the adjoint functor theorem (Theorem \ref{remwq}).

\item We characterize the $\mV$-enriched $\infty$-category of $\mV$-enriched presheaves on a small $\mV$-enriched $\infty$-category as the free cocompletion under small weighted colimits
(Corollary \ref{Yowei}).

\item We prove that the enriched Yoneda-embedding preserves weighted limits (Corollary \ref{yow}).

%\item We develop a theory of enriched kan extensions and prove that weighted colimits ensure the existence of enriched left kan extensions, while dually weighted limits ensure the existence of enriched right kan extensions.

\item We study enriched coends as examples of weighted colimits and prove a coend formula for 
weighted colimits % can be constructed from tensors and weighted coends
(Theorem \ref{heuu}).

\item We prove a Bousfield-Kan formula for weighted colimits that computes weighted colimits as a geometric realization of colimits indexed by spaces (Theorem \ref{BK}).

\item We study enriched functoriality of weighted colimits in the weight and in the enriched functor (Proposition \ref{weifun}, Corollary \ref{Funf}).

\item We prove that weighted colimits in enriched $\infty$-categories of enriched functors are formed object-wise (Theorem \ref{Enric}).

\item We characterize $\infty$-categories presentably left tensored over $\mV$ as $\mV$-presentable $\infty$-categories, an enriched analogon of presentability expressed via weighted colimits (Theorem \ref{Pree}).

\item We develop a framework of universally adjoining weighted colimits to an enriched $\infty$-category (Theorem \ref{wooond}).

\item We study absolute weighted colimits, those weighted colimits preserved by any $\mV$-enriched functor and construct the Cauchy-completion by universally adjoining absolute weighted colimits to an enriched $\infty$-category (Notation \ref{CC}, Proposition \ref{Univop}).

\end{enumerate}

To prove these results we use a theory of bienriched $\infty$-categories of \cite{heine2024bienriched} that extends enriched $\infty$-category theory and is especially useful to control enrichment on $\infty$-categories of enriched functors, which is crucial to prove (7). Consequently, we develop our theory of weighted colimits in the more general framework of bienriched $\infty$-categories.
However, our definition of weighted colimits specializes to the definition of weighted colimits for enriched $\infty$-categories of \cite[§ 6]{hinich2021colimits} (Remark \ref{sosos}).
%Feature (4) is especially important in homotopy theory, where structure on an object is typically encoded by a functor, and thus weighted colimits of diagrams of enriched functors become meaningful.

The theory of universally adjoining weighted colimits to an enriched $\infty$-category gives the following theorem,
which may be viewed as an enriched version of \cite[Proposition 5.3.6.2.]{lurie.HTT}:

\begin{theorem}\label{wooond007}(Theorem \ref{wooond}, Corollary \ref{wooond0000}, Proposition \ref{quirk}) Let $\mV$ be a presentably monoidal $\infty$-category, $\mC$ a small $\mV$-enriched $\infty$-category, $\mH$ a set of small $\mV$-weights and $\Lambda$ a set of $\mH$-weighted cocones in $\mC$.
There is a large but locally small $\mV$-enriched $\infty$-category $\mP\B\Env^{\mH}_{\Lambda}(\mC)$ that admits $\mH$-weighted colimits 
and a $\mV$-enriched functor $\mC \to \mP\B\Env^{\mH}_{\Lambda}(\mC)$
that induces for every $\mV $-enriched $\infty$-category $\mD$ that admits $\mH$-weighted colimits an equivalence $$\Enr\Fun_{\mV}^{\mH}(\mP\B\Env^{\mH}_{\Lambda}(\mC), \mD) \to \Enr\Fun_{\mV}^{\Lambda}(\mC, \mD)$$	
between the $\infty$-category of $\mV$-enriched functors preserving
$\mH$-weighted colimits and the $\infty$-category of $\mV$-enriched functors turning $\mH$-weighted cocones in $\mC$ to weighted colimits.
If $\Lambda$ consists of $\mH$-weighted colimiting cocones, the $\mV$-enriched functor $\mC \to \mP\B\Env^{\mH}_{\Lambda}(\mC)$ is fully faithful.
	
\end{theorem}

We use Theorem \ref{wooond007} to construct a tensor product for $\mV$-enriched $\infty$-categories that admit weighted colimits for a fixed set of weights:

\begin{theorem}\label{thh}(Theorem \ref{thqaz} (1))
Let $\mV$ be a presentably $\bE_{\bk+1}$-monoidal $\infty$-category for $0 \leq \bk \leq \infty$ and $\mH$ a set of small $\mV$-weights.
There is a presentably $\bE_\bk$-monoidal structure on the $\infty$-category $$\Enr_{\mV}(\mH)$$
of small $\mV$-enriched $\infty$-categories that admit $\mH$-weighted colimits and $\mV$-enriched functors preserving $\mH$-weighted colimits.
For every $\mC,\mD\in\Enr_{\mV}(\mH) $ the internal hom is the $\mV$-enriched $\infty$-category of $\mV$-enriched functors $\mC \to \mD$ preserving $\mH$-weighted colimits.

%For every inclusion $\mH' \subset \mH$ of sets of small $\mV$-weights the inclusion $$\Cat_\infty^{\mV}(\mH) \subset \Cat_\infty^{\mV}(\mH')$$ admits an $\bE_{\bk}$-monoidal left adjoint.
\end{theorem}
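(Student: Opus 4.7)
The plan is to realize $\Enr_\mV(\mH)$ as the local objects of an $\bE_\bk$-monoidal localization of the $\infty$-category $\Enr_\mV$ of all small $\mV$-enriched $\infty$-categories, in direct analogy with Lurie's construction of the tensor product on $\widehat{\Cat}_\infty(\mK)$ as a localization of $\widehat{\Cat}_\infty$. Since $\mV$ is presentably $\bE_{\bk+1}$-monoidal, $\Enr_\mV$ carries a canonical presentably $\bE_\bk$-monoidal structure (this is the enriched analogue of the cartesian structure on $\Cat_\infty$ in the first row of the analogy table). Three things need to happen: produce a left adjoint $L : \Enr_\mV \to \Enr_\mV(\mH)$ to the subcategory inclusion; upgrade $L$ to an $\bE_\bk$-monoidal localization; and identify the resulting internal hom with the enriched functor $\infty$-category of $\mH$-preserving $\mV$-enriched functors.

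For the first step I would invoke the framework of universally adjoining weighted colimits (Proposition \ref{wooond}) applied to the collection $\mH$ to construct $L$ as a left adjoint to the forgetful functor $\Enr_\mV(\mH) \hookrightarrow \Enr_\mV$; by Corollary \ref{Yowei} this specializes on the maximal choice of $\mH$ to the enriched presheaf construction $\mC \mapsto \mP_\mV(\mC)$, and in general realizes $L(\mC)$ as the smallest full $\mV$-enriched subcategory of $\mP_\mV(\mC)$ closed under $\mH$-weighted colimits and containing the representables. For the third step, observe that the closed structure on $\Enr_\mV$ has internal hom $\Fun_\mV(\mC,\mD)$, the full $\mV$-enriched functor category, so once the localization is known to be $\bE_\bk$-monoidal the formula for the localized internal hom is forced: it is the full $\mV$-enriched subcategory of $\Fun_\mV(\mC,\mD)$ consisting of functors that send $L$-local equivalences to equivalences, which by construction of $L$ are precisely the $\mV$-enriched functors preserving $\mH$-weighted colimits.

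The main obstacle is the middle step — the compatibility of $L$ with the $\bE_\bk$-monoidal structure. By the standard theory of monoidal localization, it suffices to check that for any $L$-equivalence $f : \mC \to \mC'$ in $\Enr_\mV$ and any $\mD \in \Enr_\mV$ the morphism $f \otimes \mD$ is again an $L$-equivalence; equivalently, by adjunction, that for every $\mE \in \Enr_\mV(\mH)$ the internal hom $\Fun_\mV(\mD,\mE)$ lies in $\Enr_\mV(\mH)$, i.e.\ admits $\mH$-weighted colimits and these are preserved by evaluation at each object of $\mD$. This is exactly the content of Theorem \ref{Enric} (weighted colimits in enriched functor categories are computed object-wise), which furnishes the required $\mH$-weighted colimits on $\Fun_\mV(\mD,\mE)$ and thus closes the enriched internal hom under the class of local objects. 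Presentability of the $\bE_\bk$-monoidal structure then follows because $L$ is accessible, $\Enr_\mV(\mH)$ is closed under the relevant colimits in $\Enr_\mV$, and the tensor product on $\Enr_\mV$ is itself presentably $\bE_\bk$-monoidal; the standard formalism of $\bE_\bk$-monoidal Bousfield localizations transports all of this to $\Enr_\mV(\mH)$.
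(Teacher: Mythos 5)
Your proposal rests on treating the inclusion $\Enr_{\mV}(\mH)\subset \Enr_{\mV}$ as a reflective Bousfield localization and then invoking the standard formalism of $\bE_\bk$-monoidal localizations. That framework does not apply here, and this is the essential gap: the inclusion is not fully faithful, because morphisms in $\Enr_{\mV}(\mH)$ are required to preserve $\mH$-weighted colimits while morphisms in $\Enr_{\mV}$ are arbitrary enriched functors. Consequently the free-cocompletion monad on $\Enr_{\mV}$ obtained from Proposition \ref{wooond} is not idempotent (the counit $L(U(\mN))\to\mN$ is far from an equivalence), there is no class of ``$L$-local equivalences'' in $\Enr_{\mV}$ whose local objects recover $\Enr_{\mV}(\mH)$, and even if one localized at the maps inverted by $L$ the resulting $\infty$-category would have \emph{all} enriched functors as morphisms between local objects, not the $\mH$-colimit-preserving ones; for the same reason the ``forced'' formula you give for the internal hom comes out wrong (the localization formalism would return the full functor category, and your description of the local functors in terms of $L$-local equivalences does not typecheck). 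A second, related problem is the universal property of the tensor product: $L(\mC\otimes\mD)$, the free $\mH$-cocompletion of the plain tensor product, does not force the canonical functor $\mC\times\mD\to\mC\otimes_{\mH}\mD$ to preserve the $\mH$-weighted colimits that $\mC$ and $\mD$ already possess in each variable separately, so it does not corepresent functors that are $\mH$-cocontinuous in each variable, which is the property one actually needs.

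The paper's proof of Theorem \ref{thqaz} fixes exactly these points by changing the ambient category: it works with enriched $\infty$-categories \emph{marked} by a set of weighted diagrams ($\L\Enr_{*}^{\ot}$, Notation \ref{aloiso}, Proposition \ref{prpp}), where $\Enr_{\mV}(\mH)$, marked by all its $\mH$-weighted colimit diagrams, does embed as a \emph{full} suboperad (Lemma \ref{eqas}) and is an accessible localization relative to the base whose local equivalences are the units $(\mM,\Lambda)\to(\mP\B\Env^{\mH}_{\Lambda}(\mM)_{\L\Enr},\mH)$; compatibility with the tensor structure is exactly Corollary \ref{wooond1} (the multivariable form of Proposition \ref{wooond}, which keeps track of the marked diagrams in each factor), and presentability comes from Proposition \ref{presen} and Corollary \ref{presenta}. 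The identification of the internal hom with the $\infty$-category of $\mH$-colimit-preserving enriched functors is then a separate argument (Theorem \ref{Thhs}, Corollary \ref{Qa}), where your appeal to Theorem \ref{Enric} does enter, but via Corollary \ref{Enrichi} and the explicit description of the localized tensor product, not via localization formalism. So the ingredients you cite (Proposition \ref{wooond}, Theorem \ref{Enric}) are the right ones, but the assembly via a monoidal Bousfield localization of $\Enr_{\mV}$ cannot be carried out as stated; the passage to marked diagrams is not an optional convenience but the device that makes the localization argument legitimate.
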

%construct for every $\bE_{\bk+1}$-monoidal $\infty$-category $\mV$ for $\bk \geq1$ a closed $\bE_{\bk}$-monoidal structure on the $\infty$-category $\Cat_\infty^\mV(\mH)$
%which is defined as the subcategory of the $\infty$-category $\Cat_\infty^\mV$ of $\mV$-enriched $\infty$-categories whose objects are of $\mV$-enriched $\infty$-categories having $\mH$-weighted colimits and $\mV$-enriched functors preserving $\mH$-weighted colimits.

Specializing Theorem \ref{thh} to absolute $\mV$-weights, i.e. $\mV$-weights preserved by any $\mV$-enriched functor, we obtain a tensor product for Cauchy-complete $\mV$-enriched $\infty$-categories:
\begin{corollary}\label{zzi}(Corollary \ref{CaC}) Let $\n \geq 1$ and $\mV^\boxtimes \to \bE_{\n+1}$ a presentably $\bE_{\n+1}$-monoidal $\infty$-category. % and $\mH$ a set of left enriched weights over $\mV.$
The $\infty$-category $ {_\mV\L\Enr^\wedge}$ of Cauchy-complete $\mV$-enriched $\infty$-categories carries a canonical presentably $\bE_\n$-monoidal structure
such that Cauchy-completion ${_\mV\L\Enr} \to {_\mV\L\Enr^\wedge}$ refines to an $\bE_\n$-monoidal functor.
%such that the embedding $${_\mV\L\Enr}(\mH)^\ot \subset {_\mV\L\Q\Enr}(\mH)^\ot$$ is $\bE_\n$-monoidal.	
	
\end{corollary}

Applying Corollary \ref{zzi} to the presentably symmetric monoidal $\infty$-category of spectra we obtain a presentably symmetric monoidal structure for Cauchy-complete spectral $\infty$-categories.

Specializing Theorem \ref{thh} to (op)lax colimits and (op)lax pushouts we obtain for any $\n \geq 1$ and set $\mK$ of small $(\infty,\n)$-categories a tensor product for $(\infty,\n)$-categories that admit $\mK$-indexed (op)lax colimits and a tensor product for $(\infty,\n)$-categories that admit (op)lax pushouts:
\begin{corollary}
Let $\n \geq 1$ and $\mK$ be a set of $(\infty,\n)$-categories.
There is a presentably symmetric monoidal structure on the $\infty$-category
$$ \Cat_{(\infty,\n)}^{\mathrm{(op)lax}}(\mK)$$	
of $(\infty,\n)$-categories that admit $\mK$-indexed (op)lax colimits and functors preserving $\mK$-indexed (op)lax colimits.
	
\end{corollary}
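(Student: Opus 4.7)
The plan is to specialize Theorem~\ref{thh} to the case $\mV = \Cat_\infty$ with its cartesian monoidal structure, which is presentably $\bE_\infty$-monoidal, so corresponds to $\bk = \infty$ in the statement of the theorem and thus yields a presentably symmetric monoidal (rather than merely $\bE_\bk$-monoidal) structure on the relevant $\infty$-category of enriched $\infty$-categories. Under the identification of $(\infty,2)$-categories with $\Cat_\infty$-enriched $\infty$-categories, a functor of $(\infty,2)$-categories corresponds to a $\Cat_\infty$-enriched functor, so the $\infty$-category $\Cat_{(\infty,2)}^{\mathrm{(op)lax}}(\mK)$ can be rewritten as an $\Enr_{\Cat_\infty}(\mH)$ for a suitably chosen set $\mH$ of $\Cat_\infty$-weights.

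The key step is the construction of $\mH$ from $\mK$. For each $\mJ \in \mK$, as recalled in the introduction, the (op)lax colimit of a diagram $\mJ \to \mC$ of $(\infty,2)$-categories is the colimit weighted with respect to the $\Cat_\infty$-enriched presheaf on $\mJ$ that assigns to $\Z \in \mJ$ the (op)lax slice $\mJ_{\Z/}$. I take $\mH$ to be the set consisting of these (op)lax slice weights $\rH_\mJ^{(\mathrm{op})\lax}$, one for each $\mJ \in \mK$. By construction a $\Cat_\infty$-enriched $\infty$-category admits $\mH$-weighted colimits if and only if the corresponding $(\infty,2)$-category admits $\mK$-indexed (op)lax colimits, and a $\Cat_\infty$-enriched functor preserves $\mH$-weighted colimits if and only if the corresponding functor of $(\infty,2)$-categories preserves $\mK$-indexed (op)lax colimits. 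Hence there is an equivalence of $\infty$-categories
\[
\Cat_{(\infty,2)}^{\mathrm{(op)lax}}(\mK) \simeq \Enr_{\Cat_\infty}(\mH).
\]

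With this identification in place, Theorem~\ref{thh} directly endows the right-hand side with a presentably $\bE_\infty$-monoidal structure, and transports it to the left-hand side. The main technical obstacle is the verification that (op)lax colimits really are encoded by the (op)lax slice weights in the sense used by Theorem~\ref{thh}, i.e.\ the identification of the universal property of the (op)lax colimit (as classically formulated in $(\infty,2)$-categorical terms) with the enriched universal property $\Mor_\mD(\colim^\rH(\F),\Y) \simeq \Mor_{\mP_{\Cat_\infty}(\mJ)}(\rH,\Mor_\mD(\F(-),\Y))$ for $\rH = \rH_\mJ^{(\mathrm{op})\lax}$. This is a general enriched-categorical computation, reducing via the Yoneda-lemma and Corollary~\ref{yow} to a comparison of the defining mapping $(\infty,1)$-category of morphisms in $\mP_{\Cat_\infty}(\mJ)$ out of the (op)lax slice weight with the space of (op)lax cocones, which one checks on generators and uses that the (op)lax slice is the free $\Cat_\infty$-enriched presheaf on $\mJ$ generated by an (op)lax cocone on the Yoneda-embedding.
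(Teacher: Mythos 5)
Your proposal follows essentially the same route as the paper: identify $(\infty,2)$-categories with $\Cat_{\infty}$-enriched $\infty$-categories, attach to each $\mJ \in \mK$ its (op)lax weight, observe that $\Cat_{(\infty,2)}^{\mathrm{(op)lax}}(\mK)$ is precisely ${_{\Cat_{\infty}}\L\Enr}(\mH)$ for this set of weights, and apply Theorem \ref{thqaz} (Theorem \ref{thh}) with $\mV=\Cat_{\infty}$ carrying the cartesian structure, which is presentably symmetric monoidal. Two remarks on how the labour is divided, though. What you single out as the ``main technical obstacle'' --- matching a classically formulated $(\infty,2)$-categorical universal property of (op)lax colimits against the enriched weighted-colimit universal property --- does not arise in the paper, because there (op)lax colimits are \emph{defined} (Definition \ref{Laax}) as colimits weighted by the (op)lax weight, so once the weights are in hand the corollary is an immediate specialization of Theorem \ref{thqaz}; your sketched comparison (``check on generators'', ``free presheaf generated by an (op)lax cocone'') is therefore a detour, and it is the only part of your argument that is not actually carried out. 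Conversely, the one substantive input you take for granted is the existence of the weights themselves: that the assignment $\Z \mapsto$ (op)lax slice assembles into an honest $\Cat_{\infty}$-enriched presheaf $\kappa^{\mJ^{\op}}$ is constructed in the paper by composing the $\Cat_{\infty}$-enriched Yoneda embedding (Theorem \ref{unitol}) with the $(\infty,2)$-categorical Grothendieck construction and the functor discarding non-invertible $2$-morphisms; simply declaring ``$\mH$ is the set of (op)lax slice weights'' presupposes exactly this construction, so if you insist on an independent $(\infty,2)$-categorical notion of (op)lax colimit you still owe both the construction of the weight and the comparison, neither of which your sketch supplies.
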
	

\begin{corollary}Let $\n \geq 1$.
There is a presentably symmetric monoidal structure on the $\infty$-category
$$ \Cat_{(\infty,\n)}^{\mathrm{push}, \mathrm{(op)lax}}$$	
of $(\infty,\n)$-categories that admit (op)lax pushouts and functors preserving (op)lax pushouts.
	
\end{corollary}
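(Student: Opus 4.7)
The plan is to deduce this corollary as an immediate specialization of Theorem~\ref{thh} to $\mV = \Cat_\infty$ equipped with the cartesian symmetric monoidal structure. Since $\Cat_\infty$ is presentably symmetric monoidal, it is presentably $\bE_{\bk+1}$-monoidal for every $0 \leq \bk \leq \infty$; taking $\bk = \infty$, Theorem~\ref{thh} furnishes, for any set $\mH$ of small $\Cat_\infty$-weights, a presentably symmetric monoidal structure on $\Enr_{\Cat_\infty}(\mH)$, whose internal hom is the $\Cat_\infty$-enriched $\infty$-category of $\Cat_\infty$-enriched functors preserving $\mH$-weighted colimits.

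The key step is then to identify a set $\mH$ of $\Cat_\infty$-weights so that, under the standard identification of $\Cat_\infty$-enriched $\infty$-categories with $(\infty,2)$-categories, a $\Cat_\infty$-enriched $\infty$-category admits $\mH$-weighted colimits precisely when the corresponding $(\infty,2)$-category admits (op)lax pushouts, with the analogous statement for morphisms. As recalled in the introduction, the (op)lax pullback of a cospan is the limit of $\F \colon \{0,1\}^{\triangleright} \to \Cat_\infty$ weighted by the $\Cat_\infty$-weight $\rH \colon \{0,1\}^{\triangleright} \to \Cat_\infty$ corresponding to $\{0\}\subset [1]\supset\{1\}$; dually, the (op)lax pushout of a span arises as the colimit weighted by a single $\Cat_\infty$-weight $\rH^{\mathrm{push}}$ on $\{0,1\}^{\triangleleft}$. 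Taking $\mH = \{\rH^{\mathrm{push}}\}$ (respectively its oplax variant) then identifies $\Enr_{\Cat_\infty}(\mH)$ with $\Cat_{(\infty,2)}^{\mathrm{push},\mathrm{(op)lax}}$ and transports the presentably symmetric monoidal structure.

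The main thing to verify, and the one step where I would need care, is that $\rH^{\mathrm{push}}$ is a small $\Cat_\infty$-weight in the technical sense of the paper, i.e. a $\Cat_\infty$-enriched presheaf on a small $\Cat_\infty$-enriched $\infty$-category. This is nonetheless immediate: $\{0,1\}^{\triangleleft}$ has three objects, and the $\infty$-categories $\{0\},\{1\},[1]$ appearing as values of the weight are small. A final bookkeeping step is to check that preservation of $\rH^{\mathrm{push}}$-weighted colimits matches preservation of (op)lax pushouts under the $\Cat_\infty$-enriched versus $(\infty,2)$-categorical dictionary, which follows directly from the defining universal property of weighted colimits applied to the single weight $\rH^{\mathrm{push}}$.
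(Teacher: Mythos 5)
Your proposal is correct and follows essentially the same route as the paper: the paper obtains this corollary precisely by specializing Theorem \ref{thh} (i.e. Theorem \ref{thqaz} (1)) to $\mV=\Cat_\infty$ with its cartesian presentably symmetric monoidal structure and $\mH$ the singleton consisting of the (op)lax pushout weight $\sigma$ on $\{0,1\}^{\triangleleft}$ from Definition \ref{Puush}, under the identification $\Cat_{(\infty,2)}\simeq{_{\Cat_\infty}\L\Enr}$. Your smallness check and the dictionary between $\sigma$-weighted colimits and (op)lax pushouts are exactly the (definitional) ingredients the paper relies on.
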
	

In Theorem \ref{thh} the condition of smallness on a $\mV$-enriched $\infty$-category is crucial: there is no tensor product of not necessarily small $\mV$-enriched $\infty$-categories
having $\mH$-weighted colimits. The reason is already visible for enrichment in the cartesian  monoidal $\infty$-category $\mS$ of small spaces: there is no tensor product on the $\infty$-category of not necessarily small $\mS$-enriched, i.e. locally small,
$\infty$-categories having small colimits and functors preserving small colimits
since any such tensor product destroys local smallness.
The remedy is to consider presentable $\infty$-categories, which are locally small and to which the tensor product restricts, or to consider locally large $\infty$-categories having small colimits whose very large $\infty$-category carries a closed symmetric monoidal structure.
A locally large $\infty$-category may be viewed as an $\infty$-category enriched in the $\infty$-category of large spaces, which arises from the $\infty$-category of small spaces $\mS$ by formally adding large filtered colimits.
In analogy we may think of a locally large $\mV$-enriched $\infty$-category as an $\infty$-category enriched in the formal completion of $\mV$ under large filtered colimits. 
Lurie \cite[Definition 4.2.1.25.]{lurie.higheralgebra} introduces for every monoidal $\infty$-category $\mV$ a notion of $\infty$-category pseudo-enriched in $\mV$, which we identify with an $\infty$-category enriched in the $\infty$-category of presheaves on $\mV$ equipped with Day-convolution \cite[Corollary 4.22.]{heine2024bienriched}.
Similarly, in \cite[Definition 3.29.]{heine2024bienriched} we introduce for every monoidal $\infty$-category $\mV$ compatible with small colimits a notion of $\infty$-category quasi-enriched in $\mV$, which we identify with an $\infty$-category enriched in the $\infty$-category of presheaves on $\mV$ preserving small limits, a model for the formal completion of $\mV$ under large filtered colimits. 
Consequently, we may view an  $\infty$-category quasi-enriched in $\mV$ as a locally large $\mV$-enriched $\infty$-category.
We prove analogues of Theorem \ref{thh} for quasi-enriched and pseudo-enriched $\infty$-categories:

\begin{theorem}\label{thhq} (Theorem \ref{thqaz} (2) and (3))
Let $0 \leq \bk \leq \infty$.	
	
\begin{enumerate} 
\item Let $\mV$ be an $\bE_{\bk+1}$-monoidal $\infty$-category compatible with small colimits and $\mH$ a collection of not necessarily small $\mV$-weights.
There is a closed $\bE_\bk$-monoidal structure on the $\infty$-category $$\Q\Enr_{\mV}(\mH)$$
of not necessarily small $\infty$-categories quasi-enriched in $\mV$ that admit $\mH$-weighted colimits and $\mV$-enriched functors preserving $\mH$-weighted colimits.

\item Let $\mV$ be a small $\bE_{\bk+1}$-monoidal $\infty$-category and $\mH$ a set of small $\mV$-weights.
There is a presentably $\bE_\bk$-monoidal structure on the $\infty$-category $$\P\Enr_{\mV}(\mH)$$
of small $\infty$-categories pseudo-enriched in $\mV$ that admit $\mH$-weighted colimits and $\mV$-enriched functors preserving $\mH$-weighted colimits.
%For every $\mC,\mD\in\Q\Enr_{\mV}(\mH) $ the internal hom is the $\mV$-enriched $\infty$-category of $\mV$-enriched functors $\mC \to \mD$ preserving $\mH$-weighted colimits.
\end{enumerate}		
%For every inclusion $\mH' \subset \mH$ of sets of small $\mV$-weights the inclusion $$\Cat_\infty^{\mV}(\mH) \subset \Cat_\infty^{\mV}(\mH')$$ admits an $\bE_{\bk}$-monoidal left adjoint.
\end{theorem}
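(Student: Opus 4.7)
The plan is to bootstrap both statements from Theorem \ref{thh} by reducing quasi- and pseudo-enrichment to ordinary enrichment in a suitable larger presentably monoidal $\infty$-category, and then identifying the resulting subcategories of weighted-cocomplete objects with $\Q\Enr_{\mV}(\mH)$ respectively $\P\Enr_{\mV}(\mH)$.

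For part (2), I would use the identification from \cite[Corollary 4.31.]{Heine2024Lur} that pseudo-enrichment in a monoidal $\infty$-category $\mV$ is the same as enrichment in $\mP(\mV)$ equipped with Day-convolution. Since $\mV$ is small and $\bE_{\bk+1}$-monoidal, Day-convolution endows $\mP(\mV)$ with a presentably $\bE_{\bk+1}$-monoidal structure. Under this identification a $\mV$-weight $\rH$ on a small pseudo-enriched $\infty$-category $\mJ$ corresponds to a $\mP(\mV)$-weight on the same underlying $\mV$-enriched $\infty$-category (via the Yoneda embedding $\mV \hookrightarrow \mP(\mV)$), and admitting $\rH$-weighted colimits is equivalent in both contexts. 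Therefore the presentably $\bE_\bk$-monoidal $\infty$-category $\Enr_{\mP(\mV)}(\mH')$ produced by Theorem \ref{thh} for the image $\mH'$ of $\mH$ restricts to the desired presentably $\bE_\bk$-monoidal structure on $\P\Enr_{\mV}(\mH)$. The main thing to check is that the tensor product and the internal hom of Theorem \ref{thh} preserve the essential image of small pseudo-enriched $\infty$-categories among small $\mP(\mV)$-enriched ones, which follows by inspecting the construction of the tensor product via universally adjoining weighted colimits (Proposition \ref{wooond}) together with the fact that this image is closed under weighted colimits indexed by weights in $\mH'$.

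For part (1), I would use the analogous identification from \cite[Definition 3.28.]{Heine2024Lur} that $\infty$-categories quasi-enriched in $\mV$ correspond to $\infty$-categories enriched in $\widehat{\mV} := \Fun^{\lim}(\mV^{\op},\widehat{\mS})$, the full subcategory of $\widehat{\mP(\mV)}$ on presheaves preserving small limits. When $\mV$ is $\bE_{\bk+1}$-monoidal and compatible with small colimits, $\widehat{\mV}$ inherits an $\bE_{\bk+1}$-monoidal structure (but only at a larger universe level), and the Yoneda embedding $\mV \hookrightarrow \widehat{\mV}$ is $\bE_{\bk+1}$-monoidal. Applying the very-large version of Theorem \ref{thh} to $\widehat{\mV}$ with the class $\mH$ of weights, one obtains a closed $\bE_\bk$-monoidal structure on $\Enr_{\widehat{\mV}}(\mH)$ inside a larger universe. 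I would then show that $\Q\Enr_{\mV}(\mH)$ is a full subcategory stable under the tensor product and internal hom. Stability under internal hom is the condition that the enriched $\infty$-category of $\mH$-colimit-preserving functors between two quasi-enriched $\infty$-categories is again quasi-enriched; this reduces to verifying that morphism objects land in $\widehat{\mV}$ rather than just $\widehat{\mP(\mV)}$, which can be checked pointwise using (7) (object-wise formation of weighted colimits in functor $\infty$-categories, Theorem \ref{Enric}) together with the fact that $\widehat{\mV} \subset \widehat{\mP(\mV)}$ is closed under the relevant limits. Stability under the tensor product again follows from the construction via universally adjoining weighted colimits.

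The main obstacle in both parts is the size bookkeeping and the verification that the essential images under the embeddings $\P\Enr_{\mV}(\mH) \hookrightarrow \Enr_{\mP(\mV)}(\mH')$ and $\Q\Enr_{\mV}(\mH) \hookrightarrow \Enr_{\widehat{\mV}}(\mH)$ are closed under the Theorem \ref{thh} tensor product. The cleanest way to argue this closure is to give an intrinsic description of the tensor product of two (quasi- or pseudo-) enriched $\infty$-categories as the universal recipient of a bi-enriched functor preserving $\mH$-weighted colimits separately in each variable, using the bi-enriched framework of \cite{Heine2024Lur} and Proposition \ref{wooond}; this universal description manifestly stays inside the appropriate subcategory. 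The statement that $\P\Enr_{\mV}(\mH)$ is moreover presentably $\bE_\bk$-monoidal then follows from presentability of $\Enr_{\mP(\mV)}(\mH')$ given by Theorem \ref{thh} together with the fact that $\P\Enr_{\mV}(\mH) \subset \Enr_{\mP(\mV)}(\mH')$ is an accessible localization.
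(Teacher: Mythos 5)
Your proposal is essentially a reduction of the statement to Theorem \ref{thh} along change of enrichment: pseudo-enrichment in a small monoidal $\mV$ is traded for enrichment in $\mP(\mV)$ with Day convolution, and quasi-enrichment in $\mV$ for enrichment in $\widehat{\mV}=\Ind_\sigma(\mV)$ in a larger universe. For the pseudo-enriched half this is in fact the route the paper intends (the statement is attributed to Theorem \ref{thqaz} (3) precisely via this identification), and the needed ingredients are in the paper: Proposition \ref{eqq} and Corollary \ref{cosqa} give the change-of-enrichment comparison, and Lemma \ref{Enristo} together with Proposition \ref{pullba} gives the correspondence of weights, weighted colimits and their preservation. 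The paper's own proof, by contrast, does not pass through Theorem \ref{thh} at all: it constructs the monoidal structure directly on $\kappa$-enriched $\infty$-categories equipped with sets of weighted diagrams, as a localization relative to the base of a symmetric monoidal cocartesian fibration (\S 4.1--4.2), using the adjoining-weighted-colimits machinery of Proposition \ref{wooond}; the quasi-enriched case is then the case $\kappa=\sigma$ in a larger universe, and Theorem \ref{thh} is afterwards deduced from the quasi-enriched case using Lemma \ref{quirk} and Corollary \ref{presenta}.

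Two points need attention. First, for part (1) your dependency runs opposite to the paper's: Theorem \ref{thh} is proved there from the quasi-enriched statement, so deriving the quasi-enriched case from ``the very-large version of Theorem \ref{thh}'' is non-circular only if you treat Theorem \ref{thh} as universe-polymorphic and independently established; as written, your argument supplies only the translation layer and presupposes the substantive construction. You also do not verify the hypothesis that $\widehat{\mV}=\Ind_\sigma(\mV)$ is presentably $\bE_{\bk+1}$-monoidal in the larger universe, which uses compatibility of $\mV$ with small colimits and Proposition \ref{Day}. Second, the ``main thing to check'' you single out is vacuous and misplaces the difficulty: by Proposition \ref{eqq} and Corollary \ref{cosqa} the pullback functors along $\mV\subset\mP(\mV)$ and $\mV\subset\widehat{\mV}$ are \emph{equivalences} between $\mP(\mV)$-enriched (resp.\ $\widehat{\mV}$-enriched) $\infty$-categories and pseudo-enriched (resp.\ quasi-enriched) ones, so there is no proper essential image whose closure under the tensor product or internal hom needs checking; likewise your presentability argument for $\P\Enr_\mV(\mH)$, via an asserted ``accessible localization'' $\P\Enr_\mV(\mH)\subset\Enr_{\mP(\mV)}(\mH')$, is unproven as stated and is true only because that inclusion is an equivalence. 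The genuine checks are the ones you pass over quickly: that weights on small pseudo-/quasi-enriched $\infty$-categories and the corresponding weighted colimits match up under these equivalences (Lemma \ref{Enristo}, Proposition \ref{pullba}), and, if Theorem \ref{thh} is not granted as a black box, the construction of the monoidal structure itself.
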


For $\mH$ the collection of all small $\mV$-weights the $\infty$-category $\Q\Enr_\mV(\mH)$ is the $\infty$-category $\Q\Enr_\mV^{\rc\rc}$ of large $\infty$-categories quasi-enriched in $\mV$ that admit small weighted colimits, which by Theorem \ref{thhq} admits a closed $\bE_\bk$-monoidal structure.
%We identify with the $\infty$-category of $\infty$-categories with closed left $\mV$-action compatible with small colimits.
We identify $\Q\Enr_\mV^{\rc\rc} $ with the $\infty$-category $\LMod_\mV(\Cat_\infty^{\rc\rc}) $ of $\infty$-categories left tensored over $\mV$ compatible with small colimits (Example \ref{rfgp}).
%and prove that the $\bE_\bk$-monoidal structure on $\Q\Enr_\mV^{\rc\rc}$ restricts to the full subcategory $\LMod_\mV(\Pr^\L)$ of $\infty$-categories presentably left tensored over $\mV,$ an enriched analogon of the fact that the tensor product on $\Cat_\infty^{\rc\rc}$ restricts to $\Pr^\L:$
%There is a canonical equivalence $\Pr^\L_\mV \simeq \LMod_\mV(\Pr^\L)$and we identify the restricted $\bE_{\bk}$-monoidal structure on $\Pr_\mV^\L$ withthe relative tensor product on $\LMod_\mV(\Pr^\L).$
%On the other hand the relative tensor product on $\LMod_\mV(\Cat_\infty^{\rc\rc})$  also restricts to $\LMod_\mV(\Pr^\L)$.
%In fact there is a commutative square of $\bE_\bk$-monoidal $\infty$-categories and lax $\bE_\bk$-monoidal inclusions, where both vertical maps are $\bE_\bk$-monoidal embeddings:
%\begin{equation*} 
%\begin{xy}
%\xymatrix{
%\Pr^\L_\mV  \ar[d]^{} \ar[rr]^{ \simeq} &&  \LMod_\mV(\Pr^\L) \ar[d]
%\\ \Cat_\infty^{\mV, \rc\rc} \ar[rr] &&\LMod_\mV(\Cat^{\rc\rc}_\infty).
%}
%\end{xy} 
%\end{equation*}
%\LMod_\mV(\Pr^\L)\simeq \Pr^\L_\mV \subset 
For general set $\mH$ of $\mV$-weights the tensor product on $\Q\Enr_\mV(\mH)$ interpolates between the relative tensor product on $\LMod_\mV(\Cat_\infty^{\rc\rc}) \simeq \Q\Enr_\mV^{\rc\rc}$ and a tensor product of $\infty$-categories quasi-enriched in $\mV$: %an inclusion $\mH' \subset \mH$ of sets of small $\mV$-weights gives an inclusion $\Cat_\infty^{\mV}(\mH) \subset \Cat_\infty^{\mV}(\mH')$,which admits an $\bE_{\bk}$-monoidal left adjoint,
the inclusion $\emptyset \subset \mH \subset \{\mathrm{small} \hspace{1mm} \mV \mathrm{-weights} \} $ gives lax $\bE_\bk$-monoidal inclusions
$$\LMod_\mV(\Cat_\infty^{\rc\rc})\subset \Q\Enr_\mV(\mH) \subset \Q\Enr_\mV=\Q\Enr_\mV(\emptyset).$$
%that restrict to lax $\bE_\bk$-monoidal inclusions$$\LMod_\mV(\Pr^\L)\subset \widehat{\Enr}_\mV(\mH) \subset \widehat{\Enr}_\mV=\widehat{\Enr}_\mV(\emptyset).$$
The $\bE_{\bk}$-monoidal left adjoint of the inclusion $\LMod_\mV(\Cat_\infty^{\rc\rc}) \subset \Q\Enr_\mV$ sends any small $\mV$-enriched $\infty$-category $\mC$ to the presentably left tensored $\infty$-category $\mP_\mV(\mC)$ of $\mV$-enriched presheaves on $\mC$ and so restricts to an $\bE_\bk$-monoidal functor $\Enr_\mV \to \LMod_\mV(\Pr^\L)$
to the $\infty$-category of $\infty$-categories presentably left tensored over $\mV.$
This way we obtain the following corollary, which connects the higher algebra of
$\mV$-enriched $\infty$-categories with the considerably more tractable higher algebra in the $\infty$-category $\Pr^\L$ of presentable $\infty$-categories:

\begin{corollary}\label{Thy}(Corollary \ref{Preml})
Let $\mV$ be a presentably $\bE_{\bk+1}$-monoidal $\infty$-category for $1 \leq \bk \leq \infty$. The functor $$\mP_\mV: \Enr_\mV \to \LMod_\mV(\Pr^\L)$$ of $\mV$-enriched presheaves %sending a $\mV$-enriched $\infty$-category $\mC$ to the $\mV$-enriched $\infty$-category of $\mV$-enriched presheaves on $\mC$
is an $\bE_{\bk}$-monoidal functor, where the left hand side carries the tensor product of $\mV$-enriched $\infty$-categories and the right hand side carries the relative tensor product. 
	
\end{corollary}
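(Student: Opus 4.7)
The plan is to derive the corollary as the restriction of an $\bE_\bk$-monoidal left adjoint produced by combined application of Theorems~\ref{thh} and~\ref{thhq} in the extreme cases $\mH=\emptyset$ and $\mH$ the class of all small $\mV$-weights. First, Theorem~\ref{thh} with $\mH=\emptyset$ equips $\Enr_\mV=\Enr_\mV(\emptyset)$ with the tensor product of $\mV$-enriched $\infty$-categories, and Theorem~\ref{thhq}(1) with $\mH=\emptyset$ gives a closed $\bE_\bk$-monoidal structure on $\Q\Enr_\mV$ refining it. Dually, Theorem~\ref{thhq}(1) applied to $\mH$ the class of all small $\mV$-weights, together with the identification $\Q\Enr_\mV^{\rc\rc}\simeq \LMod_\mV(\Cat_\infty^{\rc\rc})$ from Remark~\ref{rfgp}, furnishes the relative tensor product on $\LMod_\mV(\Cat_\infty^{\rc\rc})$.

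Next, I would exploit the functoriality of $\mH\mapsto \Q\Enr_\mV(\mH)$ in the set $\mH$ of weights: as stated in the introduction, each inclusion $\LMod_\mV(\Cat_\infty^{\rc\rc})\subset \Q\Enr_\mV(\mH)\subset \Q\Enr_\mV$ is lax $\bE_\bk$-monoidal and admits an $\bE_\bk$-monoidal left adjoint. This is the enriched analogue of Lurie's property (3) for $\Cat_\infty(\mK)$, and its proof proceeds by showing that the universal procedure for adjoining $\mH$-weighted colimits respects the $\bE_\bk$-monoidal structure, which is a principal output of the universally-adjoining-weighted-colimits technique (Proposition~\ref{wooond}). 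In the extreme case $\mH=\{$all small $\mV$-weights$\}$ the composite $\bE_\bk$-monoidal left adjoint $\Q\Enr_\mV\to \LMod_\mV(\Cat_\infty^{\rc\rc})$ is by construction the universal cocompletion of a quasi-enriched $\infty$-category under all small weighted colimits, which for small $\mV$-enriched $\infty$-categories is precisely the enriched presheaves construction $\mP_\mV$ by the universal property of $\mP_\mV(\mC)$ as free cocompletion (Corollary~\ref{Yowei}).

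Finally I restrict to small objects. The full subcategory $\Enr_\mV\subset \Q\Enr_\mV$ of small $\mV$-enriched $\infty$-categories is closed under the tensor product so its inclusion is $\bE_\bk$-monoidal; on the target side $\mP_\mV(\mC)$ is presentable for small $\mC$, so the composite left adjoint above factors through $\LMod_\mV(\Pr^\L)\subset \LMod_\mV(\Cat_\infty^{\rc\rc})$, whose induced monoidal structure is the relative tensor product of presentably left tensored $\infty$-categories. Together this gives the claimed $\bE_\bk$-monoidal functor $\mP_\mV:\Enr_\mV\to \LMod_\mV(\Pr^\L)$. I expect the main obstacle to be verifying that the functoriality in $\mH$ produces a \emph{strong} (not merely lax) $\bE_\bk$-monoidal left adjoint, i.e.\ that universally adjoining $\mH$-weighted colimits commutes with the tensor product up to coherent equivalence. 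In the non-enriched setting this follows from a description of the tensor product via multilinear functors preserving colimits in each variable together with idempotence of the localization; in the $\mV$-enriched setting one must replace multilinear functors by $\mV$-enriched multilinear functors and verify the analogous idempotence, for which the Yoneda-embedding statements and the bi-enriched functoriality results (Proposition~\ref{weifun}, Corollary~\ref{yow}) provide the essential input.
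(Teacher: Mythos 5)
Your proposal is correct and follows essentially the same route as the paper: the paper's proof of Corollary \ref{Preml} applies the functoriality in the class of weights (Proposition \ref{preee}, applied to $\emptyset \subset \L\rc\rc$) to get a left adjoint relative to $\bE_\bk$ of the lax $\bE_\bk$-monoidal inclusion $\LMod_\mV(\Cat_\infty^{\rc\rc}) = \Q\Enr_\mV(\L\rc\rc) \subset \Q\Enr_\mV$, identifies this left adjoint on small objects with $\mP_\mV$, and restricts to $\Enr_\mV \to \LMod_\mV(\Pr^\L)$ using the monoidality of $\Enr_\mV \subset \Q\Enr_\mV$ from Theorem \ref{thqaz}(3). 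The "strong versus lax" issue you flag is handled exactly as you anticipate: Proposition \ref{preee} is proved via the monoidal localization of Theorem \ref{thqaz}, which rests on the multilinear universal property of adjoining weighted colimits (Proposition \ref{wooond}, Corollary \ref{wooond1}).
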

By Corollary \ref{Thy} the functor of $\mV$-enriched presheaves is $\bE_\bk$-monoidal and so preserves $\bE_\bk$-algebras. % which in $\Enr_\mV$ are $\bE_\bk$-monoidal $\mV$-enriched $\infty$-categories and in $\LMod_\mV(\Pr^\L)$are presentably $\bE_\bk$-monoidal $\infty$-categories equipped with an $\bE_\bk$-monoidal left $\mV$-action. 
As a consequence the $\mV$-enriched $\infty$-category of $\mV$-enriched presheaves on any $\bE_{\bk}$-monoidal $\mV$-enriched $\infty$-category is an $\bE_\bk$-algebra in $\LMod_\mV(\Pr^\L)$ and the $\mV$-enriched Yoneda-embedding is $\bE_\bk$-monoidal.
In particular, for $\bk=\infty$ the $\mV$-enriched $\infty$-category of $\mV$-enriched presheaves on a symmetric monoidal $\mV$-enriched $\infty$-category is a $\mV$-algebra in $\Pr^\L$, i.e. a presentably symmetric monoidal $\infty$-category under $\mV$.
Hinich \cite[Proposition 8.4.3.]{hinich2021colimits} constructs for every $\bE_{\bk+1}$-monoidal $\infty$-category $\mV$ an $\bE_\bk$-monoidal $\mV$-enriched Yoneda-embedding, which satisfies the $\bE_\bk$-monoidal version of free cocompletion and therefore is equivalent to our version of $\bE_\bk$-monoidal $\mV$-enriched Yoneda-embedding
up to an $\bE_\bk$-monoidal automorphism that is possibly not the identity.
This gives an answer to a question of \cite[Question 1.8.]{BENMOSHE2024107625}
and proves a monoidal version of a conjecture of \cite[Conjecture 1.1.]{berman2020enriched}.

We combine Corollary \ref{Thy} with the following theorem to construct a $\mV$-enriched version of Day-convolution:
Lurie \cite[Proposition 4.8.1.17.]{lurie.higheralgebra} gives an explicite formula for the tensor product of presentable $\infty$-categories by proving that for every presentable $\infty$-categories $\mC,\mD$ there is a canonical equivalence
$$\mC \ot \mD \simeq \Fun^\R(\mC^\op,\mD),$$
where the right hand side is the $\infty$-category of right adjoint functors $\mC^\op $ to $\mD.$
We prove an enriched version of Lurie's result:
\begin{theorem}\label{dda}(Theorem \ref{prestensor})
Let $\mV$ be a presentably symmetric monoidal $\infty$-category and $\mC,\mD \in \LMod_\mV(\Pr^\L).$
There is a canonical $\mV$-enriched equivalence
$$\mC \ot_\mV \mD \simeq \Enr\Fun_\mV^{\R}(\mC^\op,\mD),$$
where the right hand side is the $\mV$-enriched $\infty$-category of right adjoint $\mV$-enriched functors $\mC^\op $ to $\mD.$
		
\end{theorem}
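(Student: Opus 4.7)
The plan is to prove the first equivalence first, and then derive the second equivalence as a consequence by combining it with the universal property of $\mV$-enriched presheaves (Corollary \ref{Yowei}). The overall strategy mirrors Lurie's proof of \cite[Proposition 4.8.1.17.]{lurie.higheralgebra}, lifted to the $\mV$-enriched setting: build a canonical comparison map and verify it is an equivalence by reducing to representables via colimit-preservation.

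For the first equivalence, I construct a $\mV$-enriched functor
\[
\Phi: \mC \otimes_\mV \mD \to \Enr\Fun^\R_\mV(\mC^\op, \mD)
\]
out of the $\mV$-bilinear functor $\mC \times \mD \to \Enr\Fun^\R_\mV(\mC^\op, \mD)$ sending $(c,d)$ to the $\mV$-enriched functor $c' \mapsto \Mor_\mC(c',c) \otimes d$, where the outer $\otimes$ denotes the left $\mV$-action on $\mD$. This functor is separately colimit-preserving and $\mV$-linear in each variable, so by the universal property of the relative tensor product on $\LMod_\mV(\Pr^\L)$ (which exists as a closed structure by Theorem \ref{thhq} applied to $\mH$ the class of all small $\mV$-weights) it factors uniquely through $\mC\otimes_\mV\mD$. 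Both source and target of $\Phi$ define colimit-preserving functors of $\mC\in\LMod_\mV(\Pr^\L)$: the source because $-\otimes_\mV\mD$ is a left adjoint, the target by the $\mV$-enriched analogue of the $\Pr^\L/\Pr^\R$ duality, under which colimits of $\mC$ in $\LMod_\mV(\Pr^\L)$ correspond to limits of right-adjoint functors and $\Enr\Fun^\R_\mV(-,\mD)$ converts these to colimits. Since every object of $\LMod_\mV(\Pr^\L)$ arises as a colimit of objects of the form $\mP_\mV(\mB)$ for $\mB\in\Enr_\mV$ small (Corollary \ref{Thy} combined with Theorem \ref{Pree}), it suffices to prove $\Phi$ is an equivalence for $\mC=\mP_\mV(\mB)$. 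In that case, the enriched Yoneda lemma (Corollary \ref{yow}) identifies right-adjoint $\mV$-enriched functors $\mP_\mV(\mB)^\op \to \mD$ with $\mV$-enriched functors $\mB^\op\to\mD$, while the free cocompletion property of $\mP_\mV$ (Corollary \ref{Yowei}) gives $\mP_\mV(\mB)\otimes_\mV\mD\simeq\Enr\Fun_\mV(\mB^\op,\mD)$; tracing through $\Phi$ shows it realizes exactly this identification. This simultaneously proves the second equivalence.

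The main obstacle is twofold. First, one must establish that $\Enr\Fun^\R_\mV(\mC^\op,\mD)$ is genuinely an object of $\LMod_\mV(\Pr^\L)$ (in particular presentable and $\mV$-tensored), which requires a $\mV$-enriched version of the adjoint functor theorem and care with the opposite of a presentable $\mV$-enriched $\infty$-category. Second, one must verify that $\Phi$ is colimit-preserving in $\mC$, which relies on a $\mV$-enriched extension of Lurie's $\Pr^\L/\Pr^\R$ duality compatible with the enriched opposite construction; this is where the hypothesis that $\mV$ be presentably symmetric monoidal becomes essential, allowing the relative tensor product on $\LMod_\mV(\Pr^\L)$ to be defined and the duality to be implemented on the level of morphism objects in $\mV$.
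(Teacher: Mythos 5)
There is a genuine gap, and it sits exactly at the point where your argument claims to be done. In your base case you assert that the free cocompletion property (Corollary \ref{Yowei}) ``gives'' $\mP_\mV(\mB) \ot_\mV \mD \simeq \Enr\Fun_\mV(\mB^\op,\mD)$. Free cocompletion only identifies $\mV$-linear colimit-preserving functors \emph{out of} $\mP_\mV(\mB)$ with $\mV$-enriched functors out of $\mB$; to obtain the displayed equivalence you would still have to show that $\Enr\Fun_\mV(\mB^\op,\mD)$ corepresents $\mV$-bilinear colimit-preserving functors on $\mP_\mV(\mB)\times\mD$, i.e.\ that $\LinFun^\L_{\mV,\emptyset}(\Enr\Fun_{\emptyset,\mV}(\mB^\op,\mD),\mE) \simeq \Enr\Fun_{\emptyset,\mV}(\mB,\LinFun^\L_{\mV,\emptyset}(\mD,\mE))$ — but that is precisely the second equivalence of the theorem (Corollary \ref{Dayso}), so the argument is circular where it is supposed to carry the weight. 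In the paper this step is genuinely nontrivial: it is Proposition \ref{leumorat}, proved by a monadicity argument — both $\Enr\Fun^{\Lambda-\lim}_{\mV,\emptyset}(\mM,\mV)\ot_\mV\mN$ and $\Enr\Fun^{\Lambda-\lim}_{\mV,\emptyset}(\mM,\mN)$ are exhibited as monadic over $\Fun(\mM^\simeq,\mN)$ and the comparison functor is shown to commute with the monadic left adjoints by checking on the generators $\mM^\simeq(\X,-)\ot\Y$. Some substitute for this computation (or for Lurie's corresponding argument in the unenriched case) has to appear in any proof; it cannot be extracted from the Yoneda universal property alone.

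The reduction step is also not supported by what you cite. Theorem \ref{Pree} exhibits $\mC$ as an accessible $\mV$-enriched \emph{localization} of some $\mP_\mV(\mN)$, not as a colimit of presheaf categories in $\LMod_\mV(\Pr^\L)$, so even granting the base case you would still need to handle localizations (or justify a colimit decomposition), and the colimit-preservation of $\mC\mapsto\Enr\Fun^\R_{\mV,\emptyset}(\mC^\op,\mD)$ rests on an enriched $\Pr^\L/\Pr^\R$ duality and on the presentability and $\mV$-tensoring of $\Enr\Fun^\R_{\mV,\emptyset}(\mC^\op,\mD)$, which you flag as obstacles but never supply. For contrast, the paper avoids any comparison map and any colimit decomposition: it fixes $\kappa$ with $\mC$ $\kappa$-compactly generated, uses Proposition \ref{krye} to identify $\mC$ with enriched presheaves on $\mC^\kappa$ satisfying $\kappa$-small limit and cotensor conditions, applies the op-duality Lemma \ref{leums} and then Proposition \ref{leumorat}, obtaining the first equivalence as a direct chain (Theorem \ref{thei}); the second follows from Proposition \ref{Days}/Corollary \ref{Dayso} by the same mechanism together with the enriched Yoneda identification $\mP_\mW(\mM)^\circledast \simeq \Enr\Fun_{\emptyset,\mW}(\mM^\op,\mW)^\circledast$.
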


Specializing to the $\mV$-enriched $\infty$-category of $\mV$-enriched presheaves we obtain the following corollary:

\begin{corollary}\label{ddaa}(Corollary \ref{Dayso})
Let $\mV$ be a presentably symmetric monoidal $\infty$-category and $\mC$ a small $\mV$-enriched $\infty$-category. 
There is a canonical $\mV$-enriched equivalence
$$\mP_\mV(\mC) \ot_\mV \mD \simeq \Enr\Fun_\mV^{}(\mC^\op,\mD).$$
\end{corollary}

%Theorem \ref{dda} gives for every small $\mV$-enriched $\infty$-category $\mC$ and $\infty$-category $\mD$ presentably left tensored over $\mV$ a canonical $\mV$-enriched equivalence, where the third equivalence is the universal property of free cocompletion under weighted colimits: $$ \mP_\mV(\mC)\ot_\mV \mD \simeq \Enr\Fun_{\mV}^{\R}(\mP_\mV(\mC)^\op,\mD) \simeq \Enr\Fun_{\mV}^{\L}(\mP_\mV(\mC),\mD^\op)^\op$$$$\simeq \Enr\Fun_{\mV}(\mC,\mD^\op)^\op \simeq \Enr\Fun_\mV(\mC^\op,\mD).$$

Using that $\bE_\infty$-algebras in $\LMod_\mV(\Pr^\L)$ are $\mV$-algebras in $\Pr^\L$ we obtain the following corollary:

\begin{corollary}\label{C1} (Corollary \ref{Dayco2})
Let $\mV$ be a presentably symmetric monoidal $\infty$-category, $\mC$ a small symmetric monoidal $\mV$-enriched $\infty$-category and
$\mD$ a $\mV$-algebra in $\Pr^\L$.
Then $\Enr\Fun_\mV(\mC^\op,\mD)$ carries the structure of a $\mV$-algebra in $\Pr^\L.$

\end{corollary}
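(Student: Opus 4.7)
The plan is to combine Corollary~\ref{Thy} with Theorem~\ref{dda}, using the standard identification (invoked in the paper just above the statement) between $\mV$-algebras in $\Pr^\L$ and $\bE_\infty$-algebras in the symmetric monoidal $\infty$-category $\LMod_\mV(\Pr^\L)$ equipped with the relative tensor product $\ot_\mV$.

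First, since $\mV$ is presentably symmetric monoidal, Corollary~\ref{Thy} specialized to $\bk=\infty$ says that the $\mV$-enriched presheaf functor
\[ \mP_\mV \colon \Enr_\mV \to \LMod_\mV(\Pr^\L) \]
is symmetric monoidal, with respect to the tensor product of small $\mV$-enriched $\infty$-categories on the source and the relative tensor product $\ot_\mV$ on the target. Any symmetric monoidal functor preserves $\bE_\infty$-algebras, so applying $\mP_\mV$ to $\mC \in \Alg_{\bE_\infty}(\Enr_\mV)$ produces $\mP_\mV(\mC) \in \Alg_{\bE_\infty}(\LMod_\mV(\Pr^\L))$, i.e.\ a $\mV$-algebra in $\Pr^\L$.

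Next, $\mD$ is by hypothesis an $\bE_\infty$-algebra in $\LMod_\mV(\Pr^\L)$. The tensor product of two $\bE_\infty$-algebras inside any symmetric monoidal $\infty$-category is itself an $\bE_\infty$-algebra (via the Eckmann--Hilton style argument that the symmetric monoidal product refines to a functor on commutative algebras). Therefore
\[ \mP_\mV(\mC) \ot_\mV \mD \in \Alg_{\bE_\infty}(\LMod_\mV(\Pr^\L)). \]
By Theorem~\ref{dda} applied with $\mB := \mC$, there is a canonical $\mV$-enriched equivalence
\[ \mP_\mV(\mC) \ot_\mV \mD \simeq \Enr\Fun_\mV(\mC^\op, \mD), \]
and transporting the $\bE_\infty$-algebra structure along this equivalence endows $\Enr\Fun_\mV(\mC^\op, \mD)$ with the structure of a $\mV$-algebra in $\Pr^\L$.

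The main technical point to verify is that the equivalence supplied by Theorem~\ref{dda} is genuinely an equivalence in $\LMod_\mV(\Pr^\L)$ (and not merely at the level of underlying $\mV$-enriched $\infty$-categories), so that the $\bE_\infty$-structure does transport. This is where one exploits that $\Enr\Fun_\mV(\mC^\op,\mD)$ is the internal hom of $\mP_\mV(\mC)$ into $\mD$ with respect to the closed symmetric monoidal structure on $\LMod_\mV(\Pr^\L)$ provided by Corollary~\ref{Thy}: once this identification is established, the required compatibility is automatic. Everything else reduces to functoriality of $\mP_\mV$ and the formal preservation of commutative algebras under closed symmetric monoidal tensor products.
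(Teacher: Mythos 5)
Your proposal is correct and follows essentially the paper's own route: monoidality of $\mP_\mV$ (Corollary \ref{Thy}, i.e.\ Corollary \ref{Preml}), the fact that relative tensor products of commutative algebras in $\LMod_\mV(\Pr^\L)$ are again commutative algebras, and the $\mV$-linear equivalence $\mP_\mV(\mC)\ot_\mV\mD \simeq \Enr\Fun_\mV(\mC^\op,\mD)$ of Theorem \ref{dda} (Corollary \ref{Dayso}), which is exactly how the paper proves the more general $\bE_\bk$-monoidal statement Corollary \ref{Dayco} via Proposition \ref{Days}. One small correction to your closing remark: $\Enr\Fun_\mV(\mC^\op,\mD)$ is the internal hom from $\mP_\mV(\mC^\op)$ (not from $\mP_\mV(\mC)$, whose internal hom into $\mD$ is $\Enr\Fun_\mV(\mC,\mD)$), but this detour is unnecessary since Corollary \ref{Dayso} already supplies the equivalence as a left $\mV$-linear equivalence in $\LMod_\mV(\Pr^\L)$, so the $\bE_\infty$-structure transports directly.
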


We use Corollary \ref{C1} to characterize the relative tensor product for $\infty$-categories presentably left tensored over $\mV$ as a $\mV$-enriched localization of the $\mV$-enriched $\infty$-category of $\mV$-enriched presheaves. For that we characterize $\infty$-categories presentably left tensored over $\mV$ as $\mV$-enriched presentable $\infty$-categories:
for any small regular cardinal $\kappa$ a $\mV$-enriched $\infty$-category $\mM$ is $\mV$-enriched $\kappa$-presentable (Definition \ref{presss}) if it admits small weighted colimits and every object of $\mM$ is a small $\kappa$-filtered conical colimit of $\mV$-enriched $\kappa$-compact objects of $\mM$, where an object $\X $ of $\mM$ is $\mV$-enriched $\kappa$-compact if the morphism object functor $\Mor_\mM(\X,-):\mM \to \mV$ preserves small $\kappa$-filtered conical colimits.
We characterize $\mV$-enriched $\kappa$-presentable $\infty$-categories:
\begin{theorem}\label{Thh3}(Theorem \ref{Pree})
Let $\tau\leq \kappa$ be small regular cardinals, $\mV $ a $\tau$-compactly generated monoidal $\infty$-category and $\mM$ a $\mV$-enriched $\infty$-category. The following conditions are equivalent:
	
\begin{enumerate}
\item The $\mV$-enriched $\infty$-category $\mM$ is $\mV$-enriched $\kappa$-presentable.

\item The $\mV$-enriched $\infty$-category $\mM$ is a $\kappa$-compactly generated left tensored $\infty$-category.
		
\item There is a small $\mV$-enriched $\infty$-category $\mN$ and a $\kappa$-accessible $\mV$-enriched localization $\mM \rightleftarrows \mP_\mV(\mN)$.
	
\end{enumerate}
	
\end{theorem}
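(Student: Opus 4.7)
The plan is to prove the cycle (1) $\Rightarrow$ (3) $\Rightarrow$ (2) $\Rightarrow$ (1), with the characterization of $\mV$-enriched $\kappa$-compact objects via the $\tau$-compact generation of $\mV$ playing the decisive role.

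For (1) $\Rightarrow$ (3), I would let $\mN \subset \mM$ denote the full $\mV$-enriched sub-$\infty$-category spanned by a small set of $\mV$-enriched $\kappa$-compact objects large enough to generate $\mM$ via $\kappa$-filtered conical colimits, which is small because $\mM$ is locally small (being $\mV$-enriched) and $\kappa$-presentable. By the free cocompletion property of $\mV$-enriched presheaves under small weighted colimits (Corollary \ref{Yowei}), the inclusion $\mN \hookrightarrow \mM$ extends uniquely to a $\mV$-enriched functor $L: \mP_\mV(\mN) \to \mM$ preserving small weighted colimits. The $\mV$-enriched Yoneda embedding and the morphism object functors $\Mor_\mM(\X,-)$ for $\X \in \mN$ give the right adjoint $j: \mM \to \mP_\mV(\mN)$; preservation of small $\kappa$-filtered conical colimits by $\Mor_\mM(\X,-)$ (which is the definition of $\mV$-enriched $\kappa$-compactness) translates into $\kappa$-accessibility of $j$. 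Since the objects of $\mN$ generate $\mM$ under small $\kappa$-filtered conical colimits and $L$ preserves all small weighted colimits, the counit $LJ \to \id$ is an equivalence, so $(L,j)$ exhibits $\mM$ as a $\kappa$-accessible $\mV$-enriched localization.

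For (3) $\Rightarrow$ (2), the starting point is that $\mP_\mV(\mN)$ is presentably left tensored over $\mV$ because it arises as $\mP_\mV(\mN) \simeq \mP(\mN) \otimes_{\mS} \mV$ in $\LMod_\mV(\Pr^\L)$ (a consequence of Corollary \ref{Thy} together with the universal property of $\mN_\mV$ from the non-enriched $\mN$, or directly from Corollary \ref{Yowei} combined with Theorem \ref{thh}); moreover the representable objects are $\mV$-enriched $\kappa$-compact for $\kappa \geq \tau$, using $\tau$-compact generation of $\mV$ to control $\Mor_{\mP_\mV(\mN)}(\y\X,-)$. The accessible $\mV$-enriched localization $(L,j)$ transports this presentable left tensoring to $\mM$: tensors are defined by $\V \otimes \X := L(\V \otimes j(\X))$, and $\kappa$-compact generation is preserved because $j$ is $\kappa$-accessible and fully faithful.

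For (2) $\Rightarrow$ (1), by assumption $\mM$ admits small conical colimits and all tensors with objects of $\mV$, so by the existence result Proposition \ref{weeei} it admits all small weighted colimits. It remains to prove that every object of $\mM$ is a small $\kappa$-filtered conical colimit of $\mV$-enriched $\kappa$-compact objects. Ordinary $\kappa$-compact generation of the underlying $\infty$-category provides a small set of generators in terms of ordinary $\kappa$-filtered colimits; the task is to promote these to $\mV$-enriched $\kappa$-compact generators. I would close the generating set under tensors $\V \otimes -$ with $\V$ running through a set of $\tau$-compact generators of $\mV$, and then exploit the formula $\Mor_\mM(\V \otimes \X, \Y) \simeq \Mor_\mV(\V, \Mor_\mM(\X,\Y))$: because $\V$ is $\tau$-compact and $\tau \leq \kappa$, the $\mV$-enriched mapping object $\Mor_\mM(\V \otimes \X, -)$ preserves $\kappa$-filtered conical colimits in $\mM$ whenever $\Mor_\mM(\X,-)$ does, and the latter follows from $\kappa$-compactness of $\X$ in the underlying $\infty$-category together with the identification of conical colimits as the $\mS$-enriched piece controlled by $\tau$-compact generators of $\mV$.

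The main obstacle will be the last step: bridging $\mV$-enriched $\kappa$-compactness (a condition on morphism objects in $\mV$) and ordinary $\kappa$-compactness (a condition on mapping spaces), which is exactly where the hypothesis $\tau \leq \kappa$ and $\tau$-compact generation of $\mV$ must be used to ensure that testing against a $\tau$-compact generating set of $\mV$ detects preservation of $\kappa$-filtered conical colimits of $\Mor_\mM$.
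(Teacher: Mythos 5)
Your overall strategy is close to the paper's, and the decisive technical ingredient is the same: the adjunction identity $\L\Mor_\mM(\V\ot\X,-)\simeq \L\Mor_\mV(\V,\L\Mor_\mM(\X,-))$ together with compact generation of $\mV$ (and $\kappa$-compactness of the tensor unit, using $\tau\leq\kappa$) to translate between $\mV$-enriched $\kappa$-compactness and ordinary $\kappa$-compactness stable under the $\mV^\kappa$-action; combined with Proposition \ref{weeei} this is exactly how the paper identifies the presentable and compactly generated tensored conditions, and it is the content of your last step. The organization differs: you run the cycle presentable $\Rightarrow$ localization $\Rightarrow$ compactly generated tensored $\Rightarrow$ presentable, whereas the paper uses the compactly generated tensored condition as the hub, obtaining the localization from it via the restricted enriched Yoneda embedding $\mM \to \mP_\mV(\mM^\kappa)$ (Proposition \ref{krye}) and the converse from Proposition \ref{rembrako}.

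Two of your justifications are shaky, both at places the paper's ordering is designed to avoid. First, in your step (1)$\Rightarrow$(3) you must choose a small full subcategory $\mN$ of $\mV$-enriched $\kappa$-compact generators; ``locally small and $\kappa$-presentable'' does not by itself bound these objects by a small set, since Definition \ref{presss} imposes no such smallness, and this is precisely why the paper extracts $\mN=\mM^\kappa$ only after knowing that the underlying $\infty$-category is $\kappa$-compactly generated presentable, where essential smallness of $\mM^\kappa$ is available. Second, the identification $\mP_\mV(\mN)\simeq \mP(\mN)\otimes_{\mS}\mV$ is false for a genuinely $\mV$-enriched $\mN$ (it holds only for trivially enriched ones, Corollary \ref{leiik}); what you actually need, namely that $\mP_\mV(\mN)$ is presentably left tensored with the representables $\mV$-enriched $\kappa$-compact, is Proposition \ref{rembrako} together with the enriched Yoneda lemma (Theorem \ref{unitol}). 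With these two repairs your cycle closes and recovers the theorem by essentially the same means as the paper.
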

We use Corollary \ref{Thy} and Theorem \ref{Thh3} to deduce the following corollary:

\begin{corollary} (Theorem \ref{prestensor}) Let $\mV$ be a presentably symmetric monoidal $\infty$-category and $\mM,\mN$ presentably left tensored $\infty$-categories. 
By Theorem \ref{Thh3} there are accessible $\mV$-enriched localizations
$\mP_\mV(\mC)\rightleftarrows \mM, \mP_\mV(\mD)\rightleftarrows \mN$
with respect to sets $\rS,\T$ closed under the left actions, respectively.
%There are small $\mV$-enriched $\infty$-categories $\mC \to \mD$, sets $\rS$ of morphisms of $\mP_\mV(\mC)$ and $\T$ of morphisms of $\mP_\mV(\mD)$ closed under the left actions and left $\mV$-enriched equivalences $\mM^\circledast \simeq \rS^{-1}\mP_\mV(\mC)^\circledast, \mN^\circledast \simeq \T^{-1}\mP_\mV(\mD)^\circledast.$ % compatible with small colimits.
% and $\mM^\circledast \to \mV^\ot $ a small right tensored $\infty$-category compatible with $\kappa$-small colimits.
%There is a canonical equivalence  The tensor product of $\Q\Enr_\mV^{\rc\rc}$ admits the following descriptions:
There is an accessible $\mV$-enriched localization $$\mP_\mV(\mC \ot \mD) \rightleftarrows \mM \ot_{\mV} \mN$$ with respect to the set $\rS \boxtimes\T:= \{\f \ot \W \ot \Y, \V \ot \X \ot \g \mid \f \in \rS, \g \in \T, \V, \W \in \mV, \X \in \mC, \Y \in \mD \}$.
		
\end{corollary}

Blumberg-Gepner-Tabuada \cite{article} develop algebraic $\K$-theory of stable $\infty$-categories
and characterize algebraic $\K$-theory by an universal property. % using a theory of non-commutative motives.
In analogy \cite{mazelgee2021universal} characterize secondary algebraic $\K$-theory by a universal property based on the notion of 2-stable $(\infty,2)$-category:
%, and characterize secondary algebraic $\K$-theory by an universal property using a theory of non-commutative motives.
secondary algebraic $\K$-theory is a categorification of algebraic $\K$-theory that instead of the stable $\infty$-category of module spectra $\Mod_\R$ over a ring $\R$ uses the $\infty$-category $ \Mod^2_{\R}$ of small (Cauchy-complete) stable $\infty$-categories left tensored over $\Mod_\R$,  thought of as secondary $\R$-modules. 
Like $\Mod_\R$ is the prime example of a stable $\infty$-category, $ \Mod^2_{\R}$ is the prime example of a 2-stable $(\infty,2)$-category.
An $\infty$-category  is stable if it admits a zero object, pushouts and pullbacks and a commutative square is a pushout square if and only if it is a pullback  square.
The $(\infty,2)$-category of small stable $\infty$-categories is a $2$-stable $(\infty,2)$-category in the following sense: it is enriched in stable
$\infty$-categories (via its canonical $\Cat_\infty$-enrichment) and %of small $\infty$-categories lies in the subcategory of %naturally enriched in stable $\infty$-categories %but has an additional feature: 
it is a preadditive $(\infty,2)$-category: %.e. conical $\bk$-fold coproducts and products agree for  every $\bk \geq 0$.meaning that 
it admits a zero object in the enriched sense and conical finite coproducts and products and such agree via the canonical map.
%the map from the conical $\bk$-fold coproduct to the conical $\bk$-fold product is an equivalence for any $\bk \geq0$. 
%By ... We call such $(\infty,2)$-categories 2-preadditive, 2-additive and 2-stable, respectively.In ... the authors study 2-stable $(\infty,2)$-categories as the natural domain for secondary $\K$-theory in an analogous way as
More generally, we inductively define (Cauchy-complete) $\n$-stable $(\infty,\n)$-categories for every $\n \geq1$ as those (Cauchy-complete) preadditive $(\infty,\n)$-categories that are
enriched in (Cauchy-complete) $\n$-1-stable $(\infty,\n-1)$-categories.
%whose enrichment in $\Cat_{(\infty,\n-1)}$ is in the subcategory of $\n-1$-stable $(\infty,\n-1)$-categories and that i
We prove that the $(\infty,\n+1)$-category $ \St(\n)$ of small stable $(\infty,\n)$-categories is $\n+1$-stable and the $(\infty,\n+1)$-category $ \St^\wedge(\n)$ of small Cauchy-complete stable $(\infty,\n)$-categories is Cauchy complete and $\n+1$-stable (Corollary \ref{jlmmozz}).
%denoted by $\Cat_{(\infty,\n)}^{\mathrm{preadd}},\Cat_{(\infty,\n)}^{\mathrm{add}}, \St(\n)$are $\n+1$-preadditive, $\n+1$-additive, $\n+1$-stable, respectively.Other examples are the $(\infty,\n+1)$-categories of small idempotent complete preadditive, additive, stable $(\infty,\n)$-categories
We use Theorem \ref{thh} to construct a tensor product for %$\n$-preadditive, $\n$-additive and 
$\n$-stable $(\infty,\n)$-categories generalizing the tensor product of stable $\infty$-categories:
%extending the case $\n=1:$
\begin{corollary}(Corollary \ref{Stab})
Let $\n \geq 1$.
There are canonical  presentably symmetric monoidal structures on 
the $\infty$-categories $$\St(\n), \St^\wedge(\n).$$
\end{corollary}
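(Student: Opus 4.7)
The plan is to proceed by induction on $\n \geq 0$, invoking Theorem \ref{thh} at each stage to generate the tensor product in one dimension higher from the tensor product in the current dimension. For the base case $\n=0$, the $\infty$-category $\Cat_{(\infty,0)}^{\mathrm{ex}}$ (resp.\ its Cauchy-complete variant) should be identified with a standard presentably symmetric monoidal $\infty$-category whose structure is already known, for example compact spectra with the smash product in the Cauchy-complete case, so the base case follows from classical results of Lurie.

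For the inductive step, suppose $\mV:=\Cat_{(\infty,\n)}^{\mathrm{ex}}$ carries a presentably symmetric monoidal structure, i.e.\ a presentably $\bE_{\infty}$-monoidal structure. Theorem \ref{thh} applied with $\bk=\infty$ then furnishes, for any set $\mH$ of small $\mV$-weights, a presentably symmetric monoidal structure on $\Enr_\mV(\mH)$ whose internal hom is the $\mV$-enriched $\infty$-category of $\mV$-enriched functors preserving $\mH$-weighted colimits. I choose $\mH$ to be a set of $\mV$-weights encoding a zero object and conical finite products and coproducts in a $\mV$-enriched $\infty$-category. By the inductive definition of $\n{+}1$-stability, $\Cat_{(\infty,\n+1)}^{\mathrm{ex}}$ is then exactly the full subcategory of $\Enr_\mV(\mH)$ consisting of those $\mV$-enriched $\infty$-categories for which the canonical comparison from the conical finite coproduct to the conical finite product is an equivalence, i.e.\ those satisfying the preadditivity condition.

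The main obstacle is to show that this full subcategory is a symmetric monoidal reflective subcategory of $\Enr_\mV(\mH)$, so that it inherits a presentably symmetric monoidal structure. The reflectivity step requires identifying the left adjoint as a preadditivization functor in the enriched setting, which should follow from general localization techniques once one observes that preadditivity is cut out by a small set of morphisms (encoding the invertibility of the comparison map from coproducts to products, iterated over finite arities). The compatibility with the tensor product then reduces to verifying that this set of morphisms is stable under tensoring with any object, using that tensoring by a fixed object is a $\mV$-linear functor preserving $\mH$-weighted colimits; this parallels the classical passage from small $\infty$-categories with finite coproducts to small preadditive $\infty$-categories (see e.g.\ \cite[Proposition 3.6.]{heine2020infinity}).

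For the Cauchy-complete variant, the same induction applies, with $\mV=\Cat_{(\infty,\n)}^{\mathrm{ex},\wedge}$. One additionally has to verify that Cauchy-completeness passes through the tensor product. This follows from the fact that the Cauchy-completion is obtained by universally adjoining absolute weighted colimits (Notation \ref{CC}, Proposition \ref{Univop}): since absolute weighted colimits are preserved by every $\mV$-enriched functor, the subcategory of Cauchy-complete objects is a symmetric monoidal reflective subcategory of $\Enr_\mV(\mH)$, whose reflector is smashing and hence compatible with the preadditive localization considered above. Composing these two symmetric monoidal localizations produces the desired presentably symmetric monoidal structure on $\Cat_{(\infty,\n+1)}^{\mathrm{ex},\wedge}$.
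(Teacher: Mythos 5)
Your overall strategy (induct on $\n$, use the main tensor-product theorem for enriched $\infty$-categories with prescribed colimits, then pass to a symmetric monoidal reflective subcategory of ``preadditive'' objects) is the same shape as the paper's, but the step that carries all the weight is exactly the one you leave unverified, and your proposed mechanism for it is problematic. First, Theorem \ref{thh} only governs weighted \emph{colimits}, so a set $\mH$ of $\mV$-weights cannot ``encode conical finite products'': the paper's ambient category at each stage is ${^{\coprod}_\mV\L\Enr}={_\mV\L\Enr}(\mathrm{Fin})$, requiring only finite conical coproducts, and preadditivity (existence of finite conical products agreeing with coproducts) is an extra condition on objects, not a further colimit requirement. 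Second, your reflectivity argument rests on the assertion that preadditivity is cut out by a small set of morphisms of ${^{\coprod}_\mV\L\Enr}$ whose saturation is stable under the tensor product; you neither exhibit such a set nor explain why locality against maps between coproduct-presented enriched categories can encode the \emph{existence} of finite conical products, and stability under the tensor product of Theorem \ref{thh} cannot be checked without an explicit description of that tensor product on the would-be generators. This is precisely the point where the paper takes a different route: it first shows (enriching the Gepner--Groth--Nikolaus argument, Proposition \ref{rigz}) that $\Cmon(-)$ is a \emph{right} adjoint to the inclusion $_\mV\mathrm{Preadd}\subset{^{\coprod}_\mV\L\Enr}$, then obtains the left adjoint abstractly from Lemma \ref{gtr} (a coreflective, limit-closed, accessible full subcategory of a presentable $\infty$-category is also reflective), and finally proves monoidal compatibility not via generating morphisms but via closedness: by Theorem \ref{Thhs} the internal hom exists, and by Theorem \ref{Enric} (weighted colimits of enriched functors are computed object-wise) the internal hom into a preadditive object is preadditive, which yields the left adjoint relative to $\bE_\n$ (Proposition \ref{ihist}).

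Two further points. For the Cauchy-complete variant the paper does not compose with a second localization whose reflector is ``smashing'' (a claim you do not justify and do not need); it simply enlarges the set of weights, working with ${_\mV\L\Enr}(\mathrm{Fin}\cup\{\mathrm{absolute}\ \mV\text{-weights}\})$, so that Cauchy-completeness is built into the input of Theorem \ref{thqaz}, and then runs the same internal-hom argument for the preadditive subcategory, using that $\Cmon$ of a Cauchy-complete enriched $\infty$-category is again Cauchy-complete. Finally, the induction in the paper is rooted at $\Cat^{\mathrm{ex}}_\infty$ with the tensor product obtained by localizing the monoidal structure on small $\infty$-categories with finite colimits, and $\Cat^{\mathrm{ex}}_{(\infty,\n)}$ is \emph{defined} as the iterated preadditive enrichment ${_{\Cat^{\mathrm{ex}}_\infty}\mathrm{Preadd}(\n-1)}$; your base-case identification with compact spectra does not match this set-up (and compact spectra are not presentable, so it would not even produce a presentably symmetric monoidal base case). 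So while the skeleton of your argument is right, the symmetric monoidal reflectivity of the preadditive subcategory needs the $\Cmon$-coreflection/internal-hom argument (or a genuine substitute), not an appeal to an unspecified small set of local equivalences.
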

We use the same technique to construct a tensor product for $\n$-additive and
$\n$-preadditive $(\infty,\n)$-categories for any $\n \geq 1$ (Corollary \ref{Stab}).

\vspace{5mm}

\subsection{Notation and terminology}

We fix a hierarchy of Grothendieck universes whose objects we call small, large, very large, etc.
We call a space small, large, etc. if its set of path components and its homotopy groups are for any choice of base point. We call an $\infty$-category small, large, etc. if its maximal subspace and all its mapping spaces are.

\vspace{2mm}

We write 
\begin{itemize}
\item $\Set$ for the category of small sets.
\item $\Delta$ for (a skeleton of) the category of finite, non-empty, partially ordered sets and order preserving maps, whose objects we denote by $[\n] = \{0 < ... < \n\}$ for $\n \geq 0$.
\item $\mS$ for the $\infty$-category of small spaces.
\item $ \Cat_\infty$ for the $\infty$-category of small $\infty$-categories.
\item $\Cat_\infty^{\rc \rc} $ for the $\infty$-category of large $\infty$-categories with small colimits and small colimits preserving functors.
\end{itemize}

\vspace{2mm}

We often indicate $\infty$-categories of large objects by $\widehat{(-)}$, for example we write $\widehat{\mS}, \widehat{\Cat}_\infty$ for the $\infty$-categories of large spaces, $\infty$-categories.

For any $\infty$-category $\mC$ containing objects $\A, \B$ we write
\begin{itemize}
\item $\mC(\A,\B)$ for the space of maps $\A \to \B$ in $\mC$,
\item $\mC_{/\A}$ for the $\infty$-category of objects over $\A$,
\item $\Ho(\mC)$ for its homotopy category,
\item $\mC^{\triangleleft}, \mC^{\triangleright}$ for the $\infty$-category arising from $\mC$ by adding an initial, final object, respectively,
\item $\mC^\simeq $ for the maximal subspace in $\mC$.
\end{itemize}

Note that $\Ho(\Cat_\infty)$ is cartesian closed and for small $\infty$-categories $\mC,\mD$ we write $\Fun(\mC,\mD)$ for the internal hom, the $\infty$-category of functors $\mC \to \mD.$ 

%\subsubsection*{Inclusions and embeddings}
We often call a fully faithful functor $\mC \to \mD$ an embedding.
We call a functor $\phi: \mC \to \mD$ an inclusion (of a subcategory)
if one of the following equivalent conditions holds:
\begin{itemize}
\item For any $\infty$-category $\mB$ the induced map
$\Cat_\infty(\mB,\mC) \to \Cat_\infty(\mB,\mD)$ is an embedding.
\item The functor $\phi: \mC \to \mD$ induces an embedding on maximal subspaces and on all mapping spaces.
%\item The functor $\Ho(\phi):\Ho(\mC) \to \Ho(\mD)$ is an inclusion and the functor $\mC \to \Ho(\mC) \times_{\Ho(\mD)} \mD$ is an equivalence.

\end{itemize}

%or say that $\phi$ exhibits $\mC$ as a subcategory of $\mD$

%In this case $\phi$ is uniquely determined by $\mD$ and $\Ho(\phi): \Ho(\mC) \to \Ho(\mD).$\vspace{2mm}

%We call a monoidal $\infty$-category compatible with small colimits if it admits small colimits, which are preserved by the tensor product in each component.
%We call a monoidal $\infty$-category presentable if it is compatible with small colimits and its underlying $\infty$-category is presentable.
%\vspace{2mm}

%\subsubsection*{Relative cocartesian fibrations}

Let $\rS$ be an $\infty$-category and $\mE \subset \Fun([1],\rS)$ a full subcategory. We call a functor $\X \to \rS$ a cocartesian fibration relative to $\mE$ if for every morphism $[1] \to \rS$ that belongs to $\mE$ the pullback
$[1] \times_\rS \X \to [1]$ is a cocartesian fibration whose cocartesian morphisms are preserved by the projection $[1] \times_\rS \X \to \X.$
We call a functor $\X \to \Y$ over $\rS$ a map of cocartesian fibrations relative to $\mE$ if it preserves cocartesian lifts of morphisms of $\mE.$
We write $\Cat_{\infty/\rS}^\mE \subset \Cat_{\infty/\rS}$ for the subcategory of cocartesian fibrations relative to $\mE$ and maps of such.

\subsubsection*{Acknowledgements} We thank David Gepner and Markus Spitzweck for helpful discussions.

\label{rel}

\section{Enriched $\infty$-categories}

In this section we recall the theory of left, right and bienriched $\infty$-categories of \cite{heine2024bienriched} and further develope this theory for our needs.

%developed by \cite{MR3345192} and \cite{HINICH2020107129}.
%We work with an alternative but due to \cite{HEINE2023108941} equivalent approach to enriched $\infty$-categories based on the notion of weakly left enriched $\infty$-categories,originally defined by Lurie \cite[Definition 4.2.1.12.]{lurie.higheralgebra} in a slightly different model.

\subsection{$\infty$-operads}
To define enriched $\infty$-categories we first need to introduce non-symmetric $\infty$-operads \cite[Definition 2.2.6.]{GEPNER2015575}: 
%\cite[Definition 2.16.]{HEINE2023108941}.
%More generally, for later applications in section \ref{HW} we define $\bE_\bk$-operads for any $1 \leq \bk \leq \infty$, which interpolate between non-symmetric $\infty$-operads and symmetric $\infty$-operads.

\begin{notation}\label{ooop}
Let $\Ass:= \Delta^\op$ be the category of finite non-empty totally ordered sets and order preserving maps.
We call a map $[\n] \to [\m]$ in $\Ass$
\begin{itemize}
\item inert if it corresponds to a map of $\Delta$ of the form $[\m] \simeq \{\bi, \bi+1,..., \bi+\m \} \subset [\n]$ for some $\bi \geq 0.$
%\item standard inert if $\m=1$.
\item active if it corresponds to a map of $\Delta$, which preserves the minimum and maximum.
\end{itemize}
\end{notation}
For every $\n \geq 0$ there are $\n$ inert morphisms $[\n] \to [1]$, where the $\bi$-th inert morphism $[\n] \to [1]$ for $1 \leq \bi \leq \n$ corresponds to the map $[1] \simeq \{\bi-1, \bi\} \subset [\n]$.

%\begin{notation}We say that a morphism in $\Ass $ preserves the maximum if it corresponds to a morphism $[\m] \to [\n]$ in $\Delta$ that sends $\m$ to $\n.$\end{notation}
 
%\begin{notation}For $1 \leq \bk < \infty$ we write $\bE_\bk:= \prod_{\bi=1}^\bk \Ass $ and call a morphism in $\bE_\bk$ active, inert, respectively, if it is component-wise.\end{notation}

%For every $([\n_\bi])_{\bi=1}^{\bk} \in \bE_\bk$ there are$\prod_{\bi=1}^\bk \n_\bi $-many inert morphisms$([\n_\bi])_{\bi=1}^{\bk} \to ([1])_{\bi=1}^{\bk}$ in $\bE_\bk.$

%\begin{notation}We write $\bE_\infty$ for the category $\Comm$ of finite pointed sets.We write pointed finite sets as $\langle \n \rangle:= \{ \ast, 1, ..., \n\}$ for $\n \geq 0$, where $\ast$ is the base point.\end{notation}

%\begin{notation}
%We call a map $\theta$ of pointed finite sets $\langle \n \rangle \to \langle \m \rangle$ \begin{itemize}
%\item inert if for every $1 \leq \bi \leq \m$ the fiber of $\theta$ over $\bi$ consists precisely of one element.
%\item standard inert if $\m=1$.
%\item active if it sends only the base point to the base point.
%\end{itemize}\end{notation}
%For every $\n \geq 0$ there are $\n$ inert morphisms $\langle \n \rangle \to \langle 1 \rangle$, where the $\bi$-th inert morphism $ \langle \n \rangle \to \langle 1 \rangle$ for $1 \leq \bi \leq \n$ sends $\bi$ to 1.Now we are ready to define $\bE_\bk$-operads and $\bE_\bk$-monoidal $\infty$-categories:

\begin{definition}\label{ek}
A (non-symmetric) $\infty$-operad is a 
%Let $1 \leq \bk \leq \infty.$ A 
cocartesian fibration $\phi: \mV^\ot \to \Ass$ relative to the collection of inert morphisms such that the following conditions hold, where we set $\mV:=\mV^\ot_{[1]}:$

\begin{enumerate}
\item For every $\n \geq 0$ the family of $\n$ inert morphisms $[\n] \to [1]$ of $\Ass$ induces an equivalence $\mV^\ot_{[\n]} \to \mV^{\times \n}.$

\item For every $\Y,\X \in \mV^\ot $ lying over $[\m],[\n]\in \Ass $ the family
$\X \to \X_\bj$ for $1 \leq \bj \leq \n $ of $\phi$-cocartesian lifts of
the inert morphisms $[\n]\to [1]$ induces a pullback square
\begin{equation*} 
\begin{xy}
\xymatrix{
\mV^\ot(\Y,\X) \ar[d]^{} \ar[r]^{ }
& \prod_{1 \leq \bj \leq \n} \mV^\ot(\Y,\X_\bj) \ar[d]^{} 
\\  \Ass([\m],[\n]) 
\ar[r]^{} & \prod_{1 \leq \bj \leq \n} \Ass([\m], [1]). 
}
\end{xy} 
\end{equation*}

\end{enumerate}	
	
\end{definition}

\begin{definition} A monoidal $\infty$-category is an $\infty$-operad $\phi: \mV^\ot \to \Ass$ such that $\phi$ is a cocartesian fibration.
	
%\begin{itemize}
%\item 
%\item An $\bE_\bk$-operad $\phi: \mV^\boxtimes \to \bE_\bk$ is corepresentable if $\phi$ is a locally cocartesian fibration.
%\end{itemize}
\end{definition}

\begin{remark}\label{uuu}
A cocartesian fibration $\phi: \mV^\ot \to \Ass$ is a monoidal $\infty$-category if and only if the first condition
of Definition \ref{ek} holds. The second condition is automatic if $\phi$ is a cocartesian fibration \cite[Remark 2.17.]{HEINE2023108941}.
\end{remark}

%\begin{notation}For $\bk=\infty$ we call $\bE_\bk$-operads symmetric $\infty$-operads and$\bE_\bk$-monoidal $\infty$-categories symmetric monoidal $\infty$-categories.For $\bk=1$ we call $\bE_\bk$-operads $\infty$-operads and we call $\bE_\bk$-monoidal $\infty$-categories monoidal $\infty$-categories.\end{notation}

\begin{notation}
%Let $1 \leq \bk \leq \infty.$
For every $\infty$-operad $\phi: \mV^\ot \to \Ass$ we set
$ \mV:=\mV^\ot_{[1]}$ and call $\mV$ the underlying $\infty$-category.
\end{notation}

\begin{notation}
Let $\mV^\ot \to \Ass$ be an $\infty$-operad and $\V_1,..., \V_{\n},\W \in \mV$ for $\n \geq 0 $. Let
$$\Mul_{\mV}(\V_1,..., \V_\n;\W)$$ be the full subspace of $\mV^\ot(\V,\W)$ spanned by the morphisms $\V \to \W$ in $\mV^\ot$ lying over the active morphism $[1] \to [\n]$ in $\Delta,$ where $\V \in \mV^\ot_{[\n]} \simeq \mV^{\times \n}$ corresponds to $(\V_1,..., \V_\n) $.
	
\end{notation}

\begin{definition}
\begin{enumerate}
\item An $\infty$-operad $\phi: \mV^\ot \to \Ass$ is locally small if the mapping spaces of $\mV^\ot$ are small.

\item An $\infty$-operad $\phi: \mV^\ot \to \Ass$ is small if it is locally small and $\mV$ is small.

\end{enumerate}	
\end{definition}

\begin{remark}\label{oprer}
By the first axiom of Definition \ref{ek} an $\infty$-operad $ \mV^\ot \to \Ass$ is small if and only if $\mV^\ot$ is small.
An $\infty$-operad $\mV^\ot \to \Ass$ is locally small if and only if the multi-morphism spaces of $ \mV^\ot \to \Ass $ are small.	
So an $\infty$-operad $\mV^\ot \to \Ass$ is small if and only if $\mV$ is small and the multi-morphism spaces of $ \mV^\ot \to \Ass $ are small.	

\end{remark}

%\begin{notation}
%Let $1 \leq \bk \leq \infty$ and $\phi: \mV^\boxtimes \to \bE_\bk$ an  $\bE_\bk$-operad.
%W%e write $\mV^\ot \to \Ass$ for the pullback of $\phi$ along the functor
%$ \Ass \times \{[1]\}^{\times \bk-1} \to \Ass^{\times \bk}=\bE_\bk$
%and call $\mV^\ot \to \Ass $ the underlying $\infty$-operad.
%In particular, for $\bk=1$ we write $\mV^\ot$ for $\mV^\boxtimes.$\end{notation}

\begin{notation}
	
For every monoidal $\infty$-category $\mV^\ot \to \Ass$
we write $\ot: \mV \times \mV \simeq \mV^\ot_{[2]} \to \mV^\ot_{[1]}= \mV$ for the functor induced by the active map $[1] \to [2]$ in $\Delta.$

\end{notation}

%The next definition extends Definition \ref{compost} (1) for $\bk=1:$

\begin{definition}Let $\mK \subset \Cat_\infty$ be a full subcategory.
An $\infty$-operad $\mV^\ot \to \Ass$ is compatible with $\mK$-indexed colimits if $\mV$ admits $\mK$-indexed colimits and for every $\V_1,..., \V_\n, \V \in \mV$ for $\n \geq0$ and $0 \leq \bi \leq \n$ the presheaf $\Mul_\mV(\V_1,..,\V_\bi,-, \V_{\bi+1},..., \V_\n;\V)$ on $\mV$ preserves $\mK$-indexed limits.
	
\end{definition}

\begin{remark}
	
A monoidal $\infty$-category $\mV^\ot \to \Ass$ is compatible with $\mK$-indexed colimits if
it is compatible with $\mK$-indexed colimits as an $\infty$-operad, i.e. $\mV$ admits
$\mK$-indexed colimits and such are preserved by the tensor product component-wise.
\end{remark}

\begin{definition}\label{compost}
A monoidal $\infty$-category $\mV^\ot \to \Ass$ is
\begin{enumerate}
\item closed if the tensor product $\ot: \mV \times \mV \to \mV$ is left adjoint component-wise.
\item presentably if $\mV$ is presentable and (1) holds.
\item $\kappa$-compactly generated for a regular cardinal $\kappa$ if it is compatible with small colimits, $\mV$ is $\kappa$-compactly generated and the monoidal structure of $\mV$ restricts to the full subcategory $\mV^\kappa \subset \mV$ of $\kappa$-compact objects.
\end{enumerate}
\end{definition}
\begin{remark}
By \cite[Proposition 7.15.]{Rune} every presentably monoidal $\infty$-category is $\kappa$-compactly generated for some regular cardinal $\kappa.$
\end{remark}
We also consider maps of $\infty$-operads:
\begin{definition}
Let $\mV^\ot \to \Ass, \mW^\ot \to \Ass$ be $\infty$-operads.
A map of $\infty$-operads $\mV^\ot \to \mW^\ot$ is a map of cocartesian fibrations
$\mV^\ot \to \mW^\ot$ relative to the collection of inert morphisms of $\Ass$.
\end{definition}

\begin{definition}
	
A map of $\infty$-operads $\mV^\ot \to \mW^\ot$ is an embedding
if it is fully faithful.
\end{definition}
\begin{definition}
Let $\mV^\ot \to \bE_\bk, \mW^\ot \to \Ass$ be monoidal $\infty$-categories.
\begin{itemize}
\item A lax monoidal functor $\mV^\ot \to \mW^\ot$ is a map of $\infty$-operads $\mV^\ot \to \mW^\ot$.
\item A monoidal functor $\mV^\ot \to \mW^\ot$ is a map of cocartesian fibrations $\mV^\ot \to \mW^\ot$ over $\Ass.$	
\end{itemize}	
\end{definition}

\begin{notation}We fix the following notation:
\begin{itemize}
	
\item Let $\Op_\infty \subset \Cat_{\infty/\Ass}$ be the subcategory of $\infty$-operads and maps of $\infty$-operads.

\item Let $\Mon \subset\Op_\infty$ be the subcategory of monoidal $\infty$-categories and monoidal functors.

\item Let $\Pr\Mon \subset \widehat{\Mon}$ be the subcategory of monoidal $\infty$-categories compatible with small colimits and monoidal functors preserving small colimits.

\end{itemize}
	
\end{notation}

\begin{notation}\label{notilo}
For every $\infty$-operads $\mV^\ot \to \Ass, \mW^\ot \to \Ass$ let $$\Alg_\mV(\mW) \subset \Fun_\Ass(\mV^\ot,\mW^\ot)$$ be the full subcategory of maps of $\infty$-operads.
If $\mV^\ot \to \Ass, \mW^\ot \to \Ass$ are monoidal $\infty$-categories,
let $$\Fun^{\ot,\L}(\mV,\mW) \subset\Fun^\ot(\mV,\mW) \subset \Alg_\mV(\mW)$$ be the full subcategories of monoidal functors (that induce on underlying $\infty$-categories a left adjoint).
\end{notation}

\begin{definition} An $\infty$-operad $\phi: \mV^\ot \to \Ass$ admits a tensor unit if the unique morphism $[0]\to [1]$ in $\Ass$ admits a $\phi$-cocartesian lift $\emptyset \to \tu_\mV. $ We call $\tu_\mV$ the tensor unit of $\mV.$
	
\end{definition}

%\begin{warning}An $\infty$-operad $\mV^\ot \to \Ass$ does not necessarily admit a tensor unit if the functor $ \Mul_{\mV}(\emptyset;-): \mV \to \mS$ is corepresentable. The latter condition is equivalent to say that the unique morphism $[0]\to [1]$ in $\Ass$ admits a locally $\phi$-cocartesian lift $\emptyset \to \tu_\mV. $ The condition that the unique morphism $[0]\to [1]$ in $\Ass$ admits a $\phi$-cocartesian lift $\emptyset \to \tu_\mV$ is equivalent to say that the functor $ \Mul_{\mV}(\emptyset;-): \mV \to \mS$ is corepresentable by some object $\tu_\mV$ and the canonical map $$ \mV^\ot(\X,\Y) \to \Ass([1],[1]) \times \mV(\tu,\Y) $$ is an equivalence.\end{warning}

%\begin{definition}
%An $\infty$-operad $\mV^\ot \to \Ass$ admits a tensor unit if there is a morphism$\emptyset \to \tu_\mV$ in $\mV^\ot$ lying over the unique map $[0]\to [1]$ in $\Ass$ (corresponding to an object of $ \Mul_{\mV}(\emptyset;\tu_\mV)$) such that the induced map$\mV(\tu_\mV,\tu_\mV) \to \Mul_{\mV}(\emptyset;\tu_\mV)$ is an equivalence.We call $\tu_\mV$ the tensor unit of $\mV$.\end{definition}

%\begin{definition}Let $\mV^\ot \to \Ass, \mW^\ot \to \Ass$ be $\infty$-operads that admit a tensor unit.A map of $\infty$-operads $\F: \V^\ot \to \mW^\ot$ preserves the tensor unit if the induced map $\tu_\mW \to \F(\tu_\mV)$ is an equivalence.	\end{definition}

%\begin{notation}Let $\Mon \subset \Op_{\infty}$ be the subcategory whose objects are the $\infty$-operads that admit a tensor unit and whose morphisms are the mapsof $\infty$-operads preserving the tensor unit.\end{notation}

\begin{remark}\label{preps}
Let $\phi: \mV^\ot \to \Ass$ be an $\infty$-operad that admits a tensor unit
and for every $\n \geq 1, 1 \leq \bj \leq \n$ let $(\V_\bi^\bj)_{\bi=1}^{\m_\bj}$ be a family in $\mV^\ot,$ where $ \m_\bj \geq 0$.
%Let $\emptyset \to \tu$ be the $\phi$-cocartesian lift of the map $[0] \to [1]$ in $\Ass.$
By the axioms of an $\infty$-operad there is a morphism
$$(\V_\bi^\bj)_{\bi=1}^{\m_1}, \emptyset, (\V_\bi^\bj)_{\bi=1}^{\m_2}, \emptyset,(\V_\bi^\bj)_{\bi=1}^{\m_3}, ..., \emptyset, (\V_\bi^\bj)_{\bi=1}^{\m_\n} \to (\V_\bi^\bj)_{\bi=1}^{\m_1}, \tu_\mV, (\V_\bi^\bj)_{\bi=1}^{\m_2}, \tu_\mV, (\V_\bi^\bj)_{\bi=1}^{\m_3},..., \tu_\mV, (\V_\bi^\bj)_{\bi=1}^{\m_\n} $$
in $\mV^\ot$, which is $\phi$-cocartesian by \cite[Lemma 2.24.]{HEINE2023108941} since the morphism $\emptyset \to \tu$ is $\phi$-cocartesian.
%In particular, for every $\V \in \mV$ the following induced map is an equivalence:$$ \Mul_\mV((\V_\bi^\bj)_{\bi=1}^{\m_1}, \tu_\mV, (\V_\bi^\bj)_{\bi=1}^{\m_2}, \tu_\mV, ..., \tu_\mV, (\V_\bi^\bj)_{\bi=1}^{\m_\n} ;\V) \to \Mul_\mV((\V_\bi^\bj)_{\bi=1}^{\m_1}, \emptyset, (\V_\bi^\bj)_{\bi=1}^{\m_2}, \emptyset, ..., \emptyset, (\V_\bi^\bj)_{\bi=1}^{\m_\n};\V).$$

\end{remark}

%\begin{remark}\label{Remast}
%Let $\phi: \mV^\ot \to \Ass$ be an $\infty$-operad.The following conditions are equivalent.

%\begin{enumerate}\item For every $[\n]\in \Ass$ the unique morphism $[0] \to [\n]$ in $\Ass$admits a $\phi$-cocartesian lift.\item The unique morphism $[0] \to [1]$ in $\Ass$ admits a $\phi$-cocartesian lift.

%\item The morphism $[0] \to [1]$ in $\Ass$ admits a locally $\phi$-cocartesian lift: there is a morphism $\emptyset \to \tu_\mV$ in $\mV^\ot$, where %$\emptyset $ is the unique object of $\mV^\ot_{[0]}$ and $\tu_\mV \in \mV,$ that induces for any $\X \in \mV$ an equivalence$ \mV(\tu_\mV,\X) \to \Mul_{\mV}(\emptyset; \X)$.\item The functor $ \Mul_{\mV}(\emptyset;-): \mV \to \mS$ is corepresentable.

%\item For every $[\n]\in \Ass$ the unique morphism $[0] \to [\n]$ in $\Ass$admits a locally $\phi$-cocartesian lift: there is a morphism $\emptyset \to \tu_\mV,..., \tu_\mV$ in $\mV^\ot$, where $\tu_\mV \in \mV,$ such that for every $\X \in \mV$ the induced map+$ \Mul_\mV(\tu_\mV,...,\tu_\mV;\X) \to \Mul_{\mV}(\emptyset; \X)$ is an equivalence.\end{enumerate}

%In particular, by (3), (4) and (5) for every $\infty$-operad $\mV^\ot \to \Ass$ that admits a tensor unit the functor $ \Mul_\mV(\tu_\mV,...,\tu_\mV;-): \mV \to \mS$ is corepresented by $\tu_\mV$.\end{remark}

%\begin{notation}Let $\Op_{\infty}^\tu \subset \Op_{\infty}$ be the subcategory of $\infty$-operads that admit a tensor unit and morphisms of $\infty$-operads preserving the tensor unit.\end{notation}

\begin{lemma}\label{preten}

Let $\phi: \mV^\ot \to \Ass$ be an $\infty$-operad that admits a tensor unit.
The $\infty$-category $\Alg(\mV)$ admits an initial object.
An associative algebra in $\mV$ is initial if and only if it preserves the tensor unit as a map $\Ass \to \mV^\ot.$

\end{lemma}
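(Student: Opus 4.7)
The plan is to exhibit an initial object $\mathbf{1} \in \Alg(\mV)$ as the ``trivial'' algebra with underlying object $\tu_\mV$, verify its universal property using that of $\tu_\mV$, and read off the characterization. To construct $\mathbf{1}: \Ass \to \mV^\ot$, note that the corepresentability of $\Mul_\mV(\emptyset; -)$ is equivalent to the existence of a $\phi$-cocartesian lift $\eta: \emptyset \to \tu_\mV$ in $\mV^\ot$ of the active morphism $[0] \to [1]$ in $\Ass$. Iteratively applying Remark \ref{preps}, for every $[\n] \in \Ass$ there is a $\phi$-cocartesian morphism $\emptyset \to (\tu_\mV, \dots, \tu_\mV)$ in $\mV^\ot$ lifting the unique active morphism $[0] \to [\n]$. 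Since $[0]$ is initial in $\Ass$, these cocartesian lifts organize into a functor $\mathbf{1}: \Ass \to \mV^\ot$ over $\Ass$, formally the operadic left Kan extension of $\{[0]\} \to \mV^\ot$ along $\{[0]\} \subset \Ass$, sending each $[\n]$ to the tuple $(\tu_\mV, \dots, \tu_\mV)$. Cocartesian transport along an inert morphism $[\n] \to [\m]$ projects $(\tu_\mV, \dots, \tu_\mV)$ to a consecutive sub-tuple, which is again $(\tu_\mV, \dots, \tu_\mV)$, so $\mathbf{1}$ is a map of $\infty$-operads.

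Next I would verify the universal property. Given any $A \in \Alg(\mV)$, the Segal condition (Definition \ref{ek}(2)) together with the fact that $A$ preserves inert cocartesian lifts reduces the data of a morphism $\mathbf{1} \to A$ over $\Ass$ to its value at arity $[1]$, namely a single morphism $\tu_\mV \to A([1])$ in $\mV$; naturality against the active morphism $[0] \to [1]$ then forces this morphism to equal the essentially unique factorization of the structure morphism $\emptyset \to A([1])$ (the image of $[0] \to [1]$ under $A$) through $\eta$, by $\phi$-cocartesianness of $\eta$. Compatibility with the remaining active morphisms of $\Ass$ is automatic from the algebra axioms of $A$ (the unit and associativity axioms force the multiplications of $\mathbf{1}$ and $A$ to interact correctly with any choice of unit-compatible map). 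Hence $\Map_{\Alg(\mV)}(\mathbf{1}, A)$ is contractible, so $\mathbf{1}$ is initial.

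Finally, an algebra $A$ is initial if and only if the canonical map $\mathbf{1} \to A$ is an equivalence, equivalently if and only if the induced unit map $\tu_\mV \to A([1])$ is an equivalence in $\mV$. Since this sits in a factorization $\emptyset \xrightarrow{\eta} \tu_\mV \to A([1])$ with $\eta$ $\phi$-cocartesian, it is an equivalence precisely when the image $\emptyset \to A([1])$ of $[0] \to [1]$ under $A$ is itself $\phi$-cocartesian---that is, precisely when $A$, viewed as a map $\Ass \to \mV^\ot$, preserves the cocartesian lift defining the tensor unit. The principal technical difficulty lies in the first step: assembling the family of $\phi$-cocartesian morphisms $\emptyset \to (\tu_\mV, \dots, \tu_\mV)$ into a single coherent functor $\mathbf{1}$, which uses the initiality of $[0]$ in $\Ass$ and the multiplicative coherence supplied by Remark \ref{preps}.
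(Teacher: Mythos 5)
Your construction is essentially the paper's: the initial algebra is the trivial algebra on $\tu_\mV$, obtained as the relative left Kan extension of $\emptyset \in \mV^\ot_{[0]}$ along $\{[0]\} \subset \Ass$, with the pointwise $\phi$-colimits given by the $\phi$-cocartesian lifts of Remark \ref{preps} (the indexing categories $\{[0]\}\times_\Ass \Ass_{/[\n]}$ being contractible), and the characterization of initial algebras follows from $\phi$-cocartesianness of $\emptyset \to \tu_\mV$ exactly as in the paper. The only soft spot is your by-hand verification that $\Map_{\Alg(\mV)}(\mathbf{1},A)$ is contractible --- ``compatibility with the remaining active morphisms is automatic from the algebra axioms'' is not an adequate argument $\infty$-categorically --- but this is precisely what the universal property of the relative left Kan extension gives formally (the paper invokes Lurie's HTT 4.3.2.13 and 4.3.2.17, producing a fully faithful left adjoint to restriction $\Alg(\mV) \to \mV^\ot_{[0]}$), so the gap is closed by the very tool you already invoked for the construction.
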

\begin{proof}
A map $\A \to \B$ in $\Alg(\mV)$ is an equivalence if $\A, \B$ preserve the tensor unit since the induced map $\A([1]) \to \B([1])$ in $\mV^\ot$ is a map under $\emptyset \simeq \A([0]) \simeq \B([0])$. % and so corresponds to a map in $\mV$ under $\tu_\mV$. 		
Thus it is enough to prove that the functor $\rho: \Alg(\mV) \to \mV^\ot_{[0]}$ restricting along the embedding $[0] \subset \Ass$ admits a fully faithful left adjoint that sends $\emptyset$ to an algebra preserving the tensor unit.
By \cite[Lemma 4.3.2.13., Proposition 4.3.2.17.]{lurie.HTT} the functor
$\rho$ has a  fully faithful left adjoint if for any $\n \geq 0$ the functor $\lambda: \{[0]\}\times_\Ass \Ass_{/[\n]} \to \mV^\ot_{[0]} \subset \mV^\ot$ 
admits a $\phi$-colimit and the left adjoint sends $\emptyset \in \mV^\ot_{[0]}$ to the algebra $\Z $ in $ \mV$ that maps $[\n] \in \Ass$
to the $\phi$-colimit of $\lambda.$
Since $\{[0]\}\times_\Ass \Ass_{/[\n]} $ is contractible, a $\phi$-colimit of $\lambda$ is a $\phi$-cocartesian lift
of the map $[0]\to [\n]$ in $\Ass$, which exists by Remark \ref{preps}. The algebra $\Z $ in $ \mV$ sends $[1] \in \Ass$ to the $\phi$-cocartesian lift $\emptyset \to \tu_\mV$ of the map $[0]\to [1]$ in $\Ass$. So $\Z$ preserves the tensor unit.

\end{proof}

Via Lemma \ref{preten} we can make the following definition:

\begin{notation}Let $\n \geq 1$, $1 \leq \bi \leq \n$ and $\mV_\bi^\ot \to \Ass$ be $\infty$-operads such that for every $1 \leq \bj \leq \n, \bj \neq \bi$
the $\infty$-operad $\mV_\bj^\ot \to \Ass$ admits a tensor unit. 
Let $\tau_\bi: \mV_\bi^\ot \to \mV_1^\ot \times_\Ass ... \times_\Ass \mV_\n^\ot$
be the map of $\infty$-operads that is the identity on the $\bi$-th factor and the constant map $\mV_\bi^\ot \to \Ass \to \mV_\bj^\ot $ preserving the tensor unit on
the $\bj$-th factor for $\bj \neq \bi$, where we use Lemma \ref{preten}.

\end{notation}

%\subsection{Weak enrichment}

\begin{notation}\label{Ind} Let $\kappa$ be a small regular cardinal and $\mC$ a small $\infty$-category.
Let $\Ind_\kappa(\mC) \subset \mP(\mC)$ be the full subcategory generated by $\mC$ under small $\kappa$-filtered colimits.

\end{notation}

\begin{example}

For $\kappa=\emptyset$ we find that $\Ind_\emptyset(\mC)=\mP(\mC).$ 	

\end{example}

\begin{remark}\label{Indd} 
By \cite[Corollary 5.3.5.4.]{lurie.higheralgebra} for every small $\infty$-category $\mC$
that admits $\kappa$-small colimits the full subcategory $\Ind_\kappa(\mC) \subset \mP(\mC)$ precisely consists of the functors $\mC^\op \to \mS$ preserving $\kappa$-small limits. 
Thus $\Ind_\kappa(\mC)$ is a $\kappa$-accessible localization with respect to the set of maps $\{ \colim(\y \circ \rH) \to \y(\colim(\rH)) \mid \rH:\K \to \mC, \ \K \ \kappa\text{-small} \}$, where $\y: \mC \subset \mP(\mC)$ is the Yoneda-embedding. Hence $\Ind_\kappa(\mC)$ is a presentable $\infty$-category.

\end{remark}

The next proposition follows from \cite[Corollary 8.31.]{HEINE2023108941}:

\begin{proposition}\label{Day}Let $\kappa$ be a small regular cardinal
and $\mV^\ot \to \Ass$ a small monoidal $\infty$-category.

\begin{enumerate}
\item There is a monoidal $\infty$-category $\Ind_\kappa(\mV)^\ot \to \Ass $ compatible with small $\kappa$-filtered colimits and a monoidal embedding $\mV^\ot \to \Ind_\kappa(\mV)^\ot$ inducing the embedding $\mV \to \Ind_\kappa(\mV)$. % on underlying $\infty$-categories.

\item For every monoidal $\infty$-category $\mW^\ot \to \Ass$ compatible with small $\kappa$-filtered colimits the functor 
$$
\Alg_{\Ind_\kappa(\mV)}(\mW) \to \Alg_{\mV}(\mW)$$ admits a fully faithful left adjoint that lands in the full subcategory $\Alg^{\kappa-\mathrm{fil}}_{\Ind_\kappa(\mV)}(\mW)$ of maps of $\infty$-operads preserving small $\kappa$-filtered colimits.
%and sends maps of $\infty$-operads preserving $\kappa$-small colimits to maps of $\infty$-operads preserving small colimits.
Thus the following functor is an equivalence:
\begin{equation*}\label{ejjt}
\Alg^{\kappa-\mathrm{fil}}_{\Ind_\kappa(\mV)}(\mW) \to \Alg_{\mV}(\mW).
\end{equation*}

\item If $\mV$ admits $\kappa$-small colimits, the latter equivalence restricts to an equivalence $$\Alg^\L_{\Ind_\kappa(\mV)}(\mW) \to \Alg^\kappa_{\mV}(\mW).$$

%\item If $\mV^\ot \to \Ass$ is compatible with $\kappa$-small colimits, $\Ind_\kappa(\mV)^\ot \to \Ass$ is compatible with small colimits.

%\item If $\mV^\ot \to \Ass$ is a monoidal $\infty$-category, $\Ind_\kappa(\mV)^\ot \to \Ass$ is a monoidal $\infty$-category, the embedding $\mV^\ot \to \Ind_\kappa(\mV)^\ot$is monoidal and for every monoidal $\infty$-category $\mW^\ot \to \Ass$ compatible with small colimits the functor (\ref{ejjt}) reflects monoidal functors. 
%Thus the following functors are equivalences:
%\begin{equation*}\label{zubn}
%\Fun^{\ot,\kappa-\mathrm{fil}}(\Ind_\kappa(\mV), \mW) \to \Fun^{\ot}(\mV,\mW),
%\end{equation*}
%\begin{equation*}\label{zubn}
%\Fun^{\ot,\L}(\Ind_\kappa(\mV), \mW) \to \Fun^{\ot, \kappa}(\mV,\mW).
%\end{equation*}
\end{enumerate}

\end{proposition}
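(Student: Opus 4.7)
The plan is to deduce the proposition from Corollary 8.31 of \cite{HEINE2023108941}, which supplies the analogous statement in the broader Day-convolution / bi-enriched framework, and to match up the universal properties carefully.

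For (1), I would first construct a Day-convolution monoidal structure on $\mP(\mV)^\ot \to \Ass$ compatible with small colimits, extending $\mV^\ot$ along a monoidal Yoneda embedding $\mV^\ot \hookrightarrow \mP(\mV)^\ot$. I would then show that the full subcategory $\Ind_\kappa(\mV) \subset \mP(\mV)$ is closed under the Day-convolution tensor product. The key observation is that the Day tensor product of two representables is a representable (since $\mV$ is monoidal), and the tensor product preserves small colimits separately in each variable; combined with the fact that $\Ind_\kappa(\mV)$ is generated by $\mV$ under small $\kappa$-filtered colimits, this closure property is immediate. Compatibility with small $\kappa$-filtered colimits of the induced monoidal structure on $\Ind_\kappa(\mV)$ then follows from the compatibility on $\mP(\mV)$, and the Yoneda embedding is fully faithful and monoidal.

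For (2), the left adjoint is produced by operadic left Kan extension along $\mV^\ot \subset \Ind_\kappa(\mV)^\ot$. The universal property from \cite[Cor.~8.31]{HEINE2023108941}, applied to the full subcategory of $\kappa$-compact objects $\mV \subset \Ind_\kappa(\mV)$, identifies restriction along $\mV^\ot \hookrightarrow \Ind_\kappa(\mV)^\ot$ with an equivalence between maps of $\infty$-operads $\Ind_\kappa(\mV)^\ot \to \mW^\ot$ preserving small $\kappa$-filtered colimits and arbitrary maps of $\infty$-operads $\mV^\ot \to \mW^\ot$. Writing this equivalence as $\Alg^{\kappa\text{-fil}}_{\Ind_\kappa(\mV)}(\mW) \simeq \Alg_{\mV}(\mW)$ and composing with the inclusion $\Alg^{\kappa\text{-fil}}_{\Ind_\kappa(\mV)}(\mW) \subset \Alg_{\Ind_\kappa(\mV)}(\mW)$ exhibits the desired fully faithful left adjoint landing in the $\kappa$-filtered-preserving subcategory.

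For (3), assuming $\mV$ admits $\kappa$-small colimits, I would use that a functor out of $\Ind_\kappa(\mV)$ to a cocomplete $\infty$-category preserves all small colimits if and only if it preserves small $\kappa$-filtered colimits and its restriction to $\mV$ preserves $\kappa$-small colimits (HTT 5.3.5.10). Under the equivalence from (2), this characterization translates to: a map of $\infty$-operads $\Phi \in \Alg^{\kappa\text{-fil}}_{\Ind_\kappa(\mV)}(\mW)$ is underlying-left-adjoint if and only if its restriction to $\mV$ preserves $\kappa$-small colimits. What remains is to check this characterization is compatible with the monoidal structure — precisely, that an operadic left Kan extension of a $\kappa$-small-colimit-preserving map of $\infty$-operads $\mV^\ot \to \mW^\ot$ yields a monoidal (and not merely lax) functor whose underlying functor is left adjoint. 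This is the main obstacle and I expect it to follow by combining the object-wise colimit preservation with the fact that for each $\V \in \mV$ the functor $\V \ot (-)$ on $\Ind_\kappa(\mV)$ preserves small colimits, which in turn reduces to the corresponding statement on $\mV$ via $\kappa$-filtered colimits.
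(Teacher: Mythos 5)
Your overall route coincides with the paper's: the paper establishes this proposition simply by invoking \cite[Corollary 8.31.]{HEINE2023108941}, and your argument is a fleshed-out version of the same reduction — Day convolution on presheaves, closure of $\Ind_\kappa(\mV)$ under the tensor product and unit, operadic left Kan extension together with the universal property for the adjunction in (2), and the standard $\Ind_\kappa$ cocontinuity criterion for (3). Parts (1) and (2) are fine as written.

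One correction concerning (3): the step you single out as the main obstacle is not part of the statement, and as formulated it is false. The categories $\Alg^{\L}_{\Ind_\kappa(\mV)}(\mW)$ and $\Alg^{\kappa}_{\mV}(\mW)$ are full subcategories of $\Alg$, i.e.\ of maps of $\infty$-operads (lax monoidal functors), cut out only by a condition on the underlying functors; no strong monoidality of the extension is claimed or needed. Indeed it cannot hold in general: the embedding $\mV^\ot \to \Ind_\kappa(\mV)^\ot$ is a monoidal functor, so if the operadic left Kan extension of a map of $\infty$-operads $\mV^\ot \to \mW^\ot$ were a monoidal functor, then its restriction along this embedding — which by full faithfulness of the left adjoint is the original map — would be monoidal as well; hence the extension of a genuinely lax operad map is never monoidal. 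Once that claim is dropped, your argument for (3) reduces to the correct assertion that, under the equivalence of (2), an operad map $\Ind_\kappa(\mV)^\ot \to \mW^\ot$ preserving small $\kappa$-filtered colimits has underlying functor preserving all small colimits if and only if its restriction to $\mV$ preserves $\kappa$-small colimits; this uses that $\mV \to \Ind_\kappa(\mV)$ preserves $\kappa$-small colimits when $\mV$ admits them (Remark \ref{Indd}) for one direction and the decomposition of small colimits into $\kappa$-small and $\kappa$-filtered ones for the other, which is exactly what the statement requires.
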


In the later sections we will also use symmetric $\infty$-operads.

\begin{notation}We write $\Comm$ for the category of finite pointed sets. We write finite pointed sets as $\langle \n \rangle:= \{ \ast, 1, ..., \n\}$ for $\n \geq 0$, where $\ast$ is the base point. \end{notation}

\begin{notation}
We call a map $\theta$ of pointed finite sets $\langle \n \rangle \to \langle \m \rangle$ \begin{itemize}
\item inert if for every $1 \leq \bi \leq \m$ the fiber of $\theta$ over $\bi$ consists precisely of one element.
%\item standard inert if $\m=1$.
\item active if it sends only the base point to the base point.
\end{itemize}\end{notation}

\begin{remark}
	
For every $\n \geq 0$ there are $\n$ inert morphisms $\langle \n \rangle \to \langle 1 \rangle$, where the $\bi$-th inert morphism $ \langle \n \rangle \to \langle 1 \rangle$ for $1 \leq \bi \leq \n$ sends $\bi$ to 1.
\end{remark}

\begin{definition}
A symmetric $\infty$-operad is a cocartesian fibration $\mV^\boxtimes \to \Comm$
relative to the collection of inert morphisms that satisfies the analogous conditions like in
Definition \ref{ek}, where $[\n], [\m]$ are replaced by $\langle \n\rangle, \langle \m\rangle.$
A symmetric monoidal $\infty$-category is a symmetric $\infty$-operad $\mV^\boxtimes \to \Comm$ that is also a cocartesian fibration.
\end{definition}

\begin{remark}
A cocartesian fibration $\mV^\boxtimes \to \Comm$ is a symmetric monoidal $\infty$-category if and only if it satisfies the analogue of condition (1) of Definition \ref{ek}, condition (2) is automatic for cocartesian fibrations like in Remark \ref{uuu}.
	
\end{remark}
%\begin{notation}We call $\mV:=\mV^\boxtimes_{\langle1\rangle}$ the underlying $\infty$-category. \end{notation}

\begin{notation}
	
By \cite[Construction 4.1.2.9.]{lurie.higheralgebra} there is a canonical functor $\theta: \Ass \to \Comm, [\n]\mapsto \langle\n\rangle $ and we write $\mV^\ot \to \Ass$ for the pullback of a symmetric monoidal $\infty$-category $\mV^\boxtimes \to \Comm$ along $\theta,$ which is a monoidal $\infty$-category that we call the underlying monoidal $\infty$-category.
We define maps of symmetric $\infty$-operads and (lax) symmetric monoidal functors analogously as in Notation \ref{notilo}.
\end{notation}
\begin{example}
By \cite[Proposition 5.1.0.3.]{lurie.higheralgebra} for every $\n \geq 0$ there is a symmetric $\infty$-operad $\bE_\n \to \Comm$, the $\n$-th little disk operad. %, whose algebras are $\bE_\n$-algebras.
For $\n=1$ the $\bE_1$-operad is the symmetrization of the non-symmetric $\infty$-operad $\Ass.$	
For $\n= \infty$ there is an equivalence between $\Comm$-algebras and $\Comm$-algebras
\cite[Theorem 5.1.2.2]{lurie.higheralgebra}.
$\bE_0$-algebras are precisely objects equipped with a map from the tensor unit.
	
\end{example}

\subsection{Weakly enriched $\infty$-categories}

To define enriched $\infty$-categories we first introduce weakly enriched $\infty$-categories following \cite{heine2024bienriched}. %, which there were called weakly tensored $\infty$-categories.

%\subsubsection{Weakly bienriched $\infty$-categories}

\begin{definition}
A morphism in $ \Ass$ preserves the minimum (maximum) if it corresponds to a map $[\m] \to [\n]$ in $\Delta$ sending $0$ to $0$
(sending $\m$ to $\n$).

\end{definition}

The next definition is \cite[Definition 2.25.]{heine2024bienriched}:

\begin{definition}\label{bla}
Let $\mV^\ot \to \Ass, \mW^\ot \to \Ass$ be $\infty$-operads.
An $\infty$-category weakly bienriched in $\mV, \mW$
is a map $\phi=(\phi_1,\phi_2): \mM^\circledast \to \mV^\ot \times \mW^\ot $ of cocartesian fibrations relative to the collection of inert morphisms of $\Ass \times \Ass$ whose first component preserves the maximum and whose second component preserves the minimum such that the following conditions hold:
\begin{enumerate}
\item for every $\n,\m \geq 0$ the map $[0]\simeq \{\n\} \subset [\n]$ in the first component and the map $[0]\simeq \{0\} \subset [\m]$ in the second component induce an equivalence
$$ \theta: \mM^\circledast_{[\n][\m]} \to \mV^\ot_{[\n]} \times  \mM^\circledast_{[0][0]} \times\mW^\ot_{[\m]},$$
\vspace{1mm}
\item for every $\X,\Y \in \mM^\circledast$ lying over $([\m'], [\n']), ([\m], [\n]) \in \Ass \times \Ass$ the cocartesian lift $\Y \to \Y'$ of the map $[0]\simeq \{\m\} \subset [\m]$ and $[0]\simeq \{0\} \subset [\n]$ induces a
pullback square
\begin{equation*} 
\begin{xy}
\xymatrix{
\mM^\circledast(\X,\Y)  \ar[d]^{} \ar[r]^{ }
& \mM^\circledast(\X,\Y') \ar[d]^{} 
\\ \mV^\ot(\phi_1(\X),\phi_1(\Y)) \times \mW^\ot(\phi_2(\X),\phi_2(\Y))  
\ar[r]^{}  & \mV^\ot(\phi_1(\X),\phi_1(\Y')) \times \mW^\ot(\phi_2(\X),\phi_2(\Y')). 
}
\end{xy} 
\end{equation*}
\end{enumerate}

\end{definition}

%\begin{remark}In \cite[Definition 3.3.]{HEINE2023108941} we define weakly bienriched $\infty$-categories under the name weakly bienriched $\infty$-categoriesand reserve the notion of weakly bienriched $\infty$-categories for a different structure of enrichment based on Gepner-Haugseng's model of enriched $\infty$-categories \cite{GEPNER2015575}.In \cite{HEINE2023108941} we prove an equivalence between weakly bienriched $\infty$-categories and weakly bienriched $\infty$-categoriesjustifying our choice of terminology. Moreover we prefer to use this terminology sincewe specialize the notion of weakly bienriched $\infty$-categories of Definition \ref{bla} to several notions of enrichment.	\end{remark}

\begin{notation}

For every weakly bienriched $\infty$-category $\phi: \mM^\circledast \to \mV^\ot \times \mW^\ot$ we call $\mM:= \mM^\circledast_{[0][0]} $ the underlying $\infty$-category of $\phi$ and say that $\phi$ exhibits $\mM$ as weakly bienriched in $\mV,\mW$.

\end{notation}

\begin{example}
Let $\mV^\ot \to \Ass$ be an $\infty$-operad.
We write $\mV^\circledast \to \Ass \times \Ass$ for the pullback of $\mV^\ot \to \Ass$ along the functor $\Ass \times \Ass \to \Ass, \ ([\n],[\m]) \mapsto [\n]\ast [\m]$.
The two functors $\Ass \times \Ass \times [1] \to \Ass$ corresponding to the natural transformations $(-)\ast \emptyset \to (-)\ast (-), \ \emptyset \ast (-) \to (-)\ast (-)$ send the morphism $\id_{[\n],[\m]}, 0 \to 1$ to
an inert one and so give rise to functors
$\mV^\circledast \to \mV^\ot \times \Ass, \ \mV^\circledast \to \Ass \times \mV^\ot $ over $\Ass\times \Ass$. The resulting functor $\mV^\circledast \to \mV^\ot \times \mV^\ot$ is an $\infty$-category weakly bienriched in $\mV,\mV.$

\end{example}

%\begin{example}\label{jhll}For every $\infty$-category $\K$ and $\infty$-operad $\mV^\ot \to \Ass$the projection $ \mV^\ot \times \K \times \mW^\ot \to \mV^\ot \times \mW^\ot $is an $\infty$-category weakly bienriched over $\mV,\mW.$\end{example}

\begin{example}
Let $\mM^\circledast \to \mV^\ot \times \mW^\ot$ be a weakly bienriched $\infty$-category and $\mN \subset \mM$ a full subcategory.
Let $\mN^\circledast \subset \mM^\circledast$ be the full subcategory spanned by all objects of $\mM^\circledast$ lying over some
$(\V,\W)\in \mV^\ot \times \mW^\ot$ corresponding some object of $\mN \subset \mM.$
The restriction $\mN^\circledast \subset \mM^\circledast \to \mV^\ot \times \mW^\ot$ is a weakly bienriched $\infty$-category, whose underlying $\infty$-category is $\mN$.
We call $\mN^\circledast \to \mV^\ot \times \mW^\ot$ the full weakly bienriched subcategory of $ \mM^\circledast $ spanned by $\mN.$

\end{example}

\begin{notation}\label{empp}
Let $\emptyset^\ot \subset \Ass$ be the full subcategory spanned by $[0] \in \Ass.$
Then $\emptyset^\ot$ is contractible and $\emptyset^\ot \subset \Ass$ is an $\infty$-operad that is the initial $\infty$-operad.
	
\end{notation}

\begin{definition}
An $\infty$-category weakly left enriched in $\mV$ is an $\infty$-category weakly bienriched in $\mV, \emptyset$.
An $\infty$-category weakly right enriched in $\mV$ is an $\infty$-category weakly bienriched in $ \emptyset, \mV$.

\end{definition}

%In this case Definition \ref{bla} simplifies to the following one:\begin{definition}\label{wla}Let $\mV^\ot \to \Ass$ be an $\infty$-operad and $\phi: \mM^\circledast \to \mV^\ot $ a map of cocartesian fibrations relative to the collection of inert morphisms of $\Ass$ that preserve the maximum.\vspace{1mm}	We call $\phi: \mM^\circledast \to \mV^\ot $ an $\infty$-category weakly left enriched over $\mV$ if the following conditions hold:\begin{enumerate}\item for every $\n \geq 0$ the map $[0]\simeq \{\n\} \subset [\n]$ in $\Delta$ induces an equivalence$$ \theta: \mM^\circledast_{[\n]} \to \mV^\ot_{[\n]} \times \mM^\circledast_{[0]}.$$\item for every $\X,\Y \in \mM^\circledast$ lying over $[\m], [\n] \in \Ass$ the cocartesian lift $\Y \to \Y'$ of the map $[0]\simeq \{\n\} \subset [\n]$ in $\Delta$ induces a pullback square
%\begin{equation*} 
%begin{xy}
%\xymatrix{\mM^\circledast(\X,\Y)  \ar[d]^{} \ar[r]^{ }& \mM^\circledast(\X,\Y') \ar[d]^{} \\ \mV^\ot(\phi(\X),\phi(\Y)) \ar[r]^{}  & \mV^\ot(\phi(\X),\phi(\Y')). }\end{xy} \end{equation*}\end{enumerate}\end{definition}There is a dual notion of weakly right enriched $\infty$-category.

\begin{notation}\label{mult}(Multi-morphism spaces)
Let $\mM^\circledast \to \mV^\ot \times \mW^\ot$ be a weakly bienriched $\infty$-category and $\V_1,..., \V_{\n} \in \mV, \ \X, \Y \in \mM, \W_1,...,\W_\m \in \mW$ for some $\n,\m \geq 0 $. Let
$$\Mul_{\mM}(\V_1,..., \V_\n,\X,\W_1,...,\W_\m; \Y)$$ be the full subspace of $\mM^\circledast(\Z,\Y)$ spanned by the morphisms $\Z \to \Y$ in $\mM^\circledast$ lying over the map $[0] \simeq \{0 \} \subset [\n],[0] \simeq \{\m \} \subset [\m]$ in $\Delta \times \Delta,$ where $\Z \in \mM_{[\n],[\m]}^\circledast \simeq \mV^{\times \n} \times \mM \times \mW^{\times\m}$ corresponds to $(\V_1,..., \V_\n,\X,\W_1,...,\W_\m) $.

\end{notation}

\begin{remark}For every $\infty$-operad $\mV^\ot \to \Ass$ the projection $\mM^\circledast:=\mV^\circledast \to \mV^\ot$ gives rise to an embedding
$\mV^\circledast(\Z,\Y) \to \mV^\ot(\Z,\Y) $
that restricts to an equivalence
$$\Mul_{\mM}(\V_1,..., \V_\n,\X,\W_1,...,\W_\m; \Y) \simeq \Mul_{\mV}(\V_1,..., \V_\n,\X,\W_1,...,\W_\m; \Y).$$
\end{remark}

\begin{definition}Let $\phi: \mM^\circledast \to \mV^\ot \times \mW^\ot $ be a weakly bienriched $\infty$-category.
\begin{enumerate}
\item We call $\phi$ locally small if the $\infty$-categories $\mV^\ot, \mW^\ot, \mM^\circledast$ are locally small.

\item We call $\phi$ small if $\phi$ is locally small and $\mM$ is small.

\item We call $\phi$ absolute small if $\phi$ is small and $\mV,\mW$ are small.

\end{enumerate}

\end{definition}

\begin{remark}Let $\phi: \mM^\circledast \to \mV^\ot \times \mW^\ot $ be a weakly bienriched $\infty$-category.
The first axiom of Definition \ref{bla} implies that $\phi$ is absolute small if and only if $ \mM^\circledast, \mV^\ot, \mW^\ot$ are small.
%an $\infty$-category $\phi: \mM^\circledast \to \mV^\ot \times \mW^\ot $ weakly bienriched in small $\infty$-operads is small if and only if $ \mM^\circledast$ is small.
Remark \ref{oprer} and the axioms of Definition \ref{bla} imply that $\phi$ is locally small if and only if the multi-morphism spaces (Notation \ref{mult}) of $\mV^\ot \to \Ass, \mW^\ot \to \Ass, \phi $ are small.	So $\phi$ is small if and only if $\mM$ is small and the multi-morphism spaces of $\mV^\ot \to \Ass, \mW^\ot \to \Ass, \phi  $ are small.		
\end{remark}

Next we define tensored $\infty$-categories.

\begin{definition}\label{leftten}
Let $\phi: \mM^\circledast \to \mV^\ot \times \mW^\ot$ be a weakly bienriched $\infty$-category.

\begin{enumerate}
\item We say that $\phi: \mM^\circledast \to \mV^\ot \times \mW^\ot$ exhibits $\mM$ as left tensored over $\mV$ if $\mV^\ot \to \Ass$ is a monoidal $\infty$-category and $\phi$ is a map of cocartesian fibrations over $\Ass$
via projection to the first factor.

%relative to the collection of morphisms of $\Ass \times \Ass$ whose second component is inert and preserves the minimum.

\item We say that $\phi: \mM^\circledast \to \mV^\ot \times \mW^\ot$ exhibits $\mM$ as right tensored over $\mW$ if $\mW^\ot \to \Ass$ is a monoidal $\infty$-category and $\phi$ is a map of cocartesian fibrations over $\Ass$
via projection to the second factor.

%$\phi$ is a map of cocartesian fibrations relative to the collection of morphisms of $\Ass \times \Ass$ whose first component is inert and preserves the maximum. 

\item We say that $\phi: \mM^\circledast \to \mV^\ot \times \mW^\ot$ exhibits $\mM$ as bitensored over $\mV,\mW$ if $\phi$ exhibits $\mM$ as left tensored over $\mV$ and right tensored over $\mW$.

% or equivalently if $\phi$ is a map of cocartesian fibrations over $\Ass \times \Ass$.

%\item We say that an $\infty$-category $\phi: \mM^\circledast \to \mV^\ot$weakly left enriched over $\mV$ exhibits $\mM$ as left tensored over $\mV$if $\phi$ is a map of cocartesian fibrations over $\Ass$.

\end{enumerate}

\end{definition} 

\begin{remark}

Let $\mV^\ot \to \Ass, \mW^\ot \to \Ass$ be monoidal $\infty$-categories and $\phi: \mM^\circledast \to \mV^\ot \times \mW^\ot$ a map of cocartesian fibrations over $\Ass \times \Ass.$
Then $\phi: \mM^\circledast \to \mV^\ot \times \mW^\ot$ exhibits $\mM$ as bitensored over $\mV,\mW$ if and only if condition (1) of Definition \ref{bla} holds. Condition (2) is then automatic. 
\end{remark}

%We apply Definition \ref{leftten} in particular to the case that $\mV^\ot=\emptyset^\ot$ or $\mW^\ot=\emptyset^\ot$.

\begin{example}
	
Let $\mM^\circledast \to \mV^\ot $ be a (weakly) left tensored $\infty$-category
and $\mN^\circledast \to \mW^\ot $ a (weakly) right tensored $\infty$-category.
The functor $\mM^\circledast \times \mN^\circledast \to \mV^\ot \times \mW^\ot $ is a (weakly) bitensored $\infty$-category.

%Let $\mM^\circledast \to \mV^\ot $ be a weakly left enriched $\infty$-categoryand $\mN^\circledast \to \mW^\ot $ a right tensored $\infty$-category.The functor $\mM^\circledast \times \mN^\circledast \to \mV^\ot \times \mW^\ot $ is a right tensored $\infty$-category.

%Let $\mM^\circledast \to \mV^\ot $ be a left tensored $\infty$-categoryand $\mN^\circledast \to \mW^\ot $ a weakly right enriched $\infty$-category.The functor $\mM^\circledast \times \mN^\circledast \to \mV^\ot \times \mW^\ot $ is a left tensored $\infty$-category.

%Let $\mM^\circledast \to \mV^\ot $ be a left tensored $\infty$-categoryand $\mN^\circledast \to \mW^\ot $ a right tensored $\infty$-category.The functor $\mM^\circledast \times \mN^\circledast \to \mV^\ot \times \mW^\ot $ is a bitensored $\infty$-category.
	
\end{example}

%\begin{definition}A left tensored, right tensored, bitensored $\infty$-category $\mM^\circledast \to \mV^\ot \times \mW^\ot$, respectively, is small if $\mM^\circledast, \mV^\ot, \mW^\ot$ are small.\end{definition}

\begin{definition}Let $\kappa$ be a small regular cardinal. %Let $\phi: \mM^\circledast \to \mV^\ot \times \mW^\ot$ be a bitensored $\infty$-category.
\begin{enumerate}
\item A left tensored $\infty$-category $\phi: \mM^\circledast \to \mV^\ot \times \mW^\ot$ is compatible with $\kappa$-small colimits if $\mV^\ot \to \Ass$ is compatible with $\kappa$-small colimits, $\mM$ admits $\kappa$-small colimits, for every $\V \in \mV, \X \in \mM$ the functors $(-) \ot \X: \mV \to \mM, \V \ot (-): \mM \to \mM$ preserve $\kappa$-small colimits and for every $\W_1,...,\W_\m \in \mW$
for $\m \geq0$ and $\Y \in \mM$ the functor $\Mul_\mM(-,\W_1,...,\W_\m;\Y): \mM^\op \to \mS$ preserves $\kappa$-small limits.

\item A right tensored $\infty$-category $\phi: \mM^\circledast \to \mV^\ot \times \mW^\ot$ is compatible with $\kappa$-small colimits if $\mW^\ot \to \Ass$ is compatible with $\kappa$-small colimits, $\mM$ admits $\kappa$-small colimits, for every $\W \in \mW, \X \in \mM$ the functors $\X \ot (-): \mW \to \mM, (-) \ot \W: \mM \to \mM$ preserve $\kappa$-small colimits and for every $\V_1,...,\V_\n \in \mV$
for $\n \geq0$ and $\Y \in \mM$ the functor $\Mul_\mM(\V_1,...,\V_\n,-;\Y): \mM^\op \to \mS$ preserves $\kappa$-small limits.

\item A bitensored $\infty$-category is compatible with $\kappa$-small colimits if it is a left and right tensored $\infty$-category compatible with $\kappa$-small colimits, i.e. $\mV^\ot \to \Ass, \mW^\ot \to \Ass$ are compatible with $\kappa$-small colimits, $\mM$ admits $\kappa$-small colimits and for every $\V \in \mV, \W \in \mW, \X \in \mM$ the functors $(-)\ot \X: \mV \to \mM, \X \ot (-): \mW \to \mM, \V \ot (-), (-) \ot \W: \mM \to \mM$ preserve $\kappa$-small colimits.

% and right tensored $\infty$-category compatible with small colimits.

%\item We say that $\phi$ is an $\infty$-category with closed left action if for any $\X \in \mM$ the functor$(-) \ot \X: \mV \to \mM$
%$\V \ot (-): \mM \to \mM$ admits a right adjoint.

\end{enumerate}

\end{definition}

\begin{definition}Let $\kappa$ be a small regular cardinal.

\begin{enumerate}
\item A presentably left tensored $\infty$-category is a left tensored $\infty$-category  $\phi: \mM^\circledast \to \mV^\ot \times \mW^\ot$ compatible with small colimits such that $\mV,\mM$ are presentable.

\item A presentably right tensored $\infty$-category is a right tensored $\infty$-category  $\phi: \mM^\circledast \to \mV^\ot \times \mW^\ot$ compatible with small colimits such that $\mM, \mW$ are presentable.

\item A presentably bitensored $\infty$-category is a presentably left tensored and  presentably right tensored $\infty$-category.

\item A left tensored $\infty$-category $\mM^\circledast \to \mV^\ot \times \mW^\ot$ is $\kappa$-compactly generated if $\mV^\ot \to \Ass$ and $\mM$ are $\kappa$-compactly generated and the left $\mV$-action on $\mM$ restricts to a left $\mV^\kappa$-action on $\mM^\kappa$. 

\item A right tensored $\infty$-category $\mM^\circledast \to \mV^\ot \times \mW^\ot$ is $\kappa$-compactly generated if $\mW^\ot \to \Ass$ and $\mM$ are $\kappa$-compactly generated and the right $\mW$-action on $\mM$ restricts to a right $\mW^\kappa$-action on $\mM^\kappa$. 

\item A bitensored $\infty$-category $\mM^\circledast \to \mV^\ot \times \mW^\ot$ is $\kappa$-compactly generated if the left $\mV$-action and right $\mW$-action are
$\kappa$-compactly generated.

\end{enumerate}	
\end{definition}

\begin{remark}

By \cite[Proposition 7.15.]{Rune} every presentably left tensored, right tensored, bitensored $\infty$-category, respectively, is $\kappa$-compactly generated for some regular cardinal $\kappa$.
\end{remark}
%\subsubsection{Tensors and cotensors}

\vspace{1mm}
In the following we generalize the notions of left, right and bitensored $\infty$-categories %(compatible with small colimits) 
by introducing the notions of tensors. %and conical colimits.

\begin{definition}\label{Defo}
	
Let $\mM^\circledast \to \mV^\ot\times \mW^\ot$ be a weakly bienriched $\infty$-category and $\V \in \mV, \W \in \mW, \X,\Y \in \mM$.
	
\begin{enumerate}
%\item A multi-morphism $\psi \in \Mul_\mM(\V,\X, \W;\Y)$ exhibits $\Y$ as the bitensor of $\V, \X, \W$ if for every $\Z \in \mM, \V_1,...,\V_\n \in \mV, \W_1,...,\W_\m \in \mW$ for $\n,\m \geq 0 $ the following map is an equivalence:\begin{equation*}\label{tenss}\Mul_\mM(\V_1,...,\V_\n, \Y, \W_1,...,\W_\m; \Z) \to \Mul_{\mM}(\V_1,...,\V_\n,\V,\X,\W, \W_1,...,\W_\m; \Z).\end{equation*}We write $\V \ot \X \ot \W$ for $\Y$.

\item A multi-morphism $\psi \in \Mul_\mM(\V,\X;\Y)$ exhibits $\Y$ as the left tensor of $\V, \X$, denoted by $\V \ot \X$, if for every $\Z \in \mM, \V_1,...,\V_\n \in \mV, \W_1,...,\W_\m \in \mW$ for $\n,\m \geq 0 $ the following map is an equivalence:
\begin{equation*}\label{tenss}
\Mul_\mM(\V_1,...,\V_\n, \Y, \W_1,...,\W_\m; \Z) \to \Mul_{\mM}(\V_1,...,\V_\n,\V,\X, \W_1,...,\W_\m; \Z).
\end{equation*}
%We write $\V \ot \X $ for $\Y$. 

\item A multi-morphism $\psi \in \Mul_\mM(\X,\W;\Y)$ exhibits $\Y$ as the right tensor of $\X, \W$, denoted by $\X \ot \W,$ if for any $\Z \in \mM, \V_1,...,\V_\n \in \mV, \W_1,...,\W_\m \in \mW$ for $\n,\m \geq 0 $ the map is an equivalence:
\begin{equation*}\label{tenss}
\Mul_\mM(\V_1,...,\V_\n, \Y, \W_1,...,\W_\m; \Z) \to \Mul_{\mM}(\V_1,...,\V_\n,\X,\W, \W_1,...,\W_\m; \Z).
\end{equation*}
%We write $\X\ot\W $ for $\Y$. 

%Similarly, we define when a multi-morphism $\psi \in \Mul_\mM(\W,\X;\Y)$ for $\W \in \mW, \X,\Y \in \mM$ exhibits $\Y$ as the tensor of $\W$ and $\X$.

\vspace{1mm}

%\item A multi-morphism $\tau \in \Mul_\mM(\V,\Y, \W;\X)$ exhibits $\Y$ as the bicotensor of $\V, \X, \W$ if for every $\Z \in \mM, \V_1,...,\V_\n \in \mV, \W_1,...,\W_\m\in \mW$ for $\n,\m \geq 0 $ the following map is an equivalence: $$\Mul_\mM(\V_1,...,\V_\n, \Z, \W_1,...,\W_\m; \Y) \to \Mul_{\mM}(\V,\V_1,...,\V_\n,\Z, \W_1,...,\W_\m,\W; \X).$$ We write $\X^{\V, \W}$ for $\Y$.

\item A multi-morphism $\tau \in \Mul_\mM(\V,\Y;\X)$ exhibits $\Y$ as the left cotensor of $\V, \X$, denoted by $^{\V}\X $, if for any $\Z \in \mM, \V_1,...,\V_\n \in \mV, \W_1,...,\W_\m\in \mW$ for $\n,\m \geq 0 $ the following map is an equivalence: $$\Mul_\mM(\V_1,...,\V_\n, \Z, \W_1,...,\W_\m; \Y) \to \Mul_{\mM}(\V,\V_1,...,\V_\n,\Z, \W_1,...,\W_\m; \X).
$$ 

\item A multi-morphism $\tau \in \Mul_\mM(\Y,\W;\X)$ exhibits $\Y$ as the right cotensor of $\X, \W$, denoted by $\X^\W$, if for every $\Z \in \mM, \V_1,...,\V_\n \in \mV, \W_1,...,\W_\m\in \mW$ for $\n,\m \geq 0 $ the map is an equivalence: $$\Mul_\mM(\V_1,...,\V_\n, \Z, \W_1,...,\W_\m; \Y) \to \Mul_{\mM}(\V_1,...,\V_\n,\Z, \W_1,...,\W_\m, \W; \X).
$$ 

% and $\X^\V$ if $\W=\tu_{\mP\Env(\mV)}$ and $\X^\W$ if $\V=\tu_{\mP\Env(\mV)}.$We call $\X^\V$ the left cotensor of $\V, \X$ and $\X^\W$ the right cotensor of $\X, \W.$	
\end{enumerate}
%Similarly, we define when a multi-morphism $\psi \in \Mul_\mM(\W,\Y;\X)$ for $\W \in \mW, \X,\Y \in \mM$ exhibits $\Y$ as the cotensor of $\W$ and $\X$.
\end{definition}
%By Proposition \ref{eqq} the $\infty$-category $\mM$ is canonically left tensored over $\mP\Env(\mV).$We also allow $\V \in \mP\Env(\mV)$ in the definition of (co)tensors.

\begin{remark}\label{reuj}

Let $\mM^\circledast \to \mV^\ot\times \mW^\ot$ be a weakly bienriched $\infty$-category, $\V \in \mV, \W \in \mW$ and $ \X \in \mM$. 
If the respective left and right tensors exist,
there is a canonical equivalence
$ (\V \ot \X) \ot \W \simeq \V \ot (\X\ot \W).$
In this case we refer to $ (\V \ot \X) \ot \W \simeq \V \ot (\X\ot \W)$ as the bitensor of $\V,\X,\W.$
%Let $\psi \in \Mul_\mM(\V,\X;\Y)$ be a multi-morphism that exhibits $\Y$ as the left tensor of $\V, \X$ and $\psi' \in \Mul_\mM(\Y, \W;\Z)$ a multi-morphism that exhibits $\Z$ as the right tensor of $\Y, \W.$The induced multi-morphism $\psi'' \in \Mul_\mM(\V, \X, \W;\Z)$ exhibits $\Z$ as the bitensor of $\V, \X, \W.$In other words, if $\V \ot \X$ exists and $(\V\ot \X) \ot \W$ exists, then $(\V\ot \X) \ot \W \simeq \V \ot \X \ot \W.$Similarly, if $\X \ot \W$ exists and $\V\ot (\X \ot \W)$ exists, then $\V\ot (\X \ot \W)\simeq\V \ot \X \ot \W.$The same is true for cotensors.
Similarly, if the respective left and right cotensors exist,
there is a canonical equivalence
$ (\X^\V)^\W \simeq (\X^\W)^\V$ and we refer to the latter object as the bicotensor of $\V,\X,\W.$
\end{remark}

\begin{definition}

A weakly bienriched $\infty$-category $\mM^\circledast \to \mV^\ot \times \mW^\ot$ admits
\begin{itemize}

%\item bi(co)tensors if for every object $\V \in \mV, \W \in \mW, \X \in \mM$ there is a bi(co)tensor of $\V, \X, \W.$

\item left (co)tensors if for every object $\V \in \mV, \X \in \mM$ there is a left (co)tensor of $\V$ and $\X$.

\item right (co)tensors if for every object $\W \in \mW, \X \in \mM$ there is a right (co)tensor of $\X$ and $\W.$

\end{itemize}

%Note that $\mM$ admits bi(co)tensors if and only if it has left and right (co)tensors using Remark \ref{reuj}.

\begin{example}
Every weakly bienriched $\infty$-category that is left, right, bitensored admits left tensors, right tensors, left and right tensors, respectively.	
	
\end{example}

\begin{remark}\label{locre}
Let $\phi: \mM^\circledast \to \mV^\ot \times \mW^\ot$ be a weakly bienriched $\infty$-category that admits left and right tensors.
Then $\phi$ is a locally cocartesian fibration. 
	
\end{remark}

\end{definition}
\subsection{Enriched functors}

Next we define maps of weakly bienriched $\infty$-categories.
The next definition is \cite[Definition 2.50.]{heine2024bienriched}:

\begin{definition}\label{linmapp}
Let $\alpha: \mV^\ot \to \mV'^\ot,\beta: \mW^\ot \to \mW'^\ot$ be maps of $\infty$-operads and $\phi: \mM^\circledast \to \mV^\ot \times \mW^\ot, \phi':\mM'^\circledast \to \mV'^\ot \times \mW'^\ot $ weakly bienriched $\infty$-categories.
An enriched functor $\phi \to \phi'$ is a commutative square of $\infty$-categories over $\Ass \times \Ass$ 
\begin{equation*} 
\begin{xy}
\xymatrix{
\mM^\circledast  \ar[d]^{\phi} \ar[rr]^{\gamma}
&&\mM'^\circledast \ar[d]^{\phi'} 
\\ \mV^\ot \times \mW^\ot
\ar[rr]^{\alpha \times \beta}  && \mV'^\ot \times \mW'^\ot
}
\end{xy} 
\end{equation*}
such that $\gamma$ preserves cocartesian lifts of inert morphisms of $\Ass \times \Ass$ whose first component preserves the maximum and whose second component preserves the minimum.

\begin{itemize}
\item An enriched functor $\phi \to \phi'$ is an embedding if $\alpha, \beta, \gamma$ are fully faithful.	
 
\item An enriched functor $\phi \to \phi'$ is left (right) linear if it preserves left (right) tensors.

\item An enriched functor $\phi \to \phi'$ is linear if it is left and right linear.

\item An enriched functor $\phi \to \phi'$ is left $\mV$-enriched if $\alpha$ is the identity.

\item An enriched functor $\phi \to \phi'$ is right $\mW$-enriched if $\beta$ is the identity.

\item An enriched functor $\phi \to \phi'$ is $\mV,\mW$-enriched if it is left $\mV$-enriched and right $\mW$-enriched.

\item An enriched functor $\phi \to \phi'$ is left $\mV$-linear if it is left linear
and left $\mV$-enriched.

\item An enriched functor $\phi \to \phi'$ is right $\mW$-linear if it is right linear
and right $\mW$-enriched.

\item An enriched functor $\phi \to \phi'$ is $\mV,\mW$-linear if it is
left $\mV$-linear and right $\mW$-linear (or equivalently $\mV,\mW$-enriched and linear).

%If $\phi,\phi'$ are bitensored $\infty$-categories

\end{itemize}
\end{definition}

\begin{notation}Let $\mM^\circledast \to \mV^\ot \times \mW^\ot, \mN^\circledast \to \mV'^\ot \times \mW'^\ot$ be weakly bienriched $\infty$-categories.

\begin{enumerate}
\item Let $$\Enr\Fun(\mM,\mN) \subset (\Fun(\mV^\ot, \mV'^\ot) \times \Fun(\mW^\ot, \mW'^\ot)) \times_{\Fun(\mM^\circledast,\mV'^\ot \times \mW'^\ot) } \Fun(\mM^\circledast,\mN^\circledast) $$ be the full subcategory of enriched functors.

\item Let $$\L\LinFun(\mM,\mN), \ \R\LinFun(\mM,\mN), \ \LinFun(\mM,\mN) \subset \Enr\Fun(\mM,\mN)$$ be the full subcategories of left linear, right linear, linear functors, respectively.

\end{enumerate}
%maps of weakly bienriched $\infty$-categories.

%\vspace{1mm}
%\item Let $\Enr\Fun(\mM,\mN)_\L \subset \Enr\Fun(\mM,\mN)$ be the full subcategory of maps of weakly bienriched $\infty$-categories lying over maps of $\infty$-operads that admit a right adjoint relative to $\Ass.$

%\item If $\mM^\circledast \to \mV^\ot \times \mW^\ot, \mN^\circledast \to \mV'^\ot \times \mW'^\ot$ admit bitensors,let $\LinFun(\mM,\mN) \subset \Enr\Fun(\mM,\mN)$ be the full subcategory of maps of weakly bienriched $\infty$-categories that preserve bitensors.

\end{notation} 

\begin{notation}Let $\mM^\circledast \to \mV^\ot \times \mW^\ot, \mN^\circledast \to \mV^\ot \times \mW^\ot$ be weakly bienriched $\infty$-categories.

\begin{enumerate}

\item Let $$ \Enr\Fun_{\mV,\mW}(\mM,\mN) \subset \Fun_{\mV^\ot \times \mW^\ot}(\mM^\circledast,\mN^\circledast)$$ be the full subcategory of $\mV,\mW$-enriched functors.

\item Let $$\L\LinFun_{\mV, \mW}(\mM,\mN) \subset \Enr\Fun_{\mV, \mW}(\mM,\mN),$$$$ \R\LinFun_{\mV, \mW}(\mM,\mN)\subset \Enr\Fun_{\mV, \mW}(\mM,\mN),$$$$ \LinFun_{\mV, \mW}(\mM,\mN) \subset \Enr\Fun_{\mV, \mW}(\mM,\mN)$$ be the full subcategories of left linear, right linear, linear $\mV,\mW$-enriched functors, respectively.
%$\mV$-linear and right $\mW$-enriched functors.

\item %If $\mM^\circledast \to \mV^\ot \times \mW^\ot, \mN^\circledast \to \mV^\ot \times \mW^\ot$ admit left and right tensors, 
Let $$ \LinFun^\L_{\mV, \mW}(\mM,\mN) \subset \LinFun_{\mV, \mW}(\mM,\mN)$$ be the full subcategory of $\mV,\mW$-linear functors whose underlying functor admits a right adjoint.

\end{enumerate}
\end{notation} 

%For the next remark we use the notation of Example \ref{euuz}:
\begin{example}\label{euuz}
Let $\mM^\circledast \to \mV^\ot \times \mW^\ot$ be a weakly bienriched $\infty$-category and $\K$ an $\infty$-category.
\begin{itemize}
\item 
The functor $\K \times \mM^\circledast \to \mV^\ot \times \mW^\ot$ 	
is a weakly bienriched $\infty$-category that exhibits $\K \times \mM$
as weakly bienriched in $\mV,\mW.$
	
\item The pullback $(\mM^\K)^\circledast:= \mV^\ot \times \mW^\ot\times_{\Fun(\K,\mV^\ot \times \mW^\ot)} \Fun(\K,\mM^\circledast) \to \mV^\ot \times \mW^\ot$	along the diagonal functor $\mV^\ot \times \mW^\ot\to \Fun(\K,\mV^\ot \times \mW^\ot)$ is a weakly bienriched $\infty$-category that exhibits $\Fun(\K,\mM)$
as weakly bienriched in $\mV,\mW.$
\end{itemize}
\end{example}

\begin{remark}\label{2-cat}
Let $\mM^\circledast \to \mV^\ot \times \mW^\ot, \mN^\circledast \to \mV^\ot \times \mW^\ot$ be weakly bienriched $\infty$-categories and $\K$ an $\infty$-category.
The canonical equivalences $$ \Fun(\K,\Fun_{\mV^\ot\times\mW^\ot}(\mM^\circledast,\mN^\circledast)) \simeq \Fun_{\mV^\ot\times\mW^\ot}(\K\times \mM^\circledast,\mN^\circledast)
\simeq\Fun_{\mV^\ot\times\mW^\ot}(\mM^\circledast,(\mN^\K)^\circledast)$$
restrict to equivalences
$$ \Fun(\K,\Enr\Fun_{\mV,\mW}(\mM,\mN)) \simeq \Enr\Fun_{\mV,\mW}(\K\times\mM,\mN)
\simeq\Enr\Fun_{\mV,\mW}(\mM,\mN^\K).$$

\end{remark}

We will often use the following proposition, which is \cite[Proposition 4.2.4.2.]{lurie.higheralgebra}:

\begin{proposition}\label{Line}
For every bitensored $\infty$-category $\mM^\circledast \to \mV^\ot \times \mW^\ot$
and $\infty$-category $\K$ the following forgetful functor is an equivalence:
$$\LinFun_{\mV, \mW}(\mV \times \K \times \mW,\mM) \to \Fun(\K,\mM).$$
	
\end{proposition}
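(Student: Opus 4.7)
The plan is to recognize the weakly bi-enriched $\infty$-category $\mV \times \K \times \mW \to \mV^\ot \times \mW^\ot$ as the free bitensored $\infty$-category over $\mV, \mW$ on the $\infty$-category $\K$: its fiber over $([\n],[\m]) \in \Ass \times \Ass$ is $\mV^{\times \n} \times \K \times \mW^{\times \m}$ with the obvious structure maps built from the monoidal structures on $\mV$ and $\mW$. Under this description, the forgetful functor in the statement is restriction along the inclusion $\K \simeq \{\tu_\mV\} \times \K \times \{\tu_\mW\} \hookrightarrow \mV \times \K \times \mW$, so the proposition amounts to the universal property of this free construction.

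First I would produce an inverse on objects. Given $\F : \K \to \mM$, define $\tilde\F : (\mV \times \K \times \mW)^\circledast \to \mM^\circledast$ over $\mV^\ot \times \mW^\ot$ by
\[
\tilde\F(\V_1, \ldots, \V_\n, \bk, \W_1, \ldots, \W_\m) \;=\; \V_1 \ot \cdots \ot \V_\n \ot \F(\bk) \ot \W_1 \ot \cdots \ot \W_\m,
\]
using the iterated bitensor in $\mM$ provided by Remark \ref{reuj}. The cocartesian lifts of $\mM^\circledast \to \Ass \times \Ass$ assemble this assignment into a $\mV, \mW$-linear functor, and the unit equivalences $\tu_\mV \ot \X \ot \tu_\mW \simeq \X$ give $\tilde\F|_\K \simeq \F$. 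Conversely, any $\F' \in \LinFun_{\mV, \mW}(\mV \times \K \times \mW, \mM)$ must satisfy
\[
\F'(\V_1, \ldots, \V_\n, \bk, \W_1, \ldots, \W_\m) \;\simeq\; \V_1 \ot \cdots \ot \V_\n \ot \F'(\tu_\mV, \bk, \tu_\mW) \ot \W_1 \ot \cdots \ot \W_\m
\]
by $\mV, \mW$-linearity, so $\F'$ is determined up to coherent equivalence by $\F'|_\K$.

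To make the argument rigorous at the $\infty$-operadic level I would reduce to the one-sided case, which is Lurie's already-cited Proposition 4.2.4.2. I would endow $\R\LinFun_\mW(\K \times \mW, \mM)$ with the pointwise left $\mV$-tensored structure inherited from $\mM$; this exists because in a bitensored $\infty$-category the left $\mV$-action commutes with the right $\mW$-action. Lurie's left-tensored version then yields
\[
\LinFun_{\mV, \mW}(\mV \times \K \times \mW, \mM) \;\simeq\; \R\LinFun_\mW(\K \times \mW, \mM),
\]
and the symmetric right-tensored analogue, obtained by reversing the monoidal structure on $\mW$, identifies the right-hand side with $\Fun(\K, \mM)$. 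Composing the two equivalences recovers the forgetful map in the statement.

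The main obstacle I expect is the $\infty$-operadic bookkeeping required to produce the pointwise left $\mV$-tensored structure on the internal hom $\R\LinFun_\mW(\K \times \mW, \mM)$, together with the verification that both applications of Lurie's one-sided result match up under the identifications above. Once that structural point is in place, the conclusion is formal.
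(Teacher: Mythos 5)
The paper itself contains no proof of this statement: Proposition \ref{Line} is quoted as Lurie's Proposition 4.2.4.2 of Higher Algebra, so there is no in-paper argument for your sketch to match, and any actual proof is automatically a different route. Your route --- identify $\mV \times \K \times \mW$ as the free bitensored $\infty$-category on $\K$ and deduce the universal property by currying from the one-sided free statement --- is the natural way to prove it and can be completed, but two points in your write-up need repair. First, your description of the free object is off: the fiber of $\mV \times \K \times \mW \to \mV^\ot \times \mW^\ot$ over $([\n],[\m])$ is $\mV^{\times \n} \times (\mV \times \K \times \mW) \times \mW^{\times \m}$, with underlying $\infty$-category $\mV \times \K \times \mW$ (it is the product of $\mV$ left-tensored over itself, $\K$, and $\mW$ right-tensored over itself); the object whose fibers are $\mV^{\times \n} \times \K \times \mW^{\times \m}$ is the trivial weakly bi-enriched $\infty$-category $\K_{\mV,\mW}$, which admits no tensors at all --- the paper's Corollary \ref{leiik} records that the two only agree after passing to the bitensored envelope $\B\Env(\K_{\mV,\mW})$, so the distinction matters.

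Second, the step you defer as ``$\infty$-operadic bookkeeping'' is where the actual mathematical content lies. The plain one-sided statement for functor categories does not by itself yield your intermediate equivalence between $\mV,\mW$-linear functors out of $\mV \times \K \times \mW$ and right $\mW$-linear functors out of $\K \times \mW$: you need the one-sided free statement internally to right $\mW$-modules, equivalently the left $\mV$-tensored structure on the right-linear functor category together with the compatibility of the two applications of the one-sided result with the residual actions. This is available --- in the present paper's framework Propositions \ref{innerho}, \ref{lehmmm} and \ref{ljnbfg} supply exactly these internal-hom and currying equivalences, and in Higher Algebra the bimodule $\infty$-operad formalism does the same --- but as written your proposal is a plan rather than a proof, since the statement you are proving is essentially equivalent to that compatibility. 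Finally, note a mild circularity: you propose to invoke ``Lurie's already-cited Proposition 4.2.4.2'' as the one-sided input, but that citation is precisely the one the paper attaches to the bitensored statement under discussion; you should instead quote the one-sided free-module statement (or the envelope universal properties such as Proposition \ref{bitte}) explicitly.
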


%The following lemma is \cite[Lemma 3.74.]{HEINE2023108941}:

%\begin{lemma}\label{colas}

%Let $\mV^\ot \to \Ass, \mW^\ot \to \Ass$ be $\infty$-operads, $\mM^\circledast \to \mV^\ot \times \mW^\ot, \mN^\circledast \to \mV^\ot \times \mW^\ot$ weakly bienriched $\infty$-categories and $\mK \subset \Cat_\infty$ a full subcategory. 
%\begin{enumerate}
%\item If $\mN$ admits $\mK$-indexed conical colimits and left and right tensors, 
%$ \Enr\Fun_{\mV,\mW}(\mM, \mN) $ admits $\mK$-indexed colimits and the functor $ \Enr\Fun_{\mV,\mW}(\mM, \mN) \to \Fun(\mM, \mN)$ preserves $\mK$-indexed colimits.
%\item If for every $\V \in \mV, \W \in \mW$ the functors $\V \ot (-), (-) \ot \W: \mN \to \mN$ are accessible, $\mM^\circledast$ is small and $\mN$ is accessible, then $ \Enr\Fun_{\mV,\mW}(\mM, \mN)$ is accessible.\end{enumerate}\end{lemma}

\begin{notation}

Let $$\omega\B\Enr \subset (\Op_{\infty} \times \Op_{\infty}) \times_{ \Cat_{\infty / \Ass \times \Ass} } \Fun([1], \Cat_{\infty / \Ass \times \Ass}) $$ be the subcategory of weakly bienriched $\infty$-categories.
\end{notation}

\begin{notation}
Evaluation at the target restricts to a forgetful functor $\omega\B\Enr \to \Op_\infty \times \Op_\infty$ 
whose fibers over $\infty$-operads
$\mV^\ot \to \Ass, \mW^\ot \to \Ass$ we denote by $ _\mV\omega\B\Enr_{\mW}.$

\end{notation}

\begin{remark}

There is a canonical equivalence
$$ \omega\B\Enr_{\emptyset, \emptyset} \simeq \Cat_\infty, \ \mM^\circledast \to \emptyset^\ot \times \emptyset^\ot \mapsto \mM^\circledast$$
invserse to the functor $\K \mapsto \emptyset^\ot \times \K \times \emptyset^\ot \simeq \K.$
\end{remark}

%\begin{proposition}\label{pr} For every small $\infty$-operads $\mV^\ot \to \Ass, \mW^\ot \to \Ass$ the $\infty$-category ${_\mV\omega\B\Enr}_{\mW}$ is compactly generated.\end{proposition}

The next proposition is \cite[Proposition 2.62.]{heine2024bienriched}:

\begin{proposition}\label{bica}
The forgetful functor $$\gamma: \omega\B\Enr \to \Op_\infty \times \Op_\infty$$
% \BMod \to \Mon \times \Mon
is a cartesian fibration. %and the embedding $\BMod \subset \omega\B\Enr$ preserves cartesian morphisms.
Let $\psi: \mM^\circledast \to \mN^\circledast$ be an enriched functor lying over maps of $\infty$-operads $\alpha: \mV^\ot \to \mV'^\ot, \alpha': \mW^\ot \to \mW'^\ot$.
The map $\psi$ is $\gamma$-cartesian if and only if 
the commutative square
\begin{equation*} 
\begin{xy}
\xymatrix{
\mM^\circledast \ar[d] \ar[rr]^{\psi}
&&\mN^\circledast \ar[d] 
\\ \mV^\ot \times \mV'^\ot
\ar[rr]^{\alpha \times \alpha'}  && \mW^\ot \times \mW'^\ot
}
\end{xy} 
\end{equation*}
is a pullback square, in other words if $\psi$ induces an equivalence
$\mM \simeq \mN$ and for every $\V_1,...,\V_\n \in \mV,\X,\Y \in \mM, \W_1,...,\W_\m\in \mW$ for $\n,\m \geq 0 $ the following canonical map is an equivalence: $$\Mul_\mM(\V_1,..., \V_\n,\X,\W_1,...,\W_\m;\Y) \to \Mul_\mN(\alpha(\V_1),...,\alpha(\V_\n),\psi(\X), \alpha'(\W_1),...,\alpha'(\W_\m);\psi(\Y)).$$
\end{proposition}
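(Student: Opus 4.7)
The plan is to construct cartesian lifts of $\gamma$ explicitly as pullbacks in $\Cat_\infty$ and then deduce the characterization. Given $\phi': \mM'^\circledast \to \mV'^\ot \times \mW'^\ot$ in $\omega\B\Enr$ and a morphism $(\alpha, \beta): (\mV^\ot, \mW^\ot) \to (\mV'^\ot, \mW'^\ot)$ of $\Op_\infty \times \Op_\infty$, I would form
$$\mM^\circledast := (\mV^\ot \times \mW^\ot) \times_{\mV'^\ot \times \mW'^\ot} \mM'^\circledast,$$
equipped with its projections $\phi: \mM^\circledast \to \mV^\ot \times \mW^\ot$ and $\psi: \mM^\circledast \to \mM'^\circledast$, and propose $\psi$ as the $\gamma$-cartesian lift of $(\alpha, \beta)$ at $\phi'$.

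The first task is to verify that $\phi$ is a weakly bi-enriched $\infty$-category and $\psi$ is an enriched functor. Relative cocartesian fibrations are stable under pullback along functors that preserve cocartesian lifts of the relevant morphisms; since $\alpha \times \beta$ is a map of $\infty$-operads it preserves cocartesian lifts of inert morphisms, so $\phi$ inherits the relative cocartesian fibration structure of $\phi'$ and $\psi$ preserves cocartesian lifts of the prescribed inert arrows. Axiom (1) of Definition \ref{bla} follows from axiom (1) for $\phi'$ by base change, using $\mV^\ot_{[0]} \simeq \ast \simeq \mW^\ot_{[0]}$ to identify $\mM^\circledast_{[0][0]} \simeq \mM'^\circledast_{[0][0]}$. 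Axiom (2) follows because the mapping-space pullback square for $\mM^\circledast$ arises as a base change of the mapping-space pullback square for $\mM'^\circledast$ along the operadic mapping-space squares of $\mV^\ot, \mW^\ot$ coming from $\alpha, \beta$.

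To verify $\psi$ is $\gamma$-cartesian, I would take any $\phi'': \mN^\circledast \to \mV''^\ot \times \mW''^\ot$ in $\omega\B\Enr$ together with an enriched functor $\mN^\circledast \to \mM'^\circledast$ whose underlying maps of $\infty$-operads factor through $(\alpha, \beta)$; the universal property of the pullback yields a canonical map $\mN^\circledast \to \mM^\circledast$ over $\mV^\ot \times \mW^\ot$, which preserves the required cocartesian lifts because its composite with $\psi$ does and such cocartesian lifts in $\mM^\circledast$ are detected in $\mM'^\circledast$. The characterization of cartesian morphisms then follows: a morphism in $\omega\B\Enr$ is $\gamma$-cartesian iff it is equivalent to such a pullback lift, equivalently iff the stated square is a pullback square. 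The reformulation as an equivalence on underlying $\infty$-categories together with equivalences on multi-morphism spaces is obtained by unpacking the pullback condition via axioms (1) and (2) of Definition \ref{bla}, which decompose the mapping spaces of $\mM^\circledast$ and $\mN^\circledast$ into operadic data (already equivalent via $\alpha, \beta$) and multi-morphism data. The main obstacle is the careful bookkeeping around the restricted class of inert morphisms defining $\omega\B\Enr$ as a subcategory (not a full subcategory) of the ambient $\infty$-category, so that both the pullback and the universal maps into it in fact land in $\omega\B\Enr$; once this is arranged, base-change stability of cocartesian fibrations handles the rest.
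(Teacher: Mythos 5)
Your proposal is correct and follows the expected route: the paper itself gives no argument for this proposition but imports it as \cite[Theorem 3.30.]{HEINE2023108941}, and the proof there is exactly your base-change construction — the $\gamma$-cartesian lift of $(\alpha,\beta)$ at $\mM'^\circledast$ is the pullback $(\mV^\ot\times\mW^\ot)\times_{\mV'^\ot\times\mW'^\ot}\mM'^\circledast$, whose weakly bi-enriched structure and universal property are checked just as you indicate, and the characterization of cartesian morphisms then amounts to unwinding the pullback condition through axioms (1) and (2) of Definition \ref{bla} (as the paper also does in Lemma \ref{lemlems}). One small point of phrasing: cocartesian lifts of the restricted inert morphisms in the pullback are detected by the pair of projections to $\mV^\ot\times\mW^\ot$ and $\mM'^\circledast$ jointly, not by $\mM'^\circledast$ alone; in your situation the $\mV^\ot\times\mW^\ot$-component is automatically cocartesian, so the argument goes through as intended.
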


\begin{notation}\label{invo}
The category $\Ass=\Delta^\op$ carries a canonical involution sending $[\n] $ to $[\n]$ and a map $\f:[\n] \to [\m]$ to the map $[\n] \to [\m], \bi \mapsto \m-\f(\n-\bi).$ The involution on $\Ass$ induces an involution on
$\Cat_{\infty/\Ass}$ that restricts to an involution $(-)^\rev$ on $\Op_{\infty} \subset \Cat_{\infty/\Ass}.$ 
Moreover the involution on $\Ass$ induces an involution on
$\Ass \times \Ass$ by applying the involution on $\Ass$ to each factor and switching the factors, which induces an involution on
$(\Cat_{\infty / \Ass} \times \Cat_{\infty / \Ass}) \times_{ \Cat_{\infty / \Ass \times \Ass} } \Fun([1], \Cat_{\infty / \Ass \times \Ass}) $ that restricts to
involutions $(-)^\rev$ on $\omega\B\Enr$ and $\BMod$, under which $ \LMod$ corresponds to $ \RMod$.

\end{notation}

%\begin{remark}\label{symmpu} By \cite[Construction 4.1.2.9.]{lurie.higheralgebra} there is a canonical functor $\chi: \bE_1=\Delta^\op \to \Comm$ sending $[\n] $ to $\langle\n\rangle$. If a monoidal $\infty$-category $\mV^\ot \to \Ass$ is the pullback along $\chi$ of a symmetric monoidal $\infty$-category $\mV^\boxtimes \to \Comm$, there is a canonical monoidal equivalence$(\mV^\rev)^\ot \simeq \mV^\ot$ since $\chi$ factors as $ \Ass \xrightarrow{(-)^\op} \Ass \xrightarrow{\chi}\Comm$.\end{remark}

\begin{remark}\label{2-catt}
	
There is a canonical left action of $\Cat_\infty$ on $\omega\B\Enr$
that sends $\K, \mM^\circledast \to \mV^\ot \times \mW^\ot$ to
$\K \times \mM^\circledast \to \mV^\ot \times \mW^\ot.$
The forgetful functor $\omega\B\Enr \to \Op_\infty \times \Op_\infty$ is $\Cat_\infty$-linear, where the target carries the trivial action.
Thus for any small $\infty$-operads $\mV^\ot \to \Ass, \mW^\ot \to \Ass$ also the fiber $_\mV \omega\B\Enr_\mW$ carries a left $\Cat_\infty$-action that acts the same. These actions are closed by Remark \ref{2-cat} and so exhibit $\omega\B\Enr$ and $_\mV \omega\B\Enr_\mW$ as left $\Cat_\infty$-enriched, i.e. as $(\infty,2)$-categories.
	
\end{remark}

\begin{notation}
Let $$ \LMod, \RMod, \BMod \subset \omega\B\Enr $$
be the subcategories of left tensored, right tensored, bitensored $\infty$-categories and enriched functors preserving left tensors, right tensors, left and right tensors, respectively.
	
\end{notation}

%For later applications we add the following notation:

\begin{notation}
Let $$\cc\cc\LMod \subset \widehat{\LMod},\ \cc\cc\RMod \subset \widehat{\RMod},\ \cc\cc\BMod \subset \widehat{\BMod} $$ be the subcategories of left tensored, right tensored, bitensored $\infty$-categories, respectively, compatible with small colimits and left linear, right linear, linear functors preserving small colimits, respectively.

Let $$\Pr\LMod \subset \cc\cc\LMod,\ \Pr\RMod \subset \cc\cc\RMod,\ \Pr\BMod \subset\cc\cc\BMod $$ be the full subcategories of presentably left tensored, right tensored, bitensored $\infty$-categories, respectively.

\end{notation}

The next proposition follows from \cite[Corollary 8.31.]{HEINE2023108941}:

\begin{proposition}\label{cool} Let $\kappa$ be a small regular cardinal and $\mM^\circledast \to \mV^\ot \times \mW^\ot$ an absolute small bitensored $\infty$-category. 

\begin{enumerate}

\item There is a bitensored $\infty$-category $\Ind_\kappa(\mM)^\circledast \to \Ind_\kappa(\mV)^\ot \times \Ind_\kappa(\mW)^\ot$ compatible with small $\kappa$-filtered colimits and a $\mV,\mW$-linear embedding $
\mM^\circledast \to \mV^\ot \times_{\Ind_\kappa(\mV)^\ot} \Ind_\kappa(\mM)^\circledast \times_{\Ind_\kappa(\mW)^\ot} \mW^\ot$ inducing the embedding $\mM \to \Ind_\kappa(\mM)$ on underlying $\infty$-categories.
\item For every bitensored $\infty$-category $\mN^\circledast \to \Ind_\kappa(\mV)^\ot \times \Ind_\kappa(\mW)^\ot$ compatible with small $\kappa$-filtered colimits the functor
$$
\Enr\Fun_{\Ind_\kappa(\mV),\Ind_\kappa(\mW)}(\Ind_\kappa(\mM), \mN) \to \Enr\Fun_{\mV,\mW}(\mM,\mN)$$
admits a fully faithful left adjoint that lands in the full subcategory $$\Enr\Fun^{\kappa-\mathrm{fil}}_{\Ind_\kappa(\mV),\Ind_\kappa(\mW)}(\Ind_\kappa(\mM), \mN) $$
of enriched functors preserving small $\kappa$-filtered colimits.
So the following is an equivalence:
\begin{equation*}\label{exol2}\Enr\Fun^{\kappa-\mathrm{fil}}_{\Ind_\kappa(\mV),\Ind_\kappa(\mW)}(\Ind_\kappa(\mM), \mN) \to \Enr\Fun_{\mV,\mW}(\mM,\mN).\end{equation*}

\item If $\mM$ admits $\kappa$-small colimits, the latter equivalence restricts to an equivalence
$$\Enr\Fun^{\L}_{\Ind_\kappa(\mV),\Ind_\kappa(\mW)}(\Ind_\kappa(\mM), \mN) \to \Enr\Fun^\kappa_{\mV,\mW}(\mM,\mN).$$

%\item If $\mM^\circledast \to \mV^\ot \times \mW^\ot$ is compatible with $\kappa$-small colimits, $\Ind_\kappa(\mM)^\circledast \to \Ind_\kappa(\mV)^\ot \times \Ind_\kappa(\mW)^\ot$ is compatible with small colimits.

%\item If $\mM^\circledast \to \mV^\ot \times \mW^\ot$ is a bitensored $\infty$-category, $\Ind_\kappa(\mM)^\circledast \to \Ind_\kappa(\mV)^\ot \times \Ind_\kappa(\mW)^\ot$ is a bitensored $\infty$-category, the embedding (\ref{jett}) is linearand for every bitensored $\infty$-category $\mN^\circledast \to \Ind_\kappa(\mV)^\ot \times \Ind_\kappa(\mW)^\ot$ compatible with small colimits the functor (\ref{exol2}) reflects linear functors.

%\item If $\mM^\circledast \to \mV^\ot \times \mW^\ot$ is a left tensored $\infty$-category, $\Ind_\kappa(\mM)^\circledast \to \Ind_\kappa(\mV)^\ot \times \Ind_\kappa(\mW)^\ot$ is a left tensored $\infty$-category, the embedding (\ref{jett}) is left linearand for every left tensored $\infty$-category $\mN^\circledast \to \Ind_\kappa(\mV)^\ot \times \Ind_\kappa(\mW)^\ot$ compatible with small colimits the functor (\ref{exol2}) reflects left linear functors.
%Thus the following functors are equivalences:$$\LinFun^{\kappa-\mathrm{fil}}_{\Ind_\kappa(\mV),\Ind_\kappa(\mW)}(\Ind_\kappa(\mM), \mN) \to \LinFun_{\mV,\mW}(\mM,\mN),$$$$\LinFun^{\L}_{\Ind_\kappa(\mV),\Ind_\kappa(\mW)}(\Ind_\kappa(\mM), \mN) \to \LinFun^\kappa_{\mV,\mW}(\mM,\mN).$$

\end{enumerate}

\end{proposition}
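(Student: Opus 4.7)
The plan is to reduce the bitensored statement to a combination of Proposition \ref{Day} for the operadic sides $\mV^\ot \to \Ass$ and $\mW^\ot \to \Ass$ together with a universal property for enriched presheaf-type extension of the action. The reference \cite[Corollary 8.31.]{HEINE2023108941} presumably gives a more general result on Ind-completion of weakly bi-enriched $\infty$-categories; our task is to package it under the stronger bitensored hypothesis.

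First, I would apply Proposition \ref{Day} to obtain monoidal $\infty$-categories $\Ind_\kappa(\mV)^\ot \to \Ass$ and $\Ind_\kappa(\mW)^\ot \to \Ass$ compatible with small $\kappa$-filtered colimits together with monoidal embeddings $\mV^\ot \hookrightarrow \Ind_\kappa(\mV)^\ot$ and $\mW^\ot \hookrightarrow \Ind_\kappa(\mW)^\ot$ extending $\mV \hookrightarrow \Ind_\kappa(\mV)$ and $\mW \hookrightarrow \Ind_\kappa(\mW)$. Second, I would construct $\Ind_\kappa(\mM)^\circledast \to \Ind_\kappa(\mV)^\ot \times \Ind_\kappa(\mW)^\ot$ as the essentially unique bitensored $\infty$-category compatible with small $\kappa$-filtered colimits whose underlying $\infty$-category is $\Ind_\kappa(\mM)$ and whose action functors $(-)\ot\X, \X\ot(-), \V\ot(-), (-)\ot \W$ are obtained by left Kan extending the given actions along the Yoneda embeddings. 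Concretely, for $\widehat{\V}\in\Ind_\kappa(\mV)$, $\widehat{\W}\in\Ind_\kappa(\mW)$ written as $\kappa$-filtered colimits $\widehat{\V}=\colim \V_i$, $\widehat{\W}=\colim \W_j$ with $\V_i\in\mV$, $\W_j\in\mW$, and $\widehat{\X}=\colim \X_k\in\Ind_\kappa(\mM)$ with $\X_k\in\mM$, one sets $\widehat{\V}\ot\widehat{\X}\ot\widehat{\W}:=\colim_{i,k,j}(\V_i\ot\X_k\ot\W_j)$, where the colimit is taken in $\Ind_\kappa(\mM)$. Compatibility of the two sides with $\kappa$-filtered colimits is automatic from the construction.

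For part (2), the universal property proceeds by an adjunction-existence argument. Given a target $\mN^\circledast \to \Ind_\kappa(\mV)^\ot\times\Ind_\kappa(\mW)^\ot$ compatible with small $\kappa$-filtered colimits, the restriction functor along the $\mV,\mW$-linear embedding $\mM^\circledast \to \mV^\ot\times_{\Ind_\kappa(\mV)^\ot}\Ind_\kappa(\mM)^\circledast\times_{\Ind_\kappa(\mW)^\ot}\mW^\ot$ admits a left adjoint sending an enriched functor $\F\colon\mM\to\mN$ to its enriched left Kan extension $\widehat{\F}$, defined object-wise by $\widehat{\F}(\colim\X_k)=\colim\F(\X_k)$ computed in $\mN$. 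The key point is to check that $\widehat{\F}$ is canonically $\Ind_\kappa(\mV),\Ind_\kappa(\mW)$-enriched; this follows because the multi-morphism spaces in $\Ind_\kappa(\mM)^\circledast$ and $\mN^\circledast$ above $(\widehat{\V}_1,\ldots,\widehat{\V}_n,-,\widehat{\W}_1,\ldots,\widehat{\W}_m;-)$ both carry $\widehat{(-)}$ in and out of $\kappa$-filtered colimits continuously by hypothesis, so both sides agree with the $\kappa$-filtered colimit of the multi-morphism spaces in $\mM^\circledast$ and $\mN^\circledast$ respectively. Fully faithfulness of the left adjoint is then immediate from the unit being an equivalence on $\mM$ together with the fact that $\widehat{\F}$ is uniquely determined by its restriction when it preserves small $\kappa$-filtered colimits.

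Part (3) is a formal consequence of (2): when $\mM$ admits $\kappa$-small colimits, an enriched functor $\widehat{\F}\colon\Ind_\kappa(\mM)\to\mN$ preserves all small colimits iff it preserves small $\kappa$-filtered colimits and its restriction to $\mM$ preserves $\kappa$-small colimits, since every object of $\Ind_\kappa(\mM)$ is a $\kappa$-filtered colimit of objects of $\mM$ and $\Ind_\kappa$ is generated under $\kappa$-filtered colimits by $\kappa$-small-colimit-preserving presheaves (Remark \ref{Indd}). The main obstacle I expect is the verification that enriched left Kan extension along a bi-enriched inclusion yields a genuinely $\Ind_\kappa(\mV),\Ind_\kappa(\mW)$-enriched functor, i.e.\ that cocartesian lifts of inert morphisms of $\Ass\times\Ass$ are preserved after extension; this is precisely where the hypothesis that $\mN^\circledast$ is compatible with small $\kappa$-filtered colimits is used in an essential way, and where one must appeal to \cite[Corollary 8.31.]{HEINE2023108941} rather than to a naive construction.
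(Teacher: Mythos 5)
The paper does not actually prove this proposition: like Proposition \ref{Day}, it is quoted directly from \cite[Corollary 8.31.]{HEINE2023108941}, so there is no in-paper argument to compare yours against step by step. Your route is, in the end, the same one, since you yourself conclude that the decisive points must be taken from that corollary; to that extent the proposal is consistent with the paper.

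Be aware, though, that if you intended the intermediate steps as an independent construction, they have a real gap. Writing $\widehat{\V}\ot\widehat{\X}\ot\widehat{\W}:=\colim_{i,k,j}(\V_i\ot\X_k\ot\W_j)$ only specifies the bi-action on objects; a bitensored $\infty$-category is a cocartesian fibration $\Ind_\kappa(\mM)^\circledast \to \Ind_\kappa(\mV)^\ot \times \Ind_\kappa(\mW)^\ot$ with all coherences, and assembling the object-wise formulas into such a fibration (together with the verification that the resulting multi-morphism presheaves send small $\kappa$-filtered colimits in each variable to limits) is exactly the content of the cited corollary, not something that is ``automatic from the construction.'' The same issue recurs in part (2): defining $\widehat{\F}$ object-wise by $\widehat{\F}(\colim\X_k)=\colim\F(\X_k)$ and comparing multi-morphism spaces does not yet produce an enriched functor, i.e.\ a map of total $\infty$-categories over $\Ass\times\Ass$ preserving the relevant cocartesian lifts, functorially in $\F$; also note that compatibility with $\kappa$-filtered colimits means the multi-morphism presheaves carry such colimits to \emph{limits} of spaces, so the phrase ``carry $\kappa$-filtered colimits continuously in and out'' should be stated in that form. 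Part (3) is fine as a formal consequence of (2), using that $\Ind_\kappa(\mM)$ is generated under small $\kappa$-filtered colimits by $\mM$ and that small colimits are generated by $\kappa$-small and $\kappa$-filtered ones, provided one reads $\Enr\Fun^{\L}$ as enriched functors whose underlying functor preserves small colimits (equivalently is a left adjoint in this presentable setting).
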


%\begin{remark}\label{Reim}Let $\kappa$ be a small regular cardinal and $\mM^\circledast \to \mV^\ot \times \mW^\ot$ an absolute small weakly bienriched $\infty$-category. 
%For every $\V_1, ..., \V_\n \in \mV, \W_1, ..., \W_\m \in \mW$ for $\n,\m \geq 0$ the functor
%$$\Mul_{\Ind_\kappa(\mM)}(\V_1, ..., \V_\n, \X, \W_1, ..., \W_\m;-) : \Ind_\kappa(\mM) \to \mS $$
%preserves small $\kappa$-filtered colimits. This follows from the fact that the embedding $\iota: \Ind_\kappa(\mM) \subset \mP(\B\Env(\mM))$ preserves small $\kappa$-filtered colimits
%so that the latter functor identifies with the functor
%$$\mP(\B\Env(\mM))(\V_1 \ot  ...\ot \V_\n \ot \X\ot \W_1\ot ...\ot \W_\m,-) \circ \iota : \Ind_\kappa(\mM) \to \mS. $$\end{remark}

\begin{notation}
Let  $\mM^\circledast \to \mV^\ot \times \mW^\ot$ be an absolute small weakly bienriched $\infty$-category. We set $$\mP(\mV)^\ot:=\Ind_\emptyset(\mV)^\ot, \
\mP(\mM)^\circledast:= \Ind_\emptyset(\mM)^\circledast \to \mP(\mV)^\ot \times \mP(\mW)^\ot.$$

\end{notation}

Next we define weakly enriched $\infty$-categories of enriched functors. The following lemma
is \cite[Lemma 5.4.]{heine2024bienriched}: % follows from \cite[Corollary 3.29.]{heine2024localglobalprincipleparametrizedinftycategories}:

\begin{lemma}\label{lemist}
Let $\mC \to \T, \T \to \rS$ be functors such that $\mC \to \T \to \rS$ is a cocartesian fibration. The functor 
$(-)\times_\rS \mC: \Cat_{\infty / \rS} \to \Cat_{\infty / \mC} \to \Cat_{\infty / \T}$ admits a right adjoint that we denote by
$ \Fun_\T^{\rS}(\mC,-).$ 
%If $\rS, \T $ are contractible, we drop $\rS$, $\T$ from the notation.	
\end{lemma}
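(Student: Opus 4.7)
The plan is to decompose the given functor as $f_! \circ p^*$, where $p \colon \mC \to \rS$ denotes the composite cocartesian fibration, $f \colon \mC \to \T$ the intermediate map, $p^* \colon \Cat_{\infty/\rS} \to \Cat_{\infty/\mC}$ pullback along $p$, and $f_! \colon \Cat_{\infty/\mC} \to \Cat_{\infty/\T}$ postcomposition with $f$. Since the right adjoint of a composite is the composite of right adjoints in the reverse order, it then suffices to exhibit a right adjoint for each of $f_!$ and $p^*$ separately, and the desired functor will be their composite
$$ \Fun_\T^\rS(\mC, -) \;:=\; p_* \circ f^* \colon \Cat_{\infty/\T} \xrightarrow{f^*} \Cat_{\infty/\mC} \xrightarrow{p_*} \Cat_{\infty/\rS}. $$

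The right adjoint of $f_!$ is tautological: pullback $f^* \colon \Cat_{\infty/\T} \to \Cat_{\infty/\mC}$, sending $(\Y \to \T)$ to $(\Y \times_\T \mC \to \mC)$, is right adjoint to $f_!$ for any map $f$ of $\infty$-categories, with no further hypothesis needed.

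The main content of the lemma is therefore the construction of a right adjoint $p_*$ to the pullback functor $p^*$. Since $\Cat_{\infty/\rS}$ and $\Cat_{\infty/\mC}$ are presentable $\infty$-categories, by the adjoint functor theorem it suffices to check that $p^*$ preserves small colimits, i.e. that $p$ is \emph{exponentiable} (flat) in $\Cat_\infty$. This is precisely where the hypothesis that $\mC \to \T \to \rS$ is cocartesian enters: (locally) cocartesian fibrations of $\infty$-categories are known to be exponentiable. Concretely, for $\Z \to \mC$ the resulting object $p_*(\Z) \to \rS$ will have fiber over $s \in \rS$ equivalent to the $\infty$-category of sections of $\Z_s \to \mC_s$, with transition functoriality induced by cocartesian transport along morphisms of $\rS$; accordingly, $\Fun_\T^\rS(\mC, \Y) \to \rS$ will have fiber over $s$ given by $\Fun_{\T_s}(\mC_s, \Y_s)$, matching the suggestive notation.

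The main obstacle, and the only nontrivial input, is thus the exponentiability of the cocartesian fibration $p$, equivalently the preservation of colimits by $p^*$ in the slice. I would invoke the existing literature on flat / exponentiable fibrations of $\infty$-categories for this rather than re-proving it; everything else in the argument is formal manipulation with adjunctions between slice $\infty$-categories.
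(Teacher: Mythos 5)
The paper states this lemma without proof, so there is no in-paper argument to compare against; judged on its own terms, your proposal is correct and is the standard argument. The decomposition $(-)\times_\rS \mC \simeq f_!\circ p^*$ with $p\colon \mC\to\rS$ the composite and $f\colon\mC\to\T$ the intermediate map is exactly what the stated functor is, the adjunction $f_!\dashv f^*$ is formal, and the only genuine input is that pullback along the cocartesian fibration $p$ admits a right adjoint, which is the exponentiability (flatness) of (co)cartesian fibrations; this is indeed in the literature (e.g. Lurie, \emph{Higher Algebra}, Appendix B.3, or Ayala--Francis), so nothing is missing. Two minor caveats. First, your route through the adjoint functor theorem needs $\Cat_{\infty/\rS}$ and $\Cat_{\infty/\mC}$ to be presentable, i.e. $\rS,\mC$ small, whereas the lemma is later applied to large inputs (e.g. $\mM^\circledast\times\mW^\ot$ over $\mV^\ot\times\mW^\ot$); this is harmless, since one can either pass to a larger universe or observe that exponentiability produces $p_*$ directly, with no presentability hypothesis. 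Second, your aside that the transition functoriality of $p_*(\Z)\to\rS$ is "induced by cocartesian transport" is only heuristic, as $p_*(\Z)\to\rS$ need not be a cocartesian fibration; the fiberwise description you give, which is what Remark \ref{puas} records, is correct and is all that is used.
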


The following remark is \cite[Remark 3.23.]{heine2024localglobalprincipleparametrizedinftycategories}:

\begin{remark}\label{puas}
For every functors $\mD \to \T$ and $\rS' \to \rS$ there is a canonical equivalence 
$ \rS' \times_\rS  \Fun_\T^{\rS}(\mC,\mD) \simeq \Fun_{\rS' \times_\rS \T}^{\rS'}( \rS' \times_\rS \mC, \rS' \times_\rS \mD)$ specifying the fibers of the functor $\Fun_\T^{\rS}(\mC,\mD) \to \rS.$
\end{remark}

%\begin{remark}\label{reuil}Let $\phi: \T \to \rS$ be a functor, $\mE \subset \Fun([1],\rS)$ a full subcategoryand $\mC \to \T$ a cartesian fibration relative to the collection of $\phi$-cocartesian lifts of morphisms of $\rS$ that belong to $\mE$ and $\mD \to \T$ a cocartesian fibration relative to the collection of $\phi$-cocartesian lifts of morphisms of $\rS$ that belong to $\mE$.By \cite[Proposition B.4.1.]{lurie.higheralgebra} the functor $\Fun_\T^{\rS}(\mC,\mD) \to \rS$ is a cocartesian fibration relative to $\mE.$\end{remark}

%\begin{lemma}\label{faith}Let $\mC \to \T, \T \to \rS$ be functors such that the composition $\mC \to \T \to \rS$ is a cocartesian fibration.Let $\mD \to \mE$ be a fully faithful functor over $\T$. Then $\Fun^\rS_\T(\mC,\mD) \to \Fun^\rS_\T(\mC,\mE) $ is fully faithful.\end{lemma}

The next proposition is \cite[Lemma 5.8.]{heine2024bienriched}:

\begin{proposition}\label{innerho}
Let $\mM^\circledast \to \mV^\ot, \mN^\circledast \to \mV^\ot \times \mW^\ot$ be weakly left, weakly bienriched $\infty$-categories, respectively.
The functor $\Fun^{\mW^\ot}_{\mV^\ot \times \mW^\ot}(\mM^\circledast \times \mW^\ot, \mN^\circledast) \to \mW^\ot $ is a	weakly right enriched $\infty$-category.

%\begin{enumerate}\item The functor $\alpha: \Fun^{\mW^\ot}_{\mV^\ot \times \mW^\ot}(\mM^\circledast \times \mW^\ot, \mN^\circledast) \to \mW^\ot $ is a	weakly right enriched $\infty$-category.\item The functor $\beta: \Fun^{\mV^\ot}_{\mV^\ot \times \mW^\ot}(\mV^\ot \times \mO^\circledast, \mN^\circledast) \to \mV^\ot $ is a weakly left enriched $\infty$-category.

%\vspace{1mm}

%\item If $\mN^\circledast \to\mV^\ot \times \mW^\ot$ exhibits $\mN$ as right tensored over $\mW$, the functor $\alpha$ is a right tensored $\infty$-category and for every $\X \in \mM$ the following canonical functor is right $\mW$-linear:$$ \Fun^{\mW^\ot}_{\mV^\ot \times \mW^\ot}(\mM^\circledast \times \mW^\ot, \mN^\circledast) \to \mN^\circledast.$$

%\item If $\mN^\circledast \to\mV^\ot \times \mW^\ot$ exhibits $\mN$ as left tensored over $\mV$, the functor $\beta$ is a left tensored $\infty$-category and for every $\X \in \mO$ the following canonical functor is left $\mV$-linear:$$ \Fun^{\mV^\ot}_{\mV^\ot \times \mW^\ot}(\mV^\ot \times \mO^\circledast, \mN^\circledast) \to \mN^\circledast.$$\end{enumerate}
\end{proposition}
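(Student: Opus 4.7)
The plan is to apply Lemma~\ref{lemist} to produce the candidate and then verify the three conditions defining a weakly right enriched $\infty$-category (the cocartesian-lift condition and axioms~(1) and~(2) of Definition~\ref{bla}) as consequences of the corresponding properties of $\mN^\circledast$. Apply Lemma~\ref{lemist} with $\rS = \mW^\ot$, $\T = \mV^\ot \times \mW^\ot$ and $\mC = \mM^\circledast \times \mW^\ot \to \T$ given by the weakly left enriched structure crossed with the identity; the composition $\mC \to \T \to \rS$ is the projection $\mM^\circledast \times \mW^\ot \to \mW^\ot$, which is trivially a cocartesian fibration. Write $\P^\circledast := \Fun^{\mW^\ot}_{\mV^\ot \times \mW^\ot}(\mM^\circledast \times \mW^\ot, \mN^\circledast) \to \mW^\ot$. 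By Remark~\ref{puas}, the fiber over $[\m] \in \mW^\ot$ is naturally $\Fun_{\mV^\ot}(\mM^\circledast, \mN^\circledast_{[\m]})$, where $\mN^\circledast_{[\m]} := \mN^\circledast \times_{\mW^\ot} \{[\m]\}$ is viewed as a weakly left enriched $\infty$-category over $\mV^\ot$.

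For axiom~(1), applying it to $\mN^\circledast$ with the inert map $\{0\} \subset [\m]$ in the second component produces a natural equivalence $\mN^\circledast_{[\m]} \simeq \mN^\circledast_{[0]} \times \mW^\ot_{[\m]}$ in $\Cat_{\infty/\mV^\ot}$, where $\mW^\ot_{[\m]}$ is treated as a constant $\mV^\ot$-object. Applying $\Fun_{\mV^\ot}(\mM^\circledast, -)$ and using that products with constants distribute through it yields $\P^\circledast_{[\m]} \simeq \P^\circledast_{[0]} \times \mW^\ot_{[\m]}$, confirming axiom~(1). For axiom~(2), the required pullback square of mapping spaces reduces, via the universal property of the internal hom $\Fun^{\mW^\ot}_{\mV^\ot \times \mW^\ot}(-,-)$, to the analogous pullback squares for $\mN^\circledast$ applied object-wise over $\mM^\circledast$. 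For the cocartesian-lift condition, given $\F : \mM^\circledast \to \mN^\circledast_{[\m]}$ over $\mV^\ot$ and an inert $\sigma: [\m] \to [\m']$ preserving the minimum, the lift is constructed by post-composing $\F$ pointwise with the cocartesian lifts in $\mN^\circledast$ of $(\id_{[\n]}, \sigma)$ at $\F(\X)$ for $\X \in \mM^\circledast_{[\n]}$; such lifts exist because $(\id_{[\n]}, \sigma)$ is inert with first component trivially preserving the maximum and second preserving the minimum.

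The main obstacle is the coherent assembly of the pointwise cocartesian lifts into a single morphism $\F \to \sigma_!\F$ in $\P^\circledast$, and the verification that this morphism is cocartesian. Functoriality of the assignment $\X \mapsto \sigma_!\F(\X)$ in morphisms of $\mM^\circledast$ compatible with $\mV^\ot$ follows from the universal property of cocartesian lifts: any morphism $\X \to \Y$ in $\mM^\circledast$ over a morphism of $\mV^\ot$, composed with the cocartesian lift at $\F(\Y)$, factors uniquely through the cocartesian lift at $\F(\X)$. The cocartesianness of the resulting morphism $\F \to \sigma_!\F$ in $\P^\circledast$ then follows because, under the internal-hom adjunction, testing against another morphism $\F \to \G$ in $\P^\circledast$ translates pointwise to testing the universal property of the cocartesian lifts in $\mN^\circledast$, which holds by construction.
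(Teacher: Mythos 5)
The paper itself gives no argument for this statement: it is quoted from \cite[Lemma 3.73]{HEINE2023108941}, so the only comparison available is with your proposal on its own terms. Your overall strategy (construct the object via Lemma \ref{lemist}, identify fibers via Remark \ref{puas}, and verify the relative cocartesian condition and axioms (1)--(2) of Definition \ref{bla} from the corresponding properties of $\mN^\circledast$) is the natural one, but the two steps that carry the actual mathematical content are asserted with justifications that do not hold at the stated level of generality.

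First, your verification of axiom (1) rests on the claim that axiom (1) for $\mN^\circledast$ gives an equivalence $\mN^\circledast_{[\m]} \simeq \mN^\circledast_{[0]} \times \mW^\ot_{[\m]}$ \emph{in} $\Cat_{\infty/\mV^\ot}$, where $\mN^\circledast_{[\m]}$ is the pullback of $\mN^\circledast$ over $[\m]$ in the second $\Ass$-coordinate. Axiom (1) only identifies the fibers over objects of $\Ass \times \Ass$, i.e. $\mN^\circledast_{[\n][\m]} \simeq \mV^\ot_{[\n]} \times \mN \times \mW^\ot_{[\m]}$; it says nothing about morphisms of $\mN^\circledast_{[\m]}$ lying over non-identity (in particular non-inert) morphisms in the $\mV$-direction, which is exactly what a splitting over $\mV^\ot$ (and over $\mV^\ot \times \mW^\ot_{[\m]}$, as your relative-hom computation actually needs, since $\P^\circledast_{[\m]}$ is the internal hom over $\mW^\ot_{[\m]}$, not $\Fun_{\mV^\ot}(\mM^\circledast,\mN^\circledast_{[\m]})$) requires. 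Such a splitting of mapping spaces can only come from axiom (2) together with the relative cocartesian structure, and one must moreover check that the resulting identification is induced by the specific inert pushforward along $\{0\} \subset [\m]$, as axiom (1) for $\P^\circledast$ demands; none of this is argued.

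Second, the construction of the cocartesian lift $\F \to \sigma_!\F$ and the proof that it is cocartesian hinge on applying the universal property of the chosen lifts in $\mN^\circledast$ against morphisms $\F(\X) \to \G(\Y)$ lying over pairs $(\gamma,\sigma)$ with $\gamma$ an arbitrary morphism of $\mV^\ot$. With the paper's notion of cocartesian fibration relative to the inert morphisms (cocartesianness only in the pullback over $[1]$), the chosen lifts are only universal against morphisms over $(\id,\sigma)$ followed by fiber morphisms; the extension to mixed morphisms over $(\gamma,\sigma)$ is precisely what has to be extracted from condition (2) of Definition \ref{bla}, and your appeal to "the universal property of cocartesian lifts" presumes it rather than proves it. Likewise, the coherent assembly of the pointwise lifts into a functor $[1]\times\mM^\circledast \to \mN^\circledast$ over $\mV^\ot\times\mW^\ot$ (equivalently an edge of $\P^\circledast$) is a relative-section/contractibility-of-choices argument, and axiom (2) for $\P^\circledast$ requires computing mapping spaces of the relative functor category as limits of the pointwise pullback squares, not merely invoking them "object-wise". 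These are exactly the technical points the cited Lemma 3.73 is there to settle, so as it stands the proposal outlines the right reduction but leaves the essential verifications unproved.
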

\begin{notation}\label{bbbb}
Let $\mM^\circledast \to \mV^\ot, \mN^\circledast \to \mV^\ot \times \mW^\ot$ be weakly left, weakly bienriched $\infty$-categories, respectively.
Let $$\Enr\Fun_{\mV, \emptyset}(\mM,\mN)^\circledast \subset \Fun^{\mW^\ot}_{\mV^\ot \times \mW^\ot}(\mM^\circledast \times \mW^\ot, \mN^\circledast)\to \mW^\ot $$ be the full subcategory weakly right enriched in $\mW$ spanned by 
$$\Enr\Fun_{\mV, \emptyset}(\mM, \mN) \subset \Fun_{\mV^\ot}(\mM^\circledast, \mN_{[0]}^\circledast) \simeq \Fun^{\mW^\ot}_{\mV^\ot \times \mW^\ot}(\mM^\circledast \times \mW^\ot, \mN^\circledast)_{[0]}.$$

\end{notation}

%The next notation is \cite[Corollary 8.31.]{HEINE2023108941}:

\begin{notation}\label{bbbb}
Let $\mO^\circledast \to \mW^\ot, \mN^\circledast \to \mV^\ot \times \mW^\ot$ be weakly right, weakly bienriched $\infty$-categories, respectively.
Let $$\Enr\Fun_{\emptyset,\mW}(\mO,\mN)^\circledast:=(\Enr\Fun_{\mW, \emptyset}(\mO^\rev,\mN^\rev)^\rev)^\circledast \to \mV^\ot $$ 
be the weakly left $\mV$-enriched $\infty$-category.
\end{notation}

%\begin{remark}There is a canonical right $\mW$-enriched equivalence$$\Enr\Fun_{\mV, \emptyset}(\mM,\mN)^\circledast \simeq (\Enr\Fun_{\emptyset,\mV}(\mM^\rev,\mN^\rev)^\rev)^\circledast.$$\end{remark}

The following proposition is \cite[Proposition 3.78.]{HEINE2023108941}:

\begin{proposition}\label{lehmmm} 
Let $\mM^\circledast \to \mV^\ot, \mO^\circledast \to \mW^\ot, \mN^\circledast \to \mV^\ot \times \mW^\ot$ be weakly left, weakly right, weakly bienriched $\infty$-categories, respectively.
The canonical equivalence
$$ \Fun_{\mW^\ot}(\mO^\circledast, \Fun^{\mW^\ot}_{\mV^\ot \times \mW^\ot}(\mM^\circledast \times \mW^\ot, \mN^\circledast)) \simeq \Fun_{\mV^\ot \times \mW^\ot}(\mM^\circledast \times \mO^\ot, \mN^\circledast)$$
restricts to an equivalence
\begin{equation*}\label{equino}
\Enr\Fun_{\emptyset, \mW}(\mO, \Enr\Fun_{\mV, \emptyset}(\mM,\mN)) \simeq \Enr\Fun_{\mV, \mW}(\mM \times \mO, \mN).
\end{equation*} 

%\begin{enumerate}
%\item The canonical equivalence
%$$ \Fun_{\mW^\ot}(\mO^\circledast, \Fun^{\mW^\ot}_{\mV^\ot \times \mW^\ot}(\mM^\circledast \times \mW^\ot, \mN^\circledast)) \simeq \Fun_{\mV^\ot \times \mW^\ot}(\mM^\circledast \times \mO^\ot, \mN^\circledast)$$
%restricts to an equivalence
%\begin{equation*}\label{equino}
%\Enr\Fun_{\emptyset, \mW}(\mO, \Enr\Fun_{\mV, \emptyset}(\mM,\mN)) \simeq \Enr\Fun_{\mV, \mW}(\mM \times \mO, \mN).\end{equation*} 

%\item The canonical equivalence
%$$ \Fun_{\mV^\ot}(\mM^\circledast,\Fun^{\mV^\ot}_{\mV^\ot \times \mW^\ot}(\mV^\ot \times \mO^\circledast, \mN^\circledast)) \simeq \Fun_{\mV^\ot \times \mW^\ot}(\mM^\circledast \times \mO^\ot, \mN^\circledast)$$
%restricts to an equivalence$$ \Enr\Fun_{\mV, \emptyset}(\mM, \Enr\Fun_{\emptyset, \mW}(\mO,\mN)) \simeq \Enr\Fun_{\mV,\mW}(\mM \times \mO, \mN).$$\end{enumerate}
\end{proposition}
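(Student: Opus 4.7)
The plan is to deduce this equivalence from the adjunction of Lemma~\ref{lemist}. Applied to $\mV^\ot \times \mW^\ot \to \mW^\ot$ (projection) and to $\mM^\circledast \times \mW^\ot \to \mV^\ot \times \mW^\ot$, it gives the displayed outer equivalence after observing that $\mO^\circledast \times_{\mW^\ot}(\mM^\circledast \times \mW^\ot) \simeq \mM^\circledast \times \mO^\circledast$ over $\mV^\ot \times \mW^\ot$. Denote by $F \leftrightarrow G$ the resulting bijection between functors $F \colon \mO^\circledast \to \Fun^{\mW^\ot}_{\mV^\ot \times \mW^\ot}(\mM^\circledast \times \mW^\ot, \mN^\circledast)$ over $\mW^\ot$ and functors $G \colon \mM^\circledast \times \mO^\circledast \to \mN^\circledast$ over $\mV^\ot \times \mW^\ot$; the task reduces to showing that $F$ belongs to the LHS full subcategory if and only if $G$ belongs to the RHS full subcategory.

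I would decompose the LHS condition on $F$ into two parts: (a) $F$ factors through the full subcategory $\Enr\Fun_{\mV,\emptyset}(\mM,\mN)^\circledast \subset \Fun^{\mW^\ot}_{\mV^\ot\times\mW^\ot}(\mM^\circledast\times\mW^\ot,\mN^\circledast)$, and (b) $F$ preserves cocartesian lifts of inert morphisms of $\Ass$ that preserve the minimum, as required of a weakly right $\mW$-enriched functor out of $\mO^\circledast$. Using Remark~\ref{puas} together with the first axiom of Definition~\ref{bla} applied to the weakly right enriched structure on $\Fun^{\mW^\ot}_{\mV^\ot \times \mW^\ot}(\mM^\circledast \times \mW^\ot, \mN^\circledast)$ from Proposition~\ref{innerho}, condition~(a) unwinds to the requirement that for every $\O \in \mO^\circledast$ lying over $[\m]$, the functor $\mM^\circledast \to \mN^\circledast$ obtained from $F(\O)$ by pushing forward along the inert maps $\{\bj\} \subset [\m]$ for $1 \leq \bj \leq \m$ is weakly left $\mV$-enriched. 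Under $F \leftrightarrow G$, this translates precisely to the statement that $G$ preserves cocartesian lifts of morphisms $(\iota, \id)$ in $\Ass \times \Ass$ with $\iota$ an inert morphism preserving the maximum.

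Condition~(b) translates under $F \leftrightarrow G$ directly to $G$ preserving cocartesian lifts of morphisms $(\id, \iota)$ with $\iota$ inert preserving the minimum. Since a cocartesian lift in $\mM^\circledast \times \mO^\circledast$ of an inert morphism in $\Ass \times \Ass$ whose first component preserves the maximum and whose second preserves the minimum factors as a composite of cocartesian lifts of the two special types $(\iota, \id)$ and $(\id, \iota')$, conditions~(a) and (b) together are equivalent to $G$ being a weakly bi-enriched functor in the sense of Definition~\ref{linmapp}. The main obstacle is the translation of condition~(a): one must recognize the full subcategory of fiberwise enriched functors inside the ambient mapping $\infty$-category via the weakly right enriched structure of Proposition~\ref{innerho}, and match this with the preservation of cocartesian lifts on the first component by $G$. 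All remaining checks are straightforward consequences of the defining property of cocartesian morphisms in the product fibration $\mM^\circledast \times \mO^\circledast \to \mV^\ot \times \mW^\ot$.
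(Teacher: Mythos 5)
Note first that the paper does not prove this proposition itself but quotes it from \cite{HEINE2023108941}, so there is no in-paper argument to compare with; your strategy (the adjunction of Lemma \ref{lemist} for the ambient equivalence, then matching the two full subcategories by splitting the enriched-functor condition on $G$ into preservation of cocartesian lifts of the two types $(\iota,\id)$ and $(\id,\iota')$) is the natural route and very likely the one taken in the reference. However, as written there are two gaps. The first is a concrete misstatement in the unwinding of condition (a): membership of $F(\O)$, for $\O$ lying over $[\m]$, in the full weakly right enriched subcategory $\Enr\Fun_{\mV,\emptyset}(\mM,\mN)^\circledast$ is tested by pushing forward along the inert map $[0]\simeq \{0\}\subset [\m]$ appearing in axiom (1) of Definition \ref{bla} for the second component, producing an object of $\Fun_{\mV^\ot}(\mM^\circledast,\mN^\circledast_{[0]})$; the maps $\{\bj\}\subset[\m]$ for $1\leq \bj\leq \m$ that you use do not preserve the minimum, so the weakly right enriched structure of Proposition \ref{innerho} provides no cocartesian lifts along them, and they only recover the $\mW^{\times \m}$-factor of the fiber, not the underlying left $\mV$-enriched functor.

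The second and more substantial gap is exactly the step you flag as ``the main obstacle'' and then assert: both translations (a) and (b) rest on an objectwise description of the cocartesian lifts of inert minimum-preserving morphisms in $\Fun^{\mW^\ot}_{\mV^\ot\times\mW^\ot}(\mM^\circledast\times\mW^\ot,\mN^\circledast)\to\mW^\ot$, and Proposition \ref{innerho} as stated only asserts that this is a weakly right enriched $\infty$-category without describing its cocartesian morphisms, so you cannot quote it for the identification ``cocartesian equals pointwise cocartesian''. Moreover, even granting that description, the equivalence of (a) with preservation of $(\iota,\id)$-cocartesian lifts is not purely formal in the direction you need: from cocartesianness of the induced morphism between the pushforwards of $G(\X,\O)$ and $G(\X',\O)$ to the fiber over $[0]$ (a morphism over $(\iota,\id_{[0]})$) one must deduce cocartesianness of $G(f,\id_\O)$ over $(\iota,\id_{[\m]})$; this requires a detection argument using axioms (1) and (2) of Definition \ref{bla} for $\mN^\circledast$ together with the fact that the image of the cocartesian morphism $f$ in $\mV^\ot$ is cocartesian, so that the relevant comparison map lies over an equivalence in $\mV^\ot\times\mW^\ot$ and is an equivalence precisely when its pushforward to the $([0],[0])$-fiber is one (the other direction is the easy left-cancellation for cocartesian morphisms). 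Until these ingredients are supplied, your argument is an outline of the correct strategy rather than a complete proof.
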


%\begin{remark}\label{lubi}\label{leman}
%Let $\mM^\circledast \to \mV^\ot, \mN^\circledast \to \mV^\ot \times \mW^\ot$ be weakly left, weakly bienriched $\infty$-categories, respectively. 
%There is a canonical right $\mW$-enriched equivalence$$\Enr\Fun_{\mV, \emptyset}(\mM,\mN) \simeq \Enr\Fun_{\emptyset, \mV}(\mM^\rev,\mN^\rev)^\rev.$$
%For every map $\mW'^\ot \to \mW^\ot$ of $\infty$-operadsthere is a canonical $\mW'$-enriched equivalence
%$$ \mW'^\ot \times_{\mW^\ot} \Enr\Fun_{\mV, \emptyset}(\mM,\mN)^\circledast \simeq \Enr\Fun_{\mV, \emptyset}(\mM,\mW' \times_\mW \mN)^\circledast.$$
%represented by the canonical equivalence for every weakly right enriched $\infty$-category $\mO^\circledast \to \mW^\ot:$$$ \Enr\Fun_{\emptyset, \mW}(\mO, \Enr\Fun_{\mV, \emptyset}(\mM,\mN)) \simeq \Enr\Fun_{\mV, \mW}(\mM \times \mO, \mN)\simeq \Enr\Fun_{\mW,\mV}((\mM \times \mO)^\rev, \mN^\rev)$$$$\simeq \Enr\Fun_{\mW,\mV}(\mO^\rev \times \mM^\rev, \mN^\rev) \simeq \Enr\Fun_{\mW, \emptyset}(\mO^\rev, \Enr\Fun_{\emptyset, \mV}(\mM^\rev,\mN^\rev)) $$$$\simeq \Enr\Fun_{\emptyset,\mW}(\mO, \Enr\Fun_{\emptyset, \mV}(\mM^\rev,\mN^\rev)^\rev).$$\end{remark}

The next remark is \cite[Remark 5.13.]{heine2024bienriched}.

\begin{remark}

Let $\mM^\circledast \to \mV^\ot, \mN^\circledast \to \mV^\ot \times \mW^\ot, \mN'^\circledast \to \mV^\ot \times \mW^\ot$ be weakly left, weakly bienriched $\infty$-categories, respectively.
For every $\mV,\mW$-enriched embedding $ \mN^\circledast \to \mN'^\circledast$
the induced right $\mW$-enriched functor
$\Enr\Fun_{\mV, \emptyset}(\mM,\mN)^\circledast \to \Enr\Fun_{\mV, \emptyset}(\mM,\mN')^\circledast$
is an embedding.

\end{remark}

%\begin{proposition}\label{innerhosty} Let $\mK \subset \Cat_\infty$ be a full subcategory and $\mM^\circledast \to \mV^\ot, \mO^\circledast \to \mW^\ot, \mN^\circledast \to \mV^\ot \times \mW^\ot$ weakly  left enriched, weakly right enriched, weakly bienriched $\infty$-categories, respectively.

%\begin{enumerate}\item If $\mN^\circledast \to \mV^\ot \times \mW^\ot$ admits right tensorsand forming right tensors preserves $\mK$-indexed colimits, the weakly right enriched $\infty$-category $\Enr\Fun_{\mV, \emptyset}(\mM,\mN)^\circledast \to \mW^\ot$ admits right tensors and $\mK$-indexed conical colimits and the right $\mW$-enriched functor$\Enr\Fun_{\mV, \emptyset}(\mM,\mN)^\circledast \to \Fun(\mM,\mN)^\circledast $ is right $\mW$-linear and preserves $\mK$-indexed colimits.

%\item If $\mN^\circledast \to \mV^\ot \times \mW^\ot$ admits left tensorsand forming left tensors preserves $\mK$-indexed colimits, the weakly left enriched $\infty$-category $\Enr\Fun_{\emptyset, \mW}(\mO,\mN)^\circledast \to \mV^\ot$ admits left tensors and $\mK$-indexed conical colimits and the left $\mV$-enriched functor$ \Enr\Fun_{\emptyset, \mW}(\mO,\mN)^\circledast \to \Fun(\mO,\mN)^\circledast $ is left $\mV$-linear and preserves $\mK$-indexed colimits.\end{enumerate}\end{proposition}

Next we consider trivial weakly bienriched $\infty$-categories.

\begin{definition}Let $\mV^\ot \to \Ass, \mW^\ot \to \Ass$ be $\infty$-operads.
	
\begin{enumerate}
\item A weakly bienriched $\infty$-category $\mM^\circledast \to \mV^\ot \times \mW^\ot$ 
is left trivial if for every $\n>0,\m \geq 0$ and $\V_1,...,\V_\n \in \mV, \W_1,...,\W_\m \in \mW,\X,\Y \in \mM$ the space $\Mul_{\mM}(\V_1,...,\V_\n,\X, \W_1,...,\W_\m;\Y)$ is empty.	
\item A weakly bienriched $\infty$-category $\mM^\circledast \to \mV^\ot \times \mW^\ot$ 
right trivial if for every $\n \geq 0,\m > 0$ and $\V_1,...,\V_\n \in \mV, \W_1,...,\W_\m \in \mW,\X,\Y \in \mM$ the space $\Mul_{\mM}(\V_1,...,\V_\n,\X, \W_1,...,\W_\m;\Y)$ is empty.	
\item A weakly bienriched $\infty$-category is trivial if it is left trivial and right trivial.
\end{enumerate}	
	
\end{definition}

\begin{remark}\label{tent}
A weakly bienriched $\infty$-category $\mM^\circledast \to \mV^\ot \times \mW^\ot$ is left trivial if and only if the functor $\mM^\circledast \to \mV^\ot$ factors through the subcategory $\triv_\mV^\circledast \subset \mV^\ot.$
And dually for right triviality.
	
\end{remark}	

%\begin{remark}\label{Poll}Let $\mV'^\ot \to \mV^\ot, \mW'^\ot \to \mW^\ot$ be maps of $\infty$-operads. For every left trivial, right trivial, trivial weakly bienriched $\infty$-category $\mM^\circledast \to \mV^\ot \times \mW^\ot$ the pullback$\mV'^\ot \times_{\mV^\ot} \mM^\circledast \times_{\mW^\ot}\mW'^\ot \to \mV'^\ot \times \mW'^\ot$ is a left trivial, right trivial, trivial weakly bienriched $\infty$-category, respectively.\end{remark}Next we consider examples:

\begin{notation}
	
Let $\Ass_\mi,\Ass_\ma \subset\Ass$ be the subcategories with the same objects and with morphisms the inert morphisms preserving the minimum, maximum, respectively.
\end{notation}

%\begin{example}\emph{ }\begin{enumerate}\item The inclusion $\Ass_\ma \subset \Ass$ exhibits $[0]$ as weakly left enriched in $[0]$, where the left enrichment is left trivial.
%\item The inclusion $\Ass_\mi \subset \Ass$ exhibits $[0]$ as weakly right enriched in $[0]$, where the right enrichment is right trivial.\end{enumerate}\end{example}

\begin{notation}Let $\mV^\ot \to \Ass, \mW^\ot \to \Ass$ be $\infty$-operads and $\K$ an $\infty$-category.

\begin{enumerate}
\item Let $$_\mV\triv^\circledast :=\mV^\ot \times_\Ass \Ass_\ma \to \mV^\ot.$$

\item Let $$\triv_\mW^\circledast :=\mW^\ot \times_\Ass \Ass_\mi \to \mW^\ot.$$

\item Let $$\K_{\mV,\mW}^\circledast :=\triv_\mV^\circledast \times \K \times \triv_\mW^\circledast \to \mV^\ot \times \mW^\ot.$$
\end{enumerate}
\end{notation}

%\begin{remark}Observe that $\triv_\mV, \triv_\mW, \triv_{\mV,\mW}$ are contractibleamd for every $\V_1,...,\V_\n \in \mV,\X,\Y \in \triv_\mV$ for $\n \geq0$the space $\Mul_{\triv_\mV}(\V_1,...,\V_\n,\X;\Y)$ is empty if $\n >0$ and contractible if $\n=0$.
%So $\ast_\mV$ is contractible.In particular, $\ast_\mV^\circledast \to \mV^\ot$ is different from the final weakly left enriched $\infty$-category given by $\id: \mV^\ot \to \mV^\ot$, whose underlying $\infty$-category is contractible, too, but whose multimorphism spaces are all contractible.\end{remark}

%\begin{remark}Note that $\ast_\mV^\circledast $ is the full weakly left enriched subcategoryof $ \mV^\ot \times_{\mP\Env(\mV)^\ot} \mP\Env(\mV)^\circledast $ spanned by the tensor unit and so identifies with the pullback $\mV^\ot \times_{\mP\Env(\mV)^\ot}B\tu_{\mP\Env(\mV)}^\circledast \to \mV^\ot.$ \end{remark}

%\begin{warning}Note that the weakly left enriched $\infty$-category $\ast_\mV^\circledast \to \mV^\ot$ is different from the final weakly left enriched $\infty$-categorygiven by $\id: \mV^\ot \to \mV^\ot$, whose underlying $\infty$-category is contractible, too, but whose multimorphism spaces are all contractible.\end{warning}Remark \ref{Poll} gives the following example:

\begin{example}\label{Triv}

Let $\mV^\ot \to \Ass, \mW^\ot \to \Ass$ be $\infty$-operads
and $\K$ a small $\infty$-category.
Then $_\mV\triv^\circledast \to \mV^\ot, \triv_\mW^\circledast \to \mW^\ot, \K_{\mV, \mW}^\circledast \to \mV^\ot \times \mW^\ot$
are trivial weakly left, weakly right enriched, weakly bienriched $\infty$-categories, respectively.
We call $\K_{\mV, \mW}^\circledast \to \mV^\ot \times \mW^\ot$ the trivial weakly bienriched $\infty$-category on $\K$.
\end{example}

%\begin{remark}\label{aii}The object $[0]$ is a final object of the subcategories $\Ass_\ma, \Ass_\mi \subset \Ass$ since for every $\n \geq0$ there is a unique inert order preserving map $[0]\to[\n]$ preserving the maximum, minimum, respectively.	The functors $_\mV\triv^\circledast \to \Ass_\ma, \triv_\mW^\circledast \to \Ass_\mi$ are cocartesian fibrations whose fiber over the final object is contractible. Thus the unique object of ${_\mV \triv^\circledast}, \triv_\mW^\circledast $ lying over the final object is a final object of $_\mV\triv^\circledast, \triv_\mW^\circledast,$ respectively.\end{remark} 

The next Proposition is \cite[Proposition 2.87.]{heine2024bienriched}:

\begin{proposition}\label{ljnbfg}Let $\mN^\circledast \to \mV^\ot \times \mW^\ot$ be a weakly bienriched $\infty$-category.
\begin{enumerate}\item Let $\mM^\circledast \to \mV^\ot $ be a weakly left enriched $\infty$-category. The following functor is an equivalence: $$\Enr\Fun_{\mV, \mW}(\mM \times \triv_\mW, \mN) \to \Enr\Fun_{\mV,\emptyset}(\mM, \mN).$$

\item Let $\mM^\circledast \to \mW^\ot $ be a weakly right enriched $\infty$-category. The following functor is an equivalence: $$\Enr\Fun_{\mV, \mW}(_\mV\triv \times \mM, \mN) \to \Enr\Fun_{\emptyset, \mW}(\mM, \mN).$$
\item Let $\mM^\circledast \to \mV^\ot \times \mW^\ot $ be a weakly bienriched $\infty$-category. The next functor is an equivalence: $$\Enr\Fun_{\mV, \mW}(_\mV\triv \times \triv_\mW, \mN) \to \mN.$$
\end{enumerate}
\end{proposition}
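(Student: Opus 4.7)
The plan is to prove (1) in detail and then deduce (2) and (3) formally. For (1), the key move is to apply Proposition \ref{lehmmm} with $\mO = \triv_\mW$ in order to obtain
\[
\Enr\Fun_{\mV,\mW}(\mM \times \triv_\mW, \mN) \simeq \Enr\Fun_{\emptyset,\mW}\bigl(\triv_\mW, \Enr\Fun_{\mV,\emptyset}(\mM,\mN)\bigr).
\]
This reduces (1) to the following core claim: for every weakly right $\mW$-enriched $\infty$-category $\mO^\circledast \to \mW^\ot$, evaluation at the unique object of $\triv_\mW^\circledast|_{[0]}$ provides an equivalence
\[
\Enr\Fun_{\emptyset,\mW}(\triv_\mW, \mO) \xrightarrow{\simeq} \mO.
\]

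To prove the core claim, I would construct an explicit inverse. Recall from the definition that $\triv_\mW^\circledast = \mW^\ot \times_\Ass \Ass_\mi$ has the same objects as $\mW^\ot$ but only those morphisms lying over inert min-preserving morphisms of $\Ass$; thus every morphism of $\triv_\mW^\circledast$ is cocartesian for $\mW^\ot \to \Ass$. Given $X \in \mO$, the value of the would-be inverse at an object $Y \in \triv_\mW^\circledast|_{[\m]}$ projecting to $(W_1,\ldots,W_\m) \in \mW^\ot_{[\m]}$ must be the unique object $X_Y \in \mO^\circledast$ whose cocartesian image over the min-preserving projection $[\m] \to [0]$ is $X$; existence and uniqueness of such an $X_Y$ follow from axiom (1) of Definition \ref{bla} applied to $\mO$, which identifies $\mO^\circledast_{[\m]}$ with $\mO \times \mW^\ot_{[\m]}$. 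Functoriality on morphisms is then forced by the requirement that the functor preserve cocartesian lifts, and the pullback square of axiom (2) of Definition \ref{bla} guarantees that the resulting map on mapping spaces is an equivalence.

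Parts (2) and (3) follow formally. For (2), apply the involution $(-)^\rev$ of Notation \ref{invo}, which swaps left with right and min with max, converting (1) into (2). For (3), apply (1) with $\mM := {}_\mV\triv$, yielding
\[
\Enr\Fun_{\mV,\mW}({}_\mV\triv \times \triv_\mW, \mN) \simeq \Enr\Fun_{\mV,\emptyset}({}_\mV\triv, \mN),
\]
and then apply the $\rev$-dual of the core claim, namely $\Enr\Fun_{\mV,\emptyset}({}_\mV\triv, \mO') \simeq \mO'$ for every weakly left $\mV$-enriched $\mO'$, to identify the right-hand side with $\mN$.

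The main obstacle is verifying that the object-level construction extends to a bona fide functor of $\infty$-categories; that is, that the mapping-space comparison is an equivalence. The essential input is to combine the pullback square in axiom (2) of Definition \ref{bla} for $\mO$ with the observation that the mapping spaces of $\triv_\mW^\circledast$ are exactly the subspaces of $\mW^\ot$-mapping spaces sitting over $\Ass_\mi$; once these two descriptions are aligned, the required preservation of cocartesian lifts of inert min-preserving morphisms becomes tautological.
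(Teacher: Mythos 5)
Your reduction scaffolding is sound: Proposition \ref{lehmmm} applied with $\mO=\triv_\mW$ does reduce (1) to the claim that evaluation at the unique object induces an equivalence $\Enr\Fun_{\emptyset,\mW}(\triv_\mW,\mO)\to \mO$ for every weakly right enriched $\infty$-category $\mO^\circledast\to\mW^\ot$, and (2), (3) then follow formally via the involution $(-)^\rev$ and by specializing (1) to $\mM={_\mV\triv}$. Note that the paper itself does not prove this proposition but quotes it from \cite{Heine2024Lur}, so the comparison here is only with the correctness of your argument for the core claim.

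That is where there is a genuine gap. Your assertion that ``every morphism of $\triv_\mW^\circledast$ is cocartesian for $\mW^\ot\to\Ass$'' is false: $\triv_\mW^\circledast=\mW^\ot\times_\Ass\Ass_\mi$ contains \emph{all} morphisms of $\mW^\ot$ lying over inert min-preserving maps, in particular all morphisms lying over identities of $\Ass$, i.e.\ all morphisms of the fibers $\mW^\ot_{[\m]}\simeq\mW^{\times\m}$, and these are cocartesian only when they are equivalences. Consequently the enriched-functor condition (preservation of cocartesian lifts of the relevant inert morphisms) says nothing about the value of your putative inverse $F_X$ on these fiberwise morphisms, so the step ``functoriality on morphisms is forced by the requirement that the functor preserve cocartesian lifts'' fails as stated; and in any case an object-by-object prescription with functoriality declared to be ``forced'' is not a construction of a functor of $\infty$-categories. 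What has to be proved is that restriction along $\triv_\mW^\circledast\vert_{[0]}\subset\triv_\mW^\circledast$ is fully faithful and essentially surjective over $\mW^\ot$: essential surjectivity is precisely the existence of a coherent extension of $X$ over all of $\triv_\mW^\circledast$, which needs an actual mechanism — for instance a relative Kan extension argument over $\mW^\ot$ in the style of the proof of Lemma \ref{preten} (using the relative Kan extension results of \cite{lurie.HTT} quoted there), with axiom (1) of Definition \ref{bla} verifying the pointwise conditions; and full faithfulness requires computing mapping spaces in the functor $\infty$-category $\Enr\Fun_{\emptyset,\mW}(\triv_\mW,\mO)$, for which invoking the pullback square of axiom (2) at a single object, as you do, is not by itself sufficient. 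So the route is reasonable and the statement you reduce to is correct, but the central step needs a genuine argument replacing the one you give.
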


%\begin{corollary}\label{asio}Let $\mV^\ot \to \Ass, \mW^\ot \to \Ass$ be small $\infty$-operads.\begin{enumerate}\item The functor ${_\mV\omega\B\Enr_\mW} \to {_\mV\omega\B\Enr_\emptyset}$taking pullback along $\emptyset^\circledast \subset \mW^\circledast$admits a fully faithful left adjoint whose essential image precisely consists of theright trivial weakly bienriched $\infty$-categories.\item The functor ${_\mV\omega\B\Enr_\mW} \to {_\emptyset\omega\B\Enr_\mW}$taking pullback along $\emptyset^\circledast \subset \mV^\circledast$admits a fully faithful left adjoint whose essential image precisely consists of theleft trivial weakly bienriched $\infty$-categories.
%\item The functor ${_\mV\omega\B\Enr_\mW} \to \Cat_\infty, \mM^\circledast \to \mV^\ot \times \mW^\ot \mapsto \mM$admits a fully faithful left adjoint whose essential image precisely consists of the trivial weakly bienriched $\infty$-categories.\end{enumerate}	

% $\mM^\circledast \to \mV^\ot \times \mW^\ot$ such that for every $\n,\m > 0$ and $\V_1,...,\V_\n \in \mV, \W_1,...,\W_\m \in \mW,\X,\Y \in \mM$ the space $\Mul_{\mM}(\V_1,...,\V_\n,\X, \W_1,...,\W_\m;\Y)$ is empty.\end{corollary}

%The next propositon is \cite[Proposition 3.29.]{HEINE2023108941}:

Next we define enriched adjunctions following \cite[§ 2.4.]{heine2024bienriched}.
The next definitions are \cite[Definitions 2.66., 2.67.]{heine2024bienriched}:

\begin{definition}
An enriched adjunction is an adjunction in the $(\infty,2)$-category $\omega\B\Enr$ of Remark \ref{2-catt}.	
	
\end{definition}

\begin{definition}Let $\mV^\ot \to \Ass, \mW^\ot \to \Ass$ be small $\infty$-operads. An $\mV,\mW$-enriched adjunction is an adjunction in the $(\infty,2)$-category $_\mV\omega\B\Enr_\mW$ of Remark \ref{2-catt}.	
	
\end{definition}

\begin{remark}
A $\mV,\mW$-enriched adjunction is an adjunction relative to $\mV^\ot \times \mW^\ot.$
	
\end{remark}

\begin{remark}
Enriched adjunctions were also discussed in \cite[6.2.6.]{heine2019restricted}
and for Gepner-Haugseng's model of enrichment in \cite[§4.1., §4.2.]{stefanich2020presentable}.
	
\end{remark}

The next remark is \cite[Remark 2.71.]{heine2024bienriched}:
\begin{remark}Let $\mM^\circledast \to \mV^\ot \times \mW^\ot, \mN^\circledast \to \mV'^\ot \times \mW'^\ot$ be weakly bienriched $\infty$-categories. 
	
\begin{enumerate}
\item An enriched functor $ \G: \mN^\circledast \to \mM^\circledast$ lying over maps of $\infty$-operads $\gamma: \mV'^\ot \to \mV^\ot, \delta: \mW'^\ot \to \mW^\ot$
admits an enriched left adjoint if $\gamma, \delta$ admit left adjoints $\alpha, \beta$ relative to $\Ass$, respectively, and for every $\X \in \mM$ there is an $\Y \in \mN$
and a morphism $\X \to \G(\Y) $ in $\mM$ such that for any $\V_1,...,\V_\n \in \mV, \W_1,..., \W_\m \in \mW$ for $\n, \m \geq 0$ and $\Z \in \mN$ the following map is an equivalence:
$$ \Mul_\mN(\alpha(\V_1),...,\alpha(\V_\n),\Y, \beta(\W_1),..., \beta(\W_\m);\Z) \to$$$$ \Mul_\mM(\gamma(\alpha(\V_1)),...,\gamma(\alpha(\V_\n)),\G(\Y), \delta(\beta(\W_1)),..., \delta(\beta(\W_\m));\G(\Z))\to $$$$ \Mul_\mM(\V_1,...,\V_\n,\X, \W_1,..., \W_\m;\G(\Z)).$$

\item An enriched functor $ \F: \mM^\circledast \to \mN^\circledast$ lying over maps of $\infty$-operads $\alpha: \mV^\ot \to \mV'^\ot, \beta: \mW^\ot \to \mW'^\ot$
admits an enriched right adjoint if $\alpha, \beta$ admit right adjoints $\gamma, \delta$ relative to $\Ass$, respectively, and for every $\Y \in \mN$ there is an $\X \in \mM$
and a morphism $\F(\X) \to \Y $ in $\mN$ such that for any $\V_1,...,\V_\n \in \mV, \W_1,..., \W_\m \in \mW$ for $\n, \m \geq 0$ and $\Z \in \mM$ the following map is an equivalence:
$$\hspace{12mm}\Mul_\mM(\V_1,...,\V_\n,\Z, \W_1,..., \W_\m;\X) \to \Mul_\mN(\alpha(\V_1),...,\alpha(\V_\n),\F(\Z), \beta(\W_1),..., \beta(\W_\m);\F(\X))$$$$ \to \Mul_\mN(\alpha(\V_1),...,\alpha(\V_\n),\F(\Z), \beta(\W_1),..., \beta(\W_\m);\Y).$$

\item The same holds for $\mV,\mW$-enriched left and right adjoint, where $\alpha, \beta$ are the identities.
\end{enumerate}
\end{remark}

We will often use the following lemma, which is \cite[Lemmas 2.72, 2.73.]{heine2024bienriched}:

\begin{lemma}\label{Adj}\label{Adj2} 
\begin{enumerate}
	
\item Let $\F: \mM^\circledast \to \mN^\circledast$ be an enriched functor between weakly bienriched $\infty$-categories that lies over maps of $\infty$-operads that admit right adjoints relative to $\Ass$ such that $\mM$ admits left and right tensors.
Then $\F: \mM^\circledast \to \mN^\circledast$ admits an enriched right adjoint if and only if
$\F$ is linear and the underlying functor $\mM \to \mN$ of $\F$ admits a right adjoint $\G:\mN \to \mM$.

\vspace{1mm}
\item Let $\G: \mN^\circledast \to \mM^\circledast$ be an enriched
functor between weakly bienriched $\infty$-categories that lies over maps of $\infty$-operads that admit left adjoints relative to $\Ass$ such that $\mM, \mN$ admits left and right tensors.
Then $\G: \mN^\circledast \to \mM^\circledast$ admits an enriched left adjoint if and only if the underlying functor $\mN \to \mM$ of $\G$ admits a left adjoint $\F:\mM \to \mN$ that is linear.

\end{enumerate}\end{lemma}

%\begin{corollary}Let $\mM^\circledast \to \mV^\ot \times \mW^\ot, \mN^\circledast \to \mV^\ot \times \mW^\ot$ be weakly bienriched $\infty$-categories that admit left and right tensors.Then $$\Enr\Fun^\L_{\mV, \mW}(\mM,\mN) =\LinFun^\L_{\mV, \mW}(\mM,\mN).$$\end{corollary}

%\begin{notation}Let $\mV^\ot \to \Ass,\mW^\ot\to\Ass$ be small $\infty$-operads.Let $${_\mV\omega\B\Enr^\L_\mW}, {_\mV \omega\B\Enr^\R_\mW} \subset {_\mV\omega\B\Enr_\mW} $$ be the subcategories with the same objects and with morphisms the $\mV,\mW$-enriched functors that admit a $\mV,\mW$-enriched left adjoint, right adjoint, respectively.\end{notation}

%Lemma \ref{ohai} implies the following:\begin{corollary}\label{kurt}Let $\mV^\ot \to \Ass,\mW^\ot\to\Ass$ be small $\infty$-operads.There is a canonical equivalence $${_\mV\omega\B\Enr^\L_\mW} \simeq ({_\mV\omega\B\Enr^\R_\mW})^\op .$$\end{corollary}

\begin{notation}Let $\mM^\circledast \to \mV^\ot \times \mW^\ot, \mN^\circledast \to \mV^\ot \times \mW^\ot$ be weakly bienriched $\infty$-categories.
Let
$$\Enr\Fun^\L_{\mV, \mW}(\mM,\mN) \subset \Enr\Fun_{\mV, \mW}(\mM,\mN),\ \Enr\Fun^\R_{\mV, \mW}(\mM,\mN) \subset \Enr\Fun_{\mV, \mW}(\mM,\mN)$$ be the full subcategories of $\mV, \mW$-enriched functors that admit a $\mV,\mW$-enriched left adjoint, a $\mV,\mW$-enriched right adjoint, respectively.	

\end{notation}

The next proposition is \cite[Proposition 2.115.]{heine2024bienriched}:

\begin{proposition}\label{adjeq}
Let $\mM^\circledast \to \mV^\ot \times\mW^\ot, \mN^\circledast \to \mV^\ot \times\mW^\ot$ be weakly bienriched $\infty$-categories.
There is a canonical equivalence
$$\Enr\Fun^\L_{\mV, \mW}(\mM,\mN)^\op \simeq \Enr\Fun_{\mV,\mW}^\R(\mN,\mM).$$	
\end{proposition}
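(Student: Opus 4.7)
The plan is to derive this equivalence from the $(\infty,2)$-categorical structure on $_\mV\omega\B\Enr_\mW$ constructed in Remark \ref{2-catt}. By the very definition of $\mV,\mW$-enriched adjunction (an adjunction in the $(\infty,2)$-category $_\mV\omega\B\Enr_\mW$), the full subcategories $\Enr\Fun^\L_{\mV,\mW}(\mM,\mN)$ and $\Enr\Fun^\R_{\mV,\mW}(\mN,\mM)$ of the hom-$\infty$-categories in $_\mV\omega\B\Enr_\mW$ are precisely the subcategories of $1$-morphisms admitting a right adjoint, respectively a left adjoint, in this $(\infty,2)$-category. Hence the statement reduces to the general $(\infty,2)$-categorical mate equivalence: for any $(\infty,2)$-category $\mathbb{C}$ and objects $X,Y \in \mathbb{C}$, there is a canonical equivalence between the opposite of the space of left-adjointable $1$-morphisms $X \to Y$ and the space of right-adjointable $1$-morphisms $Y \to X$.

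To prove this general fact I would introduce an auxiliary $\infty$-category $\mathrm{Adj}_{\mathbb{C}}(X,Y)$ of adjunction data, whose objects are tuples $(F,G,\eta,\epsilon)$ consisting of $1$-morphisms $F\colon X \to Y$, $G\colon Y \to X$, and a unit $\eta\colon \id_X \to GF$ and counit $\epsilon\colon FG \to \id_Y$ satisfying the triangle identities. Concretely, this can be obtained as a mapping $\infty$-category out of a universal adjunction $(\infty,2)$-category into $\mathbb{C}$ with prescribed values on objects, or equivalently via the two-sided fibration description of enriched adjunctions. There are obvious projections
\begin{equation*}
\Enr\Fun^\L_{\mV,\mW}(\mM,\mN) \xleftarrow{\ p_\L\ } \mathrm{Adj}_{_\mV\omega\B\Enr_\mW}(\mM,\mN) \xrightarrow{\ p_\R\ } \Enr\Fun^\R_{\mV,\mW}(\mN,\mM)^\op
\end{equation*}
that remember only the left, respectively only the right adjoint together with its $\mV,\mW$-enriched functor structure, and the target of $p_\R$ is taken opposite because mate-passage reverses the direction of $2$-morphisms between right adjoints relative to that of the corresponding natural transformations between left adjoints.

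I would then verify that both $p_\L$ and $p_\R$ are trivial Kan fibrations by the usual uniqueness of adjoints: given a left adjoint $F$, the space of enriched right adjoints together with compatible unit/counit data is contractible by the characterization of enriched adjunctions recalled just above the statement, and dually for a right adjoint $G$. Composing $p_\R$ with a section of $p_\L$ then yields the desired equivalence, whose value on objects is the $\mV,\mW$-enriched mate assignment $F \mapsto G$.

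The main obstacle is the coherent construction of $\mathrm{Adj}_{\mathbb{C}}(X,Y)$ and the verification of contractibility at all simplicial levels; in the unenriched case this is handled by Lurie's adjunction machinery, and porting it to the enriched setting requires either working inside the $(\infty,2)$-category $_\mV\omega\B\Enr_\mW$ directly (using the free-adjunction $(\infty,2)$-category as in Riehl--Verity and Gagna--Harpaz--Lanari) or exploiting the fact, evident from Definition \ref{linmapp} and the characterization of enriched adjunctions via multi-morphism spaces, that an enriched adjunction is determined by its underlying adjunction together with a compatible structure on mapping objects, which then allows one to bootstrap from the non-enriched equivalence $\Fun^\L(\mM,\mN)^\op \simeq \Fun^\R(\mN,\mM)$.
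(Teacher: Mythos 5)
A preliminary remark on comparison: the paper does not prove this proposition internally — it is imported verbatim from \cite[Proposition 2.111.]{Heine2024Lur} — so there is no in-text argument to measure your proof against, and I can only assess your route on its own terms. Your reduction is legitimate: since a $\mV,\mW$-enriched adjunction is by definition an adjunction in the $(\infty,2)$-category $_\mV\omega\B\Enr_\mW$ of Remark \ref{2-catt}, the proposition is indeed an instance of the general mate equivalence for hom-$\infty$-categories in an $(\infty,2)$-category. (Minor slip: in the paper's conventions $\Enr\Fun^\L_{\mV,\mW}(\mM,\mN)$ consists of functors admitting an enriched \emph{left} adjoint, not a right adjoint; since the mate statement is symmetric under swapping source and target and passing to opposites this does not change the target statement, but your labels are reversed.)

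The genuine gap sits in the step you dispatch with ``trivial Kan fibrations by the usual uniqueness of adjoints'' — that is exactly where all the content of the proposition lives. First, contractibility of the fibers of $p_\L$ and $p_\R$, i.e. contractibility of the $\infty$-category of coherent unit/counit data extending a given left (or right) adjoint, is not a consequence of the pointwise existence criterion recalled before the statement; it is a homotopy-coherent adjunction theorem of Riehl--Verity type, and it must be established for the specific model of $(\infty,2)$-categories in play here, namely $\Cat_\infty$-enriched $\infty$-categories arising from the closed $\Cat_\infty$-action of Remark \ref{2-catt}, which requires a model comparison you do not supply. Second, even granting fiberwise contractibility, a functor with contractible fibers need not be an equivalence: you must construct $\mathrm{Adj}(\mM,\mN)$ coherently, with the two projections as actual functors, and establish a fibration property for them (for instance via restriction along the generators of a free adjunction $(\infty,2)$-category), none of which is carried out. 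Finally, your fallback of bootstrapping from the non-enriched equivalence does not work as stated: $\Enr\Fun_{\mV,\mW}(\mM,\mN)$ is not a full subcategory of $\Fun(\mM,\mN)$, so the unenriched mate equivalence cannot simply be restricted; one must build the equivalence compatibly with the enrichment data, which is precisely the substance of the cited result.
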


%Remark \ref{reyyt} and Remark \ref{2-cat} imply the following:

%\begin{remark}\label{indadj}Let $\mO^\circledast \to \mV^\ot \times \mW^\ot$ be a weakly bienriched $\infty$-category, $\F: \mM^\circledast \to \mN^\circledast, \G: \mN^\circledast \to \mM^\circledast$ be $\mV,\mW$-enriched functors and $\eta: \id \to \G \circ \F$ a morphism in $\Enr\Fun_{\mV,\mW}(\mM,\mM)$ that exhibits $\F$ as a $\mV,\mW$-enriched left adjoint of $\G$. Then $$\Enr\Fun_{\mV,\mW}(\eta,\mO) : \Enr\Fun_{\mV,\mW}(\F,\mO) \circ \Enr\Fun_{\mV,\mW}(\G,\mO) \to \id$$exhibits $ \Enr\Fun_{\mV,\mW}(\G,\mO)$ as a left adjoint of $\Enr\Fun_{\mV,\mW}(\F,\mO)$ and $$\Enr\Fun_{\mV,\mW}(\mO,\eta) : \id \to \Enr\Fun_{\mV,\mW}(\mO,\G) \circ \Enr\Fun_{\mV,\mW}(\mO,\F) $$exhibits $ \Enr\Fun_{\mV,\mW}(\mO,\F)$ as a left adjoint of $\Enr\Fun_{\mV,\mW}(\mO,\G)$. \end{remark}

\subsection{Tensored envelopes}

In the following we introduce a tool that reduces questions about weakly enriched $\infty$-categories to tensored $\infty$-categories. The next propositions follow from \cite[Proposition 3.92, Proposition 3.101.]{HEINE2023108941}:

\begin{proposition}\label{envv} Let $\mV^\ot \to \Ass$ be an $\infty$-operad. There is a monoidal $\infty$-category $\Env(\mV)^\ot \to \Ass$ and an embedding of $\infty$-operads $ \mV^\ot \subset \Env(\mV)^\ot$ such that
\begin{enumerate}
\item Every object of $\Env(\mV)$ is equivalent to $\V_1 \ot ...\ot \V_\n$ for
$\n \geq 0$ and $\V_1,...,\V_\n \in \mV \subset \Env(\mV).$ 
		
\item For every monoidal $\infty$-category $\mW^\ot \to \Ass$ 
the induced functor $$\Alg_{\Env(\mV)}(\mW) \to \Alg_{ \mV}(\mW)$$
admits a fully faithful left adjoint that lands in the full subcategory 
of monoidal functors. In particular, the following induced functor is an equivalence:
$$\Fun^{\ot}(\Env(\mV), \mW) \to \Alg_{ \mV}(\mW).$$
\end{enumerate}
	
\end{proposition}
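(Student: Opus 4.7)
The plan is to construct $\Env(\mV)^\ot$ by hand as a suitable pullback over $\Ass$, verify it is a monoidal $\infty$-category, produce the embedding $\mV^\ot \subset \Env(\mV)^\ot$, and then deduce the universal property from the cocartesian fibration structure on monoidal $\infty$-categories.

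First I would \emph{build the envelope}. Let $\Ass^\act \subset \Ass$ denote the (wide) subcategory of active morphisms (maps in $\Delta$ preserving minimum and maximum). Set
$$\Env(\mV)^\ot \;:=\; \mV^\ot \times_\Ass \Fun^\act([1],\Ass),$$
where $\Fun^\act([1],\Ass) \subset \Fun([1],\Ass)$ is the full subcategory of active morphisms, the pullback is formed along evaluation at $0$, and the projection to $\Ass$ is evaluation at $1$. Concretely, an object of $\Env(\mV)^\ot$ over $[\n] \in \Ass$ is a pair $(\V,\alpha)$ with $\V \in \mV^\ot_{[\m]}$ and $\alpha \colon [\m] \to [\n]$ active; morphisms $(\V,\alpha) \to (\V',\alpha')$ over $\beta\colon [\n]\to [\n']$ are morphisms $\V \to \V'$ in $\mV^\ot$ whose image in $\Ass$ realises a factorisation $\beta \circ \alpha$ through $\alpha'$ of inert-then-active type.

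Next I would \emph{verify the monoidal structure and generation}. Using the inert/active factorisation system on $\Ass$, cocartesian lifts of a morphism $\beta$ of $\Ass$ against the projection $p\colon \Env(\mV)^\ot \to \Ass$ are produced by lifting the inert part in $\mV^\ot$ (which is a cocartesian fibration relative to inert morphisms) and post-composing with the residual active morphism. This shows $p$ is a cocartesian fibration. For the first Segal-type axiom of Definition \ref{ek}, I would observe that an active morphism $[\m] \to [\n]$ decomposes uniquely as an ordered concatenation of active morphisms $[\m_\bi] \to [1]$ for $1 \leq \bi \leq \n$ with $\m_1+\cdots+\m_\n = \m$, whence the $\n$ inert lifts induce an equivalence $\Env(\mV)^\ot_{[\n]} \xrightarrow{\sim} \Env(\mV)^{\times \n}$ (use that $\mV^\ot_{[\m]} \simeq \mV^{\times \m}$ and then regroup). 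By Remark \ref{uuu}, $p$ is a monoidal $\infty$-category. The inclusion $\mV^\ot \hookrightarrow \Env(\mV)^\ot$ sending $\V \in \mV^\ot_{[\m]}$ to $(\V,\id_{[\m]})$ is fully faithful; it sends inert cocartesian lifts to inert cocartesian lifts, hence is an embedding of $\infty$-operads. Statement (1) then follows: an object $(\V,\alpha\colon [\m] \to [1])$ with $\V\simeq (\V_1,\dots,\V_\m)$ admits a cocartesian morphism from $(\V,\id_{[\m]})$ over the active map $\alpha$, exhibiting it as $\V_1\ot\cdots\ot\V_\m$.

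For the \emph{universal property} (2), given a monoidal $\infty$-category $\mW^\ot \to \Ass$ with projection $q$, and a map of operads $f\colon \mV^\ot \to \mW^\ot$, I would define an extension $\tilde f\colon \Env(\mV)^\ot \to \mW^\ot$ by sending $(\V,\alpha)$ to the target of the $q$-cocartesian lift of $\alpha$ starting at $f(\V)$; functoriality is forced by the uniqueness of cocartesian lifts, and $\tilde f$ sends \emph{all} $p$-cocartesian morphisms to $q$-cocartesian morphisms by the two-out-of-three property for cocartesian morphisms, so $\tilde f$ is monoidal. Restriction along $\mV^\ot \subset \Env(\mV)^\ot$ recovers $f$ (since $\id_{[\m]}$ lifts to the identity), giving the unit of the adjunction. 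The counit on a monoidal functor $\mathrm{G}\colon \Env(\mV)^\ot \to \mW^\ot$ is the canonical equivalence produced by the $q$-cocartesian lifts along active morphisms, which agree with the image under $\mathrm{G}$ of the analogous $p$-cocartesian morphisms used in the construction of $\tilde f$; thus the unit is an equivalence and the left adjoint is fully faithful with essential image the monoidal functors. The final equivalence in (2) is the restriction of this adjunction to the full subcategories of monoidal functors, which is an equivalence since the left adjoint already lands there.

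\paragraph{Main obstacle.} The genuinely delicate step is checking that $p\colon \Env(\mV)^\ot \to \Ass$ is a (full) cocartesian fibration and that the Segal equivalence $\Env(\mV)^\ot_{[\n]} \simeq \Env(\mV)^{\times\n}$ holds; both rest on a careful bookkeeping with the inert/active factorisation in $\Ass = \Delta^\op$ and with the non-symmetric way active maps $[\m]\to [\n]$ decompose into ordered concatenations of active maps $[\m_\bi]\to [1]$. Once this combinatorial verification is done, the universal property is essentially a formal consequence of the fact that a monoidal functor out of $\Env(\mV)^\ot$ is determined by its restriction to active cocartesian lifts of objects of $\mV^\ot$.
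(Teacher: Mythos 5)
The paper itself gives no argument here: Proposition \ref{envv} is imported wholesale from \cite[Proposition 3.92, Proposition 3.101]{HEINE2023108941}, so your direct construction is by necessity a different route — and it is the standard one, namely the non-symmetric analogue of the monoidal envelope of \cite[\S 2.2.4]{lurie.higheralgebra}: take $\Env(\mV)^\ot := \mV^\ot \times_{\Ass}\Fun^{\act}([1],\Ass)$ (pullback along evaluation at $0$, structure map evaluation at $1$), produce cocartesian lifts via the inert/active factorisation, get the Segal condition by decomposing an active map $[\m]\to[\n]$ into blocks, identify each object $(\V,\alpha)$ as the cocartesian pushforward of $(\V,\id)$ along the active map (which is exactly statement (1)), and extend operad maps $\mV^\ot\to\mW^\ot$ to monoidal functors by pushing forward along active morphisms in $\mW^\ot$. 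This is essentially how the cited reference (and Lurie) proves it, so the mathematical content of your proposal is sound; what the paper's citation buys is brevity, what your route buys is an explicit model of $\Env(\mV)$, which is what (1) is really about.

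Two places need tightening. First, your description of the morphisms of the pullback is not right: a morphism $(\V,\alpha)\to(\V',\alpha')$ over $\beta$ is simply a morphism $\V\to\V'$ in $\mV^\ot$ lying over some $\gamma$ together with a commutative square $\alpha'\circ\gamma\simeq\beta\circ\alpha$ in $\Ass$; there is no ``inert-then-active'' condition on $\gamma$. The inert--active factorisation only enters when you construct the $p$-cocartesian lift of $\beta$ at $(\V,\alpha)$, by factoring $\beta\circ\alpha$ as an inert map followed by an active one and lifting the inert part in $\mV^\ot$. Second, defining $\tilde{f}$ objectwise as ``the target of the $q$-cocartesian lift of $\alpha$ at $f(\V)$'' and appealing to uniqueness of cocartesian lifts does not by itself yield a functor of $\infty$-categories, let alone the unit and counit of an adjunction; the usual repair is to factor $\tilde{f}$ as $\Env(\mV)^\ot \to \Env(\mW)^\ot \to \mW^\ot$, where the first functor is induced by $f$ and the second is a cocartesian-pushforward functor $\mW^\ot\times_{\Ass}\Fun^{\act}([1],\Ass)\to\mW^\ot$ that exists precisely because $\mW^\ot\to\Ass$ is a cocartesian fibration, and then to verify the adjunction by a mapping-space computation (or by exhibiting the restriction functor as having the monoidal functors as local objects), rather than assembling unit and counit by hand. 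With these two points repaired, your argument goes through and recovers both halves of (2), including that the left adjoint is fully faithful with essential image the monoidal functors.
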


\begin{definition}Let $\mV^\ot \to \Ass$ be an $\infty$-operad.
We call $\Env(\mV)^\ot \to \Ass $ the monoidal envelope. % of $\mV^\ot \to \Ass$.
		
\end{definition}

\begin{proposition}\label{bitte}
Let $\mM^\circledast \to \mV^\ot \times \mW^\ot$ be a weakly bienriched $\infty$-category.
There is a bitensored $\infty$-category $\B\Env(\mM)^\circledast \to \Env(\mV)^\ot \times \Env(\mW)^\ot$ and an enriched embedding $\mM^\circledast \subset \B\Env(\mM)^\circledast$ lying over the embeddings $\mV^\ot \subset \Env(\mV)^\ot, \mW^\ot \subset \Env(\mW)^\ot$ such that
\begin{enumerate}
\item Every object of $\B\Env(\mM)$ is equivalent to $\V_1 \ot ...\ot \V_\n \ot \X \ot \W_1 \ot ...\ot\W_\m$ for
$\n,\m \geq 0$ and $\V_1,...,\V_\n \in \mV \subset \Env(\mV), \X \in \mM \subset \B\Env(\mM), \W_1,...,\W_\m \in \mW \subset \Env(\mW).$ 
		
\item For every bitensored $\infty$-category $\mN^\circledast \to \Env(\mV)^\ot \times \Env(\mW)^\ot$ the functor $$\Enr\Fun_{\Env(\mV),\Env(\mW)}(\B\Env(\mM), \mN) \to \Enr\Fun_{\mV,\mW}(\mM, \mN)$$
admits a fully faithful left adjoint that lands in the full subcategory 
of $\Env(\mV), \Env(\mW)$-linear functors. In particular, the following induced functor is an equivalence: $$\LinFun_{\Env(\mV),\Env(\mW)}(\B\Env(\mM), \mN) \to \Enr\Fun_{\mV,\mW}(\mM, \mN).$$
\end{enumerate}
	
\end{proposition}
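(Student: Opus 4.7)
The plan is to follow the pattern of the monoidal envelope of Proposition \ref{envv}, now adapted to the two-sided enriched setting. The key idea is that a weakly bi-enriched structure is a cocartesian fibration over $\Ass \times \Ass$ for the restricted class of inert morphisms of Definition \ref{bla} (first component preserves the maximum and second preserves the minimum), while a bitensored structure is a cocartesian fibration for all morphisms satisfying Segal conditions; the envelope freely generates cocartesian lifts for the remaining morphisms.

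Concretely, I would construct $\B\Env(\mM)^\circledast$ as a suitable full subcategory of $\mM^\circledast \times_{\Ass \times \Ass} \Fun([1], \Ass \times \Ass)$ obtained by restricting to arrows whose inert part (under the inert-active factorization compatible with the bi-enriched inert class) is trivial; the projection to $\Ass \times \Ass$ is evaluation at the target. The structure map to $\Env(\mV)^\ot \times \Env(\mW)^\ot$ is inherited from the analogous construction in Proposition \ref{envv} applied on each side. One then checks that this is a cocartesian fibration over $\Ass \times \Ass$ by inspecting arrows directly; the bitensored Segal condition from condition (1) of Definition \ref{bla} reduces to the same condition on $\mM^\circledast$. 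The enriched embedding $\mM^\circledast \subset \B\Env(\mM)^\circledast$ sends $X$ to $X$ together with the identity active arrow. Statement (1) then follows since cocartesian transport along an active arrow $([\n'],[\m']) \to ([1],[1])$ applied to $X \in \mM^\circledast_{[\n'],[\m']}$, which corresponds via the equivalence $\mM^\circledast_{[\n'],[\m']} \simeq \mV^{\times \n'} \times \mM \times \mW^{\times \m'}$ to $(\V_1, \ldots, \V_{\n'}, X_0, \W_1, \ldots, \W_{\m'})$, realizes the iterated bitensor $\V_1 \ot \cdots \ot \V_{\n'} \ot X_0 \ot \W_1 \ot \cdots \ot \W_{\m'}$.

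For the universal property (2), given a bitensored $\mN^\circledast \to \Env(\mV)^\ot \times \Env(\mW)^\ot$, the candidate left adjoint sends a $\mV,\mW$-enriched functor $F \colon \mM^\circledast \to \mN^\circledast$ to the $\Env(\mV),\Env(\mW)$-linear extension $\tilde F$ which on an object $\V_1 \ot \cdots \ot \V_{\n'} \ot X \ot \W_1 \ot \cdots \ot \W_{\m'}$ of $\B\Env(\mM)^\circledast$ outputs $\V_1 \ot \cdots \ot \V_{\n'} \ot F(X) \ot \W_1 \ot \cdots \ot \W_{\m'}$, which is well-defined because $\mN$ is bitensored. The restriction of $\tilde F$ along $\mM^\circledast \subset \B\Env(\mM)^\circledast$ recovers $F$, yielding fully faithfulness of the adjoint, and when $\tilde F$ is already linear the unit is an equivalence, giving the claimed equivalence onto $\LinFun_{\Env(\mV),\Env(\mW)}(\B\Env(\mM), \mN)$. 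The main obstacle is promoting this pointwise assignment to an $\infty$-categorical functor exhibiting the adjunction. I would handle this either by factoring through the one-sided monoidal envelopes sequentially, first extending along $\mV^\ot \subset \Env(\mV)^\ot$ via Proposition \ref{envv} to obtain a left $\Env(\mV)$-linear, right $\mW$-enriched functor and then symmetrically along $\mW^\ot \subset \Env(\mW)^\ot$, or by invoking \cite[Lemma 4.3.2.13., Proposition 4.3.2.17.]{lurie.HTT} as in the proof of Lemma \ref{preten} to construct the adjoint via pointwise left Kan extension along $\mM^\circledast \subset \B\Env(\mM)^\circledast$, verifying that the relevant colimits exist because of the explicit description of objects of $\B\Env(\mM)^\circledast$ as bitensors of objects of $\mM.$
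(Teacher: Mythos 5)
The paper gives no in-text argument for this proposition: it is imported from \cite{HEINE2023108941} (Propositions 3.92 and 3.101), where the bitensored envelope is constructed by exactly the mechanism you describe, namely the analogue of Lurie's monoidal envelope via (relative) active arrows, with the universal property obtained by a free-extension/relative Kan extension argument. So your sketch is not a different route but a reconstruction of the cited proof, and its skeleton is correct.

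Two places where your outline is thinner than what the argument actually requires. First, the factorization system you need on $\Ass \times \Ass$ is not the product of the ordinary inert--active factorizations: the inert class for weakly bi-enriched $\infty$-categories consists of inert pairs whose first component preserves the maximum and whose second preserves the minimum, so, e.g., $[0]\simeq\{0\}\subset[\n]$ is inert in $\Delta$ but must be counted as ``active'' in the relative sense --- this is precisely what makes the fiber of $\B\Env(\mM)^\circledast$ over $([0],[0])$ consist of the tuples $(\V_1,\dots,\V_\n,\X,\W_1,\dots,\W_\m)$ and yields statement (1). Pinning down this relative factorization and then verifying that evaluation at the target is a cocartesian fibration satisfying condition (1) of Definition \ref{bla}, with the projection to $\Env(\mV)^\ot\times\Env(\mW)^\ot$ a map of cocartesian fibrations in each variable (so that the result is genuinely bitensored), is where essentially all of the work in the cited propositions lies; ``checking by inspecting arrows directly'' elides it. Second, your first proposed route to (2), iterating Proposition \ref{envv}, does not literally apply: that proposition concerns maps of $\infty$-operads into monoidal $\infty$-categories (algebras), not enriched functors out of a weakly (bi-)enriched $\infty$-category, so running it sidewise would already presuppose the universal property of the one-sided tensored envelope, which is the statement being proved. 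Your second route is the viable one: build the left adjoint by relative left Kan extension along $\mM^\circledast\subset\B\Env(\mM)^\circledast$ (the extension is computed by cocartesian transport, so the required relative colimits exist for trivial reasons), observe it lands in $\Env(\mV),\Env(\mW)$-linear functors because $\mN^\circledast$ is bitensored, and deduce the displayed equivalence from (1) together with the fact that linear functors preserve the bitensors generating $\B\Env(\mM)$ from $\mM$.
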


\begin{definition}
Let $\mM^\circledast \to \mV^\ot \times \mW^\ot$ be a weakly bienriched $\infty$-category.
We call $\B\Env(\mM)^\circledast \to \Env(\mV)^\ot\times \Env(\mW)^\ot $ the bitensored envelope. %of $\mM^\circledast \to \mV^\ot \times \mW^\ot$.
	
\end{definition}

The next lemma is \cite[Lemma 3.93.]{HEINE2023108941}:

\begin{lemma}\label{looocx}\label{loccyy}
Let % $\mV^\ot \to \Ass$ be a monoidal $\infty$-category and 
$\mM^\circledast \to \mV^\ot \times \mW^\ot$ be a bitensored $\infty$-category.
The embedding $\mV^\ot \subset \Env(\mV)^\ot$ admits a left adjoint relative to $\Ass$.
%that sends $(\V_1,...,\V_\n)$ for $\n \geq 0$ and $\V_1,...,\V_\n \in \mV$ to $\V_1 \ot ...\ot \V_\n$.
The embedding $\mM^\circledast \subset \B\Env(\mM)^\circledast$ admits an enriched left adjoint covering the left adjoints of the embeddings $\mV^\ot \subset \Env(\mV)^\ot, \mW^\ot \subset \Env(\mW)^\ot.$
\end{lemma}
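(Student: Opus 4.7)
The plan is to extract both left adjoints directly from the universal properties of the (bi)tensored envelopes stated in Propositions \ref{envv} and \ref{bitte}, using that $\mV$, $\mW$ and $\mM$ are already monoidal/bitensored.

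For the first claim, since $\mV^\ot\to\Ass$ is monoidal, the identity is a map of $\infty$-operads $\id:\mV^\ot\to\mV^\ot$. Feeding this into the equivalence $\Fun^\ot(\Env(\mV),\mV)\xrightarrow{\simeq}\Alg_\mV(\mV)$ of Proposition \ref{envv}(2) produces a monoidal functor $L:\Env(\mV)^\ot\to\mV^\ot$ over $\Ass$ whose restriction along the inclusion $\iota:\mV^\ot\subset\Env(\mV)^\ot$ is the identity of $\mV^\ot$ as a map of $\infty$-operads. In particular $L\circ\iota\simeq\id_{\mV^\ot}$, so the counit is (essentially) the identity. It remains to produce a unit $\eta:\id_{\Env(\mV)^\ot}\to\iota\circ L$ over $\Ass$ and verify the triangle identities. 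By Proposition \ref{envv}(1), on an object $\V_1\ot\cdots\ot\V_\n\in\Env(\mV)$ (fiber over $[1]$), $\iota L$ returns the element $\iota(\V_1\ot\cdots\ot\V_\n)$ obtained by computing the tensor product inside $\mV$, and $\eta$ is the canonical contraction map; the general fiber over $[\n]$ is handled similarly. The adjunction identity reduces to the assertion that for every $\X\in\Env(\mV)^\ot_{[\n]}$ and every $(\W_1,\ldots,\W_\n)\in\mV^\ot_{[\n]}$, the natural map
\[\Env(\mV)^\ot(\X,\iota(\W_1,\ldots,\W_\n))\longrightarrow\mV^\ot(L(\X),(\W_1,\ldots,\W_\n))\]
is an equivalence, which is exactly the behaviour of mapping spaces in the envelope: morphisms out of $\X$ into an object coming from $\mV^\ot$ are computed after first evaluating the active tensor, since $\iota$ detects no additional multi-morphisms.

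For the second claim, I would bootstrap off the first. The monoidal left adjoints $L_\mV:\Env(\mV)^\ot\to\mV^\ot$ and $L_\mW:\Env(\mW)^\ot\to\mW^\ot$ let me restrict the given $\mV,\mW$-bitensored structure on $\mM$ to an $\Env(\mV),\Env(\mW)$-bitensored structure on the same underlying $\infty$-category, yielding $\mM'^\circledast\to\Env(\mV)^\ot\times\Env(\mW)^\ot$. Because $L_\mV\circ\iota_\mV\simeq\id$ and similarly for $\mW$, the restriction of $\mM'$ along $\iota_\mV\times\iota_\mW$ is canonically $\mM^\circledast$ itself, so the identity $\mM^\circledast\to\mM^\circledast$ is a $\mV,\mW$-enriched functor into $\mM'$ over $\iota_\mV,\iota_\mW$. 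Applying Proposition \ref{bitte}(2) to this enriched functor produces an $\Env(\mV),\Env(\mW)$-linear functor $L_\mM:\B\Env(\mM)^\circledast\to\mM'^\circledast$ whose restriction along $\iota_\mM:\mM^\circledast\subset\B\Env(\mM)^\circledast$ is (equivalent to) the identity; unwinding, $L_\mM$ sends the generic object $\V_1\ot\cdots\ot\V_\n\ot\X\ot\W_1\ot\cdots\ot\W_\m$ of Proposition \ref{bitte}(1) to the actual bitensor in $\mM$.

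To finish, I would check that $L_\mM\dashv\iota_\mM$ relative to $\mV^\ot\times\mW^\ot$. As in part one, the counit is essentially the identity, and the unit at a generic object $\V_1\ot\cdots\ot\V_\n\ot\X\ot\W_1\ot\cdots\ot\W_\m$ is the contraction to its bitensor in $\mM$, reincluded via $\iota_\mM$. The adjunction identity amounts to the equivalence
\[\B\Env(\mM)^\circledast(\V_1\ot\cdots\ot\X\ot\cdots\ot\W_\m,\,\iota_\mM(\Y))\;\simeq\;\Mul_\mM(\V_1,\ldots,\V_\n,\X,\W_1,\ldots,\W_\m;\Y),\]
which is exactly the universal property characterizing the bitensored envelope in Proposition \ref{bitte}(1). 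The main obstacle, and where care is needed, is organizing the unit transformation into a genuine natural transformation of $\infty$-functors over $\Ass\times\Ass$ rather than a pointwise family of maps, and verifying the triangle identities coherently; this is routine once one uses the equivalence of Proposition \ref{bitte}(2) applied to appropriate diagram categories indexed on $[1]$ to package the unit as a morphism of $\Env(\mV),\Env(\mW)$-linear functors.
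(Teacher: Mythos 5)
The paper does not actually prove this lemma: it imports it as \cite[Lemma 3.93]{HEINE2023108941}, so there is no in-paper argument to compare yours against. Judged on its own terms, your reconstruction from Propositions \ref{envv} and \ref{bitte} is essentially correct and is the expected argument: produce the monoidal functor $L_\mV:\Env(\mV)^\ot\to\mV^\ot$ over $\Ass$ from the identity algebra via Proposition \ref{envv}(2), pull the bitensored structure on $\mM$ back along $L_\mV\times L_\mW$ (this pullback is indeed bitensored over the envelopes, since pseudo-enrichment and tensors pull back along monoidal functors, cf.\ Lemma \ref{zzzz}), extend the identity of $\mM$ to an $\Env(\mV),\Env(\mW)$-linear functor out of $\B\Env(\mM)$ via Proposition \ref{bitte}(2), and verify both adjunctions objectwise through mapping-space identities.

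Two points of precision. First, the identities $\Env(\mV)^\ot(\X,\iota(\W_1,\ldots,\W_\n))\simeq\mV^\ot(L_\mV(\X),(\W_1,\ldots,\W_\n))$ and $\B\Env(\mM)(\V_1\ot\cdots\ot\X\ot\cdots\ot\W_\m,\Y)\simeq\Mul_\mM(\V_1,\ldots,\V_\n,\X,\W_1,\ldots,\W_\m;\Y)$ are not literally what Proposition \ref{envv}(1), respectively \ref{bitte}(1), assert (those are generation statements). You need in addition that the embeddings $\mV^\ot\subset\Env(\mV)^\ot$ and $\mM^\circledast\subset\B\Env(\mM)^\circledast$ are fully faithful (which the propositions do state), so that multimorphism spaces are preserved, and that in a (bi)tensored $\infty$-category mapping spaces out of iterated tensors are multimorphism spaces (Definition \ref{Defo} together with Lemma \ref{zzzz}); with these inputs your identities follow, so this is a gap in citation rather than in substance. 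Second, your closing worry about assembling the unit into a coherent natural transformation and checking triangle identities is unnecessary: the paper records a pointwise recognition criterion for enriched adjoints (the remark quoted from \cite{Heine2024Lur}), which requires only the relative left adjoints $L_\mV,L_\mW$ at the operad level together with, for each object of $\B\Env(\mM)$, a morphism to an object of $\mM$ inducing equivalences on multimorphism spaces. Your contraction maps supply exactly this, provided you check the version of the equivalence with additional tensor factors from $\Env(\mV)$ and $\Env(\mW)$ on either side; that case reduces to the one you state, since tensoring a generator $\V_1\ot\cdots\ot\X\ot\cdots\ot\W_\m$ by envelope objects yields another generator of the same shape.
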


For the next definition we use Propositions \ref{Day} and \ref{cool}:

\begin{definition}\label{zgbbl}Let $\mV^\ot \to \Ass, \mW^\ot \to \Ass$ be $\infty$-operads and $\mM^\circledast \to \mV^\ot \times \mW^\ot$ a weakly bienriched $\infty$-category.

\begin{enumerate}
\item The closed monoidal envelope is $$\mP\Env(\mV)^\ot:=\mP(\Env(\mV))^\ot \to \Ass$$  %of $\mV^\ot \to \Ass$.

%\item We call $$\mP\L\Env(\mM)^\circledast := \mP(\L\Env(\mM))^\circledast \to \mP\Env(\mV)^\ot \times \mP(\mW)^\ot$$ the closed left tensored envelope.

%\item We call $$\mP\R\Env(\mM)^\circledast := \mP(\R\Env(\mM))^\circledast \to \mP(\mV)^\ot \times \mP\Env(\mW)^\ot$$ the closed right tensored envelope.

\item The closed bitensored envelope is $$\mP\B\Env(\mM)^\circledast := \mP(\B\Env(\mM))^\circledast \to \mP\Env(\mV)^\ot \times \mP\Env(\mW)^\ot.$$

\item The closed left tensored envelope of $\mM^\circledast \to \mV^\ot \times \mW^\ot$ is the full left tensored subcategory $$\mP\L\Env(\mM)^\circledast \subset \mP\B\Env(\mM)^\circledast \times_{\mP\Env(\mW)^\ot} \mW^\ot \to \mP\Env(\mV)^\ot \times \mW^\ot$$ generated under small colimits by the objects $ \V_1 \ot ... \ot \V_\n \ot \X$ for $\n \geq 0$ and $\V_1,...,\V_\n \in \mV$ and $\X \in \mM.$
		
\item The closed right tensored envelope of $\mM^\circledast \to \mV^\ot \times \mW^\ot$ is the full weakly right enriched subcategory $$ \mP\R\Env(\mM)^\circledast \subset \mV^\ot \times_{\mP\Env(\mV)^\ot}\mP\B\Env(\mM)^\circledast \to \mV^\ot \times \mP\Env(\mW)^\ot$$ generated under small colimits by the objects $ \X \ot \W_1 \ot ... \ot \W_\m$ for $\m \geq 0$ and $\W_1,...,\W_\m \in \mW, \X \in \mM.$
		
\end{enumerate}	

\end{definition}

The embedding $\mM^\circledast \to \mP\B\Env(\mM)^\circledast$ induces embeddings $\mM^\circledast \to \mP\L\Env(\mM)^\circledast, \mM^\circledast \to \mP\R\Env(\mM)^\circledast.$

\vspace{1mm}
%\begin{remark}The left action of $\mP\Env(\mV)$ on $\mP\B\Env(\mM)$ restricts to$\mP\L\Env(\mM)$ and therefore $\mP\L\Env(\mM)^\circledast \to \mP\Env(\mV)^\ot \times \mW^\ot$exhibits $\mP\L\Env(\mM)$ as left tensored over $\mP\Env(\mV)$.Similarly, $\mP\R\Env(\mM)^\circledast \to \mV^\ot \times \mP\Env(\mW)^\ot$exhibits $\mP\R\Env(\mM)$ as right tensored over $\mP\Env(\mW)$.\end{remark}

Propositions \ref{Day}, \ref{cool}, \ref{envv} and \ref{bitte} imply the following corollary:

\begin{corollary}\label{envvcor} 

\begin{enumerate}
\item Let $\mV^\ot \to \Ass$ be a small $\infty$-operad. For every monoidal $\infty$-category $\mW^\ot \to \Ass$ compatible with small colimits
the induced functor $$\Alg_{\mP\Env(\mV)}(\mW) \to \Alg_{\mV}(\mW)$$
admits a fully faithful left adjoint that lands in the full subcategory 
of monoidal functors that admit a right adjoint. In particular, the following induced functor is an equivalence:
$$\Fun^{\ot,\L}(\mP\Env(\mV), \mW) \to \Alg_{\mV}(\mW).$$

\item Let $\mM^\circledast \to \mV^\ot \times \mW^\ot$ be a weakly bienriched $\infty$-category. For every bitensored $\infty$-category $\mN^\circledast \to\mP\Env(\mV)^\ot \times \mP\Env(\mW)^\ot $ compatible with small colimits the induced functor $$\Enr\Fun_{\mP\Env(\mV),\mP\Env(\mW)}(\mP\B\Env(\mM), \mN) \to \Enr\Fun_{\mV,\mW}(\mM, \mN)$$
admits a fully faithful left adjoint that lands in the full subcategory 
of $\mP\Env(\mV), \mP\Env(\mW)$-linear functors that admit a right adjoint. In particular, the following functor is an equivalence: $$\LinFun^\L_{\mP\Env(\mV),\mP\Env(\mW)}(\mP\B\Env(\mM), \mN) \to \Enr\Fun_{\mV,\mW}(\mM, \mN).$$

\end{enumerate}	

\end{corollary}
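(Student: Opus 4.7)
Both statements follow by composing two fully faithful left adjoints arising from universal properties already established. The key observation is that, by Definition \ref{zgbbl}, the closed envelopes are obtained by applying $\Ind_\emptyset = \mP$ to the ordinary envelopes, so we may invoke Propositions \ref{Day} and \ref{cool} in the degenerate regime $\kappa = \emptyset$, noting that $\emptyset$-filtered colimits are all small colimits and $\emptyset$-small colimits are empty (hence vacuously admitted).

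For part (1), I would first invoke Proposition \ref{envv} to obtain a fully faithful left adjoint $L_1 \colon \Alg_\mV(\mW) \hookrightarrow \Alg_{\Env(\mV)}(\mW)$ whose essential image is $\Fun^\ot(\Env(\mV),\mW)$. Since $\Env(\mV)$ is a small monoidal $\infty$-category and $\mW^\ot \to \Ass$ is compatible with small colimits, Proposition \ref{Day}(2) with $\kappa = \emptyset$ produces a fully faithful left adjoint $L_2 \colon \Alg_{\Env(\mV)}(\mW) \hookrightarrow \Alg_{\mP\Env(\mV)}(\mW)$, and Proposition \ref{Day}(3) (applicable because $\Env(\mV)$ vacuously admits $\emptyset$-small colimits) identifies its essential image with $\Alg^\L_{\mP\Env(\mV)}(\mW)$, i.e.\ maps of operads whose underlying functor admits a right adjoint. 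The composite $L_2 \circ L_1$ is then fully faithful and is the desired left adjoint to restriction $\Alg_{\mP\Env(\mV)}(\mW) \to \Alg_{\mV}(\mW)$.

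The remaining, and main, technical point is to check that this composite lands in $\Fun^{\ot,\L}(\mP\Env(\mV),\mW)$, i.e.\ that $L_2$ sends a monoidal functor $F \colon \Env(\mV)^\ot \to \mW^\ot$ to a monoidal $\widetilde F \colon \mP\Env(\mV)^\ot \to \mW^\ot$. I would verify this fiberwise over $\Ass$: for each $[\n]$, the fiber $\widetilde F_{[\n]} \colon \mP\Env(\mV)^{\times \n} \to \mW^{\times \n}$ is, by the construction of $L_2$, the small-colimit-preserving extension of $F_{[\n]}$, while the tensor product on $\mP\Env(\mV)$ is characterized as the Day convolution extension of the tensor product on $\Env(\mV)$. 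Since $F$ is monoidal and $\ot$ on $\mW$ preserves small colimits in each variable, the two extensions agree, so $\widetilde F$ preserves cocartesian lifts over $\Ass$ and is monoidal. Combined with the colimit preservation already built into $\mathrm{Im}(L_2)$, this yields the stated equivalence $\Fun^{\ot,\L}(\mP\Env(\mV),\mW) \simeq \Alg_\mV(\mW)$.

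Part (2) proceeds entirely in parallel: replace Proposition \ref{envv} by Proposition \ref{bitte} to get a fully faithful left adjoint with essential image $\LinFun_{\Env(\mV),\Env(\mW)}(\B\Env(\mM),\mN)$, and replace Proposition \ref{Day} by Proposition \ref{cool}(2)--(3) applied with $\kappa = \emptyset$ to the totally small bitensored $\infty$-category $\B\Env(\mM)^\circledast \to \Env(\mV)^\ot \times \Env(\mW)^\ot$. The same fiberwise Day convolution argument, applied separately to the left $\mV$-action and the right $\mW$-action, shows that the composite of these two fully faithful left adjoints lands in $\LinFun^\L_{\mP\Env(\mV),\mP\Env(\mW)}(\mP\B\Env(\mM),\mN)$, giving the stated equivalence. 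The only genuine obstacle in both parts is this fiberwise compatibility of the cocompletion with the (bi)monoidal structure.
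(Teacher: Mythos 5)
Your proposal is correct and follows essentially the same route as the paper, which deduces the corollary precisely by composing Proposition \ref{envv} (resp.\ \ref{bitte}) with Proposition \ref{Day} (resp.\ \ref{cool}) in the case $\kappa=\emptyset$. The only addition on your side is the explicit verification that the colimit-preserving extension of a monoidal (resp.\ linear) functor along $\Env(\mV)\subset\mP\Env(\mV)$ remains monoidal (resp.\ linear), via generation under small colimits and compatibility of the tensor products with colimits; this is exactly the routine step the paper leaves implicit, and your argument for it is sound.
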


\begin{remark}

By Corollary \ref{envvcor} (2) every enriched functor
$\F:\mM^\circledast \to \mN^\circledast$ starting at an absolute small weakly bienriched $\infty$-category uniquely extends to a linear functor
$\mP\B\Env(\mM)^\circledast \to \mN^\circledast$ that lies over left adjoint monoidal functors and admits an enriched right adjoint, which factors as $\mN^\circledast \subset \mP\B\Env(\mN)^\circledast \xrightarrow{\B\Env(\F)^*} \mP\B\Env(\mM)^\circledast$. 

\end{remark}

\begin{remark}\label{switch}
By the universal property of Corollary \ref{envvcor} (2) there is a canonical equivalence
$ \mP\B\Env(\mM^\rev)^\circledast \simeq (\mP\B\Env(\mM)^\rev)^\circledast$
that restricts to an equivalence
$ \mP\L\Env(\mM^\rev)^\circledast \simeq (\mP\R\Env(\mM)^\rev)^\circledast.$
	
\end{remark}

%Corollary \ref{envdecom} gives the following one:
%\begin{corollary}\label{envdecom2}
%Let $\mM^\circledast \to \mV^\ot \times \mW^\ot$ be a weakly bienriched $\infty$-category. There is an equivalence
%$$\mP\L\Env(\R\Env(\mM))^\circledast \simeq \mP\B\Env(\mM)^\circledast$$
%of $\infty$-categories presentably bitensored over $\mP\Env(\mV),\mP\Env(\mW).$	\end{corollary}

\begin{lemma}\label{alor}
%Let $\mN^\circledast \to \mV'^\ot \times \mW'^\ot $ be a bitensored $\infty$-category compatible with small colimits.
Let $\F: \mM^\circledast \to \mM'^\circledast$ be an enriched functor of absolute small weakly bienriched $\infty$-categories lying over maps of $\infty$-operads $\alpha:  \mV^\ot \to \mV'^\ot,\beta:\mW^\ot \to \mW'^\ot$.
For every $\V_1,...,\V_\n \in \mV, \W_1,...,\W_\m \in \mW$ for $\n,\m \geq 0$ and $\Z \in \mM' $ there is a canonical equivalence of presheaves on $\mP(\mM):$
$$ \kappa: \mP(\mM)(-, \Mul_{\mM'}(\alpha(\V_1),...,\alpha(\V_\n),\F(-), \beta(\W_1),...,\beta(\W_\m);\Z)) \simeq $$$$ \Mul_{\mP\B\Env(\mM)}(\V_1,...,\V_\n,\bj_!(-), \W_1,...,\W_\m; \F^*(\Z)).$$
	
\end{lemma}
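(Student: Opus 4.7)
The plan is to show that both sides, viewed as functors $\mP(\mM)^\op\to\mS$ in the variable $P$, send small colimits to limits, and then to match them canonically on the representables $P=\bj(\X)$ for $\X\in\mM$. For the LHS this is tautological: it is the representable $\mP(\mM)(-,Q)$ for the presheaf $Q$ on $\mM$ given by $\X\mapsto\Mul_{\mM'}(\alpha(\V_1),\ldots,\alpha(\V_\n),\F(\X),\beta(\W_1),\ldots,\beta(\W_\m);\Z)$. For the RHS, $\bj_!:\mP(\mM)\to\mP\B\Env(\mM)$ is the colimit-preserving extension of the enriched embedding $\mM\subset\mP\B\Env(\mM)$ along Yoneda, i.e.\ the left adjoint supplied by Corollary \ref{envvcor}~(2), and in particular preserves small colimits. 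Since $\mP\B\Env(\mM)^\circledast$ is bitensored and compatible with small colimits by Definition \ref{zgbbl} and Proposition \ref{cool}, the multi-morphism functor $\Mul_{\mP\B\Env(\mM)}(\V_1,\ldots,\V_\n,-,\W_1,\ldots,\W_\m;\F^*(\Z))$ sends colimits to limits; composition yields the required limit-preservation on $\mP(\mM)^\op$.

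Since limit-preserving functors $\mP(\mM)^\op\to\mS$ are right Kan extended from their restriction along Yoneda, it suffices to construct the equivalence, naturally in $\X$, at $P=\bj(\X)$. On the LHS, Yoneda gives $\mP(\mM)(\bj(\X),Q)\simeq Q(\X)$. For the RHS, $\bj_!(\bj(\X))$ is canonically the image of $\X$ under the embedding $\mM\subset\mP\B\Env(\mM)$. Let $\widetilde{\F}:\mP\B\Env(\mM)^\circledast\to\mP\B\Env(\mM')^\circledast$ denote the universal linear extension of $\mM\xrightarrow{\F}\mM'\subset\mP\B\Env(\mM')$ supplied by Corollary \ref{envvcor}~(2); by construction it restricts to $\F$ on $\mM$, lies over the extensions of $\alpha,\beta$ to $\mP\Env$, and its right adjoint composed with $\mM'\subset\mP\B\Env(\mM')$ is $\F^*$. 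The enriched adjunction together with the $\mP\Env(\alpha),\mP\Env(\beta)$-linearity of $\widetilde{\F}$ give
\[
\Mul_{\mP\B\Env(\mM)}\bigl(\V_1,\ldots,\V_\n,\X,\W_1,\ldots,\W_\m;\F^*(\Z)\bigr)\simeq\Mul_{\mP\B\Env(\mM')}\bigl(\alpha(\V_1),\ldots,\alpha(\V_\n),\F(\X),\beta(\W_1),\ldots,\beta(\W_\m);\Z\bigr),
\]
and the enriched embedding $\mM'\subset\mP\B\Env(\mM')$ of Proposition \ref{bitte} identifies the right-hand side with $Q(\X)$.

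Each step in this chain is manifestly natural in $\X$: Yoneda for the LHS; naturality of the unit of the adjunction producing $\F^*$ and of the tensor structure maps on the RHS. The pointwise equivalences therefore assemble into a natural equivalence of presheaves on $\mM$, which by the limit-preservation from the first paragraph extends uniquely through Yoneda to the claimed natural equivalence $\kappa$ of presheaves on $\mP(\mM)$. The principal point requiring attention is the coherent bookkeeping threading $\bj_!$, the universal extension $\widetilde{\F}$, the adjunction producing $\F^*$, and the enriched embeddings $\mM\hookrightarrow\mP\B\Env(\mM)$, $\mM'\hookrightarrow\mP\B\Env(\mM')$ into a single natural transformation; once each ingredient's naturality has been extracted from its universal property, the global equivalence is forced.
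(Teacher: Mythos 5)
Your proposal is correct and follows essentially the same route as the paper: both sides are reduced, via preservation of limits, to their restriction along the Yoneda embedding of $\mM$, and on representables the identification is made by combining Yoneda, the (enveloping) bitensored structure, and the adjunction $\F_!\dashv\F^*$. The only cosmetic difference is that the paper evaluates $\F^*(\Z)$ explicitly as the restricted presheaf $\Z\circ\B\Env(\F)$ on $\B\Env(\mM)$, whereas you invoke the enriched adjunction and linearity abstractly; these amount to the same computation.
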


\begin{proof}

%By Corollary \ref{envvcor} (2) every enriched functor$\mM^\circledast \to \mN^\circledast$ starting at an absolute small weakly bienriched $\infty$-category uniquely extends to a linear functor$\mP\B\Env(\mM)^\circledast \to \mN^\circledast$ that lies over left adjoint monoidal functors and admits a right adjoint $\beta$ relative to $\Ass \times \Ass.$In particular, every enriched functor$\F: \mM^\circledast \to \mM'^\circledast$ of absolute small weakly bienriched $\infty$-categories lying over maps of $\infty$-operads $\alpha:  \mV^\ot \to \mV'^\ot,\beta:\mW^\ot \to \mW'^\ot$ uniquely extends to a linear functor$\F_!: \mP\B\Env(\mM)^\circledast \to \mP\B\Env(\mN)^\circledast$ that lies over the left adjoint monoidal functors $\alpha_!, \beta_!$ and that admits a right adjoint $\F^*$ relative to $\Ass \times \Ass.$The right adjoint $\F^*$ is the functor $\B\Env(\F)^*$ and the right adjoint $\beta$ factors as $\mN^\circledast \subset \mP\B\Env(\mN)^\circledast \xrightarrow{\F^*} \mP\B\Env(\mM)^\circledast$ as a consequence of Remark \ref{ula}.For every $\V_1,...,\V_\n \in \mV, \W_1,...,\W_\m \in \mW$ for $\n,\m \geq 0$ and $\Z \in \mM' $ 
The equivalence $\kappa$ is uniquely determined by its restriction to $\mM$
since source and target of $\kappa$ preserve small limits.
There is a canonical equivalence of presheaves on $\mM$,
where $\bj: \mM \subset \B\Env(\mM)$ is the canonical embedding:
$$ \mP(\mM)(\y_\mM(-), \Mul_{\mM'}(\alpha(\V_1),...,\alpha(\V_\n),\F(-), \beta(\W_1),...,\beta(\W_\m);\Z)) \simeq$$$$ \Mul_{\mM'}(\alpha(\V_1),...,\alpha(\V_\n),\F(-), \beta(\W_1),...,\beta(\W_\m);\Z) \simeq$$$$ \B\Env(\mM')(\alpha(\V_1) \ot ... \ot \alpha(\V_\n) \ot \bj \circ \F(-) \ot \beta(\W_1) \ot... \ot \beta(\W_\m),\Z) \simeq$$$$
\F^*(\Z) \circ (\V_1 \ot ... \ot \V_\n \ot \bj(-) \ot \W_1 \ot... \ot \W_\m) \simeq $$
$$\mP\B\Env(\mM)(\V_1 \ot ... \ot \V_\n \ot (\y_{\B\Env(\mM)} \circ \bj(-)) \ot \W_1 \ot... \ot \W_\m, \F^*(\Z)) \simeq $$$$\Mul_{\mP\B\Env(\mM)}(\V_1,...,\V_\n,(\bj_! \circ \y_{\mM}(-)), \W_1,...,\W_\m; \F^*(\Z)).$$
%The latter equivalence uniquely extends to an equivalenceof presheaves on $\mP(\mM)$ preserving small limits:$$ \mP(\mM)(-, \Mul_{\mM'}(\alpha(\V_1),...,\alpha(\V_\n),\F(-), \beta(\W_1),...,\beta(\W_\m);\Z)) \simeq $$$$ \Mul_{\mP\B\Env(\mM)}(\V_1,...,\V_\n,\bj_!(-), \W_1,...,\W_\m; \F^*(\Z)).$$	
	
\end{proof}

Proposition \ref{bitte}, \ref{ljnbfg} and \ref{cool} imply the following corollary:

\begin{corollary}\label{leiik}
Let $\mV^\ot \to \Ass, \mW^\ot \to \Ass$ be small $\infty$-operads and $\K$ an $\infty$-category.
\begin{enumerate} 
\item 
The unique $\Env(\mV), \Env(\mW)$-linear functor
$$\B\Env(\K_{\mV,\mW})^\circledast \to \Env(\mV)^\circledast \times \K\times \Env(\mW)^\circledast $$
extending the functor $\K \to \Env(\mV) \times\K\times\Env(\mW)$
%sending $\K_{\mV,\mW}$ to the tensor unit 
is an equivalence, which is inverse to the unique $\Env(\mV), \Env(\mW)$-linear functor
$\Env(\mV)^\circledast \times \K\times \Env(\mW)^\circledast \to \B\Env(\K_{\mV,\mW})^\circledast $
extending the embedding $\K \subset \B\Env(\K_{\mV,\mW}).$

%The unique left adjoint $\mP\Env(\mV)$-linear functor
%$$\kappa: \mP\L\Env(\K_\mV)^\circledast \to \mP\Env(\mV)^\circledast$$sending $\triv_\mV$ to the tensor unit %extending the embedding $ \triv_\mV^\circledast \subset \mP\Env(\mV)^\circledast$is an equivalence, which is inverse to the unique left adjoint $\mP\Env(\mV)$-linear functor$\mP\Env(\mV)^\circledast \to \mP\L\Env(\triv_\mV)^\circledast $sending the tensor unit to $\triv_\mV.$
\item 
The unique left adjoint $\mP\Env(\mV), \mP\Env(\mW)$-linear functor
$$\mP\B\Env(\K_{\mV,\mW})^\circledast \to (\mP\Env(\mV) \otimes \mP(\K)\ot \mP\Env(\mW))^\circledast $$
extending the functor $\K \to \mP(\K)\to \mP\Env(\mV) \otimes \mP(\K)\ot \mP\Env(\mW)$
%sending $\K_{\mV,\mW}$ to the tensor unit 
is an equivalence, which is inverse to the unique left adjoint $\mP\Env(\mV), \mP\Env(\mW)$-linear functor
$$(\mP\Env(\mV)\otimes \mP(\K) \ot \mP\Env(\mW))^\circledast \to \mP\B\Env(\K_{\mV,\mW})^\circledast $$
extending the embedding $\K \subset \mP\B\Env(\K_{\mV,\mW}).$
%sending the tensor unit to $\triv_{\mV,\mW}.$	
	
\end{enumerate}
	
\end{corollary}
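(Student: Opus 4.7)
The strategy is a Yoneda argument: I would verify that both sides of each claimed equivalence co-represent the functor $\mN \mapsto \Fun(\K, \mN)$ (on underlying $\infty$-categories) on the relevant $\infty$-category of bitensored $\infty$-categories, and that the canonical comparison map in the statement induces, under this co-representation, the identity natural transformation. Both parts proceed along the same lines; the only distinction is that part (1) uses the universal properties of $\B\Env$ (Proposition \ref{bitte}) and of the free bitensored $\infty$-category $\mV \times \K \times \mW$ (Proposition \ref{Line}), while part (2) uses the closed versions together with free cocompletion.

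For part (1), I first observe that the weakly bi-enriched $\infty$-category $\K_{\mV,\mW}^\circledast = {_\mV\triv^\circledast} \times \K \times \triv_\mW^\circledast$ is precisely the $\Cat_\infty$-action of $\K$ on $_\mV\triv^\circledast \times \triv_\mW^\circledast$ from Remark \ref{2-catt}. Combining Remark \ref{2-cat} with Proposition \ref{ljnbfg}(3) therefore yields, for every bitensored $\mN^\circledast \to \Env(\mV)^\ot \times \Env(\mW)^\ot$,
\begin{equation*}
\Enr\Fun_{\mV,\mW}(\K_{\mV,\mW}, \mN) \simeq \Fun(\K, \Enr\Fun_{\mV,\mW}({_\mV\triv} \times \triv_\mW, \mN)) \simeq \Fun(\K, \mN).
\end{equation*}
Proposition \ref{bitte} then promotes this to $\LinFun_{\Env(\mV),\Env(\mW)}(\B\Env(\K_{\mV,\mW}), \mN) \simeq \Fun(\K, \mN)$, while Proposition \ref{Line} directly gives the same identification for $\Env(\mV) \times \K \times \Env(\mW)$. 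Both equivalences are implemented by precomposition with the canonical inclusion of $\K$, and by construction the comparison map in the statement commutes with these inclusions; hence Yoneda upgrades the natural identification of co-represented functors to the claimed equivalence.

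For part (2), the same argument works after replacing $\B\Env$ by $\mP\B\Env$ and $\Env$ by $\mP\Env$. Corollary \ref{envvcor}(2), combined with the computation of $\Enr\Fun_{\mV,\mW}(\K_{\mV,\mW}, \mN)$ above, gives
\begin{equation*}
\LinFun^\L_{\mP\Env(\mV),\mP\Env(\mW)}(\mP\B\Env(\K_{\mV,\mW}), \mN) \simeq \Fun(\K, \mN)
\end{equation*}
for every $\mN^\circledast \to \mP\Env(\mV)^\ot \times \mP\Env(\mW)^\ot$ compatible with small colimits. On the other hand, $\mP\Env(\mV) \otimes \mP(\K) \otimes \mP\Env(\mW)$ is, by the universal property of the tensor product on $\cc\cc\BMod$, the free cocomplete $\mP\Env(\mV),\mP\Env(\mW)$-bimodule generated by the presentable $\infty$-category $\mP(\K)$; combined with the fact that $\mP(\K)$ is the free cocompletion of $\K$, this yields
\begin{equation*}
\LinFun^\L_{\mP\Env(\mV),\mP\Env(\mW)}((\mP\Env(\mV) \otimes \mP(\K) \otimes \mP\Env(\mW))^\circledast, \mN) \simeq \Fun^\L(\mP(\K), \mN) \simeq \Fun(\K, \mN).
\end{equation*}
Yoneda again concludes.

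The main point requiring care is checking that the two chains of natural equivalences in each part are implemented by restriction along the respective canonical embeddings of $\K$, so that the comparison map in the statement (which by construction extends the identity on $\K$) corresponds under Yoneda to the identity natural transformation of co-represented functors. This is a routine but slightly delicate unwinding of the universal properties of $\B\Env$, $\mP\B\Env$, and the relative tensor product on $\cc\cc\BMod$, all applied to the single object $\K \in \Cat_\infty$.
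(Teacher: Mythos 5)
Your proof is correct and follows essentially the route the paper intends: the paper offers no written argument beyond asserting that Propositions \ref{bitte}, \ref{ljnbfg} and \ref{cool} imply the corollary, and your corepresentability argument (identifying $\Enr\Fun_{\mV,\mW}(\K_{\mV,\mW},-)\simeq\Fun(\K,-)$ via Remark \ref{2-cat} and Proposition \ref{ljnbfg}, then invoking the universal properties of $\B\Env$, $\mP\B\Env$, Proposition \ref{Line} and its closed analogue) is exactly the fleshed-out version of that claim. The only cosmetic difference is that in part (2) you route through Corollary \ref{envvcor} and the free-bimodule property of the presentable tensor product rather than applying $\mP$ from Proposition \ref{cool} to part (1), which is the same circle of ideas.
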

\subsection{Enrichment}

Before defining enriched $\infty$-categories we define the more general class of pseudo-enriched $\infty$-categories that contains all tensored $\infty$-categories.
We follow \cite[§ 3]{heine2024bienriched}.
The next definition is \cite[Definition 3.1.]{heine2024bienriched}:

\begin{definition}\label{Lu}Let $\phi: \mM^\circledast\to \mV^\ot \times \mW^\ot$ be a weakly bienriched $\infty$-category.
	
\begin{enumerate}
\item We say that $\phi$ exhibits $\mM$ as left pseudo-enriched in $\mV$ if $\mV^\ot \to\Ass$ is a monoidal $\infty$-category and for every $\X,\Y \in \mM$ and $\V_1, ..., \V_\n \in \mV, \W_1, ..., \W_\m \in \mW$ for $\n,\m \geq 0$ the map
$$ \Mul_{\mM}(\V_1 \ot ... \ot \V_\n, \X, \W_1, ..., \W_\m; \Y) \to \Mul_{\mM}(\V_1, ..., \V_\n, \X, \W_1, ..., \W_\m ; \Y)$$ 
induced by the active morphism $\V_1, ..., \V_\n \to \V_1 \ot  ... \ot \V_\n $
in $\mV^\ot$ is an equivalence.
\vspace{1mm}

\item We say that $\phi$ exhibits $\mM$ as right pseudo-enriched in $\mW$ if $\mW^\ot \to\Ass$ is a monoidal $\infty$-category and for every $\X,\Y \in \mM$ and $\V_1, ..., \V_\n \in \mV, \W_1, ..., \W_\m \in \mW$ for $\n,\m \geq 0$ the map
$$ \Mul_{\mM}(\V_1, ..., \V_\n, \X, \W_1 \ot ... \ot \W_\m; \Y) \to \Mul_{\mM}(\V_1, ...,\V_\n, \X, \W_1, ..., \W_\m ; \Y)$$ 
induced by the active morphism $\W_1, ..., \W_\m \to \W_1 \ot  ... \ot \W_\m $
in $\mW^\ot$ is an equivalence.

\item We say that $\phi$ exhibits $\mM$ as bipseudo-enriched in $\mV, \mW$ if $\mV^\ot \to\Ass, \mW^\ot \to\Ass $ are monoidal $\infty$-categories and for any $\X,\Y \in \mM$ and $\V_1, ..., \V_\n \in \mV, \W_1, ..., \W_\m \in \mW$ for $\n,\m \geq 0$ the following map is an equivalence:
$$ \Mul_{\mM}(\V_1 \ot ... \ot \V_\n, \X, \W_1 \ot ...\ot \W_\m; \Y) \to \Mul_{\mM}(\V_1, ..., \V_\n, \X, \W_1, ..., \W_\m ; \Y).$$

\end{enumerate}

\end{definition}

\begin{remark}\label{both}
A weakly bienriched $\infty$-category $\mM^\circledast\to \mV^\ot \times \mW^\ot$ exhibits $\mM$ as bipseudo-enriched in $\mV, \mW$ if and only if it exhibits $\mM$ as left pseudo-enriched in $\mV$ and right pseudo-enriched in $\mW$.
	
\end{remark}

%\begin{example}Let $\phi: \mM^\circledast\to \mV^\ot \times \mW^\ot$ be a weakly bienriched $\infty$-category.If $\phi$ exhibits $\mM$ as left tensored in $\mV$, right tensored in $\mW$, bitensored in $\mV, \mW$, respectively, then $\phi$ exhibits $\mM$ as left pseudo-enriched in $\mV$, right pseudo-enriched in $\mW$, bipseudo-enriched in $\mV, \mW.$
	
%Any bitensored $\infty$-category $\mM^\circledast\to \mV^\ot \times \mW^\ot$ exhibits $\mM$ as pseudo-enriched in $\mV, \mW$since for any $\X,\Y \in \mM$ and $\V_1, ..., \V_\n \in \mV, \W_1, ..., \W_\m \in \mW$ for $\n,\m \geq 0$ the map$$ \Mul_{\mM}(\V_1 \ot ... \ot \V_\n, \X, \W_1 \ot ...\ot \W_\m; \Y) \to \Mul_{\mM}(\V_1, ..., \V_\n, \X, \W_1, ..., \W_\m ; \Y)$$ identifies with the equivalence$$ \Mul_{\mM}(\V_1 \ot ... \ot \V_\n, \X, \W_1 \ot ...\ot \W_\m; \Y) \simeq \mM(\V_1 \ot ... \ot \V_\n \ot \X \ot \W_1 \ot ...\ot \W_\m, \Y) $$$$\simeq \Mul_{\mM}(\V_1, ..., \V_\n, \X, \W_1, ..., \W_\m ; \Y). $$ Similarly any weakly bienriched $\infty$-category that exhibits $\mM$ as\end{example}

The next lemma is \cite[Lemma 3.3.]{heine2024bienriched}:

\begin{lemma}\label{zzzz} 
	
\begin{enumerate}
\item A weakly bienriched $\infty$-category $\mM^\circledast \to \mV^\ot \times \mW^\ot$ is a left tensored $\infty$-category if and only if it is a left pseudo-enriched $\infty$-category and admits left tensors.	

\item A weakly bienriched $\infty$-category $\mM^\circledast \to \mV^\ot \times \mW^\ot$ is a right tensored $\infty$-category if and only if it is a right pseudo-enriched $\infty$-category and admits right tensors.	
	
\item A weakly bienriched $\infty$-category $\mM^\circledast \to \mV^\ot \times \mW^\ot$ is a bitensored $\infty$-category if and only if it is a bipseudo-enriched $\infty$-category and admits bitensors.
\end{enumerate}	
\end{lemma}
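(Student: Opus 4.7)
We prove (1); part (2) is dual via the involution $(-)^\rev$ of Notation \ref{invo}, and part (3) combines (1), (2), and Remark \ref{both}.

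\emph{Forward direction of (1).} Assume $\phi: \mM^\circledast \to \mV^\ot \times \mW^\ot$ is left tensored, so $\phi_1: \mM^\circledast \to \mV^\ot \to \Ass$ is a cocartesian fibration and $\phi$ preserves its cocartesian lifts. Given $V \in \mV$ and $X \in \mM$, form the $\phi_1$-cocartesian lift of the min-preserving morphism $[1] \to [0]$ in $\Ass$ (paired with $\id_{[0]}$ in the second $\Ass$-factor) starting at $(V, X) \in \mM^\circledast_{[1],[0]}$. Its target lies in $\mM = \mM^\circledast_{[0],[0]}$, and the cocartesian universal property, restricted to the multi-morphism subspaces of Notation \ref{mult}, is precisely the defining equivalence of Definition \ref{Defo}(1) for $V \ot X$. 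For the pseudo-enriched condition, the active morphism $[n] \to [1]$ in $\Ass$ corresponds in $\Delta$ to $[1] \to [n]$, $1 \mapsto n$, which preserves the maximum; by the axiom $\theta$ of Definition \ref{bla}(1) (which identifies the $\mM$-component at the maximum index), the $\phi_1$-cocartesian lift of this active paired with $\id_{[m]}$ preserves the $\mM$-component $X$, and the resulting cocartesian equivalence on multi-morphism spaces is exactly Definition \ref{Lu}(1).

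\emph{Backward direction of (1).} Assume $\phi$ is left pseudo-enriched and admits left tensors. Consider the bitensored envelope $\B\Env(\mM)^\circledast \to \Env(\mV)^\ot \times \Env(\mW)^\ot$ of Proposition \ref{bitte}; the enriched embedding $\mM^\circledast \subset \B\Env(\mM)^\circledast$ is fully faithful, and the composite $\B\Env(\mM)^\circledast \to \Env(\mV)^\ot \to \Ass$ is a cocartesian fibration because $\B\Env(\mM)$ is bitensored. To conclude that $\phi_1: \mM^\circledast \to \Ass$ is a cocartesian fibration whose cocartesian lifts $\phi_1$ preserves, it suffices to show that for every morphism in the first $\Ass$-factor (paired with identity in the second) the cocartesian lift in $\B\Env(\mM)^\circledast$ starting at an object of $\mM^\circledast$ has target lying in $\mM^\circledast$. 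Its target has first-factor entries given by iterated tensors in $\Env(\mV)$ of the $V_j$'s, which land in $\mV$ by monoidality of $\mV^\ot \to \Ass$, and $\mM$-entry given by an iterated left tensor of the trailing $V_j$'s with $X$, which lands in $\mM$ because $\mM$ admits left tensors and $\mV$ is monoidal. The pseudo-enriched condition is the ingredient that ensures the iterated multi-tensors computed in $\B\Env(\mM)$ agree with the iterated left tensors computed in $\mM$; full-faithfulness of the embedding then transports the cocartesian property back to $\mM^\circledast$.

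\emph{Main obstacle.} The central difficulty is identifying the target of the envelope's cocartesian lift with an object of $\mM^\circledast$: the $\mV$-part is handled by monoidality of $\mV^\ot$, the $\mM$-part uses the existence of left tensors, and the pseudo-enriched condition secures the compatibility between the iterated left tensors computed in $\mM$ and the multi-tensors computed in $\B\Env(\mM)$. Once this identification is made, the cocartesian property in $\mM^\circledast$, and hence the left tensored structure, follows formally from the fully faithful embedding.
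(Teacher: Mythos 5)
Your forward direction is essentially fine: in a left tensored $\infty$-category the cocartesian lift over the non-max-preserving inert map $[1]\to[0]$ in the first $\Ass$-factor produces the left tensor, and the lift over the active map $[\n]\to[1]$ (which fixes the $\mM$-entry) produces the pseudo-enrichment equivalence of Definition \ref{Lu}(1); this is the standard argument. (Note the paper itself gives no proof of this lemma but cites \cite[Lemma 3.3.]{Heine2024Lur}, so the comparison here is with what a correct argument must accomplish.)

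The backward direction, however, has a genuine gap at exactly the point you flag as the "main obstacle". You reduce to showing that the cocartesian lift in $\B\Env(\mM)^\circledast \to \Env(\mV)^\ot \to \Ass$ of a first-factor morphism, starting at an object of $\mM^\circledast$, has target in $\mM^\circledast$, and you justify this by saying its $\Env(\mV)$-entries "land in $\mV$ by monoidality of $\mV^\ot\to\Ass$" and its $\mM$-entry is an iterated left tensor landing in $\mM$. Both claims fail, because $\B\Env(\mM)^\circledast \to \Env(\mV)^\ot\times\Env(\mW)^\ot$ is the \emph{free} bitensored $\infty$-category on $\mM^\circledast$: cocartesian pushforward there produces formal tensor words. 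The resulting $\Env(\mV)$-entries are formal tensors of objects of $\mV$, which do not lie in the suboperad $\mV^\ot\subset\Env(\mV)^\ot$ (this embedding is a map of $\infty$-operads, not a monoidal functor; that is precisely why Lemma \ref{looocx} must supply a left adjoint relative to $\Ass$), and the $\mM$-entry is the formal tensor $\V_j\ot\cdots\ot\X$ in $\B\Env(\mM)$, not the left tensor computed in $\mM$ — the embedding $\mM^\circledast\subset\B\Env(\mM)^\circledast$ preserves multi-morphism objects (Proposition \ref{morpre}) but never tensors. No hypothesis on $\mM$, pseudo-enrichment included, can force these formal objects into $\mM^\circledast$, so the envelope cannot deliver the required cocartesian lifts by restriction. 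A correct argument builds the lifts inside $\mM^\circledast$ directly: over a first-factor morphism $[\n]\to[\n']$ starting at $(\V_1,\dots,\V_\n,\X,\W_1,\dots,\W_\m)$, take as target the object whose $\mV$-entries are tensor products formed in $\mV$ (monoidality of $\mV^\ot\to\Ass$) and whose $\mM$-entry is the iterated left tensor in $\mM$ of the trailing $\V_j$'s with $\X$, equipped with the evident multimorphism; its cocartesian universal property is then exactly the combination of the defining equivalence of the left tensor (Definition \ref{Defo}(1)) with the pseudo-enrichment equivalence identifying $\Mul_\mM(\V_1,\dots,\V_\n,\X,\dots;-)$ with $\Mul_\mM(\V_1\ot\cdots\ot\V_\n,\X,\dots;-)$, after which one checks these lifts compose and are carried by $\phi$ to cocartesian morphisms, using condition (2) of Definition \ref{bla}.
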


\begin{notation}
Let $$ \L\P\Enr, \R\P\Enr, \B\P\Enr \subset \omega\B\Enr $$ be the full subcategories of left pseudo-enriched, right pseudo-enriched, bipseudo-enriched $\infty$-categories, respectively.
	
\end{notation}

%The next example is:

%\begin{example}
%Let $\mM^\circledast \to \mV^\ot$ be a weakly left enriched $\infty$-category and $\mN^\circledast \to \mW^\ot$ a weakly right enriched $\infty$-category.
%\begin{enumerate}
%\item If $\mM^\circledast \to \mV^\ot$ exhibits $\mM$ as left pseudo-enriched in $\mV$, then
%$\mM^\circledast \times \mN^\circledast \to \mV^\ot \times \mW^\ot$ exhibits
%$\mM \times \mN$ as left pseudo-enriched in $\mV$.
%\item If $\mN^\circledast \to \mW^\ot$ exhibits $\mN$ as right pseudo-enriched in $\mW$, then
%$\mM^\circledast \times \mN^\circledast \to \mV^\ot \times \mW^\ot$ exhibits$\mM \times \mN$ as right pseudo-enriched in $\mW$.
%\item If $\mM^\circledast \to \mV^\ot$ exhibits $\mM$ as left pseudo-enriched in $\mV$ and $\mN^\circledast \to \mW^\ot$ exhibits $\mN$ as right pseudo-enriched in $\mW$, then$\mM^\circledast \times \mN^\circledast \to \mV^\ot \times \mW^\ot$ exhibits$\mM \times \mN$ as bipseudo-enriched. % in $\mV, \mW$.\end{enumerate}\end{example}

\begin{notation}\label{Enros}
Let $\mM^\circledast \to \mV^\ot \times \mW^\ot$ be a weakly bienriched $\infty$-category.
%Since $\mP\B\Env(\mM)^\circledast \to \mP\Env(\mV)^\ot \times \mP\Env(\mW)^\ot$is an $\infty$-category with closed biaction, by Proposition \ref{Line} evaluation at the tensor units gives an equivalence$$\Enr\Fun^\L_{\mP\Env(\mV),\mP\Env(\mW)}(\mP\Env(\mV)\ot \mP\B\Env(\mW),\mP\B\Env(\mM)) \simeq \mP\B\Env(\mM).$$By Proposition \ref{adjeq} there is a canonical equivalence$$\Enr\Fun^\L_{\mP\Env(\mV),\mP\Env(\mW)}(\mP\Env(\mV)\ot \mP\B\Env(\mW),\mP\B\Env(\mM))^\op$$$$\simeq \Enr\Fun^\R_{\mP\Env(\mV),\mP\Env(\mW)}(\mP\B\Env(\mM), \mP\Env(\mV)\ot \mP\B\Env(\mW)).$$The graph of $\mM$ is the $\mV,\mW$-enriched functor $\Gamma_\mM: \mM^\circledast \times \mM^\op \to \mP\Env(\mV)\ot \mP\B\Env(\mW)$ corresponding to the composition $$\mM^\op \subset \mP\B\Env(\mM)^\op \simeq \Enr\Fun^\L_{\mP\Env(\mV),\mP\Env(\mW)}(\mP\Env(\mV)\ot \mP\B\Env(\mW),\mP\B\Env(\mM))^\op$$$$\simeq \Enr\Fun^\R_{\mP\Env(\mV),\mP\Env(\mW)}(\mP\B\Env(\mM), \mP\Env(\mV)\ot \mP\B\Env(\mW))$$$$\to \Enr\Fun_{\mV,\mW}(\mM, \mP\Env(\mV)\ot \mP\B\Env(\mW)),$$where the last functor is restriction.
For every $\X\in \mM$ let $$\Gamma_\mM(\X,-): \mM^\circledast \to (\mP\Env(\mV)\ot \mP\B\Env(\mW))^\circledast \simeq \mP(\Env(\mV) \times \Env(\mW))^\circledast$$ be the restriction of the enriched right adjoint of the left adjoint $\mP\Env(\mV),\mP\Env(\mW)$-linear functor $$(\mP\Env(\mV)\ot \mP\B\Env(\mW))^\circledast \to \mP\B\Env(\mM)^\circledast, (\V,\W) \mapsto \V\ot\X\ot\W.$$ 
We get a $\mV,\mW$-enriched functor $\Gamma_\mM: \mM^\op \times \mM^\circledast \to \mP(\Env(\mV) \times \Env(\mW))^\circledast$ that we call graph of $\mM.$

\end{notation}
\begin{remark}
	
For every $\X, \Y \in \mM$ and $\V_1 ,...,\V_\n \in \mV, \W_1,...,\W_\m \in \mW$ for $\n, \m \geq 0$ there is a canonical equivalence
$$ \Gamma_\mM(\X,\Y)(\V_1 \ot ... \ot \V_\n, \W_1 \ot ...\ot \W_\m) \simeq \mP\B\Env(\mM)(\V_1 \ot ... \ot \V_\n \ot \X \ot \W_1,...,\W_\m, \Y) $$$$ \simeq \Mul_\mM(\V_1, ..., \V_\n, \X, \W_1,...,\W_\m; \Y).$$

\end{remark}

The next remark is \cite[Remark 3.8.]{heine2024bienriched}:

\begin{remark}\label{rhhhj} An absolute small weakly bienriched $\infty$-category $\mM^\circledast \to \mV^\ot \times \mW^\ot$ exhibits $\mM$ as 
\begin{enumerate}
		
\item left pseudo-enriched in $\mV$ if and only if $\mV^\ot \to \Ass$ is a monoidal $\infty$-category and for every $\X,\Y \in \mM$ the object $$\Gamma_\mM(\X,\Y)\in \mP(\Env(\mV) \times \Env(\mW)) \simeq \Fun(\Env(\mW)^\op, \mP\Env(\mV))$$ lies in $\Fun(\Env(\mW)^\op, \mP(\mV)).$
		
\item right pseudo-enriched in $\mW$ if and only if $\mW^\ot \to \Ass$ is a monoidal $\infty$-category and for every $\X,\Y \in \mM$ the object $$\Gamma_\mM(\X,\Y)\in \mP(\Env(\mV) \times \Env(\mW)) \simeq \Fun(\Env(\mV)^\op, \mP\Env(\mW))$$ 
lies in $\Fun(\Env(\mV)^\op, \mP(\mW)).$
			
\item bipseudo-enriched in $\mV, \mW$ if and only if $\mV^\ot \to \Ass, \mW^\ot \to \Ass$ are monoidal $\infty$-categories and for every $\X,\Y \in \mM$ the object $$\Gamma_\mM(\X,\Y)\in \mP(\Env(\mV) \times \Env(\mW))$$ lies in $\mP(\mV \times \mW).$
\end{enumerate}
	
\end{remark}

Now we are ready to define enriched $\infty$-categories. The next definition is \cite[Definition 3.9.]{heine2024bienriched}:

\begin{definition}
Let $\mM^\circledast\to \mV^\ot \times \mW^\ot $ be a weakly bienriched $\infty$-category.

\begin{enumerate}
\item A left multi-morphism object of $\W_1,..., \W_\m, \X, \Y \in \mM$ for $\m \geq 0$ is an object $$\L\Mul\Mor_{\mM}(\X, \W_1,..., \W_\n; \Y) \in \mV $$ such that there is a multi-morphism $\beta \in \Mul_{\mM}(\L\Mul\Mor_{\mM}(\X, \W_1,..., \W_\n; \Y), \X, \W_1,..., \W_\m; \Y) $ that induces for every objects $\V_1, ..., \V_\n \in \mV$ for $\n \geq 0$ an equivalence
$$\hspace{3mm} \Mul_{\mV}(\V_1, ..., \V_\n;  \L\Mul\Mor_{\mM}(\X, \W_1,..., \W_\m; \Y)) \simeq \Mul_{\mM}(\V_1, ..., \V_\n, \X, \W_1, ..., \W_\m; \Y).$$	
	
\item A right multi-morphism object of $\V_1,..., \V_\n, \X, \Y \in \mM$ for $\n \geq 0$ is an object $$\R\Mul\Mor_{\mM}(\V_1,..., \V_\n, \X; \Y) \in \mW $$ such that there is a multi-morphism $\alpha \in \Mul_{\mM}(\V_1,..., \V_\n, \X, \R\Mul\Mor_{\mM}(\V_1,..., \V_\n, \X; \Y); \Y) $ that induces for every objects $\W_1, ..., \W_\m \in \mW$ for $\m \geq 0$ an equivalence
$$\hspace{3mm}\Mul_{\mW}(\W_1, ..., \W_\m;  \R\Mul\Mor_{\mM}(\V_1,..., \V_\n, \X; \Y)) \simeq \Mul_{\mM}(\V_1, ..., \V_\n, \X, \W_1, ..., \W_\m; \Y).$$

%\item A bi-multi-morphism object of $\X, \Y \in \mM$ for $\m \geq 0$ is an object $\Mor_{\mM}(\X, \Y) \in \mV $ together with a multi-morphism $\beta \in \Mul_{\mM}(\Mul\Mor_{\mM}(\W_1,..., \W_\n, \X, \Y), \X, \W_1,..., \W_\m; \Y) $ that induces for every objects $\V_1, ..., \V_\n \in \mV$ an equivalence$$\Mul_{\mV}(\V_1, ..., \V_\n;  \Mul\Mor_{\mM}(\W_1,..., \W_\m, \X, \Y)) \simeq \Mul_{\mM}(\V_1, ..., \V_\n, \X, \W_1, ..., \W_\m; \Y).$$

\end{enumerate}

\end{definition}

The next definition is \cite[Definition 3.10.]{heine2024bienriched}:

\begin{definition}
Let $\mM^\circledast\to \mV^\ot \times \mW^\ot $ be a weakly bienriched $\infty$-category and $\X, \Y \in \mM$.	

\begin{enumerate}
\item A left morphism object of $\X,\Y $ in $\mM$ is a factorization
$\L\Mor_\mM(\X,\Y): \Env(\mW)^\op \to \mV$ of the functor 
$\Env(\mW)^\op \to \mP\Env(\mV)$ corresponding to $\Gamma_\mM(\X,\Y) \in \mP(\Env(\mV) \times \Env(\mW))$.

\item A right morphism object of $\X,\Y $ in $\mM$ is a factorization
$\R\Mor_\mM(\X,\Y): \Env(\mV)^\op \to \mW$ of the functor 
$\Env(\mV)^\op \to \mP\Env(\mW)$ corresponding to $\Gamma_\mM(\X,\Y) \in \mP(\Env(\mV) \times \Env(\mW))$.

\end{enumerate}

\end{definition}

\begin{remark}Let $\mM^\circledast\to \mV^\ot \times \mW^\ot $ be a weakly bienriched $\infty$-category and $\X, \Y \in \mM, \W_1, ..., \W_\m \in \mW$ for $\m \geq 0$.
A left multi-morphism object $\L\Mul\Mor_{\mM}(\W_1,..., \W_\m, \X; \Y) \in \mV$
represents the presheaf $\Gamma_\mM(\X,\Y)(\W_1 \ot ... \ot \W_\m,-) \in \mP\Env(\mV).$ 
Consequently, there is a left morphism object $\L\Mor_\mM(\X,\Y): \Env(\mW)^\op \to \mV$
if and only if for every $ \W_1, ..., \W_\m \in \mW$ for $\m \geq 0$ there is a left multi-morphism object $\L\Mul\Mor_{\mM}(\W_1,..., \W_\m, \X; \Y) \in \mV$.
In this case there is a canonical equivalence
$$ \L\Mor_\mM(\X,\Y)(\W_1 \ot ... \ot \W_\m) \simeq \L\Mul\Mor_{\mM}(\X, \W_1,..., \W_\m; \Y).$$
The similar holds for right (multi-) morphism objects. 
\end{remark}

\begin{remark}Let $\mM^\circledast\to \mV^\ot \times \mW^\ot $ be a weakly bienriched $\infty$-category and $\X, \Y \in \mM.$ If the left and right morphism object of $\X,\Y \in \mM$ exist, for every $\V_1, ..., \V_\n \in \mV, \W_1,..., \W_\m \in \mW$ for $\n,\m \geq 0$ there is a canonical equivalence $$ \Mul_{\mW}( \W_1,..., \W_\m; \R\Mor_\mM(\X,\Y)(\V_1 \ot ... \ot \V_\n)) \simeq$$$$ \Mul_{\mW}( \W_1,..., \W_\m;  \R\Mul\Mor_{\mM}(\V_1, ..., \V_\n, \X; \Y)) \simeq\Mul_{\mM}(\V_1, ..., \V_\n, \X, \W_1, ..., \W_\m; \Y) \simeq$$$$ \Mul_{\mV}(\V_1, ..., \V_\n;  \L\Mul\Mor_{\mM}(\X, \W_1,..., \W_\m; \Y)) \simeq \Mul_{\mV}(\V_1, ..., \V_\n; \L\Mor_\mM(\X,\Y)(\W_1 \ot ... \ot \W_\m)).$$
This gives an adjunction 
$$\L\Mor_{\mM}(\X,\Y)^\op: \Env(\mW) \rightleftarrows \Env(\mV)^\op: \R\Mor_{\mM}(\X,\Y),$$
where the left adjoint lands in $\mV^\op$ and the right adjoint lands in $\mW$
\cite[Remark 3.20.]{heine2024bienriched}.

\end{remark}

%\begin{lemma}\label{ato} Let $\mM^\circledast \to \mV^\ot \times \mW^\ot$ be a small bi-tensored $\infty$-category.The linear embedding $\mM^\circledast \subset \mP(\mM)^\circledast$ preserves left and right multi-morphism objects.\end{lemma}

The next proposition is \cite[Proposition 3.13.]{heine2024bienriched}:

\begin{proposition}\label{morpre}
Let $\mM^\circledast \to \mV^\ot \times \mW^\ot$ be a weakly bienriched $\infty$-category.
The embeddings $$\mM^\circledast \subset \B\Env(\mM)^\circledast, \B\Env(\mM)^\circledast \subset \mP\B\Env(\mM)^\circledast$$ of weakly bienriched $\infty$-categories preserve left and right multi-morphism objects.
%and so preserves left and right morphism objects.

\end{proposition}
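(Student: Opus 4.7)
The plan is to verify both embeddings by a common template. Each is a fully faithful enriched functor lying over embeddings of operads, where the target is bi-pseudo-enriched (being bitensored, via Lemma~\ref{zzzz}), and where every object of the target operads admits a canonical presentation in terms of objects of the source operads.

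For the embedding $\mM^\circledast \subset \B\Env(\mM)^\circledast$, I fix $\X,\Y \in \mM$ and $\W_1,\ldots,\W_\m \in \mW$ and assume the left multi-morphism object $\U := \L\Mul\Mor_\mM(\X,\W_1,\ldots,\W_\m;\Y) \in \mV$ exists. It suffices to show that the image of $\U$ in $\Env(\mV)$ represents the functor $\T \mapsto \Mul_{\B\Env(\mM)}(\T,\X,\W_1,\ldots,\W_\m;\Y)$. Writing any $\T \in \Env(\mV)$ as $\V_1 \ot \cdots \ot \V_\n$ with $\V_i \in \mV$ via Proposition~\ref{envv}(1), the chain
\begin{align*}
\Env(\mV)(\T,\U)
&\simeq \Mul_{\Env(\mV)}(\V_1,\ldots,\V_\n;\U) \\
&\simeq \Mul_\mV(\V_1,\ldots,\V_\n;\U) \\
&\simeq \Mul_\mM(\V_1,\ldots,\V_\n,\X,\W_1,\ldots,\W_\m;\Y) \\
&\simeq \Mul_{\B\Env(\mM)}(\V_1,\ldots,\V_\n,\X,\W_1,\ldots,\W_\m;\Y) \\
&\simeq \Mul_{\B\Env(\mM)}(\T,\X,\W_1,\ldots,\W_\m;\Y)
\end{align*}
uses, in order: monoidal pseudo-enrichment of $\Env(\mV)$; full faithfulness of $\mV^\ot \subset \Env(\mV)^\ot$ as a map of operads, combined with the pullback-square characterization of multi-morphism spaces in Definition~\ref{ek}(2); the defining universal property of $\U$; full faithfulness of the enriched embedding $\mM^\circledast \subset \B\Env(\mM)^\circledast$ (Proposition~\ref{bica}); and left pseudo-enrichment of the bitensored envelope $\B\Env(\mM)^\circledast$. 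Right multi-morphism objects follow by the symmetric argument via the involution $(-)^\rev$ of Notation~\ref{invo}.

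The second embedding $\B\Env(\mM)^\circledast \subset \mP\B\Env(\mM)^\circledast$ is handled analogously, except that an arbitrary object $\P \in \mP\Env(\mV)$ is canonically presented as a small colimit $\P \simeq \colim_i \T_i$ of representables $\T_i \in \Env(\mV)$. Given a left multi-morphism object $\U \in \Env(\mV)$ of $\X, \W_1, \ldots, \W_\m, \Y$ in $\B\Env(\mM)$, the parallel chain
\begin{align*}
\mP\Env(\mV)(\P,\U)
&\simeq \lim_i \Env(\mV)(\T_i,\U) \\
&\simeq \lim_i \Mul_{\B\Env(\mM)}(\T_i,\X,\W_1,\ldots,\W_\m;\Y) \\
&\simeq \lim_i \Mul_{\mP\B\Env(\mM)}(\T_i,\X,\W_1,\ldots,\W_\m;\Y) \\
&\simeq \Mul_{\mP\B\Env(\mM)}(\P,\X,\W_1,\ldots,\W_\m;\Y)
\end{align*}
invokes the Yoneda equivalence, the representing property of $\U$, full faithfulness of the Yoneda-type embedding $\B\Env(\mM)^\circledast \subset \mP\B\Env(\mM)^\circledast$ (Proposition~\ref{bica}), and compatibility of $\mP\B\Env(\mM)^\circledast$ with small colimits, which is precisely the content of the construction via Proposition~\ref{cool}. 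I expect no substantive obstacle; the only delicate point is the second equivalence of the first display, which is an immediate consequence of the fact that a fully faithful map of operads induces equivalences on all fibres of its mapping-space functor, hence on multi-morphism spaces.
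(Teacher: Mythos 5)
Your argument is correct; note that the paper itself does not prove this statement but imports it as \cite[Proposition 3.13.]{Heine2024Lur}, so there is no in-paper proof to compare against. Your verification is the natural one and matches the structure one would expect: reduce the universal property over $\Env(\mV)$ to objects of $\mV$ via Proposition \ref{envv} (1) together with the monoidality/pseudo-enrichment of the envelopes, and reduce the universal property over $\mP\Env(\mV)$ to representables via the canonical colimit presentation together with the compatibility of $\mP\B\Env(\mM)^\circledast$ with small colimits from Proposition \ref{cool}. Two small points of hygiene: the fact that the enriched embeddings $\mM^\circledast \subset \B\Env(\mM)^\circledast$ and $\B\Env(\mM)^\circledast \subset \mP\B\Env(\mM)^\circledast$ induce equivalences on multi-morphism spaces is not really the content of Proposition \ref{bica} (which characterizes cartesian morphisms of $\omega\B\Enr$); it follows directly from the definition of multi-morphism spaces as subspaces of mapping spaces of the total $\infty$-categories (Notation \ref{mult}) together with the full faithfulness asserted in Propositions \ref{bitte} and \ref{cool}, so you should cite those instead. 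Also, to conclude that the image of $\L\Mul\Mor_\mM(\X,\W_1,\ldots,\W_\m;\Y)$ is a left multi-morphism object you need the displayed equivalences to be induced by composition with the image of the universal multi-morphism $\beta$; your chains are in fact natural in this sense, but it is worth saying so explicitly, since the definition requires a specific exhibiting multi-morphism rather than an abstract equivalence of functors.
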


The next definition is \cite[Definition 3.14.]{heine2024bienriched}:

\begin{definition}
	
\begin{enumerate}
\item A weakly bienriched $\infty$-category $\mM^\circledast \to \mV^\ot \times \mW^\ot$ exhibits $\mM$ as left enriched in $\mV$ if for every $\X, \Y \in \mM$ there is a left morphism object $\L\Mor_{\mM}( \X, \Y) \in \mV$.

\item A weakly bienriched $\infty$-category $\mM^\circledast \to \mV^\ot \times \mW^\ot$ exhibits $\mM$ as right enriched in $\mW$ if for every $\X, \Y \in \mM$ there is a right morphism object $\R\Mor_{\mM}(\X, \Y) \in \mW.$

\item A weakly bienriched $\infty$-category $\mM^\circledast \to \mV^\ot \times \mW^\ot$ exhibits $\mM$ as bienriched in $\mV, \mW$ if it exhibits $\mM$ as left enriched in $\mV$ and right enriched in $\mW$.

\end{enumerate}

\end{definition}

\begin{notation}
Let $$\L\Enr, \R\Enr, \B\Enr \subset \omega\B\Enr$$ be the full subcategories of left enriched, right enriched, bienriched $\infty$-categories.

\end{notation}

The next lemma is \cite[Lemma 3.18.]{heine2024bienriched}:

\begin{lemma}
A weakly bienriched $\infty$-category $\mM^\circledast \to \mV^\ot \times \mW^\ot$ that exhibits $\mM$ as left enriched in a monoidal $\infty$-category, right enriched in a monoidal $\infty$-category, bienriched in monoidal $\infty$-categories, respectively, exhibits $\mM$ as left pseudo-enriched, right pseudo-enriched, bipseudo-enriched, respectively.
	
\end{lemma}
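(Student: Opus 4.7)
By Remark \ref{both} it suffices to handle the left case; the right case is formally dual, and the bi-enriched case follows by combining the two (applying both one-sided statements and recalling that the bi-pseudo-enriched condition is the conjunction of the two one-sided ones).

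So assume $\mV^\ot \to \Ass$ is a monoidal $\infty$-category and $\mM^\circledast \to \mV^\ot \times \mW^\ot$ is left enriched in $\mV$. Fix $\X, \Y \in \mM$ and $\V_1,\dots,\V_\n \in \mV, \W_1,\dots,\W_\m \in \mW$. The plan is to rewrite both sides of the map
\[
\Mul_{\mM}(\V_1 \ot \cdots \ot \V_\n,\X,\W_1,\dots,\W_\m;\Y) \to \Mul_{\mM}(\V_1,\dots,\V_\n,\X,\W_1,\dots,\W_\m;\Y)
\]
in terms of the left multi-morphism object $\L := \L\Mul\Mor_\mM(\X,\W_1,\dots,\W_\m;\Y) \in \mV$, which exists because $\L\Mor_\mM(\X,\Y)\colon \Env(\mW)^\op \to \mV$ exists by hypothesis and represents the presheaf $\Gamma_\mM(\X,\Y)(-,\W_1\ot\cdots\ot\W_\m)$ on $\mV$ (here I use the remark recorded just after the definition of left morphism object). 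Applying the defining universal property of $\L$ twice, once with input $\V_1,\dots,\V_\n$ and once with input $\V_1\ot\cdots\ot\V_\n$, one obtains equivalences
\[
\Mul_{\mM}(\V_1,\dots,\V_\n,\X,\W_1,\dots,\W_\m;\Y) \simeq \Mul_\mV(\V_1,\dots,\V_\n;\L),
\]
\[
\Mul_{\mM}(\V_1\ot\cdots\ot\V_\n,\X,\W_1,\dots,\W_\m;\Y) \simeq \Mul_\mV(\V_1\ot\cdots\ot\V_\n;\L) \simeq \mV(\V_1\ot\cdots\ot\V_\n,\L).
\]

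The remaining point is that in a monoidal $\infty$-category the comparison
\[
\mV(\V_1\ot\cdots\ot\V_\n,\L) \to \Mul_\mV(\V_1,\dots,\V_\n;\L)
\]
induced by the cocartesian lift $(\V_1,\dots,\V_\n) \to \V_1\ot\cdots\ot\V_\n$ of the active morphism $[1]\to[\n]$ in $\Ass$ (which exists because $\mV^\ot \to \Ass$ is cocartesian, not merely relative cocartesian over inerts) is an equivalence. This is the universal property of a cocartesian lift: precomposition with the lift identifies morphisms in $\mV^\ot$ out of the tensor lying over the active map with multi-morphisms out of $(\V_1,\dots,\V_\n)$.

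Stitching these three equivalences together yields the displayed equivalence, and the only thing to verify is naturality, namely that the composite coincides with the map induced by the active morphism $\V_1,\dots,\V_\n \to \V_1\ot\cdots\ot\V_\n$ as in Definition \ref{Lu}. This is immediate from the construction, since both equivalences through $\L$ are obtained by composing with the universal multi-morphism $\beta \in \Mul_\mM(\L,\X,\W_1,\dots,\W_\m;\Y)$, and composition is associative. The main (very mild) obstacle is just keeping track of the identification of $\Mul_\mV(\V_1\ot\cdots\ot\V_\n;\L)$ with $\mV(\V_1\ot\cdots\ot\V_\n,\L)$ and checking that the resulting triangle of multi-morphism spaces commutes up to the expected homotopy; both follow formally from the cocartesian property of the active lift in $\mV^\ot$.
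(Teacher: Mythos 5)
The paper itself does not prove this lemma; it is quoted from the reference (\cite[Lemma 3.18.]{Heine2024Lur}), so there is no in-text proof to compare with. Your argument is correct and is the natural one: by hypothesis the left multi-morphism object $\L=\L\Mul\Mor_\mM(\X,\W_1,\dots,\W_\m;\Y)$ exists, the universal multi-morphism $\beta$ identifies both $\Mul_\mM(\V_1,\dots,\V_\n,\X,\vec{\W};\Y)$ and $\Mul_\mM(\V_1\ot\cdots\ot\V_\n,\X,\vec{\W};\Y)$ with multi-morphism spaces into $\L$ in $\mV^\ot$, and the monoidality hypothesis (i.e.\ that $\mV^\ot\to\Ass$ is a cocartesian fibration) is exactly what makes precomposition with the cocartesian lift of the active morphism an equivalence $\mV(\V_1\ot\cdots\ot\V_\n,\L)\simeq\Mul_\mV(\V_1,\dots,\V_\n;\L)$; the right-hand and bi-enriched cases follow by the reversal involution and Remark \ref{both}. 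The one step you rightly flag, and which deserves the sentence you give it, is the compatibility of these identifications with the comparison map of Definition \ref{Lu}: since that map is precomposition in $\mM^\circledast$ with a lift of the active morphism (identity on $\X$ and the $\W_\bj$), the commutativity of the relevant triangle is just associativity of composition with $\beta$, which is the level of rigor the paper itself uses for such naturality statements.
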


%\begin{remark}Let $\mC, \mD$ be small $\infty$-categories.There is a canonical equivalence $$\Fun^\R(\mC^\op,\mD)\simeq \Fun^\R(\mD^\op,\mC).$$

%Let $\mC, \mD$ be presentable $\infty$-categories. There is a canonical equivalence$$\mC \ot \mD \simeq \Fun^\R((\mC \otimes \mD)^\op, \mS) \simeq \Fun^\R(\mC^\op,\mD) \simeq \mD \ot \mC. $$ The universal functor $\mC \times \mD \to \mC \ot \mD \simeq \Fun^\R(\mC^\op,\mD)$preserving small colimits component-wise sends $\C,\D$ to the image of $\D$ under the left adjoint $\lan_\C$ of the functor $\Fun^\R(\mC^\op,\mD) \to \mD$ evaluating at $\C.$\end{remark}

%\begin{remark}\label{EEnr}Let $\mM^\circledast \to \mV^\ot \times \mW^\ot$ be a weakly bienriched $\infty$-category and $\X,\Y \in \mM$ such that there is a left and a right morphism object $\L\Mor_\mM(\X,\Y), \R\Mor_\mM(\X,\Y).$ Then $$ \Gamma_\mM(\X,\Y)\in \mP(\Env(\mV) \times \Env(\mW)) \simeq \Fun(\Env(\mV)^\op, \mP\Env(\mW))$$is the image of $\R\Mor_\mM(\X,\Y) \in \Fun(\Env(\mV)^\op, \mW)$ and $$\Gamma_\mM(\X,\Y)\in \mP(\Env(\mV) \times \Env(\mW)) \simeq \Fun(\Env(\mW)^\op, \mP\Env(\mV)) $$is the image of $ \L\Mor_\mM(\X,\Y) \in \Fun(\Env(\mW)^\op, \mV).$ Thus $ \R\Mor_\mM(\X,\Y), \L\Mor_\mM(\X,\Y)$ lie in $$\Fun^\R(\Env(\mV)^\op, \Env(\mW)) \simeq \Fun^\R(\Env(\mW)^\op, \Env(\mV))$$so that there is an adjunction $$\L\Mor_{\mM}(\X,\Y)^\op: \Env(\mW) \rightleftarrows \Env(\mV)^\op: \R\Mor_{\mM}(\X,\Y). $$\end{remark}

\begin{remark}Let $\mV^\ot \to \Ass, \mW^\ot \to \Ass$ be presentably monoidal $\infty$-categories, $\mM^\circledast \to \mV^\ot \times \mW^\ot$ a weakly bienriched $\infty$-category and $\alpha: \mV \times \mW \to \mV \ot \mW$ the universal functor preserving small colimits component-wise.
By the adjoint functor theorem the canonical embedding $$\alpha^*: \mV \ot \mW \subset \widehat{\mP}(\mV \ot \mW) \to \widehat{\mP}(\mV \times \mW) \simeq \Fun(\mV^\op, \widehat{\mP}(\mW))$$ 
induces an equivalence $\mV \ot \mW \to \Fun^\R(\mV^\op,\mW).$
%to the full subcategory of  
Under this equivalence
%$\mV \ot \mW \simeq \Fun^\R(\mV^\op,\mW)$ 
the left morphism object $\L\Mor_\mM(\X,\Y)$ of $\X,\Y \in \mM$ is an object of $\mV \ot \mW$, whose image under the embedding $$\alpha^*: \mV \ot \mW \subset \widehat{\mP}(\mV \ot \mW) \to \widehat{\mP}(\mV \times \mW)$$ is $\Mul_{\mM}(-, \X,-; \Y).$	
In particular, for every $\V \in \mV, \W \in \mW$ there is a canonical equivalence
\begin{equation}\label{uop}
\mV \ot \mW(\alpha(\V,\W), \L\Mor_{\mM}(\X, \Y)) \simeq \mM(\V \ot \X \ot \W,\Y) \simeq \Mul_{\mM}(\V,\X,\W;\Y).\end{equation}

\end{remark}

\begin{example}\label{Biolk}

Every presentably bitensored $\infty$-category $\mM^\circledast \to \mV^\ot \times \mW^\ot$ exhibits $\mM$ as bienriched in $\mV,\mW$.
By Example \ref{Line} for any $\X \in \mM$ there is a unique left adjoint $\mV, \mW$-linear functor $\mV\ot \mW \to \mM$ sending the tensor unit to $\X$.
The right adjoint sends $\Y \in \mM$ to a left morphism object $\L\Mor_{\mM}(\X, \Y) \in \mV \ot \mW $ by equivalence (\ref{uop}).
%since for every $\V \in \mV, \W \in \mW$ there is a canonical equivalence$$\mV \ot \mW(\alpha(\V,\W), \L\Mor_{\mM}(\X, \Y)) \simeq \mM(\V \ot \X \ot \W,\Y) \simeq \Mul_{\mM}(\V,\X,\W;\Y).$$
In particular, for every weakly bienriched $\infty$-category $\mM^\circledast \to \mV^\ot \times \mW^\ot$ and every $\X, \Y \in \mM$ the object
$\Gamma_\mM(\X, \Y) \in \mP(\Env(\mV)\times \Env(\mW)) \simeq \mP\Env(\mV) \otimes \mP\Env(\mW) $ is the left morphism object $\L\Mor_{\bar{\mM}}(\X,\Y)$ of $\X$ and $\Y$,
where $\bar{\mM}^\circledast \subset \mP\B\Env(\mM)^\circledast \to \mP\Env(\mV)^\ot \times \mP\Env(\mW)^\ot$ is the full weakly bienriched subcategory spanned by $\mM$.

\end{example}

Moreover we use the following terminology:

\vspace{1mm}
\begin{definition}\label{Luel}Let $\kappa, \tau$ be regular cardinals
and $\phi: \mM^\circledast\to \mV^\ot \times \mW^\ot$ a weakly bienriched $\infty$-category.
	
\begin{enumerate}
\item We say that $\phi$ exhibits $\mM$ as left $\kappa$-enriched in $\mV$ if $\mV^\ot \to\Ass$ is a monoidal $\infty$-category compatible with $\kappa$-small colimits, $\phi$ is a left pseudo-enriched $\infty$-category and for every $\X,\Y \in \mM$ and $\W_1,...,\W_\m \in \mW$ for $\m \geq0$ the presheaf
$\Mul_\mM(-,\X, \W_1,...,\W_\m;\Y)$ on $\mM$ preserves $\kappa$-small limits.
	
\vspace{1mm}
		
\item We say that $\phi$ exhibits $\mM$ as right $\tau$-enriched in $\mW$ if $\mW^\ot \to\Ass$ is a monoidal $\infty$-category compatible with $\tau$-small colimits, $\phi$ is a right pseudo-enriched $\infty$-category and for every $\X,\Y \in \mM$ and $\V_1,...,\V_\n \in \mV$ for $\n \geq0$ the presheaf
$\Mul_\mM(\V_1,...,\V_\n,\X,-;\Y)$ on $\mM$ preserves $\tau$-small limits.
		
\vspace{1mm}
		
\item We say that $\phi$ exhibits $\mM$ as $\kappa, \tau$-bienriched in $\mV, \mW$ if it exhibits $\mM$ as left $\kappa$-enriched in $\mV$ and right $\tau$-enriched in $\mW.$
		
\end{enumerate}
	
\end{definition}

\begin{notation}Let $\kappa, \tau$ be small regular cardinals.
Let $$ ^\kappa\L\Enr, \R\Enr^\tau, {^\kappa\B\Enr}^{\tau} \subset \omega\B\Enr $$ be the full subcategories of left $\kappa$-enriched, right $\tau$-enriched, $\kappa, \tau$-bienriched $\infty$-categories, respectively.
	
\end{notation}

\begin{example}
	
A weakly bienriched $\infty$-category $\mM^\circledast\to \mV^\ot \times \mW^\ot$ exhibits $\mM$ as left (right) $\emptyset$-enriched in $\mV$ if and only if it exhibits $\mM$ as left (right) pseudo-enriched.
	
\end{example}

%\begin{example}\label{laulo2}
%Let $\kappa$ be a small regular cardinal.  weakly bienriched $\infty$-category $\mM^\circledast \to \mV^\ot \times \mW^\ot$ is a left (right) tensored $\infty$-category compatible with $\kappa$-small colimits if and only if $\mM^\circledast \to \mV^\ot \times \mW^\ot$ is a left (right) $\kappa$-enriched $\infty$-category that admits left (right) tensors and $\kappa$-small conical colimits.\end{example}

\begin{definition}Let $\sigma$ be the strongly inaccessible cardinal corresponding to the small universe and $\phi: \mM^\circledast\to \mV^\ot \times \mW^\ot$ a weakly bienriched $\infty$-category.
	
\begin{enumerate}
\item We say that $\phi$ exhibits $\mM$ as left quasi-enriched in $\mV$
if $\phi$ exhibits $\mM$ as left $\sigma$--enriched. % in $\mV$.
		
\item We say that $\phi$ exhibits $\mM$ as right quasi-enriched in $\mW$
if $\phi$ exhibits $\mM$ as right $\sigma$-enriched. %in $\mW$.
		
\item We say that $\phi$ exhibits $\mM$ as bi-quasi-enriched in $\mV, \mW$
if $\phi$ exhibits $\mM$ as $\sigma, \sigma$-bienriched. % in $\mV, \mW$.
\end{enumerate}
	
\end{definition}

\begin{notation}
Let $$ \L\Q\Enr, \R\Q\Enr, \B\Q\Enr \subset \omega\widehat{\B\Enr} $$ be the full subcategories of left quasi-enriched, right quasi-enriched, bi-quasi-enriched $\infty$-categories, respectively, whose underlying $\infty$-category is small.
	
\end{notation}

The next theorem is \cite[Theorem 5.51.]{heine2024bienriched}:

\begin{theorem}\label{psinho}Let $\kappa$ be a small regular cardinal,
\begin{enumerate}
\item Let $\mV^\otimes \to \Ass$ be a small $\infty$-operad, $ \mW^\ot \to \Ass$ a small monoidal $\infty$-category compatible with $\kappa$-small colimits, $\mM^\circledast \to \mV^\ot$ a small weakly left enriched $\infty$-category and $\mN^\circledast \to \mV^\ot \times \mW^\ot$ a small 
right $\kappa$-enriched $\infty$-category. Then $\Enr\Fun_{\mV, \emptyset}(\mM, {\mN})^\circledast \to \mW^\ot $ is right $\kappa$-enriched. % $\infty$-category.
	
\vspace{1mm}
\item Let $\mV^\ot \to \Ass$ be an $\infty$-operad, $\mM^\circledast \to \mV^\ot$ a small weakly left enriched $\infty$-category, $\mN^\circledast \to \mV^\ot \times \mW^\ot $ a right enriched $\infty$-category and $ \mW^\ot \to \Ass$ an $\infty$-operad that exhibits $\mW$ as right enriched in $\mW$ such that $\mW$ admits small limits and for every $\W \in \mW$ the functor $\R\Mor_\mW(\W,-):\mW \to \mW$ preserves small limits. Then $\Enr\Fun_{\mV, \emptyset}(\mM, {\mN})^\circledast \to \mW^\ot $ is a right enriched $\infty$-category.
	
\end{enumerate}

\end{theorem}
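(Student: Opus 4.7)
My strategy is to describe the multi-morphism spaces of the weakly right enriched $\infty$-category $\Enr\Fun_{\mV,\emptyset}(\mM,\mN)^\circledast \to \mW^\ot$ in terms of those of $\mN$ and then to transfer the required properties from $\mN$, together with the hypotheses on $\mW$, by standard commutation-of-limits arguments. The first step is to compute, for $\F,\G \in \Enr\Fun_{\mV,\emptyset}(\mM,\mN)$ and $\W_1,\ldots,\W_\m \in \mW$, the multi-morphism space
$$\Mul_{\Enr\Fun_{\mV,\emptyset}(\mM,\mN)}(\F, \W_1, \ldots, \W_\m; \G)$$
as an ``enriched end'' over $\mM$ of the multi-morphism spaces $\Mul_\mN(\F(\X), \W_1, \ldots, \W_\m; \G(\X))$ with $\mV$-enriched naturality in $\X$. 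This identification can be extracted from Proposition \ref{lehmmm}: for a suitable trivial weakly right $\mW$-enriched $\infty$-category $\mO^\circledast$ corepresenting an $\m$-ary multi-morphism from its two objects, the equivalence
$$\Enr\Fun_{\emptyset,\mW}(\mO, \Enr\Fun_{\mV,\emptyset}(\mM,\mN)) \simeq \Enr\Fun_{\mV,\mW}(\mM \times \mO, \mN)$$
yields precisely this enriched end description.

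For Part (1), right pseudo-enrichment of $\Enr\Fun_{\mV,\emptyset}(\mM,\mN)^\circledast$ is inherited objectwise: the equivalence
$$\Mul_\mN(\F(\X), \W_1 \ot \ldots \ot \W_\m; \G(\X)) \simeq \Mul_\mN(\F(\X), \W_1, \ldots, \W_\m; \G(\X))$$
provided by pseudo-enrichment of $\mN$ (which is implied by right $\kappa$-enrichment) is natural in $\X$, and equivalences are preserved by limits, so it passes to the enriched end. For the continuity condition, fix a $\kappa$-small colimit $\W = \colim_\bj \W^\bj$ in $\mW$ in any $\mW$-slot: right $\kappa$-enrichment of $\mN$ turns this colimit into a limit of multi-morphism spaces objectwise in $\X$, and compatibility of $\mW^\ot$ with $\kappa$-small colimits makes this compatible with the tensor products $\W_1 \ot \ldots \ot \W_\m$ appearing in the pseudo-enrichment axiom. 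Fubini for limits then gives that the enriched end over $\mM$ commutes with the limit over $\bj$, which is exactly the $\kappa$-continuity condition for $\Enr\Fun_{\mV,\emptyset}(\mM,\mN)^\circledast$.

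For Part (2), the stronger hypotheses on $\mW$ (small limits exist and each $\R\Mor_\mW(\W,-)$ preserves small limits) allow the construction of right morphism objects in $\Enr\Fun_{\mV,\emptyset}(\mM,\mN)^\circledast$ by an end in $\mW$. The right enrichment of $\mN$ supplies $\R\Mor_\mN(\F(\X), \G(\X)) \in \mW$ functorially in $\X$, and one sets
$$\R\Mor_{\Enr\Fun_{\mV,\emptyset}(\mM,\mN)}(\F, \G) := \lim_\X \R\Mor_\mN(\F(\X), \G(\X)) \in \mW,$$
with the limit indexed by the same small diagram encoding $\mV$-enriched naturality from Step~1; it exists since $\mW$ has small limits and $\mM$ is small. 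The universal property is verified by iterating the hypothesis that each $\R\Mor_\mW(\W_\bi,-)$ preserves small limits: this expresses $\Mul_\mW(\W_1,\ldots,\W_\m; -)$ as a composite of functors each preserving small limits, so it commutes with the limit defining $\R\Mor_{\Enr\Fun_{\mV,\emptyset}(\mM,\mN)}(\F,\G)$, and the enriched end formula for $\Mul_{\Enr\Fun}(\F, \W_1, \ldots, \W_\m; \G)$ matches. The main technical obstacle is to fix a clean combinatorial model for the enriched-end indexing diagram; I would work with a twisted arrow $\infty$-category of $\B\Env(\mM)$ so that ends reduce to ordinary limits, or apply Proposition \ref{lehmmm} abstractly with $\mO^\circledast$ a suitable free weakly right $\mW$-enriched $\infty$-category on one $\m$-ary multi-morphism. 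Once this bookkeeping is settled, all remaining verifications are Fubini-type commutations.
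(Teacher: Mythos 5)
The paper does not actually prove Theorem \ref{psinho}; it imports it from \cite[Theorem 5.50.]{Heine2024Lur}, so there is no in-paper argument to match your proposal against, and it has to be judged on its own terms. Judged that way, the load-bearing step is exactly the one you defer as ``bookkeeping'': the identification of $\Mul_{\Enr\Fun_{\mV,\emptyset}(\mM,\mN)}(\F,\W_1,\ldots,\W_\m;\G)$ with a small limit (enriched end) of multi-morphism spaces of $\mN$, naturally in the $\mW$-slots. Proposition \ref{lehmmm} is an equivalence of functor $\infty$-categories and does not by itself compute multi-morphism spaces; to use it as you suggest you would have to construct the free weakly right $\mW$-enriched $\infty$-category on a single $\m$-ary multi-morphism (or set up a twisted-arrow model of enriched ends over the merely weakly left enriched $\mM$) and prove that it corepresents these spaces compatibly with the inert/active structure of $\mW^\ot$. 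That construction and its naturality --- which you also need in (1) to compare the strings $\W_1,\ldots,\W_\m$ with their tensor product, and in (2) to produce the universal evaluation multi-morphism exhibiting your candidate --- is the actual substance of the theorem; nothing in the present paper supplies it, and it is precisely the machinery on enriched functor $\infty$-categories (objectwise tensors, objectwise limits, enriched ends) that \cite{Heine2024Lur} develops in order to prove the statement.

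There is also a concrete gap in your verification for Part (2). A right morphism object must induce equivalences $\Mul_\mW(\W_1,\ldots,\W_\m;\R\Mor(\F,\G))\simeq \Mul_{\Enr\Fun_{\mV,\emptyset}(\mM,\mN)}(\F,\W_1,\ldots,\W_\m;\G)$ for \emph{all} $\m\geq 0$. Iterating the hypothesis that each $\R\Mor_\mW(\W,-)$ preserves small limits reduces the cases $\m\geq 1$ to mapping spaces of $\mW$, which do commute with limits; but the nullary case $\m=0$ requires the functor $\Mul_\mW(\emptyset;-)\colon \mW\to\mS$ (equivalently, that the restricted Yoneda embedding $\mW\to\mP\Env(\mW)$ preserves the relevant limit) to commute with the small limit defining your candidate, and this follows neither from your iteration nor from the stated hypotheses --- it would be automatic if $\mW^\ot$ had a tensor unit, which is not assumed. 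Relatedly, your displayed formula $\lim_\X\R\Mor_\mN(\F(\X),\G(\X))$ ranges over objects only, and its terms are in the bi-enriched setting functors $\Env(\mV)^\op\to\mW$ rather than objects of $\mW$; the limit must be indexed by the full $\mV$-multi-morphism data of $\mM$, with terms the right multi-morphism objects $\R\Mul\Mor_\mN(\V_1,\ldots,\V_\n,\F(\X);\G(\Y))$ --- which is again the deferred end construction.
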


\begin{remark}
Let $\mV^\ot \to \Ass, \mW^\ot \to \Ass$ be presentably monoidal $\infty$-categories.
An $\infty$-category left quasi-enriched in $\mV$ is an $\infty$-category left enriched in $\mV$ if and only if it is locally small.
An $\infty$-category right quasi-enriched in $\mW$ is an $\infty$-category right enriched in $\mW$ if and only if it is locally small.

%An $\infty$-category bi-quasi-enriched in $\mV, \mW$ is an $\infty$-category bienriched in $\mV, \mW$ if and only if it is locally small.
	
\end{remark}

\begin{notation}Let $\mM^\circledast \to \mU^\ot \times \mV^\ot$, $\mN^\circledast \to \mV^\ot \times \mW^\ot$, $\mO^\circledast \to \mW^\ot \times \mQ^\ot$ be presentably bitensored $\infty$-categories.
We write $ (\mM \ot_\mV \mN \ot_\mW \mO)^\circledast \to \mU^\ot \times \mQ^\ot $ for the relative tensor product of \cite[Definition 4.4.2.10.]{lurie.higheralgebra}.
\end{notation}

The next theorem is \cite[Theorem 4.67.]{heine2024bienriched}:

\begin{theorem}\label{bica}
The forgetful functor $\gamma: \omega\B\Enr \to \Op_\infty \times \Op_\infty$ 
is a cocartesian fibration.
Let $\psi: \mM^\circledast \to \mN^\circledast$ be an enriched functor lying over maps of $\infty$-operads $\alpha: \mV^\ot \to \mV'^\ot, \alpha': \mW^\ot \to \mW'^\ot$.
The following conditions are equivalent:

\begin{enumerate}
\item The enriched functor $\psi: \mM^\circledast \to \mN^\circledast$ is $\gamma$-cocartesian.

%\item The induced map $\B\Env(\mM)^\circledast \to \B\Env(\mN)^\circledast$of bitensored $\infty$-categories exhibits $\B\Env(\mN)$as $$\Env(\mV') \ot_{\Env(\mV)} \B\Env(\mM) \ot_{\Env(\mW)} \Env(\mW').$$

\item The underlying functor $\mM \to \mN$ is essentially surjective and %the map 
$\mP\B\Env(\mM)^\circledast \to \mP\B\Env(\mN)^\circledast$
%of bitensored $\infty$-categories
induces an equivalence $$\mP\B\Env(\mN) \simeq \mP\Env(\mV') \ot_{\mP\Env(\mV)} \mP\B\Env(\mM) \ot_{\mP\Env(\mW)} \mP\Env(\mW').$$

\item The functor $\mM \to \mN$ is essentially surjective and for every $\X, \Y \in \mM$ the induced morphism 
$$ (\alpha, \beta)_!(\Gamma_{\mM}(\X,\Y)) \to \Gamma_{\mN}(\psi(\X),\psi(\Y))$$ in $\mP(\B\Env(\mV') \times \B\Env(\mW')) $ is an equivalence.		

\end{enumerate}

\end{theorem}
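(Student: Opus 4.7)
The plan is to construct explicit $\gamma$-cocartesian lifts using the closed bitensored envelope of Definition \ref{zgbbl} and the relative tensor product of bitensored $\infty$-categories, and then to read conditions (2) and (3) off this construction.

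Given $\mM^\circledast \to \mV^\ot \times \mW^\ot$ together with maps of $\infty$-operads $\alpha: \mV^\ot \to \mV'^\ot$ and $\alpha': \mW^\ot \to \mW'^\ot$, first use Corollary \ref{envvcor}(1) to extend $\alpha, \alpha'$ uniquely to left adjoint monoidal functors of closed monoidal envelopes. These exhibit $\mP\Env(\mV')$ (resp.\ $\mP\Env(\mW')$) as a $(\mP\Env(\mV), \mP\Env(\mV'))$-bimodule (resp.\ $(\mP\Env(\mW), \mP\Env(\mW'))$-bimodule) in $\cc\cc\BMod$, while $\mP\B\Env(\mM)^\circledast$ is a $(\mP\Env(\mV), \mP\Env(\mW))$-bimodule. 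Form the relative tensor product
$$ \mK^\circledast := (\mP\Env(\mV') \ot_{\mP\Env(\mV)} \mP\B\Env(\mM) \ot_{\mP\Env(\mW)} \mP\Env(\mW'))^\circledast $$
and define $\mN^\circledast \to \mV'^\ot \times \mW'^\ot$ as the full weakly bi-enriched subcategory of the pullback of $\mK^\circledast$ along the embedding $\mV'^\ot \times \mW'^\ot \subset \mP\Env(\mV')^\ot \times \mP\Env(\mW')^\ot$ spanned by the essential image of the composite $\mM \to \mP\B\Env(\mM) \to \mK$. The induced enriched functor $\psi: \mM^\circledast \to \mN^\circledast$ is the candidate $\gamma$-cocartesian lift; essential surjectivity of $\mM \to \mN$ holds by construction, and by construction $\mK$ is generated under small colimits and tensoring by $\mP\Env(\mV'), \mP\Env(\mW')$ from (the image of) $\mN$, so Proposition \ref{bitte} yields the equivalence $\mP\B\Env(\mN) \simeq \mK$ asserted in (2).

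To see $\psi$ is $\gamma$-cocartesian, apply Corollary \ref{envvcor}(2) twice to convert enriched functors $\mM \to \mO, \mN \to \mO$ (for arbitrary weakly bi-enriched $\mO$ over $\mV''^\ot \times \mW''^\ot$ and maps $\alpha'' : \mV'^\ot \to \mV''^\ot, \alpha''' : \mW'^\ot \to \mW''^\ot$) into left adjoint linear functors out of $\mP\B\Env(\mM), \mP\B\Env(\mN) \simeq \mK$ respectively; under these identifications restriction along $\psi$ becomes restriction along the canonical map $\mP\B\Env(\mM) \to \mK$, which is an equivalence on the relevant functor spaces by the universal property of the relative tensor product in $\cc\cc\BMod$ (\cite[Theorem 4.4.2.8]{lurie.higheralgebra}). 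For the equivalence (2) $\Leftrightarrow$ (3), Example \ref{Biolk} identifies $\Gamma_\mM(\X,\Y)$ with the left morphism object of $\X,\Y$ in $\mP\B\Env(\mM)$ regarded as bi-enriched over $\mP\Env(\mV) \ot \mP\Env(\mW)$. Since the extended monoidal functors $\mP\Env(\alpha), \mP\Env(\alpha')$ are left Kan extensions along $\Env(\alpha), \Env(\alpha')$, base change via the relative tensor product pushes morphism objects forward by $(\alpha, \alpha')_!$, so (2) immediately implies (3). Conversely, $\mN^\circledast$ generates $\mP\B\Env(\mN)$ under small colimits and tensoring by $\mP\Env(\mV'), \mP\Env(\mW')$, and the multi-morphism spaces of $\mP\B\Env(\mN)$ between objects of the form $\V \ot \psi(\X) \ot \W$ and $\V' \ot \psi(\Y) \ot \W'$ are determined by $\Gamma_\mN(\psi(\X), \psi(\Y))$, so agreement on graphs forces the equivalence in (2).

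The main technical obstacle is the precise interaction between the full weakly bi-enriched inclusion $\mN^\circledast \subset \mK^\circledast$ and the universal property of the relative tensor product: one must verify that linear extensions of enriched functors out of $\mN$ factor through the localization producing $\mK$, so that one may legitimately read off $\mP\B\Env(\mN) \simeq \mK$. This hinges on essential surjectivity of $\mM \to \mN$ together with Proposition \ref{morpre}, which ensures that left multi-morphism objects are preserved by the embedding $\mM^\circledast \subset \mP\B\Env(\mM)^\circledast$, so that graph computations in $\mP\B\Env(\mN)$ can be transported back to $\mN$.
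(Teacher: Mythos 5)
You should first note that this paper does not actually prove the statement: it is quoted verbatim as \cite[Theorem 4.60.]{Heine2024Lur}, so there is no in-paper proof to compare your argument against; I can only assess your sketch on its own terms. Your overall strategy -- building the candidate cocartesian lift as the full weakly bi-enriched subcategory of the pullback of $\mK^\circledast = (\mP\Env(\mV') \ot_{\mP\Env(\mV)} \mP\B\Env(\mM) \ot_{\mP\Env(\mW)} \mP\Env(\mW'))^\circledast$ spanned by the image of $\mM$, and then reading the universal property off Corollary \ref{envvcor}(2) together with the universal property of the relative tensor product -- is the natural route and is very likely close to the cited source.

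However, there is a genuine gap at the crux of the argument: the claim that $\mP\B\Env(\mN) \simeq \mK$. Generation of $\mK$ under small colimits and the $\mP\Env(\mV'), \mP\Env(\mW')$-biaction by the image of $\mN$ only gives that the canonical colimit-preserving linear functor $\mP\B\Env(\mN) \to \mK$ extending $\mN \hookrightarrow \mK$ is essentially surjective; Proposition \ref{bitte} (or Corollary \ref{envvcor}) does not give full faithfulness. Full faithfulness requires computing the multi-morphism spaces of $\mK$ between objects of the form $\V' \ot \psi(\X) \ot \W'$ and showing they are the ones freely determined by $\mN$, which is exactly the content of the graph formula $(\alpha,\alpha')_!\Gamma_\mM(\X,\Y) \simeq \Gamma_\mN(\psi(\X),\psi(\Y))$ in (3). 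You justify this with the blanket assertion that ``base change via the relative tensor product pushes morphism objects forward by $(\alpha,\alpha')_!$,'' but for general presentable bimodules morphism objects in a relative tensor product are computed by a bar resolution and do \emph{not} simply push forward; the statement is true here only because $\mP\Env(\mV), \mP\Env(\mV'), \mP\B\Env(\mM)$ are free cocompletions (presheaves on the envelopes), so that $\mK$ can be identified with presheaves on a transported envelope, and proving that identification is the substantive step you have skipped. Without it, neither (1)$\Leftrightarrow$(2) nor (2)$\Leftrightarrow$(3) is established. A second, more minor point: Corollary \ref{envvcor}(2) applies to targets that are bitensored compatible with small colimits, whereas $\gamma$-cocartesianness quantifies over arbitrary weakly bi-enriched targets $\mO^\circledast$; you must pass to $\mP\B\Env(\mO)^\circledast$ and verify that the equivalences furnished by the relative tensor product restrict to the full subcategories of functors landing in $\mO$ and lying over the prescribed operad maps -- this is precisely where essential surjectivity of $\mM \to \mN$ is used, and it should be carried out rather than hinted at.
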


The next proposition is \cite[Proposition 5.45.]{heine2024bienriched}:

\begin{proposition}\label{oppoen}
	
There is an involution $$(-)^\op: {_\mV\omega\B\Enr_\mW} \simeq {_\mW\omega\B\Enr_\mV}$$
fitting into a commutative square 
\begin{equation*} 
\begin{xy}
\xymatrix{
{_\mV\omega\B\Enr_\mW} \ar[d] \ar[rr]^{(-)^\op}
&&{_\mW\omega\B\Enr_\mV} \ar[d] 
\\ \Cat_\infty
\ar[rr]^{(-)^\op}  &&  \Cat_\infty,
}
\end{xy} 
\end{equation*}
such that for every weakly bienriched $\infty$-category $\mM^\circledast \to \mV^\ot \times \mW^\ot $ and $\X,\Y \in \mM$ there is an equivalence $$\Gamma_{\mM^\op}(\X,\Y) \simeq \Gamma_\mM(\Y,\X).$$	
\end{proposition}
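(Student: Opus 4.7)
My plan is to construct the involution $(-)^\op$ explicitly on the level of multi-morphism spaces, then verify the formula for $\Gamma$ and commutativity of the square.

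I would begin by defining $(-)^\op$ on bitensored $\infty$-categories via explicit multi-morphism formulas. For a bitensored $\infty$-category $\mN^\circledast \to \mV^\ot \times \mW^\ot$, define $\mN^\op$ to be bitensored over $\mW^\ot \times \mV^\ot$, with underlying $\infty$-category the categorical opposite of $\mN$, and multi-morphism spaces (up to appropriate order reversal)
\[
\Mul_{\mN^\op}(\W_1, \ldots, \W_\m, \X, \V_1, \ldots, \V_\n; \Y) \simeq \Mul_\mN(\V_\n, \ldots, \V_1, \Y, \W_\m, \ldots, \W_1; \X).
\]
The internal reversal of orderings within the $\V_\bi$ and $\W_\bj$ sequences is required so that composition of multi-morphisms in $\mN^\op$ (corresponding to composition in $\mN$ with source and target swapped) is well-defined. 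I would then pass to the general case by embedding $\mM^\circledast \subset \B\Env(\mM)^\circledast$ via Proposition \ref{bitte} and showing that $\mM^\op$ sits inside $\B\Env(\mM)^\op$ as a full weakly bi-enriched subcategory over $\mW^\ot \times \mV^\ot$.

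To make this functorial and obtain an involution of $(\infty, 2)$-categories, I would express $(-)^\op$ as the composite of the $(-)^\rev$-involution of Notation \ref{invo}, which takes $_\mV\omega\B\Enr_\mW$ to $_{\mW^\rev}\omega\B\Enr_{\mV^\rev}$, with a canonical equivalence that reverses each operad component back using that the internal order reversals in the above multi-morphism formula cancel the reversal introduced by $(-)^\rev$ on $\Ass$. The $(\infty,2)$-categorical enhancement follows automatically from compatibility of both operations with the $\Cat_\infty$-action of Remark \ref{2-catt}.

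For the key formula $\Gamma_{\mM^\op}(\X, \Y) \simeq \Gamma_\mM(\Y, \X)$, I argue as follows. By Notation \ref{Enros}, $\Gamma_\mM(\X, \Y)$ is determined by the collection of all multi-morphism spaces with source $\X$ and target $\Y$, assembled into a presheaf on $\Env(\mV) \times \Env(\mW)$. By the definition of $\mM^\op$, these multi-morphism spaces in $\mM^\op$ are precisely the multi-morphism spaces of $\mM$ with source $\Y$ and target $\X$, yielding the desired equivalence. The commutativity of the square with $(-)^\op$ on $\Cat_\infty$ is immediate: setting $\n = \m = 0$ in the multi-morphism definition recovers the ordinary mapping space, and our opposite construction swaps source and target, so the underlying $\infty$-category of $\mM^\op$ is the categorical opposite.

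The main obstacle is verifying that the construction of $(-)^\op$ on bitensored $\infty$-categories extends well to all of $\omega\B\Enr$ and preserves enriched functors. This amounts to checking that the formula for multi-morphism spaces is compatible with the cocartesian-fibration-relative-to-inert-morphisms structure, in particular that the min/max preservation conditions of Definition \ref{bla} survive the swap of the two factors of $\Ass \times \Ass$. The internal order reversals play an essential role here: without them, the conditions on the first and second components would be incompatible after swapping, and the resulting map would fail to be weakly bi-enriched. Once this compatibility is established, all remaining properties follow formally from the construction.
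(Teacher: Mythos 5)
There is a genuine error in your construction, and since the paper itself does not prove this statement (it is quoted from \cite[Proposition 5.45.]{Heine2024Lur}), I judge your proposal on its own terms. The internal order reversals in your defining formula are wrong, and your justification for them is backwards. The involution asserted in the proposition lands in ${_\mW\omega\B\Enr_\mV}$, i.e. $\mM^\op$ is weakly bi-enriched in $\mW,\mV$ with their \emph{original} operad structures; the reversal appearing in the classical statement that $\mC^\op$ is $\mV^\rev$-enriched is already absorbed by the exchange of the left and right enrichment. Concretely, the formula must read $\Mul_{\mM^\op}(\W_1,\ldots,\W_\m,\X,\V_1,\ldots,\V_\n;\Y)\simeq \Mul_\mM(\V_1,\ldots,\V_\n,\Y,\W_1,\ldots,\W_\m;\X)$ with no reversal: compatibility with the active morphisms $\W_1,\ldots,\W_\m\to \W_1\ot\cdots\ot\W_\m$ (Definition \ref{Lu}) is then exactly the right pseudo-enrichment of $\mM$ over $\mW$, and composition in $\mM^\op$ is computed in $\mM$ by inserting the multimorphism corresponding to the outer factor into the distinguished slot of the one corresponding to the inner factor, which requires no reversal of the $\V$- or $\W$-strings. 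With your reversed formula the same active-map test forces $\W_1\ot\cdots\ot\W_\m\simeq\W_\m\ot\cdots\ot\W_1$, so your construction produces an object of ${_{\mW^\rev}\omega\B\Enr_{\mV^\rev}}$ rather than of ${_\mW\omega\B\Enr_\mV}$, and the asserted identity $\Gamma_{\mM^\op}(\X,\Y)\simeq\Gamma_\mM(\Y,\X)$ fails for it: evaluating on $\W_1\ot\cdots\ot\W_\m$ and $\V_1\ot\cdots\ot\V_\n$ you only obtain $\Gamma_\mM(\Y,\X)$ precomposed with word reversal on $\Env(\mV)\times\Env(\mW)$. So the reversals do not rescue the min/max bookkeeping of Definition \ref{bla}; they contradict the statement you are proving.

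There is also a structural gap. You propose to obtain $(-)^\op$ as $(-)^\rev$ followed by ``a canonical equivalence that reverses each operad component back.'' But $(-)^\rev$ of Notation \ref{invo} leaves the underlying $\infty$-category unchanged (a left $\mV$-enrichment is literally re-read as a right $\mV^\rev$-enrichment of the same $\infty$-category), so no re-indexing of the enriching operads afterwards can produce the opposite of the underlying $\infty$-category; any such ``canonical equivalence'' would have to be the op-involution one is trying to construct, so the argument is circular. The genuine content of the proposition is a construction, at the level of the cocartesian-fibration model of Definition \ref{bla}, that dualizes the total space $\mM^\circledast$ while exchanging the two $\Ass$-factors and keeping track of the inert-morphism conditions, together with functoriality in enriched functors; your heuristic multimorphism formula (once corrected as above) only records what this construction must do on multimorphism spaces and does not by itself define a functor on ${_\mV\omega\B\Enr_\mW}$, let alone an involution compatible with the $\Cat_\infty$-action of Remark \ref{2-catt}.
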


%\subsection{Enriched presheaves via tensored envelopes}
\subsection{Generalized enrichment}

%In the following we 

%In this subsection we construct the $\infty$-category ofenriched presheaves as a localization of the enveloping $\infty$-category with closed biaction (Proposition \ref{pseuso}, Proposition \ref{unipor}, Corollary \ref{unipor3}).
%Next we study the relationship between enriched, pseudo-enriched and weakly enriched $\infty$-categories.For that we introduce the following concept that treats weak enrichment, pseudo-enrichment and enrichment on one footage.

The next definition is \cite[Definition 4.1.]{heine2024bienriched}:

\begin{definition}\label{locpa}
	
A small localization pair is a pair $(\mV^\ot \to \Ass, \rS)$,
where $\mV^\ot \to \Ass$ is a small $\infty$-operad and $\rS$ is a set of morphisms
of $\mP\Env(\mV)$ %closed under the tensor product of $\mP\Env(\mV)$
%and containing all identities of objects of $\mV \subset \mP\Env(\mV)$ 
such that the saturated closure $\bar{\rS}$ of $\rS$ is closed under the tensor product %every object of $\mV \subset \mP\Env(\mV)$ is local with respect to the localization $\rS^{-1}\mP\Env(\mV)$ and 
and for every $\V \in \mV$ and $\f \in \rS$ %the morphisms $\V \ot \f, \f \ot \V$ belong to $\rS$and 
the map $\mP\Env(\mV)(\f,\V)$ is an equivalence.
\end{definition}
\begin{remark}
	
Let $(\mV^\ot \to \Ass, \rS)$ be a small localization pair.
Since $\mP\Env(\mV)^\ot \to \Ass$ is a presentably monoidal $\infty$-category, $\rS$ is a set and $\bar{\rS}$ is closed under the tensor product, the
embedding $\rS^{-1}\mP\Env(\mV)^\ot \subset \mP\Env(\mV)^\ot$ of the full suboperad spanned by the $\rS$-local objects admits a left adjoint relative to $\Ass.$
%re is a monoidal localization$\mP\Env(\mV)^\ot \rightleftarrows \rS^{-1}\mP\Env(\mV)^\ot,$ where $\rS^{-1}\mP\Env(\mV)^\ot$ is with respect to $\rS.$
Moreover by definition the embedding of $\infty$-operads $\mV^\ot \subset \mP\Env(\mV)^\ot $ induces an embedding of $\infty$-operads $\mV^\ot \subset \rS^{-1}\mP\Env(\mV)^\ot.$
\end{remark}

The next definition is \cite[Definition 4.3.]{heine2024bienriched}:

\begin{definition}
Let $(\mV^\ot \to \Ass, \rS)$, $(\mW^\ot \to \Ass, \T)$	be small localization pairs.

\begin{enumerate}
\item A weakly bienriched $\infty$-category $\mM^\circledast \to \mV^\ot \times \mW^\ot$ exhibits $\mM$ as left $\rS$-enriched in $\mV$
if for every $\f \in \rS, \X, \Y \in \mM, \W \in \mP\Env(\mW)$
the induced map $\mP\B\Env(\mM)(\f \ot \X \ot \W, \Y)$ is an equivalence. 
	
\item A weakly bienriched $\infty$-category $\mM^\circledast \to \mV^\ot \times \mW^\ot$ exhibits $\mM$ as right $\T$-enriched in $\mW$
if for every $\g \in \T, \X, \Y \in \mM, \V \in \mP\Env(\mV)$
the map $\mP\B\Env(\mM)(\V \ot \X \ot \g, \Y)$ is an equivalence. 

\item A weakly bienriched $\infty$-category $\mM^\circledast \to \mV^\ot \times \mW^\ot$ exhibits $\mM$ as $\rS, \T$-bienriched in $\mV, \mW$ if it exhibits $\mM$ as left $\rS$-enriched in $\mV$ and right $\T$-enriched in $\mW$.
\end{enumerate}	
\end{definition}

%\begin{remark}A weakly bienriched $\infty$-category $\mM^\circledast \to \mV^\ot \times \mW^\ot$ exhibits $\mM$ as $\rS, \T$-bienriched in $\mV, \mW$ if and only if for every $\f \in \bar{\rS}, \g \in \bar{\T}, \X, \Y \in \mM$ the map $\mP\B\Env(\mM)(\f \ot \X \ot \g, \Y)$ is an equivalence.\end{remark}

\begin{notation}Let $(\mV^\ot \to \Ass, \rS)$, $(\mW^\ot \to \Ass, \T)$	be small localization pairs.
Let $$_\mV ^\rS\L\Enr_{\mW}, \ {_\mV\R\Enr}^{\T}_{\mW}, {^\rS_\mV\B\Enr^\T_{\mW}} \subset {_\mV\omega\B\Enr}_{\mW} $$ the full subcategories of left $\rS$-enriched, right $\T$-enriched, $\rS, \T$-bienriched $\infty$-categories, respectively.
	
\end{notation}

\begin{example}Let $\mU^\ot \to \Ass$ be a small $\infty$-operad.
The pair $(\mU^\ot \to \Ass, \emptyset)$ is a small localization pair,
where $\emptyset^{-1}\mP\Env(\mU)^\ot= \mP\Env(\mU)^\ot.$
Let $(\mV^\ot \to \Ass, \rS)$, $(\mW^\ot \to \Ass, \T)$	be small localization pairs.
Then $$_\mV ^\rS\L\Enr_{\mW}=  {^\rS_\mV\B\Enr^\emptyset_{\mW}}, \ {_\mV\R\Enr}^{\T}_{\mW}= {^\emptyset_\mV\B\Enr^\T_{\mW}}.$$

%A weakly bienriched $\infty$-category $\mM^\circledast \to \mV^\ot \times \mW^\ot$ exhibits $\mM$ as $\emptyset, \emptyset$-bienriched in $\mV, \mW$.

\end{example}

\begin{notation}\label{enr}
Let $\kappa$ be a small regular cardinal and $\mV^\ot \to \Ass$ a small monoidal $\infty$-category compatible with $\kappa$-small colimits. 
Let $\Enr_\kappa$ be the set of morphisms
$$\V'_1, ... , \V'_\ell, \V_1, ..., \V_\n, \V''_1, ... , \V''_\bk \to \V'_1, ... , \V'_\ell,\V_1 \ot ...\ot \V_\n, \V''_1, ... , \V''_\bk,$$ 
$$ \V'_1, ... , \V'_\ell,\colim(\iota \circ \F), \V''_1, ... , \V''_\bk \to \V'_1, ... , \V'_\ell, \iota(\colim(\F)),\V''_1, ... , \V''_\bk$$
for $\V'_1, ... , \V'_\ell, \V_1,...,\V_\n, \V''_1, ... , \V''_\bk \in \mV$ and $\ell,\n,\bk \geq0, $ where $\iota: \mV \subset \mP\Env(\mV)$ is the embedding, $\F: \K \to \mV$ is a functor and $\K$ is $\kappa$-small.
\end{notation}

%Let $\mV^\ot \to \Ass, \mW^\ot \to \Ass$ be small $\infty$-operads.

\begin{notation}Let $\kappa, \tau$ be small regular cardinals.

\begin{enumerate}
\item If $\mV^\ot \to \Ass$ is a small monoidal $\infty$-category compatible with $\kappa$-small colimits and $\mW^\ot \to \Ass$ is a small $\infty$-operad, let $\L\Enr_\kappa:=(\Enr_\kappa,\emptyset)$.

\item If $\mV^\ot \to \Ass$ is a small $\infty$-operad and $\mW^\ot \to \Ass$ is a small monoidal $\infty$-category compatible with $\kappa$-small colimits, let $\R\Enr_\kappa:=(\emptyset,\Enr_\kappa)$.

\item If $\mV^\ot \to \Ass, \mW^\ot \to \Ass$ are small monoidal $\infty$-categories  compatible with $\kappa$-small colimits, $\tau$-small colimits, respectively, let $\B\Enr_{\kappa,\tau}:=(\Enr_\kappa,\Enr_\tau)$.
If $\kappa,\tau$ are the the strongly inaccessible cardinal corresponding to the small universe, we drop $\kappa, \tau$, respectively.

%If $\kappa=\emptyset$, we drop $\kappa.$If $\tau=\emptyset$, we drop $\tau.$

%set $\L\Enr:=\L\Enr_\kappa, \R\Enr:=\R\Enr_\kappa, \B\Enr:=\B\Enr_\kappa.$

\end{enumerate}	

\end{notation}

\begin{example}\label{Exaso}
Let $\kappa$ be a small regular cardinal.

\begin{enumerate}
\item For every small monoidal $\infty$-category $\mV^\ot \to \Ass$ compatible with $\kappa$-small colimits the pair $(\mV^\ot \to \Ass, \Enr_\kappa)$ is a small localization pair, 
where $\Enr_\kappa^{-1}\mP\Env(\mV)^\ot= \Ind_\kappa(\mV)^\ot$,
which is a monoidal localization of $\mP\Env(\mV)^\ot \to \Ass.$
A weakly bienriched $\infty$-category $\mM^\circledast \to \mV^\ot \times \mW^\ot$ exhibits $\mM$ as left $\Enr_\kappa$-enriched in $\mV$ if and only if
it exhibits $\mM$ as left $\kappa$-enriched in $\mV$.

\item If $\mV^\ot \to\Ass$ is a presentably monoidal $\infty$-category,
a locally small weakly bienriched $\infty$-category $\mM^\circledast \to \mV^\ot \times \mW^\ot$ exhibits $\mM$ as left $\Enr$-enriched in $\mV$ if and only if it exhibits $\mM$ as left enriched in $\mV$.
\end{enumerate}
\end{example}

%\begin{lemma}\label{locL}Let $(\mV^\ot \to \Ass, \rS)$, $(\mW^\ot \to \Ass, \T)$ be small localization pairssuch that $\rS$ is a set of morphisms of $\Env(\mV)$ and $\T$ is a set of morphisms of $\Env(\mW).$Let $\mM^\circledast \to \mV^\ot $ be a small left $\rS$-enriched $\infty$-categoryand $\mN^\circledast \to \mW^\ot $ a small right $\T$-enriched $\infty$-category.Then $\mM^\circledast \times \mN^\circledast \to \mV^\ot \times \mW^\ot$ is a $\rS,\T$-bienriched $\infty$-category.\end{lemma}

\begin{definition}
Let $(\mV^\ot \to \Ass, \rS)$, $(\mV'^\ot \to \Ass, \rS')$ be small localization pairs.	
A map of localization pairs $(\mV^\ot \to \Ass, \rS) \to (\mV'^\ot \to \Ass, \rS')$
is a map of $\infty$-operads $\mV^\ot \to \mV'^\ot$ such that the induced left adjoint monoidal functor $\mP\Env(\mV) \to \mP\Env(\mV')$
sends morphisms of $\rS$ to morphisms of $\rS'.$

\end{definition}

\begin{notation}
Let $\mM^\circledast \to \mV^\ot \times \mW^\ot, \mN^\circledast \to \mV'^\ot \times \mW'^\ot$ be weakly bienriched $\infty$-categories, $\rS, \rS', \T,\T'$ collections of morphisms of $\mP\Env(\mV),\mP\Env(\mV'),\mP\Env(\mW), \mP\Env(\mW')$, respectively.
Let $$\Enr\Fun_{\rS, \rS',\T, \T'}(\mM, \mN)\subset \Enr\Fun(\mM, \mN) $$ be the full subcategory of enriched functors lying over maps of $\infty$-operads whose induced left adjoint monoidal functors on closed monoidal envelopes send $\rS$ to $\rS'$ and $\T$ to $\T'$, respectively.
\end{notation}

\begin{example}
	
Let $\mV^\ot \to \Ass, \mV'^\ot \to \Ass$ be small monoidal $\infty$-categories compatible with $\kappa$-small colimits, compatible with $\kappa'$-small colimits,
respectively, for small regular cardinals $\kappa < \kappa'$.
A map of $\infty$-operads $\mV^\ot \to \mV'^\ot$ is a map of localization pairs
$(\mV^\ot \to \Ass, \Enr_\kappa) \to (\mV'^\ot \to \Ass, \Enr_{\kappa'})$
if and only if it is a monoidal functor preserving $\kappa$-small colimits.

\end{example}

%\begin{corollary}\label{corg} Let $(\mV^\ot \to \Ass, \rS)$, $(\mW^\ot \to \Ass, \T)$	be small localization pairs.

%\begin{enumerate}\item A weakly bienriched $\infty$-category $\mM^\circledast \to \mV^\ot \times \mW^\ot$ exhibits $\mM$ as left $\rS$-enriched if for every $ \X, \Y \in \mM, \W_1,...,\W_\m \in \mW$ for $\m \geq 0$ the left multi-morphism object $$ \L\Mul\Mor_{\bar{\mM}}(\W_1,...,\W_\m,\X;\Y)$$ belongs to $\rS^{-1}\mP\Env(\mV).$

%\item A weakly bienriched $\infty$-category $\mM^\circledast \to \mV^\ot \times \mW^\ot$ exhibits $\mM$ as right $\T$-enriched if for every $ \X, \Y \in \mM, \V_1,...,\V_\n \in \mV$ for $\n \geq 0$ the right multi-morphism object $$ \R\Mul\Mor_{\bar{\mM}}(\V_1,...,\V_\n,\X;\Y)$$ belongs to $\T^{-1}\mP\Env(\mW).$

%\item A weakly bienriched $\infty$-category $\mM^\circledast \to \mV^\ot \times \mW^\ot$ exhibits $\mM$ as $\rS, \T$-bienriched if for every $ \X, \Y \in \mM, \V_1,...,\V_\n \in \mV$ for $\n \geq 0$ the morphism object $ \Mor_{\bar{\mM}}(\X,\Y)$ belongs to $\rS^{-1}\mP\Env(\mV) \ot \T^{-1}\mP\Env(\mW).$\end{enumerate}\end{corollary}

The next proposition is \cite[Proposition 4.21.]{heine2024bienriched}:

\begin{proposition}\label{eqq}\label{coronn} Let $(\mV^\ot \to \Ass, \rS), (\mW^\ot \to \Ass, \T)$ be small localization pairs.

\begin{enumerate}

\item The functor 
$${_{\rS^{-1}\mP\Env(\mV)}\omega\B\Enr}_{\mW} \to {_\mV\omega\B\Enr}_{\mW},$$ which takes pullback along the embedding $\mV^\ot \subset \rS^{-1}\mP\Env(\mV)^\ot$, restricts to an equivalence
$$ {_{\rS^{-1}\mP\Env(\mV)}\L\Enr}_{\mW} \to {^\rS_\mV\L\Enr}_{\mW}.$$

\item The functor 
$${_\mV\omega\B\Enr}_{\T^{-1}\mP\Env(\mW)} \to {_\mV\omega\B\Enr}_{\mW},$$ which takes pullback along the embedding $\mW^\ot \subset \T^{-1}\mP\Env(\mW)^\ot$, restricts to an equivalence
$$ {_\mV\R\Enr}_{\T^{-1}\mP\Env(\mW)} \to {_\mV\R\Enr}^\T_{\mW}. $$

\item The functor 
$$\rho: {_{\rS^{-1}\mP\Env(\mV)}\omega\B\Enr}_{\T^{-1}\mP\Env(\mW)} \to {_\mV\omega\B\Enr}_{\mW},$$ which takes pullback along the embeddings $\mV^\ot \subset \rS^{-1}\mP\Env(\mV)^\ot, \mW^\ot \subset \T^{-1}\mP\Env(\mW)^\ot$, restricts to an equivalence
$$ {_{\rS^{-1}\mP\Env(\mV)}\B\Enr}_{\T^{-1}\mP\Env(\mW)}  \to {^\rS_\mV\B\Enr^ \T_{\mW}}.$$
\end{enumerate}	
\end{proposition}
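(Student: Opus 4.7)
The three statements are proved by parallel arguments; I would focus on (3), since (1) and (2) follow by the same method (and indeed fit into a common framework once one replaces $\T^{-1}\mP\Env(\mW)^\ot$ by $\mW^\ot$ and observes that every object of $\mW$ is trivially $\emptyset$-local). The strategy is to exhibit $\rho$ as an equivalence onto its essential image by constructing an explicit inverse $\sigma$ built out of the closed bitensored envelope.

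First I verify that $\rho$ lands in the stated subcategory. Given a bi-enriched $\mN^\circledast \to \rS^{-1}\mP\Env(\mV)^\ot \times \T^{-1}\mP\Env(\mW)^\ot$, the canonical $\mP\Env(\mV),\mP\Env(\mW)$-linear map $\mP\B\Env(\rho(\mN))^\circledast \to \mN^\circledast$ extending the embedding $\rho(\mN)^\circledast \subset \mN^\circledast$ (furnished by Corollary \ref{envvcor}) shows that for every $\f \in \rS, \g \in \T, \X, \Y \in \mM, \V \in \mP\Env(\mV), \W \in \mP\Env(\mW)$ the maps $\mP\B\Env(\rho(\mN))(\f \ot \X \ot \W, \Y)$ and $\mP\B\Env(\rho(\mN))(\V \ot \X \ot \g, \Y)$ factor through maps in $\mN$ whose source-targets are $\rS$- and $\T$-local, hence are equivalences. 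Thus $\rho(\mN)$ is $\rS,\T$-bi-enriched.

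For the inverse, given an $\rS,\T$-bi-enriched $\mM^\circledast \to \mV^\ot \times \mW^\ot$, define
\[ \sigma(\mM)^\circledast := \mP\B\Env(\mM)^\circledast \times_{\mP\Env(\mV)^\ot \times \mP\Env(\mW)^\ot} (\rS^{-1}\mP\Env(\mV)^\ot \times \T^{-1}\mP\Env(\mW)^\ot) \]
restricted to the full weakly bi-enriched subcategory spanned by $\mM$. Since the multi-morphism spaces of $\sigma(\mM)$ are values of $\Gamma_\mM(\X,\Y)$ on objects of $\rS^{-1}\mP\Env(\mV) \times \T^{-1}\mP\Env(\mW)$, and since the $\rS,\T$-enrichment hypothesis on $\mM$ (Example \ref{Biolk} identifies $\Gamma_\mM(\X,\Y)$ with the left morphism object in $\mP\B\Env$) says exactly that $\Gamma_\mM(\X,\Y)$ inverts $\rS$ in the first slot and $\T$ in the second, the restricted presheaf is represented by an object of $\rS^{-1}\mP\Env(\mV)$ (by evaluating in $\rS^{-1}\mP\Env(\mV) \ot \T^{-1}\mP\Env(\mW)$ against the co-Yoneda formula), and analogously on the right. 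By Proposition \ref{morpre}, these representing objects supply left and right morphism objects of $\sigma(\mM)$, so $\sigma(\mM)$ is bi-enriched.

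The identity $\rho \circ \sigma \simeq \id$ is immediate: taking a further pullback along $\mV^\ot \subset \rS^{-1}\mP\Env(\mV)^\ot$ and $\mW^\ot \subset \T^{-1}\mP\Env(\mW)^\ot$ returns the full weakly bi-enriched subcategory of $\mP\B\Env(\mM)^\circledast$ on $\mM$, which is $\mM^\circledast$ by the unit of Corollary \ref{envvcor}. For $\sigma \circ \rho \simeq \id$, starting with a bi-enriched $\mN^\circledast \to \rS^{-1}\mP\Env(\mV)^\ot \times \T^{-1}\mP\Env(\mW)^\ot$, the left morphism objects of $\rho(\mN)$ in $\mP\Env(\mV)$ are computed by $\Gamma_{\rho(\mN)}(\X,\Y)$, and this presheaf is the left Kan extension of $\L\Mor_{\mN}(\X,\Y) \in \rS^{-1}\mP\Env(\mV)$ along the reflective inclusion $\rS^{-1}\mP\Env(\mV) \subset \mP\Env(\mV)$; restricting back to $\rS^{-1}\mP\Env(\mV)$ recovers the original object, showing $\sigma(\rho(\mN)) \simeq \mN$. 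The main obstacle is the construction of $\sigma$: one must verify that fiberwise restriction along the reflective embeddings of localized $\infty$-operads preserves the cocartesian-lift structure encoded in Definition \ref{bla}, which hinges on the fact that $\rS^{-1}\mP\Env(\mV)^\ot \subset \mP\Env(\mV)^\ot$ admits a left adjoint relative to $\Ass$ so that full subcategory inclusions on each fiber suffice.
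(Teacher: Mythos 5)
The paper does not actually prove this proposition --- it is imported from \cite{Heine2024Lur} (Proposition 4.29, Corollary 4.30) --- so your argument has to stand on its own; the object-level mechanism you isolate (the $\rS,\T$-enrichment condition says exactly that the morphism objects of $\mM$ computed in $\mP\B\Env(\mM)$ are $\rS$-, respectively $\T$-local, so the enrichment of $\mM\subset\mP\B\Env(\mM)$ upgrades canonically to the localized operads) is indeed the right one, but as written there are genuine gaps. First, the well-definedness step misapplies Corollary \ref{envvcor}: that result produces a linear extension only when the target is a bitensored $\infty$-category over $\mP\Env(\mV)^\ot\times\mP\Env(\mW)^\ot$ compatible with small colimits, whereas your $\mN^\circledast$ is merely bi-enriched, and lives over the localized suboperads, so there is no ``canonical $\mP\Env(\mV),\mP\Env(\mW)$-linear map $\mP\B\Env(\rho(\mN))^\circledast\to\mN^\circledast$''; moreover ``factors through maps whose source and target are local'' is not an argument that the maps in question are equivalences. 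What one should do instead is identify, for $\W_1,\dots,\W_\m\in\mW$, the limit-preserving presheaf $\mP\B\Env(\rho(\mN))((-)\ot\X\ot\W_1\ot\cdots\ot\W_\m,\Y)$ on $\mP\Env(\mV)$ with the presheaf represented by $\L\Mul\Mor_{\mN}(\X,\W_1,\dots,\W_\m;\Y)\in\rS^{-1}\mP\Env(\mV)$ by comparing restrictions to $\Env(\mV)$, and deduce $\rS$-locality from representability by a local object (and dually on the right).

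Second, and more seriously, you only build $\sigma$ on objects and check the two composites agree with the identities object by object. That does not prove an equivalence of $\infty$-categories: you need either a functorial $\sigma$ with natural equivalences $\rho\circ\sigma\simeq\id$ and $\sigma\circ\rho\simeq\id$, or full faithfulness of $\rho$ together with essential surjectivity. Your $\sigma$-construction is precisely an essential-surjectivity argument (and that part is salvageable, e.g. by taking the full weakly bi-enriched subcategory of $\mP\B\Env(\mM)^\circledast_{\rS,\T}$ spanned by $\mM$, which is bi-enriched over the localized operads by Lemma \ref{Lauf} and Example \ref{Biolk} and visibly pulls back to $\mM$), but nothing in the proposal addresses mapping spaces: why does every $\mV,\mW$-enriched functor $\rho(\mN)^\circledast\to\rho(\mN')^\circledast$ lift, essentially uniquely, to an enriched functor over $\rS^{-1}\mP\Env(\mV)^\ot$ and $\T^{-1}\mP\Env(\mW)^\ot$? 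This compatibility with the action of all local objects, not just objects of $\mV$ and $\mW$, is the substantive half of the statement and is exactly where the enrichment hypotheses are used. Relatedly, in your ``$\sigma\circ\rho\simeq\id$'' step the claim that $\Gamma_{\rho(\mN)}(\X,\Y)$ is a left Kan extension of $\L\Mor_{\mN}(\X,\Y)$ along the reflective inclusion does not typecheck; the correct statement is that $\Gamma_{\rho(\mN)}(\X,\Y)(-,\W_1\ot\cdots\ot\W_\m)$ is the image of $\L\Mul\Mor_{\mN}(\X,\W_1,\dots,\W_\m;\Y)$ under $\rS^{-1}\mP\Env(\mV)\subset\mP\Env(\mV)$, and even then one must treat both variables simultaneously (arbitrary local $\W_j$, not only $\W_j\in\mW$) to identify all multi-morphism spaces of $\sigma(\rho(\mN))$ with those of $\mN$.
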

Proposition \ref{eqq} gives the following corollary:	
	
\begin{corollary}\label{coronn}
Let $\mV^\ot \to \Ass, \mW^\ot \to \Ass$ be small $\infty$-operads. 
\begin{enumerate}
	
\item The functor $$ {_{\mP\Env(\mV)}\L\Enr}_{\mW} \to {_\mV\omega\B\Enr}_{\mW}$$ taking pullback along the embedding $\mV^\ot \subset \mP\Env(\mV)^\ot$ is an equivalence.\vspace{1mm}

\item The functor $$ {_\mV\R\Enr}_{\mP\Env(\mW)} \to {_\mV\omega\B\Enr}_{\mW}$$ taking pullback along the embedding $ \mW^\ot \subset \mP\Env(\mW)^\ot$ is an equivalence.

\item The functor $$ {_{\mP\Env(\mV)}\B\Enr}_{\mP\Env(\mW)} \to {_\mV\omega\B\Enr}_{\mW}$$ taking pullback along the embeddings $\mV^\ot \subset \mP\Env(\mV)^\ot, \mW^\ot \subset \mP\Env(\mW)^\ot$ is an equivalence.
\end{enumerate}
\end{corollary}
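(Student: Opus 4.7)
The plan is to deduce this corollary directly from Proposition~\ref{eqq} by specializing the localization data to be empty. First, for any small $\infty$-operad $\mV^\ot \to \Ass$, the pair $(\mV^\ot, \emptyset)$ is a small localization pair in the sense of Definition~\ref{locpa}: both conditions—that the saturated closure of $\rS$ be closed under the tensor product, and that every $\V \in \mV$ be $\f$-local for each $\f \in \rS$—hold vacuously when $\rS = \emptyset$. Moreover the associated localization is trivial, so $\emptyset^{-1}\mP\Env(\mV)^\ot = \mP\Env(\mV)^\ot$, and likewise for $\mW$.

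The key observation is that the conditions of being left $\emptyset$-enriched, right $\emptyset$-enriched, or $(\emptyset,\emptyset)$-bi-enriched are vacuous, so every weakly bi-enriched $\infty$-category satisfies them. Hence
$$ {^\emptyset_\mV\L\Enr}_{\mW} = {_\mV\R\Enr^\emptyset}_{\mW} = {^\emptyset_\mV\B\Enr^\emptyset_{\mW}} = {_\mV\omega\B\Enr}_{\mW}. $$
Part~(1) then follows by applying Proposition~\ref{eqq}(1) to the localization pair $(\mV^\ot, \emptyset)$, which yields an equivalence ${_{\mP\Env(\mV)}\L\Enr}_{\mW} \xrightarrow{\sim} {^\emptyset_\mV\L\Enr}_{\mW} = {_\mV\omega\B\Enr}_{\mW}$. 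Part~(2) follows from the analogous application of Proposition~\ref{eqq}(2) with $\T = \emptyset$, and part~(3) follows from Proposition~\ref{eqq}(3) with both $\rS = \emptyset$ and $\T = \emptyset$.

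There is essentially no real obstacle here: the entire substance of the corollary is already packaged in Proposition~\ref{eqq}, and all that remains is the formal check that the $\emptyset$-enrichment conditions are vacuous. The only mild subtlety is confirming that Definition~\ref{locpa} accommodates the empty collection, which it does trivially; everything else is a direct specialization.
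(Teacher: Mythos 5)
Your proposal is correct and is exactly the paper's route: the paper derives Corollary \ref{coronn} by specializing Proposition \ref{eqq} to the localization pairs $(\mV^\ot,\emptyset)$, $(\mW^\ot,\emptyset)$, using (as in the Example preceding the corollary) that $\emptyset^{-1}\mP\Env(\mV)^\ot = \mP\Env(\mV)^\ot$ and that the $\emptyset$-enrichment conditions are vacuous, so ${^\emptyset_\mV\B\Enr^\emptyset_\mW} = {_\mV\omega\B\Enr}_\mW$.
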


\begin{corollary}\label{cosqa}
Let $\kappa, \tau$ be small regular cardinals.

\begin{enumerate}
\item Let $\mV^\ot \to \Ass$ be a small monoidal $\infty$-category compatible with $\kappa$-small colimits and $ \mW^\ot \to \Ass $ a small $\infty$-operad.
The functor $${_{\Ind_\kappa(\mV)}\L\Enr}_{\mW} \to {_{\mV}^\kappa\L\Enr_\mW} $$ taking pullback along the monoidal embedding $\mV^\ot \subset \Ind_\kappa(\mV)^\ot$ 
is an equivalence.
%$$ {_{\mP\Env(\mV)^\kappa}\L\P\Enr^\kappa_\mW} \to {_{\mV}\omega\L\Enr}_{\mW}.$$ 
%is fully faithful and the essential image precisely consists of the left pseudo-enriched $\infty$-categories$\mM^\circledast \to (\mV^\kappa)^\ot \times \mW^\ot $ such thatfor any $\X, \Y \in \mM, \W_1,..., \W_\m$ for $\m \geq 0$ the presheaf $\Mul_\mM(-,\X,\W_1,..., \W_\m;\Y): (\mV^\kappa)^\op \to \mS$ preserves $\kappa$-small limits.

\item Let $\mW^\ot \to \Ass$ be a small monoidal $\infty$-category compatible with $\tau$-small colimits and $\mV^\ot \to \Ass $ a small $\infty$-operad.
The functor $$ {_\mV\R\Enr}_{\Ind_\tau(\mW)} \to {_\mV\R\Enr^\tau_{\mW}}$$ taking pullback along the monoidal embedding $\mW^\ot \subset \Ind_\tau(\mW)^\ot$ is an equivalence.
%fully faithful and the essential image precisely consists of the right pseudo-enriched $\infty$-categories$\mM^\circledast \to \mV^\ot \times (\mW^\tau)^\ot $ such thatfor any $\X, \Y \in \mM, \V_1,..., \V_\n$ for $\n \geq 0$ the presheaf $\Mul_\mM(\V_1,...,\V_\n,\X,-;\Y): (\mW^\tau)^\op \to \mS$ preserves $\tau$-small limits.

\item Let $\mV^\ot \to \Ass, \mW^\ot \to \Ass$ be small monoidal $\infty$-categories compatible with $\kappa$-small colimits, $\tau$-small colimits, respectively.
The functor $$ {_{\Ind_\kappa(\mV)}\B\Enr}_{\Ind_\tau(\mW)} \to {^\kappa_{\mV}\B\Enr^{\tau}_{\mW}}$$ restricting along the monoidal embeddings $\mV^\ot \subset \Ind_\kappa(\mV)^\ot, \mW^\ot \subset \Ind_\tau(\mW)^\ot$ is an equivalence.
%fully faithful and the essential image precisely consists of the bipseudo-enriched $\infty$-categories$\mM^\circledast \to (\mV^\kappa)^\ot \times (\mW^\tau)^\ot $ such thatfor any $\X, \Y \in \mM$ the presheaf $\Mul_\mM(-,\X,-;\Y): (\mV^\kappa)^\op \times (\mW^\tau)^\op \to \mS$ preserves $\kappa$-small limits in the first component and $\tau$-small limits in the second component.

\end{enumerate}

\end{corollary}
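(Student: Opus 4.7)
The plan is to deduce all three statements directly from Proposition \ref{eqq} by plugging in the specific localization pairs identified in Example \ref{Exaso}(1). Recall from that example that for any small monoidal $\infty$-category $\mV^\ot \to \Ass$ compatible with $\kappa$-small colimits, the pair $(\mV^\ot, \Enr^\kappa_\mV)$ is a small localization pair whose associated monoidal localization of $\mP\Env(\mV)^\ot$ is precisely the presentably monoidal $\infty$-category $\Ind_\kappa(\mV)^\ot$, and the condition of being left $\Enr^\kappa_\mV$-enriched in $\mV$ for a weakly bi-enriched $\infty$-category $\mM^\circledast \to \mV^\ot \times \mW^\ot$ coincides with the condition of being left $\kappa$-enriched in $\mV$. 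The analogous statement holds on the right-hand factor.

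For part (1), I would set $\rS := \Enr^\kappa_\mV$ so that $\rS^{-1}\mP\Env(\mV)^\ot \simeq \Ind_\kappa(\mV)^\ot$. Proposition \ref{eqq}(1) then asserts that pullback along the monoidal embedding $\mV^\ot \subset \Ind_\kappa(\mV)^\ot$ restricts to an equivalence
$${_{\Ind_\kappa(\mV)}\L\Enr}_\mW \xrightarrow{\simeq} {^{\Enr^\kappa_\mV}_\mV\L\Enr}_\mW,$$
and by Example \ref{Exaso}(1) the right hand side is literally ${_\mV^\kappa\L\Enr_\mW}$. Part (2) is entirely symmetric: setting $\T := \Enr^\tau_\mW$ and invoking Proposition \ref{eqq}(2) together with the right-enriched analogue of Example \ref{Exaso}(1) produces the desired equivalence. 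Alternatively one may reduce (2) to (1) by applying the involution $(-)^\rev$ of Notation \ref{invo}, which exchanges left and right enrichment.

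For part (3), I would simultaneously take $\rS := \Enr^\kappa_\mV$ and $\T := \Enr^\tau_\mW$, so that $\rS^{-1}\mP\Env(\mV)^\ot \simeq \Ind_\kappa(\mV)^\ot$ and $\T^{-1}\mP\Env(\mW)^\ot \simeq \Ind_\tau(\mW)^\ot$. Proposition \ref{eqq}(3) then yields an equivalence
$${_{\Ind_\kappa(\mV)}\B\Enr}_{\Ind_\tau(\mW)} \xrightarrow{\simeq} {^{\Enr^\kappa_\mV}_\mV\B\Enr}^{\Enr^\tau_\mW}_\mW,$$
via the pullback functor along both monoidal embeddings. Identifying the right-hand side with ${^\kappa_\mV\B\Enr^\tau_\mW}$ requires recognising that being $\Enr^\kappa_\mV,\Enr^\tau_\mW$-bi-enriched is the same as being both left $\Enr^\kappa_\mV$-enriched and right $\Enr^\tau_\mW$-enriched, which is immediate from the definition, combined again with Example \ref{Exaso}(1) on both factors and with Remark \ref{both} ensuring that bi-pseudo-enrichment decomposes into separate left and right pseudo-enrichment conditions.

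There is no serious obstacle: the entire argument is a packaging of Proposition \ref{eqq} with the accessibility-type identifications of Example \ref{Exaso}. The only point that requires mild care is the matching of notation on the right-enriched side, where one must observe that the $\kappa$-enrichment condition on the morphism spaces (Definition \ref{Luel}) agrees with the image-in-the-localization condition defining $\Enr^\kappa_\mV$-enrichment; this is exactly the content of Example \ref{Exaso}(1). Hence no independent work is needed beyond citing these two results.
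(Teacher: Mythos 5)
Your derivation is exactly the paper's intended route: Corollary \ref{cosqa} is stated without separate proof as an immediate consequence of Proposition \ref{eqq} applied to the localization pairs $(\mV^\ot,\Enr^\kappa_\mV)$ and $(\mW^\ot,\Enr^\tau_\mW)$, together with the identifications $(\Enr^\kappa_\mV)^{-1}\mP\Env(\mV)^\ot\simeq\Ind_\kappa(\mV)^\ot$ and "left $\Enr^\kappa_\mV$-enriched $=$ left $\kappa$-enriched" from Example \ref{Exaso}(1) (and its right-hand dual). Your packaging, including the optional reduction of (2) to (1) via $(-)^\rev$, is correct and matches the paper's argument.
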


%\begin{corollary}\label{sma}\label{presy}
	
%\begin{enumerate}\item Let $\mV^\ot \to \Ass$ be a presentably monoidal $\infty$-category,$\mW^\ot \to \Ass$ a small $\infty$-operad and $\mM^\circledast \to \mV^\ot \times \mW^\ot, \mN^\circledast \to \mV^\ot \times \mW^\ot$ left enriched $\infty$-categories such that $\mM,\mN$ are small.The $\infty$-category $\Enr\Fun_{\mV, \mW}(\mM,\mN)$ is small.

%\item Let $\mV^\ot \to \Ass$ be a small $\infty$-operad, $\mW^\ot \to \Ass$a presentably monoidal $\infty$-category and $\mM^\circledast \to \mV^\ot \times \mW^\ot, \mN^\circledast \to \mV^\ot \times \mW^\ot$ right enriched $\infty$-categories such that $\mM,\mN$ are small.The $\infty$-category $\Enr\Fun_{\mV, \mW}(\mM,\mN)$ is small.

%\item Let $\mM^\circledast \to \mV^\ot \times \mW^\ot, \mN^\circledast \to \mV^\ot \times \mW^\ot$ be $\infty$-categories bienriched in presentably monoidal $\infty$-categories such that $\mM,\mN$ are small. The $\infty$-category $\Enr\Fun_{\mV, \mW}(\mM,\mN)$ is small.\end{enumerate}\end{corollary}
%\subsubsection{Generalized enriched presheaves}

The next notation is \cite[Notation 4.14.]{heine2024bienriched}:

\begin{notation}Let $(\mV^\ot \to \Ass, \rS), (\mW^\ot \to \Ass, \T)$ be small localization pairs (Definition \ref{locpa}) and $\mM^\circledast \to \mV^\ot \times \mW^\ot$ an absolute small weakly bienriched $\infty$-category.
	
\begin{enumerate}
\item
Let $$\mP\B\Env(\mM)^\circledast_{\rS,\T} \subset \mP\B\Env(\mM)^\circledast$$
be the full weakly bienriched subcategory spanned by the presheaves on
$\B\Env(\mM)$ that are local with respect to the set of morphisms of the form $\f \ot \X \ot \W_1 ... \ot \W_\m $ and $\V_1 \ot ... \ot \V_\n \ot \X \ot \g$ 
for $\f \in \rS, \g \in \T, \X \in \mM$, $\V_1,...,\V_\n \in \mV, \W_1,...,\W_\m \in \mW$ for $\n, \m \geq0$.

\item Let $$\mP\widetilde{\B\Env}(\mM)^\circledast_{\rS,\T}:= \mV^\ot \times_{\rS^{-1}\mP\Env(\mV)^\ot}\mP\B\Env(\mM)^\circledast_{\rS,\T}\times_{\T^{-1}\mP\Env(\mW)^\ot} \mW^\ot.$$
\end{enumerate}

If $\rS=\emptyset$ or $\T=\emptyset$, we drop $\rS,\T$ from the notation, respectively.

\end{notation}

\begin{remark}
As a consequence of Remark \ref{switch} there is a canonical equivalence
$$(\mP\B\Env(\mM)_{\rS,\T}^\rev)^\circledast \simeq \mP\B\Env(\mM^\rev)_{\T,\rS}^\circledast.$$
	
\end{remark}

\begin{lemma}\label{Lauf}
Let $(\mV^\ot \to \Ass, \rS), (\mW^\ot \to \Ass, \T)$ be small localization pairs and $\mM^\circledast \to \mV^\ot \times \mW^\ot$ a small weakly bienriched $\infty$-category.
The embedding $\mP\B\Env(\mM)^\circledast_{\rS, \T} \subset \mP\B\Env(\mM)^\circledast$ admits an enriched left adjoint such that the unit lies over the units of the localizations $\rS^{-1}\mP\Env(\mV)^\ot \subset \mP\Env(\mV)^\ot, \T^{-1}\mP\Env(\mW)^\ot \subset \mP\Env(\mW)^\ot$. % relative to $\Ass$.
In particular, $\mP\B\Env(\mM)^\circledast_{\rS, \T} \to \rS^{-1}\mP\Env(\mV)^\ot \times \T^{-1}\mP\Env(\mW)^\ot$ is a presentably bitensored $\infty$-category.
	
\end{lemma}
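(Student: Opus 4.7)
The plan is to realize the inclusion as a Bousfield-type reflective localization of the presentably bitensored $\infty$-category $\mP\B\Env(\mM)^\circledast$ and then promote the reflection to an enriched left adjoint using the closure of $\bar{\rS}$ and $\bar{\T}$ under the tensor product that is built into Definition \ref{locpa}.

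First, since $\mM^\circledast$ is totally small, Proposition \ref{bitte} produces a small bitensored envelope $\B\Env(\mM)^\circledast$, and Proposition \ref{cool} makes $\mP\B\Env(\mM)^\circledast$ presentably bitensored over $\mP\Env(\mV)^\ot \times \mP\Env(\mW)^\ot$. By construction, $\mP\B\Env(\mM)_{\rS,\T}$ is the localization of the presentable $\infty$-category $\mP\B\Env(\mM)$ at the set $\Sigma$ of morphisms of the form $\f \ot \X \ot \W_1 \ot \cdots \ot \W_\m$ and $\V_1 \ot \cdots \ot \V_\n \ot \X \ot \g$ with $\f \in \rS$, $\g \in \T$; hence the inclusion $\mP\B\Env(\mM)_{\rS,\T} \subset \mP\B\Env(\mM)$ automatically admits a left adjoint $\LL$.

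The main step is to show that $\LL$ extends to an enriched left adjoint, which reduces to the stability of the strongly saturated class generated by $\Sigma$ under tensoring on either side by presheaves. Concretely, for any $\V' \in \mP\Env(\mV)$ and $\W' \in \mP\Env(\mW)$, a generator $\f \ot \X \ot \W_1 \ot \cdots \ot \W_\m$ with $\f \in \rS$ is carried to $(\V' \ot \f) \ot \X \ot \W_1 \ot \cdots \ot \W_\m \ot \W'$, and the hypothesis of Definition \ref{locpa} that $\bar{\rS}$ is closed under the tensor product of $\mP\Env(\mV)^\ot$ places $\V' \ot \f$ in the saturation of $\rS$. The symmetric argument for $\bar{\T}$ handles the $\g$-generators. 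Therefore the full subcategory of $\Sigma$-local objects is stable under the presentably bitensored action, and $\LL$ lifts to an enriched left adjoint of the embedding.

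For the unit claim and the presentably bitensored ``in particular'' statement, the structure map $\mP\B\Env(\mM)^\circledast \to \mP\Env(\mV)^\ot \times \mP\Env(\mW)^\ot$ carries the generators of $\Sigma$ onto generators of the base localizations $\rS^{-1}\mP\Env(\mV)^\ot \subset \mP\Env(\mV)^\ot$ and $\T^{-1}\mP\Env(\mW)^\ot \subset \mP\Env(\mW)^\ot$, so the unit of $\LL$ at an object $\P$ lying over $(\V_1,\ldots,\V_\n;\W_1,\ldots,\W_\m)$ projects to the reflection units $\V_\bi \to \LL_\rS(\V_\bi)$ and $\W_\bj \to \LL_\T(\W_\bj)$ in the base. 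Consequently, on $\rS,\T$-local objects the bi-action factors through the localized envelopes, and since we obtain a reflective localization of a presentably bitensored $\infty$-category along a functor preserving small colimits, the induced structure $\mP\B\Env(\mM)^\circledast_{\rS,\T} \to \rS^{-1}\mP\Env(\mV)^\ot \times \T^{-1}\mP\Env(\mW)^\ot$ is itself presentably bitensored.

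The hard part is the saturation argument in the central step: one must confirm that the closure of $\Sigma$ under small colimits and cobase-change is preserved by the external presheaf action on both sides. This is precisely the content of the closure conditions on $\bar{\rS}$ and $\bar{\T}$ encoded in Definition \ref{locpa}; everything else is formal manipulation of reflective localizations of presentable $\infty$-categories.
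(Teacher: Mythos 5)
Your proposal is correct and follows essentially the same route as the paper: realize $\mP\B\Env(\mM)_{\rS,\T}$ as the Bousfield localization of the presentable $\infty$-category $\mP\B\Env(\mM)$ at the set of generating morphisms, and use the closure of $\bar{\rS}$ and $\bar{\T}$ under the tensor product (Definition \ref{locpa}) to see that the $\mP\Env(\mV),\mP\Env(\mW)$-biaction preserves the (saturation of the) local equivalences, which yields the enriched left adjoint with unit over the base units and the presentably bitensored structure. The paper's own argument is exactly this, phrased via the class $\mQ$ of morphisms $\f\ot\X\ot\g$ with $\f\in\bar{\rS}$, $\g\in\bar{\T}$.
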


\begin{remark}
Since $\mP\B\Env(\mM)$ is presentable, the embedding $\mP\B\Env(\mM)_{\rS,\T} \subset \mP\B\Env(\mM)$ admits a left adjoint and $\mP\B\Env(\mM)_{\rS,\T} \subset \mP\B\Env(\mM)$ precisely consists of the presheaves on $\B\Env(\mM)$ that are local with respect to the collection $\mQ$ of morphisms of the form $\f \ot \X \ot \g $ for $\f \in \bar{\rS}, \g \in \bar{\T}$ and $\X \in \mM.$ The biaction of $\mP\Env(\mV), \mP\Env(\mW)$ on $\mP\B\Env(\mM)$ sends morphisms of $\bar{\rS}, \mQ, \bar{\T}$ to $\mQ$.
This implies that the embedding $\mP\B\Env(\mM)^\circledast_{\rS, \T} \subset \mP\B\Env(\mM)^\circledast$ admits an enriched left adjoint such that the unit lies over the units of the localizations $\rS^{-1}\mP\Env(\mV)^\ot \subset \mP\Env(\mV)^\ot, \T^{-1}\mP\Env(\mW)^\ot \subset \mP\Env(\mW)^\ot$. % relative to $\Ass$.
\end{remark}

\begin{remark}\label{embil}
	
A weakly bienriched $\infty$-category $\mM^\circledast \to \mV^\ot \times \mW^\ot$ is a $\rS, \T$-bienriched $\infty$-category if and only if
$\mM^\circledast \subset \mP\B\Env(\mM)^\circledast$ lands in
$\mP\B\Env(\mM)^\circledast_{\rS,\T}$.
\end{remark}

\begin{remark}\label{Functol}
Let $\mM^\circledast \to \mN^\circledast$ be an enriched functor between absolute small weakly bienriched $\infty$-categories lying over maps of small localization pairs
$(\mV^\ot \to \Ass, \rS)\to (\mV'^\ot \to \Ass, \rS'), (\mW^\ot \to \Ass, \T)\to (\mW'^\ot \to \Ass, \T')$.
The induced left adjoint map of bitensored $\infty$-categories $\mP\B\Env(\mM)^\circledast \to \mP\B\Env(\mN)^\circledast$ preserves local equivalences for the corresponding localizations and so descends to a left adjoint map of bitensored $\infty$-categories $\mP\B\Env(\mM)_{\rS,\T}^\circledast \to \mP\B\Env(\mN)_{\rS',\T'}^\circledast$.
	
\end{remark}

The next proposition is \cite[Proposition 4.20.]{heine2024bienriched}:

\begin{proposition}\label{pseuso} Let $(\mV^\ot \to \Ass, \rS), (\mW^\ot \to \Ass, \T)$ be small localization pairs, $\mM^\circledast \to \mV^\ot \times \mW^\ot$ a small $\rS, \T$-bienriched $\infty$-category and $\rho: \mN^\circledast \to \rS^{-1}\mP\Env(\mV)^\ot \times \T^{-1}\mP\Env(\mW)^\ot$ a bitensored $\infty$-category compatible with small colimits. 

\begin{enumerate}\item The enriched functor $\mM^\circledast \subset \mP\B\Env(\mM)^\circledast \to \mP\B\Env(\mM)^\circledast_{\rS,\T}$ induces a functor $$\Enr\Fun_{\rS^{-1}\mP\Env(\mV),\T^{-1}\mP\Env(\mW)}(\mP\B\Env(\mM)_{\rS,\T},\mN) \to \Enr\Fun_{\mV,\mW}(\mM,\mN)$$that admits a fully faithful left adjoint. The left adjoint lands in the full subcategory $$\LinFun^\L_{\rS^{-1}\mP\Env(\mV),\T^{-1}\mP\Env(\mW)}(\mP\B\Env(\mM)_{\rS,\T},\mN).$$

\item The following induced functor is an equivalence: $$\LinFun^\L_{\rS^{-1}\mP\Env(\mV),\T^{-1}\mP\Env(\mW)}(\mP\B\Env(\mM)_{\rS,\T},\mN) \to \Enr\Fun_{\mV,\mW}(\mM,\mN).$$
\end{enumerate}\end{proposition}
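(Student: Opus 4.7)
The plan is to derive this from the universal property of the closed bitensored envelope (Corollary \ref{envvcor}(2)) combined with that of the enriched reflective localization (Lemma \ref{Lauf}). First I would view $\mN$ as a bitensored $\infty$-category over $\mP\Env(\mV)^\ot \times \mP\Env(\mW)^\ot$ via pullback along the monoidal left adjoints $\mP\Env(\mV)^\ot \to \rS^{-1}\mP\Env(\mV)^\ot$ and $\mP\Env(\mW)^\ot \to \T^{-1}\mP\Env(\mW)^\ot$; compatibility with small colimits is preserved because these are cocartesian fibrations preserving small colimits. Corollary \ref{envvcor}(2) then supplies a fully faithful left adjoint to the restriction
$$\Enr\Fun_{\mP\Env(\mV),\mP\Env(\mW)}(\mP\B\Env(\mM),\mN) \to \Enr\Fun_{\mV,\mW}(\mM,\mN)$$
landing in the full subcategory $\LinFun^\L_{\mP\Env(\mV),\mP\Env(\mW)}(\mP\B\Env(\mM),\mN)$, on which the restriction is an equivalence.

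The crucial intermediate step is that precomposition with the enriched localization $\mP\B\Env(\mM) \to \mP\B\Env(\mM)_{\rS,\T}$ induces an equivalence
$$\LinFun^\L_{\rS^{-1}\mP\Env(\mV),\T^{-1}\mP\Env(\mW)}(\mP\B\Env(\mM)_{\rS,\T},\mN) \simeq \LinFun^\L_{\mP\Env(\mV),\mP\Env(\mW)}(\mP\B\Env(\mM),\mN).$$
I would prove this by showing that any $\F$ on the right automatically inverts the generating local equivalences: for $\f \in \rS$, $\X \in \mM$ and $\W_1, \ldots, \W_\m \in \mW$, $\mP\Env(\mV),\mP\Env(\mW)$-linearity of $\F$ gives an equivalence $\F(\f \ot \X \ot \W_1 \ot \cdots \ot \W_\m) \simeq \f \ot \F(\X) \ot \W_1 \ot \cdots \ot \W_\m$, and since the $\mP\Env(\mV)$-action on $\mN$ factors through $\rS^{-1}\mP\Env(\mV)$ by construction, the class $\f$ acts as an equivalence; the case of $\T$ is symmetric. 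By Lemma \ref{Lauf} and the universal property of the reflective localization, $\F$ then factors uniquely through $\mP\B\Env(\mM)_{\rS,\T}$ as a left adjoint $\rS^{-1}\mP\Env(\mV),\T^{-1}\mP\Env(\mW)$-linear functor, yielding the equivalence. Composing with the equivalence of the preceding paragraph gives claim (2).

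For claim (1), composing the fully faithful left adjoint of Corollary \ref{envvcor}(2) with the inverse of the intermediate equivalence and the full subcategory inclusion $\LinFun^\L \subset \Enr\Fun$ produces a fully faithful functor $\Enr\Fun_{\mV,\mW}(\mM,\mN) \to \Enr\Fun_{\rS^{-1}\mP\Env(\mV),\T^{-1}\mP\Env(\mW)}(\mP\B\Env(\mM)_{\rS,\T},\mN)$ landing in the asserted full subcategory. To verify it is left adjoint to the restriction, one checks the unit-counit directly: given $\G \in \Enr\Fun_{\mV,\mW}(\mM,\mN)$, the unit is an equivalence because the extension followed by restriction to $\mM$ recovers $\G$ by the universal property of $\mP\B\Env(\mM)$, while on the subcategory $\LinFun^\L$ on the other side, the counit is an equivalence by the intermediate equivalence. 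The main obstacle will be the careful verification that left adjoint linear functors to $\mN$ automatically invert the local equivalences, which crucially uses that the biaction on $\mN$ factors through the localizations of the bases.
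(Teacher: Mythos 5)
The paper itself contains no proof of this proposition --- it is quoted from \cite[Proposition 4.20.]{Heine2024Lur} --- so I can only measure your argument against the tools the paper provides. Your route is sound and uses exactly those tools: pulling $\mN^\circledast$ back along the monoidal localization functors gives a bitensored $\infty$-category over $\mP\Env(\mV)^\ot \times \mP\Env(\mW)^\ot$ compatible with small colimits (and since the objects of $\mV,\mW$ are $\rS$-, $\T$-local by the definition of a localization pair, $\mV,\mW$-enriched functors into this pullback agree with those into $\mN$), so Corollary \ref{envvcor} (2) applies; and your key observation that any $\mP\Env(\mV),\mP\Env(\mW)$-linear left adjoint out of $\mP\B\Env(\mM)$ inverts the generating local equivalences $\f \ot \X \ot \W_1 \ot \cdots$ and $\V_1 \ot \cdots \ot \X \ot \g$ is correct, since the pulled-back action sends $\f \in \rS$, $\g \in \T$ to equivalences. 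Two small points in the intermediate step should be said explicitly: the factorization $\F \simeq \bar{\F} \circ \L$ is again linear over the localized bases because the localized actions are computed by applying $\L$ to the old ones and $\F$ inverts the localization units; and $\bar{\F}$ again admits a right adjoint because the right adjoint of $\F$ takes values in $\rS,\T$-local objects (precisely because $\F$ inverts the local equivalences). With these remarks part (2) is complete.

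The genuine gap is in your verification of part (1). Producing a fully faithful functor $\G \mapsto \widehat{\G}$ into $\LinFun^\L_{\rS^{-1}\mP\Env(\mV),\T^{-1}\mP\Env(\mW)}(\mP\B\Env(\mM)_{\rS,\T},\mN)$ whose restriction to $\mM$ recovers $\G$ does not yet exhibit it as a left adjoint of the restriction functor on the whole enriched functor category: checking ``the counit on $\LinFun^\L$'' only tests the universal property against linear left adjoints, whereas the adjunction must be verified against arbitrary enriched functors $\rH: \mP\B\Env(\mM)^\circledast_{\rS,\T} \to \mN^\circledast$. The fix is available from your own ingredients: precomposition with the enriched localization $\L: \mP\B\Env(\mM)^\circledast \to \mP\B\Env(\mM)^\circledast_{\rS,\T}$ of Lemma \ref{Lauf} is fully faithful on all enriched functor categories, being right adjoint to precomposition with the fully faithful inclusion $\iota$ with invertible counit (as $\L \circ \iota \simeq \id$). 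Hence for every $\rH$ one has
$$\Map(\widehat{\G},\rH) \simeq \Map(\widehat{\G}\circ\L,\rH\circ\L) \simeq \Map(\G,(\rH\circ\L)_{\mid\mM}),$$
where the second equivalence is the adjunction of Corollary \ref{envvcor} (2) applied to $\widehat{\G}\circ\L$, which is the extension of $\G$ supplied there, and $(\rH\circ\L)_{\mid\mM}$ is exactly the restriction of $\rH$ along $\mM^\circledast \to \mP\B\Env(\mM)^\circledast_{\rS,\T}$. This is the required universal property, so the left adjoint exists, is fully faithful and lands in $\LinFun^\L$ as claimed; with this insertion your argument goes through.
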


%We add the following proposition, which we need in the next section toprove Proposition \ref{wooond}.
%\subsection{Enriched presheaves}

%Next we construct for every small regular cardinal $\kappa$ a $\kappa$-enriched $\infty$-category of $\kappa$-enriched presheaves.

%\begin{notation}\label{nity}Let $\kappa$ be a small regular cardinal and $\mV^\ot \to \Ass$ a small monoidal $\infty$-category.
%Let $\mP_\kappa(\mV)^\ot \subset \mP(\mV)^\ot$ be the full suboperad spanned by the full subcategory of $\mP(\mV)$ generated by $\mV$ under $\kappa$-small colimits.Let $\mP^\kappa(\mV)^\ot \subset \mP(\mV)^\ot$ be the full suboperad spanned by the $\kappa$-compact objects. % of $\mP(\mV)$.\end{notation}

%\begin{remark}\label{comge}Let $\mV^\ot \to \Ass$ be a small monoidal $\infty$-category and $\kappa$ a small regular cardinal.Since the Yoneda-embedding is monoidal and $ \mP(\mV)^\ot \to \Ass$ is compatible with small colimits, $\mP_\kappa(\mV)^\ot \to \Ass$ is a monoidal full subcategory of $\mP(\mV)^\ot\to\Ass$ compatible with $\kappa$-small colimits.The full subcategory $\mP^\kappa(\mV)$ is the smallest full subcategory of $\mP(\mV)$ containing $\mP_\kappa(\mV)$ and closed under retracts \cite[Proposition 5.3.4.17.]{lurie.HTT}. So $\mP^\kappa(\mV)$ is a monoidal full subcategory of $\mP(\mV)$ closed under $\kappa$-small colimits.\end{remark}

\begin{notation}
Let $\kappa$ be a small regular cardinal and $\mC$ a small $\infty$-category.
Let $\mP_\kappa(\mC)^\ot \subset \mP(\mC)^\ot$ be the full suboperad spanned by the full subcategory of $\mP(\mC)$ generated by $\mC$ under $\kappa$-small colimits.
	
\end{notation}

The next notation is \cite[Notation 4.37.]{heine2024bienriched}:

%\begin{remark}
%Observe that $\widehat{\mP}_\sigma(\mC)=\mP(\mC).$	\end{remark}

\begin{notation}\label{enrprrr}Let $\kappa$ be a small regular cardinal.
	
\begin{enumerate}\item Let $\mV^\ot \to \Ass$ be a small monoidal $\infty$-category compatible with $\kappa$-small colimits, $\mW^\ot \to \Ass$ a small $\infty$-operad and $\mM^\circledast \to \mV^\ot \times \mW^\ot$ a small left $\kappa$-enriched $\infty$-category.
Let $$\mP\B\Env_\kappa(\mM)_{\L\Enr}^\circledast \subset \mV^\ot \times_{\Ind_\kappa(\mV)^\ot} \mP\B\Env(\mM)_{\L\Enr_\kappa}^\circledast \times_{\mP\Env(\mW)^\ot} \mP_\kappa\Env(\mW)^\ot $$ be the full bitensored subcategory generated by $\mM$ under $\kappa$-small colimits. % and the $\mV, \mP_\kappa\Env(\mW)$-biaction.

%Let $$\mP\widetilde{\B\Env}_\kappa(\mM)_{\L\Enr}^\circledast := \mP\B\Env_\kappa(\mM)_{\L\Enr}^\circledast \times_{\mP_\kappa\Env(\mW)^\ot} \mW^\ot \to \mV^\ot \times \mW^\ot.$$
%Let$$\mP\L\Env_\kappa(\mM)_{\L\Enr}^\circledast \subset \mP\B\Env_\kappa(\mM)_{\L\Enr_\kappa}^\circledast $$be the full weakly bienriched subcategory generated by $\mM$ under $\kappa$-small colimits and the left $\mV$-action.

%\item Let $$\mP\widetilde{\L\Env}_\kappa(\mM)_{\L\Enr}^\circledast := \mP\L\Env_\kappa(\mM)_{\L\Enr}^\circledast \times_{\mP_\kappa\Env(\mW)^\ot} \mW^\ot \to \mV^\ot \times \mW^\ot.$$

\item Let $\mV^\ot \to \Ass$ be a small $\infty$-operad, $\mW^\ot \to \Ass$ a small monoidal $\infty$-category compatible with $\kappa$-small colimits and $\mM^\circledast \to \mV^\ot \times \mW^\ot$ a small right $\kappa$-enriched $\infty$-category.
%Let$$ \mP\B\Env_\kappa(\mM)_{\R\Enr}^\circledast := (\mP\B\Env_\kappa(\mM^\rev)^\rev_{\L\Enr})^\circledast.$$
Let $$ \mP\B\Env_\kappa(\mM)_{\R\Enr}^\circledast \subset \mP_\kappa\Env(\mV)^\ot \times_{\mP\Env(\mV)^\ot} \mP\B\Env(\mM)_{\R\Enr_\kappa}^\circledast \times_{\Ind_\kappa(\mW)^\ot} \mW^\ot$$
be the full bitensored subcategory generated by $\mM$ under $\kappa$-small colimits.
% and the $\mP_\kappa\Env(\mV), \mW$-biaction.

%Let $$\mP\widetilde{\B\Env}_\kappa(\mM)_{\R\Enr}^\circledast := \mV^\ot \times_{\mP_\kappa\Env(\mV)^\ot} \mP\B\Env_\kappa(\mM)_{\R\Enr}^\circledast \to \mV^\ot \times \mW^\ot.$$

%\item Let $\mV^\ot \to \Ass$ be a small $\infty$-operad, $\mW^\ot \to \Ass$ a small monoidal $\infty$-category compatible with $\kappa$-small colimits and $\mM^\circledast \to \mV^\ot \times \mW^\ot$ a small right $\kappa$-enriched $\infty$-category.Let$$ \mP\R\Env_\kappa(\mM)_{\R\Enr}^\circledast \subset \mP_\kappa\Env(\mV)^\ot \times_{\mP\Env(\mV)^\ot} \mP\B\Env(\mM)_{\R\Enr_\kappa}^\circledast \times_{\Ind_\kappa(\mW)^\ot} \mW^\ot$$be the full weakly bienriched subcategory generated by $\mM$ under $\kappa$-small colimits and the right $\mW$-action.Let$$\mP\widetilde{\R\Env}_\kappa(\mM)_{\R\Enr}^\circledast := \mV^\ot \times_{\mP_\kappa\Env(\mV)^\ot} \mP\R\Env_\kappa(\mM)_{\R\Enr}^\circledast \to \mV^\ot \times \mW^\ot.$$
	
\item Let $\mV^\ot \to \Ass, \mW^\ot \to \Ass$ be small monoidal $\infty$-categories compatible with $\kappa$-small colimits and $\mM^\circledast \to \mV^\ot \times \mW^\ot$ a small $\kappa, \kappa$-bienriched $\infty$-category. Let$$\mP\B\Env_\kappa(\mM)_{\B\Enr}^\circledast \subset \mV^\ot \times_{\Ind_\kappa(\mV)^\ot} \mP\B\Env(\mM)_{\B\Enr_\kappa}^\circledast\times_{\Ind_\kappa(\mW)^\ot} \mW^\ot$$be the full bitensored subcategory generated by $\mM$ under $\kappa$-small colimits.

%Applying the notation in a larger universe and for $\kappa=\sigma$ the strongly inacessible cardinal corresponding to the small universe we remove $\kappa$ from the notation.
% and the $\mV,\mW$-biaction.

%\item Let $\mV^\ot \to \Ass$ be a small monoidal $\infty$-category compatible with $\kappa$-small colimits, $\mW^\ot \to \Ass$ a small $\infty$-operad and $\mM^\circledast \to \mV^\ot \times \mW^\ot$ a small left $\kappa$-enriched $\infty$-category.Let$$\mP\widetilde{\B\Env}_\kappa(\mM)_{\L\Enr}^\circledast := \mP\B\Env_\kappa(\mM)_{\L\Enr}^\circledast \times_{\mP_\kappa\Env(\mW)^\ot} \mW^\ot \to \mV^\ot \times \mW^\ot.$$

%\itemLet $\mV^\ot \to \Ass$ be a small $\infty$-operad, $\mW^\ot \to \Ass$ a small monoidal $\infty$-category compatible with $\kappa$-small colimits and $\mM^\circledast \to \mV^\ot \times \mW^\ot$ a small right $\kappa$-enriched $\infty$-category.Let $$\mP\widetilde{\B\Env}_\kappa(\mM)_{\R\Enr}^\circledast := \mV^\ot \times_{\mP_\kappa\Env(\mV)^\ot} \mP\B\Env_\kappa(\mM)_{\R\Enr}^\circledast \to \mV^\ot \times \mW^\ot.$$
	
%\item Let $\mV^\ot \to \Ass$ be a small monoidal $\infty$-category compatible with $\kappa$-small colimits, $\mW^\ot \to \Ass$ a small $\infty$-operad and $\mM^\circledast \to \mV^\ot \times \mW^\ot$ a small left $\kappa$-pseudo-enriched $\infty$-category.

%\item Let $\mV^\ot \to \Ass$ be a small $\infty$-operad, $\mW^\ot \to \Ass$ a small monoidal $\infty$-category compatible with $\kappa$-small colimits and $\mM^\circledast \to \mV^\ot \times \mW^\ot$ a small right $\kappa$-pseudo-enriched $\infty$-category.
\end{enumerate}\end{notation}

Next we apply Notation \ref{enrprrr} in a larger universe:

\begin{notation}\label{enrprrrt}Let $\sigma$ be the large regular cardinal corresponding to the small universe.
	
\begin{enumerate}\item Let $\mV^\ot \to \Ass$ be a monoidal $\infty$-category compatible with small colimits, $\mW^\ot \to \Ass$ an $\infty$-operad and $\mM^\circledast \to \mV^\ot \times \mW^\ot$ a left quasi-enriched $\infty$-category.
Let $$ \mP\B\Env(\mM)_{\L\Enr}^\circledast := \mP\B\Env_\sigma(\mM)_{\L\Enr}^\circledast \to \mV^\ot \times \mP\Env(\mW)^\ot.$$

%Let $$\mP\widetilde{\B\Env}_\kappa(\mM)_{\L\Enr}^\circledast := \mP\B\Env_\kappa(\mM)_{\L\Enr}^\circledast \times_{\mP_\kappa\Env(\mW)^\ot} \mW^\ot \to \mV^\ot \times \mW^\ot.$$
%Let$$\mP\L\Env_\kappa(\mM)_{\L\Enr}^\circledast \subset \mP\B\Env_\kappa(\mM)_{\L\Enr_\kappa}^\circledast $$be the full weakly bienriched subcategory generated by $\mM$ under $\kappa$-small colimits and the left $\mV$-action.
		
%\item Let $$\mP\widetilde{\L\Env}_\kappa(\mM)_{\L\Enr}^\circledast := \mP\L\Env_\kappa(\mM)_{\L\Enr}^\circledast \times_{\mP_\kappa\Env(\mW)^\ot} \mW^\ot \to \mV^\ot \times \mW^\ot.$$
		
%\item Let $\mV^\ot \to \Ass$ be a small $\infty$-operad, $\mW^\ot \to \Ass$ a small monoidal $\infty$-category compatible with $\kappa$-small colimits and $\mM^\circledast \to \mV^\ot \times \mW^\ot$ a small right $\kappa$-enriched $\infty$-category.
%Let$$ \mP\B\Env_\kappa(\mM)_{\R\Enr}^\circledast := (\mP\B\Env_\kappa(\mM^\rev)^\rev_{\L\Enr})^\circledast.$$
%Let $$ \mP\B\Env_\kappa(\mM)_{\R\Enr}^\circledast \subset \mP_\kappa\Env(\mV)^\ot \times_{\mP\Env(\mV)^\ot} \mP\B\Env(\mM)_{\R\Enr_\kappa}^\circledast \times_{\Ind_\kappa(\mW)^\ot} \mW^\ot$$
%	be the full bitensored subcategory generated by $\mM$ under $\kappa$-small colimits.
% and the $\mP_\kappa\Env(\mV), \mW$-biaction.
		
%Let $$\mP\widetilde{\B\Env}_\kappa(\mM)_{\R\Enr}^\circledast := \mV^\ot \times_{\mP_\kappa\Env(\mV)^\ot} \mP\B\Env_\kappa(\mM)_{\R\Enr}^\circledast \to \mV^\ot \times \mW^\ot.$$
		
\item Let $\mV^\ot \to \Ass$ be an $\infty$-operad, $\mW^\ot \to \Ass$ a monoidal $\infty$-category compatible with small colimits and $\mM^\circledast \to \mV^\ot \times \mW^\ot$ a right quasi-enriched $\infty$-category.
Let $$\mP\B\Env(\mM)_{\R\Enr}^\circledast := \mP\B\Env_\sigma(\mM)_{\R\Enr}^\circledast \to \mP\Env(\mV)^\ot \times \mW^\ot.$$
		
\item Let $\mV^\ot \to \Ass, \mW^\ot \to \Ass$ be monoidal $\infty$-categories compatible with small colimits and $\mM^\circledast \to \mV^\ot \times \mW^\ot$ a bi-quasi-enriched $\infty$-category. Let $$\mP\B\Env(\mM)_{\B\Enr}^\circledast:= \mP\B\Env_\sigma(\mM)_{\B\Enr}^\circledast \to \mV^\ot \times \mW^\ot.$$
\end{enumerate}
	
\end{notation}

\begin{notation}
	
Let $\mV^\ot \to \Ass$ be a monoidal $\infty$-category compatible with small colimits and $\mM^\circledast \to \mV^\ot$ a left quasi-enriched $\infty$-category.
Let $\mP_\mV(\mM)^\circledast:= \mP\B\Env(\mM)_{\L\Enr}^\circledast.$	
	
\end{notation}

The next proposition is \cite[Corollary 4.46.]{heine2024bienriched}:
\begin{proposition}\label{rembrako}
\begin{enumerate}\item Let $\mV^\ot \to \Ass$ be a presentably monoidal $\infty$-category, $\mW^\ot \to \Ass$ a small $\infty$-operad and $\mM^\circledast \to \mV^\ot \times \mW^\ot$ a small left enriched $\infty$-category.
Then $$\mP\B\Env(\mM)_{\L\Enr}^\circledast \to \mV^\ot \times \mP\Env(\mW)^\ot$$ is a presentably left tensored $\infty$-category.
		
\item Let $\mV^\ot \to \Ass$ be a small $\infty$-operad, $\mW^\ot \to \Ass$ a presentably monoidal $\infty$-category and $\mM^\circledast \to \mV^\ot \times \mW^\ot$ a small right enriched $\infty$-category.
Then $$\mP\B\Env(\mM)_{\R\Enr}^\circledast \to \mP\Env(\mV)^\ot \times \mW^\ot$$
is a presentably right tensored $\infty$-category.

\item Let $\mV^\ot \to \Ass, \mW^\ot \to \Ass$ be presentably monoidal $\infty$-categories and $\mM^\circledast \to \mV^\ot \times \mW^\ot$ a small bienriched $\infty$-category. Then $$ \mP\B\Env(\mM)_{\B\Enr}^\circledast:= \mP\B\Env_\sigma(\mM)_{\B\Enr}^\circledast \to \mV^\ot \times \mW^\ot$$ is a presentably bitensored $\infty$-category.

\end{enumerate}
	
\end{proposition}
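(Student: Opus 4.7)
Unpacking Notation \ref{enrprrrt}(1), $\mP_\mV(\mM)^\circledast$ is the full bitensored subcategory of
$$\mV^\ot \times_{\Ind_\sigma(\mV)^\ot} \mP\B\Env(\mM)_{\L\Enr_\sigma}^\circledast \times_{\mP\Env(\mW)^\ot} \mP_\sigma\Env(\mW)^\ot$$
generated by $\mM$ under small colimits, where $\sigma$ denotes the large regular cardinal of the small universe. I argue part (1); parts (2) and (3) follow by the symmetric argument applied to $\R\Enr_\sigma$ (transporting via Remark \ref{switch}) and to the combined pair $\B\Enr_{\sigma,\sigma}$. Since $\mV$ is presentably monoidal and $\mM$ is small, Example \ref{Exaso}(2) identifies left enrichment of $\mM$ in $\mV$ with left $\Enr_\mV$-enrichment, so the construction is applicable.

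The first step is to apply Lemma \ref{Lauf} in the large universe where $\L\Enr_\sigma = (\Enr^\sigma_\mV,\emptyset)$ is a small localization pair. This produces a bilinear left adjoint to $\mP\B\Env(\mM)^\circledast_{\L\Enr_\sigma} \subset \mP\B\Env(\mM)^\circledast$ whose unit lies over the monoidal localization $\mP\Env(\mV)^\ot \to \Ind_\sigma(\mV)^\ot$. Hence $\mP\B\Env(\mM)^\circledast_{\L\Enr_\sigma} \to \Ind_\sigma(\mV)^\ot \times \mP\Env(\mW)^\ot$ is a presentably bitensored $\infty$-category.

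Second, I pull back along the monoidal embedding $\mV^\ot \subset \Ind_\sigma(\mV)^\ot$, which is available since $\mV$ is presentable and cocomplete. The pullback leaves the underlying $\infty$-category unchanged and restricts the left action to $\mV$; the restricted left $\mV$-action still preserves small colimits in each variable, inherited from the $\Ind_\sigma(\mV)$-action. The outer pullback is trivial because $\mP_\sigma\Env(\mW) = \mP\Env(\mW)$, as small colimits generate $\mP\Env(\mW)$ from $\Env(\mW)$.

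Finally, I take $\mP_\mV(\mM)$ to be the full subcategory generated by $\mM$ under small colimits and verify closure under the $\mV$- and $\mP\Env(\mW)$-biaction. Since $\mV$ is $\tau$-compactly generated for some small $\tau$, every $\V \in \mV$ is a small $\tau$-filtered colimit of $\tau$-compact objects; using that the tensor preserves small colimits in each variable, closure under the $\mV$-action reduces to showing that $\V_0 \ot \X \ot \W_0$ lies in $\mP_\mV(\mM)$ for $\V_0 \in \mV^\tau$, $\X \in \mM$, and $\W_0 \in \Env(\mW)$, which holds because such formal bitensors already sit inside the essential image of $\B\Env(\mM)$ in $\mP\B\Env(\mM)^\circledast_{\L\Enr_\sigma}$. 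The same reasoning handles the $\mP\Env(\mW)$-action. Presentability of $\mP_\mV(\mM)$ follows: it is closed under small colimits inside a presentable $\infty$-category and is generated by the small set comprising $\mM$ and its bitensors by a set of generators of $\mV$ and $\mW$. The main obstacle is the closure verification in this last step, which requires tracking how the saturation of $\L\Enr_\sigma$ interacts with compact generators of $\mV$; this is precisely where the compatibility of the localization with the biaction (built into Definition \ref{locpa}) does the work.
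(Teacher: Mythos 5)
There is a genuine gap at the final and decisive step. Your skeleton (work in the enlarged universe, apply Lemma \ref{Lauf} to get that $\widehat{\mP}\B\Env(\mM)^\circledast_{\L\Enr_\sigma}\to \Ind_\sigma(\mV)^\ot\times\widehat{\mP}\Env(\mW)^\ot$ is presentably bitensored there, pull back the action along $\mV^\ot\subset\Ind_\sigma(\mV)^\ot$ and $\mP\Env(\mW)^\ot\subset\widehat{\mP}\Env(\mW)^\ot$, then pass to the subcategory generated by $\mM$) is a correct unwinding of Notation \ref{enrprrrt}, but your justification of presentability — ``closed under small colimits inside a presentable $\infty$-category and generated by a small set'' — does not apply. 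The ambient localization is presentable only with respect to the large universe: its generators need not be $\kappa$-compact for any \emph{small} $\kappa$, the $\Enr^\sigma_\mV$-local objects are closed only under large filtered colimits, and even local smallness of the colimit-closure of $\mM$ (equivalently, that the multimorphism spaces $\Mul(-,\W_1,\dots,\W_\m;\Y)$ land in small spaces and preserve small limits, which is part of ``compatible with small colimits'') is not automatic. This is exactly the point the paper treats with care elsewhere: one fixes a small $\kappa$ with $\mV$ $\kappa$-compactly generated, uses Corollary \ref{cosqa}/Lemma \ref{mini} to trade the large-universe construction for the small-universe construction $\mP\B\Env_\kappa(\mM)_{\L\Enr}$ over $\mV^\kappa$ (cf. the proofs of Proposition \ref{quirk} and Proposition \ref{presen}), or alternatively identifies $\mP_\mV(\mM)^\circledast$ with an enriched functor $\infty$-category $\Enr\Fun_{\emptyset,\mV}(\mM^\op,\mV)^\circledast$ (Theorem \ref{unitol}, Remark \ref{Remrema}), whose presentability is known. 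Without such a compactness-transfer or functor-category argument, presentability and the smallness of the multimorphism spaces remain unproven; note also that the paper itself does not prove this proposition internally but cites \cite[Corollary 4.41.]{Heine2024Lur}, so your from-scratch route must carry this weight itself.

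A secondary problem is the closure-under-biaction step. If ``full bitensored subcategory generated by $\mM$ under small colimits'' is read as the smallest full subcategory closed under the biaction and small colimits, the verification is vacuous and should be dropped; if you instead take only the colimit-closure of $\mM$, then your reduction to $\V_0\ot\X\ot\W_0$ with $\V_0\in\mV^\tau$, $\X\in\mM$, $\W_0\in\Env(\mW)$ is fine, but the justification ``such formal bitensors already sit inside the essential image of $\B\Env(\mM)$'' does not show membership in the closure of $\mM$: the bitensor in the localized category is $\L$ applied to a representable of $\B\Env(\mM)$, which is generally not itself local and in any case lies in the image of $\B\Env(\mM)$, not visibly in the subcategory generated by $\mM$ alone under small colimits. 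Relatedly, your remark that ``the outer pullback is trivial'' conflates $\widehat{\mP}\Env(\mW)$ with $\mP\Env(\mW)$; the pullback genuinely restricts the right-acting operad, and this restriction is what makes the $\mS$-valued multimorphism condition even plausible.
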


The next theorem is \cite[5.49.]{heine2024bienriched}, which gives rise to the enriched Yoneda-embedding.

\begin{theorem}\label{unitol}
Let $\mM^\circledast \to \mV^\ot $ be a left enriched $\infty$-category. There is a left $\mV$-linear equivalence $$\mid-\mid: \mP_\mV(\mM)^\circledast\simeq\Enr\Fun_{\emptyset, \mV}(\mM^\op,\mV)^\circledast$$	
sending $\X$ to $\L\Mor_{\mP_\mV(\mM)}((-)_{\mid\mM},\X).$
In particular, there is a left $\mV$-enriched embedding $$\mM^\circledast \to \Enr\Fun_{\emptyset, \mV}(\mM^\op,\mV)^\circledast$$	
that sends $\X$ to $\L\Mor_{\mM}(-,\X)$.
	
%If $\mV^\ot \to \Ass$ is a presentably monoidal $\infty$-category, the equivalence $\theta$ corresponds to the $\mV,\mV$-enriched functor $$\hspace{10mm}\mP\L\Env(\mM)_{\L\Enr}^\circledast\times (\mM^\op)^\circledast \subset\mP\L\Env(\mM)_{\L\Enr}^\circledast \times (\mP\L\Env(\mM)_{\L\Enr}^\op)^\circledast\xrightarrow{\L\Mor_{\mP\L\Env(\mM)_{\L\Enr}}(-,-)}\mV^\circledast.$$
\end{theorem}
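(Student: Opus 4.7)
The strategy is to realize both sides as presentably left $\mV$-tensored $\infty$-categories and to match them via the universal property of $\mP_\mV(\mM)^\circledast$ provided by Proposition \ref{pseuso}.

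First, I would verify that $\Enr\Fun_{\emptyset, \mV}(\mM^\op,\mV)^\circledast$ is a presentably left $\mV$-tensored $\infty$-category. By Proposition \ref{oppoen} the opposite $\mM^\op$ is a small weakly right $\mV$-enriched $\infty$-category, and $\mV$ canonically underlies a bitensored $\infty$-category over $\mV,\mV$. Applying the op-form of Theorem \ref{psinho} (via the involution of Proposition \ref{oppoen}) shows that $\Enr\Fun_{\emptyset, \mV}(\mM^\op,\mV)^\circledast \to \mV^\ot$ is a left enriched $\infty$-category, with left morphism objects and left tensors computed pointwise in $\mV$; since $\mV$ is presentable and its tensor product preserves small colimits in each variable, this is in fact presentably left $\mV$-tensored.

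Next, I would build the comparison functor. By Proposition \ref{rembrako}, $\mP_\mV(\mM)^\circledast$ is presentably left $\mV$-tensored. Proposition \ref{pseuso}, applied with $\mW=\emptyset$ and $\rS=\L\Enr$, gives for any presentably left $\mV$-tensored $\infty$-category $\mN$ the universal property
$$\LinFun^\L_\mV(\mP_\mV(\mM),\mN) \xrightarrow{\simeq} \Enr\Fun_{\mV,\emptyset}(\mM,\mN).$$
Taking $\mN=\Enr\Fun_{\emptyset,\mV}(\mM^\op,\mV)^\circledast$ and combining the op-equivalence of Proposition \ref{oppoen} with the tensor-hom adjunction of Proposition \ref{lehmmm} yields a canonical identification
$$\Enr\Fun_{\mV,\emptyset}(\mM,\Enr\Fun_{\emptyset,\mV}(\mM^\op,\mV)) \simeq \Enr\Fun_{\mV,\mV}(\mM\times\mM^\op,\mV),$$
into which the enriched graph $\Gamma_\mM$, refined to $\mV$ by the left-enrichment hypothesis on $\mM$, supplies precisely the Yoneda-style functor $\X\mapsto \L\Mor_\mM(-,\X)$. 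Transporting this class across the universal property produces the required left $\mV$-linear functor $\mid-\mid:\mP_\mV(\mM)^\circledast\to\Enr\Fun_{\emptyset,\mV}(\mM^\op,\mV)^\circledast$, and the identity $\mid\X\mid=\L\Mor_{\mP_\mV(\mM)}((-)_{\mid\mM},\X)$ follows from Proposition \ref{morpre} together with the fact that the embedding $\mM\subset\mP_\mV(\mM)$ is $\mV$-enriched and preserves left morphism objects.

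Finally, to prove $\mid-\mid$ is an equivalence, I would verify that the target $\mathsf{F}:=\Enr\Fun_{\emptyset,\mV}(\mM^\op,\mV)^\circledast$ also satisfies the same universal property. Precomposition with the Yoneda-style embedding $\mM\to \mathsf{F}$ gives a functor
$$\LinFun^\L_\mV(\mathsf{F},\mN)\to\Enr\Fun_{\mV,\emptyset}(\mM,\mN),$$
and one shows it is an equivalence via an enriched density argument: the image of $\mM$ generates $\mathsf{F}$ as a left $\mV$-module under small colimits, and a left $\mV$-linear left adjoint out of $\mathsf{F}$ is determined (up to contractible choice) by its restriction to $\mM$. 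Combined with the universal property of $\mP_\mV(\mM)$, this identifies $\mid-\mid$ with an equivalence of presentable left $\mV$-modules. The addendum about the enriched embedding is immediate: restricting $\mid-\mid$ along $\mM\subset\mP_\mV(\mM)^\circledast$ gives the map $\X\mapsto\L\Mor_\mM(-,\X)$, which is fully faithful by the enriched Yoneda lemma encoded in the definition of the morphism objects $\L\Mor_\mM(-,-)$.

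The main obstacle is the enriched density step: one must know that representables $\L\Mor_\mM(-,\X)$ generate $\Enr\Fun_{\emptyset,\mV}(\mM^\op,\mV)$ as a presentable left $\mV$-module. The cleanest route is to use Corollary \ref{envvcor}(2) combined with the $\L\Enr$-localization: both sides are canonically identified with the same localization of $\mP\B\Env(\mM)^\circledast$, since the bi-enriched envelope construction realises $\Enr\Fun_{\emptyset,\mV}(\mM^\op,\mV)^\circledast$ as a left $\mV$-linear presentable localization whose local objects are exactly the $\L\Enr$-local presheaves on $\B\Env(\mM)$, which by definition is $\mP_\mV(\mM)^\circledast$. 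Once this identification of localizations is matched with the concrete formula $\mid\X\mid=\L\Mor_{\mP_\mV(\mM)}((-)_{\mid\mM},\X)$, the theorem follows.
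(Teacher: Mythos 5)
First, a caveat: this paper does not prove Theorem \ref{unitol} at all; it is imported verbatim from \cite[5.48.]{Heine2024Lur}, so there is no in-paper proof to compare against. Judged on its own merits within the paper's framework, your overall architecture (realize both sides as presentably left $\mV$-tensored $\infty$-categories, produce a left $\mV$-linear comparison out of $\mP_\mV(\mM)^\circledast$ via its universal property (Proposition \ref{pseuso}/Proposition \ref{weifun}) applied to the Yoneda-type functor $\X\mapsto\L\Mor_\mM(-,\X)$, then check the comparison is an equivalence) is the expected shape of such an argument.

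The genuine gap is in your final step. Your ``cleanest route'' asserts that Corollary \ref{envvcor}(2) together with the $\L\Enr$-localization identifies $\Enr\Fun_{\emptyset,\mV}(\mM^\op,\mV)^\circledast$ with the localization of $\mP\B\Env(\mM)^\circledast$ at the $\L\Enr$-local presheaves, ``which by definition is $\mP_\mV(\mM)^\circledast$''. Neither half of this is available: Corollary \ref{envvcor}(2) is a universal property of $\mP\B\Env(\mM)^\circledast$ and says nothing about the enriched functor $\infty$-category, and $\mP_\mV(\mM)^\circledast$ is \emph{not} defined as the full subcategory of $\L\Enr$-local presheaves but (Notation \ref{enrprrr}, \ref{enrprrrt}) as the full left tensored subcategory of that localization generated by $\mM$ under small colimits. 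Identifying this generated subcategory with $\Enr\Fun_{\emptyset,\mV}(\mM^\op,\mV)^\circledast$ is precisely the content of the theorem, so your argument is circular at exactly the point you flag as the main obstacle. To close it you need two independent inputs: (i) the enriched Yoneda lemma, i.e.\ fully faithfulness of $\X\mapsto\L\Mor_\mM(-,\X)$, which you invoke but do not prove (it is a theorem of \cite{Heine2024Lur}, not something ``encoded in the definition'' of morphism objects); and (ii) density, i.e.\ that every left $\mV$-enriched functor $\mM^\op\to\mV$ is a small colimit of left tensors of representables. Within the toolkit the paper actually quotes, (ii) is exactly what the bar-resolution statement \cite[Proposition 5.26.]{Heine2024Lur} (used in Proposition \ref{BK}) and the generation statement \cite[Lemma 5.21.]{Heine2024Lur} (used in Proposition \ref{leumorat}) provide; an argument along those lines, rather than the localization-matching claim, is what would make your last paragraph correct. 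The remaining steps of your plan (presentable left tensoring of the functor category via Theorem \ref{psinho} and \cite[Proposition 3.76., Lemma 3.64.]{HEINE2023108941}, and the identification of the restriction of $\mid-\mid$ with $\X\mapsto\L\Mor_\mM(-,\X)$ via Proposition \ref{morpre}) are fine modulo the usual universe enlargement needed to apply Proposition \ref{pseuso} when $\mV$ is large.
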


Corollary \ref{coronn} gives the following corollary:

\begin{corollary}
	
Let $\mV^\ot \to \Ass$ be a small $\infty$-operad and $\mM^\circledast \to \mV^\ot $ a weakly left enriched $\infty$-category. There is a left $\mP\Env(\mV)$-linear equivalence $$\mid-\mid: \mP\L\Env(\mM)^\circledast\simeq\Enr\Fun_{\emptyset, \mV}(\mM^\op,\mP\Env(\mV))^\circledast$$	
sending $\X$ to $\L\Mor_{\mP\L\Env(\mM)}((-)_{\mid\mM},\X).$
In particular, there is a left $\mP\Env(\mV)$-enriched embedding $$\mM^\circledast \to \Enr\Fun_{\emptyset, \mV}(\mM^\op,\mP\Env(\mV))^\circledast$$	
that sends $\X$ to $\L\Mor_{\bar{\mM}}(-,\X)$.	
	
\end{corollary}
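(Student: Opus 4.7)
The plan is to upgrade $\mM^\circledast$ from a weakly left $\mV$-enriched to a left $\mP\Env(\mV)$-enriched $\infty$-category using Corollary \ref{coronn}, apply Theorem \ref{unitol} in this enlarged setting, and then translate both sides of the resulting equivalence back to the expressions appearing in the statement by comparing universal properties. Explicitly, by Corollary \ref{coronn}(1) pullback along the embedding of $\infty$-operads $\mV^\ot \subset \mP\Env(\mV)^\ot$ induces an equivalence
\[
{_{\mP\Env(\mV)}\L\Enr} \simeq {_\mV\omega\B\Enr};
\]
let $\tilde{\mM}^\circledast \to \mP\Env(\mV)^\ot$ be the left enriched $\infty$-category corresponding to $\mM^\circledast$. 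Its underlying $\infty$-category agrees with $\mM$. Applying Theorem \ref{unitol} to $\tilde{\mM}^\circledast$ yields a left $\mP\Env(\mV)$-linear equivalence
\[
|-| : \mP_{\mP\Env(\mV)}(\tilde{\mM})^\circledast \simeq \Enr\Fun_{\emptyset, \mP\Env(\mV)}(\tilde{\mM}^\op, \mP\Env(\mV))^\circledast
\]
sending $\X$ to $\L\Mor_{\mP_{\mP\Env(\mV)}(\tilde{\mM})}((-)_{|\tilde{\mM}}, \X)$.

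It remains to identify both sides. For the left hand side, Notation \ref{enrprrrt} gives $\mP_{\mP\Env(\mV)}(\tilde{\mM})^\circledast = \mP\B\Env(\tilde{\mM})_{\L\Enr}^\circledast$. By the one-sided version of Proposition \ref{pseuso} together with Corollary \ref{coronn}(1), this corepresents the functor on presentably left tensored $\infty$-categories $\mN^\circledast \to \mP\Env(\mV)^\ot$ given by $\mN \mapsto \Enr\Fun_{\mP\Env(\mV)}(\tilde{\mM}, \mN) \simeq \Enr\Fun_\mV(\mM, \mN)$. The closed left tensored envelope $\mP\L\Env(\mM)^\circledast$ of Definition \ref{zgbbl} is itself presentably left tensored over $\mP\Env(\mV)^\ot$ and, by Corollary \ref{envvcor}(2) in the one-sided case together with the colimit-generation built into its definition, corepresents the same functor. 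Hence $\mP_{\mP\Env(\mV)}(\tilde{\mM})^\circledast \simeq \mP\L\Env(\mM)^\circledast$.

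For the right hand side, the identification
\[
\Enr\Fun_{\emptyset, \mP\Env(\mV)}(\tilde{\mM}^\op, \mP\Env(\mV))^\circledast \simeq \Enr\Fun_{\emptyset, \mV}(\mM^\op, \mP\Env(\mV))^\circledast
\]
follows from Proposition \ref{lehmmm}: testing against any weakly left $\mP\Env(\mV)$-enriched $\mO^\circledast$, both sides are converted to enriched functor $\infty$-categories out of $\mO \times \tilde{\mM}^\op$, respectively $\mO \times \mM^\op$, into $\mP\Env(\mV)^\circledast$ with the appropriate bi-enrichment, and these agree under the pullback-extension correspondence provided by Corollary \ref{coronn}(3) combined with Proposition \ref{oppoen} to handle the opposite. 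Combining both identifications with the equivalence from Theorem \ref{unitol} produces the desired left $\mP\Env(\mV)$-linear equivalence; the ``in particular'' clause then follows by precomposing with the enriched embedding $\mM^\circledast \subset \mP\L\Env(\mM)^\circledast$, under which $\L\Mor_{\mP\L\Env(\mM)}((-)_{|\mM}, \X)$ for $\X \in \mM$ restricts to $\L\Mor_{\bar{\mM}}(-, \X)$. The main technical hurdle will be the left hand side identification: verifying that the localization defining $\mP\B\Env(\tilde{\mM})_{\L\Enr}^\circledast$ agrees with the colimit-closure defining $\mP\L\Env(\mM)^\circledast$. Both constructions are universal presentably left tensored $\infty$-categories over $\mP\Env(\mV)^\ot$ equipped with a $\mV$-enriched functor from $\mM$, so uniqueness of corepresenting objects will handle it without needing to inspect the constructions directly.
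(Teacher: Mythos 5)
Your proposal is correct and follows essentially the same route as the paper, which derives this corollary by transporting Theorem \ref{unitol} along the equivalence of Corollary \ref{coronn} between weakly left $\mV$-enriched and left $\mP\Env(\mV)$-enriched $\infty$-categories; the identifications of the two sides that you spell out via universal properties are exactly what the paper leaves implicit. No gaps.
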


\section{Weighted colimits}\label{HW}

\subsection{Weighted colimits for weakly enriched $\infty$-categories}

In this section we define weighted colimits and study their properties. 
Weighted colimits are a generalization of conical colimits. % associated to enriched diagram $\infty$-categories.
For every weakly bienriched $\infty$-category $\mM^\circledast \to \mV^\ot\times \mW^\ot$ a 
functor $\bar{\F}:\K^\triangleleft \to \mM$ is a conical colimit diagram if for every $\V_1,...,\V_\n \in \mV, \W_1,...,\W_\m \in \mW, \Z \in \mM$ for $\n,\m \geq 0$ the canonical map 
\begin{equation}\label{eqvbp}
\Mul_\mM(\V_1,...,\V_\n, \bar{\F}(\infty),\W_1,...,\W_\m;\Z) \to \lim \Mul_\mM(\V_1,...,\V_\n,\F(-),\W_1,...,\W_\m;\Z)\end{equation} is an equivalence.
By Proposition \ref{ljnbfg} the functor $\F:= \bar{\F}_{\mid\K}:\K \to \mM$ underlies a unique $\mV,\mW$-enriched functor $ \K_{\mV,\mW}^\circledast \to \mM^\circledast$,
%and the canonical transformation $\F \to \bar{\F}(\infty)$ to the constant diagramcorresponds to a map of presheaves $* \to \F^*(\mM(-,\bar{\F}(\infty)))$ on $\K $ starting at the final presheaf on $\K$.Via 
which gives rise to a $\mP\Env(\mV), \mP\Env(\mW)$-enriched adjunction $$\F_!: \mP\B\Env(\K_{\mV,\mW})^\circledast \rightleftarrows \mP\B\Env(\mM)^\circledast:\F^*.$$
%the latter map of presheaves on $\K$ corresponds to a map$\F_!(*) \to \bar{\F}(\infty)$ of presheaves on $\mM.$
%of $\F:\mJ \to \mM$ exhibits $\bar{\F}(\infty)$ as the conical colimit of $\F$ if for every $\mV_1,...,\mV_\n \in \mV, \W_1,...,\W_\m \in \mW,\Z \in \mM$ for $\n,\m \geq 0$ the following map is an equivalence: \begin{equation}\label{eqvbp}\Mul_\mM(\V_1,...,\V_\n, \bar{\F}(\infty), \W_1,...,\W_\m;\Z) \to \lim\Mul_\mM(\V_1,...,\V_\n,\F(-), \W_1,...,\W_\m;\Z).\end{equation}
%The map $\F:\mJ^\circledast \to \mM^\circledast$ gives rise to an adjunction $\label{eqb}\F_!: \mP\B\Env(\mJ)^\circledast \rightleftarrows \mP\B\Env(\mM)^\circledast :\F^\ast.$Let $*\in \mP(\mJ)$ be the final presheaf.
Let $*$ be the final presheaf on $\K_{\mV,\mW}$. Lemma \ref{alor} provides a canonical equivalence
\begin{equation*}\label{eqccl}
\lim\Mul_{\mM}(\V_1,...,\V_\n,\F(-), \W_1,...,\W_\m;\Z) \simeq \mP(\K_{\mV,\mW})(*, \Mul_{\mM}(\V_1,...,\V_\n,\F(-), \W_1,...,\W_\m;\Z) \simeq $$
$$ \Mul_{\mP\B\Env(\K_{\mV,\mW})}(\V_1,...,\V_\n, *, \W_1,...,\W_\m; \F^*(\Z)) \simeq \Mul_{\mP\B\Env(\mM)}(\V_1,...,\V_\n, \F_!(*),  \W_1,...,\W_\m; \Z),
\end{equation*}
%where $*$ is the final presheaf on $\K_{\mV,\mW}$ and the middle equivalence is by Lemma \ref{alor}, 
under which the map (\ref{eqvbp}) identifies with the following map:
$$\Mul_\mM(\V_1,...,\V_\n, \bar{\F}(\infty), \W_1,...,\W_\m;\Z) \to \Mul_{\mP\B\Env(\mM)}(\V_1,...,\V_\n, \F_!(*), \W_1,...,\W_\m; \Z).$$

This motivates the following definition of weighted colimit,
where we replace the weakly bienriched $\infty$-category $\K_{\V,\W}^\circledast \to \mV^\ot \times \mW^\ot$ by any weakly bienriched $\infty$-category $\mJ^\circledast \to \mV^\ot \times \mW^\ot$ and the final presheaf $\ast \in \mP(\mJ) \subset \mP\B\Env(\mJ)$ by any object of $\mP\B\Env(\mJ)$: 

\begin{definition}\label{weight}
Let $\mM^\circledast \to \mV^\ot \times \mW^\ot, \mJ^\circledast \to \mV'^\ot \times \mW'^\ot$ be absolute small weakly  bienriched $\infty$-categories, $\F: \mJ^\circledast \to \mM^\circledast$ an enriched functor, $\Y\in\mM$ and $\rH\in\mP\B\Env(\mJ).$
A morphism $\psi: \F_!(\rH) \to \Y $ in $\mP\B\Env(\mM)$ exhibits $\Y$ as the $\rH$-weighted colimit of $\F$ if for every $\Z \in \mM$, $\V_1,...,\V_\n \in \mV, \W_1,...,\W_\m \in \mW$ for $\n,\m \geq 0$ the following map is an equivalence:
$$\Mul_\mM(\V_1,...,\V_\n, \Y, \W_1,...,\W_\m; \Z) \to \Mul_{\mP\B\Env(\mM)}(\V_1,...,\V_\n, \F_!(\rH), \W_1,...,\W_\m; \Z).$$
\end{definition}

\begin{notation}
	
If a morphism $\psi$ exists like in Definition \ref{weight}, it is unique, and we say that $\mM$ admits the $\rH$-weighted colimit of $ \F$ and write $\colim^\rH(\F)$ for $\Y.$ %We call $\rH$ a weight on $\mJ.$
%We say that $\mM$ admits small weighted colimits 
\end{notation}

Dually, we define weighted limits:

\begin{definition}\label{weightl}
Let $\mM^\circledast \to \mV^\ot \times \mW^\ot, \mJ^\circledast \to \mV'^\ot \times \mW'^\ot$ be absolute small weakly  bienriched $\infty$-categories, $\F: \mJ^\circledast \to \mM^\circledast$ an enriched functor, $\Y\in\mM$ and $\rH\in\mP\B\Env(\mJ^\op).$	
The $\rH$-weighted limit of $\F$ denoted by $\lim^\rH(\F)$ if it exists, is the $\rH$-weighted colimit of $\F^\op: (\mJ^\op)^\circledast \to (\mM^\op)^\circledast$.	
	
\end{definition}

We often prove results about weighted colimits that dually give results about weighted limits
and viceversa.

\begin{lemma}\label{sasa}
Let $\rho: \mI^\circledast \to \mJ^\circledast, \F: \mJ^\circledast \to \mM^\circledast$ be enriched functors and $\rH \in \mP\B\Env(\mI), \Y \in \mM.$
A morphism $(\F\circ \rho)_!(\rH) \simeq \F_!(\rho_!(\rH)) \to \Y$ in $\mP\B\Env(\mM)$
exhibits $\Y$ as the $\rho_!(\rH)$-weighted colimit of $\F$ if and only if
it exhibits $\Y$ as the $\rH$-weighted colimit of $\F \circ \rho.$	
\end{lemma}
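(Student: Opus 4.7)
The plan is to observe that both conditions in the statement unpack to the very same universal property once we invoke functoriality of the left adjoint construction $(-)_!$.

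First I would recall that every enriched functor $\F:\mJ^\circledast \to \mM^\circledast$ between totally small weakly bi-enriched $\infty$-categories induces, by Corollary \ref{envvcor}(2) applied to the left adjoint $\mP\Env(\mV),\mP\Env(\mW)$-linear extension, an enriched adjunction
\[
\F_!:\mP\B\Env(\mJ)^\circledast \rightleftarrows \mP\B\Env(\mM)^\circledast:\F^*.
\]
The assignment $\F\mapsto \F_!$ is functorial in $\F$, since it is the composite of $\F\mapsto \B\Env(\F)$ with the (functorial) presheaf extension; in particular there is a canonical equivalence $(\F\circ\rho)_! \simeq \F_!\circ \rho_!$ of left adjoints, which is precisely the equivalence $(\F\circ\rho)_!(\rH) \simeq \F_!(\rho_!(\rH))$ of the statement.

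Next I would unpack Definition \ref{weight} in both situations. A morphism $\psi: (\F\circ\rho)_!(\rH)\to \Y$ in $\mP\B\Env(\mM)$ exhibits $\Y$ as the $\rH$-weighted colimit of $\F\circ\rho$ if and only if, for every $\Z\in \mM$ and every $\V_1,\dots,\V_\n\in\mV$, $\W_1,\dots,\W_\m\in\mW$, the induced map
\[
\Mul_\mM(\V_1,\dots,\V_\n,\Y,\W_1,\dots,\W_\m;\Z) \to \Mul_{\mP\B\Env(\mM)}(\V_1,\dots,\V_\n,(\F\circ\rho)_!(\rH),\W_1,\dots,\W_\m;\Z)
\]
is an equivalence. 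Similarly, the same $\psi$, viewed through the identification $(\F\circ\rho)_!(\rH)\simeq \F_!(\rho_!(\rH))$, exhibits $\Y$ as the $\rho_!(\rH)$-weighted colimit of $\F$ if and only if the analogous map with $\F_!(\rho_!(\rH))$ in place of $(\F\circ\rho)_!(\rH)$ is an equivalence.

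Since these two maps are identified by the canonical equivalence $(\F\circ\rho)_!(\rH)\simeq \F_!(\rho_!(\rH))$, one is an equivalence if and only if the other is. Thus the two conditions coincide, proving the lemma. There is no real obstacle here; the only mildly subtle point is to invoke functoriality of the enriched left adjoint extension so that the equivalence $(\F\circ\rho)_!\simeq \F_!\circ\rho_!$ is compatible with the universal morphism $\psi$, but this is already built into the statement of Corollary \ref{envvcor}(2).
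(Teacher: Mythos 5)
Your proposal is correct and matches the paper's argument: the paper simply remarks that the lemma follows immediately from the definition, which is exactly what you spell out by identifying $(\F\circ\rho)_!(\rH)$ with $\F_!(\rho_!(\rH))$ and observing that the two universal properties of Definition \ref{weight} then coincide. Your extra care about functoriality of $(-)_!$ is a harmless elaboration of the same point.
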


\begin{proof}
This follows immediately from the definition.
\end{proof}

\begin{remark}\label{rest}Let the assumptions like in Lemma \ref{sasa} and let $\F: \mJ^\circledast \to \mM^\circledast$ lie over maps of $\infty$-operads $\alpha,\beta$.
Then $\F$ factors as the canonical enriched functor $\tau: \mJ^\circledast \to (\alpha,\beta)_!(\mJ)^\circledast$ followed by a unique $\mV,\mW$-enriched functor $\F': (\alpha,\beta)_!(\mJ)^\circledast \to \mM^\circledast$.
Thus by Lemma \ref{sasa} a morphism $\F'_!(\tau_!(\rH)) \simeq \F_!(\rH) \to\Y$ exhibits $\Y$ as the $\rH$-weighted colimit of $\F$ if and only if it exhibits $\Y$ as the $\tau_!(\rH)$-weighted colimit of $\F'.$
%In particular, $\mM$ admits $(\alpha,\beta, \mJ^\circledast \to \mV'^\ot \times \mW'^\ot, \rH)$-weighted colimits if and only if$\mM$ admits $\tau_!(\rH)$-weighted colimits.
In other words by changing the weight it is enough to define weighted colimits for
enriched functors lying over identity maps of $\infty$-operads.
On the other hand, the extra freedom in Definition \ref{weight} is more convenient when studying preservation of weighted colimits by general enriched functors.
\end{remark}

\begin{lemma}\label{lif} Let the assumptions like in Definition \ref{weight} assume that $\F$ lies over essentially surjective maps $\alpha:\mV'^\ot \to \mV^\ot, \beta: \mW'^\ot \to \mW^\ot $ of $\infty$-operads. The morphism $\psi: \F_!(\rH) \to \Y $ in $\mP\B\Env(\mM)$ exhibits $\Y$ as the $\rH$-weighted colimit of $\F$ if and only if the corresponding morphism
$\psi': \rH \to \F^*(\Y) $ in $\mP\B\Env(\mJ)$ induces for every $\Z \in \mM$, $\V'_1,...,\V'_\n \in \mV', \W'_1,...,\W'_\m \in \mW'$ for $\n,\m \geq 0$ an equivalence:
$$\Mul_{\mM}(\alpha(\V'_1),...,\alpha(\V'_\n), \Y, \beta(\W'_1),...,\beta(\W'_\m); \Z) \to \Mul_{\mP\B\Env(\mJ)}(\V'_1,...,\V'_\n, \rH, \W'_1,...,\W'_\m;\F^*(\Z)).$$
\end{lemma}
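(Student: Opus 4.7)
The plan is to transport the defining equivalence for weighted colimits across the $\mP\Env(\mV), \mP\Env(\mW)$-enriched adjunction
$$\F_!: \mP\B\Env(\mJ)^\circledast \rightleftarrows \mP\B\Env(\mM)^\circledast : \F^*$$
obtained by extending $\F$ to closed envelopes via Corollary \ref{envvcor}(2), and then exploit essential surjectivity of $\alpha$ and $\beta$ to pass between test objects in $\mV, \mW$ and test objects in $\mV', \mW'$.

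First I would record that $\F_!$ is a left adjoint linear functor lying over the monoidal left adjoints $\mP\Env(\alpha): \mP\Env(\mV')^\ot \to \mP\Env(\mV)^\ot$ and $\mP\Env(\beta): \mP\Env(\mW')^\ot \to \mP\Env(\mW)^\ot$, whose right adjoints I also denote by $\F^*, \mP\Env(\alpha)^*, \mP\Env(\beta)^*$. Since $\F_!$ satisfies $\F_!(\V'_1 \ot \cdots \ot \V'_\n \ot \rH \ot \W'_1 \ot \cdots \ot \W'_\m) \simeq \alpha(\V'_1) \ot \cdots \ot \alpha(\V'_\n) \ot \F_!(\rH) \ot \beta(\W'_1) \ot \cdots \ot \beta(\W'_\m)$, the adjunction $\F_! \dashv \F^*$ yields a natural equivalence
$$\Mul_{\mP\B\Env(\mM)}(\alpha(\V'_1), \ldots, \alpha(\V'_\n), \F_!(\rH), \beta(\W'_1), \ldots, \beta(\W'_\m); \Z) \simeq \Mul_{\mP\B\Env(\mJ)}(\V'_1, \ldots, \V'_\n, \rH, \W'_1, \ldots, \W'_\m; \F^*(\Z))$$
that sends $\psi$ to $\psi'$, and fits into a commutative square with the two maps to be compared, whose other vertical is the identity on $\Mul_\mM(\alpha(\V'_1), \ldots, \alpha(\V'_\n), \Y, \beta(\W'_1), \ldots, \beta(\W'_\m); \Z)$. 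Thus the map in the conclusion of the lemma is obtained from the map appearing in Definition \ref{weight} by naturally identifying its target with the multi-morphism space in $\mP\B\Env(\mJ)$.

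For the forward direction, if $\psi$ exhibits $\Y$ as the $\rH$-weighted colimit of $\F$, then by definition the map in Definition \ref{weight} is an equivalence for all $\V_i \in \mV$ and $\W_j \in \mW$; in particular it is an equivalence for $\V_i = \alpha(\V'_i)$ and $\W_j = \beta(\W'_j)$, and the square above shows the lemma's map is then an equivalence. For the converse, assume the map in the lemma is an equivalence for all tuples in $\mV', \mW'$. Since $\alpha$ and $\beta$ are essentially surjective, every tuple $(\V_1, \ldots, \V_\n)$ in $\mV$ and $(\W_1, \ldots, \W_\m)$ in $\mW$ admits lifts $\V'_i, \W'_j$ with $\alpha(\V'_i) \simeq \V_i$ and $\beta(\W'_j) \simeq \W_j$. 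Since the multi-morphism spaces depend only on the equivalence classes of their arguments, the commutative square above then identifies the map of Definition \ref{weight} for $(\V_1, \ldots, \V_\n, \W_1, \ldots, \W_\m)$ with an equivalence, so $\psi$ exhibits $\Y$ as $\colim^\rH(\F)$.

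The only non-routine point is ensuring that the extended adjunction is genuinely linear over $\mP\Env(\alpha), \mP\Env(\beta)$ so that the adjunction equivalence on multi-morphism spaces above is available; this is provided by the universal property of $\mP\B\Env(-)$ recalled in Corollary \ref{envvcor}(2). Beyond that, the argument is formal.
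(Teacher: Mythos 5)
Your argument is correct and follows essentially the same route as the paper: the paper's proof consists precisely of the adjointness equivalence $\Mul_{\mP\B\Env(\mM)}(\alpha(\V'_1),\ldots,\alpha(\V'_\n), \F_!(\rH), \beta(\W'_1),\ldots,\beta(\W'_\m); \Z) \simeq \Mul_{\mP\B\Env(\mJ)}(\V'_1,\ldots,\V'_\n, \rH, \W'_1,\ldots,\W'_\m; \F^*(\Z))$, leaving the specialization and the essential-surjectivity step implicit. You simply spell out those remaining (formal) details, which is fine.
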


\begin{proof}
By adjointness %for every $\Z \in \mM$, $\V'_1,...,\V'_\n \in \mV', \W'_1,...,\W'_\m \in \mW'$ for $\n,\m \geq 0$ 
there is an equivalence 
$$\Mul_{\mP\B\Env(\mM)}(\alpha(\V'_1),...,\alpha(\V'_\n), \F_!(\rH), \beta(\W'_1),...,\beta(\W'_\m); \Z) \simeq $$$$ \Mul_{\mP\B\Env(\mJ)}(\V'_1,...,\V'_\n, \rH, \W'_1,...,\W'_\m; \F^*(\Z)).$$		
	
\end{proof}

\begin{remark}Let the assumptions like in Lemma \ref{lif} but let $\alpha, \beta$ not be essentially surjective.
Then the condition of Lemma \ref{lif} is weaker than the one of Definition \ref{weight} and amounts to say that $\Y $ is the $\rH$-weighted colimit of the induced $\mV',\mW'$-enriched functor $\mJ^\circledast \to \mV'^\ot \times_{\mV^\ot} \mM^\circledast \times_{\mW^\ot} \mW'^\ot.$

\end{remark}

We have the following functoriality of weighted colimits:
\begin{remark}\label{zujj}
Let $\alpha:\F\to \G: \mJ^\circledast \to \mM^\circledast$ be an enriched functor and $\tau: \rH \to \rH'$ a morphism in $\mP\Env(\mJ).$
The morphism $\F_!(\rH) \xrightarrow{\alpha_!(\rH)} \G_!(\rH) \xrightarrow{\G_!(\tau)} \G_!(\rH') \to \colim^{\rH'}(\G)$
in $\mP\Env(\mM)$
factors as the morphism $\F_!(\rH) \to \colim^\rH(\F)$ followed by a unique morphism $\colim^{\rH}(\F) \to \colim^{\rH'}(\G)$ in $\mM.$

\end{remark}

%In the following we consider some remarks about Definition \ref{weight}:

\begin{definition}\label{ddd}
%Let $\mV^\ot \to \Ass, \mW^\ot \to \Ass$ be $\infty$-operads. 

An absolute small weight %over $\mV,\mW$ 
is a quadruple $(\alpha: \mV'^\ot \to \mV^\ot,
\beta: \mW'^\ot \to \mW^\ot, \mJ^\circledast \to \mV'^\ot \times \mW'^\ot, \rH),$ where $\alpha, \beta$ are maps of small $\infty$-operads,
$\mJ^\circledast \to  \mV'^\ot \times \mW'^\ot$ is an absolute small weakly  bienriched $\infty$-category and $\rH \in \mP\B\Env(\mJ).$
We also call $(\alpha,\beta,\mJ^\circledast \to \mV'^\ot \times \mW'^\ot, \rH)$ an absolute small $(\alpha,\beta)$-weight over $\mV,\mW$ on $\mJ$.
\end{definition}

\begin{definition}Let $\mV^\ot \to \Ass, \mW^\ot \to \Ass$ small $\infty$-operads.

\begin{enumerate}
\item A left weight over $\mV$ is a weight over $\mV,\emptyset$.
% such that $\mW'^\ot \simeq \emptyset^\circledast$.

\item A right weight over $\mW$ is a weight over $\emptyset,\mW$.
% such that $\mV'^\ot \simeq \emptyset^\circledast$.

\item A $\mV,\mW$-weight is a weight over $\mV,\mW$
such that $\alpha, \beta$ are the identities.
% we identify a weight  $(\alpha, \beta, \mJ^\circledast \to \mV'^\ot \times \mW'^\ot, \rH)$ with the pair $(\mJ^\circledast \to \mV'^\ot \times \mW'^\ot, \rH)$, which we call a $\mV,\mW$-weight on $\mJ.$

\item A left $\mV$-weight is a left weight over $\mV,\mW$ such that $\alpha$ is the identity. %-weight such that $\mW^\ot \simeq \emptyset^\circledast.$

\item A right $\mW$-weight is a right weight over $\mV,\mW$ such that $\beta$ is the identity. %-weight such that $\mW^\ot \simeq \emptyset^\circledast.$

%is a $\kappa$-compact object of $\mP\B\Env(\mM).$
\end{enumerate}
\end{definition}

\begin{remark}\label{labo}
A left weight is a weight $(\alpha: \mV'^\ot \to \mV^\ot,
\beta: \mW'^\ot \to \mW^\ot, \mJ^\circledast \to \mV'^\ot \times \mW'^\ot, \rH \in \mP\B\Env(\mJ))$
such that $\mW^\ot\simeq \emptyset^\ot.$ So also $\mW'^\ot\simeq \emptyset^\ot.$
Consequently, a left weight is equivalently a triple $(\alpha: \mV'^\ot \to \mV^\ot, \mJ^\circledast \to \mV'^\ot, \rH \in \mP\L\Env(\mJ))$.
%In particular, small left weights over $\mV,\mW$ are precisely small left weights over$\mV, \emptyset.$
In particular, we can identify a left weight $(\alpha: \mV'^\ot \to \mV^\ot, \mJ^\circledast \to \mV'^\ot, \rH \in \mP\L\Env(\mJ))$ over $\mV$ with the 
weight $(\alpha: \mV'^\ot \to \mV^\ot, \beta: \emptyset^\ot \to \mW^\ot, \mJ^\circledast \to \mV'^\ot, \rH \in \mP\L\Env(\mJ))$ over $\mV, \mW$.
The same remark holds dually for right weights.	
	
\end{remark}

\begin{definition} Let $\kappa$ be a small regular cardinal.
An absolute small weight $(\alpha,\beta,\mJ^\circledast \to \mV'^\ot \times \mW'^\ot, \rH)$ is $\kappa$-small if $\rH$ belongs to the full subcategory of $\mP\B\Env(\mM)$
generated by $\B\Env(\mM)$ under $\kappa$-small colimits.	
	
\end{definition}

\begin{definition}\label{Dewe}
Let $\mM^\circledast \to \mV^\ot \times \mW^\ot$ be absolute small weakly  bienriched $\infty$-category and $\T=(\alpha,\beta,\mJ^\circledast \to \mV'^\ot \times \mW'^\ot, \rH)$ an absolute small weight over $\mV,\mW$ and $\kappa$ a small regular cardinal. 
\begin{enumerate}
\item We say that $\mM$ admits $\T$-weighted colimits if $\mM$ admits the $\rH$-weighted colimit of every enriched functor $\mJ^\circledast \to \mM^\circledast$ lying over $\alpha, \beta.$

\item Let $\mH$ be a collection of absolute small weights over $\mV,\mW$.
We say that $\mM$ admits $\mH$-weighted colimits if $\mM$ admits $\T$-weighted colimits for every weight $\T \in \mH.$

\item We say that $\mM$ admits $\kappa$-small weighted colimits if it admits $\T$-weighted colimits for every $\kappa$-small weight $\T$ over $\mV,\mW.$

\item We say that $\mM$ admits $\kappa$-small left weighted colimits if it admits $\T$-weighted colimits for every $\kappa$-small left weight $\T$ over $\mV.$

\item We say that $\mM$ admits $\kappa$-small right weighted colimits if it admits $\T$-weighted colimits for every $\kappa$-small right weight $\T$ over $\mW.$

\item We say that $\mM$ admits small weighted colimits if it admits $\T$-weighted colimits for every absolute small weight $\T$ over $\mV,\mW.$

\item We say that $\mM$ admits small left weighted colimits if it admits $\T$-weighted colimits for every small left weight $\T$ over $\mV.$

\item We say that $\mM$ admits small right weighted colimits if it admits $\T$-weighted colimits for every small right weight $\T$ over $\mW.$

%\item Let $\mH$ be a collection of small left weights over $\mV$.We say that $\mM$ admits $\mH$-weighted colimits if $\mM$ admits $\T$-weighted colimits for every left weight $\T \in \mH.\item Let $\mH$ be a collection of small right weights over $\mW$.We say that $\mM$ admits $\mH$-weighted colimits if $\mM$ admits $\T$-weighted colimits for every right weight $\T \in \mH.$

\end{enumerate}

\end{definition}

We apply Definition \ref{Dewe} in particular to the case that $\mH$ is a collection of left weights over $\mV$ or a collection of right weights over $\mW$ viewed as weights over $\mV,\mW$ in the canonical way (Remark \ref{labo}).

\subsection{Weighted colimits for pseudo-enriched $\infty$-categories}

Next we consider weighted colimits for quasi-enriched and pseudo-enriched $\infty$-categories.
To describe quasi-enriched and pseudo-enriched $\infty$-categories on one footage we use the notion of $\rS,\T$-bienriched $\infty$-categories for localization pairs $(\mV^\ot \to \Ass, \rS), (\mW^\ot \to \Ass, \T)$ (Definition \ref{locpa}), which specializes to quasi-enriched
and pseudo-enriched $\infty$-categories and the notion of $\kappa$-enriched $\infty$-categories for any small regular cardinal $\kappa$.

Let $(\mV^\ot \to \Ass, \rS), (\mW^\ot \to \Ass, \T)$ be small localization pairs and $\mM^\circledast \to \mV^\ot \times \mW^\ot$ a small $\rS,\T$-bienriched $\infty$-category.
By Lemma \ref{Lauf} there is an enriched localization $\L : \mP\B\Env(\mM)^\circledast \rightleftarrows \mP\B\Env(\mM)_{\rS,\T}^\circledast$ to the respective $\infty$-category of "presheaves" for $\rS,\T$-enriched $\infty$-categories (Proposition \ref{pseuso}).
We have the following description of weighted colimit in this context:

\begin{lemma}\label{Enristo}%Let $(\mV^\ot \to \Ass, \rS), (\mW^\ot \to \Ass, \T)$ be small localization pairs, $\mM^\circledast \to \mV^\ot \times \mW^\ot$ a small bi-$\rS,\T$-enriched $\infty$-category, % lying over $\alpha, \beta$
%Let $(\mV^\ot \to \Ass, \rS), (\mW^\ot \to \Ass, \T), (\mV'^\ot \to \Ass, \rS'), (\mW'^\ot \to \Ass, \T')$ be small localization pairs and $\F: \mJ^\circledast \to \mM^\circledast$ an enriched functor from a small $\rS',\T'$-bienriched $\infty$-category to a small $\rS,\T$-bienriched $\infty$-category % lying over $\alpha, \beta$
% $\mM^\circledast \to \mV^\ot \times \mW^\ot$ a small bi-$\rS,\T$-enriched $\infty$-category, $\mJ^\circledast \to \mV'^\ot \times \mW'^\ot$ a small bi-$\rS',\T'$-enriched $\infty$-category and that lies over maps of localization pairs $(\mV'^\ot \to \Ass, \rS) \to (\mV^\ot \to \Ass, \rS')$ and $ (\mW'^\ot \to \Ass, \T) \to (\mW^\ot \to \Ass, \T')$ and $\Y \in \mM, \rH \in \mP\B\Env(\mJ)_{\rS',\T'}.$
Let $(\mV^\ot \to \Ass, \rS), (\mW^\ot \to \Ass, \T)$ be small localization pairs and $\mM^\circledast \to \mV^\ot \times \mW^\ot$ a small $\rS,\T$-bienriched $\infty$-category, $\F: \mJ^\circledast \to \mM^\circledast$ an enriched functor, $ \rH \in \mP\B\Env(\mJ), \Y \in \mM$ and $\F_!(\rH) \to \Y $ a morphism in $\mP\B\Env(\mM)$. The following conditions are equivalent:
%The enriched functor $\F_!: \mP\B\Env(\mJ)^\circledast \to \mP\B\Env(\mM)^\circledast$ descends to an enriched functor $\widetilde{\F_!}: \mP\B\Env(\mJ)_{\rS',\T'}^\circledast \to \mP\B\Env(\mM)_{\rS,\T}^\circledast$ (Remark \ref{Functol}).

\begin{enumerate}
\item The morphism $\F_!(\rH) \to \Y $ in $\mP\B\Env(\mM)$
%corresponds to a morphism $\psi': \widetilde{\F_!}(\rH) \to \Y $ in $\mP\B\Env(\mM)_{\rS,\T}$. The morphism $\psi$ 
exhibits $\Y$ as the $\rH$-weighted colimit of $\F$.

\item For every $\Z \in \mM$, $\V_1,...,\V_\n \in \mV, \W_1,...,\W_\m \in \mW$ for $\n,\m \geq 0$ the induced morphism $\L(\F_!(\rH)) \to \Y$ in $\mP\B\Env(\mM)_{\rS,\T}$
%where $\L$ is the left adjoint of the embedding $\mP\B\Env(\mM)_{\rS,\T} \subset \mP\B\Env(\mM),$ 
induces an equivalence:
$$\hspace{10mm}\Mul_\mM(\V_1,...,\V_\n, \Y, \W_1,...,\W_\m; \Z) \to \Mul_{\mP\B\Env(\mM)_{\rS,\T}}(\V_1,...,\V_\n, \L(\F_!(\rH)), \W_1,...,\W_\m; \Z).$$
\end{enumerate}

\hspace{6mm} If $\rH \in \mP\B\Env(\mM)_{\rS,\T}$ conditions (1),(2) are equivalent to the following one: the corresponding morphism $\psi': \rH \to \F^*(\Y) $ in $\mP\B\Env(\mJ)_{\rS,\T}$ induces for any $\Z \in \mM$, $\V_1,...,\V_\n \in \mV, \W_1,...,\W_\m \in \mW$ for $\n,\m \geq 0$ an equivalence:
$$\hspace{11mm}\Mul_{\mM}(\V_1,..., \V_\n, \Y, \W_1,...,\W_\m; \Z) \to \Mul_{\mP\B\Env(\mJ)_{\rS,\T}}(\V_1,...,\V_\n, \rH, \W_1,...,\W_\m;\F^*(\Z)).$$

\end{lemma}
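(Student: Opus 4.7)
The plan is to reduce all three equivalences to two basic ingredients: the universal property of the enriched localization $\L\colon \mP\B\Env(\mM)^\circledast \rightleftarrows \mP\B\Env(\mM)^\circledast_{\rS,\T}$ supplied by Lemma~\ref{Lauf}, combined with the enriched adjunction $\F_! \dashv \F^*$ between $\mP\B\Env(\mJ)$ and $\mP\B\Env(\mM)$. The essential preliminary is that since $\mM$ is $\rS,\T$-bi-enriched, Remark~\ref{embil} guarantees $\mM^\circledast \subset \mP\B\Env(\mM)^\circledast_{\rS,\T}$; hence every $\Z \in \mM$ is already local.

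For the equivalence $(1) \Leftrightarrow (2)$, I would show that for every $\V_1,\ldots,\V_\n \in \mV$, $\W_1,\ldots,\W_\m \in \mW$, and every $\Z \in \mM$, the canonical comparison map
$$\Mul_{\mP\B\Env(\mM)}(\V_1,\ldots, \F_!(\rH), \ldots,\W_\m; \Z) \to \Mul_{\mP\B\Env(\mM)_{\rS,\T}}(\V_1,\ldots, \L(\F_!(\rH)),\ldots,\W_\m; \Z)$$
induced by the localization unit is an equivalence. Unwinding the multi-morphism spaces as mapping spaces in the bitensored envelope, it suffices to check that the tensored morphism $\V_1 \ot \cdots \ot \F_!(\rH) \ot \cdots \ot \W_\m \to \V_1 \ot \cdots \ot \L(\F_!(\rH)) \ot \cdots \ot \W_\m$ is an $\rS,\T$-local equivalence, which is automatic by the closure of the saturated class of local equivalences under the $\mP\Env(\mV), \mP\Env(\mW)$-biaction built into Definition~\ref{locpa}. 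With this identification in place, conditions (1) and (2) express literally the same statement about $\Mul_\mM(\V_1,\ldots,\Y,\ldots,\W_\m;\Z)$.

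For the additional equivalence with (3) under the standing hypothesis $\rH \in \mP\B\Env(\mJ)_{\rS,\T}$, I would invoke the enriched adjunction $\F_! \dashv \F^*$, which yields natural equivalences
$$\Mul_{\mP\B\Env(\mM)}(\V_1,\ldots, \F_!(\rH), \ldots,\W_\m; \Z) \simeq \Mul_{\mP\B\Env(\mJ)}(\V_1,\ldots, \rH, \ldots,\W_\m; \F^*(\Z)).$$
Combined with the argument of the previous paragraph, this reduces matters to identifying the right-hand side with the localized multi-morphism space $\Mul_{\mP\B\Env(\mJ)_{\rS,\T}}(\ldots; \F^*(\Z))$, which in turn requires verifying that $\F^*(\Z)$ itself lies in $\mP\B\Env(\mJ)_{\rS,\T}$. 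This is a formal consequence of Remark~\ref{Functol}: since $\F_!$ descends to the localized categories it carries local equivalences to local equivalences, and applying adjunction to $\mP\B\Env(\mJ)(\f, \F^*(\Z))$ then shows $\F^*(\Z)$ is local; the closure argument of the preceding paragraph then gives the desired identification of multi-morphism spaces.

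The point requiring the most care is a notational subtlety concerning the operads over which objects lie: $\mJ$ is a priori weakly bi-enriched over $\mV'^\ot \times \mW'^\ot$ rather than over $\mV^\ot \times \mW^\ot$, while the multi-morphism spaces in condition (3) are tensored by objects of $\mV, \mW$. To handle this cleanly I would first use Remark~\ref{rest} together with Lemma~\ref{sasa} to reduce to the case where $\F$ lies over identity maps of $\infty$-operads, replacing $\mJ$ by $(\alpha,\beta)_!(\mJ)$ and $\rH$ by its pushforward; in this reduced case the adjunction $\F_! \dashv \F^*$ is genuinely $\mP\Env(\mV),\mP\Env(\mW)$-linear and the multi-morphism computations go through without further bookkeeping.
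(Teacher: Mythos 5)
Your proposal is correct and follows essentially the same route as the paper: the equivalence $(1)\Leftrightarrow(2)$ is the adjointness of the enriched localization $\L: \mP\B\Env(\mM)^\circledast \rightleftarrows \mP\B\Env(\mM)^\circledast_{\rS,\T}$ applied to the local object $\Z \in \mM$, and the equivalence with $(3)$ comes from the adjunction $\F_! \dashv \F^*$ restricted to local objects, which the paper packages as the enriched adjunction $\L \circ \F_!: \mP\B\Env(\mJ)^\circledast_{\rS,\T} \rightleftarrows \mP\B\Env(\mM)^\circledast_{\rS,\T}: \F^*$ while you verify locality of $\F^*(\Z)$ by hand via Remark \ref{Functol}. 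The only cosmetic difference is attribution: closure of local equivalences under the biaction is the content of Lemma \ref{Lauf} and the remark following it rather than of Definition \ref{locpa} itself.
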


\begin{proof}
The equivalence between (1) and (2) follows by adjointness via the enriched localization $\L : \mP\B\Env(\mM)^\circledast \rightleftarrows \mP\B\Env(\mM)_{\rS,\T}^\circledast$.
The second part of the lemma follows from the enriched adjunction $\L \circ \F_!: \mP\B\Env(\mJ)_{\rS,\T}^\circledast \rightleftarrows \mP\B\Env(\mM)_{\rS,\T}^\circledast: \F^*.$
	
\end{proof}

%Lemma \ref{Enristo} applies to left, right and bi-quasi-enriched, left, right and bipseudo-enriched and left, right and bienriched

\begin{notation}Let $(\mV^\ot \to \Ass, \rS), (\mW^\ot \to \Ass, \T)$ be small localization pairs and $\mM^\circledast \to \mV^\ot \times \mW^\ot$ a small $\rS,\T$-bienriched $\infty$-category.
%Let $\mM^\circledast \to \mV^\ot \times \mW^\ot$ be an absolute small weakly  bienriched $\infty$-category.
Let $\mM_{\rS,\T}^\circledast \subset \mP\B\Env(\mM)_{\rS,\T}^\circledast $ be the full weakly bienriched subcategory spanned by the objects $\T \in \mP\B\Env(\mM)_{\rS,\T}$ such that there is a morphism $\T \to \T'$ in $\mP\B\Env(\mM)_{\rS,\T}$, where $\T' \in \mM$, such that for every $\Z \in \mM$, $\V_1,...,\V_\n \in \mV, \W_1,...,\W_\m \in \mW$ for $\n,\m \geq 0$ the following map is an equivalence:
\begin{equation}\label{eqaso}
\Mul_{\mP\B\Env(\mM)_{\rS,\T}}(\V_1,...,\V_\n, \T', \W_1,...,\W_\m; \Z) \to \Mul_{\mP\B\Env(\mM)_{\rS,\T}}(\V_1,...,\V_\n, \T, \W_1,...,\W_\m; \Z).\end{equation}
\end{notation}
\begin{remark}
Let $\bar{\mM}^\circledast \subset \mP\B\Env(\mM)_{\rS,\T}^\circledast $ be the full weakly bienriched subcategory spanned by $\mM.$  The embedding $\bar{\mM}^\circledast \subset \mP\B\Env(\mM)_{\rS,\T}^\circledast$ lands in 
$\mM_{\rS,\T}^\circledast$ (taking the identity of $\T$).
\end{remark}

\begin{lemma}
Let $(\mV^\ot \to \Ass, \rS), (\mW^\ot \to \Ass, \T)$ be small localization pairs and $\mM^\circledast \to \mV^\ot \times \mW^\ot$ a small $\rS,\T$-bienriched $\infty$-category.	
The embedding $\bar{\mM}^\circledast \subset\mM_{\rS,\T}^\circledast$ admits a $\rS^{-1}\mP\Env(\mV), \T^{-1}\mP\Env(\mW)$-enriched left adjoint $\phi$.
\end{lemma}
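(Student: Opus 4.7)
The plan is to apply the characterization of enriched left adjoints recalled in the excerpt: because the inclusion $\bar{\mM}^\circledast \subset \mM_{\rS,\T}^\circledast$ is the identity on underlying $\infty$-operads, it suffices to produce for each $\X \in \mM_{\rS,\T}$ an object $\phi(\X) \in \bar{\mM}$ together with a morphism $\eta_\X \colon \X \to \phi(\X)$ such that, for every $\Y \in \bar{\mM}$ and every test tuple $\V_1,\ldots,\V_n \in \rS^{-1}\mP\Env(\mV)$, $\W_1,\ldots,\W_m \in \T^{-1}\mP\Env(\mW)$ with $n,m \geq 0$, the induced map
$$\Mul_{\mP\B\Env(\mM)_{\rS,\T}}(\V_1,\ldots,\V_n,\phi(\X),\W_1,\ldots,\W_m;\Y) \to \Mul_{\mP\B\Env(\mM)_{\rS,\T}}(\V_1,\ldots,\V_n,\X,\W_1,\ldots,\W_m;\Y)$$
is an equivalence. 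Naturality of $\eta$ and functoriality of $\phi$ then follow automatically from this pointwise universal property.

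By the very definition of $\mM_{\rS,\T}^\circledast$, each $\X$ comes equipped with some morphism $\X \to \X'$, with $\X' \in \mM$, satisfying precisely the desired equivalence when the test entries are restricted to $\V_i \in \mV$ and $\W_j \in \mW$. I set $\phi(\X) := \X'$ and take $\eta_\X$ to be the supplied map. The only real content is therefore to bootstrap condition (\ref{eqaso}) from test objects in the small operads $\mV,\mW$ to test objects in the full localized envelopes.

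This bootstrap proceeds in two steps. First, by Lemma \ref{Lauf} the ambient weakly bi-enriched $\infty$-category $\mP\B\Env(\mM)_{\rS,\T}^\circledast$ is presentably bitensored over $\rS^{-1}\mP\Env(\mV)^\ot \times \T^{-1}\mP\Env(\mW)^\ot$, hence bi-pseudo-enriched by Lemma \ref{zzzz}; consequently a multi-morphism space with a test entry of the form $\V_{i,1} \otimes \cdots \otimes \V_{i,k_i}$ for $\V_{i,j} \in \mV$ is equivalent to the one with entries $\V_{i,1},\ldots,\V_{i,k_i}$ separated, and symmetrically on the $\mW$-side. This propagates (\ref{eqaso}) from $\mV,\mW$ to the monoidal envelopes $\Env(\mV),\Env(\mW)$. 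Second, $\Env(\mV) \subset \mP\Env(\mV)$ is dense by the Yoneda embedding, and the localization $\mP\Env(\mV) \to \rS^{-1}\mP\Env(\mV)$ preserves colimits, so every object of $\rS^{-1}\mP\Env(\mV)$ is a small colimit of objects of $\Env(\mV)$; symmetrically for $\T^{-1}\mP\Env(\mW)$. Since $\mP\B\Env(\mM)_{\rS,\T}^\circledast$ is presentably bitensored, every multi-morphism functor is corepresented by an iterated tensor and therefore sends colimits in any test variable to limits in $\mS$; applying this one variable at a time promotes the equivalence from $\Env(\mV),\Env(\mW)$ to $\rS^{-1}\mP\Env(\mV),\T^{-1}\mP\Env(\mW)$. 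The principal difficulty is precisely this extension, which genuinely uses both the presentable bitensored structure of $\mP\B\Env(\mM)_{\rS,\T}^\circledast$ guaranteed by Lemma \ref{Lauf} and the density of the monoidal envelopes inside their localized presheaf categories.
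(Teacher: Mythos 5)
Your proof is correct and follows essentially the same route as the paper: take the morphism $\X \to \X'$ supplied by the definition of $\mM_{\rS,\T}^\circledast$ and bootstrap the equivalence of multi-morphism spaces from test objects in $\mV,\mW$ to all of $\rS^{-1}\mP\Env(\mV),\T^{-1}\mP\Env(\mW)$, using that these are generated under small colimits by tensor products of objects of $\mV,\mW$ and that the presentably bitensored structure of $\mP\B\Env(\mM)_{\rS,\T}^\circledast$ makes the relevant mapping-space functors send such colimits to limits. Your extra preliminary step reducing the enriched adjoint to the pointwise universal property is exactly what the paper leaves implicit.
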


\begin{proof}Let $\T \in \mP\B\Env(\mM)_{\rS,\T}$. Then there is a morphism $\T \to \T'$ in $\mP\B\Env(\mM)_{\rS,\T}$, where $\T' \in \mM$, such that for every $\Z \in \mM$, $\V_1,...,\V_\n \in \mV, \W_1,...,\W_\m \in \mW$ for $\n,\m \geq 0$ the map (\ref{eqaso}) is an equivalence. Thus for every $\V \in \rS^{-1}\mP\Env(\mV), \W \in \T^{-1}\mP\Env(\mW)$ the following map is an equivalence since $\rS^{-1}\mP\Env(\mV)$ is generated under small colimits by tensor products of objects of $\mV:$
\begin{equation}\label{eqaso2}
\mP\B\Env(\mM)_{\rS,\T}(\V \ot \T' \ot \W, \Z) \to \mP\B\Env(\mM)_{\rS,\T}(\V \ot \T \ot \W, \Z).
\end{equation}	
	
\end{proof}

\begin{remark}\label{cohfun}Let $(\mV^\ot \to \Ass, \rS), (\mW^\ot \to \Ass, \T)$ be small localization pairs and $\mM^\circledast \to \mV^\ot \times \mW^\ot$ a small $\rS,\T$-bienriched $\infty$-category.
An enriched functor $\F: \mJ^\circledast \to \mM^\circledast$ admits a $\rH$-weighted colimit for a weight $\rH \in \mP\B\Env(\mJ)$ if and only if
the functor $ \mP\B\Env(\mJ) \xrightarrow{\F_!} \mP\B\Env(\mM) \xrightarrow{\L} \mP\B\Env(\mM)_{\rS,\T}$ lands in $\mM_{\rS,\T}.$
In this case there is a canonical equivalence $\colim^\rH(\F)\simeq \phi(\L(\F_!(\rH))).$

%$\mJ^\circledast \to \mV^\ot \times \mW^\ot, \mM^\circledast \to \mV^\ot \times \mW^\ot$ small $\rS,\T$-bienriched $\infty$-categories.
%Moreover, if $\Theta^\circledast \subset \Enr\Fun(\mJ,\mM) \times \mP\B\Env(\mJ)^\circledast$ is the full weakly bienriched subcategory spanned by those pairs $(\F, \rH)$ such that $\mM$ %^\circledast \to \mV^\ot \times \mW^\ot$ admits the $\rH$-weighted colimit of $\F$, the enriched functor$$ \Theta^\circledast \subset \Enr\Fun(\mJ,\mM) \times \mP\B\Env(\mJ)^\circledast \to \mP\B\Env(\mM)^\circledast, (\F,\rH) \mapsto \F_!(\rH)$$lands in $\mM'^\circledast$ and the enriched functor 
%$ \Theta^\circledast \subset \Enr\Fun(\mJ,\mM) \times \mP\B\Env(\mJ)^\circledast \to \mM'^\circledast \xrightarrow{\L}\bar{\mM}^\circledast$ sends $(\F,\rH) $ to $\colim^\rH(\F).$

\end{remark}

%The definition of weighted colimits simplifies for enriched %and pseud-enriched $\infty$-categories, where we use Notation \ref{enrprrr2}:

%The latter remark specializes to enriched $\infty$-categories: %left enriched, right enriched and bienriched $\infty$-categories, where we explain the case of left enrichment: %An important case is the following one:

%By the next remark the theory of weighted colimits of enriched functors between $\infty$-categories left enriched in a presentably monoidal $\infty$-category is a special case of the theory of weighted colimits of enriched functors between small bienriched $\infty$-categories:

%If we are in the situation of Definition \ref{weight} and $\mV^\ot=\mV'^\ot, \mW^\ot= \mW'^\ot$ and $\mM$ admits the $\rH$-weighted colimit of every $\mV,\mW$-enriched functor $\mJ^\circledast \to \mM^\circledast$, we say that $\mM$ admits $\rH$-weighted colimits. For the general situation we introduce the following terminology:

\begin{proposition}\label{weifun} Let $(\mV^\ot \to \Ass, \rS), (\mW^\ot \to \Ass, \T)$ be small localization pairs and $\mJ^\circledast \to \mV^\ot \times \mW^\ot, \mM^\circledast \to \mV^\ot \times \mW^\ot$ small $\rS,\T$-bienriched $\infty$-categories.
%Let $\mJ^\circledast \to \mV^\ot \times \mW^\ot, \mM^\circledast \to \mV^\ot \times \mW^\ot$ be absolute small weakly  bienriched $\infty$-categories.

\begin{enumerate}
\item Let $\F: \mJ^\circledast \to \mM^\circledast$ be a $\mV,\mW$-enriched functor such that
$\mM$ admits the $\rH$-weighted colimit of $\F$ for every $\rH \in \mP\B\Env(\mJ)_{\rS,\T}.$
The enriched functor $\F^*_{\mid \bar{\mM}^\circledast}: \bar{\mM}^\circledast \to \mP\B\Env(\mJ)_{\rS,\T}^\circledast$ admits a $\rS^{-1}\mP\Env(\mV), \T^{-1}\mP\Env(\mW)$-enriched left adjoint that sends $\rH$ to $\colim^\rH(\F)$ and extends $\F.$

\item Assume that $\mM$ admits the $\rH$-weighted colimit of every $\mV,\mW$-enriched functor $\F: \mJ^\circledast \to \mM^\circledast$ for every $\rH \in \mP\B\Env(\mJ)_{\rS,\T}.$
The induced functor $$\Enr\Fun_{\rS^{-1}\mP\Env(\mV),\T^{-1}\mP\Env(\mW)}(\mP\B\Env(\mJ)_{\rS,\T},\bar{\mM}) \to \Enr\Fun_{\mV,\mW}(\mJ,\mM)$$ admits a fully faithful left adjoint that lands in $\LinFun^\L_{\rS^{-1}\mP\Env(\mV),\T^{-1}\mP\Env(\mW)}(\mP\B\Env(\mJ)_{\rS,\T},\bar{\mM}).$
In particular, the following induced functor is an equivalence: $$\LinFun^\L_{\rS^{-1}\mP\Env(\mV),\T^{-1}\mP\Env(\mW)}(\mP\B\Env(\mJ)_{\rS,\T},\bar{\mM}) \to \Enr\Fun_{\mV,\mW}(\mJ,\mM)$$

\end{enumerate}	

\end{proposition}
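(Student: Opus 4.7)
\textbf{Part (1).} The plan is to construct the desired enriched left adjoint as a composite
\[ \Phi \;:=\; \phi \circ \L \circ \F_!\colon \mP\B\Env(\mJ)_{\rS,\T}^\circledast \longrightarrow \bar{\mM}^\circledast, \]
where $\F_!\colon \mP\B\Env(\mJ)^\circledast \to \mP\B\Env(\mM)^\circledast$ is the canonical $\mP\Env(\mV),\mP\Env(\mW)$-linear left adjoint extending $\F$ (Corollary \ref{envvcor}), $\L$ is the enriched localization of Lemma \ref{Lauf} (under which $\L\F_!$ is the enriched left adjoint of the restriction $\F^*\colon \mP\B\Env(\mM)_{\rS,\T}^\circledast \to \mP\B\Env(\mJ)_{\rS,\T}^\circledast$, since $\F^*$ preserves $\rS,\T$-local objects by linearity of $\F_!$), and $\phi\colon \mM_{\rS,\T}^\circledast \to \bar{\mM}^\circledast$ is the enriched left adjoint of the inclusion constructed just before the proposition. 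The hypothesis on $\mM$ ensures, via Remark \ref{cohfun}, that $\L\F_!$ actually lands in $\mM_{\rS,\T}^\circledast$, so the composite is defined, and by construction $\Phi(\rH) = \colim^\rH(\F)$. Being a composite of enriched functors between weakly bi-enriched $\infty$-categories over the desired base operads, $\Phi$ is $\rS^{-1}\mP\Env(\mV), \T^{-1}\mP\Env(\mW)$-enriched.

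To verify $\Phi \dashv \F^*_{|\bar{\mM}^\circledast}$ as an enriched adjunction, chain the three enriched adjunctions for any $\rH \in \mP\B\Env(\mJ)_{\rS,\T}$, $\Y \in \bar{\mM}$, and tuples $\V_1,\dots,\V_\n \in \rS^{-1}\mP\Env(\mV)$, $\W_1,\dots,\W_\m \in \T^{-1}\mP\Env(\mW)$:
\begin{align*}
\Mul_{\bar{\mM}}(\V_\bullet,\Phi(\rH),\W_\bullet;\Y)
&\simeq \Mul_{\mM_{\rS,\T}}(\V_\bullet,\L\F_!(\rH),\W_\bullet;\Y) && (\phi \dashv \iota) \\
&\simeq \Mul_{\mP\B\Env(\mM)_{\rS,\T}}(\V_\bullet,\L\F_!(\rH),\W_\bullet;\Y) && (\mM_{\rS,\T} \subset \mP\B\Env(\mM)_{\rS,\T} \text{ full}) \\
&\simeq \Mul_{\mP\B\Env(\mJ)_{\rS,\T}}(\V_\bullet,\rH,\W_\bullet;\F^*(\Y)) && (\L\F_! \dashv \F^*).
\end{align*}
That $\Phi$ extends $\F$ follows because $\F_!$ extends $\F$, $\L$ is the identity on $\rS,\T$-local objects (and the essential image of $\mJ$ in $\mP\B\Env(\mJ)$ is local), and $\phi$ is the identity on $\bar{\mM}$.

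\textbf{Part (2).} Apply Proposition \ref{pseuso} to the presentably bitensored target $\mP\B\Env(\mM)_{\rS,\T}^\circledast$ to get an equivalence
\[ \LinFun^\L_{\rS^{-1}\mP\Env(\mV),\T^{-1}\mP\Env(\mW)}(\mP\B\Env(\mJ)_{\rS,\T},\mP\B\Env(\mM)_{\rS,\T}) \;\xrightarrow{\sim}\; \Enr\Fun_{\mV,\mW}(\mJ,\mP\B\Env(\mM)_{\rS,\T}), \]
whose inverse sends $\F$ to $\L\F_!$. Restrict both sides along the respective embeddings $\bar{\mM}^\circledast \subset \mP\B\Env(\mM)_{\rS,\T}^\circledast$ and $\mM^\circledast \subset \bar{\mM}^\circledast$: the right-hand side restricts to $\Enr\Fun_{\mV,\mW}(\mJ,\mM)$, and for $\F$ in this subcategory, the hypothesis combined with Part (1) says that $\L\F_!$ factors through $\bar{\mM}^\circledast$ via $\Phi = \phi \circ \L\F_!$. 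Linearity of $\Phi$ in the sense of preserving left and right tensors from the (presentably bitensored) domain is then automatic from the adjunction verified in Part (1): for $\V \in \rS^{-1}\mP\Env(\mV)$ and $\rH \in \mP\B\Env(\mJ)_{\rS,\T}$ the computation
\[ \Mul_{\bar{\mM}}(-,\Phi(\V \ot \rH),-;\Y) \simeq \Mul_{\mP\B\Env(\mJ)_{\rS,\T}}(-,\V,\rH,-;\F^*(\Y)) \simeq \Mul_{\bar{\mM}}(-,\V,\Phi(\rH),-;\Y) \]
shows $\Phi(\V \ot \rH)$ represents the left tensor $\V \ot \Phi(\rH)$ in $\bar{\mM}$, and analogously on the right. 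Moreover $\Phi$ is a left adjoint in the ambient bitensored $\infty$-category (Part (1) applied to the composite with $\bar{\mM}^\circledast \subset \mP\B\Env(\mM)_{\rS,\T}^\circledast$), so $\Phi$ lies in the $\LinFun^\L$ subcategory.

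Conversely, any linear left-adjoint $\Psi\colon \mP\B\Env(\mJ)_{\rS,\T}^\circledast \to \bar{\mM}^\circledast$ whose restriction along $\mJ \subset \mP\B\Env(\mJ)_{\rS,\T}$ lies in $\Enr\Fun_{\mV,\mW}(\mJ,\mM)$ agrees with $\Phi$ under the Proposition \ref{pseuso} equivalence applied to the composite $\mP\B\Env(\mJ)_{\rS,\T}^\circledast \xrightarrow{\Psi} \bar{\mM}^\circledast \subset \mP\B\Env(\mM)_{\rS,\T}^\circledast$. This restriction functor is therefore an equivalence onto $\Enr\Fun_{\mV,\mW}(\mJ,\mM)$, which is the second statement. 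The first statement (existence of a fully faithful left adjoint landing in $\LinFun^\L$) is a direct consequence: the equivalence on the $\LinFun^\L$-subcategory provides the fully faithful left adjoint to the restriction functor from the larger $\Enr\Fun$-subcategory.

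\textbf{Expected obstacle.} The main subtlety is checking that $\Phi$ really lies in the $\LinFun^\L$ subcategory when the target $\bar{\mM}^\circledast$ is only weakly bi-enriched rather than bitensored: one must verify that left and right tensors of $\Phi$'s image actually exist in $\bar{\mM}$. This is precisely where the hypothesis that all $\rH$-weighted colimits exist does the work, by forcing $\L\F_!$ to land in $\mM_{\rS,\T}$ and letting the universal property of $\phi$ supply the needed tensors.
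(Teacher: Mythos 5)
Your part (1) is correct and follows the paper's own route: restrict the enriched adjunction $\F_!\colon \mP\B\Env(\mJ)_{\rS,\T}^\circledast \rightleftarrows \mP\B\Env(\mM)_{\rS,\T}^\circledast \colon \F^*$ to $\mM_{\rS,\T}^\circledast$ using the weighted-colimit hypothesis (Remark \ref{cohfun}), then compose with the enriched localization $\mM_{\rS,\T}^\circledast \rightleftarrows \bar{\mM}^\circledast$; the verification that the composite extends $\F$ is the same as in the paper.

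In part (2), however, the uniqueness/full-faithfulness step has a genuine gap. You compare an arbitrary $\Psi \in \LinFun^\L_{\rS^{-1}\mP\Env(\mV),\T^{-1}\mP\Env(\mW)}(\mP\B\Env(\mJ)_{\rS,\T},\bar{\mM})$ with $\Phi$ by post-composing with the embedding $\iota\colon \bar{\mM}^\circledast \subset \mP\B\Env(\mM)_{\rS,\T}^\circledast$ and invoking the equivalence of Proposition \ref{pseuso}. But this embedding preserves weighted limits (Corollary \ref{yow}), not tensors or conical colimits: for $\X \in \mM$ and $\V \in \rS^{-1}\mP\Env(\mV)$ the left tensor of $\V$ and $\X$ formed in $\bar{\mM}$ is the weighted colimit in $\mM$, i.e. the reflection of the presheaf-level tensor, and these differ in general. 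Hence $\iota \circ \Psi$ is typically neither linear nor a left adjoint, so Proposition \ref{pseuso} does not apply to it and your conclusion that $\Psi$ agrees with $\Phi$ does not follow as written. Similarly, the closing claim that the first statement of (2) (the fully faithful left adjoint of the restriction functor on all of $\Enr\Fun(\mP\B\Env(\mJ)_{\rS,\T},\bar{\mM})$) is a \emph{direct} consequence of the equivalence on the $\LinFun^\L$-subcategory is not automatic: one must verify the adjunction property of $\Phi_\G$ against arbitrary enriched functors into $\bar{\mM}$, not only against linear left adjoints. The paper closes precisely this step by passing through the full subcategory of enriched functors $\mP\B\Env(\mJ)_{\rS,\T}^\circledast \to \mM_{\rS,\T}^\circledast$ that send $\mJ$ into $\mM$ (to which the adjunction of Proposition \ref{pseuso} restricts, since by hypothesis the left adjoint $\L\circ\G_!$ of any $\G\colon \mJ^\circledast \to \mM^\circledast$ lands there) and then transporting the fully faithful left adjoint across post-composition with the enriched localization $\mM_{\rS,\T}^\circledast \rightleftarrows \bar{\mM}^\circledast$, using that $\bar{\mM}$-valued functors take values in local objects so that maps $\L\circ\G_! \to \iota\circ\mathrm{H}$ correspond to maps $\Phi_\G = \phi\circ\L\circ\G_! \to \mathrm{H}$. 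Your explicit linearity computation for $\Phi$ can stay, but this transport argument is the missing ingredient needed to obtain both assertions of (2).
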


\begin{proof}
(1): The $\rS^{-1}\mP\Env(\mV),\T^{-1}\mP\Env(\mW)$-enriched adjunction $\F_!: \mP\B\Env(\mJ)_{\rS,\T}^\circledast \rightleftarrows \mP\B\Env(\mM)_{\rS,\T}^\circledast : \F^*$ restricts to a $\rS^{-1}\mP\Env(\mV),\T^{-1}\mP\Env(\mW)$-enriched adjunction $\F_!: \mP\B\Env(\mJ)_{\rS,\T}^\circledast \rightleftarrows \mM_{\rS,\T}^\circledast : \F^*_{\mid \mM_{\rS,\T}^\circledast}.$
Composing the latter with the $\rS^{-1}\mP\Env(\mV),\T^{-1}\mP\Env(\mW)$-enriched localization $\L: \mM_{\rS,\T}^\circledast \rightleftarrows \mM^\circledast$ we obtain a $\rS^{-1}\mP\Env(\mV),\T^{-1}\mP\Env(\mW)$-enriched adjunction $\L \circ \F_!: \mP\B\Env(\mJ)_{\rS,\T}^\circledast \rightleftarrows \mM^\circledast : \F^*_{\mid \mM^\circledast}.$
The restriction of the enriched functor $\L \circ \F_!: \mP\B\Env(\mJ)_{\rS,\T}^\circledast \to \mM_{\rS,\T}^\circledast \to \mM^\circledast$ to $\mJ^\circledast$ factors as $\mJ^\circledast \to \mM^\circledast \subset \mM_{\rS,\T}^\circledast \to \mM^\circledast$ and so is $\F.$

(2): By Proposition \ref{pseuso} %Corollary \ref{envvcor}
the induced functor $$\Enr\Fun_{\rS^{-1}\mP\Env(\mV),\T^{-1}\mP\Env(\mW)}(\mP\B\Env(\mJ)_{\rS,\T},\mP\B\Env(\mM)_{\rS,\T}) \to \Enr\Fun_{\mV,\mW}(\mJ,\mP\B\Env(\mM)_{\rS,\T})$$ admits a fully faithful left adjoint, which lands in the full subcategory $$\LinFun^\L_{\rS^{-1}\mP\Env(\mV),\T^{-1}\mP\Env(\mW)}(\mP\B\Env(\mJ)_{\rS,\T},\mP\B\Env(\mM)_{\rS,\T}).$$
Let $$\Enr\Fun'_{\rS^{-1}\mP\Env(\mV),\T^{-1}\mP\Env(\mW)}(\mP\B\Env(\mJ)_{\rS,\T},\mM_{\rS,\T}) \subset$$$$ \Enr\Fun_{\rS^{-1}\mP\Env(\mV),\T^{-1}\mP\Env(\mW)}(\mP\B\Env(\mJ)_{\rS,\T},\mM_{\rS,\T})$$ be the full subcategory of enriched functors sending $\mJ$ to $\mM.$ In particular, the induced functor $$\Enr\Fun'_{\rS^{-1}\mP\Env(\mV),\T^{-1}\mP\Env(\mW)}(\mP\B\Env(\mJ)_{\rS,\T},\mM_{\rS,\T}) \to \Enr\Fun_{\mV,\mW}(\mJ,\mM)$$ admits a fully faithful left adjoint that lands in the full subcategory $$\LinFun^\L_{\rS^{-1}\mP\Env(\mV),\T^{-1}\mP\Env(\mW)}(\mP\B\Env(\mJ)_{\rS,\T},\mM_{\rS,\T}).$$
So the result follows from the fact that the equivalence $$\Enr\Fun_{\rS^{-1}\mP\Env(\mV),\T^{-1}\mP\Env(\mW)}(\mP\B\Env(\mJ)_{\rS,\T},\mM) \simeq \Enr\Fun'_{\rS^{-1}\mP\Env(\mV),\T^{-1}\mP\Env(\mW)}(\mP\B\Env(\mJ)_{\rS,\T},\mM_{\rS,\T})$$ is inverse to the following functor post-composing with $\L:$ $$ \Enr\Fun'_{\rS^{-1}\mP\Env(\mV),\T^{-1}\mP\Env(\mW)}(\mP\B\Env(\mJ)_{\rS,\T},\mM_{\rS,\T}) \to \Enr\Fun_{\rS^{-1}\mP\Env(\mV),\T^{-1}\mP\Env(\mW)}(\mP\B\Env(\mJ)_{\rS,\T},\mM)$$

\end{proof}

\begin{corollary}Let $(\mV^\ot \to \Ass, \rS), (\mW^\ot \to \Ass, \T)$ be small localization pairs and $\mJ^\circledast \to \mV^\ot \times \mW^\ot$ a small $\rS,\T$-bienriched $\infty$-category, $\rH \in \mP\B\Env(\mJ)_{\rS,\T}$ and $\iota: \mJ^\circledast \subset  \mP\B\Env(\mJ)^\circledast_{\rS,\T}$ the canonical enriched embedding.
There is a canonical equivalence $ \colim^\rH \iota \simeq \rH.$

\end{corollary}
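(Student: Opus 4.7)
The plan is to apply the third characterization of Lemma \ref{Enristo}. Set $\mM := \mP\B\Env(\mJ)_{\rS,\T}$; since $\rH$ lies in $\mM = \mP\B\Env(\mJ)_{\rS,\T}$, it suffices to exhibit a morphism $\rH \to \iota^*(\rH)$ in $\mP\B\Env(\mJ)_{\rS,\T}$ (where $\iota^*$ is the right adjoint of $\iota_!$) that, for every $\Z \in \mM$ and every $\V_1, \ldots, \V_\n \in \mV$, $\W_1, \ldots, \W_\m \in \mW$, induces an equivalence
\[
\Mul_\mM(\V_1, \ldots, \V_\n, \rH, \W_1, \ldots, \W_\m; \Z) \to \Mul_{\mP\B\Env(\mJ)_{\rS,\T}}(\V_1, \ldots, \V_\n, \rH, \W_1, \ldots, \W_\m; \iota^*(\Z)).
\]

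The central step is a canonical identification $\iota^*(\Z) \simeq \Z$ for every $\Z \in \mM$. For $\X \in \mJ$ and multi-arguments as above, Lemma \ref{alor} applied to the enriched embedding $\iota$ (together with the descent to the localizations furnished by Lemma \ref{Lauf} and Proposition \ref{pseuso}) provides a canonical equivalence
\[
\Mul_{\mP\B\Env(\mJ)_{\rS,\T}}(\V_1, \ldots, \V_\n, \X, \W_1, \ldots, \W_\m; \iota^*(\Z)) \simeq \Mul_\mM(\V_1, \ldots, \V_\n, \iota(\X), \W_1, \ldots, \W_\m; \Z).
\]
Because $\iota$ is the tautological embedding and $\mM = \mP\B\Env(\mJ)_{\rS,\T}$, the right-hand side agrees with $\Mul_{\mP\B\Env(\mJ)_{\rS,\T}}(\V_1, \ldots, \V_\n, \X, \W_1, \ldots, \W_\m; \Z)$. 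Since objects of $\mP\B\Env(\mJ)_{\rS,\T}$ are determined by their multi-morphism spaces out of tuples of the form $\V_1, \ldots, \V_\n, \X, \W_1, \ldots, \W_\m$ with $\X \in \mJ$, this yields a canonical equivalence $\iota^*(\Z) \simeq \Z$ in $\mP\B\Env(\mJ)_{\rS,\T}$, natural in $\Z$.

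With this identification, take the required morphism $\rH \to \iota^*(\rH)$ to be the identity under $\iota^*(\rH) \simeq \rH$. The map of multi-morphism spaces to be checked then has source and target both equal to $\Mul_\mM(\V_1, \ldots, \V_\n, \rH, \W_1, \ldots, \W_\m; \Z)$ and is the identity, hence an equivalence. By Lemma \ref{Enristo} this exhibits $\rH$ as the $\rH$-weighted colimit of $\iota$, giving the desired equivalence $\colim^\rH \iota \simeq \rH$.

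The main technical point is verifying that the formula of Lemma \ref{alor}, which is stated for $\mP\B\Env$, descends correctly to the localized presheaf category $\mP\B\Env(\mJ)_{\rS,\T}$; this is built into the formation of the enriched adjunction $\iota_! \dashv \iota^*$ via the enriched localization of Lemma \ref{Lauf}, so no further work is required beyond invoking these results.
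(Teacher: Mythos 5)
Your argument is correct and is essentially the deduction the paper intends (the corollary is stated without an explicit proof, following Lemma \ref{Enristo}, Remark \ref{cohfun} and Proposition \ref{weifun}): the whole content is that the restricted Yoneda functor $\iota^*$ is the identity on $\mM=\mP\B\Env(\mJ)_{\rS,\T}$, since objects there are local presheaves on $\B\Env(\mJ)$ and hence determined by the multi-morphism spaces $\Mul(\V_1,\dots,\V_\n,\X,\W_1,\dots,\W_\m;-)$ with $\X\in\mJ$, so the criterion of Lemma \ref{Enristo} is verified by the identity of $\rH$. The only cosmetic point is size: Lemma \ref{Enristo} is stated for small $\mM$ while your $\mM$ is large, but the corollary itself already requires reading weighted colimits into a large target, which the paper handles by passing to a larger universe, so no genuine gap arises.
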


\subsection{Weighted colimits for enriched $\infty$-categories}Next we consider weighted colimits for enriched $\infty$-categories, which is of special relevance for us.
In the following we use Notation \ref{enrprrr}.
Let $\mM^\circledast \to \mV^\ot \times \mW^\ot$ be an $\infty$-category left enriched in a presentably monoidal $\infty$-category and $\L : \widehat{\mP}\B\Env(\mM)^\circledast \rightleftarrows \widehat{\mP}\B\Env(\mM)_{\L\Enr}^\circledast$ the enriched localization
of Lemma \ref{Lauf}. For every $\mV,\mW$-enriched functor $\F: \mJ^\circledast \to \mM^\circledast$ the $\mV,\mW$-enriched functor $$\widehat{\mP}\B\Env(\mJ)^\circledast_{\L\Enr} \subset \widehat{\mP}\B\Env(\mJ)^\circledast \xrightarrow{\F_!} \widehat{\mP}\B\Env(\mM)^\circledast \xrightarrow{\L} \widehat{\mP}\B\Env(\mM)_{\L\Enr}^\circledast $$ restricts to a $\mV,\mW$-enriched functor $\mP\B\Env(\mJ)_{\L\Enr}^\circledast \to \mP\B\Env(\mM)_{\L\Enr}^\circledast$ that admits a $\mV,\mW$-enriched right adjoint $\F^*.$
Specializing to left enriched $\infty$-categories Lemma \ref{Enristo} gives the following corollary:

\begin{corollary}\label{obsto}
Let $\mM^\circledast \to \mV^\ot \times \mW^\ot$ be an $\infty$-category left enriched in a presentably monoidal $\infty$-category,
%$\alpha: \mV'^\ot \to \mV^\ot, \beta: \mW^\ot \to \mW'^\ot$ be monoidal functors preserving small colimits between monoidal $\infty$-categories compatible with small colimits.
$\F: \mJ^\circledast \to \mM^\circledast$ a $\mV,\mW$-enriched functor, $\rH \in\mP\B\Env(\mJ)_{\L\Enr}, \Y \in \mM$ and $\F_!(\rH) \to \Y $ a morphism in $\widehat{\mP}\B\Env(\mM)$. The following conditions are equivalent:
%The enriched functor $\F_!: \mP\B\Env(\mJ)^\circledast \to \mP\B\Env(\mM)^\circledast$ descends to an enriched functor $\widetilde{\F_!}: \mP\B\Env(\mJ)_{\rS',\T'}^\circledast \to \mP\B\Env(\mM)_{\rS,\T}^\circledast$ (Remark \ref{Functol}).

\begin{enumerate}
\item The morphism $\F_!(\rH) \to \Y $ in $\widehat{\mP}\B\Env(\mM)$
%corresponds to a morphism $\psi': \widetilde{\F_!}(\rH) \to \Y $ in $\mP\B\Env(\mM)_{\rS,\T}$. The morphism $\psi$ 
exhibits $\Y$ as the $\rH$-weighted colimit of $\F$.
	
\item The induced morphism $\L(\F_!(\rH)) \to \Y$ in $\mP\B\Env(\mM)_{\L\Enr}$ induces for every $\Z \in \mM$ an equivalence:
$$\hspace{10mm}\L\Mor_\mM(\Y, \Z) \to \L\Mor_{\mP\B\Env(\mM)_{\L\Enr}}(\L(\F_!(\rH)), \Z).$$

\item The corresponding morphism $\psi': \rH \to \F^*(\Y) $ in $\mP\B\Env(\mJ)_{\L\Enr}$ induces for every $\Z \in \mM$ an equivalence:
$$\hspace{11mm}\L\Mor_{\mM}(\Y, \Z) \to \L\Mor_{\mP\B\Env(\mJ)_{\L\Enr}}(\rH,\F^*(\Z)).$$
\end{enumerate}

%between left enriched $\infty$-categories lying over $\alpha, \beta.$ %from the left over $\alpha$ and from the right over a map of $\infty$-operads $ \beta: \mW'^\ot \to \mW^\ot $.
% and lying from the left over a monoidal functor $\alpha: \mV'^\ot \to \mV^\ot$ preserving small colimits between monoidal $\infty$-categories compatible with small colimits.
%$ \beta: \mW'^\ot \to \mW^\ot $ a map of $\infty$-operads and $\F: \mJ^\circledast \to \mM^\circledast$ an enriched functor between left enriched $\infty$-categories lying over $\alpha, \beta$ and $\Y\in\mM$ and 
	
%If $\mV$ is presentable and $\mW^\ot \to \Ass$ and $ \mM$ are small, then %ly monoidal $\infty$-category $\mP\B\Env(\mM)_{\L\Enr}^\circledast \to \mV^\ot \times \mW^\ot$ is a left enriched $\infty$-category by Proposition \ref{Corfols}.Hence $\psi$ exhibits $\Y$ as the $\rH$-weighted colimit of $\F$ if and only if for every $\Z \in \mM$ the morphism $\psi'$ induces an equivalence:
%\begin{equation*}\label{qwerd}\L\Mor_\mM(\Y, \Z) \to \L\Mor_{\mP\B\Env(\mM)_{\L\Enr}}(\L(\F_!(\rH)), \Z). \end{equation*}
\end{corollary}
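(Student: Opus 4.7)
The strategy is to recognize the corollary as the specialization of Lemma \ref{Enristo} to the left-enriched setting, and to translate the multi-morphism space conditions into conditions on left morphism objects. By Example \ref{Exaso}(2), for a presentably monoidal $\mV$, a locally small weakly bi-enriched $\infty$-category is left enriched in $\mV$ precisely when it is $\Enr_\mV$-left enriched. Working in a sufficiently large universe (so that the relevant $\infty$-categories and the set $\Enr_\mV$ become small), I apply Lemma \ref{Enristo} to the localization pair $(\mV^\ot \to \Ass, \Enr_\mV)$ and $(\mW^\ot \to \Ass, \emptyset)$. Since $\Enr_\mV^{-1}\widehat{\mP}\Env(\mV) \simeq \mV$ and $\emptyset^{-1}\mP\Env(\mW) = \mP\Env(\mW)$, this identifies $\mP\B\Env(\mM)_{\L\Enr}^\circledast \to \mV^\ot \times \mP\Env(\mW)^\ot$ with the presentably bitensored, hence bi-enriched (Example \ref{Biolk}) $\infty$-category from Proposition \ref{rembrako}(1), in which all left morphism objects exist.

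Lemma \ref{Enristo} gives the equivalence of condition (1) of the corollary with the statement that for all $\V_1,\dots,\V_\n \in \mV$, $\W_1,\dots,\W_\m \in \mW$, and $\Z \in \mM$ the induced map
\[
\Mul_\mM(\V_1,\dots,\V_\n, \Y, \W_1,\dots,\W_\m; \Z) \to \Mul_{\mP\B\Env(\mM)_{\L\Enr}}(\V_1,\dots,\V_\n, \L(\F_!(\rH)), \W_1,\dots,\W_\m; \Z)
\]
is an equivalence, and similarly (when $\rH \in \mP\B\Env(\mJ)_{\L\Enr}$) with the corresponding adjoint statement for $\rH \to \F^*(\Y)$. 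The translation from $\Mul$ to $\L\Mor$ proceeds in two steps: first, quantifying the $\V_i$'s and using the defining universal property of the left multi-morphism object $\L\Mul\Mor_\mM(\Y, \W_1,\dots,\W_\m; \Z) \in \mV$ together with the Yoneda lemma on $\Env(\mV)^\op$ converts the displayed equivalence (for varying $\V_i, \n$) into an equivalence of left multi-morphism objects; second, varying the $\W_j$'s and $\m$ and using Proposition \ref{morpre} to identify the restriction of $\L\Mor_{\mP\B\Env(\mM)_{\L\Enr}}(\L(\F_!(\rH)), \Z)$ along $\Env(\mW)^\op \hookrightarrow \Env(\mP\Env(\mW))^\op$ packages the data into an equivalence of functors $\Env(\mW)^\op \to \mV$, giving condition (2). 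Condition (3) is obtained verbatim the same way from the adjoint formulation in Lemma \ref{Enristo}.

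The main point to check carefully is the identification of source and target of the map in (2): since $\mM \subset \mP\B\Env(\mM) \subset \widehat{\mP}\B\Env(\mM)$ preserves left multi-morphism objects by Proposition \ref{morpre}, and since the localization $\widehat{\mP}\B\Env(\mM) \rightleftarrows \mP\B\Env(\mM)_{\L\Enr}$ is enriched with unit lying over $\mV \subset \widehat{\mP}\Env(\mV)$ (Lemma \ref{Lauf}), the left morphism object of $\Y,\Z \in \mM$ computed in $\mP\B\Env(\mM)_{\L\Enr}$, when restricted along $\Env(\mW)^\op \hookrightarrow \Env(\mP\Env(\mW))^\op$, recovers $\L\Mor_\mM(\Y,\Z)$. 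This compatibility together with the functoriality of $\L\Mor$ in the first variable (Remark \ref{zujj}-style) produces the comparison map appearing in (2) and (3), and makes the Yoneda translation unambiguous.
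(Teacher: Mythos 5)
Your proposal follows essentially the paper's own route: the paper obtains Corollary \ref{obsto} precisely by specializing Lemma \ref{Enristo} to the left enriched setting (via the localization-pair formalism of Example \ref{Exaso}, applied in a larger universe) and repackaging the multi-morphism conditions as equivalences of left morphism objects, exactly as you describe. One small correction to your justification: $\Enr_\mV^{-1}\widehat{\mP}\Env(\mV)$ is $\Ind_\sigma(\mV)$ (the quasi-enriched base), not $\mV$ itself; what you actually need — and what Notation \ref{enrprrrt} together with Proposition \ref{rembrako} provides — is that the corollary's $\mP\B\Env(\mM)_{\L\Enr}$, the full subcategory pulled back along $\mV^\ot \subset \Ind_\sigma(\mV)^\ot$ and generated by $\mM$ under small colimits, is presentably left $\mV$-tensored, after which your Mul-to-$\L\Mor$ translation goes through unchanged since all inputs lie in $\mV$, $\mW$, $\mM$ and the relevant objects lie in that subcategory.
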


\begin{remark}\label{sosos}
	
%Let $\mJ^\circledast \to \mV^\ot \times \mW^\ot$ be an absolute small weakly  bienriched $\infty$-category, $ \mM^\circledast \to \mV^\ot \times \mW^\ot$ a presentably  bi-tensored $\infty$-category, $\rH$ a weight over $\mV,\mW$ and $\F: \mJ^\circledast \to \mM^\circledast$ an enriched functor, which by Corollary \ref{envvcor} (2) uniquely extends to a left adjoint linear functor$\bar{\F}: \mP\B\Env(\mJ)^\circledast \to \mP\B\Env(\mM)^\circledast$.By Corollary \ref{cohfun} the functor $\bar{\F}$ sends $\rH$ to the $\rH$-weighted colimit of $\F.$
Hinich \cite[§ 6]{hinich2021colimits} defines a notion of weighted colimit for his model of enriched $\infty$-categories \cite{HINICH2020107129}, which is equivalent to our model by 
\cite{MR4185309}, \cite[Theorem 6.7.]{HEINE2023108941} and \cite{heine2024equivalence}. The description of the weighted colimit given by Corollary \ref{obsto} implies that our notion of weighted colimit for left enriched $\infty$-categories agrees with Hinich's definition.
This definition of weighted colimits was also introduced and used in \cite[Definition A.34.]{heine2021real}.

\end{remark}

One proves the following proposition like Proposition \ref{weifun}:
\begin{proposition}\label{weifunt} Let $\mV^\ot \to \Ass$ be a presentably monoidal $\infty$-category, $\mW^\ot \to \Ass$ a small $\infty$-operad and $\mJ^\circledast \to \mV^\ot \times \mW^\ot, \mM^\circledast \to \mV^\ot \times \mW^\ot$ small left enriched $\infty$-categories.
\begin{enumerate}
\item Let $\F: \mJ^\circledast \to \mM^\circledast$ be a $\mV,\mW$-enriched functor such that
$\mM$ admits the $\rH$-weighted colimit of $\F$ for every $\rH \in \mP\B\Env(\mJ)_{\L\Enr}.$
The $\mV,\mW$-enriched functor $\F^*_{\mid \mM^\circledast}: \mM^\circledast \to \mP\B\Env(\mJ)_{\L\Enr}^\circledast$ admits a $\mV,\mW$-enriched left adjoint that sends $\rH$ to $\colim^\rH(\F)$ and extends $\F.$
		
\item Assume that $\mM$ admits the $\rH$-weighted colimit of every $\mV,\mW$-enriched functor $\F: \mJ^\circledast \to \mM^\circledast$ for every $\rH \in \mP\B\Env(\mJ)_{\L\Enr}.$
The induced functor $$\Enr\Fun_{\mV,\mW}(\mP\B\Env(\mJ)_{\L\Enr},\mM) \to \Enr\Fun_{\mV,\mW}(\mJ,\mM)$$ admits a fully faithful left adjoint that lands in $\LinFun^\L_{\mV,\mW}(\mP\B\Env(\mJ)_{\L\Enr},\mM).$
In particular, the following induced functor is an equivalence: $$\LinFun^\L_{\mV,\mW}(\mP\B\Env(\mJ)_{\L\Enr}, \mM) \to \Enr\Fun_{\mV,\mW}(\mJ,\mM)$$
		
\end{enumerate}	
	
\end{proposition}

We use the following enriched variants of weights, which put us in the situation of Corollary \ref{obsto} via Lemma \ref{sasa} and Remark \ref{rest}:

\begin{definition}Let $\mV^\ot \to \Ass, \mW^\ot \to \Ass$ be $\infty$-operads.
	
\begin{enumerate}
\item A small left enriched weight over $\mV,\mW$ is a weight $(\alpha: \mV'^\ot \to \mV^\ot,
\beta: \mW'^\ot \to \mW^\ot, \mJ^\circledast \to \mV'^\ot \times \mW'^\ot, \rH)$ over $\mV,\mW$ such that $\mV^\ot \to \Ass, \mV'^\ot \to \Ass$ are presentably monoidal $\infty$-categories, $\mW^\ot \to \Ass, \mW'^\ot \to \Ass$ are small $\infty$-operads,
$\alpha$ is a left adjoint monoidal functor, $ \mJ^\circledast \to \mV'^\ot \times \mW'^\ot$ is a small left enriched $\infty$-category and $\rH \in \mP\B\Env(\mM)_{\L\Enr}.$

\item A small right enriched weight over $\mV,\mW$ is a weight $(\alpha: \mV'^\ot \to \mV^\ot,
\beta: \mW'^\ot \to \mW^\ot, \mJ^\circledast \to \mV'^\ot \times \mW'^\ot, \rH)$ over $\mV,\mW$ such that $\mW^\ot \to \Ass, \mW'^\ot \to \Ass$ are presentably monoidal $\infty$-categories, $\mV^\ot \to \Ass, \mV'^\ot \to \Ass$ are small $\infty$-operads,
$\beta$ is a left adjoint monoidal functor, $ \mJ^\circledast \to \mV'^\ot \times \mW'^\ot$ is a small right enriched $\infty$-category and $\rH \in \mP\B\Env(\mM)_{\R\Enr}.$

\item A small enriched weight over $\mV,\mW$ is a weight $(\alpha: \mV'^\ot \to \mV^\ot, \beta: \mW'^\ot \to \mW^\ot, \mJ^\circledast \to \mV'^\ot \times \mW'^\ot, \rH)$ over $\mV,\mW$ such that $\mV^\ot \to \Ass, \mV'^\ot \to \Ass, \mW^\ot \to \Ass, \mW'^\ot \to \Ass$ are presentably monoidal $\infty$-categories, $\alpha, \beta$ are left adjoint monoidal functors, $ \mJ^\circledast \to \mV'^\ot \times \mW'^\ot$ is a small bienriched $\infty$-category and $\rH \in \mP\B\Env(\mM)_{\B\Enr}.$

\end{enumerate}
\end{definition}

%\begin{definition}\label{Dewe}Let $\mM^\circledast \to \mV^\ot \times \mW^\ot$ be small bienriched $\infty$-category and $\T=(\alpha,\beta,\mJ^\circledast \to \mV'^\ot \times \mW'^\ot, \rH)$ a small enriched weight over $\mV,\mW$.
%\begin{enumerate}
%\item We say that $\mM$ admits small left enriched weighted colimits if it admits $\T$-weighted colimits for every small left enriched weight $\T$ over $\mV.$

%\item We say that $\mM$ admits small right enriched weighted colimits if it admits $\T$-weighted colimits for every small right enriched weight $\T$ over $\mW.$

%\item Let $\mH$ be a collection of small left weights over $\mV$.We say that $\mM$ admits $\mH$-weighted colimits if $\mM$ admits $\T$-weighted colimits for every left weight $\T \in \mH.\item Let $\mH$ be a collection of small right weights over $\mW$.We say that $\mM$ admits $\mH$-weighted colimits if $\mM$ admits $\T$-weighted colimits for every right weight $\T \in \mH.$\end{enumerate}\end{definition}

%We apply Definition \ref{Dewe} in particular to the case that $\mH$ is a collection of left enriched weights over $\mV$ or a collection of right enriched weights over $\mW$ viewed as enriched weights over $\mV,\mW$ in the canonical way (Remark \ref{labo}).

\subsection{Conical colimits and tensors}

Next we consider our first example of weighted colimits, which are conical colimits and left and right tensors.

%We consider the following types of examples of weighted colimits that give rise to dual examples of weighted limits:

%\begin{itemize}
%\item conical colimits
%\item left and right tensors
%\item enriched coends
%\item lax and oplax colimits\end{itemize}
%We define conical colimits:

\begin{definition}Let $\mM^\circledast \to \mV^\ot \times \mW^\ot$ be a weakly bienriched $\infty$-category.

\begin{enumerate}
\item A functor $\F: \K^\triangleright \to \mM$ is a conical colimit diagram %or that the induced morphism
if for every $\V_1,...,\V_\n \in \mV, \W_1,...,\W_\m \in \mW$ for $\n,\m \geq 0$
and $\Y \in \mM$ the presheaf $\Mul_\mM(\V_1,...,\V_\n,-, \W_1,...,\W_\m;\Y)$ on $\mM$ sends $\F^\op: (\K^\op)^\triangleleft \to \mM^\op$ to a limit diagram.
An $\infty$-category $\mM$ admits $\K$-indexed conical colimits if $\mM$ admits the conical colimit of every functor starting at $\K.$

\item A functor $\F: \K^\triangleleft \to \mM$ is a conical limit diagram %or that the induced morphism
if for every $\V_1,...,\V_\n \in \mV, \W_1,...,\W_\m \in \mW$ for $\n,\m \geq 0$
and $\X \in \mM$ the presheaf $\Mul_\mM(\V_1,...,\V_\n,\X, \W_1,...,\W_\m;-)$
on $\mM$ sends $\F$ to a limit diagram.
An $\infty$-category $\mM$ admits $\K$-indexed conical limits if $\mM$ admits the conical limit of every functor starting at $\K.$

\end{enumerate}
%The conical colimit of $\F$ is the colimit of $\F$ weighted at the final presheaf $*$on $\mJ$ seen as an object of $\mP(\mJ) \subset \mP\B\Env(\mJ).$
\end{definition}

\begin{remark}
Stefanich \cite[Definition 4.3.2.]{stefanich2020presentable} introduces conical colimits for Gepner-Haugseng's model of enriched $\infty$-categories.	
	
\end{remark}

\begin{remark}\label{laulo}
Let $\mM^\circledast \to \mV^\ot \times \mW^\ot$ be a weakly bienriched $\infty$-category. 

\begin{enumerate}
\item Every conical colimit diagram is a colimit diagram.

\item If $\mM^\circledast \to \mV^\ot \times \mW^\ot$ admits left and right cotensors,
every colimit diagram is conical. % colimit diagram.

\item Assume that $\mM^\circledast \to \mV^\ot \times \mW^\ot$ admits left tensors
and let $\G: \K \to \mM$ be a functor.
The colimit of $\G$ is conical if and only if forming left tensors preserves the colimit of $\G$ and for every $\W_1,...,\W_\m \in \mW$ for $\m \geq 0$
and $\Y \in \mM$ the presheaf $\Mul_\mM(-, \W_1,...,\W_\m;\Y)$ on $\mM$ preserves the limit of $\G^\op.$

\item Assume that $\mM^\circledast \to \mV^\ot \times \mW^\ot$ admits left and right tensors and let $\G: \K \to \mM$ be a functor.
The colimit of $\G$ is conical if and only if forming left and right tensors preserves the colimit of $\G$.
\end{enumerate}

\end{remark}

%\begin{example}Let $\mK \subset \Cat_\infty$ be a full subcategory.A left tensored, right tensored, bitensored $\infty$-category is compatible with $\mK$-indexed colimits if and only if it admits all $\mK$-indexed conical colimits.\end{example}

%\begin{example}A monoidal $\infty$-category is compatible with $\mK$-indexed colimits if and only if it is compatible with $\mK$-indexed colimits as an $\infty$-operad.\end{example}

\begin{definition}Let $\mK \subset \Cat_\infty$ be a full subcategory.
% and $\phi: \mM^\circledast \to \mV^\ot \times \mW^\ot$ a weakly bienriched $\infty$-category.
%We say that $\phi$ is compatible with $\mK$-indexed colimits if the $\infty$-operad $\mV^\ot \to \Ass$ is compatible with $\mK$-indexed colimits, $\mM$ admits $\mK$-indexed colimits and for every $\X, \Y \in \mM$, $\V_1,...,\V_\n \in \mV, \W_1,...,\W_\m \in \mW$ for $\n,\m \geq0$ and $0 \leq \bi \leq \n$ the functors $$\Mul_\mM(\V_1,...,\V_\n,-,\W_1,...,\W_\m;\Y): \mM^\op \to \mS,$$$$ \Mul_\mM(\V_1,..,\V_\bi,-, \V_{\bi+1},...,\V_\n,\X,\W_1,...,\W_\m;\Y): \mV^\op \to \mS $$ preserve small limits.
%Similarly, we define when a weakly bienriched $\infty$-category $\phi: \mM^\circledast \to \mV^\ot \times \mW^\ot$ exhibits $\mM$ as weakly right $\mW$-enriched compatible with small colimits.
A weakly bienriched $\infty$-category $\phi: \mM^\circledast \to \mV^\ot \times \mW^\ot$ is compatible with $\mK$-indexed colimits if $\phi$ admits $\mK$-indexed conical colimits and for every $\X, \Y \in \mM$, $\V_1,...,\V_\n \in \mV, \W_1,...,\W_\m \in \mW$ for $\n,\m \geq0$ and $0 \leq \bi \leq \n, 0 \leq \bj \leq \m$ the presheaves %$$\Mul_\mM(\V_1,...,\V_\n,-,\W_1,...,\W_\m;\Y): \mM^\op \to \mS,$$
$$  \Mul_\mM(\V_1,..,\V_\bi,-, \V_{\bi+1},...,\V_\n,\X,\W_1,...,\W_\m;\Y)$$$$ \Mul_\mM(\V_1,...,\V_\n,\X,\W_1,..,\W_\bj,-, \W_{\bj+1},...,\W_\m;\Y)$$ on $\mV,\mW$, respectively, preserve $\mK$-indexed limits.

%A weakly bienriched $\infty$-category $\phi: \mM^\circledast \to \mV^\ot \times \mW^\ot$ is compatible with small colimits if the $\infty$-operads $\mV^\ot \to \Ass, \mW^\ot \to \Ass$ are compatible with small colimits, $\mM$ admits small colimitsand for every $\X, \Y \in \mM$, $\V_1,...,\V_\n \in \mV, \W_1,...,\W_\m \in \mW$ for $\n,\m \geq0$ and $0 \leq \bi \leq \n, 0 \leq \bj \leq \m$ the functors $$\Mul_\mM(\V_1,...,\V_\n,-,\W_1,...,\W_\m;\Y): \mM^\op \to \mS,$$$$  \Mul_\mM(\V_1,..,\V_\bi,-, \V_{\bi+1},...,\V_\n,\X,\W_1,...,\W_\m;\Y): \mV^\op \to \mS,$$$$ \Mul_\mM(\V_1,...,\V_\n,\X,\W_1,..,\W_\bj,-, \W_{\bj+1},...,\W_\m;\Y): \mW^\op \to \mS$$ preserve small limits.
\end{definition}

\begin{notation}
Let $\K$ be a small $\infty$-category and $\mV^\ot \to \Ass, \mW^\ot \to \Ass$ small $\infty$-operads.
By Corollary \ref{leiik} the equivalence $\K_{\mV,\mW}^\circledast \simeq *_\mV^\circledast \times\K\times *_\mW^\circledast$ extends to an equivalence \begin{equation}\label{ttt}
\mP\B\Env(\K_{\mV,\mW}) \simeq \mP\Env(\mV) \otimes \mP(\K) \otimes \mP\Env(\mW).\end{equation}
For every $\V \in \mP\Env(\mV),\W \in \mP\Env(\mW)$ we define the following weight $$\langle\V, \K, \W\rangle \in \mP\B\Env(\K_{\mV,\mW}) $$ corresponding to the image of $(\V,*,\W) \in \mP\Env(\mV) \times \mP(\K) \times \mP\Env(\mW) $ in $ \mP\Env(\mV) \otimes \mP(\K) \otimes \mP\Env(\mW).$

\end{notation}

\begin{remark}
Equivalence (\ref{ttt}) restricts to equivalences
$$\mP\L\Env(\K_{\mV,\mW}) \simeq \mP\Env(\mV) \otimes \mP(\K) \ot \mP(*_\mW),$$$$
\mP\R\Env(\K_{\mV,\mW}) \simeq  \mP(*_\mV) \ot \mP(\K) \ot \mP\Env(\mW),$$$$
\mP(\K_{\mV,\mW}) \simeq \mP(*_\mV) \ot \mP(\K) \ot \mP(*_\mW).$$	

Thus $\langle\V,*,\tu_{\mP\Env(\mW)}\rangle$ is a left weight and $\langle \tu_{\mP\Env(\mV)},*, \W\rangle$ is a right weight.
	
\end{remark}

\begin{definition}
Let $\K$ be a small $\infty$-category and $\mV^\ot \to \Ass, \mW^\ot \to \Ass$ small $\infty$-operads.
The trivial weight on $\K$ is the weight $\langle \tu_{\mP\Env(\mV)},\K, \tu_{\mP\Env(\mW)}\rangle.$
	
\end{definition}

\begin{lemma}\label{exorr}
Let $\mM^\circledast \to \mV^\ot \times \mW^\ot$ be an absolute small weakly  bienriched $\infty$-category, $\K$ an $\infty$-category, $\V \in \mV,\W \in \mW$ and $\F: \K_{\mV, \mW}^\circledast \to \mM^\circledast$ a $\mV,\mW$-enriched functor and $\X \in \mM$ corresponding to a $\mV,\mW$-enriched functor $ *_{\mV, \mW}^\circledast \to \mM^\circledast$.
% sending the unique object to an object $\X \in \mM.$
\begin{enumerate}
\item The colimit of $\X$ weighted at the weight $\langle\V,*,\W\rangle$ is the bitensor $\V\ot\X \ot \W$. 	
\item The colimit of $\F$ weighted at the left weight $\langle\V,*,\tu_{\mP\Env(\mW)}\rangle$ is the left tensor $\V\ot \X$. 
\item The colimit of $\F$ weighted at the right weight $\langle \tu_{\mP\Env(\mV)},*, \W\rangle$
is the right tensor $\X\ot \W$. 
\item The colimit of $\F$ weighted at the trivial weight on $\K$ is the conical colimit of $\F: \K \to \mM$.

%\item The limit of $\F$ weighted at the weight $\langle\V,\K,\W\rangle$ is the bicotensor ${^\V\colim}(\F)^\W$. \item The limit of $\F$ weighted at the weight $\langle\V,\K,\tu_{\mP\Env(\mW)}\rangle$ is the left cotensor ${^{\V}\colim(\F)}.$\item The limit of $\F$ weighted at the weight $\langle \tu_{\mP\Env(\mV)},\K,\W\rangle$ is the right cotensor $\colim(\F)^{\W}.$\item The limit of $\F$ weighted at $\langle\tu_{\mP\Env(\mV)},\K, \tu_{\mP\Env(\mW)}\rangle$ is the conical limit of $\F: \K \to \mM$.

\end{enumerate}
\end{lemma}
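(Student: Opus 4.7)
The plan is to reduce each part to its defining universal property by carefully tracking the image of the weights under $\F_!$. Begin by invoking Corollary \ref{leiik}, which produces a canonical equivalence
$$\mP\B\Env(\K_{\mV,\mW})^\circledast \simeq (\mP\Env(\mV) \ot \mP(\K) \ot \mP\Env(\mW))^\circledast,$$
under which $\langle \V, \K, \W \rangle$ corresponds to the external tensor $\V \boxtimes \ast \boxtimes \W$, where $\ast \in \mP(\K)$ is the terminal presheaf (for (4)) or the representable at the unique object (for (1)--(3)). The enriched functor $\F$ extends uniquely to a $\mP\Env(\mV), \mP\Env(\mW)$-linear left adjoint $\F_! \colon \mP\B\Env(\K_{\mV,\mW})^\circledast \to \mP\B\Env(\mM)^\circledast$ by Corollary \ref{envvcor}, and by left $\mP\Env(\mV)$- and right $\mP\Env(\mW)$-linearity,
$$\F_!(\V \boxtimes \ast \boxtimes \W) \simeq \V \ot \F_!(\ast) \ot \W \quad \text{in } \mP\B\Env(\mM).$$
In the singleton case $\F_!(\ast) = \X$, and in the general case $\F_!(\ast) \simeq \colim_{\K} \F(-)$, the latter because $\ast \simeq \colim_\K \y_\K(-)$ in $\mP(\K)$ and $\F_!$ is cocontinuous.

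For parts (1)--(3) I verify the universal property of Definition \ref{weight} directly. For $\Z \in \mM$ and $\V_\bullet \in \mV, \W_\bullet \in \mW$, the universal property of the bitensored envelope (Proposition \ref{bitte}) identifies
$$\Mul_{\mP\B\Env(\mM)}(\V_1,...,\V_\n, \V \ot \X \ot \W, \W_1,...,\W_\m; \Z) \simeq \Mul_\mM(\V_1,...,\V_\n, \V, \X, \W, \W_1,...,\W_\m; \Z).$$
Comparing with the definition of bitensor (Definition \ref{Defo}) shows that if the bitensor $\V \ot \X \ot \W \in \mM$ exists, the canonical morphism $\F_!(\langle \V, \ast, \W\rangle) \to \V\ot\X\ot\W$ exhibits it as the weighted colimit, establishing (1). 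For (2), the weight $\langle \V, \ast, \tu_{\mP\Env(\mW)}\rangle$ maps under $\F_!$ to $\V \ot \X \ot \tu_{\mP\Env(\mW)} \simeq \V \ot \X$ by unitality, and the claim reduces to the definition of left tensor; part (3) is symmetric, and may be obtained formally by applying Proposition \ref{oppoen} together with Remark \ref{switch} to reduce to (2) for the opposite bi-enrichment.

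For part (4), the computation above identifies $\F_!$ of the trivial weight with $\colim_\K \F$ in $\mP\B\Env(\mM)$. Since $\mP\B\Env(\mM)$ is presentably bitensored, its multi-morphism spaces send colimits in the middle factor to limits, and since the embedding $\mM^\circledast \subset \mP\B\Env(\mM)^\circledast$ preserves multi-morphism spaces when the target lies in $\mM$ (Proposition \ref{bitte}), we obtain
$$\Mul_{\mP\B\Env(\mM)}(\V_\bullet, \textstyle\colim_\K \F, \W_\bullet; \Z) \simeq \lim_\K \Mul_\mM(\V_\bullet, \F(-), \W_\bullet; \Z),$$
which is precisely the defining equivalence of a conical colimit for $\F \colon \K \to \mM$. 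The main obstacle is the bookkeeping of the identification $\F_!(\ast) \simeq \colim_\K \F$ on the nose in $\mP\B\Env(\mM)$ (as opposed to after any localization); this is handled by the cocontinuity of $\F_!$ inherited from its status as a left adjoint via Corollary \ref{envvcor}, together with the fact that the representable $\y_\K(\bk) \in \mP(\K)$ maps under $\F_!$ to $\F(\bk) \in \mM \subset \mP\B\Env(\mM)$, which follows from the Yoneda lemma and the explicit description of the equivalence of Corollary \ref{leiik}.
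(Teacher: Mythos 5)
Your proof is correct and follows essentially the same route as the paper: identify $\F_!(\langle\V,\K,\W\rangle) \simeq \V\ot\F_!(*)\ot\W$ via the equivalence of Corollary \ref{leiik} and the linearity/cocontinuity of $\F_!$ from Corollary \ref{envvcor}, compute $\F_!(*)$ as $\X$ resp. $\colim_\K\F$ since $*$ is the colimit of the Yoneda embedding, and then compare multi-morphism spaces in $\mP\B\Env(\mM)$ with Definition \ref{Defo} and the definition of conical colimit. The only minor caveat is your optional formal reduction of (3) to (2): this should go through the reversal $(-)^\rev$ (Remark \ref{switch}) rather than the opposite $(-)^\op$ of Proposition \ref{oppoen}, since $(-)^\op$ exchanges tensors with cotensors, but the direct symmetric argument you give first is exactly what the paper intends.
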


\begin{proof}We prove (1). The proofs of (2), (3), (4) are similar.
%(5)-(8) are dual. 
The induced left adjoint $\mP\Env(\mV), \mP\Env(\mW)$-linear functor $\F_!: (\mP\Env(\mV) \otimes \mP(\K) \ot \mP\Env(\mW))^\circledast \simeq \mP\B\Env(\K_{\mV, \mW})^\circledast \to \mP\B\Env(\mM)^\circledast $
sends $\langle\V,\K,\W\rangle$ to $\V \ot \F_!(*) \ot \W$. % using linearity.
The final presheaf $*$ on $\K$ is the colimit of the Yoneda-embedding of $\K$
so that $\F_!(*)$ is the colimit of $\F.$
%the tensor unit $\tu_{\Env(\mV)}, \tu_{\Env(\mW)}$ to $\X$ and so sends $\V,\W$ to $ \V \ot \X \ot \W.$ 
%Let $\rH$ be the weight for the left $\V$-tensor. 
So for every $\Z \in \mM, \V_1,...,\V_\n \in \mV,\W_1,...,\W_\m$ for $\n,\m \geq 0$ there is an equivalence
$$\Mul_{\mP\B\Env(\mM)}(\V_1,...,\V_\n, \F_!(\langle\V,\K,\W\rangle), \W_1,...,\W_\m; \Z) \simeq$$$$ \Mul_{\mP\B\Env(\mM)}(\V_1,...,\V_\n, \V, \X,\W, \W_1,...,\W_\m; \Z)\simeq \Mul_{\mM}(\V_1,...,\V_\n,\V,\X,\W,\W_1,...,\W_\m; \Z).$$

\end{proof}

%\begin{remark}Let $\kappa$ be a small regular cardinal, $ \mV^\ot \to \Ass, \mW^\ot \to \Ass$ small$\infty$-operads and $\V \in \mV, \W \in \mW.$The weights for the left $\V$-tensor and right $\W$-tensor are $\kappa$-smallsince the equivalence$ \mP\B\Env(*_{\mV,\mW}) \simeq \mP(\Env(\mV)\times \Env(\mW)) $restricts to an equivalence $ \B\Env(*_{\mV,\mW}) \simeq \Env(\mV)\times \Env(\mW)$so that objects of $\Env(\mV)\times \Env(\mW)$ correspond to representable presheaves on $\B\Env(*_{\mV,\mW}).$	\end{remark}

\begin{lemma}
Let $\kappa$ be a small regular cardinal, $\K$ a $\kappa$-small $\infty$-category, $\mV^\ot \to \Ass, \mW^\ot \to \Ass$ small $\infty$-operads, $\V $ in the full subcategory of $\mP\Env(\mV) $ generated by $\mV$ under $\kappa$-small colimits, $\W $ in the full subcategory of $\mP\Env(\mW) $ generated by $\mW$ under $\kappa$-small colimits. The weight $\langle\V, \K, \W\rangle$ is $\kappa$-small.
\end{lemma}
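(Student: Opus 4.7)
The plan is to unfold the three separate ``$\kappa$-small generation'' hypotheses on $\V$, $\K$, and $\W$, and then combine them using that the tensor product of presentable $\infty$-categories preserves colimits in each variable separately.

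First I will recall that under the equivalence
\[
\mP\B\Env(\K_{\mV,\mW}) \simeq \mP\Env(\mV) \otimes \mP(\K) \otimes \mP\Env(\mW)
\]
of Corollary \ref{leiik}, the weight $\langle\V,\K,\W\rangle$ corresponds to the image of $(\V,\ast,\W)$ under the universal functor
\[
\mP\Env(\mV) \times \mP(\K) \times \mP\Env(\mW) \longrightarrow \mP\Env(\mV) \otimes \mP(\K) \otimes \mP\Env(\mW)
\]
preserving small colimits component-wise, where $\ast$ denotes the final presheaf on $\K$. I will also note that under this equivalence the embedding $\B\Env(\K_{\mV,\mW}) \subset \mP\B\Env(\K_{\mV,\mW})$ sends the object $\V_0 \ot k \ot \W_0$ (with $\V_0\in\mV$, $k\in\K$, $\W_0\in\mW$) to the image of $(\y(\V_0), \y(k), \y(\W_0))$.

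Next I will produce the three presentations. Since $\K$ is $\kappa$-small, the final presheaf $\ast\in\mP(\K)$ is the $\kappa$-small colimit of the Yoneda embedding $\y_\K\colon \K \to \mP(\K)$. By hypothesis, $\V$ is a $\kappa$-small colimit of objects of $\mV$ in $\mP\Env(\mV)$, and similarly $\W$ is a $\kappa$-small colimit of objects of $\mW$ in $\mP\Env(\mW)$. Writing these as $\V \simeq \colim_{i\in\I}\V_i$, $\ast \simeq \colim_{k\in \K}\y(k)$, $\W \simeq \colim_{\ell\in \L}\W_\ell$ with $\I,\K,\L$ all $\kappa$-small, the fact that the tensor product preserves small colimits component-wise gives
\[
\V \otimes \ast \otimes \W \;\simeq\; \colim_{(i,k,\ell)\in \I\times \K\times \L}\bigl(\V_i \otimes \y(k) \otimes \W_\ell\bigr).
\]
The indexing $\infty$-category $\I\times\K\times\L$ is $\kappa$-small because $\kappa$ is regular, and each term $\V_i \otimes \y(k) \otimes \W_\ell$ lies in the image of $\B\Env(\K_{\mV,\mW})$ by the identification of the previous paragraph. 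Hence $\langle\V,\K,\W\rangle$ lies in the full subcategory of $\mP\B\Env(\K_{\mV,\mW})$ generated by $\B\Env(\K_{\mV,\mW})$ under $\kappa$-small colimits, which is the definition of being $\kappa$-small.

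The only mild subtlety is the legitimacy of combining three $\kappa$-small colimits into a single $\kappa$-small colimit over the triple product, which boils down to regularity of $\kappa$; everything else is a straightforward unfolding of the universal property of the tensor product of presentable $\infty$-categories and of the identification (\ref{ttt}).
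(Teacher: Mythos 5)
Your proof is correct and follows essentially the same route as the paper: identify $\langle\V,\K,\W\rangle$ with the image of $(\V,\ast,\W)$ under the equivalence of Corollary \ref{leiik}, write $\ast$ as a $\kappa$-small colimit of representables and $\V,\W$ as built from $\mV,\mW$ by $\kappa$-small colimits, and use that the universal functor preserves small colimits component-wise together with regularity of $\kappa$. The only cosmetic imprecision is phrasing "generated under $\kappa$-small colimits" as a single $\kappa$-small colimit; since the $\kappa$-small-colimit closure of $\B\Env(\K_{\mV,\mW})$ is itself closed under $\kappa$-small colimits, this is harmless and the paper glosses the same point.
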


\begin{proof}
	
Every presheaf $\X$ on $\K$ is the colimit of
the functor $\K \times_{\mP(\K)} \mP(\K)_{/\X} \to \K \subset \mP(\K).$ 
Thus the final presheaf on $\K$ is the colimit of the Yoneda-embedding $\K \subset \mP(\K)$ and so a $\kappa$-small colimit of representables if $\K$ is $\kappa$-small.
So the result follows since the functor 
$\mP\Env(\mV) \times \mP(\K) \times \mP\Env(\mW) \to \mP\Env(\mV) \otimes \mP(\K) \otimes \mP\Env(\mW)$ preserves small colimits component-wise and the product of two $\kappa$-small $\infty$-categories is again $\kappa$-small.
	
\end{proof}

\begin{corollary}\label{cuik}
Let $\kappa$ be a small regular cardinal. If a weakly bienriched $\infty$-category admits $\kappa$-small weighted colimits, it admits $\kappa$-small conical colimits
and left and right tensors.
%In particular, it admits $\kappa$-small
% for every $\V \in \mV, \W \in \mW$ the functor$\V\ot(-) \ot \W:\mM \to \mM$ preserves $\kappa$-small colimits.
	
\end{corollary}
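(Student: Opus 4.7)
The plan is to observe that Corollary \ref{cuik} is essentially immediate from Lemma \ref{exorr} together with the preceding $\kappa$-smallness lemma on weights of the form $\langle\V,\K,\W\rangle$. Namely, Lemma \ref{exorr} parts (2), (3), (4) identify left tensors, right tensors, and conical colimits with the weighted colimits at the weights $\langle\V,*,\tu_{\mP\Env(\mW)}\rangle$, $\langle\tu_{\mP\Env(\mV)},*,\W\rangle$, and $\langle\tu_{\mP\Env(\mV)},\K,\tu_{\mP\Env(\mW)}\rangle$, respectively; the preceding lemma asserts that these weights are $\kappa$-small under mild hypotheses on $\V$, $\W$, and $\K$.

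For a $\kappa$-small conical colimit of a functor $\F\colon\K\to\mM$, I would first lift $\F$ to a $\mV,\mW$-enriched functor $\K_{\mV,\mW}^\circledast\to\mM^\circledast$ via Proposition \ref{ljnbfg}, and then invoke the hypothesis to form the weighted colimit at the trivial weight $\langle\tu_{\mP\Env(\mV)},\K,\tu_{\mP\Env(\mW)}\rangle$, which is $\kappa$-small since $\K$ is. By Lemma \ref{exorr}(4), this weighted colimit is precisely the conical colimit of $\F$. For left tensors, given $\V\in\mV$ and $\X\in\mM$ (viewed as a $\mV,\mW$-enriched functor $*_{\mV,\mW}^\circledast\to\mM^\circledast$), the weight $\langle\V,*,\tu_{\mP\Env(\mW)}\rangle$ is $\kappa$-small, so the hypothesis produces the weighted colimit, which by Lemma \ref{exorr}(2) is $\V\ot\X$. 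Right tensors are treated entirely dually using Lemma \ref{exorr}(3) and the weight $\langle\tu_{\mP\Env(\mV)},*,\W\rangle$.

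The only step that calls for a brief verification — and which I expect to be the sole (minor) obstacle — is the $\kappa$-smallness of the three weights appearing above, specifically the appearance of the tensor units $\tu_{\mP\Env(\mV)},\tu_{\mP\Env(\mW)}$, which need not a priori lie in the full subcategory of $\mP\Env(\mV),\mP\Env(\mW)$ generated by $\mV,\mW$ under $\kappa$-small colimits. To bypass this, I would work directly on $\B\Env(\K_{\mV,\mW})\simeq\Env(\mV)\times\K\times\Env(\mW)$ via the monoidal equivalence $\mP\B\Env(\K_{\mV,\mW})\simeq\mP\Env(\mV)\otimes\mP(\K)\otimes\mP\Env(\mW)$ of Corollary \ref{leiik}(2): the tensor units of Day convolution are representable on the empty words in $\Env(\mV),\Env(\mW)$, so each of the displayed weights is exhibited as a $\kappa$-small colimit of representables on $\B\Env(\K_{\mV,\mW})$, and hence is $\kappa$-small in the sense of the preceding definition of $\kappa$-small weight.
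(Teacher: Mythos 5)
Your argument is correct and is essentially the paper's (implicit) proof: the corollary is meant to follow directly from Lemma \ref{exorr} together with the preceding lemma on $\kappa$-smallness of the weights $\langle\V,\K,\W\rangle$, exactly as you combine them. Your extra remark about the tensor units is a reasonable point of care, and your resolution is the intended one — since $\kappa$-smallness of a weight is defined via generation from $\B\Env(\mJ)$, and the units are representable by the empty words of $\Env(\mV),\Env(\mW)$, the three weights in question are manifestly $\kappa$-small.
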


\subsection{Weighted colimits via enriched coends}

%In this final subsection we describe morphism objects in the enriched $\infty$-category of enriched functors as an enriched end (Theorem \ref{end}).In the following we will use symmetric monoidal $\infty$-categories \cite[Definition 2.0.0.7.]{lurie.higheralgebra}, which are cocartesian fibrations$\mV^\boxtimes \to \Comm$ to the category $\Comm$ of finite pointed sets.There is a canonical functor $\theta: \Ass \to \Comm, [\n]\mapsto \{1,...,\n,*\} $\cite[Construction 4.1.2.9.]{lurie.higheralgebra} and we write $\mV^\ot \to \Ass$ for the pullback of a symmetric monoidal $\infty$-category $\mV^\boxtimes \to \Comm$ along $\theta,$ which is a monoidal $\infty$-category.
Next we discuss a further example of weighted colimits, the enriched coends \cite[Definition 5.78.]{heine2024bienriched} with respect to enrichment in any symmetric monoidal $\infty$-category.
Moreover we prove that general weighted colimits can be built from tensors and coends if such exist (Proposition \ref{heuu}).

In the following we will use symmetric monoidal $\infty$-categories. If $\mV^\ot \to \Ass$ is the pullback of a symmetric monoidal $\infty$-category $\mV^\boxtimes \to \Comm$, there is a canonical monoidal equivalence
$(\mV^\rev)^\ot \simeq \mV^\ot$ since $\theta$ factors as $ \Ass \xrightarrow{(-)^\op} \Ass \xrightarrow{\theta}\Comm$.
The latter induces an equivalence between left and right (weakly) enriched $\infty$-categories.
%Consequently, there is no need to distinguish between (weakly) left and right $\mV$-(pseudo)-enriched $\infty$-categories, which we therefore call (weakly) $\mV$-(pseudo)-enriched $\infty$-categories. 
%Similarly, there is no need to distinguish between left and right $\mV$-pseudo-enriched $\infty$-categories, which we therefore call $\mV$-pseudo-enriched $\infty$-categories. 
In particular, there is no need to distinguish between left and right morphism objects, which we call morphism objects.

\begin{notation}Let $\mV^\boxtimes \to \Comm$ be a symmetric monoidal $\infty$-category and $\mM^\circledast \to \mV^\ot $ a left $\mV$-enriched $\infty$-category. By \cite[Notation 5.36.]{heine2024bienriched} there is a left $\mV$-enriched functor $$\Mor_\mM: (\mM^\op  \otimes \mM)^\circledast \to \mV^\circledast$$ assigning the morphism object.
\end{notation}

%\begin{notation}\label{nnoo}Let $\mM^\circledast \to \mV^\ot $ be a left enriched $\infty$-category. The $\mV,\mV$-enriched left morphism object functor $$\L\Mor_\mM: \mM^\circledast \times (\mM^\op)^\circledast \to \mV^\circledast$$ is the restricted $\mV,\mV$-enriched evaluation functor $$\mM^\circledast \times (\mM^\op)^\circledast \subset \mM^\circledast \times \Enr\Fun_{\mV, \emptyset}(\mM,\mV)^\circledast \to \mV^\circledast.$$\end{notation}

%\begin{definition}Let $\mM^\circledast \to \mV^\ot \times \mV^\ot$ be a weakly bienriched $\infty$-category, $\mJ^\circledast \to \mV^\ot $ a right enriched $\infty$-category and $\F: (\mJ^\op)^\circledast \times \mJ^\circledast \to \mM^\circledast$ a $\mV,\mV$-enriched functor.\begin{itemize}\item The $\mV$-enriched coend of $\F$ is the $\L\Mor_{\mJ^\op}$-weighted colimit of $\F$.\item The $\mV$-enriched end of $\F$ is the $\R\Mor_\mJ$-weighted limit of $\F$.\end{itemize}\end{definition}

%\label{Aho}Let $\mM^\circledast \to \mV^\ot \times \mV^\ot$ be a weakly bienriched $\infty$-category, $\mJ^\circledast \to \mV^\ot $ a right enriched $\infty$-category and $\F: (\mJ^\op)^\circledast \times \mJ^\circledast \to \mM^\circledast$ a $\mV,\mV$-enriched functor.
\begin{definition}\label{Aho} Let $\mV^\ot \to \Comm$ be a symmetric monoidal $\infty$-category, $\mM^\circledast \to \mV^\ot, \mJ^\circledast \to \mV^\ot$ left pseudo-enriched $\infty$-categories and $\F: (\mJ^\op\ot\mJ)^\circledast \to \mM^\circledast$ a left $\mV$-enriched functor.
	
\begin{itemize}
%object representing the $\mV$-enriched presheaf
%$$\mV^\op \to \mV, \V \mapsto \Mor_{\Enr\Fun_{\mV}(\mJ^\op \otimes \mJ,\mV)}(\Mor_\mJ, \Mor_{\mM}(\V,-) \circ \F).$$ %\simeq \mV(\V,\Mor_{\Enr\Fun_{\mV,\mV}(\mJ^\op \times \mJ,\mV)}(\Mor_\mJ, \F))$$representing an equivalence$$ \int^\mJ \F \simeq \Mor_{\Enr\Fun_{\mV,\mV}(\mJ^\op \times \mJ,\mV)}(\Mor_\mJ, \F).$$
		
\item The $\mV$-enriched coend of $\F$, denoted by $\int^\mJ \F$, is the $\Mor_{\mJ^\op}$-weighted colimit of $\F.$
%$$\mV \to \mV, \V \mapsto \Mor_{\Enr\Fun_{\mV}(\mJ \otimes \mJ^\op,\mV)}(\Mor_{\mJ^\op}, \Mor_{\mM}(-,\V) \circ \F^\op).$$ 	

\item The $\mV$-enriched end of $\F$, denoted by $\int_\mJ \F$, is the $\Mor_\mJ$-weighted limit of $\F.$
\end{itemize}	
	
\end{definition}

\begin{remark}

The $\mV$-enriched end of $\F$ is the $\Mor_\mJ$-weighted limit of $\F,$
which is the $\Mor_\mJ$-weighted colimit of $\F^\op: (\mJ\ot\mJ^\op)^\circledast \to (\mM^\op)^\circledast $ and so agrees with the $\mV$-enriched coend of $\F^\op.$

\end{remark}

\begin{construction}
Let $\mV^\boxtimes \to \Comm$ be a presentably symmetric monoidal $\infty$-category
and $\mM^\circledast \to \mV^\ot $ a left $\mV$-enriched $\infty$-category that admits left tensors.
%By ... there is a canonical $\mV$-enriched functor $\Mor_\mM: (\mM^\op \otimes \mM)^\circledast \to \mV^\circledast$.
The composition of left $\mV$-enriched functors $$(\mV^\op \ot \mM^\op \ot \mM)^\circledast \xrightarrow{\id \otimes \Mor_\mM} (\mV^\op \ot \mV)^\circledast \xrightarrow{\Mor_\mV} \mV^\circledast$$ corresponds to a left $\mV$-enriched functor 
$$(\mV \ot \mM)^\circledast \to (\Enr\Fun_\mV(\mM, \mV)^\op)^\circledast $$
that induces a functor $\ot : (\mV \ot \mM)^\circledast \to \mM^\circledast$ by the existence of tensors.
We obtain an equivalence
\begin{equation}\label{gggz}
\Mor_\mM((-)\ot(-),-) \simeq \Mor_\mV(-,\Mor_\mM(-,-))
\end{equation}
of left $\mV$-enriched functors $ (\mM^\op \otimes \mM^\op \ot \mM)^\circledast \to \mV^\circledast.$ 
	
\end{construction}

%\begin{theorem}\label{end}Let $\mV^\boxtimes \to \Comm$ be a presentably symmetric monoidal $\infty$-category, $\mJ^\circledast \to \mV^\ot $ a left pseudo-enriched $\infty$-category and
%$\mM^\circledast \to \mV^\ot $ a left enriched $\infty$-category and %$\F: \ot_!(\mJ^\op \times_{\Ass} \mJ)^\circledast=(\mJ^\op\ot^\mV\mJ)^\circledast \to \mM^\circledast$ a left $\mV$-enriched functor.
%$\F,\G : (\mJ^\op)^\circledast \to \mV^\circledast $ left $\mV$-enriched functors.	
%There is an equivalence$$ \Mor_{\mP_\mV(\mJ)}(\F,\G) \simeq \int_\mJ \Mor_\mV \circ (\F^\op \otimes \G).$$\end{theorem}

\begin{theorem}\label{heuu}Let $\mV^\boxtimes \to \Comm$ be a presentably symmetric monoidal $\infty$-category, $\mJ^\circledast \to \mV^\ot $ a left $\mV$-enriched $\infty$-category,
$\mM^\circledast \to \mV^\ot $ a left $\mV$-enriched $\infty$-category that admits left tensors, $\F: \mJ^\circledast \to \mM^\circledast$ a left $\mV$-enriched functor and
$ \rH \in \mP_\mV(\mJ)$. %  and $\rH':=\Mor_{\mP_\mV(\mJ)}((-)_{\mid\mJ},\rH): (\mJ^\op)^\circledast \to \mV^\circledast.$ %: (\mJ^\op)^\circledast \to \mV^\circledast$ be a $\mV$-enriched functor. If $\mM^\circledast \to \mV^\ot \times \mV^\ot$ admits small colimits and left tensors, 
There is a canonical equivalence
$$ \colim^\rH(\F) \simeq \int^\mJ\ot \circ (\mid \rH\mid \otimes \F).$$
		
%\item Let $\rH: \mJ^\circledast \to \mV^\circledast$ be a  right $\mV$-enriched functor. If $\mM^\circledast \to \mV^\ot \times \mV^\ot$ admits small limits and left cotensors, there is a canonical equivalence $$ \lim^\rH(\F) \simeq \int^\mJ(-)^{(-)} \circ (\rH^\op \times \F).$$
%The $\mH$-weighted colimit of$ \F$ is the $\mV$-enriched coend of the $\mV$-enriched functor$$ (\mJ^\op)^\circledast \ot^\mV \mJ^\circledast \xrightarrow{\F^\op \ot^\mV \G}	\end{enumerate}
	
\end{theorem}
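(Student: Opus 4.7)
The plan is to prove the equivalence by reducing to the case of representable weights, exploiting that both sides define left $\mV$-enriched functors $\mP_\mV(\mJ)^\circledast \to \mM^\circledast$ in the variable $\rH$ that preserve small weighted colimits.

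First, I would package both sides as the values at $\rH$ of $\mV$-enriched functors $\Phi_1, \Phi_2 \colon \mP_\mV(\mJ)^\circledast \to \mM^\circledast$, where $\Phi_1(\rH) := \colim^\rH(\F)$ and $\Phi_2(\rH) := \int^\mJ \ot \circ (\mid\rH\mid \otimes \F)$. By Proposition \ref{weifun}, if $\mM$ admits all the relevant weighted colimits, $\Phi_1$ is the left $\mV$-enriched left adjoint of $\F^* \colon \mM^\circledast \to \mP_\mV(\mJ)^\circledast$, and thus preserves all small weighted colimits. For $\Phi_2$, note that the enriched Yoneda equivalence $\mid - \mid$ of Theorem \ref{unitol} is left $\mV$-linear, hence cocontinuous; tensoring $\mid\rH\mid \otimes \F$ is linear in $|\rH|$; and the coend $\int^\mJ$ is itself a weighted colimit (Definition \ref{Aho}) which commutes with weighted colimits taken in the weight variable. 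So $\Phi_2$ also preserves small weighted colimits in $\rH$.

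Next, since $\mP_\mV(\mJ)^\circledast$ is the free cocompletion of $\mJ^\circledast$ under small weighted colimits (this is the content of Corollary \ref{Yowei} announced in the introduction, but concretely here it suffices that every object of $\mP_\mV(\mJ)$ is a weighted colimit of representables $\y(j)$ via the canonical presentation), it is enough to check the equivalence $\Phi_1(\y(j)) \simeq \Phi_2(\y(j))$ naturally in $j \in \mJ$. Both sides should equal $\F(j)$: for $\Phi_1$, this is the statement that $\colim^{\y(j)}(\F) \simeq \F(j)$, which follows from the universal property in Corollary \ref{obsto} together with the enriched Yoneda lemma (Theorem \ref{unitol}) identifying $\L\Mor_{\mP_\mV(\mJ)}(\y(j), \F^*(\Z)) \simeq \F^*(\Z)(j) = \L\Mor_\mM(\F(j), \Z)$. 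For $\Phi_2$, after identifying $\mid \y(j) \mid = \Mor_\mJ(-, j)$, we must establish the enriched co-Yoneda identity $\int^\mJ \Mor_\mJ(-, j) \ot \F \simeq \F(j)$; this I would prove by directly unpacking the universal property of the coend (as a $\Mor_{\mJ^\op}$-weighted colimit) and combining with the tensor-hom adjunction (\ref{gggz}) and the identification of morphism objects in enriched presheaf categories with enriched ends, so that both sides compute $\L\Mor_\mM(\F(j), \Z)$ as a mapping space into arbitrary $\Z$.

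The main obstacle is the co-Yoneda identity $\int^\mJ \Mor_\mJ(-, j) \ot \F \simeq \F(j)$ in the enriched $\infty$-categorical setting. The classical argument requires manipulating the coend's universal property through the adjunction relating it to an end, and then recognizing the resulting end as the morphism object $\L\Mor_\mM(\F(j), \Z)$ via Yoneda applied inside $\mV$. Setting this up cleanly requires the formula for morphism objects in $\Enr\Fun_\mV(\mJ^\op, \mV)$ as enriched ends, which follows from Proposition \ref{innerho} and Theorem \ref{unitol} but should be stated explicitly as an auxiliary lemma. An alternative — perhaps more efficient — route is to verify the identity $\Phi_1 \simeq \Phi_2$ by instead checking it after passing to representing presheaves: show that both functors corepresent the same functor $\L\Mor_\mM(-, \Z) \colon \mM^\op \to \mV$ at every $\Z$, by computing each side via Corollary \ref{obsto} and matching them using (\ref{gggz}) and the end-coend Fubini principle. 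Either route funnels the argument through the same essential computation.
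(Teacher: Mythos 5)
Your primary route has a hypothesis problem and defers the real work. The theorem only assumes that $\mM^\circledast \to \mV^\ot$ admits left tensors; it is a pointwise statement that, for a given $\rH$, the coend $\int^\mJ \ot \circ (\mid\rH\mid \otimes \F)$ (when it exists) has the universal property of $\colim^\rH(\F)$. Your reduction to representables needs both $\Phi_1(\rH)=\colim^\rH(\F)$ and $\Phi_2(\rH)=\int^\mJ\ot\circ(\mid\rH\mid\otimes\F)$ to be globally defined, $\mV$-enriched functors on all of $\mP_\mV(\mJ)$ preserving small weighted colimits; via Proposition \ref{weifun} that forces $\mM$ to admit small weighted colimits, which is strictly stronger than admitting left tensors, so the argument does not prove the stated theorem. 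Moreover the cocontinuity of $\Phi_2$ in the weight variable (interchange of the coend and the tensoring with weighted colimits in $\mid\rH\mid$) is asserted, not proved, and is itself a nontrivial exchange-of-weighted-colimits statement. Finally, the co-Yoneda identity $\int^\mJ \Mor_\mJ(-,j)\ot\F \simeq \F(j)$, which you correctly identify as the crux, is exactly where the substantive computation lives, and your sketch of it needs the end formula for morphism objects in $\Enr\Fun_{\mV,\emptyset}(\mJ^\op,\mV)$ — this does not simply follow from Proposition \ref{innerho} and Theorem \ref{unitol}; the paper imports it as an external result. So the scaffolding you build (free cocompletion, cocontinuity in the weight) adds hypotheses without removing the need for that computation.

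The paper instead argues directly and pointwise: using the tensor-hom equivalence (\ref{gggz}), it computes, for every $\X\in\mM$,
$$\Mor_\mM\Bigl(\int^\mJ\ot\circ(\mid\rH\mid\otimes\F),\X\Bigr) \simeq \int_\mJ \Mor_\mV\circ\bigl(\mid\rH\mid^\op\otimes(\Mor_\mM(-,\X)\circ\F^\op)\bigr) \simeq \Mor_{\mP_\mV(\mJ)}(\rH,\F^*(\X)),$$
where the first step uses that $\Mor_\mM(-,\X)$ turns the coend into an end (Corollary \ref{transco}), the second uses the end formula for morphism objects in enriched functor $\infty$-categories, and the last uses Theorem \ref{unitol}; Corollary \ref{obsto} then identifies the coend with $\colim^\rH(\F)$. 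Your briefly mentioned alternative route — showing both sides corepresent the same functor via Corollary \ref{obsto}, (\ref{gggz}) and end manipulations — is essentially this argument, and is the one that works at the stated level of generality; I would develop that route and drop the reduction to representables.
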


\begin{proof}Equivalence (\ref{gggz}) gives for every $\X \in \mM$ a canonical equivalence 
\begin{equation}\label{ggg}
\Mor_\mM(\ot \circ (\mid \rH\mid \otimes \F),\X) \simeq \Mor_\mV \circ(\mid \rH\mid ^\op \otimes (\Mor_\mM(-,\X)\circ \F^\op))
\end{equation}
of $\mV$-enriched functors $ (\mJ^\op \otimes \mJ)^\circledast \to \mV^\circledast.$
We obtain the following chain of natural equivalences:
$$\Mor_\mM(\int^\mJ\ot \circ (\mid \rH\mid  \otimes \F),\X) \simeq \int_\mJ \Mor_\mM(\ot \circ (\mid \rH\mid  \otimes \F),\X) \simeq %\Mor_{\mP_\mV(\mJ^\op \otimes^\mV \mJ)}(\Mor_{\mJ^\op}, \Mor_\mV \circ (\rH^\op \ot^\mV (\Mor_\mM(-,\X) \circ \F))) $$
\int_\mJ \Mor_\mV \circ (\mid \rH\mid ^\op \otimes (\Mor_\mM(-,\X)\circ \F^\op)) $$
$$\simeq \Mor_{\Enr\Fun_{\mV, \emptyset}(\mJ^\op,\mV)}(\mid \rH\mid ,\Mor_\mM(-,\X)\circ \F^\op)
\simeq \Mor_{\mP_\mV(\mJ)}(\rH,\F^*(\X)), $$ %\simeq \Mor_{\mP_\mV(\mM)}(\F_!(\rH),\X).$$
where the first equivalence is by Corollary \ref{transco}, %and Corollary \ref{unitol}
the second equivalence is by (\ref{ggg}), the third equivalence is by \cite[Theorem 5.84.]{heine2024bienriched} and the last equivalence is by Theorem \ref{unitol}. 
\end{proof}	

%\subsection{Functoriality of weighted colimits}

%In the following we study functoriality of forming weighted colimits.We start with the following remark:Next we consider coherent functoriality. 
%For that we recallthat by Corollary \ref{envvcor} for every absolute small weakly  bienriched $\infty$-categories $\mJ^\circledast \to \mV'^\ot \times \mW'^\ot, \mM^\circledast \to \mV^\ot \times \mW^\ot$ there is an embedding 
%\begin{equation}\label{eiii}
%\Enr\Fun(\mJ,\mM) \subset \Enr\Fun(\mJ,\mP\B\Env(\mM)) \simeq \LinFun^\L(\mP\B\Env(\mJ),\mP\B\Env(\mM)), \F \mapsto \F_!. \end{equation}
%whose essential image are the linear functors lying over left adjoint monoidal functors.Let $\rho: \Enr\Fun(\mJ,\mM) \times \mP\B\Env(\mJ)^\circledast \to \mP\B\Env(\mM)^\circledast$ be the corresponding enriched functor.

\subsection{A Bousfield Kan formula for weighted colimits}

In the following we prove a Bousfield-Kan formula for weighted colimits (Theorem \ref{BK}).

\begin{notation}
Let $\mJ^\circledast \to \mV^\ot$ be a left enriched $\infty$-category, $\mD^\circledast \to \mV^\ot \times \mW^\ot$ a weakly bienriched $\infty$-category that admits right cotensors and $\Y \in \mJ.$
By \cite[Corollary 5.23. (1)]{heine2024bienriched} the right $\mW$-enriched functor $\Enr\Fun_{\mV,\emptyset}(\mJ,\mD)^\circledast \to \mD^\circledast$ evaluating at $\Y$ admits a $\mW$-enriched right adjoint $\mathrm{Ran}_\Y$ that sends 
$\Z \in \mD $ to $ ^{{\L\Mor_\mJ(-,\Y)}} \Z.$
	
\end{notation}

In the following we use the notation of Theorem \ref{unitol}.

\begin{proposition}
	
Let $\mD^\circledast \to \mV^\ot \times \mW^\ot$ be a weakly bienriched $\infty$-category that admits right cotensors, $\F: \mJ^\circledast \to \mD^\circledast$ a left $\mV$-enriched functor and $\rH \in \mP\L\Env(\mJ)$ a left weight on $\mJ.$
For every $\X \in \mD, \W_1,...,\W_\m \in \mW, \m \geq 0$ there is a canonical equivalence
$$\Mul_{\mP\L\Env(\mD)}(\F_!(\rH), \W_1,...,\W_\m;\X) \simeq \Mul_{\Enr\Fun_{\mV,\emptyset}(\mJ,\mD)}(\F, \W_1,...,\W_\m; {^{(-)}\X}\circ {\mid \rH\mid}).$$
	
\end{proposition}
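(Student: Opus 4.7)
My plan is to chase the claimed equivalence through three steps: a Yoneda-type presentation of $\rH$ in $\mP\L\Env(\mJ)$, the tensor–cotensor adjunction in $\mD$, and an end formula for multi-morphism spaces in the weakly right $\mW$-enriched $\infty$-category of enriched functors.

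First, I would write $\rH$ canonically as a weighted colimit of Yoneda representables. By Theorem \ref{unitol} and the corollary following it, the equivalence $\mP\L\Env(\mJ)^\circledast \simeq \Enr\Fun_{\emptyset, \mV}(\mJ^\op, \mP\Env(\mV))^\circledast$ sends $\rH$ to the enriched functor $|\rH| = \L\Mor_{\mP\L\Env(\mJ)}((-)_{\mid \mJ}, \rH): \mJ^\op \to \mP\Env(\mV)$. Since the left $\mV$-enriched Yoneda embedding $\y: \mJ^\circledast \hookrightarrow \mP\L\Env(\mJ)^\circledast$ generates $\mP\L\Env(\mJ)$ under $\mP\Env(\mV)$-tensors and small colimits, $\rH$ has a canonical presentation $\rH \simeq \int^{J \in \mJ} |\rH|(J) \otimes \y(J)$. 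Because $\F_!: \mP\L\Env(\mJ)^\circledast \to \mP\L\Env(\mD)^\circledast$ is the unique $\mP\Env(\mV)$-linear small-colimit-preserving extension of $\F$ (Corollary \ref{envvcor}(2) combined with Definition \ref{zgbbl}(3)), it follows that $\F_!(\rH) \simeq \int^{J} |\rH|(J) \otimes \F(J)$ in $\mP\L\Env(\mD)$.

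Next, for the left hand side, the presheaf $\Mul_{\mP\L\Env(\mD)}(-, \W_1, \dots, \W_\m; \X)$ takes colimits to limits in its first argument, so
\[
\Mul_{\mP\L\Env(\mD)}(\F_!(\rH), \W_1, \dots, \W_\m; \X) \simeq \int_{J} \Mul_{\mP\L\Env(\mD)}(|\rH|(J) \otimes \F(J), \W_1, \dots, \W_\m; \X).
\]
Using the defining property of the left tensor (Definition \ref{Defo}(1)) and then of the cotensor (Definition \ref{Defo}(3)) reduces the integrand to $\Mul_\mD(\F(J), \W_1, \dots, \W_\m; {^{|\rH|(J)}}\X)$. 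For the right hand side, I would invoke a multi-morphism generalisation of the end formula for morphism objects used in the proof of Theorem \ref{heuu} (a variant of \cite[Theorem 5.76.]{Heine2024Lur}), namely
\[
\Mul_{\Enr\Fun_{\mV, \emptyset}(\mJ, \mD)}(\F, \W_1, \dots, \W_\m; G) \simeq \int_{J} \Mul_\mD(\F(J), \W_1, \dots, \W_\m; G(J)),
\]
and specialise to $G = {^{(-)}}\X \circ |\rH|$, whose value at $J$ is ${^{|\rH|(J)}}\X$. The two integral expressions then visibly agree, yielding the claimed equivalence.

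The principal technical obstacle is establishing the multi-morphism end formula in the weakly right $\mW$-enriched $\infty$-category $\Enr\Fun_{\mV, \emptyset}(\mJ, \mD)^\circledast$; I would derive it from Proposition \ref{lehmmm} applied to a weakly right $\mW$-enriched auxiliary $\mO^\circledast \to \mW^\ot$ corepresenting $\m$-multi-morphisms with inputs $\W_1, \dots, \W_\m$, so that multi-morphisms in the enriched functor $\infty$-category correspond to $\mV, \mW$-enriched functors out of $\mJ \times \mO$, and then reduce to the end formula for morphism objects pointwise. A subsidiary subtlety is that $|\rH|(J)$ lives in $\mP\Env(\mV)$ rather than in $\mV$; since ${^{(-)}}\X$ is contravariant and sends colimits in its first argument to limits, it extends uniquely from $\mV^\op$ to $\mP\Env(\mV)^\op$ with values in $\mP\L\Env(\mD)$, which keeps every equivalence above well-defined.
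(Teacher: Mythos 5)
Your route is genuinely different from the paper's, but as written it has a real gap: the two pillars you lean on are not available in the generality of the statement. The proposition is stated for an arbitrary $\infty$-operad $\mV^\ot \to \Ass$ (no symmetric monoidal structure, no presentability), yet your first step presents $\rH$ as an enriched coend $\int^{J}\mid\rH\mid(J)\otimes \y(J)$ over $\mJ$, and your computation of both sides as ends $\int_J$ of space-valued multi-morphism functors requires (co)ends indexed by the $\mV$-enriched $\infty$-category $\mJ$. In this paper such (co)ends are only defined when $\mV$ is symmetric monoidal (Definition \ref{Aho} requires $\mV^\boxtimes\to\Comm$, and the tensor $\mJ^\op\otimes\mJ$ of enriched $\infty$-categories needs at least an $\bE_2$-structure on $\mV$), and the end formula you cite as a model, \cite[Theorem 5.76.]{Heine2024Lur} as used in Theorem \ref{heuu}, concerns morphism objects for presentably symmetric monoidal enrichment. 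Moreover, your central ``multi-morphism end formula'' for $\Enr\Fun_{\mV,\emptyset}(\mJ,\mD)$ is exactly where the content of the proposition sits; the sketch via Proposition \ref{lehmmm} and an auxiliary corepresenting $\mO^\circledast\to\mW^\ot$ is not carried out, and even granting it you would still need a pointwise end formula whose hypotheses you do not have. So the proposal defers the real difficulty to an unproved lemma that is at least as strong as the statement, in a setting where the requisite calculus is not defined.

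For comparison, the paper's proof avoids all (co)end calculus: it is a two-step Yoneda/adjunction argument at the level of presheaf envelopes. First, the $\mW$-enriched adjunction $\ev_{\rH}\dashv \mathrm{Ran}_{\rH}$ from the Notation preceding the proposition (i.e. \cite[Proposition 5.23.]{Heine2024Lur}), applied to $\Enr\Fun_{\mP\Env(\mV),\emptyset}(\mP\L\Env(\mJ),\mP\L\Env(\mD))$, identifies $\Mul_{\mP\L\Env(\mD)}(\F_!(\rH),\W_1,\dots,\W_\m;\X)$ with $\Mul(\F_!,\W_1,\dots,\W_\m;{^{(-)}\X}\circ\mP\L\Env(\mJ)(-,\rH))$; second, restriction along the enriched Yoneda embedding $\mJ^\circledast\subset\mP\L\Env(\mJ)^\circledast$ (the universal property of the closed envelope, compatible with the right $\mW$-enrichment) carries $\F_!$ to $\F$ and ${^{(-)}\X}\circ\mP\L\Env(\mJ)(-,\rH)$ to ${^{(-)}\X}\circ\mid\rH\mid$, giving the claim. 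If you want to salvage your approach, you should either restrict to presentably symmetric monoidal $\mV$ and fully prove the multi-morphism end formula, or replace your end computations by this adjunction-plus-restriction argument, which works at the stated level of generality.
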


\begin{proof}
	
There is a canonical equivalence
$$\Mul_{\mP\L\Env(\mD)}(\F_!(\rH), \W_1,...,\W_\m;\X) \simeq$$
$$
\Mul_{\Enr\Fun_{\mP\Env(\mV),\emptyset}(\mP\L\Env(\mJ),\mP\L\Env(\mD))}(\F_!, \W_1,...,\W_\m; {^{(-)}\X}\circ \mP\L\Env(\mJ)(-,\rH)) \simeq$$
$$ \Mul_{\Enr\Fun_{\mV,\emptyset}(\mJ,\mD)}(\F, \W_1,...,\W_\m; {^{(-)}\X}\circ {\mid \rH\mid}).$$
	
\end{proof}

\begin{corollary}Let $\mJ^\circledast \to \mV^\ot $ be a weakly left enriched $\infty$-category, $\rH$ a left weight on $\mJ$ and $\mD^\circledast \to \mV^\ot \times \mW^\ot$ a weakly bienriched $\infty$-category that admits right cotensors and $\F: \mJ^\circledast \to \mD^\circledast$ a left $\mV$-enriched functor that admits a $\rH$-weighted colimit. There is a canonical equivalence
$$\Mul_{\mD}(\colim^\rH(\F), \W_1,...,\W_\m;\X) \simeq \Mul_{\Enr\Fun_{\mV,\emptyset}(\mJ,\mD)}(\F, \W_1,...,\W_\m; {^{(-)}\X}\circ {\mid \rH\mid}).$$

\end{corollary}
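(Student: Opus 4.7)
This corollary is essentially immediate from the preceding proposition together with the defining universal property of a weighted colimit. My plan is the following.

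First, apply the preceding proposition to obtain the natural equivalence
\[
\Mul_{\mP\L\Env(\mD)}(\F_!(\rH), \W_1,\dots,\W_\m;\X) \simeq \Mul_{\Enr\Fun_{\mV,\emptyset}(\mJ,\mD)}(\F, \W_1,\dots,\W_\m; {^{(-)}\X}\circ{\mid\rH\mid}).
\]
Second, invoke Definition \ref{weight}: since $\F$ admits the $\rH$-weighted colimit $\colim^\rH(\F)\in\mD$, there is a canonical morphism $\F_!(\rH)\to \colim^\rH(\F)$ in $\mP\B\Env(\mD)$ such that, specializing $\n=0$ in the defining condition, we have an equivalence
\[
\Mul_{\mD}(\colim^\rH(\F), \W_1,\dots,\W_\m;\X) \xrightarrow{\simeq} \Mul_{\mP\B\Env(\mD)}(\F_!(\rH), \W_1,\dots,\W_\m;\X)
\]
for every $\X\in\mD$ and every $\W_1,\dots,\W_\m\in\mW$. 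Since $\mJ^\circledast\to\mV^\ot$ is only weakly left enriched, the image of $\F_!(\rH)$ lies in the full left-tensored subcategory $\mP\L\Env(\mD)^\circledast\subset\mP\B\Env(\mD)^\circledast$, so this right-hand side agrees with the left-hand side of the equivalence obtained from the preceding proposition. Composing the two equivalences produces the desired identification.

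The only step that requires any care is the compatibility of envelopes: one must observe that for $\F:\mJ^\circledast\to\mD^\circledast$ with $\mJ$ only left enriched, the induced map $\F_!$ on closed bitensored envelopes factors through $\mP\L\Env(\mD)^\circledast$, and hence the multi-morphism space $\Mul_{\mP\B\Env(\mD)}(\F_!(\rH),\W_1,\dots,\W_\m;\X)$ agrees with $\Mul_{\mP\L\Env(\mD)}(\F_!(\rH),\W_1,\dots,\W_\m;\X)$ by Definition \ref{zgbbl}. Modulo this bookkeeping there is no further content; the naturality of both equivalences in $\X$ ensures the composite is also natural, so we can read off the claimed formula directly.
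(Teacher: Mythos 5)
Your proposal is correct and follows essentially the same route as the paper: the paper's proof likewise chains the defining equivalence of the $\rH$-weighted colimit (the case $\n=0$ of Definition \ref{weight}) with the preceding proposition's identification of $\Mul_{\mP\L\Env(\mD)}(\F_!(\rH),\W_1,\dots,\W_\m;\X)$. Your extra remark reconciling $\mP\B\Env(\mD)$ with $\mP\L\Env(\mD)$ is harmless bookkeeping that the paper leaves implicit.
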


\begin{proof}
For every $\X \in \mD, \V_1,...,\V_\n \in \mV, \W_1,...,\W_\m \in \mW, \n,\m \geq 0$ there is a canonical equivalence 
$$\Mul_{\mD}(\colim^\rH(\F), \W_1,...,\W_\m;\X) \simeq \Mul_{\mP\L\Env(\mD)}(\F_!(\rH), \W_1,...,\W_\m;\X) \simeq$$
$$ \Mul_{\Enr\Fun_{\mV,\emptyset}(\mJ,\mD)}(\F, \W_1,...,\W_\m; {^{(-)}\X}\circ {\mid \rH\mid}).$$

\end{proof}

\begin{corollary}\label{Funf} Let $\mJ^\circledast \to \mV^\ot $ be a weakly left enriched $\infty$-category, $\rH$ a left weight on $\mJ$ and $\mD^\circledast \to \mV^\ot \times \mW^\ot$ a weakly bienriched $\infty$-category that admits right cotensors and $\rH$-weighted colimits. The right $\mW$-enriched functor
$$\mD^\circledast \xrightarrow{\mathrm{Ran}_\rH} \Enr\Fun_{\mP\Env(\mV),\emptyset}(\mP\L\Env(\mJ),\bar{\mD})^\circledast \xrightarrow{(-)_{\mid\mJ}} \Enr\Fun_{\mV,\emptyset}(\mJ,\mD)^\circledast, \X \mapsto {^{(-)}\X}\circ {\mid \rH\mid} $$
admits a $\mW$-enriched left adjoint that sends $\F$ to $\colim^\rH(\F).$

\end{corollary}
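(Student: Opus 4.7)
The plan is to invoke the characterization of $\mW$-enriched left adjoints stated in the remark following the definition of enriched adjunctions: a $\mW$-enriched functor $\G$ admits a $\mW$-enriched left adjoint provided that, for each object $\F$ of its source, there exist an object $\colim^\rH(\F)$ and a unit morphism $\eta_\F$ whose postcomposition induces equivalences on all multi-morphism spaces. The content of the preceding corollary is precisely the multi-morphism equivalence we need, so the task reduces to (i) specifying the candidate left adjoint on objects, (ii) producing the unit, and (iii) checking that the equivalence supplied by the preceding corollary is implemented by this unit.

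For (i), define the candidate value on $\F$ to be $\colim^\rH(\F)$, which exists by hypothesis. For (ii), specialize the preceding corollary to $\m=0$ and $\X = \colim^\rH(\F)$: this gives an equivalence $\mD(\colim^\rH(\F),\colim^\rH(\F)) \simeq \Enr\Fun_{\mV,\emptyset}(\mJ,\mD)(\F, {^{(-)}\colim^\rH(\F)} \circ |\rH|)$, and take $\eta_\F$ to be the image of the identity under this equivalence.

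For (iii), I would argue as follows. The general multi-morphism equivalence of the preceding corollary is a natural transformation of two functors in the variable $\X \in \mD$ (and in the variables $\W_1,\ldots,\W_\m$): both sides are covariant in $\X$, and the candidate map given by postcomposition with $\eta_\F$ is a map between the same two functors. By construction of $\eta_\F$, the two natural transformations agree at $\X = \colim^\rH(\F)$ on $\id_{\colim^\rH(\F)}$. A Yoneda argument then forces the two transformations to coincide everywhere. Consequently, postcomposition with $\eta_\F$ realizes the equivalence of the preceding corollary, and the hypothesis of the remark is verified.

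The main technical obstacle is establishing the naturality of the preceding corollary's equivalence in the full $\mW$-enriched sense, so that a single unit transformation witnesses the $\mW$-enriched adjunction rather than an uncoordinated family of equivalences at each $\m$. This should follow by inspecting the proof of the preceding corollary, which was produced from the underlying $\mP\Env(\mV),\mP\Env(\mW)$-enriched adjunction $\F_! \dashv \F^*$ between $\mP\B\Env(\mJ)$ and $\mP\B\Env(\mD)$, together with the $\mW$-enriched functor $\mathrm{Ran}_{(-)}$ from \cite[Proposition 5.23.(1)]{Heine2024Lur}. Both of these constructions are intrinsically $\mW$-enriched, so the equivalence they assemble is automatically $\mW$-enriched; hence the Yoneda step in (iii) can be carried out one level up, in $\Enr\Fun_{\emptyset,\mW}$-valued functor categories, closing the argument.
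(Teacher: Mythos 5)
Your proposal is correct and follows essentially the same route as the paper, which states this corollary without proof as an immediate consequence of the preceding corollary's multi-morphism equivalence together with the enriched-adjunction criterion of \cite[Remark 2.67.]{Heine2024Lur}. Your unit construction and Yoneda-naturality check simply make explicit the standard verification the paper leaves implicit, with the naturality indeed coming from the enriched adjunctions $\F_!\dashv\F^*$ and $\mathrm{Ran}_\rH$ used to assemble the equivalence.
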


%\begin{corollary}\label{soslo} Let $(\mV^\ot \to \Ass, \rS)$ be a small localization pair and $\mJ^\circledast \to \mV^\ot $ a small left $\rS$-enriched $\infty$-category, $\rH \in \mP\L\Env(\mJ)_{\rS}$.For every $\Y \in \mJ$ there is a canonical equivalence in $\rS^{-1}\mP\Env(\mV) :$ $$ \colim^\rH (\Mor_\mJ(\Y,-)) \simeq \mid\rH\mid(\Y).$$\end{corollary}

\begin{theorem}\label{BK}
Let $\mJ^\circledast \to \mV^\ot \times \mW^\ot$ be a small weakly bienriched $\infty$-category, $\mD^\circledast \to \mV^\ot \times \mW^\ot$ a weakly bienriched $\infty$-category that admits left and right tensors and small conical colimits,
$\F : \mJ^\circledast \to \mD^\circledast$ a $\mV,\mW$-enriched functor and $\rH \in \mP\B\Env(\mJ).$
Then $\colim^\rH(\F)$ is the colimit of a canonical simplicial object $\mY$ in $ \mD$ such that for every $\n \geq 0$ there is a canonical equivalence in $\mN:$ $$\mY_\n \simeq \colim_{\Z_1, ...., \Z_\n \in \mJ^\simeq}(\mid\rH\mid(\Z_{\n})) \ot \L\Mor_{\bar{\mJ}}(\Z_{\n-1},\Z_\n)\ot ... \ot \L\Mor_{\bar{\mJ}}(\Z_1,\Z_2) \ot \F(\Z_1).$$	
		
\end{theorem}

\begin{proof}

By \cite[Proposition 5.26.]{heine2024bienriched} every $\mV,\mW$-enriched functor $\F : \mJ^\circledast \to \mD^\circledast$ is the colimit of a canonical simplicial object $\mX$ in $ \Enr\Fun_{\mV, \mW}(\mJ,\mD)$ such that for every $\n \geq 0$ there is a canonical equivalence in $\Enr\Fun_{\mV, \mW}(\mJ,\mD):$ $$\mX_\n \simeq \colim_{\Z_1, ...., \Z_\n \in \mM^\simeq}\L\Mor_{\bar{\mJ}}(\Z_{\n},-)\ot \L\Mor_{\bar{\mJ}}(\Z_{\n-1},\Z_\n)\ot ... \ot \L\Mor_{\bar{\mJ}}(\Z_2,\Z_3) \ot \L\Mor_{\bar{\mJ}}(\Z_1,\Z_2) \ot \F(\Z_1).$$	
By Proposition \ref{weifun} there is a left adjoint functor 
$\colim^\rH: \Enr\Fun_{\mV, \mW}(\mJ,\mD) \to \mD, \F \mapsto \colim^\rH(\F).$ Let $\mY:= \colim^\rH \circ \mX.$
Consequently, there are canonical equivalences: $\colim^\rH(\F) \simeq
\colim_{[\n]\in \Delta^\op} \mY_\n $
and $$\mY_\n \simeq \colim_{\Z_1, ...., \Z_\n \in \mJ^\simeq}\colim^\rH(\L\Mor_{\bar{\mJ}}(\Z_{\n},-)) \ot \L\Mor_{\bar{\mJ}}(\Z_{\n-1},\Z_\n)\ot ... \ot \L\Mor_{\bar{\mJ}}(\Z_1,\Z_2) \ot \F(\Z_1)$$$$\simeq \colim_{\Z_1, ...., \Z_\n \in \mJ^\simeq}(\mid\rH\mid(\Z_{\n})) \ot \L\Mor_{\bar{\mJ}}(\Z_{\n-1},\Z_\n)\ot ... \ot \L\Mor_{\bar{\mJ}}(\Z_1,\Z_2) \ot \F(\Z_1),$$
where the last equivalence is by Corollary \ref{Funf}.
\end{proof}

%\begin{proposition}
%There is a canonical equivalence
%$$\Mul_\mD(\V_1,...,\V_\n,\colim^\rH(\F), \W_1,...,\W_\m;\X) \simeq$$$$ \Mul_{\Enr\Fun_{\mV,\emptyset}(\mJ,\mD)}(\V_1,...,\V_\n,\F, \W_1,...,\W_\m; \X^{(-)}\circ \mP\L\Env(\mJ)(-,\rH)_{\mid \mJ}).$$
%\end{proposition}

%\begin{proof}There is a canonical equivalence$$\Mul_\mD(\V_1,...,\V_\n,\colim^\rH(\F), \W_1,...,\W_\m;\X) \simeq\Mul_\mD(\V_1,...,\V_\n,\F'(\rH), \W_1,...,\W_\m;\X) \simeq $$$$\Mul_{\Enr\Fun_{\mP\Env(\mV),\emptyset}(\mP\L\Env(\mJ),\mD)}(\V_1,...,\V_\n,\F', \W_1,...,\W_\m; \X^{(-)}\circ \mP\L\Env(\mJ)(-,\rH)) \simeq$$$$ \Mul_{\Enr\Fun_{\mV,\emptyset}(\mJ,\mD)}(\V_1,...,\V_\n,\F, \W_1,...,\W_\m; \X^{(-)}\circ \rH^\op).$$

%$$\L\Mor_\mD(\colim^\rH(\F),\X) \simeq \L\Mor_\mD(\F'(\rH),\X) \simeq\L\Mor_{\Fun_{\mV,\emptyset}(\mP_\mV(\mJ),\mD)}(\F', \X^{(-)}\circ \L\Mor_{\mP_\mV(\mJ)}(-,\rH)  )\simeq \L\Mor_{\Fun_{\mV,\emptyset}(\mJ,\mD)}(\F, \X^{(-)}\circ \rH^\op).$$\end{proof}

\subsection{Preservation of weighted colimits}

Next we prove that enriched left adjoints preserve weighted colimits and dually that enriched right adjoints preserve weighted limits.
We first define what it means that an enriched functor preserves weighted colimits.

\begin{definition}

%Let $\mM^\circledast \to \mV^\ot \times \mW^\ot$ be an absolute small weakly  bienriched $\infty$-category. 
A small diagram is a triple $(\rH, \F,\psi)$ consisting of 
\begin{itemize}
\item an enriched functor $\F:\mJ^\circledast \to \mM^\circledast$ of absolute small weakly  bienriched $\infty$-categories,
\item an object $\rH \in  \mP\B\Env(\mJ),$
\item a morphism $\psi: \F_!(\rH) \to \Y$ in $\mP\B\Env(\mM)$, where $\Y \in \mM.$ 

\end{itemize}
More precisely, we call $(\rH, \F,\psi)$ a small $\rH$-weighted diagram on $\mM$.
If $\alpha,\beta$ are the maps of $\infty$-operads underlying $\F:\mJ^\circledast \to \mM^\circledast$, we call $(\alpha, \beta,\mJ^\circledast, \rH)$ the weight underlying the diagram $(\rH, \F,\psi)$.
If $\psi$ exhibits $\Y$ as the $\rH$-weighted colimit, we call $(\rH, \F,\psi)$ a small $\rH$-weighted colimit diagram on $\mM.$
A small left (right) diagram is a small diagram lying over a left (right) weight.
\end{definition}

\begin{definition}A diagram is a left (right) diagram if the underlying weight is a left (right) weight.
\end{definition}

\begin{definition}
Let $\kappa, \tau$ be small regular cardinals.	
A diagram is a left enriched, right enriched, bienriched, left $\kappa$-enriched, right $\tau$-enriched, $\kappa,\tau$-bienriched diagram, respectively, if
the underlying weight is left enriched, right enriched, bienriched, left $\kappa$-enriched, right $\tau$-enriched, $\kappa,\tau$-bienriched.
\end{definition}
\begin{notation}\label{trans}
	
An enriched functor $\theta: \mM^\circledast \to \mN^\circledast$ lying over maps of $\infty$-operads $\alpha',\beta'$ sends a diagram $(\rH, \F,\psi)$ on $\mM$ lying over the weight $(\alpha, \beta,\mJ^\circledast, \rH)$ to the diagram
$$(\rH, \theta \circ \F,\psi': \theta_!(\F_!(\rH)) \xrightarrow{\theta_!(\psi)} \theta(\Y)) $$ on $\mN$ lying over the weight $(\alpha' \circ \alpha, \beta' \circ \beta,\mJ^\circledast, \rH)$. We call the latter diagram the image of $(\rH, \F,\psi)$ under $\theta.$
We call the weight $(\alpha' \circ \alpha, \beta' \circ \beta,\mJ^\circledast, \rH)$ the image of $(\alpha, \beta,\mJ^\circledast, \rH)$ under $\alpha', \beta'.$
\end{notation}

\begin{definition}
Let $\theta: \mM^\circledast \to \mN^\circledast$ be an enriched functor of absolute small weakly  bienriched $\infty$-categories and $(\rH, \F: \mJ^\circledast \to \mM^\circledast,\psi)$ a $\rH$-weighted colimit diagram on $\mM$.
We say that $\theta$ preserves the $\rH$-weighted colimit of $\F$ if $\theta$ sends the $\rH$-weighted colimit diagram $(\rH, \F: \mJ^\circledast \to \mM^\circledast,\psi)$ to a $\rH$-weighted colimit diagram on $\mN$.

\end{definition}

\begin{definition}Let $\mM^\circledast \to \mV^\ot \times \mW^\ot, \mN^\circledast \to \mV''^\ot \times \mW''^\ot $ be absolute small weakly  bienriched $\infty$-categories and $\theta: \mM^\circledast \to \mN^\circledast$ an enriched functor.
		
% of absolute small weakly  bienriched $\infty$-categories
% lying over maps of $\infty$-operads $\alpha: \mV^\ot \to \mV'^\ot, \beta: \mW^\ot \to \mW'^\ot$ 
\begin{itemize}
%\item If $\psi: \F_!(\rH) \to \Y$ exhibits $\Y$ as the $\rH$-weighted colimit of $\F$ and $\psi'$ exhibits $\theta(\Y)$ as the $\rH$-weighted colimit of $\theta \circ \F$, we say that $\theta$ preserves the $\rH$-weighted colimit of $\F.$

\item For every absolute small weight $(\alpha, \beta, \mJ^\circledast,\rH)$ over $\mV,\mW$	
we say that $\theta$ preserves $(\alpha, \beta, \mJ^\circledast, \rH)$-weighted colimits if $\theta$ preserves the $\rH$-weighted colimit of any enriched functor $ \mJ^\circledast \to \mM^\circledast$ lying over $\alpha,\beta.$

\item For every collection $\mH$ of weights over $\mV,\mW$ we say that $\theta$ preserves $\mH$-weighted colimits if $\theta$ preserves the $(\alpha, \beta, \mJ^\circledast,\rH)$-weighted colimit for every $(\alpha, \beta, \mJ^\circledast,\rH) \in \mH.$ 

\item We say that $\theta$ preserves $\kappa$-small weighted colimits if 
$\theta$ preserves the $(\alpha, \beta, \mJ^\circledast,\rH)$-weighted colimit for every $\kappa$-small weight $(\alpha, \beta, \mJ^\circledast,\rH)$ over $\mV,\mW.$
\end{itemize}

\end{definition}
\begin{notation}\label{notabene}
For every weakly bienriched $\infty$-categories $\mM^\circledast \to \mV^\ot \times \mW^\ot,\mN^\circledast \to \mV^\ot \times \mW^\ot$ and collections $\Lambda, \Lambda'$ of diagrams in $\mM, \mN$, respectively, let
$$\Enr\Fun^{\Lambda, \Lambda'}_{\mV, \mW}(\mM, \mN) \subset \Enr\Fun_{\mV,\mW}(\mM,\mN) $$ be the full subcategory of $\mV, \mW$-enriched functors sending diagrams of $\Lambda$ to diagrams of $\Lambda'.$
Let $$\Enr\Fun^{\Lambda}_{\mV,\mW}(\mM,\mN) \subset \Enr\Fun_{\mV,\mW}(\mM,\mN) $$ be the full subcategory of $\mV, \mW$-enriched functors sending diagrams of $\Lambda$ to weighted colimit diagrams.
% (or equivalently send $\mH$-weighted colimit diagrams to weighted colimit diagrams and lie over a pair of maps of $\infty$-operads that send $\mH$ to $\mH'$).
	
\end{notation}
\begin{notation}Let $\mV^\otimes \to \Ass, \mW^\ot \to \Ass$ be $\infty$-operads, $\mN^\circledast \to \mV^\ot \times \mW^\ot$ a weakly bienriched $\infty$-category, $\mM^\circledast \to \mV^\ot$ a weakly left enriched $\infty$-category, $\mO^\circledast \to \mW^\ot$ a weakly right enriched $\infty$-category, $\Lambda$ a collection of diagrams in $\mM$ and $\Lambda'$ a collection of diagrams in $\mO.$ 
Let
$$\Enr\Fun_{\mV, \emptyset}^{\Lambda}(\mM, \mN)^\circledast \subset \Enr\Fun_{\mV, \emptyset}(\mM, \mN)^\circledast$$ be the full weakly right enriched subcategory of left $\mV$-enriched functors $\mM \to \mN$ sending diagrams of $\Lambda$ to weighted colimit diagrams.
Let
$$\Enr\Fun_{\emptyset, \mW}^{\Lambda'}(\mO, \mN)^\circledast \subset \Enr\Fun_{\emptyset, \mW}(\mO, \mN)^\circledast$$ be the full weakly left enriched subcategory of right $\mW$-enriched functors $\mO \to \mN$ sending diagrams of $\Lambda'$ to weighted colimit diagrams.
\end{notation}

\begin{definition}
Let $\mM^\circledast \to \mV^\ot \times \mW^\ot$ be a weakly bienriched $\infty$-category and $\mH$ a collection of weights over $\mV, \mW$.
A collection $\Lambda$ of diagrams in $\mM$ is $\mH$-weighted if every diagram
of $\Lambda$ is $\rH$-weighted for some $\rH \in \mH$.
	
\end{definition}

\begin{notation}\label{Lamb}
Let $\mM^\circledast \to \mV^\ot \times \mW^\ot$ be a weakly bienriched $\infty$-category. For every collection $\mH$ of weights over $\mV, \mW$ let $\Lambda^{(\mM,\mH)}$ be the collection of $\rH$-weighted colimit diagrams in $\mM$ for some $\rH \in \mH$.
	
\end{notation}

\begin{remark}\label{remros}
Let $\mM^\circledast \to \mV^\ot \times \mW^\ot,\mN^\circledast \to \mV^\ot \times \mW^\ot$ be weakly bienriched $\infty$-categories, $\mH$ a collection of weights over $\mV, \mW$ and $\Lambda$ a collection of $\mH$-weighted diagrams in $\mM$.
Then $\Enr\Fun^{\Lambda}_{\mV,\mW}(\mM,\mN)= \Enr\Fun^{\Lambda,  \Lambda^{(\mN,\mH)}}_{\mV,\mW}(\mM,\mN).$
\end{remark}

\begin{proposition}\label{remqalo}

Let $\mM^\circledast \to \mV^\ot \times \mW^\ot, \mN^\circledast \to \mV^\ot \times \mW^\ot $ be weakly bienriched $\infty$-categories and $\phi: \mM^\circledast \to \mN^\circledast $ a $\mV, \mW$-enriched functor.
\begin{enumerate}
\item If $\phi$ admits a $\mV, \mW$-enriched right adjoint, $\phi$ preserves all weighted colimits.

\item If $\phi$ admits a $\mV, \mW$-enriched left adjoint, $\phi$ preserves all weighted limits.
\end{enumerate}
\end{proposition}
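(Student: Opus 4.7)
I will prove (1); statement (2) then follows by applying (1) to the opposite enriched functor, using Proposition \ref{oppoen} and Definition \ref{weightl}. Let $\psi: \mN^\circledast \to \mM^\circledast$ be a $\mV,\mW$-enriched right adjoint of $\phi$, and fix an enriched functor $\F: \mJ^\circledast \to \mM^\circledast$ and $\rH \in \mP\B\Env(\mJ)$ together with a morphism $\F_!(\rH) \to \Y$ in $\mP\B\Env(\mM)$ that exhibits $\Y$ as $\colim^\rH(\F)$. Since $\phi_! = (\phi \circ \F)_! \circ \ldots$ composes properly with $\F_!$, the image diagram (Notation \ref{trans}) is
\[
\phi_!(\F_!(\rH)) \simeq (\phi \circ \F)_!(\rH) \xrightarrow{\phi_!(\psi)} \phi(\Y)
\]
in $\mP\B\Env(\mN)$, and I must check it satisfies the weighted colimit condition of Definition \ref{weight}.

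The key observation is the compatibility of two enriched adjunctions. First, the $\mV,\mW$-enriched adjunction $\phi \dashv \psi$ supplies, for every $\Z' \in \mN$ and $\V_1,\dots,\V_\n \in \mV, \W_1,\dots,\W_\m \in \mW$, equivalences
\[
\Mul_\mN(\V_1,\dots,\V_\n,\phi(\X),\W_1,\dots,\W_\m;\Z') \simeq \Mul_\mM(\V_1,\dots,\V_\n,\X,\W_1,\dots,\W_\m;\psi(\Z'))
\]
natural in $\X \in \mM$. Second, the $\mP\Env(\mV), \mP\Env(\mW)$-linear left adjoint $\phi_!: \mP\B\Env(\mM)^\circledast \to \mP\B\Env(\mN)^\circledast$ has right adjoint $\phi^*$ given by restriction along $\B\Env(\phi)$ (Corollary \ref{envvcor}). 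Unwinding definitions, the two adjunctions together show that under the embedding $\mN^\circledast \subset \mP\B\Env(\mN)^\circledast$ the image of $\Z' \in \mN$ pulls back to the image of $\psi(\Z') \in \mM$ under $\mM^\circledast \subset \mP\B\Env(\mM)^\circledast$; that is, $\phi^*(\Z') \simeq \psi(\Z')$ inside $\mP\B\Env(\mM)$, because both presheaves send $\V_1 \ot \cdots \ot \X \ot \cdots \ot \W_\m$ to the naturally equivalent spaces displayed above.

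With these two identifications in hand, the proof collapses to a chain of natural equivalences. For every $\Z' \in \mN$ and every tuple $\V_1,\dots,\V_\n, \W_1,\dots,\W_\m$, the composite map induced by $\phi_!(\psi)$ factors as
\[
\Mul_\mN(\V_1,\dots,\V_\n,\phi(\Y),\W_1,\dots,\W_\m;\Z') \xrightarrow{\sim} \Mul_\mM(\V_1,\dots,\V_\n,\Y,\W_1,\dots,\W_\m;\psi(\Z'))
\]
by the $\mV,\mW$-enriched adjunction, then
\[
\xrightarrow{\sim} \Mul_{\mP\B\Env(\mM)}(\V_1,\dots,\V_\n,\F_!(\rH),\W_1,\dots,\W_\m;\psi(\Z'))
\]
by the hypothesis that $\Y$ is the $\rH$-weighted colimit of $\F$ in $\mM$, and finally
\[
\xrightarrow{\sim} \Mul_{\mP\B\Env(\mM)}(\V_1,\dots,\V_\n,\F_!(\rH),\W_1,\dots,\W_\m;\phi^*(\Z')) \xrightarrow{\sim} \Mul_{\mP\B\Env(\mN)}(\V_1,\dots,\V_\n,\phi_!(\F_!(\rH)),\W_1,\dots,\W_\m;\Z')
\]
using $\phi^*(\Z')\simeq \psi(\Z')$ and the adjunction $\phi_! \dashv \phi^*$. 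Thus $\phi(\Y)$ satisfies the defining property of $\colim^\rH(\phi \circ \F)$.

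The only nontrivial step is the identification $\phi^*(\Z') \simeq \psi(\Z')$ in $\mP\B\Env(\mM)$, which I regard as the main obstacle to formalising cleanly; one can alternatively argue via Proposition \ref{adjeq} that the induced adjunction $\phi_! \dashv \phi^*$ on bitensored envelopes restricts to $\phi \dashv \psi$ on the embedded $\mM, \mN$, but the hands-on verification above, using the naturality of the enriched adjunction in all arguments and the formula for $\phi^*$ as restriction of presheaves, is most direct.
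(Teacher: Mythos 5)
Your proposal is correct and follows essentially the same route as the paper: the paper likewise passes to the induced adjunction $\phi_!\dashv \gamma_!$ on closed bitensored envelopes and identifies the map $\Mul_\mN(\V_1,\dots,\phi(\Y),\dots;\Z) \to \Mul_{\mP\B\Env(\mN)}(\V_1,\dots,\phi_!(\F_!(\rH)),\dots;\Z)$ with the corresponding equivalence computed at $\gamma(\Z)$ in $\mM$, which is exactly your chain of equivalences. Your explicit verification that $\phi^*(\Z')\simeq \psi(\Z')$ in $\mP\B\Env(\mM)$ is just an unpacking of the paper's statement that the enriched adjunction induces the adjunction $\phi_!\dashv\gamma_!$ on envelopes, so there is no substantive difference.
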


\begin{proof}

We prove (1). (2) is similar. A $\mV,\mW$-enriched adjunction $\phi: \mM^\circledast \rightleftarrows \mN^\circledast: \gamma$ gives rise to a $\mP\Env(\mV),\mP\Env(\mW)$-enriched adjunction
$\phi_!: \mP\Env(\mM)^\circledast \rightleftarrows \mP\Env(\mN)^\circledast: \gamma_!$. 
Let $\F: \mJ^\circledast \to \mM^\circledast$ be an enriched functor, $\Y\in\mM, \rH\in\mP\B\Env(\mJ)$ and $\psi: \F_!(\rH) \to \Y $ a map in $\mP\B\Env(\mM)$ that exhibits $\Y$ as the $\rH$-weighted colimit of $\F$.
We like to see that the map $\phi_!(\psi): \phi_!(\F_!(\rH)) \to \phi(\Y) $ in $\mP\B\Env(\mN)$ exhibits $\phi(\Y)$ as the $\rH$-weighted colimit of $\phi \circ \F$, i.e. that for any $\Z \in \mN$, $\V_1,...,\V_\n \in \mV, \W_1,...,\W_\m \in \mW$ for some $\n,\m \geq 0$ the map $$\Mul_\mN(\V_1,...,\V_\n, \phi(\Y), \W_1,...,\W_\m; \Z) \to \Mul_{\mP\B\Env(\mN)}(\V_1,...,\V_\n, \phi_!(\F_!(\rH)), \W_1,...,\W_\m; \Z)$$ is an equivalence.
The latter map identifies with the equivalence
$$\Mul_\mM(\V_1,...,\V_\n, \Y, \W_1,...,\W_\m; \gamma(\Z)) \to \Mul_{\mP\B\Env(\mM)}(\V_1,...,\V_\n, \F_!(\rH), \W_1,...,\W_\m; \gamma(\Z)).$$

\end{proof}

\begin{definition}
Let $\mM^\circledast \to \mV^\ot \times \mW^\ot$ be a weakly bienriched $\infty$-category, $\Lambda$ a collection of diagrams in $\mM$ and $\mH$ be a collection of weights over $\mV,\mW.$

\begin{enumerate}
\item A full weakly bienriched subcategory $\mN^\circledast \subset \mM^\circledast$
is closed under diagrams of $\Lambda$ if for every diagram $(\rH,\tau, \psi: \tau_!(\rH) \to \Y)$ in $\mM$ the object $\Y\in \mM $ belongs to $\mN$ if
$\tau: \mJ^\circledast \to \mM^\circledast $ factors through $\mN^\circledast.$

\item A full weakly bienriched subcategory $\mN^\circledast \subset \mM^\circledast$
is closed under $\mH$-weighted colimits if it is closed under the collection of $\mH$-weighted colimit diagrams in $\mM.$ 

\end{enumerate}

\end{definition}

\begin{example}\label{Closure}
Let $\alpha:\F\to \G: \mM^\circledast \to \mN^\circledast$ be a
morphism of $\Enr(\mM,\mN)$ and $\T=(\rH,\tau: \mJ^\circledast \to \mM^\circledast, \psi: \tau_!(\rH) \to \Y)$ a diagram in $\mM$. %, where $\Y \in \mM$, $\tau: \mJ^\circledast \to \mM^\circledast$ is an enriched functor and $\psi$is a morphism in $\mP\Env(\mM).$
If $\F,\G$ send $\T$ to a weighted colimit diagram and $\alpha_{\tau(\Z)}$ is an equivalence for every $\Z \in \mJ$, then using Remark \ref{zujj} the morphism $\alpha_\Y$ is an equivalence.
In particular, the full subcategory of $\mM$ spanned by all $\X \in \mM$
such that $\alpha_\X$ is an equivalence, is closed under all weighted colimits
that are preserved by $\F,\G.$

\end{example}

%\subsection{Features of weighted colimits}
	
%In the following we prove the following results about weighted colimits:
%which we apply in the next section:
%\begin{enumerate}
%\item We prove an existence result for weighted colimits that splits the existence of weighted colimits in the existence of tensors and conical colimits
%criterion for the existence of weighted colimits % and dually weighted limits 
%(Proposition \ref{weeei}).

%\item We %prove that enriched left adjoints preserve weighted colimits (Proposition \ref{remqalo}) and 
%prove an enriched version of the adjoint functor theorem (Proposition \ref{remwq}).

%\item We study the behaviour of weighted colimits under pulling back the enrichment along a map of $\infty$-operads (Corollary \ref{eccco}).

%\item We prove that weighted colimits in enriched functor $\infty$-categories are formed object-wise (Proposition \ref{Enric}).\end{enumerate}

%To study existence of weighted colimits we start with functorialty of weighted colimits:
%There is the following functoriality of forming weighted colimits and dually weighted limits:

\subsection{Stability of weighted colimits under restriction of enrichment}

Next we prove that the existence of weighted colimits is stable under pulling back the enrichment (Corollary \ref{eccco}).
%We start with the following lemma:

\begin{lemma}\label{lemlems}
Let $\alpha: \mV'^\ot \to \mV^\ot, \beta: \mW'^\ot \to \mW^\ot$ be maps of small $\infty$-operads, $\mM^\circledast \to \mV^\ot \times \mW^\ot$ an absolute small weakly  bienriched $\infty$-category, $\F: \mJ^\circledast \to (\alpha, \beta)^*(\mM)^\circledast:=\mV'^\ot \times_{\mV^\ot}\mM^\circledast \times_{\mW^\ot} \mW'^\ot $ an enriched functor, $ \rH \in \mP\B\Env(\mJ)$ and $\Y \in \mM, \V'_1,... \V'_\n \in \mV', \W'_1,...,\W'_\m \in \mW'$ for some $\n,\m \geq 0$.
The projection $\rho: (\alpha, \beta)^*(\mM)^\circledast \to \mM^\circledast $ induces an equivalence $$\Mul_{\mP\L\Env((\alpha, \beta)^*(\mM))}(\V'_1,... \V'_\n, \F_!(\rH), \W'_1,...,\W'_\m; \Y) \to $$$$ \Mul_{\mP\L\Env(\mM)}(\alpha(\V'_1),...,\alpha(\V'_\n), \rho_! (\F_!(\rH)), \beta(\W'_1),...,\beta(\W'_\m); \rho(\Y)).$$

\end{lemma}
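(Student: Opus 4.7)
The plan is to exploit that $\rho$ is a cartesian morphism in the fibration $\gamma\colon \omega\B\Enr \to \Op_\infty \times \Op_\infty$ (Proposition \ref{bica}) and then combine this with Lemma \ref{alor} applied in two different ways to pin down the adjoints $\F^*$ and $(\rho\F)^*$ as the same presheaf on $\mJ$. Concretely, since $(\alpha,\beta)^*(\mM)^\circledast$ is a pullback, the cartesian property yields, for every $\X \in \mN$ lying over $\rho(\X)\in \mM$, a canonical equivalence
\[
\Mul_\mN(\V'_1,\ldots,\V'_\n, \X, \W'_1,\ldots,\W'_\m; \Y) \xrightarrow{\;\simeq\;} \Mul_\mM(\alpha(\V'_1),\ldots,\alpha(\V'_\n), \rho(\X), \beta(\W'_1),\ldots,\beta(\W'_\m); \rho(\Y)),
\]
and, since the underlying $\infty$-category of $\mN$ coincides with that of $\mM$, each object $\Y\in \mM$ has a canonical preimage $\Y_\mN \in \mN$.

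Second, I would apply Lemma \ref{alor} to $\F\colon \mJ^\circledast \to \mN^\circledast$ lying over the identities of $\mV'^\ot,\mW'^\ot$ with target $\Y_\mN$, and separately to $\rho\circ \F \colon \mJ^\circledast \to \mM^\circledast$ lying over $\alpha,\beta$ with target $\Y$. The two resulting equivalences of presheaves on $\mJ$ have, respectively, left-hand sides
\[
\mP(\mJ)(-,\Mul_\mN(\V'_1,\ldots,\V'_\n, \F(-), \W'_1,\ldots,\W'_\m; \Y_\mN))
\]
and
\[
\mP(\mJ)(-,\Mul_\mM(\alpha(\V'_1),\ldots,\alpha(\V'_\n), \rho\F(-), \beta(\W'_1),\ldots,\beta(\W'_\m); \Y)),
\]
which by the cartesian property of $\rho$ are canonically equivalent. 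The corresponding right-hand sides therefore agree, so by Yoneda (against all tensored objects $\V'_1 \ot \cdots \ot \V'_\n \ot \bj_!(-) \ot \W'_1 \ot \cdots \ot \W'_\m$, which generate $\mP\B\Env(\mJ)$ under colimits) we obtain $\F^*(\Y_\mN) \simeq (\rho\F)^*(\Y)$ in $\mP\B\Env(\mJ)$.

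Third, the claim follows by adjointness. Using the enriched adjunction $\F_!\dashv \F^*$, the $\mP\Env(\alpha),\mP\Env(\beta)$-linearity of $(\rho\F)_! = \rho_!\circ \F_!$, and the adjunction $(\rho\F)_!\dashv (\rho\F)^*$, each side of the claimed equivalence is canonically identified with
\[
\Mul_{\mP\B\Env(\mJ)}(\V'_1,\ldots,\V'_\n, \rH, \W'_1,\ldots,\W'_\m; \F^*(\Y_\mN)) \simeq \Mul_{\mP\B\Env(\mJ)}(\V'_1,\ldots,\V'_\n, \rH, \W'_1,\ldots,\W'_\m; (\rho\F)^*(\Y)),
\]
and the equivalence $\F^*(\Y_\mN) \simeq (\rho\F)^*(\Y)$ closes the argument at the $\mP\B\Env$ level. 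Finally, the descent to $\mP\L\Env$ is automatic: the inputs $\V'_i$ lie in $\mV' \subset \mP\Env(\mV')$ and the $\W'_j$ in $\mW' \subset \mP\Env(\mW')$, so the relevant multi-morphism spaces in $\mP\L\Env$ coincide with those in $\mP\B\Env$ computed on the same objects. The main obstacle I anticipate is purely bookkeeping: keeping track of the $\mP\Env(\alpha),\mP\Env(\beta)$-linearity of $\rho_!$ (which is what allows the tensor factors $\V'_i$, $\W'_j$ on the $\mN$ side to migrate across $\rho_!$ to become $\alpha(\V'_i), \beta(\W'_j)$ on the $\mM$ side), and ensuring that the equivalence $\F^*(\Y_\mN)\simeq (\rho\F)^*(\Y)$ deduced from the Yoneda argument is natural enough to plug into the adjointness computation.
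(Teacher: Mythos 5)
Your key input is the right one---the pullback description of $(\alpha,\beta)^*(\mM)^\circledast$, i.e.\ the cartesian property of $\rho$ from Proposition \ref{bica}---but the detour through $\mP\B\Env(\mJ)$ creates two loose ends that the paper's argument avoids. First, your final adjointness step, which rewrites $\Mul_{\mP\B\Env((\alpha,\beta)^*(\mM))}(\V'_1,\ldots,\V'_\n,\F_!(\rH),\W'_1,\ldots,\W'_\m;\Y)$ as $\Mul_{\mP\B\Env(\mJ)}(\V'_1,\ldots,\V'_\n,\rH,\W'_1,\ldots,\W'_\m;\F^*(\Y_\mN))$, only typechecks when $\mJ$ is weakly bi-enriched in $\mV',\mW'$ and $\F$ lies over the identities; the lemma is stated (and applied in Proposition \ref{pullba}) for an enriched functor $\F$ lying over arbitrary maps of $\infty$-operads, in which case adjointness for $\F_!\dashv\F^*$ only moves inputs coming from the operads over which $\mJ$ lies. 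This is repairable via the reduction of Remark \ref{rest} and Lemma \ref{sasa}, since both sides of the claim depend only on the object $\F_!(\rH)$, but you would need to say so. Second, your Yoneda step as written only produces objectwise equivalences of mapping spaces out of generators; to conclude $\F^*(\Y_\mN)\simeq(\rho\circ\F)^*(\Y)$, and above all to feed this into the adjointness identification of the asserted map, you must run the argument on the canonical comparison morphism---namely $\F^*$ applied to the unit $\Y\to\rho^*(\rho(\Y))$ of the enriched adjunction $\rho_!\dashv\rho^*$---and check that on generators it restricts to your cartesian-property equivalences. You flag this naturality issue yourself but do not carry it out, and without it the argument is incomplete.

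The paper's proof does exactly this repair, one level up and without ever touching $\mJ$: since $\rho$ lies over $\alpha,\beta$, adjointness for $\rho_!\dashv\rho^*$ identifies the asserted map with the map induced by the unit $\Y\to\rho^*(\rho(\Y))$ on $\Mul_{\mP\B\Env((\alpha,\beta)^*(\mM))}(\V'_1,\ldots,\V'_\n,\F_!(\rH),\W'_1,\ldots,\W'_\m;-)$, so it suffices to prove that this unit is an equivalence in $\mP\B\Env((\alpha,\beta)^*(\mM))$; evaluating the unit on the generators $\V_1\ot\cdots\ot\V_\n\ot\Z\ot\W_1\ot\cdots\ot\W_\m$ of $\B\Env((\alpha,\beta)^*(\mM))$ gives precisely the pullback equivalence of multi-morphism spaces you invoke. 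This version needs no hypothesis on $\mJ$ or $\F$ (the statement is really about an arbitrary object of $\mP\B\Env((\alpha,\beta)^*(\mM))$ in place of $\F_!(\rH)$) and packages the naturality for free. I recommend restructuring your argument along these lines; the mathematical content you use---Proposition \ref{bica} plus adjointness---is the same, so the fix is short.
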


\begin{proof}
It is enough to prove that the unit $\Y \to \rho^\ast(\rho_!(\Y)) \simeq \rho^\ast(\rho(\Y)) $
in $ \mP\B\Env((\alpha, \beta)^*(\mM))$ is an equivalence.
The unit induces at $\V_1 \ot ... \ot \V_\n \ot \Z \ot \W_1 \ot ... \ot \W_\m \in \B\Env((\alpha, \beta)^*(\mM))$ the equivalence
$$ \Mul_{(\alpha, \beta)^*(\mM)}(\V_1,...,\V_\n, \Z,\W_1,...,\W_\m; \Y) \to  \Mul_{\mM}(\alpha(\V_1),...,\alpha(\V_\n), \rho(\Z), \beta(\W_1),...,\beta(\W_\m); \rho(\Y)).$$
\end{proof}

\begin{proposition}\label{pullba}

Let $\alpha: \mV'^\ot \to \mV^\ot, \beta: \mW'^\ot \to \mW^\ot$ be maps of small $\infty$-operads, $\mJ^\circledast \to \mQ^\ot \times \mU^\ot, \mM^\circledast \to \mV^\ot \times \mW^\ot$ absolute small weakly  bienriched $\infty$-categories, $ \rH \in \mP\B\Env(\mJ)$, $\F: \mJ^\circledast \to (\alpha, \beta)^*(\mM)^\circledast:=\mV'^\ot \times_{\mV^\ot}\mM^\circledast \times_{\mW^\ot} \mW'^\ot $ an enriched functor, $\Y \in \mM$ and $\rho: (\alpha, \beta)^*(\mM)^\circledast \to \mM^\circledast $ the projection.

\begin{enumerate}
\item A morphism $$\psi: \F_!(\rH) \to \Y $$ in $\mP\B\Env((\alpha, \beta)^*(\mM))$
exhibits $\Y$ as the $\rH$-weighted colimit of $\F$ if 
$$\rho_!(\psi): \rho_!(\F_!(\rH)) \to \rho(\Y) $$ in $\mP\B\Env(\mM)$
exhibits $\rho(\Y)$ as the $\rH$-weighted colimit of $\rho \circ \F$.

\vspace{1mm}
\item The converse holds by the uniqueness of weighted colimits if the $ \rH$-weighted colimit of $\F$ exists.

\end{enumerate}

\end{proposition}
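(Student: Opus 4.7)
The strategy is to unravel the definition of weighted colimit on both sides of the projection $\rho$ and to use Lemma~\ref{lemlems} as the bridge between them. By construction of the pullback of $\infty$-operads, the multi-morphism spaces in $(\alpha,\beta)^*(\mM)^\circledast$ with entries drawn from $\mV',\mM,\mW'$ are literally those of $\mM^\circledast$ after applying $\alpha$ and $\beta$ to the entries; Lemma~\ref{lemlems} provides the analogous identification at the level of the presheaf envelopes. Together these two identifications translate the universal property defining $\psi$ as an $\rH$-weighted colimit in $(\alpha,\beta)^*(\mM)$ into a universal property defining $\rho_!(\psi)$ in $\mM$ (restricted to entries in the image of $\alpha$ and $\beta$), and conversely.

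For part~(1), I would fix $\Z \in (\alpha,\beta)^*(\mM) \simeq \mM$ together with entries $\V'_1,\ldots,\V'_\n \in \mV'$ and $\W'_1,\ldots,\W'_\m \in \mW'$ and show that the comparison map
\[
\Mul_{(\alpha,\beta)^*(\mM)}(\V'_1,\ldots,\Y,\ldots,\W'_\m;\Z) \to \Mul_{\mP\B\Env((\alpha,\beta)^*(\mM))}(\V'_1,\ldots,\F_!(\rH),\ldots,\W'_\m;\Z)
\]
induced by $\psi$ is an equivalence. The pullback description identifies the left-hand side with $\Mul_\mM(\alpha(\V'_1),\ldots,\rho(\Y),\ldots,\beta(\W'_\m);\rho(\Z))$, while Lemma~\ref{lemlems} identifies the right-hand side with $\Mul_{\mP\B\Env(\mM)}(\alpha(\V'_1),\ldots,\rho_!(\F_!(\rH)),\ldots,\beta(\W'_\m);\rho(\Z))$. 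Under these identifications the comparison map becomes the one induced by $\rho_!(\psi)$, and is therefore an equivalence by the hypothesis that $\rho_!(\psi)$ exhibits $\rho(\Y)$ as $\colim^\rH(\rho\circ\F)$.

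For part~(2), assume instead that $\psi$ exhibits $\Y$ as the $\rH$-weighted colimit of $\F$, so that in particular this colimit exists and is unique up to canonical equivalence. Combined with part~(1), this rigidity identifies $\rho_!(\psi)$ as the only possible colimit witness for $\rho\circ\F$: any $\psi'\colon \F_!(\rH) \to \Y'$ whose image $\rho_!(\psi')$ exhibits $\rho(\Y')$ as the colimit of $\rho \circ \F$ must, by part~(1), itself exhibit $\Y'$ as the colimit of $\F$, and hence be canonically equivalent to $\psi$ by uniqueness. The substantive content of the proposition lies entirely in Lemma~\ref{lemlems} and in the pullback description of multi-morphism spaces; I expect the main obstacle to be purely notational, namely bookkeeping of the entries through these two identifications, rather than any conceptual difficulty.
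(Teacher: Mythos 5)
Your treatment of part (1) is exactly the paper's argument: identify the source of the comparison map via the pullback description of multi-morphism spaces, identify its target via Lemma~\ref{lemlems}, and observe that under these identifications it becomes the map induced by $\rho_!(\psi)$, which is an equivalence by hypothesis. No issues there.

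Part (2), as you have written it, has a genuine gap. The universal property of $\psi$ in $(\alpha,\beta)^*(\mM)$ only controls multi-morphism spaces whose $\mV$- and $\mW$-entries lie in the images of $\alpha$ and $\beta$; this is strictly weaker than the universal property that $\rho_!(\psi)$ must satisfy in $\mM$, which is why the converse is not unconditional and an existence hypothesis (for the $\rH$-weighted colimit of $\rho\circ\F$ in $\mM$ --- not of $\F$, whose colimit is automatic once $\psi$ is a witness) enters at all. Your uniqueness argument compares $\psi$ with ``any $\psi'\colon \F_!(\rH)\to\Y'$ whose image exhibits the colimit of $\rho\circ\F$'', but you never produce such a $\psi'$, and without one the argument only yields a conditional statement. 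The missing step is precisely where Lemma~\ref{lemlems} is used a second time: the assumed colimit of $\rho\circ\F$ is witnessed by a morphism $(\rho\circ\F)_!(\rH)\simeq\rho_!(\F_!(\rH))\to\Y'$ in $\mP\B\Env(\mM)$ with $\Y'\in\mM$, and the case $\n=\m=0$ of Lemma~\ref{lemlems} identifies $\mP\B\Env((\alpha,\beta)^*(\mM))(\F_!(\rH),\Y')\simeq\mP\B\Env(\mM)(\rho_!(\F_!(\rH)),\Y')$, so this witness lifts to a morphism $\psi'\colon\F_!(\rH)\to\Y'$ with $\rho_!(\psi')$ the original witness. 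Only now does part (1) show that $\psi'$ exhibits $\Y'$ as the $\rH$-weighted colimit of $\F$, uniqueness gives $\psi'\simeq\psi$, and hence $\rho_!(\psi)\simeq\rho_!(\psi')$ exhibits $\rho(\Y)$ as the colimit of $\rho\circ\F$. This lifting via Lemma~\ref{lemlems} is exactly why the paper cites that lemma, and not only part (1), for the converse; your write-up should make it explicit, and should also locate the existence hypothesis in $\mM$ rather than in the pullback.
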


\begin{proof}
If $\rho_!(\psi): \rho_! (\F_!(\rH)) \to \rho(\Y) $
exhibits $\rho(\Y)$ as the $\rH$-weighted colimit of $\rho \circ \F$, for every $\Z \in (\alpha, \beta)^*(\mM)$ and $\V'_1,...,\V'_\n \in \mV', \W'_1,..., \mW'_\m \in \mW'$ for some $\n, \m \geq 0$ the canonical map
$$\Mul_{\mM}(\alpha(\V'_1),...,\alpha(\V'_\n), \rho(\Y), \beta(\W'_1),..., \beta(\W'_\m); \rho(\Z)) \to $$$$ \Mul_{\mP\L\Env(\mM)}(\alpha(\V'_1),...,\alpha(\V'_\n), \rho_! (\F_!(\rH)), \beta(\W'_1),...,\beta(\W'_\m); \rho(\Z))$$
is an equivalence. But this map canonically identifies with the map
$$\Mul_{(\alpha, \beta)^*(\mM)}(\V'_1,... \V'_\n, \Y, \W'_1,...,\W'_\m; \Z) \to \Mul_{\mP\L\Env((\alpha, \beta)^*(\mM))}(\V'_1,... \V'_\n, \F_!(\rH), \W'_1,...,\W'_\m; \Z) \xrightarrow{\sigma}$$$$ \Mul_{\mP\L\Env(\mM)}(\alpha(\V'_1),...,\alpha(\V'_\n), \rho_! (\F_!(\rH)), \beta(\W'_1),...,\beta(\W'_\m); \rho(\Z)),$$
where $\sigma$ is induced by $\rho_!: \mP\L\Env((\alpha, \beta)^*(\mM))^\circledast \to \mP\L\Env(\mM)^\circledast $. By Lemma \ref{lemlems} $\rho$ is an equivalence.

(2) immediately follows from 1. and Lemma \ref{lemlems}.

\end{proof}

\begin{corollary}\label{eccco}
Let $\tau: \mV'^\ot \to \mV^\ot, \rho: \mW'^\ot \to \mW^\ot$ be maps of small $\infty$-operads, $\mJ^\circledast \to \mQ^\ot \times \mU^\ot$ an absolute small weakly  bienriched $\infty$-category and $(\alpha, \beta, \mJ^\circledast, \rH)$ a weight over $\mV',\mW'.$
For every absolute small weakly  bienriched $\infty$-category $\mM^\circledast \to \mV^\ot \times \mW^\ot$ that admits $(\tau \circ \alpha, \rho \circ \beta, \mJ^\circledast, \rH)$-weighted colimits the pullback $\mV'^\ot \times_{\mV^\ot}\mM^\circledast \times_{\mW^\ot} \mW'^\ot$ admits $(\alpha, \beta, \mJ^\circledast, \rH)$-weighted colimits and the projection $\mV'^\ot \times_{\mV^\ot}\mM^\circledast \times_{\mW^\ot} \mW'^\ot \to \mM^\circledast$ sends $(\alpha, \beta, \mJ^\circledast, \rH)$-weighted colimits to $(\tau \circ \alpha, \rho \circ \beta, \mJ^\circledast, \rH)$-weighted colimits.

\end{corollary}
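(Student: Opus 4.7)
The plan is to reduce the corollary directly to Proposition \ref{pullba}. Given any enriched functor $\F : \mJ^\circledast \to (\tau,\rho)^*(\mM)^\circledast$ lying over $\alpha,\beta$, let $\pi : (\tau,\rho)^*(\mM)^\circledast \to \mM^\circledast$ denote the projection and set $\G := \pi \circ \F : \mJ^\circledast \to \mM^\circledast$, which lies over $\tau \circ \alpha, \rho \circ \beta$. By hypothesis $\mM$ admits the $\rH$-weighted colimit of $\G$, so there is an object $\Y_0 \in \mM$ and a morphism $\theta : \G_!(\rH) \to \Y_0$ in $\mP\B\Env(\mM)$ exhibiting $\Y_0$ as $\colim^\rH(\G)$.

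Because the underlying $\infty$-category of $(\tau,\rho)^*(\mM)^\circledast$ agrees with $\mM$, we may regard $\Y_0$ as an object $\Y$ of $(\tau,\rho)^*(\mM)$ with $\pi(\Y)=\Y_0$. The next step is to produce a morphism $\psi : \F_!(\rH) \to \Y$ in $\mP\B\Env((\tau,\rho)^*(\mM))$ whose image $\pi_!(\psi)$ coincides with $\theta$ under the canonical identification $\pi_!(\F_!(\rH)) \simeq \G_!(\rH)$. Using the adjunction $\pi_! \dashv \pi^*$, the morphism $\theta$ corresponds to a morphism $\F_!(\rH) \to \pi^*(\pi(\Y))$, and the proof of Lemma \ref{lemlems} established that the unit $\Y \to \pi^*(\pi(\Y))$ is an equivalence in $\mP\B\Env((\tau,\rho)^*(\mM))$; composing with the inverse of this unit yields the required $\psi$.

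By construction $\pi_!(\psi) = \theta$ exhibits $\pi(\Y)$ as the $\rH$-weighted colimit of $\pi \circ \F = \G$, so Proposition \ref{pullba} (1) implies that $\psi$ exhibits $\Y$ as the $\rH$-weighted colimit of $\F$. This proves the first assertion. The second assertion, namely that $\pi$ sends any $(\alpha,\beta,\mJ^\circledast,\rH)$-weighted colimit diagram in $(\tau,\rho)^*(\mM)$ to a $(\tau\circ\alpha,\rho\circ\beta,\mJ^\circledast,\rH)$-weighted colimit diagram in $\mM$, is precisely Proposition \ref{pullba} (2) together with the uniqueness of weighted colimits. The only non-formal ingredient is the fullness of the unit $\Y \to \pi^*(\pi(\Y))$, which has already been handled in Lemma \ref{lemlems}, so no further difficulty arises.
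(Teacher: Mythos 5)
Your argument is correct and is exactly the deduction the paper intends: the corollary is stated as an immediate consequence of Proposition \ref{pullba}, and you spell out the lifting of the colimiting cocone $\theta$ along $\pi_!\dashv\pi^*$ using the unit equivalence from Lemma \ref{lemlems} (with the triangle identity giving $\pi_!(\psi)\simeq\theta$), then invoke Proposition \ref{pullba} (1) for existence and (2)/uniqueness for preservation. The only nitpick is the phrase "fullness of the unit," which should read "the unit being an equivalence"; mathematically there is no gap.
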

\subsection{Existence of weighted colimits}

In the following we prove an existence result for weighted colimits that splits the existence of weighted colimits in the existence of tensors and conical colimits (Proposition \ref{weeei}).

\begin{proposition}\label{ukwa}
	
Let $ \mJ^\circledast \to \mQ^\ot \times \mU^\ot, \mM^\circledast \to \mV^\ot \times \mW^\ot$ be absolute small weakly  bienriched $\infty$-categories, $ \F: \mJ^\circledast \to \mM^\circledast$ an enriched functor lying over maps of $\infty$-operads $\alpha:\mQ^\ot \to \mV^\ot, \beta: \mU^\ot \to \mW^\ot$ and $\mK \subset \Cat_\infty, \mT \subset \mV, \mT' \subset \mW$ full subcategories.
Let $\mH \subset  \mP\B\Env(\mJ)$ be the smallest full subcategory containing $\mJ$, closed under $\K$-indexed colimits for any $\K \in \mK$ and such that for every $\Y \in \alpha^{-1}(\mT) \subset \mQ, \Z \in \beta^{-1}(\mT') \subset \mU $ the functors $\Y \ot (-), (-) \ot \Z: \mP\B\Env(\mJ) \to \mP\B\Env(\mJ)$ preserves $\mH$.
Assume that $\mM$ admits left tensors of objects $\Y \in \mT$ and right tensors of objects $ \Z \in \mT'$ and $\K$-indexed conical colimits for any $\K \in \mK.$

\begin{enumerate}
\item Then $\mM $ admits the $\rH$-weighted colimit of $\F$ for any $\rH \in \mH$.

\item An enriched functor $ \phi: \mM^\circledast \to \mN^\circledast$ preserves the $\rH$-weighted colimit of $\F$ for any $\rH \in \mH$ if $\phi$ preserves left tensors of objects $\Y \in \mT$ and right tensors of objects $ \Z \in \mT'$ and $\K$-indexed conical colimits.

\end{enumerate}

\end{proposition}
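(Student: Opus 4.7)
The plan is to characterize, for part (1), the full subcategory $\mH' \subset \mP\B\Env(\mJ)$ of weights for which $\F$ admits a weighted colimit, and to show that $\mH'$ contains $\mJ$ and is closed under the three operations generating $\mH$; by the minimality of $\mH$ one concludes $\mH \subset \mH'$. Part (2) runs analogously on the full subcategory $\mH'' \subset \mH'$ of weights whose weighted colimit is preserved by $\phi$. Throughout we exploit that $\F_!:\mP\B\Env(\mJ)^\circledast \to \mP\B\Env(\mM)^\circledast$ is a colimit-preserving enriched left adjoint lying over $\mP\Env(\alpha), \mP\Env(\beta)$, and that $\mP\B\Env(\mM)$ is presentably bitensored so that multi-morphism spaces into a fixed target send colimits in each slot to limits.

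For the base case, $\X \in \mJ$ gives $\F_!(\y(\X)) \simeq \y(\F(\X))$, and since the enriched embedding $\mM^\circledast \subset \mP\B\Env(\mM)^\circledast$ preserves multi-morphism objects by Proposition \ref{morpre}, the identity in $\mP\B\Env(\mM)$ exhibits $\F(\X)$ as the $\y(\X)$-weighted colimit of $\F$. For closure under $\K$-indexed colimits of weights, write $\rH = \colim_\K \rH_\bullet$ and set $\Y := \colim_i^{\mathrm{conical}} \Y_i$ in $\mM$ where $\Y_i := \colim^{\rH_i}(\F)$; using that $\F_!$ preserves colimits and that tensor by any $V_j, W_k$ preserves colimits in $\mP\B\Env(\mM)$, the chain
\[
\Mul_\mM(V_1,\dots,V_n,\Y,W_1,\dots,W_m;\Z) \simeq \lim_i \Mul_{\mP\B\Env(\mM)}(V_1,\dots,V_n,\F_!(\rH_i),W_1,\dots,W_m;\Z) \simeq \Mul_{\mP\B\Env(\mM)}(V_1,\dots,V_n,\F_!(\rH),W_1,\dots,W_m;\Z)
\]
identifies $\Y$ with $\colim^\rH(\F)$.

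For closure under left tensor by $Y \in \alpha^{-1}(\mT)$, let $\rH \in \mH'$ with colimit $\Y$. The left tensor $\alpha(Y) \ot_\mM \Y$ exists in $\mM$ by hypothesis since $\alpha(Y) \in \mT$; moreover $\F_!(Y \ot \rH) \simeq \alpha(Y) \ot \F_!(\rH)$ in $\mP\B\Env(\mM)$ by the $\mP\Env(\alpha)$-linearity of $\F_!$. Combining the universal property of the tensor in $\mM$ (which gives $\Mul_\mM(V_1,\dots,V_n,\alpha(Y)\ot_\mM\Y,W_1,\dots,W_m;\Z) \simeq \Mul_\mM(V_1,\dots,V_n,\alpha(Y),\Y,W_1,\dots,W_m;\Z)$) with the weighted colimit property of $\rH$ applied to the extended sequence $\alpha(Y), V_1,\dots,V_n$ yields the required equivalence exhibiting $\alpha(Y)\ot_\mM \Y$ as the $Y \ot \rH$-weighted colimit of $\F$. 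The argument for closure under right tensor by $\Z \in \beta^{-1}(\mT')$ is symmetric.

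Part (2) repeats the same three steps on $\mH''$: representables are trivial; closure under $\K$-indexed colimits uses that $\phi$ preserves $\K$-indexed conical colimits, so $\phi(\colim_i \Y_i) \simeq \colim_i \phi(\Y_i)$ propagates the preservation up the colimit in the weight; closure under tensors uses the assumed preservation by $\phi$ of the tensors by objects of $\mT$ and $\mT'$. The principal obstacle is the book-keeping distinction between tensors in $\mP\B\Env(\mM)$ (always available) and tensors internal to $\mM$ (only for objects in $\mT, \mT'$), which is resolved by consistently working through the multi-morphism formulations of the universal properties and using Proposition \ref{morpre} to pass between the two.
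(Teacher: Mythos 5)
Your proposal is correct and follows essentially the same route as the paper: define the full subcategory $\mH'$ of weights for which the weighted colimit of $\F$ exists (and, for (2), is preserved by $\phi$), check it contains $\mJ$ and is closed under $\K$-indexed colimits and the tensor functors, and conclude $\mH \subset \mH'$ by minimality. The paper simply leaves the closure verifications to Remark \ref{cohfun} and the reader, whereas you spell them out via the multi-morphism formulations; your details are sound.
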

\begin{proof}
(1): Let $\mH' \subset  \mP\B\Env(\mJ)$ be the full subcategory spanned by those $\rH \in \mP\B\Env(\mJ)$ such that $\mM$ admits the $\rH$-weighted colimit of $\F$.
Then $\mH'$ contains $\mJ$, is closed under $\K$-indexed conical colimits for any $\K \in \mK$ and for every $\Y \in \alpha^{-1}(\mT), \Z \in \beta^{-1}(\mT') $ the functor $\Y \ot (-), (-) \ot \Z: \mP\B\Env(\mJ) \to  \mP\B\Env(\mJ)$ preserves $\mH'$, where we use Remark \ref{cohfun}. Thus $\mH \subset \mH'.$

(2): Let $\mH' \subset \mP\B\Env(\mJ)$ be the full subcategory spanned by those $\rH \in \mP\B\Env(\mJ)$ such that $\mM$ admits the $\rH$-weighted colimit of $\F$ and $\phi$ preserves the $\rH$-weighted colimit of $\F.$
Then $\mH'$ contains $\mJ$, is closed under $\K$-indexed conical colimits for any $\K \in \mK$, and for every $\Y \in \alpha^{-1}(\mT), \Z \in \beta^{-1}(\mT') $ the functor $\Y \ot (-), (-) \ot \Z: \mP\B\Env(\mJ) \to  \mP\B\Env(\mJ)$ preserves $\mH'$. Thus $\mH \subset \mH'.$

\end{proof}

Proposition \ref{ukwa} implies the following corollary:

\begin{corollary}\label{Cov}
Let $\mJ^\circledast \to \mQ^\ot \times \mU^\ot, \mM^\circledast \to \mV^\ot \times \mW^\ot$ be absolute small weakly  bienriched $\infty$-categories,
$ \F: \mJ^\circledast \to \mM^\circledast$ an enriched functor and $\mK \subset \Cat_\infty$ a full subcategory.
Let $\mH \subset \mP\B\Env(\mJ)$ be the smallest full subcategory containing $\mJ$, closed under $\K$-indexed colimits for any $\K \in \mK$ and such that for any $\Y \in \mQ, \Z \in \mU$ the functors $\Y \ot (-), (-) \ot \Z:\mP\B\Env(\mJ) \to \mP\B\Env(\mJ)$ preserve $\mH$.
Assume that $\mM$ admits left and right tensors and $\K$-indexed colimits for any $\K \in \mK$ and that $\K$-indexed colimits are preserved by the functors $\V \ot (-), (-) \ot \W:\mM \to \mM$ for every $\V \in \mV, \W \in \mW$.

\begin{enumerate}
\item Then $\mM $ admits the $\rH$-weighted colimit of $\F$ for any $\rH \in \mH$.

\item An enriched functor $ \phi: \mM^\circledast \to \mN^\circledast$ preserves the $\rH$-weighted colimit of $\F$ for any $\rH \in \mH$ if $\phi$ preserves left and right tensors and $\K$-indexed colimits.

\end{enumerate}

\end{corollary}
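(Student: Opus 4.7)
The plan is to derive Corollary~\ref{Cov} as a direct specialization of Proposition~\ref{ukwa}, taking $\mT = \mV$ and $\mT' = \mW$. Under this choice the preimages $\alpha^{-1}(\mT)$ and $\beta^{-1}(\mT')$ collapse to $\mQ$ and $\mU$, so the full subcategory $\mH \subset \mP\B\Env(\mJ)$ defined in Corollary~\ref{Cov} coincides on the nose with the one appearing in Proposition~\ref{ukwa}. What remains is to verify that the conical versions of the hypotheses appearing in the proposition follow from the (ordinary) versions stipulated in the corollary.

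For part (1), the tensor hypothesis matches verbatim: requiring left tensors for all $\Y \in \mV = \mT$ and right tensors for all $\Z \in \mW = \mT'$ is exactly the assumption that $\mM$ admits left and right tensors. To obtain $\K$-indexed conical colimits, invoke Remark~\ref{laulo}(4): whenever $\mM$ already admits left and right tensors, a $\K$-indexed colimit in $\mM$ is conical if and only if it is preserved by every tensor functor $\V \otimes (-)$ and $(-) \otimes \W$ for $\V \in \mV$, $\W \in \mW$. This is precisely the extra compatibility stipulated by the corollary, so the $\K$-indexed colimits in $\mM$ are automatically conical, and Proposition~\ref{ukwa}(1) yields the existence of all $\rH$-weighted colimits for $\rH \in \mH.$

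For part (2), the tensor-preservation hypothesis on $\phi$ transfers verbatim. It then remains to verify that $\phi$ preserves $\K$-indexed conical colimits. Let $\bar{\F}\colon \K^\triangleright \to \mM$ be such a diagram. Since every conical colimit is in particular a colimit (Remark~\ref{laulo}(1)), the image $\phi \circ \bar{\F}$ is an ordinary colimit diagram in $\mN$ by the assumption that $\phi$ preserves $\K$-indexed colimits. To see that this image is again conical in $\mN$, one argues that the conicality condition is encoded by compatibility with the tensor functors, which is intertwined by $\phi$ thanks to its preservation of left and right tensors together with the characterization of Remark~\ref{laulo}. Once this is in place, Proposition~\ref{ukwa}(2) applies and yields the conclusion.

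The main subtle point I expect is the last one: transporting the tensor-compatibility characterization of conicality from $\mM$ across $\phi$ to the diagram $\phi \circ \bar{\F}$ in $\mN$. The tensor and colimit preservation hypotheses on $\phi$ are tailored precisely to make this transport work, but spelling out the compatibility with the multi-morphism spaces in $\mN$ is where the actual bookkeeping resides.
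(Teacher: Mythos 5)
Your reduction to Proposition \ref{ukwa} with $\mT=\mV$, $\mT'=\mW$ is exactly the intended route, and part (1) is complete: under the hypotheses of the corollary $\mM$ has left and right tensors, and Remark \ref{laulo} (4) shows that the $\K$-indexed colimits of $\mM$ are conical, so Proposition \ref{ukwa} (1) applies verbatim.

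The gap is in part (2), precisely at the step you defer as ``bookkeeping''. To invoke Proposition \ref{ukwa} (2) you need $\phi$ to preserve $\K$-indexed \emph{conical} colimits, and your plan is to upgrade ordinary-colimit preservation by testing conicality in $\mN$ against tensors, i.e.\ by running Remark \ref{laulo} inside $\mN$. But Remark \ref{laulo} (2),(4) require $\mN$ to admit the relevant (co)tensors, and the corollary places no hypotheses on $\mN$ at all; moreover conicality of $\phi\circ\bar{\G}$ is a condition on the multi-morphism spaces $\Mul_\mN(\V'_1,\dots,\V'_\n,-,\W'_1,\dots,\W'_\m;\Z)$ for \emph{arbitrary} objects of the $\infty$-operads over which $\mN$ lies, which need not be in the image of the operad maps underlying $\phi$, so preservation of tensors by $\phi$ gives no handle on them. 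In fact the upgrade fails in this generality: take $\mV=\mW=\emptyset$ (so the tensor hypotheses on $\phi$ are vacuous) and let $\mN$ be a monoidal $\infty$-category acting on itself whose tensor product does not preserve the $\K$-indexed colimits in question (e.g.\ the cocartesian structure and finite coproducts); then a functor preserving ordinary $\K$-indexed colimits does not preserve the corresponding $\rH$-weighted colimits for $\rH\in\mH$. The corollary is an immediate specialization of Proposition \ref{ukwa} (2) once ``preserves $\K$-indexed colimits'' is read as preservation of $\K$-indexed conical colimit diagrams (which is how it is used later, where source and target are tensored compatibly with colimits so the two notions agree); alternatively, your Remark \ref{laulo} argument does close if one additionally assumes that $\mN$ admits left and right tensors preserving $\K$-indexed colimits (or left and right cotensors), since then every colimit diagram in $\mN$ is conical.
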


We obtain the following proposition:

\begin{proposition}\label{weeei}Let $\kappa$ be a small regular cardinal and $ \mM^\circledast \to \mV^\ot \times \mW^\ot$ an absolute small weakly  bienriched $\infty$-category.
	
\begin{enumerate}
\item Then $\mM $ admits $\kappa$-small weighted colimits if and only if $\mM$ admits left and right tensors and $\kappa$-small colimits and forming left and right tensors commutes with $\kappa$-small colimits.

\item Then $\mM $ admits $\kappa$-small left (right) weighted colimits if and only if $\mM$ admits left (right) tensors and $\kappa$-small colimits and forming left
(right) tensors commutes with $\kappa$-small colimits.

\item If $\mM$ admits $\kappa$-small weighted colimits, an enriched functor $ \mM^\circledast \to \mN^\circledast$
preserves $\kappa$-small weighted colimits if and only if it preserves $\kappa$-small colimits and left and right tensors.

\item If $\mM$ admits $\kappa$-small left (right) weighted colimits, an enriched functor $ \mM^\circledast \to \mN^\circledast$
preserves $\kappa$-small left (right) weighted colimits if and only if it preserves $\kappa$-small colimits and left (right) tensors.
\end{enumerate}

\end{proposition}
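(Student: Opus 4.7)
The plan is to deduce all four parts from three already-established inputs: Corollary \ref{cuik} (which gives half of the ``only if'' direction by identifying that weighted colimits subsume tensors and conical colimits); Lemma \ref{exorr} together with Remark \ref{laulo} (which provides the remaining functoriality, namely that conical colimits are preserved by tensoring on either side); and Corollary \ref{Cov}/Proposition \ref{ukwa} (which produce the ``if'' direction by generating weighted colimits from tensors and conical colimits). No new construction is needed; the statement is essentially a packaging of these.

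For the forward direction of (1), suppose $\mM$ admits $\kappa$-small weighted colimits. Corollary \ref{cuik} immediately gives $\kappa$-small conical colimits and left and right tensors, and since every conical colimit diagram is a colimit diagram by Remark \ref{laulo}(1), $\mM$ admits $\kappa$-small colimits. Applying Remark \ref{laulo}(4) to any such conical colimit then yields that it is preserved by left and right tensoring, which is the commutation statement.

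For the backward direction of (1), I would fix any enriched functor $\F\colon\mJ^\circledast\to\mM^\circledast$ from a totally small weakly bi-enriched $\infty$-category, lying over maps $\alpha\colon\mQ^\ot\to\mV^\ot$, $\beta\colon\mU^\ot\to\mW^\ot$, and apply Corollary \ref{Cov}(1) with $\mK$ equal to the collection of $\kappa$-small $\infty$-categories. The resulting class $\mH\subset\mP\B\Env(\mJ)$ is the smallest one containing $\mJ$, closed under $\kappa$-small colimits and under left and right tensoring with objects of $\mQ$ and $\mU$. One then checks that $\mH$ exhausts the $\kappa$-small weights on $\mJ$. This reduces to two observations: every object of $\B\Env(\mJ)$, being of the form $\V_1\ot\cdots\ot\V_\n\ot\X\ot\W_1\ot\cdots\ot\W_\m$, is obtained from $\X\in\mJ$ by iterated tensoring and so lies in $\mH$; and a $\kappa$-small weight is by definition in the closure of $\B\Env(\mJ)$ under $\kappa$-small colimits in $\mP\B\Env(\mJ)$.

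The remaining parts follow the same scheme. Part (2) uses Proposition \ref{ukwa}(1) with $(\mT,\mT')=(\mV,\emptyset)$ for the left variant and $(\emptyset,\mW)$ for the right, together with Remark \ref{labo} which identifies left weights with weights in $\mP\L\Env(\mJ)$ generated from $\mJ$ by left tensoring only. Parts (3) and (4) are the functorial analogues: the forward direction is immediate from Lemma \ref{exorr}, which realises tensors and conical colimits as particular weighted colimits; the backward direction applies Corollary \ref{Cov}(2), respectively Proposition \ref{ukwa}(2), to each enriched functor $\F$. No essential obstacle arises; the only point requiring any care is the combinatorial verification that the generated classes $\mH$ coincide with the respective $\kappa$-small weights, which is a routine generation argument.
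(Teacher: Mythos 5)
Your argument is essentially the paper's own proof: the ``only if'' directions come from Corollary \ref{cuik} together with Remark \ref{laulo}, and the ``if'' directions from Corollary \ref{Cov} (resp.\ Proposition \ref{ukwa} for the one-sided cases (2) and (4), which the paper dispatches as ``similar''). The only difference is that you spell out the routine generation argument identifying the $\kappa$-small weights with the class $\mH$ produced by Corollary \ref{Cov} — objects of $\B\Env(\mJ)$ arise from $\mJ$ by iterated tensoring and $\kappa$-small weights are $\kappa$-small colimits of these — which the paper leaves implicit.
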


\begin{proof}We prove (1) and (3). The proofs of (2) and (4) are similar.
(1): If $\mM$ admits left and right tensors and $\kappa$-small colimits and forming left and right tensors commutes with $\kappa$-small colimits, then $\mM$ admits $\kappa$-small weighted colimits by Corollary \ref{Cov}.
 
%Conversely, if $\mM$ admits $\kappa$-small weighted colimits, then $\mM$ admitsbitensors by Lemma \ref{exorr}. % because $\id: \Ass \times \Ass \to \Ass \times \Ass$ is a $\kappa$-absolute small weakly  bienriched $\infty$-category.Finally, if $\mM$ admits $\kappa$-small weighted colimits, then $\mM$ admits$\kappa$-small colimits by Remark \ref{rehmm} and Example \ref{coni}.
%since for every $\kappa$-small $\infty$-category $\K$ the weakly bienriched $\infty$-category $\Ass \times \K \times \Ass \to \Ass \times \Ass$ is $\kappa$-small.Moreover if $\mM$ admits $\kappa$-small weighted colimits,
The converse is Corollary \ref{cuik} and Remark \ref{laulo}.
(3) follows like (1) from Corollary \ref{Cov}.
\end{proof}

\begin{corollary}\label{weeeisol}Let $\kappa$ be a small regular cardinal
and $ \mM^\circledast \to \mV^\ot \times \mW^\ot$ an absolute small weakly  bienriched $\infty$-category.
Then $\mM $ admits $\kappa$-small weighted colimits if and only if $\mM$ admits
$\kappa$-small weighted colimits of $\mV,\mW$-enriched functors $\K_{\V,\W}^\circledast \to \mM^\circledast$ for some $\infty$-category $\K.$
 
\end{corollary}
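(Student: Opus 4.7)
The plan is to reduce to Proposition \ref{weeei}(1), which characterizes the existence of $\kappa$-small weighted colimits on $\mM$ as the conjunction of (a) existence of left and right tensors, (b) existence of $\kappa$-small conical colimits, and (c) compatibility of left and right tensors with $\kappa$-small conical colimits. The ``only if'' direction is immediate, since for any small $\infty$-category $\K$ the trivially bi-enriched $\infty$-category $\K_{\mV,\mW}^\circledast$ is itself a totally small weakly bi-enriched $\infty$-category, so the hypothesis of the corollary specializes.

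For the ``if'' direction, I realize each of (a) and (b) as a particular weighted colimit of a $\mV,\mW$-enriched functor from a trivially bi-enriched source via Lemma \ref{exorr}. Taking $\K=\ast$ together with the (representable, hence $\kappa$-small) weights $\langle \V, \ast, \tu_{\mP\Env(\mW)} \rangle$ and $\langle \tu_{\mP\Env(\mV)}, \ast, \W \rangle$ for $\V \in \mV, \W \in \mW$, parts (2) and (3) of Lemma \ref{exorr} identify the associated weighted colimits of an object $\X \in \mM$, viewed as a $\mV,\mW$-enriched functor $\ast_{\mV,\mW}^\circledast \to \mM^\circledast$ via Proposition \ref{ljnbfg}(3), with the left tensor $\V \ot \X$ and the right tensor $\X \ot \W$; this yields (a). For any $\kappa$-small $\infty$-category $\K$, the trivial weight $\langle \tu_{\mP\Env(\mV)}, \K, \tu_{\mP\Env(\mW)}\rangle$ is $\kappa$-small, and Lemma \ref{exorr}(4) identifies the corresponding weighted colimit with the conical colimit of $\F: \K \to \mM$, yielding (b).

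Once (a) and (b) are in hand, condition (c) follows by a direct multi-morphism computation. For $\V \in \mV$, $\K$ a $\kappa$-small $\infty$-category, and $\F : \K \to \mM$, combining the left tensor property with the defining property of the conical colimit gives, for every profile $\V_1,\dots,\V_\n \in \mV, \W_1,\dots,\W_\m \in \mW$ and every $\Z \in \mM$, the equivalence
$$\Mul_\mM(\V_1,\dots,\V_\n, \V \ot \colim_\K \F, \W_1,\dots,\W_\m; \Z) \simeq \lim_{\K^\op}\Mul_\mM(\V_1,\dots,\V_\n, \V \ot \F(-), \W_1,\dots,\W_\m; \Z),$$
obtained by successively applying the tensor property, then the conical colimit property, and then the tensor property again. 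This exhibits $\V \ot \colim_\K \F$ as the conical colimit of the diagram $\V \ot \F(-) : \K \to \mM$; as the latter exists by (b) applied to this $\kappa$-small diagram, the functor $\V \ot (-)$ preserves the colimit $\colim_\K \F$. The mirror argument with $\langle \tu_{\mP\Env(\mV)}, \K, \W\rangle$-weights handles right tensors, and Proposition \ref{weeei}(1) concludes. The only delicate point is the compatibility step, which requires invoking the full multi-morphism form of both the tensor and the conical colimit universal properties rather than only the underlying mapping-space shadows; otherwise the argument is a bookkeeping exercise.
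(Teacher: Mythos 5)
Your proof is correct and follows essentially the route the paper intends: the corollary is a direct consequence of Proposition \ref{weeei}(1) together with Lemma \ref{exorr}, since the weights computing left/right tensors and $\kappa$-small conical colimits all live on trivially bi-enriched sources $\K_{\mV,\mW}^\circledast$. Your hand computation of the compatibility of tensors with conical colimits is just a re-derivation of Remark \ref{laulo}(3),(4), which the paper invokes at the corresponding step of Proposition \ref{weeei}.
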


\begin{corollary}\label{transco}
	
Let $\mM^\circledast \to \mV^\ot \times \mW^\ot$ be an absolute small weakly  bienriched $\infty$-category.
The embedding $\mM^\circledast \subset \mP\B\Env(\mM)^\circledast$
preserves weighted limits.	
	
\end{corollary}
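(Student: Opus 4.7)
The strategy is to reduce to a dualized version of Proposition \ref{weeei}(3): an enriched functor with totally small source preserves weighted limits if and only if it preserves conical limits and left and right cotensors. I would obtain this dual by applying Proposition \ref{weeei}(3) to the opposite weakly bi-enriched $\infty$-categories supplied by Proposition \ref{oppoen}, using that under the opposite construction weighted colimits become weighted limits (by Definition \ref{weightl}), conical colimits become conical limits, and left/right tensors become right/left cotensors. This reduces matters to verifying that the embedding $\mM^\circledast \subset \mP\B\Env(\mM)^\circledast$, which factors as the enriched embedding $\mM^\circledast \subset \B\Env(\mM)^\circledast$ of Proposition \ref{bitte} followed by the Yoneda embedding $\B\Env(\mM) \to \mP(\B\Env(\mM)) = \mP\B\Env(\mM)$, preserves conical limits and left and right cotensors.

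For conical limits: since $\mP\B\Env(\mM)^\circledast$ is bitensored, the dual of Remark \ref{laulo}(2) shows that every limit diagram in $\mP\B\Env(\mM)$ is automatically conical, so it suffices to check preservation of the underlying $\infty$-categorical limit. Limits in $\mP(\B\Env(\mM))$ are computed pointwise, so I would evaluate at each $X \in \B\Env(\mM)$: by Proposition \ref{bitte}(1) every such $X$ is equivalent to a tensor product $\V_1 \ot \cdots \ot \V_\n \ot \tilde{\X} \ot \W_1 \ot \cdots \ot \W_\m$ with $\tilde{\X} \in \mM$, and the representable presheaf attached to $\Y \in \mM$ evaluates at such $X$ to the multi-morphism space $\Mul_\mM(\V_1, \ldots, \V_\n, \tilde{\X}, \W_1, \ldots, \W_\m; \Y)$; the conical limit property of $\Y$ in $\mM$ then yields the required pointwise equivalence.

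For cotensors: given a left cotensor ${^\V \X}$ in $\mM$ with $\V \in \mV$ and $\X \in \mM$, I would verify that ${^\V \X}$ viewed inside $\mP\B\Env(\mM)$ satisfies the universal property of the left cotensor of $\V$ and the image of $\X$. By colimit-density of representables in $\mP\B\Env(\mM)$, together with the facts that tensoring with $\V$ preserves colimits and that mapping spaces into a fixed target carry colimits to limits, it suffices to verify the property against representable presheaves attached to objects $X' \in \B\Env(\mM)$; since the Yoneda embedding is left $\mP\Env(\mV)$-linear, $\V$-tensoring commutes with taking representables, so the verification reduces to the multi-morphism identity $\Mul_\mM(\V, \V'_1, \ldots, \tilde{\X}', \ldots, \W'_\m; \X) \simeq \Mul_\mM(\V'_1, \ldots, \tilde{\X}', \ldots, \W'_\m; {^\V \X})$ defining the cotensor in $\mM$; right cotensors are symmetric. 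The main technical point is the careful tracking of the opposite construction when dualizing Proposition \ref{weeei}(3), since the opposite of a weakly bi-enriched $\infty$-category exchanges the roles of the two enriching operads.
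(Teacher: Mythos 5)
Your proposal is correct and follows essentially the same route as the paper: reduce via (the dual of) Proposition \ref{weeei} to preservation of conical limits and left/right cotensors, then verify both by testing against objects of $\B\Env(\mM)$ — which are tensor products $\V_1\ot\cdots\ot\V_\n\ot\tilde\X\ot\W_1\ot\cdots\ot\W_\m$ — and extending to all of $\mP\B\Env(\mM)$ by colimit-density of representables together with the fact that the relevant mapping/multi-morphism functors convert colimits into limits. Your pointwise-limit phrasing of the conical case and your explicit dualization through Proposition \ref{oppoen} are only cosmetic variants of the paper's argument.
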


\begin{proof}By Proposition \ref{weeei} it is enough to show that the embedding $\mM^\circledast \subset \mP\B\Env(\mM)^\circledast$ preserves left and right cotensors and all conical limits.
Let $\W \in \mW, \Y \in \mM$ such that for every $\X \in \mM, \V_1,...,\V_\n \in \mV, \W_1,...,\W_\m \in \mW$ the induced map
$$ \Mul_\mM(\V_1,...,\V_\n,\X,\W_1,...,\W_\m;\Y^\W) \to  \Mul_\mM(\V_1,...,\V_\n,\X,\W_1,...,\W_\m, \W;\Y)$$ is an equivalence.
The latter map identifies with the map
$$ \mP\B\Env(\mM)(\V_1 \ot ... \ot \V_\n \ot \X \ot \W_1 \ot ... \ot \W_\m, \Y^\W) \to \Mul_{\mP\B\Env(\mM)}(\V_1 \ot ...\ot \V_\n \ot \X \ot \W_1 \ot ... \ot \W_\m,\W;\Y).$$
Hence the map
$ \mP\B\Env(\mM)(\T, \Y^\W) \to \Mul_{\mP\B\Env(\mM)}(\T,\W;\Y)$
is an equivalence for every $\T \in \B\Env(\mM).$
Since the functors $\mP\B\Env(\mM)(-, \Y^\W), \Mul_{\mP\B\Env(\mM)}(-,\W;\Y): \mP\B\Env(\mM)^\op \to \mS$ preserve small limits, the latter map is also an equivalence for every $\T \in \mP\B\Env(\mM)$. So the right cotensor is preserved by the embedding $\mM \subset \mP\B\Env(\mM)$. The case of the left cotensor is similar.
We prove the case of conical limits.

Let $\F: \K \to \mM$ be a functor such that for every $\X \in \mM, \V_1,...,\V_\n \in \mV, \W_1,...,\W_\m \in \mW$ the map
$$ \Mul_\mM(\V_1,...,\V_\n,\X,\W_1,...,\W_\m;\lim(\F)) \to \lim  \Mul_\mM(\V_1,...,\V_\n,\X,\W_1,...,\W_\m;\F(-))$$ is an equivalence.
The latter map identifies with the map
$$ \mP\B\Env(\mM)(\V_1 \ot ... \ot \V_\n \ot \X \ot \W_1 \ot ... \ot \W_\m,\lim(\F)) \to$$$$ \lim \mP\B\Env(\mM)(\V_1 \ot ...\ot \V_\n \ot \X \ot \W_1 \ot ... \ot \W_\m,\F(-)).$$
Hence the map
$ \mP\B\Env(\mM)(\Y,\lim(\F)) \to \lim \mP\B\Env(\mM)(\Y,\F(-))$
is an equivalence for every $\Y \in \B\Env(\mM).$
Since for every $\Z \in \mP\B\Env(\mM)$ the functor $\mP\B\Env(\mM)(-,\Z): \mP\B\Env(\mM)^\op \to \mS$ preserves small limits, the latter map is also an equivalence for every $\Y \in \mP\B\Env(\mM)$. So the limit of $\F$ is preserved by the embedding $\mM \subset \mP\B\Env(\mM)$.

% i.e. that for every $ \Y \in \mP\B\Env(\mM)$ the induced map$$ \mP\B\Env(\mM)(\Y,\lim(\F)) \to \lim \mP\B\Env(\mM)(\Y,\F(-))$$ is an equivalence.Since for every $\Z \in \mP\B\Env(\mM)$ the functor $\mP\B\Env(\mM)(-,\Z): \mP\B\Env(\mM)^\op \to \mS$ preserves small limits, we can assume that $\Y \in \B\Env(\mM)$.
	
\end{proof}

\begin{corollary}\label{yow}
	
Let $(\mV^\ot \to \Ass, \rS), (\mW^\ot \to \Ass, \T)$ be small localization pairs
and $\mM^\circledast \to \mV^\ot \times \mW^\ot$ a small $\rS,\T$-bienriched $\infty$-category.
The embedding $\mM^\circledast \subset \mP\B\Env(\mM)_{\rS, \T}^\circledast$
preserves weighted limits.
	
\end{corollary}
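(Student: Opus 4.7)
The plan is to reduce Corollary \ref{yow} to Corollary \ref{transco} by exploiting the enriched reflective adjunction from Lemma \ref{Lauf}. Since $\mM$ is assumed $\rS,\T$-bi-enriched, Remark \ref{embil} guarantees that the standard embedding $\mM^\circledast \subset \mP\B\Env(\mM)^\circledast$ factors as $j \circ \iota$, where $\iota: \mM^\circledast \to \mP\B\Env(\mM)_{\rS,\T}^\circledast$ is the embedding we wish to study and $j: \mP\B\Env(\mM)_{\rS,\T}^\circledast \subset \mP\B\Env(\mM)^\circledast$ is the full subcategory inclusion. By Lemma \ref{Lauf}, $j$ is the enriched right adjoint of an enriched localization lying over the monoidal localizations $\rS^{-1}\mP\Env(\mV)^\ot \subset \mP\Env(\mV)^\ot$ and $\T^{-1}\mP\Env(\mW)^\ot \subset \mP\Env(\mW)^\ot$. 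By the limit-dual of Proposition \ref{remqalo}, the right adjoint $j$ preserves weighted limits.

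Given a weighted limit diagram $(\rH, \F, \psi)$ in $\mM$, by Corollary \ref{transco} its image $(\rH, j \circ \iota \circ \F, \psi)$ in $\mP\B\Env(\mM)^\circledast$ is a weighted limit diagram. The plan is to transfer this universal property to $\mP\B\Env(\mM)_{\rS,\T}^\circledast$. Concretely, by the dual of Lemma \ref{Enristo}, checking that the image $(\rH, \iota \circ \F, \psi)$ is a weighted limit diagram in $\mP\B\Env(\mM)_{\rS,\T}^\circledast$ amounts to verifying equivalences of multi-morphism spaces of the form
\[ \Mul_{\mP\B\Env(\mM)_{\rS,\T}}(\V_1, \ldots, \V_\n, \Z, \W_1, \ldots, \W_\m; \iota(\Y)) \to \Mul_{\mP\B\Env(\mM)_{\rS,\T}}(\V_1, \ldots, \V_\n, \Z, \W_1, \ldots, \W_\m; j^*(\F^*(\rH))) \]
for $\V_i \in \rS^{-1}\mP\Env(\mV)$, $\W_j \in \T^{-1}\mP\Env(\mW)$, $\Z \in \mP\B\Env(\mM)_{\rS,\T}$. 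Since $j$ is fully faithful and the enrichment of $\mP\B\Env(\mM)_{\rS,\T}^\circledast$ over its localized base is induced by restricting the enrichment of $\mP\B\Env(\mM)^\circledast$ along the fully faithful right adjoints of the localizations, all these multi-morphism spaces coincide with the corresponding ones computed in $\mP\B\Env(\mM)^\circledast$, so the equivalences follow from the weighted limit property established by Corollary \ref{transco}.

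The one point that requires attention is the change of enrichment: $\iota$ lies over $\mV^\ot \subset \rS^{-1}\mP\Env(\mV)^\ot$ and $\mW^\ot \subset \T^{-1}\mP\Env(\mW)^\ot$, whereas $j \circ \iota$ lies over $\mV^\ot \subset \mP\Env(\mV)^\ot$ and $\mW^\ot \subset \mP\Env(\mW)^\ot$. I expect this to be the main technical point, handled by Proposition \ref{pullba} and Corollary \ref{eccco}, which show that existence and preservation of weighted colimits (dually limits) are compatible with pullback of enrichment along maps of $\infty$-operads. Alternatively, since $j$ identifies $\mP\B\Env(\mM)_{\rS,\T}^\circledast$ with a full weakly bi-enriched subcategory of $\mP\B\Env(\mM)^\circledast$, the universal property in the localized enrichment is obtained by restricting test objects from $\mP\Env(\mV) \times \mP\Env(\mW)$ to the reflective subcategory $\rS^{-1}\mP\Env(\mV) \times \T^{-1}\mP\Env(\mW)$, and these restrictions do not lose information because the tensor units and all objects arising from $\mV, \mW$ already lie in the localized categories.
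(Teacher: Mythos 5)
Your argument is correct and is essentially the derivation the paper intends (it states Corollary \ref{yow} without proof, as an immediate consequence of Corollary \ref{transco} together with Remark \ref{embil} and the enriched localization of Lemma \ref{Lauf}): since $\mP\B\Env(\mM)^\circledast_{\rS,\T}$ is a full weakly bi-enriched subcategory containing $\mM$, whose enriching objects and test objects form subclasses of those of $\mP\B\Env(\mM)^\circledast$, the universal property supplied by Corollary \ref{transco} restricts, the base-change point being handled exactly by Proposition \ref{pullba}/Corollary \ref{eccco} as you say. Only your displayed comparison map is slightly mis-stated (the target should be the localized pushforward of the weight, e.g.\ $\L((\F^{\op})_!(\rH))$ as in Lemma \ref{Enristo}, rather than $j^*(\F^*(\rH))$), but this does not affect the argument.
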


\begin{corollary}\label{Yowei}
	
%Let $\kappa$ be a small regular cardinal. 

\begin{enumerate}
\item For every left quasi-enriched $\infty$-category $\mM^\circledast \to \mV^\ot \times \mW^\ot$ the embedding $\mM^\circledast \subset \mP\B\Env(\mM)_{\L\Enr}^\circledast$
preserves left weighted limits and exhibits $\mM$ as free cocompletion under small left weighted colimits.
\item For every right quasi-enriched $\infty$-category $\mM^\circledast \to \mV^\ot \times \mW^\ot$ the embedding $\mM^\circledast \subset \mP\B\Env(\mM)_{\R\Enr}^\circledast$
preserves right weighted limits and exhibits $\mM$ as free cocompletion under small right weighted colimits.
\item For every bi-quasi-enriched $\infty$-category $\mM^\circledast \to \mV^\ot \times \mW^\ot$ the embedding $\mM^\circledast \subset \mP\B\Env(\mM)_{\B\Enr}^\circledast$ preserves weighted limits and exhibits $\mM$ as free cocompletion under small weighted colimits.
\end{enumerate}	
\end{corollary}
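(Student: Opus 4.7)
The plan is to deduce all three parts of Corollary \ref{Yowei} from Corollary \ref{yow} and Proposition \ref{weifun} applied in the very large universe. From the perspective of the very large universe, a left (resp.\ right, bi-) quasi-enriched $\infty$-category is a small $\L\Enr_\sigma$-bi-enriched (resp.\ $\R\Enr_\sigma$, $\B\Enr_{\sigma,\sigma}$) $\infty$-category in the sense of Example \ref{Exaso} (1), where $\sigma$ is the strongly inaccessible cardinal corresponding to the small universe. Under this identification the envelopes $\mP\B\Env(\mM)_{\L\Enr}^\circledast$, $\mP\B\Env(\mM)_{\R\Enr}^\circledast$ and $\mP\B\Env(\mM)_{\B\Enr}^\circledast$ of Notation \ref{enrprrrt} are the instances of $\mP\B\Env(\mM)_{\rS,\T}^\circledast$ from Lemma \ref{Lauf} associated with these localization pairs.

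For the preservation of weighted limits, each of the three embeddings $\mM^\circledast \subset \mP\B\Env(\mM)_{\bullet}^\circledast$ then becomes a special case of the embedding of a small $\rS,\T$-bi-enriched $\infty$-category into its $\mP\B\Env(-)_{\rS,\T}$-envelope. Corollary \ref{yow} applied in the very large universe shows that such embeddings preserve weighted limits, which specializes to preservation of small left weighted limits in (1), small right weighted limits in (2) and arbitrary small weighted limits in (3).

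For the free cocompletion statement, apply Proposition \ref{weifun} (2) in the very large universe with $\mJ = \mM$ and with the localization pair $\L\Enr_\sigma$ (resp.\ $\R\Enr_\sigma$, $\B\Enr_{\sigma,\sigma}$). This produces, for every target bi-quasi-enriched $\mN$ admitting the relevant small weighted colimits, an equivalence
$$\LinFun^\L_{\Ind_\sigma(\mV),\mP\Env(\mW)}(\mP\B\Env(\mM)_{\L\Enr},\bar{\mN}) \simeq \Enr\Fun_{\mV,\mW}(\mM,\mN),$$
and analogous equivalences in the right and bi-enriched cases, where $\bar{\mN}^\circledast$ is the full weakly bi-enriched subcategory of $\mP\B\Env(\mN)_{\L\Enr}^\circledast$ spanned by $\mN$, which by Proposition \ref{rembrako} is presentably tensored in the appropriate sense. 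The final step is to identify the left hand side with those $\mV,\mW$-enriched functors $\mP\B\Env(\mM)_{\L\Enr}^\circledast \to \mN^\circledast$ that preserve small left weighted colimits.

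The main obstacle is precisely this last matching: showing that the left adjoint $\Ind_\sigma(\mV),\mP\Env(\mW)$-linear functors landing in $\bar{\mN}$ are exactly the $\mV,\mW$-enriched functors $\mP\B\Env(\mM)_{\L\Enr} \to \mN$ preserving small left weighted colimits. One inclusion is a direct consequence of Proposition \ref{remqalo}. For the converse, Proposition \ref{weeei} reduces the preservation of small left weighted colimits to the preservation of small conical colimits and left tensors, which together with Corollary \ref{obsto} and Remark \ref{cohfun} allows the extension of a colimit-preserving enriched functor from $\mM$ to a left adjoint $\Ind_\sigma(\mV),\mP\Env(\mW)$-linear functor on $\mP\B\Env(\mM)_{\L\Enr}$. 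Parts (2) and (3) are obtained by the same argument, either via the involution of Proposition \ref{oppoen} or by working directly with $\B\Enr_{\sigma,\sigma}$.
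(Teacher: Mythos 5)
Your treatment of the limit-preservation half is in the spirit of the paper (Corollary \ref{yow} applied one universe up), but the identification on which you base everything is not correct: by Notations \ref{enrprrr} and \ref{enrprrrt}, $\mP\B\Env(\mM)_{\L\Enr}^\circledast$ is \emph{not} the localization-pair envelope $\widehat{\mP}\B\Env(\mM)^\circledast_{\L\Enr_\sigma}$ of Lemma \ref{Lauf}; it is the full bitensored subcategory of its pullback along $\mV^\ot \subset \Ind_\sigma(\mV)^\ot$, $\mP\Env(\mW)^\ot \subset \widehat{\mP}\Env(\mW)^\ot$ generated by $\mM$ under small colimits and tensors. For $\mM = \ast$ and $\mV = \mS$ the former is $\mS$ while the latter is a very large category, so they genuinely differ. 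For the preservation of weighted limits this is harmless (a weighted limit diagram in the ambient category whose diagram and limit lie in a full weakly bi-enriched subcategory is a weighted limit diagram there, so Corollary \ref{yow} one universe up does suffice after this extra remark), but it is fatal for the way you argue the second half.

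The genuine gap is in the free-cocompletion clause. Proposition \ref{weifun} (2), applied in the larger universe to the localization pair $\L\Enr_\sigma$, requires the target $\mN$ to admit $\rH$-weighted colimits for \emph{every} $\rH \in \widehat{\mP}\B\Env(\mM)_{\L\Enr_\sigma}$, i.e. for weights that are large from the original universe's point of view; a quasi-enriched target admitting only small weighted colimits does not satisfy this (already $\mN = \mV = \mS$ has no tensors with large spaces), so the equivalence you quote is not available for the targets the corollary is about. Moreover, even where it applies, its conclusion concerns functors out of $\widehat{\mP}\B\Env(\mM)_{\L\Enr_\sigma}$, the free cocompletion under \emph{large} weighted colimits, not out of $\mP\B\Env(\mM)_{\L\Enr}$; descending to the subcategory generated by $\mM$ under small weighted colimits, and showing that every small-weighted-colimit-preserving $\mV,\mW$-enriched functor $\mP\B\Env(\mM)_{\L\Enr}^\circledast \to \mN^\circledast$ arises as such a restriction, is exactly the nontrivial content. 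Your "main obstacle" paragraph instead addresses the comparison of $\LinFun^\L$ with colimit-preserving functors, which is the easy point (Propositions \ref{remqalo} and \ref{weeei}), and does not repair either the hypothesis on the target or the domain mismatch. In the paper this is handled by the adjoining-colimits machinery: one first embeds $\mN^\circledast$ into a cocompletion that does admit large weighted colimits and in which $\mN$ is closed under small weighted colimits, extends there, and then descends by a localization argument; the corollary is then the case $\Lambda = \emptyset$ and $\mH$ the collection of all small (left, right, resp. all) weights of Proposition \ref{wooond} and Corollaries \ref{wooond0}, \ref{wooond00}. Without a reduction step of this kind your argument does not go through.
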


\subsection{An enriched adjoint functor theorem}

In Proposition \ref{remqalo} we proved that enriched left adjoints preserve weighted colimits and dually that enriched right adjoints preserve weighted limits. In the following we prove a converse, an enriched version of the adjoint functor theorem: % (Theorem \ref{remwq}). 

\begin{lemma}\label{pressel}
Let $ \mM^\circledast \to \mV^\ot \times \mW^\ot$ be a weakly bienriched $\infty$-category.
The following are equivalent:

\begin{enumerate}
\item $\mM$ is presentable and admits small weighted colimits.

\item $\mM$ is presentable and admits left and right tensors and
for every $\V \in \mV$ the left tensor $\V \ot (-): \mM \to \mM$ preserves small colimits and for every $\W \in \mW$ the right tensor $(-) \ot \W : \mM \to \mM$ preserves small colimits.

\item $\mM$ is presentable and admits small weighted limits.

\item $\mM$ is presentable and admits left and right cotensors and
for every $\V \in \mV$ the left cotensor ${^\V(-)}: \mM \to \mM$ preserves small limits and for every $\W \in \mW$ the right cotensor $(-)^\W : \mM \to \mM$ preserves small limits.

\end{enumerate}

\end{lemma}

\begin{proof}
The equivalence between (1) and (2) is Proposition \ref{weeei} (1) and the equivalence between (3) and (4) is the opposite of Proposition \ref{weeei} (1).
To see that (2) and (4) are equivalent, by \cite[Corollary 4.22. (7)]{heine2024bienriched} we can assume that $ \mM^\circledast \to \mV^\ot \times \mW^\ot$ is a bipseudo-enriched $\infty$-category. In this case the equivalence between (2) and (4) follows from the adjoint functor theorem \cite[Corollary 5.5.2.9]{lurie.HTT}.
	
\end{proof}

\begin{definition}
\begin{itemize}
\item A weakly bienriched $\infty$-category $ \mM^\circledast \to \mV^\ot \times \mW^\ot$ is presentable if it admits one of the equivalent conditions of Lemma \ref{pressel}.
	
\item A bipseudo-enriched $\infty$-category $ \mM^\circledast \to \mV^\ot \times \mW^\ot$ is presentable if it admits one of the equivalent conditions of Lemma \ref{pressel}.

\item A bienriched $\infty$-category $ \mM^\circledast \to \mV^\ot \times \mW^\ot$ is presentable if it admits one of the equivalent conditions of Lemma \ref{pressel}.
	
\end{itemize}
\end{definition}	
\begin{example}

A weakly bienriched $\infty$-category $  \mM^\circledast \to \mV^\ot \times \mW^\ot$ is a presentable bipseudo-enriched $\infty$-category if and only if it is a bitensored $\infty$-category, $\mM$ is presentable and for every $\V \in \mV$ the left tensor $\V \ot (-): \mM \to \mM$ preserves small colimits and for every $\W \in \mW$ the right tensor $(-) \ot \W : \mM \to \mM$ preserves small colimits.

\end{example}
\begin{example}
A weakly bienriched $\infty$-category $  \mM^\circledast \to \mV^\ot \times \mW^\ot$ is a presentably bitensored $\infty$-category if and only if it is a presentable bienriched $\infty$-category and $\mV^\ot \to \Ass, \mW^\ot \to \Ass$ are presentably monoidal $\infty$-categories.

\end{example}

\begin{theorem} (Enriched adjoint functor theorem) 
\label{remwq}
Let $\mM^\circledast \to \mV^\ot \times \mW^\ot, \mN^\circledast \to \mV^\ot \times \mW^\ot $ be a presentable weakly bienriched $\infty$-categories and $\phi: \mM^\circledast \to \mN^\circledast $ a $\mV, \mW$-enriched functor.

\begin{enumerate}
\item %If $\mM$ admits small weighted colimits, 
Then $\phi: \mM^\circledast \to \mN^\circledast$ admits a $\mV, \mW$-enriched right adjoint if and only if $\phi$ preserves small weighted colimits.

\vspace{1mm}

\item %If $\mM$ admits small weighted limits, 
Then $\phi: \mM^\circledast \to \mN^\circledast$ admits a $\mV, \mW$-enriched left adjoint if and only if $\phi$ preserves small weighted limits and the underlying functor $\mM \to \mN$ of $\phi$ is accessible.\end{enumerate}
\end{theorem}

\begin{proof}
(1): If $\phi$ admits a $\mV, \mW$-enriched right adjoint, it  preserves all weighted colimits
by Proposition \ref{remqalo} (1).
If $\phi$ preserves small weighted colimits, it preserves left and right tensors and small conical colimits. Since $\mM$ admits small weighted colimits, every small
%for every $\V \in \mV$ the left tensor $\V \ot (-): \mM \to \mM$ preserves small colimits and for every $\W \in \mW$ the right tensor $(-) \ot \W : \mM \to \mM$ preserves small colimits, every
colimit in $\mM$ is conical. Hence the functor underlying $\phi$ preserves small colimits and so by the adjoint functor theorem \cite[Corollary 5.5.2.9]{lurie.HTT} admits a right adjoint. So the result follows from Lemma \ref{Adj}, where we use that $\mM$ admits left and right tensors and $\phi$ preserves left and right tensors.

(2): If $\phi$ admits a $\mV, \mW$-enriched left adjoint, it  preserves all weighted limits
by Proposition \ref{remqalo} (2).
If $\phi$ admits a $\mV, \mW$-enriched left adjoint, the functor underlying $\phi$ admits a left adjoint and so is accessible.
 
If $\phi$ preserves small weighted limits, it preserves left and right cotensors and small conical limits. Since $\mM$ admits small weighted limits, every small limit in $\mM$ is conical.
%Since for every $\V \in \mV$ the left cotensor ${^\V(-)}: \mM \to \mM$ preserves small limits and for every $\W \in \mW$ the right cotensor $(-)^\W : \mM \to \mM$ preserves small limits, every limit is conical. 
Hence the functor underlying $\phi$ preserves small limits and is accessible and so by the adjoint functor theorem \cite[Corollary 5.5.2.9]{lurie.HTT} admits a left adjoint. So the result follows from the opposite of Lemma \ref{Adj}, where we use that $\mM$ admits left and right cotensors and
$\phi$ preserves left and right cotensors.

\end{proof}

\subsection{Adjoining weighted colimits}

In this section we prove that weighted colimits can be universally adjoined
(Theorem \ref{wooond}).
We start with the following lemmas:

\begin{lemma}\label{small}

Let $\mM^\circledast \to \mV^\ot \times \mW^\ot$ be a locally small weakly bienriched $\infty$-category that admits $\mH$-weighted colimits for some set $\mH$ of small weights over $\mV,\mW$ and $\mB \subset \mM$ a small full subcategory.
The full subcategory of $\mM$ generated by $\mB$ under $\mH$-weighted colimits is small.
% if $\mV^\ot \to \Ass, \mW^\ot \to \Ass$ are locally small.
% andthe multi-morphism spaces of $\mV^\ot \to \Ass, \mW^\ot \to \Ass, \mM^\circledast \to \mV^\ot \times \mW^\ot $ are small.
%and $\mH$ consists of small weights.

%one of the following conditions is satisfied:\begin{enumerate}\item The $\infty$-operads $\mV^\ot \to \Ass, \mW^\ot \to \Ass$ are small,the multi-morphism spaces of $\mM^\circledast \to \mV^\ot \times \mW^\ot $ are smalland $\mH$ consists of small weights.\item The $\infty$-operad $\mV^\ot \to \Ass$ is a presentably monoidal $\infty$-category, $\mM^\circledast \to \mV^\ot \times \mW^\ot$ is a left enriched $\infty$-category and $\mH$ consists of small left enriched weights over $\mV, \mW$.\item The $\infty$-operad $\mW^\ot \to \Ass$ is a presentably monoidal $\infty$-category, $\mM^\circledast \to \mV^\ot \times \mW^\ot$ is a right enriched $\infty$-category and $\mH$ consists of small right enriched weights over $\mV, \mW$.\item The $\infty$-operads $\mV^\ot \to \Ass, \mW^\ot \to \Ass$ are presentably monoidal $\infty$-categories, $\mM^\circledast \to \mV^\ot \times \mW^\ot$ is a bienriched $\infty$-category and $\mH$ consists of small enriched weights over $\mV, \mW$.\end{enumerate}

\end{lemma}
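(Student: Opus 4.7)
The plan is to realize the $\mH$-closure of $\mB$ as the union of a transfinite chain $\mB = \mB_0 \subset \mB_1 \subset \cdots$ of small full subcategories of $\mM$ indexed by a suitable small regular cardinal $\kappa$, and to exhibit smallness by induction along the chain together with a cofinality argument that makes the chain stabilize at stage $\kappa$.

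Concretely, I would fix a small regular cardinal $\kappa$ strictly larger than the cardinality of the maximal subspace of $\mB$, than $|\mH|$, and than $|(\mJ^\circledast)^\simeq|$ for every weight $\T = (\tau,\rho,\mJ^\circledast,\rH)$ in $\mH$; such a $\kappa$ exists because $\mH$ is a set and each $\mJ^\circledast$ is totally small. Then I define the sequence by: $\mB_0 := \mB$; at a successor, $\mB_{\alpha+1}$ is the full subcategory whose objects are those of $\mB_\alpha$ together with $\colim^\rH(\F)$ for every $\T \in \mH$ and every enriched functor $\F \colon \mJ^\circledast \to \mM^\circledast$ lying over $\tau,\rho$ whose underlying functor factors through $\mB_\alpha$; at a limit $\lambda \leq \kappa$, set $\mB_\lambda := \bigcup_{\alpha < \lambda} \mB_\alpha$. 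Transfinite induction then shows each $\mB_\alpha$ with $\alpha \leq \kappa$ is small: for the successor step, for each $\T \in \mH$ the collection of enriched functors $\mJ^\circledast \to \mM^\circledast$ lying over $\tau,\rho$ and factoring through $\mB_\alpha$ is small because $\mJ^\circledast$ is totally small, $\mB_\alpha$ is small, and $\mM$ is locally small; summing over the set $\mH$ yields a small family of new objects.

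It remains to verify that $\mB_\kappa$ is closed under $\mH$-weighted colimits, from which minimality of the $\mH$-closure yields the lemma. Given $\T \in \mH$ and an enriched functor $\F \colon \mJ^\circledast \to \mM^\circledast$ lying over $\tau,\rho$ and factoring through $\mB_\kappa$, the key observation is that $|(\mJ^\circledast)^\simeq| < \kappa$ and $\mB_\kappa = \bigcup_{\alpha < \kappa} \mB_\alpha$ is a $\kappa$-indexed increasing union of full subcategories, so by regularity of $\kappa$ the underlying functor of $\F$ factors through some $\mB_\alpha$ with $\alpha < \kappa$; hence $\colim^\rH(\F) \in \mB_{\alpha+1} \subset \mB_\kappa$. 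The main obstacle is the bookkeeping at the successor step: one must argue carefully that the space of enriched functors $\mJ^\circledast \to \mM^\circledast$ lying over fixed $\tau,\rho$ and landing in a small full subcategory is itself small. This reduces, via the axioms of a weakly bi-enriched $\infty$-category, to the smallness of the multi-morphism spaces of both $\mJ^\circledast$ and $\mM^\circledast$, which is exactly the totally small / locally small hypothesis.
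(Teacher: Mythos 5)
Your proposal is correct and follows essentially the same route as the paper's proof: a transfinite chain $\mB_0\subset\mB_1\subset\cdots$ indexed up to a regular cardinal $\kappa$ dominating the sizes of the indexing $\infty$-categories of the weights, smallness at each stage via smallness of the enriched functor $\infty$-categories (using total smallness of $\mJ^\circledast$ and local smallness of $\mM^\circledast$), and a regularity/$\kappa$-compactness argument to show the union is closed under $\mH$-weighted colimits. The only differences are cosmetic (the paper phrases the cofinality step via $\kappa$-compactness of $\kappa$-small sets and identifies the union with the closure rather than invoking minimality).
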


\begin{proof}
We need to see that the collection of equivalence classes of the full subcategory of $\mM$ generated by $\mB$ under $\mH$-weighted colimits is a set.
Since $\mH$ is a set and for every weight $(\alpha, \beta, \mJ^\circledast \to \mV'^\ot \times \mW'^\ot, \rH \in \mP\B\Env(\mJ)) \in \mH$
the $\infty$-category $\mJ$ is small, there is a small regular cardinal $\kappa$ such that for every weight $(\alpha, \beta, \mJ^\circledast \to \mV'^\ot \times \mW'^\ot, \rH \in \mP\B\Env(\mJ)) \in \mH$ the collection of equivalence classes of objects of $\mJ$ is $\kappa$-small. 
We will first prove that the full subcategory of $\mM$ generated by $\mB$ under $\mH$-weighted colimits is the $\kappa$-filtered colimit of the 
sequence $(\mB_\lambda)_{\lambda < \kappa}$ of full subcategories of $\mM$ inductively defined by:
\begin{enumerate}
\item $\mB_0 := \mB,$
\item $\mB_{\lambda +1} $ is the union of $\mB_\lambda$ and the full subcategory of $\mM$ of $\rH$-weighted colimits for some weight $(\alpha, \beta, \mJ^\circledast \to \mV'^\ot \times \mW'^\ot, \rH \in \mP\B\Env(\mJ)) \in \mH$
of all enriched functors $\F: \mJ^\circledast \to \mM^\circledast$, where the collection of equivalence classes in $\mJ$ is $\lambda$-small and $\F$ lies over $\alpha, \beta$ and carries $\mJ$ to $\mB_\lambda$.

\item If $\lambda < \kappa$ is a limit ordinal, $\mB_\lambda := \cup_{\lambda' < \lambda} \mB_{\lambda'}.$ 
\end{enumerate}

Indeed, for any enriched functor $\F: \mJ^\circledast \to \mM^\circledast$ lying over $\alpha, \beta$ that carries $\mJ$ to $\colim_{\lambda < \kappa} \mB_\lambda$ for some weight $(\alpha, \beta, \mJ^\circledast \to \mV'^\ot \times \mW'^\ot, \rH \in \mP\B\Env(\mJ)) \in \mH$ the collection of equivalence classes of objects of $\mJ$ is $\kappa$-small and so a $\kappa$-compact object in the category of small sets. 
Since taking equivalence classes preserves filtered colimits 
as a functor from $\infty$-categories to sets, there is a $\lambda < \kappa$ such that $\F$ carries $\mJ$ to $\mB_\lambda$.
So the $\rH$-weighted colimit of $\F$ belongs to $\mB_{\lambda +1}.$
This shows that $\colim_{\lambda < \kappa} \mB_\lambda \subset \mM$ is the smallest full subcategory of $\mM$ containing $\mB$ and closed under $\mH$-weighted colimits.

Therefore we have to see that the colimit $\colim_{\lambda < \kappa} \mB_\lambda $ is small. We use induction:
by assumption $\mB_0=\mB$ is small. The case of a limit ordinal is clear using
that $\kappa$ is small.
For the case of a successor ordinal assume that $\lambda < \kappa$ and $\mB_\lambda$ is small. We want to see that $\mB_{\lambda+1} $ is small.
We fix the following notation: for any weight $(\alpha, \beta, \mJ^\circledast \to \mV'^\ot \times \mW'^\ot, \rH \in \mP\B\Env(\mJ)) \in \mH$ let $\mB_{\lambda+1}^\rH$ be the full subcategory of $\mM$ spanned by 
the objects of $\mB_\lambda$ and the $\rH$-weighted colimits of all enriched functors $\F: \mJ^\circledast \to \mM^\circledast$ that lie over
$\alpha, \beta$ and carry $\mJ$ to $\mB_\lambda.$
Then $\mB_{\lambda+1} \subset \mM $ is the union $\cup_{\rH \in \mH} \mB_{\lambda+1}^\rH$. Since $\mH$ is a set, it is enough to see that
$\mB_{\lambda+1}^\rH$ is small.
So it is enough to see that the full subcategory of $\mM$ spanned by the objects $\colim^\rH(\F)$ for $\F \in \Enr\Fun_{\mV', \mW'}(\mJ, (\alpha,\beta)^*(\mB_{\lambda})) $ is small, where $ \mB_\lambda^\circledast \subset \mM^\circledast$ is the full weakly bienriched subcategory. 
This follows from the fact that the $\infty$-category %$\Enr\Fun_{\mV, \mW}((\alpha,\beta)_!(\mJ), \mB_{\lambda}) $ 
$\Enr\Fun_{\mV', \mW'}(\mJ, (\alpha,\beta)^*(\mB_{\lambda})) $ is small because $(\alpha,\beta)^*(\mB_{\lambda})^\circledast \to \mV'^\ot \times \mW'^\ot $ and $\mJ^\circledast \to \mV'^\ot \times \mW'^\ot$ are small. 
The first is small since the multi-morphism spaces of $\mM^\circledast \to \mV^\ot \times \mW^\ot $ and so $\mB_\Lambda^\circledast \to \mV^\ot \times \mW^\ot  $ are small so that also the multi-morphism spaces of $(\alpha,\beta)^*(\mB_{\lambda})^\circledast \to \mV'^\ot \times \mW'^\ot $ are small,
and $\mV',\mW ', \mB_\Lambda$ are small.

%by Corollary \ref{presy}. % under one of the assumptions.

\end{proof}

\begin{lemma}\label{leuz}
Let $\mM^\circledast \to \mV^\ot \times \mW^\ot$ be a small weakly bienriched $\infty$-category, $\mH$ a set of absolute small weights over $\mV, \mW$ and $\Lambda$ a collection of $\mH$-weighted diagrams in $\mM$. Then $\Lambda$ is a set.
%\begin{enumerate}
%\item $\mM^\circledast \to \mV^\ot \times \mW^\ot$ is an absolute small weakly  bienriched $\infty$-category and $\mH$ consists of small weights. %for every weight $(\alpha, \beta, \mJ^\circledast \to \mQ^\ot \times \mU^\ot, \rH) \in \mH$ the weakly bienriched $\infty$-category$\mJ^\circledast \to \mQ^\ot \times \mU^\ot$ is small.\vspace{1mm}

%\item $\mM^\circledast \to \mV^\ot \times \mW^\ot$ is a small $\infty$-category  bienriched in presentably monoidal $\infty$-categories and $\mH$ consists of small enriched weights. 
%for every weight $(\alpha, \beta, \mJ^\circledast \to \mQ^\ot \times \mU^\ot, \rH) \in \mH$ the weakly bienriched $\infty$-category$\mJ^\circledast \to \mQ^\ot \times \mU^\ot$ is a small $\infty$-category bienriched in presentably monoidal $\infty$-categories and $\rH \in \mP\Env(\mJ)$ belongs to$\mP\mB\Env(\mJ)_{\B\mathrm{enr}} \subset \mP\mB\Env(\mJ).$\end{enumerate}	

\end{lemma}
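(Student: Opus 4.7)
The plan is to show that the total collection of $\mH$-weighted diagrams in $\mM$ (modulo equivalence) is already a set; any sub-collection $\Lambda$ is then automatically a set. Since $\mH$ is a set, it suffices to show that for each fixed weight $\T = (\alpha: \mV'^\ot \to \mV^\ot, \beta: \mW'^\ot \to \mW^\ot, \mJ^\circledast \to \mV'^\ot \times \mW'^\ot, \rH) \in \mH$, the collection of triples $(\rH, \F, \psi: \F_!(\rH) \to \Y)$ with $\F: \mJ^\circledast \to \mM^\circledast$ lying over $\alpha, \beta$ forms a set.

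First I would bound the collection of such $\F$'s. An enriched functor $\F: \mJ^\circledast \to \mM^\circledast$ lying over $\alpha,\beta$ corresponds, via the universal property of pullback, to a $\mV',\mW'$-enriched functor $\mJ^\circledast \to (\alpha, \beta)^*(\mM)^\circledast := \mV'^\ot \times_{\mV^\ot} \mM^\circledast \times_{\mW^\ot} \mW'^\ot$. By hypothesis $\mM^\circledast \to \mV^\ot \times \mW^\ot$ is small, i.e.\ $\mM$ and the multi-morphism spaces of $\mM^\circledast$ are small. Since $\T$ is a totally small weight, $\mV'^\ot, \mW'^\ot$ are small, so the pullback $(\alpha,\beta)^*(\mM)^\circledast \to \mV'^\ot \times \mW'^\ot$ is again totally small. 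Combined with smallness of $\mJ^\circledast$, this forces $\Enr\Fun_{\mV', \mW'}(\mJ, (\alpha,\beta)^*(\mM))$ to be a small $\infty$-category; in particular its collection of equivalence classes of objects is a set.

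Next, for each such $\F$ I need to bound the collection of pairs $(\Y, \psi)$ with $\Y \in \mM$ and $\psi: \F_!(\rH) \to \Y$ a morphism in $\mP\B\Env(\mM)$. Since $\mM$ is small, there are set-many choices of $\Y$; and for each $\Y$, the adjunction $\F_! \dashv \F^*$ yields
\[
\mP\B\Env(\mM)(\F_!(\rH), \Y) \simeq \mP\B\Env(\mJ)(\rH, \F^*(\Y)).
\]
Because $\mJ^\circledast$ is small, $\B\Env(\mJ)$ is a small $\infty$-category, hence $\mP\B\Env(\mJ)$ is locally small, so this mapping space is small. Thus for fixed $\F$ the collection of admissible $(\Y, \psi)$ is a set.

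Assembling these bounds: a set-indexed union over $\mH$ of set-indexed (by $\F$) unions of sets (of pairs $(\Y,\psi)$) is again a set. The only subtlety is the reduction in the first paragraph from "enriched functors over $\alpha,\beta$" to "$\mV',\mW'$-enriched functors into the pullback" and the verification that the pullback is totally small — but this is a direct consequence of the definition of pullback of weakly bi-enriched $\infty$-categories (cf.\ Corollary \ref{eccco}), so no real obstacle arises.
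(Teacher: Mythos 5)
Your argument is correct and is essentially the paper's own proof: decompose over the set $\mH$, observe that the enriched functors $\mJ^\circledast \to \mM^\circledast$ over $\alpha,\beta$ form a small $\infty$-category because $\mJ^\circledast$ is totally small and $(\alpha,\beta)^*(\mM)^\circledast$ is totally small, and then bound the choices of $(\Y,\psi)$ using smallness of $\mM$ together with the adjunction identifying $\mP\B\Env(\mM)(\F_!(\rH),\Y)$ with the small space $\mP\B\Env(\mJ)(\rH,\F^*(\Y))$. No gaps; your spelling out of why the pullback is totally small and why $\mP\B\Env(\mJ)$ is locally small just makes explicit what the paper leaves implicit.
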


\begin{proof}

Let $(\alpha, \beta, \mJ^\circledast \to \mQ^\ot \times \mU^\ot, \rH) \in \mH$ be a weight.
By assumption $\mJ^\circledast\to \mQ^\ot \times \mU^\ot$ is an absolute small weakly  bienriched $\infty$-category so that the $\infty$-category
$\Enr\Fun_{\mQ, \mU}(\mJ, (\alpha, \beta)^*(\mM))$ is small.
Since $\mM$ is small and for every enriched functor $\F: \mJ^\circledast \to \mM^\circledast$ lying over $\alpha, \beta$ and $\Y \in \mM$ the space $\mP\mB\Env(\mJ)(\rH, \F^*(\Y))$ is small, $\Lambda$ is small.

\end{proof}

%One proves the next lemma the similar way via  and Proposition \ref{rembrako}:

\begin{lemma}\label{leuzos}
\begin{enumerate}
\item Let $\mV^\ot \to \Ass$ be a presentably monoidal
$\infty$-category, $\mW^\ot \to \Ass$ a small $\infty$-operad, $\mM^\circledast \to \mV^\ot \times \mW^\ot$ a small left enriched $\infty$-category, $\mH$ a set of small left enriched weights over $\mV, \mW$ and $\Lambda$ a collection of $\mH$-weighted left enriched diagrams in $\mM$. Then $\Lambda$ is a set.

\item Let $\mV^\ot \to \Ass, \mW^\ot \to \Ass$ be presentably monoidal
$\infty$-categories, $\mM^\circledast \to \mV^\ot \times \mW^\ot$ a small bienriched $\infty$-category, $\mH$ a set of small enriched weights over $\mV, \mW$ and $\Lambda$ a collection of $\mH$-weighted enriched diagrams in $\mM$. Then $\Lambda$ is a set.
\end{enumerate}	
	
\end{lemma}

\begin{proof} Let $(\alpha, \beta, \mJ^\circledast \to \mQ^\ot \times \mU^\ot, \rH) \in \mH$. Then $\Enr\Fun_{\mQ, \mU}(\mJ, (\alpha, \beta)^*(\mM))$ is small by \cite[Corollary 4.25.]{heine2024bienriched}.
Since $\mM$ is small and for every enriched functor $\F: \mJ^\circledast \to \mM^\circledast$ lying over $\alpha, \beta$ and $\Y \in \mM$ the spaces $\mP\mB\Env(\mM)_{\L\Enr}(\F_!(\rH), \Y), \mP\mB\Env(\mM)_{\B\Enr}(\F_!(\rH), \Y)$ are small under the assumptions of (1) and (2), respectively, Proposition \ref{rembrako}, also $\Lambda$ is small.
	
\end{proof}

\begin{notation}\label{Notali}
Let $(\mV^\ot \to \Ass, \rS)$, $(\mW^\ot \to \Ass, \T)$	be small localization pairs,  $\mM^\circledast \to \mV^\ot \times \mW^\ot$ a small $\rS, \T$-bienriched  $\infty$-category and $\Lambda$ a set of diagrams in $\mM$.
\begin{enumerate}
\item Let $\mQ_\Lambda$ be the set of morphisms $\theta: \colim^{\rH}(\iota \circ \F) \to \iota(\Y)$ in $\mP\B\Env(\mM)_{\rS,\T}$ adjoint to the morphism $\rH \xrightarrow{\lambda} \F^\ast(\Y) \to (\iota\circ \F)^\ast(\iota(\Y))$
in $\mP\B\Env(\mJ)$ for some diagram $(\F:\mJ^\circledast \to \mM^\circledast, \Y, \lambda:  \rH \to \F^*(\Y)) \in \Lambda$, where $\iota: \mM^\circledast \to \mP\B\Env(\mM)_{\rS,\T}^\circledast$ is the canonical embedding.

\item Let $\mQ'_\Lambda$ be the set of morphisms in $\mP\B\Env(\mM)_{\rS,\T}^\circledast$ of the form $\V_1 \ot ... \ot \V_\n \ot \f \ot \W_1 \ot ... \ot \W_\m$ for $\f \in \mQ_\Lambda$ and $\V_1 ,..., \V_\n \in \mV, \mW_1, ..., \W_\m \in \mW$ and $\n, \m \geq 0.$
\end{enumerate}
\end{notation}

\begin{notation}
Let $(\mV^\ot \to \Ass, \rS)$, $(\mW^\ot \to \Ass, \T)$	be small localization pairs,  $\mM^\circledast \to \mV^\ot \times \mW^\ot$ a small $\rS, \T$-bienriched  $\infty$-category and $\Lambda$ a set of diagrams in $\mM$.

\begin{enumerate}
\item Let $$\mP\B\Env_\Lambda(\mM)_{\rS,\T}^\circledast \subset \mP\B\Env(\mM)_{\rS,\T}^\circledast $$
be the full weakly bienriched subcategory spanned the $\mQ'_\Lambda$-local objects.

%by all $\X$ such that for every morphism $\colim^{\rH}(\iota \circ \F) \to \iota(\Y)$ in $\mP\B\Env(\mM)_{\rS,\T}$ adjoint to a morphism $$\rH \xrightarrow{\lambda} \F^\ast(\Y) \to (\iota\circ \F)^\ast(\iota(\Y))$$in $\mP\B\Env(\mJ)$ for some diagram $(\F:\mJ^\circledast \to \mM^\circledast, \Y, \lambda:  \rH \to \F^*(\Y)) \in \Lambda$the induced morphism $$\Mor_{\mP\B\Env(\mM)_{\rS,\T}}(\iota(\Y), \X)  \to \Mor_{\mP\B\Env(\mM)_{\rS,\T}}(\colim^{\rH}(\iota \circ \F), \X)$$ is an equivalence.

\item Let $$\mP\widetilde{\B\Env}_\Lambda(\mM)_{\rS,\T}^\circledast := \mV^\ot
\times_{\rS^{-1}\mP\Env(\mV)^\ot} \mP\B\Env_\Lambda(\mM)_{\rS,\T}^\circledast \times_{\T^{-1}\mP\Env(\mW)^\ot} \mW^\ot.$$
\end{enumerate}
\end{notation}

\begin{remark}\label{umbes}
Note that $\mP\B\Env_\Lambda(\mM)_{\rS,\T}^\circledast \subset \mP\B\Env(\mM)_{\rS,\T}^\circledast $ is the full weakly bienriched subcategory spanned by all $\X$ such that the $\mW, \mV$-enriched functor
$\Mor_{\mP\B\Env(\mM)_{\rS,\T}}((-)_{\mid\mM},\X): (\mM^\op)^\circledast \to (\rS^{-1}\mP\Env(\mV) \otimes \T^{-1}\mP\Env(\mW))^\circledast$
sends every diagram of $ \Lambda$ to a weighted limit diagram.	
	
\end{remark}

\begin{lemma}\label{lolet}
Let $(\mV^\ot \to \Ass, \rS)$, $(\mW^\ot \to \Ass, \T)$	be small localization pairs,  $\mM^\circledast \to \mV^\ot \times \mW^\ot$ a small $\rS, \T$-bienriched  $\infty$-category and $\Lambda$ a set of diagrams in $\mM$.

\begin{enumerate}
\item The full weakly bienriched subcategory $\mP\B\Env_\Lambda(\mM)_{\rS,\T}^\circledast \subset \mP\B\Env(\mM)_{\rS,\T}^\circledast $ is an accessible $\rS^{-1}\mP\Env(\mV), \T^{-1}\mP\Env(\mW) $-enriched localization.

\item The restricted localization functor $\mM^\circledast \subset \mP\B\Env(\mM)_{\rS,\T}^\circledast \xrightarrow{\L} \mP\B\Env_\Lambda(\mM)_{\rS,\T}^\circledast $ %relative to $\rS^{-1}\mP\Env(\mV)^\ot \times \T^{-1}\mP\Env(\mW)^\ot $
sends diagrams of $\Lambda $ to weighted colimit diagrams.	
\end{enumerate}
\end{lemma}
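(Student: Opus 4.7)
The plan is to prove (1) by standard accessible-reflective-subcategory arguments, taking care to upgrade the localization to an enriched one, and to derive (2) by unwinding the construction of $\mQ'_\Lambda$ together with the characterization of weighted colimits in Lemma \ref{Enristo}.

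For (1), I would first note that $\Lambda$ is a set by hypothesis, hence $\mQ_\Lambda$ is a set, and $\mQ'_\Lambda$ is a set since the small localization-pair hypothesis makes $\mV, \mW$ small. By Lemma \ref{Lauf} the ambient $\infty$-category $\mP\B\Env(\mM)_{\rS,\T}$ is presentable and presentably bitensored over $\rS^{-1}\mP\Env(\mV), \T^{-1}\mP\Env(\mW)$, so the full subcategory of $\mQ'_\Lambda$-local objects is an accessible reflective localization $\L_2 : \mP\B\Env(\mM)_{\rS,\T} \rightleftarrows \mP\B\Env_\Lambda(\mM)_{\rS,\T}$. To upgrade this to an enriched localization it suffices to verify that the saturated closure $\overline{\mQ'_\Lambda}$ is stable under the biaction of $\rS^{-1}\mP\Env(\mV), \T^{-1}\mP\Env(\mW)$; equivalently, for each $\f \in \mQ_\Lambda$ and $\V \in \rS^{-1}\mP\Env(\mV), \W \in \T^{-1}\mP\Env(\mW)$ the morphism $\V \ot \f \ot \W$ must lie in $\overline{\mQ'_\Lambda}$. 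Since $\rS^{-1}\mP\Env(\mV)$ is generated under small colimits by the image of $\Env(\mV)$ (hence by finite tensor products of objects of $\mV$), and similarly for $\W$, and since the biaction preserves small colimits in each variable, this reduces by a two-step colimit closure argument to the tautology that $\V_1 \ot \cdots \ot \V_\n \ot \f \ot \W_1 \ot \cdots \ot \W_\m \in \mQ'_\Lambda$ by definition, using that $\overline{\mQ'_\Lambda}$ is closed under colimits in $\Fun([1],\mP\B\Env(\mM)_{\rS,\T})$.

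For (2), fix a diagram $(\F : \mJ^\circledast \to \mM^\circledast, \Y, \lambda : \rH \to \F^*(\Y)) \in \Lambda$. Writing $\L_1 : \mP\B\Env(\mM) \to \mP\B\Env(\mM)_{\rS,\T}$ for the first localization, the morphism $\theta : \L_1(\F_!(\rH)) = \colim^\rH(\iota \circ \F) \to \iota(\Y)$ adjoint to $\lambda$ belongs to $\mQ_\Lambda$ by construction. I want to show that its image $\L_2(\theta)$ exhibits $\L_2 \iota(\Y)$ as the $\rH$-weighted colimit of $\L_2 \circ \iota \circ \F$ in $\mP\B\Env_\Lambda(\mM)_{\rS,\T}$. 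By Lemma \ref{Enristo} this reduces to checking that for every $\Z \in \mP\B\Env_\Lambda(\mM)_{\rS,\T}$ and every tuple $\V_1,\ldots,\V_\n \in \rS^{-1}\mP\Env(\mV), \W_1,\ldots,\W_\m \in \T^{-1}\mP\Env(\mW)$, the map induced by $\vec V \ot \theta \ot \vec W$ between multi-morphism spaces is an equivalence. Since $\L_2$ is the enriched localization constructed in (1), these multi-morphism spaces agree with those computed in $\mP\B\Env(\mM)_{\rS,\T}$, and the required map is the action of $\mP\B\Env(\mM)_{\rS,\T}(-,\Z)$ on $\vec V \ot \theta \ot \vec W \in \overline{\mQ'_\Lambda}$, which is an equivalence by $\mQ'_\Lambda$-locality of $\Z$.

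The only real technical point is the biaction stability in (1); everything else is a straightforward unwinding of definitions via the enriched adjunction $\L_2 \dashv$ inclusion.
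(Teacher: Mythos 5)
Your proposal is correct and follows essentially the same route as the paper: part (1) is proved exactly as in the text, by observing that the saturated closure of $\mQ'_\Lambda$ is closed under small colimits and hence under the $\rS^{-1}\mP\Env(\mV),\T^{-1}\mP\Env(\mW)$-biaction (since these act colimit-preservingly and are generated under colimits by tensor products of objects of $\mV,\mW$), so that the presentably bitensored ambient category yields an accessible enriched localization. The only cosmetic difference is in (2): you verify the weighted-colimit universal property by hand from $\mQ'_\Lambda$-locality of the targets, whereas the paper gets the same conclusion in one line by noting that the enriched left adjoint $\L$ preserves weighted colimits (Proposition \ref{remqalo}) and inverts the generating local equivalence $\theta\in\mQ_\Lambda$ -- these are the same facts, repackaged.
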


\begin{proof}
(1): Let $\iota: \mM^\circledast \to \mP\B\Env(\mM)_{\rS,\T}^\circledast$ be the canonical embedding. %Let $\mQ_\Lambda$ be the set of morphisms $\theta: \colim^{\rH}(\iota \circ \F) \to \iota(\Y)$ in $\mP\B\Env(\mM)_{\rS,\T}$ adjoint to the morphism $$\rH \xrightarrow{\lambda} \F^\ast(\Y) \to (\iota\circ \F)^\ast(\iota(\Y))$$in $\mP\B\Env(\mJ)$ for some diagram $(\F:\mJ^\circledast \to \mM^\circledast, \Y, \lambda:  \rH \to \F^*(\Y)) \in \Lambda$.
%Let $\mQ'$ be the set of morphisms in $\mP\B\Env(\mM)_{\rS,\T}^\circledast$ of the form $\V_1 \ot ... \ot \V_\n \ot \f \ot \W_1 \ot ... \ot \W_\m$ for $\f \in \mQ$ and $\V_1 ,..., \V_\n \in \mV, \mW_1, ..., \W_\m \in \mW$ and $\n, \m \geq 0.$
The saturated class generated by $\mQ'_\Lambda$ is closed under small colimits and so closed under the $\rS^{-1}\mP\Env(\mV), \T^{-1}\mP\Env(\mW)$-biaction. 
%which are generated under small colimits by tensor products of $\mV, \mW$, respectively.
Consequently, because $\mP\B\Env(\mM)_{\rS,\T}^\circledast \to \rS^{-1}\mP\Env(\mV)^\ot \times \T^{-1}\mP\Env(\mW)^\ot $ is a presentably bitensored $\infty$-category, the full weakly bienriched subcategory $\mP\B\Env_\Lambda(\mM)_{\rS,\T}^\circledast \subset \mP\B\Env(\mM)^\circledast_{\rS,\T}$ is an accessible $\rS^{-1}\mP\Env(\mV), \T^{-1}\mP\Env(\mW)$-enriched localization. % of $ \mP\B\Env(\mM)^\circledast_{\rS,\T}\to \rS^{-1}\mP\Env(\mV)^\ot \times \T^{-1}\mP\Env(\mW)^\ot $.
% with respect to $\mQ'_\Lambda$.
% which agrees with $\mP\B\Env_\Lambda(\mM)_{\rS,\T}^\circledast\to \rS^{-1}\mP\Env(\mV)^\ot \times \T^{-1}\mP\Env(\mW)^\ot.$

(2): The induced morphism
$\rH \xrightarrow{\lambda} \F^\ast(\Y) \to (\L \circ \iota \circ \F)^\ast(\L(\iota(\Y))) $ is adjoint to the equivalence
$$\colim^{\rH}(\L \circ \iota \circ \F) \simeq \L(\colim^{\rH}(\iota \circ \F)) \xrightarrow{\L(\theta)} \L(\iota(\Y)),$$
where $\theta \in \mQ$, and so exhibits $\L(\iota((\Y))$ as the $\rH$-weighted colimit of $\L \circ \iota \circ \F.$
%In other words $\psi$ sends diagrams of $\Lambda $ to weighted colimit diagrams.	

\end{proof}

\begin{notation}
Let $(\mV^\ot \to \Ass, \rS)$, $(\mW^\ot \to \Ass, \T)$	be small localization pairs,  $\mM^\circledast \to \mV^\ot \times \mW^\ot$ a small $\rS, \T$-bienriched $\infty$-category, $\mH$ a set of absolute small weights over $\mV, \mW$ and $\Lambda$ a collection of $\mH$-weighted diagrams in $\mM$.
By Lemma \ref{leuz} (1) the collection $\Lambda$ is a set.
Let $$\mP\B\Env^\mH_\Lambda(\mM)_{\rS,\T}^\circledast \subset \mP\widetilde{\B\Env}_\Lambda(\mM)_{\rS,\T}^\circledast $$ be the smallest full weakly bienriched subcategory containing the essential image of the restricted $\mV, \mW$-enriched localization functor
$ \mM^\circledast \to \mP\widetilde{\B\Env}_\Lambda(\mM)_{\rS,\T}^\circledast$
and closed under $\mH$-weighted colimits.

If $\rS=\T=\emptyset$, we drop $\rS,\T$ from the notation.
If $\Lambda=\emptyset$, we drop $\Lambda$ from the notation.

\end{notation}

\begin{remark}
We obtain a $\mV, \mW$-enriched functor $\mM^\circledast \to \mP\B\Env^\mH_\Lambda(\mM)_{\rS,\T}^\circledast. $
	
\end{remark}

%\begin{notation}
%Let $(\mV^\ot \to \Ass, \rS)$, $(\mW^\ot \to \Ass, \T)$	be small localization pairs,  $\mM^\circledast \to \mV^\ot \times \mW^\ot$ a small $\rS, \T$-bienriched $\infty$-category, $\kappa$ a regular cardinal and $\Lambda$ a collection of diagrams in $\mM$ whose underlying weights are $\kappa$-small.Let $$\mP\L\Env^\kappa_\Lambda(\mM)_{\rS,\T}^\circledast := \mP\B\Env^{\mH}_\Lambda(\mM)_{\rS,\T}^\circledast,$$$$\mP\R\Env^\kappa_\Lambda(\mM)_{\rS,\T}^\circledast := \mP\B\Env^{\mH'}_\Lambda(\mM)_{\rS,\T}^\circledast$$for $\mH$ the set of $\kappa$-small left weights over $\mV$and $\mH'$ the set of $\kappa$-small right weights over $\mW.$
%\end{notation}

%\begin{remark}By Proposition \ref{weeei} the full weakly bienriched subcategories $$\mP\L\Env^\kappa_\Lambda(\mM)_{\rS,\T}^\circledast, \mP\R\Env^\kappa_\Lambda(\mM)_{\rS,\T}^\circledast \subset  \mP\B\Env_\Lambda(\mM)_{\rS,\T}^\circledast$$are generated under $\kappa$-small colimits and left tensors, right tensors, respectively, by objects of $\mM.$\end{remark}

\begin{remark}\label{bipre}
Lemma \ref{lolet} implies that $\mP\B\Env_\Lambda(\mM)_{\rS,\T}^\circledast \to \rS^{-1}\mP\Env(\mV)^\ot \times \T^{-1}\mP\Env(\mW)^\ot$ is a presentably bitensored $\infty$-category. Hence $\mP\widetilde{\B\Env}_\Lambda(\mM)_{\rS,\T}^\circledast  \to \mV^\ot \times \mW^\ot$ is a $\rS, \T$-bienriched $\infty$-category by Proposition \ref{eqq} that admits small weighted colimits. Thus $\mP\B\Env^\mH_\Lambda(\mM)_{\rS,\T}^\circledast \to \mV^\ot \times \mW^\ot$ is a small $\rS, \T$-bienriched $\infty$-category that admits $\mH$-weighted colimits by Lemma \ref{small}.
	
\end{remark}

\begin{remark}\label{embesto}By Lemma \ref{lolet} (2) the restricted localization functor $\mM^\circledast \to \mP\B\Env_\Lambda(\mM)_{\rS,\T}^\circledast $ and so also $\mM^\circledast \to \mP\B\Env^\mH_\Lambda(\mM)_{\rS,\T}^\circledast $ send diagrams of $\Lambda $ to $\mH$-weighted colimit diagrams.	
%Hence $\psi$ sends diagrams of $\Lambda $ to $\mH$-weighted colimit diagrams.	

\end{remark}

\begin{remark}\label{embes}

If $\Lambda$ consists of weighted colimit diagrams, every object of $\mM$ belongs to $\mP\B\Env_\Lambda(\mM)_{\rS,\T} $.
So the restricted localization functor
$\mM^\circledast \to \mP\B\Env_\Lambda(\mM)_{\rS,\T}^\circledast$
and so $\mM^\circledast \to \mP\B\Env^\mH_\Lambda(\mM)_{\rS,\T}^\circledast$ are embeddings.
	
\end{remark}

For the next notation we use the following terminology:
\begin{definition}
A weakly bienriched $\infty$-category $\mM^\circledast \to \mV^\ot \times \mW^\ot$ exhibits $\mM$ as 
\begin{itemize}
\item locally left pseudo-enriched in $\mV$ if $\mV^\ot \to \Ass$ admits a tensor unit and the pullback $\Ass \times_{\mV^\ot} \mM^\circledast \times_{\mW^\ot} \emptyset^\ot \to \Ass$ along the unique map $\emptyset^\ot \to \mW^\ot$ and the unique map $ \Ass \to \mV^\ot$ preserving the tensor unit exhibits $\mM$ as left pseudo-enriched.
		
\item locally right pseudo-enriched in $\mW$ if $\mW^\ot \to \Ass$ admits a tensor unit and the pullback $\emptyset^\ot \times_{\mV^\ot} \mM^\circledast \times_{\mW^\ot} \Ass \to \Ass$ along the unique map $\emptyset^\ot \to \mV^\ot$ and the unique map $ \Ass \to \mW^\ot$ preserving the tensor unit exhibits $\mM$ as right pseudo-enriched.
		
\item locally bipseudo-enriched in $\mV, \mW$ if $\mV^\ot \to \Ass, \mW^\ot \to \Ass$ admit a tensor unit and the pullback $\Ass \times_{\mV^\ot} \mM^\circledast\times_{\mW^\ot} \Ass \to \Ass \times \Ass$ along the unique maps $\Ass \to \mV^\ot, \Ass \to \mW^\ot$ preserving the tensor unit exhibits $\mM$ as bipseudo-enriched.
\end{itemize}
	
\end{definition}

\begin{proposition}\label{Corall}

Let $\n \geq 1$ and $(\mV_\bi^\ot \to \Ass, \rS_\bi)$, $(\mW_\bi^\ot \to \Ass, \T_\bi)$ small localization pairs for $1 \leq \bi \leq \n$.
Let $\mM_\bi^\circledast \to \mV_\bi^\ot \times \mW_\bi^\ot$ be small $\rS_\bi, \T_\bi$-bienriched $\infty$-categories such that for every $1 \leq \bi \leq \n$ the weakly bienriched $\infty$-category
$\mM_\bi^\circledast \to \mV_\bi^\ot \times \mW_\bi^\ot$ is locally bipseudo-enriched if there is some $1 \leq \bj \leq \n$ with $\bj \neq \bi$ such that $\rS_\bj$ or $ \T_\bj$ do not only contain equivalences.
Let $\mN^\circledast \to \mV^\ot \times \mW^\ot$ be a weakly bienriched $\infty$-category,
where $ \mV^\ot:= \mV_1^\ot \times_\Ass ... \times_\Ass \mV^\ot_\n, \mW^\ot:= \mW_1^\ot \times_\Ass ... \times_\Ass \mW^\ot_\n.$
Assume that for every $1 \leq \bi \leq \n$ the pullback
$\mN_\bi^\circledast := \mV_\bi^\ot \times_{\mV^\ot} \mN^\circledast \times_{\mW^\ot} \mW_\bi^\ot$ along $\tau_\bi: \mV_\bi^\ot \to \mV^\ot, \tau_\bi: \mW_\bi^\ot \to \mW^\ot$ is a $\rS_\bi, \T_\bi$-bienriched $\infty$-category that admits left and right tensors and small colimits preserved by forming left and right tensors.

\begin{enumerate}
\item The induced functor $$ \Enr\Fun_{\mV,\mW}(\prod_{1 \leq \bi \leq \n} \mP\widetilde{\B\Env}(\mM_\bi)_{\rS_\bi,\T_\bi}, \mN) \to \Enr\Fun_{\mV,\mW}(\prod_{1 \leq \bi \leq \n}\mM_\bi, \mN)$$	
admits a fully faithful left adjoint that lands in the full subcategory
$$ \LinFun^{\L,\L}_{\mV,\mW}(\prod_{1 \leq \bi \leq \n} \mP\widetilde{\B\Env}(\mM_\bi)_{\rS_\bi,\T_\bi}, \mN)$$ of $\mV, \mW$-linear functors that admit a right adjoint component-wise.

\item The following induced functor is an equivalence: $$\LinFun^{\L,\L}_{\mV,\mW}(\prod_{1 \leq \bi \leq \n} \mP\widetilde{\B\Env}(\mM_\bi)_{\rS_\bi,\T_\bi}, \mN) \to \Enr\Fun_{\mV,\mW}(\prod_{1 \leq \bi \leq \n}\mM_\bi, \mN).$$
\end{enumerate}

\end{proposition}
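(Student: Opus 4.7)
The plan is to prove the proposition by induction on $\n$, with the case $\n=1$ being exactly Proposition \ref{pseuso} applied to the $\rS_1, \T_1$-bi-enriched $\infty$-category $\mM_1^\circledast$ and to the bitensored $\infty$-category $\mN_1^\circledast$ which, by hypothesis on the pullback, is compatible with small colimits after restricting enrichment along $\tau_1 \colon \mV_1^\ot \to \mV^\ot, \mW_1^\ot \to \mW^\ot$.

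For the inductive step from $\n-1$ to $\n$, the strategy is to single out one factor (say the $\n$-th) and use Proposition \ref{lehmmm} to convert multi-variable enriched functors into iterated enriched functor $\infty$-categories. Concretely, Proposition \ref{lehmmm} identifies
$$\Enr\Fun_{\mV, \mW}\bigl(\textstyle\prod_{\bi=1}^{\n}\mM_\bi,\mN\bigr) \simeq \Enr\Fun_{\mV_\n, \mW_\n}\bigl(\mM_\n, \Enr\Fun_{\mV \setminus \mV_\n, \mW \setminus \mW_\n}(\textstyle\prod_{\bi<\n}\mM_\bi, \mN)\bigr),$$
where the outer functor $\infty$-category is formed after restricting enrichment along $\tau_\n$ and the inner enriched functor $\infty$-category is formed after restricting enrichment along the $\tau_\bj$ for $\bj<\n$. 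Applying the inductive hypothesis to the inner $\infty$-category of enriched functors on $\prod_{\bi<\n} \mM_\bi$ yields the extension to $\prod_{\bi<\n}\mP\widetilde{\B\Env}(\mM_\bi)_{\rS_\bi,\T_\bi}$, after which Proposition \ref{pseuso} applied to $\mM_\n$ yields the extension to the remaining factor. Iterating the equivalence of Proposition \ref{lehmmm} in the reverse direction converts the result back to an enriched functor on the full product, which by construction is $\mV,\mW$-linear and admits a right adjoint in each variable separately.

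The key technical point is that each intermediate enriched functor $\infty$-category must satisfy the hypotheses required for the next application of Proposition \ref{pseuso}: it must be $\rS_\bi, \T_\bi$-bi-enriched with respect to the singled-out factor, and it must be compatible with small colimits as a bitensored $\infty$-category after restricting enrichment along $\tau_\bi$. The first follows from Theorem \ref{psinho} together with the observation that enriched functor $\infty$-categories inherit enrichment from the target. The second is where the hypothesis that $\mN_\bi^\circledast$ admits left and right tensors and small colimits preserved by tensoring enters, combined with Proposition \ref{Line} to upgrade from enrichment to a tensored structure. The locally bi-pseudo-enriched hypothesis on $\mM_\bi^\circledast$ whenever some other $\rS_\bj, \T_\bj$ is non-trivial is precisely what guarantees that restricting enrichment along maps preserving tensor units commutes with taking the $\rS_\bj, \T_\bj$-localization, which is needed so that the intermediate enriched functor $\infty$-categories at each step remain correctly localized for the inductive hypothesis.

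The main obstacle will be the bookkeeping of enrichment on the tower of intermediate enriched functor $\infty$-categories, in particular verifying at each inductive step that the restricted target acquires the bitensored structure compatible with small colimits needed for the next application of Proposition \ref{pseuso}. Once this is in place, part (2) follows formally by composing the $\n$ equivalences of $\infty$-categories of linear functors with right adjoints, since at each stage Proposition \ref{pseuso} produces a fully faithful left adjoint landing in the subcategory $\LinFun^\L$.
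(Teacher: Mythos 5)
Your induction hinges on a currying step that the paper's framework does not supply. Proposition \ref{lehmmm} identifies $\Enr\Fun_{\mV,\mW}(\mM\times\mO,\mN)$ with an iterated functor $\infty$-category only when $\mM$ is weakly \emph{left} enriched in $\mV$ and $\mO$ is weakly \emph{right} enriched in $\mW$; in the situation at hand every factor $\mM_\bi$ is bi-enriched in its own pair $(\mV_\bi,\mW_\bi)$ and the ambient operads are $\mV^\ot=\mV_1^\ot\times_\Ass\ldots\times_\Ass\mV_\n^\ot$, $\mW^\ot=\mW_1^\ot\times_\Ass\ldots\times_\Ass\mW_\n^\ot$. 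The object your inductive step needs, an $\infty$-category of enriched functors out of $\prod_{\bi<\n}\mM_\bi$ that is itself weakly bi-enriched in $\mV_\n,\mW_\n$, is never constructed in the paper (Notation \ref{bbbb} only produces a weakly right enriched functor category out of a weakly left enriched source), and since the $\mV_\bi^\ot,\mW_\bi^\ot$ of a localization pair are mere $\infty$-operads you cannot pass through Theorem \ref{biii} either. Even granting such a currying, the point you yourself flag as the key technical issue --- that each intermediate functor $\infty$-category is $\rS_\n,\T_\n$-bi-enriched and admits left and right tensors and small colimits preserved by tensoring --- is not covered by what you invoke: Theorem \ref{psinho} treats only $\kappa$-enrichment or enrichment under limit hypotheses on the target, not general localization pairs, and the results on colimits in enriched functor categories (Theorem \ref{Enric}) come later and are not cited by you. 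Finally the base case is not ``exactly'' Proposition \ref{pseuso}: there $\mN$ must be bitensored over $\rS^{-1}\mP\Env(\mV)^\ot\times\T^{-1}\mP\Env(\mW)^\ot$ compatibly with small colimits, whereas your $\mN$ lives over $\mV^\ot\times\mW^\ot$; one first has to lift it to such a $\bar{\mN}$ via Proposition \ref{eqq} together with the tensor and colimit hypotheses, and to identify $\Enr\Fun_{\mV,\mW}(\mP\widetilde{\B\Env}(\mM)_{\rS,\T},\mN)$ with $\Enr\Fun_{\rS^{-1}\mP\Env(\mV),\T^{-1}\mP\Env(\mW)}(\mP\B\Env(\mM)_{\rS,\T},\bar{\mN})$.

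For comparison, the paper avoids the induction entirely: the fully faithful left adjoint of the restriction functor exists by a general extension result (\cite[Proposition 2.62.]{Heine2024Lur}), and the only thing left to check is that it lands in functors that are linear and right-adjointable in each variable. This is done by evaluating at a fixed family of objects in the remaining coordinates, factoring that evaluation through the partial extension (\cite[Proposition 8.29. (5), Proposition 3.42.]{HEINE2023108941}), and thereby reducing to the one-variable case, which is then handled by Proposition \ref{eqq} and Proposition \ref{pseuso} exactly as sketched above. To repair your argument you would either have to develop the bi-enriched functor-category currying and its compatibility with localization pairs from scratch, or adopt this evaluation-and-reduction scheme.
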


\begin{proof}
(2) follows immediately from (1) because the functor of (2) is conservative.	
(1): The existence of a fully faithful left adjoint in (1) follows from \cite[Proposition 2.65.]{heine2024bienriched}.
Let $1 \leq \bi \leq \n$ and $\X:=(\X_\bj \in \mP\widetilde{\B\Env}(\mM_\bj)_{\rS_\bj,\T_\bj})_{1 \leq \bj \leq \n, \bj \neq \bi}$
a family of objects.
We like to see that the composition
$$\Enr\Fun_{\mV,\mW}(\prod_{1 \leq \bi \leq \n}\mM_\bi, \mN) \to 
\Enr\Fun_{\mV,\mW}(\prod_{1 \leq \bi \leq \n} \mP\widetilde{\B\Env}(\mM_\bi)_{\rS_\bi,\T_\bi}, \mN) $$$$\xrightarrow{\ev_\X} \Enr\Fun_{\mV_\bi,\mW_\bi}(\mP\widetilde{\B\Env}(\mM_\bi)_{\rS_\bi,\T_\bi}, \mN_\bi)$$
lands in $\LinFun^{\L}_{\mV_\bi,\mW_\bi}(\mP\widetilde{\B\Env}(\mM_\bi)_{\rS_\bi,\T_\bi}, \mN_\bi).$
By \cite[Proposition 8.29. (5), Proposition 3.42.]{HEINE2023108941} the latter functor factors as
$$\Enr\Fun_{\mV,\mW}(\prod_{1 \leq \bi \leq \n}\mM_\bi, \mN) \to \Enr\Fun_{\mV,\mW}(\prod_{1 \leq \bj < \bi}\mP\widetilde{\B\Env}(\mM_\bj)_{\rS_\bj,\T_\bj} \times \mM_\bi \times \prod_{\bi < \bj \leq \n}\mP\widetilde{\B\Env}(\mM_\bj)_{\rS_\bj,\T_\bj}, \mN) \xrightarrow{\ev_\X} $$$$
\Enr\Fun_{\mV_\bi, \mW_\bi}(\mM_\bi, \mN_\bi) \to \Enr\Fun_{\mV_\bi,\mW_\bi}(\mP\widetilde{\B\Env}(\mM_\bi)_{\rS_\bi,\T_\bi}, \mN_\bi).$$
So we can reduce to the case $\n=1,$ where we use that $\mN_\bi^\circledast \to \mV_\bi^\ot \times \mW_\bi^\ot$ is a $\rS_\bi, \T_\bi$-bienriched $\infty$-category that admits left and right tensors and small colimits preserved by forming left and right tensors.

So let $\n=1.$
By Corollary \ref{eqq} and \cite[Lemma 4.31.]{heine2024bienriched} the functor $\mN^\circledast \to \mV^\ot \times \mW^\ot$ is the pullback of a unique bitensored $\infty$-category $ \bar{\mN}^\circledast \to \rS^{-1}\mP\Env(\mV)^\ot \times \T^{-1}\mP\Env(\mW)^\ot$ compatible with small colimits.
By Corollary \ref{eqq} the functor $$\beta: \Enr\Fun_{\rS^{-1}\mP\Env(\mV), \T^{-1}\mP\Env(\mW)}(\mP\B\Env(\mM)_{\rS,\T},\bar{\mN}) \to \Enr\Fun_{\mV,\mW}(\mP\widetilde{\B\Env}(\mM)_{\rS,\T},\mN)$$ is an equivalence.
Since $\mP\Env(\mV)$ is generated by $\mV$ under small colimits and the tensor product, also the localization $\rS^{-1}\mP\Env(\mV)$ is generated by $\mV$ under small colimits and the tensor product. This implies that the functor
$$\alpha: \LinFun^\L_{\rS^{-1}\mP\Env(\mV), \T^{-1}\mP\Env(\mW)}(\mP\B\Env(\mM)_{\rS,\T},\bar{\mN}) \to \LinFun^\L_{\mV,\mW}(\mP\widetilde{\B\Env}(\mM)_{\rS,\T},\mN) $$
is the pullback of the functor $\beta$ and so an equivalence, too. 
By Proposition \ref{pseuso} the composition $$\Enr\Fun_{\rS^{-1}\mP\Env(\mV), \T^{-1}\mP\Env(\mW)}(\mP\B\Env(\mM)_{\rS,\T},\bar{\mN}) \xrightarrow{\beta} \Enr\Fun_{\mV,\mW}(\mP\widetilde{\B\Env}(\mM)_{\rS,\T},\mN) $$$$ \to \Enr\Fun_{\mV,\mW}(\mM,\mN)$$
admits a fully faithful left adjoint that lands in the full subcategory
$$\LinFun^\L_{\rS^{-1}\mP\Env(\mV), \T^{-1}\mP\Env(\mW)}(\mP\B\Env(\mM)_{\rS,\T},\bar{\mN}).$$
%So the result follows.	

\end{proof}

\begin{notation}\label{zzzzo}

For every $\n \geq 1$ and $1 \leq \bi \leq \n$ let $\mV_\bi^\ot \to  \Ass,
\mV_\bi^\ot \to  \Ass$ be small $\infty$-operads and $\mH_\bi$ a set of weights over $\mV_\bi, \mW_\bi$.
Assume that for every $1 \leq \bi \leq \n$ the $\infty$-operad $\mV_\bi^\ot \to  \Ass$ admits a tensor unit if for every $1 \leq \bj \leq \n, \bj \neq \bi$ the set $\mH_\bj$ does not only consist of right weights.
Assume that for every $1 \leq \bi \leq \n$ the $\infty$-operad $\mW_\bi^\ot \to  \Ass$ admits a tensor unit if for every $1 \leq \bj \leq \n, \bj \neq \bi$ the set $\mH_\bj$ does not only consist of left weights.
We define a set $\underline{\mH}_\bi$ of weights over $\mV_1 \times... \times \mV_\n, \mW_1 \times... \times \mW_\n$ the following way:
if $\mH_\bi$ does not only consist of left weights and does not only consist of right weights, $\underline{\mH}_\bi$ is the image of $\mH_\bi$ under
the maps of $\infty$-operads (using Lemma \ref{preten}):
\begin{equation}\label{qqo}
\{\tu\} \times... \times \{\tu\} \times \mV_\bi \times \{\tu\} \times ... \times \{\tu\} \to \mV_1 \times... \times \mV_\n, \end{equation}
\begin{equation}\label{qqoo}
\{\tu\} \times... \times \{\tu\} \times \mW_\bi \times \{\tu\} \times ... \times \{\tu\} \to \mW_1 \times... \times \mW_\n.\end{equation}
If $\mH_\bi$ only consists of left weights, $\underline{\mH}_\bi$ is the image of $\mH_\bi$ under the map of $\infty$-operads (\ref{qqo}).
If $\mH_\bi$ only consists of right weights, $\underline{\mH}_\bi$ is the image of $\mH_\bi$ under the map of $\infty$-operads (\ref{qqoo}).
If $\mH_\bi$ is empty, $\underline{\mH}_\bi$ is empty.
Let $$\mH_1 \boxtimes...\boxtimes\mH_\n:= \bigcup_{\bi=1}^\n \underline{\mH}_\bi.$$
If $\n=0$, we agree that $\mH_1 \boxtimes...\boxtimes\mH_\n $ is empty. 

%We make a dual definition for right weights.
%For every sets $\mH_\bi$ of weights over $\mV_\bi, \mW_\bi$ for $1 \leq \bi \leq \n$ we set $$\mH_1 \boxtimes...\boxtimes\mH_\n:= \bigcup_{\bi=1}^\n \omega(\theta^\bi, \theta^\bi)(\mH_\bi) \in \omega(\mV_1 \times... \times \mV_\n, \mW_1 \times... \times \mW_\n). $$If $\n=0$, we agree that $\mH_1 \boxtimes...\boxtimes\mH_\n \in \omega(\ast,\ast) $ is empty.For every sets $\mH_\bi$ of left weights over $\mV_\bi$ for$1 \leq \bi \leq \n$ we define a set $$\mH_1 \boxtimes...\boxtimes\mH_\n:= \bigcup_{\bi=1}^\n \omega(\theta^\bi, \emptyset)(\mH_\bi) \in \omega(\mV_1 \times... \times \mV_\n, \emptyset).$$If $\n=0$, we agree that $\mH_1 \boxtimes...\boxtimes\mH_\n \in \omega(\ast,\emptyset) $ is empty. We make a dual definition for right weights.
\end{notation}

\begin{notation}
For every $\n \geq 1$ and $1 \leq \bi \leq \n$ let $\mM_\bi^\circledast \to \mV_\bi^\ot \times \mW_\bi^\ot$ be an absolute small weakly  bienriched $\infty$-category that admits $\mH_\bi$-weighted colimits for some set $\mH_\bi$ of weights over $\mV_\bi, \mW_\bi$ and $\Lambda_\bi$ a collection of $\mH_\bi$-weighted diagrams in $\mM_\bi$.
Assume that for every $1 \leq \bi \leq \n$ the weakly bienriched $\infty$-category $\mM_\bi^\circledast \to \mV_\bi^\ot \times \mW_\bi^\ot$ is a weakly locally bipseudo-enriched $\infty$-category if for every $1 \leq \bj \leq \n, \bj \neq \bi$ the set $\mH_\bj$ does not only consist of left weights and does not only consist of right weights.
Assume that for every $1 \leq \bi \leq \n$ the weakly bienriched $\infty$-category $\mM_\bi^\circledast \to \mV_\bi^\ot \times \mW_\bi^\ot$ is a locally left (right) pseudo-enriched $\infty$-category if for every $1 \leq \bj \leq \n, \bj \neq \bi$ the set $\mH_\bj$ only consists of left (right) weights.
For every $1 \leq \bi \leq \n$ we define a set $\underline{\Lambda}_\bi$ of $\rH$-weighted diagrams in $\mM_1 \times ... \times \mM_\n $ for some $\rH \in \underline{\mH}_\bi$ the following way:
%if $\Lambda_\bi$ does not only consist of left weights and does not only consist of right weights, 
$\underline{\Lambda}_\bi$ is the image of $\Lambda_\bi$ under
all enriched functors of the form
\begin{equation}\label{qqol}
\{\X_1\} \times... \times \{\X_{\bi-1}\} \times \mM_\bi \times \{\X_{\bi+1}\} \times ... \times \{\X_\n\} \to \mM_1 \times... \times \mM_\n \end{equation}
for some objects $\X_\bj \in \mM_\bj$ for $1 \leq \bj \leq \n, \bj \neq \bi.$
%If $\Lambda_\bi$ only consists of left diagrams, $\underline{\Lambda}_\bi$ is the image of $\Lambda_\bi$ under all enriched functor (\ref{qqol}).If $\Lambda_\bi$ only consists of right diagrams, $\underline{\Lambda}_\bi$ is the image of $\Lambda_\bi$ under all enriched functors (\ref{qqol}).
If $\Lambda_\bi$ is empty, $\underline{\Lambda}_\bi$ is empty.
Let $$\Lambda_1 \boxtimes ... \boxtimes \Lambda_\n:= \bigcup_{\bi=1}^\n \underline{\Lambda}_\bi.$$
If $\n=0$, we agree that $\Lambda_1 \boxtimes ... \boxtimes \Lambda_\n$ is empty. 
\end{notation}

\begin{notation}
Let $\n \geq 1$ and for every $1 \leq \bi \leq \n$ let $\mV_\bi^\ot \to \Ass $ be a small $\infty$-operad that admits a tensor unit if there is some $1 \leq \bj \leq \n$ with $\bj \neq \bi$ such that $\rS_\bj$ does not contain only equivalences.
Set $ \mV^\ot:= \mV_1^\ot \times_\Ass ... \times_\Ass \mV^\ot_\n$.
For every $1 \leq \bi \leq \n$ let $\rS_\bi$ be a collection of morphisms of $\mP\Env(\mV_\bi)$.
Let $$\rS_1 \boxtimes ... \boxtimes \rS_\n$$ be the collection of morphisms $\g$ of $\mP\Env(\mV)$ such that there is an $1 \leq \bi \leq \n$ and a morphism $\f\in \rS_\bi$ that is sent to $\g$ by the left adjoint monoidal functor $(\tau_\bi)_!:\mP\Env(\mV_\bi)^\ot \to \mP\Env(\mV)^\ot$ induced by the map of $\infty$-operads $\tau_\bi: \mV_\bi^\ot \to \mV^\ot$ that is the identity on the $\bi$-th factor and the constant map preserving the tensor unit on every other factor. 

%Then $(\mV^\ot \to \Ass, \rS_1 \boxtimes ... \boxtimes \rS_\n)$ is a localization pair.A weakly bienriched $\infty$-category $\mM^\circledast \to \mV^\ot \times \mW^\ot$ is left $\rS$-enriched if and only if for every $1 \leq \bi \leq \n$ the pullback along the map of $\infty$-operads $\tau_\bi: \mV_\bi^\ot \to \mV^\ot$ is a left $\rS_\bi$-enriched $\infty$-category.	
\end{notation}

\begin{notation}
Let $\mM^\circledast \to \mV^\ot \times \mW^\ot, \mN^\circledast \to \mV'^\ot \times \mW'^\ot$ be weakly bienriched $\infty$-categories,
%Let $\mM^\circledast \to \mV^\ot \times \mW^\ot,\mN^\circledast \to \mV'^\ot \times \mW'^\ot$ be weakly bienriched $\infty$-categories, 
$\mH$ a collection of weights over $\mV,\mW, $ $\mH'$ a collection of weights over $\mV',\mW'$ and $\Lambda$ a collection of $\mH$-weighted diagrams in $\mM$. Let $\rS$ be a collection of morphisms of $\mP\Env(\mV)$ and $\T$ a collection of morphisms of $\mP\Env(\mW)$.
%\begin{enumerate}
%\item Let $$\Enr\Fun^{\Lambda}_{\mH, \mH'}(\mM, \mN)\subset \Enr\Fun(\mM,\mN) $$ be the full subcategory of enriched functors sending diagrams of $\Lambda$ to weighted colimit diagrams and lying over maps of $\infty$-operads that send weights of $\mH$ to weights of $\mH'$.
%\item Let $$\Enr\Fun_{\rS, \rS',\T, \T'}(\mM, \mN)\subset \Enr\Fun(\mM, \mN) $$ be the full subcategory of enriched functors lying over maps of $\infty$-operads whose induced left adjoint monoidal functors on closed monoidal envelopes send $\rS$ to $\rS'$ and $\T$ to $\T'$, respectively.
Let $$\Enr\Fun^{\Lambda}_{\rS, \rS',\T, \T', \mH, \mH'}(\mM, \mN)\subset \Enr\Fun_{\rS, \rS',\T, \T'}(\mM, \mN) $$ 
be the full subcategory of enriched functors sending diagrams of $\Lambda$ to weighted colimit diagrams and lying over maps of $\infty$-operads that send weights of $\mH$ to weights of $\mH'$.

%be the intersection ofthe full subcategories $\Enr\Fun^{\Lambda}_{\mH, \mH'}(\mM, \mN)$ and$\Enr\Fun_{\rS, \rS',\T, \T'}(\mM, \mN).$\end{enumerate}
\end{notation}

\begin{theorem}\label{wooond}
Let $\n \geq 1$. For $1 \leq \bi \leq \n$ let $(\mV_\bi^\ot \to \Ass, \rS_\bi)$, $(\mW_\bi^\ot \to \Ass, \T_\bi), (\mV'^\ot \to \Ass, \rS'), (\mW'^\ot \to \Ass, \T')$ be small localization pairs, $\mH_\bi$ a set of absolute small weights over $\mV_\bi, \mW_\bi$ and $\mH'$ a collection of weights over $\mV',\mW'$.
Let $\mM_\bi^\circledast \to \mV_\bi^\ot \times \mW_\bi^\ot$ be small $\rS_\bi, \T_\bi$-bienriched $\infty$-categories, $\Lambda_\bi$ a collection of $\mH_\bi$-weighted diagrams in $\mM_\bi$ and $\mN^\circledast \to \mV'^\ot\times \mW'^\ot $ a $\rS',\T'$-bienriched $\infty$-category that admits $\mH'$-weighted colimits. Let $\mH:= \mH_1 \boxtimes... \boxtimes \mH_\n, \rS:= \rS_1 \boxtimes... \boxtimes \rS_\n, \T:= \T_1 \boxtimes... \boxtimes \T_\n.$
Assume that the following conditions hold for every $1 \leq \bi \leq \n$:
\begin{enumerate}
\item %The weakly bienriched $\infty$-category
$\mM_\bi^\circledast \to \mV_\bi^\ot \times \mW_\bi^\ot$ is a locally left pseudo-enriched $\infty$-category if there is some $1 \leq \bj \leq \n$ with $\bj \neq \bi$ such that $\rS_\bj$ does not contain only equivalences or $\Lambda_\bj$ does not only consist of right diagrams.
\item %The weakly bienriched $\infty$-category 
$\mM_\bi^\circledast \to \mV_\bi^\ot \times \mW_\bi^\ot$ is a locally right pseudo-enriched $\infty$-category if there is some $1 \leq \bj \leq \n$ with $\bj \neq \bi$ such that $\T_\bj$ does not contain only equivalences or $\Lambda_\bj$ does not only consist of left diagrams.
\end{enumerate}
%contain weights over $\mV,\mW$.is non-empty or $\rS_\bj$ or $ \T_\bj$ do not contain only equivalences.
% such that for every $1 \leq \bi \leq \n$ the pullback $\mN_\bi^\circledast := \mV_\bi^\ot \times_{\mV^\ot} \mN^\circledast \times_{\mW^\ot} \mW_\bi^\ot$ along $\tau_\bi: \mV_\bi^\ot \to \mV^\ot, \tau_\bi: \mW_\bi^\ot \to \mW^\ot$ is a bi-$\rS_\bi, \T_\bi$-enriched $\infty$-category.

The induced functor $$\gamma_\mN: \Enr\Fun_{\rS,\rS', \T,\T', \mH,\mH'}^{\Lambda^{\mH_1} \boxtimes ... \boxtimes \Lambda^{\mH_\n}}(\prod_{1 \leq \bi \leq \n} \mP\B\Env^{\mH_\bi}_{\Lambda_\bi}(\mM_\bi)_{\rS_\bi,\T_\bi}, \mN) \to  \Enr\Fun_{\rS,\rS', \T,\T', \mH,\mH'}^{\Lambda_1 \boxtimes ... \boxtimes \Lambda_\n}(\prod_{1 \leq \bi \leq \n}\mM_\bi, \mN)$$ is an equivalence.
%If for every $1 \leq \bi \leq \n$ the collection $\Lambda_\bi$ is empty,
The induced functor $$\delta_\mN: \Enr\Fun_{\rS,\rS', \T,\T', \mH,\mH'}(\prod_{1 \leq \bi \leq \n} \mP\B\Env^{\mH_\bi}(\mM_\bi)_{\rS_\bi,\T_\bi}, \mN) \to  \Enr\Fun_{\rS,\rS', \T,\T', \mH,\mH'}(\prod_{1 \leq \bi \leq \n}\mM_\bi, \mN)$$ admits a fully faithful left adjoint that lands in $\Enr\Fun_{\rS,\rS', \T,\T', \mH,\mH'}^{\Lambda^{\mH_1} \boxtimes ... \boxtimes \Lambda^{\mH_\n}}(\prod_{1 \leq \bi \leq \n} \mP\B\Env^{\mH_\bi}_{\Lambda_\bi}(\mM_\bi)_{\rS_\bi,\T_\bi}, \mN).$

\end{theorem}

\begin{proof}
%The latter induced functor is conservative as for every $1 \leq \bi \leq \n$the $\infty$-category $\mP\B\Env^{\mH_\bi}_{\Lambda_\bi}(\mM_\bi)_{\rS_\bi,\T_\bi}$is generated by $\L_\bi(\mM_\bi)$ under $\mH_\bi$-weighted colimits. So the first statement implies the second one.

We first reduce to the case that $\mN^\circledast \to \mV'^\ot\times \mW'^\ot $ admits large weighted colimits.
%Let $$\rS:=\rS_1 \boxtimes ... \boxtimes \rS_\n, \T:=\T_1 \boxtimes ... \boxtimes \T_\n, \mH:= \mH_1 \boxtimes ... \boxtimes \mH_\n$$so that an $\infty$-category weakly bienriched in $\mV,\mW$ is bi-$\rS,\T$-enriched if and only if for for every $1 \leq \bi, \bj \leq \n$ the pullback along the canonical maps of $\infty$-operads $\mV_\bi^\ot \to \mV^\ot, \mW_\bj^\ot \to \mW^\ot$ is bi-$\rS_\bi, \T_\bj$-enriched.
By Remarks \ref{embesto}, \ref{embes} there is a $\mV', \mW'$-enriched embedding $\mN^\circledast \to \widehat{\mP}\widetilde{\B\Env}_{\Lambda^{\mH'}}(\mN)_{\rS', \T'}^\circledast$ that preserves $\mH'$-weighted colimits (Lemma \ref{lolet} (2)) into a bi-$\rS',\T'$-enriched $\infty$-category that admits large weighted colimits (Remark \ref{bipre}).
% and so induces a $\mV, \mW$-enriched embedding $\mN^\circledast \to \mO^\circledast$ that preserves $\mH$-weighted colimits(Corollary \ref{eccco}).
%The canonical embedding $\mN^\circledast \subset \mV^\ot \times_{\rS^{-1}\mP\Env(\mV)^\ot} \mP\B\Env_{\Lambda^{\mH}}(\mN)_{\rS, \T}^\circledast \times_{\T^{-1}\mP\Env(\mW)} \mW^\ot$ induces a $\mV, \mW$-enriched embedding $\mN^\circledast \subset \mO^\circledast$ that preserves $\mH$-weighted colimits by Corollary \ref{eccco}.
The functor $\gamma_\mN$ is the pullback of $\gamma_{\widehat{\mP}\widetilde{\B\Env}_{\Lambda^{\mH'}}(\mN)_{\rS', \T'}}$
because $\mP\B\Env^{\mH_\bi}_{\Lambda_\bi}(\mM_\bi)_{\rS_\bi,\T_\bi}$ is generated by $\L_\bi(\mM_\bi)$ under $\mH_\bi$-weighted colimits. 
Moreover if $\delta_{\mP\widetilde{\B\Env}_{\Lambda^{\mH'}}(\mN)_{\rS', \T'}}$admits a fully faithful left adjoint that lands in $ \Enr\Fun_{\rS,\rS', \T,\T', \mH,\mH'}^{\Lambda^{\mH_1} \boxtimes ... \boxtimes \Lambda^{\mH_\n}}(\prod_{1 \leq \bi \leq \n} \mP\B\Env^{\mH_\bi}(\mM_\bi)_{\rS_\bi,\T_\bi}, \widehat{\mP}\widetilde{\B\Env}_{\Lambda^{\mH'}}(\mN)_{\rS', \T'}),$ the left adjoint sends $\Enr\Fun_{\rS,\rS', \T,\T', \mH,\mH'}(\prod_{1 \leq \bi \leq \n}\mM_\bi, \mN)$ to $\Enr\Fun_{\rS,\rS', \T,\T', \mH,\mH'}^{\Lambda^{\mH_1} \boxtimes ... \boxtimes \Lambda^{\mH_\n}}(\prod_{1 \leq \bi \leq \n} \mP\B\Env^{\mH_\bi}(\mM_\bi)_{\rS_\bi,\T_\bi}, \mN)$ because $\mP\B\Env^{\mH_\bi}(\mM_\bi)_{\rS_\bi,\T_\bi}$ is generated by $\L_\bi(\mM_\bi)$ under $\mH_\bi$-weighted colimits. So in this case the fully faithful left adjoint of $\delta_{\mP\widetilde{\B\Env}_{\Lambda^{\mH'}}(\mN)_{\rS', \T'}}$ restricts to a fully faithful left adjoint of $\delta_\mN$ that lands in $ \Enr\Fun_{\rS,\rS', \T,\T', \mH,\mH'}^{\Lambda^{\mH_1} \boxtimes ... \boxtimes \Lambda^{\mH_\n}}(\prod_{1 \leq \bi \leq \n} \mP\B\Env^{\mH_\bi}(\mM_\bi)_{\rS_\bi,\T_\bi}, \mN).$
This way we can assume that $\mN^\circledast \to \mV'^\ot\times \mW'^\ot $ admits large weighted colimits.

Let $\overline{\mP\B\Env}^{\mH_\bi}_{\Lambda_\bi}(\mM_\bi)_{\rS_\bi,\T_\bi}^\circledast \subset \mP\B\Env^{\mH_\bi}_{\Lambda_\bi}(\mM_\bi)_{\rS_\bi,\T_\bi}^\circledast$ be the preimage of $ \mP\B\Env^{\mH_\bi}_{\Lambda_\bi}(\mM_\bi)_{\rS_\bi,\T_\bi}^\circledast$ under the left adjoint $\L_\bi: \mP\widetilde{\B\Env}(\mM_\bi)_{\rS_\bi,\T_\bi}^\circledast \to \mP\widetilde{\B\Env}_{\Lambda_\bi}(\mM_\bi)_{\rS_\bi,\T_\bi}^\circledast.$
The full subcategory $\overline{\mP\B\Env}^{\mH_\bi}_{\Lambda_\bi}(\mM_\bi)_{\rS_\bi,\T_\bi}^\circledast$ is closed in $ \mP\widetilde{\B\Env}(\mM_\bi)^\circledast_{\rS_\bi,\T_\bi}$ under $\mH_\bi$-weighted colimits because $\mP\B\Env^{\mH_\bi}_{\Lambda_\bi}(\mM_\bi)_{\rS_\bi,\T_\bi}^\circledast$ is closed in $ \mP\widetilde{\B\Env}_{\Lambda_\bi}(\mM_\bi)^\circledast_{\rS_\bi,\T_\bi}$ under $\mH_\bi$-weighted colimits and the left adjoint $\L_\bi$ preserves small weighted colimits.
The localization $\L_\bi: \mP\widetilde{\B\Env}(\mM_\bi)_{\rS_\bi,\T_\bi}^\circledast \rightleftarrows \mP\widetilde{\B\Env}_{\Lambda_\bi}(\mM_\bi)_{\rS_\bi,\T_\bi}^\circledast$ relative to $\mV_\bi^\ot \times \mW_\bi^\ot$ %$\rS_\bi^{-1}\mP\Env(\mV_\bi)^\ot \times \T_\bi^{-1}\mP\Env(\mW_\bi)^\ot$ 
restricts to a $\mV_\bi, \mW_\bi$-localization $\L'_\bi:  \overline{\mP\B\Env}^{\mH_\bi}_{\Lambda_\bi}(\mM_\bi)_{\rS_\bi,\T_\bi}^\circledast \rightleftarrows \mP\B\Env^{\mH_\bi}_{\Lambda_\bi}(\mM_\bi)_{\rS_\bi,\T_\bi}^\circledast$. Precomposition with $\L'_\bi$ induces an embedding 
$$\Enr\Fun_{\rS,\rS', \T,\T', \mH,\mH'}^{\Lambda^{\mH_1} \boxtimes ... \boxtimes \Lambda^{\mH_\n}}(\prod_{1 \leq \bi \leq \n} \mP\B\Env^{\mH_\bi}_{\Lambda_\bi}(\mM_\bi)_{\rS_\bi,\T_\bi}, \mN) \hookrightarrow \Enr\Fun_{\rS,\rS', \T,\T', \mH,\mH'}^{\Lambda^{\mH_1} \boxtimes ... \boxtimes \Lambda^{\mH_\n}}(\prod_{1 \leq \bi \leq \n} \overline{\mP\B\Env}^{\mH_\bi}_{\Lambda_\bi}(\mM_\bi)_{\rS_\bi,\T_\bi}, \mN)$$
whose essential image is the full subcategory $\Enr\Fun_{\rS,\rS', \T,\T', \mH,\mH'}^{\Lambda^{\mH_1} \boxtimes ... \boxtimes \Lambda^{\mH_\n}}(\prod_{1 \leq \bi \leq \n} \overline{\mP\B\Env}^{\mH_\bi}_{\Lambda_\bi}(\mM_\bi)_{\rS_\bi,\T_\bi}, \mN)'$ of enriched functors
%$\widetilde{\mP\B\Env}^\mH_\Lambda(\mM)_{\rS,\T}^\circledast \times_{(\Ass \times \Ass)} \mM'^\circledast \to \mN^\circledast$ such that for every $\X \in \mM'$ the induced $\mV, \mW$-enriched functor $ \widetilde{\mP\B\Env}^\mH_\Lambda(\mM)_{\rS,\T}^\circledast\to \mN^\circledast$ 
inverting all morphisms inverted by $\L:=\prod_{1 \leq \bi \leq \n} \L_\bi.$
The functor $\gamma_{\mN}$ factors as 
$$ \Enr\Fun_{\rS,\rS', \T,\T', \mH,\mH'}^{\Lambda^{\mH_1} \boxtimes ... \boxtimes \Lambda^{\mH_\n}}(\prod_{1 \leq \bi \leq \n} \mP\B\Env^{\mH_\bi}_{\Lambda_\bi}(\mM_\bi)_{\rS_\bi,\T_\bi}, \mN) \simeq \Enr\Fun_{\rS,\rS', \T,\T', \mH,\mH'}^{\Lambda^{\mH_1} \boxtimes ... \boxtimes \Lambda^{\mH_\n}}(\prod_{1 \leq \bi \leq \n} \overline{\mP\B\Env}^{\mH_\bi}_{\Lambda_\bi}(\mM_\bi)_{\rS_\bi,\T_\bi}, \mN)'$$$$ \xrightarrow{\beta} \Enr\Fun_{\rS,\rS', \T,\T', \mH,\mH'}^{\Lambda_1 \boxtimes ... \boxtimes \Lambda_\n}(\prod_{1 \leq \bi \leq \n}\mM_\bi, \mN).$$
The functors $\gamma_{\mN}$ and so $\beta$ are conservative because $\mP\B\Env^{\mH_\bi}_{\Lambda_\bi}(\mM_\bi)_{\rS_\bi,\T_\bi}$ is generated by $\L_\bi(\mM_\bi)$ under $\mH_\bi$-weighted colimits. By \cite[Proposition 5.16. (1)]{heine2024bienriched} the induced functors $$\Enr\Fun_{\rS,\rS', \T,\T'}(\prod_{1 \leq \bi \leq \n} \overline{\mP\B\Env}^{\mH_\bi}_{\Lambda_\bi}(\mM_\bi)_{\rS_\bi,\T_\bi}, \mN) \to \Enr\Fun_{\rS,\rS', \T,\T'}(\prod_{1 \leq \bi \leq \n}\mM_\bi, \mN),$$$$
\Enr\Fun_{\rS,\rS', \T,\T'}(\prod_{1 \leq \bi \leq \n} \mP\widetilde{\B\Env}(\mM_\bi)_{\rS_\bi,\T_\bi}, \mN) \to \Enr\Fun_{\rS,\rS', \T,\T'}(\prod_{1 \leq \bi \leq \n} \overline{\mP\B\Env}^{\mH_\bi}_{\Lambda_\bi}(\mM_\bi)_{\rS_\bi,\T_\bi}, \mN) $$ admit fully faithful left adjoints $\phi, \psi$, respectively, since $\mN^\circledast \to \mV'^\ot \times \mW'^\ot$ admits large weighted colimits.
Consequently, it will be enough to verify that the functor $\phi$ takes  $\Enr\Fun_{\rS,\rS', \T,\T', \mH,\mH'}^{\Lambda_1 \boxtimes ... \boxtimes \Lambda_\n}(\prod_{1 \leq \bi \leq \n}\mM_\bi, \mN)$
to $\Enr\Fun_{\rS,\rS', \T,\T', \mH,\mH'}^{\Lambda^{\mH_1} \boxtimes ... \boxtimes \Lambda^{\mH_\n}}(\prod_{1 \leq \bi \leq \n} \overline{\mP\B\Env}^{\mH_\bi}_{\Lambda_\bi}(\mM_\bi)_{\rS_\bi,\T_\bi}, \mN)'.$ 
Because $\mN^\circledast \to \mV'^\ot \times \mW'^\ot$ is bi-$\rS', \T'$-enriched, by Propositions \ref{Corall} and \ref{ukwa} the induced functor 
$$\Enr\Fun_{\rS,\rS', \T,\T'}(\prod_{1 \leq \bi \leq \n} \mP\widetilde{\B\Env}(\mM_\bi)_{\rS_\bi,\T_\bi}, \mN) \to \Enr\Fun_{\rS,\rS', \T,\T'}(\prod_{1 \leq \bi \leq \n}\mM_\bi, \mN)$$
admits a fully faithful left adjoint $\rho$ that lands in the full subcategory of enriched functors preserving small weighted colimits component-wise, which send diagrams of $\Lambda^{\mH_1} \boxtimes ... \boxtimes \Lambda^{\mH_\n}$ to weighted colimit diagrams.
By adjointness the functor $\rho$ factors as fully faithful left adjoints  $$\Enr\Fun_{\rS,\rS', \T,\T'}(\prod_{1 \leq \bi \leq \n}\mM_\bi, \mN)  \xrightarrow{\phi} \Enr\Fun_{\rS,\rS', \T,\T'}(\prod_{1 \leq \bi \leq \n} \overline{\mP\B\Env}^{\mH_\bi}_{\Lambda_\bi}(\mM_\bi)_{\rS_\bi,\T_\bi}, \mN) $$$$\xrightarrow{\psi} \Enr\Fun_{\rS,\rS', \T,\T'}(\prod_{1 \leq \bi \leq \n} \mP\widetilde{\B\Env}(\mM_\bi)_{\rS_\bi,\T_\bi}, \mN)$$
so that $\phi$ factors as $\rho$ followed by the induced functor $$\Enr\Fun_{\rS,\rS', \T,\T'}(\prod_{1 \leq \bi \leq \n} \mP\widetilde{\B\Env}(\mM_\bi)_{\rS_\bi,\T_\bi}, \mN)  \to \Enr\Fun_{\rS,\rS', \T,\T'}(\prod_{1 \leq \bi \leq \n} \overline{\mP\B\Env}^{\mH_\bi}_{\Lambda_\bi}(\mM_\bi)_{\rS_\bi,\T_\bi}, \mN).$$
Since the embedding $\overline{\mP\B\Env}^{\mH_\bi}_{\Lambda_\bi}(\mM_\bi)_{\rS_\bi,\T_\bi}^\circledast \subset \mP\widetilde{\B\Env}(\mM_\bi)_{\rS_\bi,\T_\bi}$ preserves $\mH_\bi$-weighted colimits, the embedding $\prod_{1 \leq \bi \leq \n} \overline{\mP\B\Env}^{\mH_\bi}_{\Lambda_\bi}(\mM_\bi)_{\rS_\bi,\T_\bi}^\circledast \subset \prod_{1 \leq \bi \leq \n} \mP\widetilde{\B\Env}(\mM_\bi)_{\rS_\bi,\T_\bi}$
preserves diagrams of $\Lambda^{\mH_1} \boxtimes ... \boxtimes \Lambda^{\mH_\n}.$
Thus $\phi$ lands in enriched functors sending diagrams of $\Lambda^{\mH_1} \boxtimes ... \boxtimes \Lambda^{\mH_\n}$ to weighted colimit diagrams.
So we have to prove that an enriched functor $$\alpha: \overline{\mP\B\Env}^{\mH_1}_{\Lambda_1}(\mM_1)_{\rS_1,\T_1}^\circledast \times_{(\Ass \times \Ass)} ... \times_{(\Ass \times \Ass)} \overline{\mP\B\Env}^{\mH_\n}_{\Lambda_\n}(\mM_\n)_{\rS_\n,\T_\n}^\circledast\to \mN^\circledast $$ sending diagrams of $\Lambda^{\mH_1} \boxtimes ... \boxtimes \Lambda^{\mH_\n}$ to weighted colimit diagrams inverts all morphisms inverted by $\L$ if its restriction to $\mM_1^\circledast \times_{(\Ass \times \Ass)} ... \times_{(\Ass \times \Ass)} \mM_\n^\circledast$ sends diagrams of $\Lambda_1 \boxtimes ... \boxtimes \Lambda_\n$ to weighted colimit diagrams. 
%We prove next that the following conditions are equivalent for any $\mV, \mW$-enriched functor $$\alpha: \overline{\mP\B\Env}^{\mH_1}_{\Lambda_1}(\mM_1)_{\rS_1,\T_1}^\circledast \times_{(\Ass \times \Ass)} ... \times_{(\Ass \times \Ass)} \overline{\mP\B\Env}^{\mH_\n}_{\Lambda_\n}(\mM_\n)_{\rS_\n,\T_\n}^\circledast\to \mN^\circledast $$ sending diagrams of $\Lambda^{\mH_1} \boxtimes ... \boxtimes \Lambda^{\mH_\n}$ to weighted colimit diagrams and diagrams of $\Lambda$ to weighted colimit diagrams:\begin{enumerate}\item $\alpha$ inverts morphisms inverted by $\L$.\item $\alpha$ inverts any morphism $(\f_1,...,\f_\n)$ of $\prod_{1 \leq \bi \leq \n} \mP\widetilde{\B\Env}(\mM_\bi)_{\rS_\bi,\T_\bi}$, where $\f_\bi$ is a generating local equivalence of the localization $\mP\B\Env_{\Lambda_\bi}(\mM_\bi)_{\rS_\bi,\T_\bi} \subset \mP\B\Env(\mM_\bi)_{\rS_\bi,\T_\bi}$.\item \end{enumerate}
Let $ \mQ_\bi$ be the set of morphisms $\colim^{\rH}(\iota \circ \F) \to \iota(\Y)$ in $\mP\B\Env(\mM_\bi)_{\rS_\bi,\T_\bi}$ adjoint to a morphism $$\rH \xrightarrow{\lambda} \F^\ast(\Y) \to (\iota\circ \F)^\ast(\iota(\Y))$$
in $\mP\B\Env(\mJ)$ for some diagram $(\F:\mJ^\circledast \to \mM_\bi^\circledast, \Y, \lambda:  \rH \to \F^*(\Y)) \in \Lambda_\bi$, where $\iota: \mM_\bi^\circledast \to \mP\B\Env(\mM_\bi)_{\rS_\bi,\T_\bi}^\circledast$ is the canonical embedding.
By construction of the localization the set $\mQ_\bi$ is the set of generating local 
equivalences of the localization $\mP\B\Env_{\Lambda_\bi}(\mM_\bi)_{\rS_\bi,\T_\bi} \subset \mP\B\Env(\mM_\bi)_{\rS_\bi,\T_\bi}$, which are morphisms of the full subcategory
$\overline{\mP\B\Env}^{\mH_\bi}_{\Lambda_\bi}(\mM_\bi)_{\rS_\bi,\T_\bi} \subset \mP\B\Env(\mM_\bi)_{\rS_\bi,\T_\bi}$ closed under $\mH_\bi$-weighted colimits.
Therefore the enriched functor $\alpha$ inverts morphisms inverted by $\L$ if and only if it inverts any $(\f_1,...,\f_\n) \in \prod_{1 \leq \bi \leq \n} \mQ_\bi.$ 
%of $\prod_{1 \leq \bi \leq \n} \overline{\mP\B\Env}^{\mH_\bi}_{\Lambda_\bi}(\mM_\bi)_{\rS_\bi,\T_\bi}$, where $\f_\bi \in \mQ_\bi$ for $1 \leq \bi \leq \n.$
%is a generating local equivalence of the localization $\mP\B\Env_{\Lambda_\bi}(\mM_\bi)_{\rS_\bi,\T_\bi} \subset \mP\B\Env(\mM_\bi)_{\rS_\bi,\T_\bi}$.Since the generating local equivalences of the localization $\mP\B\Env_{\Lambda_\bi}(\mM_\bi)_{\rS_\bi,\T_\bi} \subset \mP\B\Env(\mM_\bi)_{\rS_\bi,\T_\bi}$ belong to the full subcategory $\overline{\mP\B\Env}^{\mH_\bi}_{\Lambda_\bi}(\mM_\bi)_{\rS_\bi,\T_\bi} \subset \mP\B\Env(\mM_\bi)_{\rS_\bi,\T_\bi}$,
For $1 \leq \bi \leq \n$ let $ \bar{\mM}^\circledast_\bi \subset \mP\B\Env_{\Lambda_\bi}(\mM_\bi)_{\rS_\bi,\T_\bi}^\circledast$ be the full weakly bienriched subcategory generated by $\mM_\bi$ under $\mH_\bi$-weighted colimits.
Since morphisms of $\mP\B\Env(\mM_\bi)_{\rS_\bi,\T_\bi} $ that belong to $\mQ_\bi$ are morphisms of $\bar{\mM}_\bi$, the enriched functor $\alpha$ inverts every  $(\f_1,...,\f_\n) \in \prod_{1 \leq \bi \leq \n} \mQ_\bi$ if for every family $(\X_\bj \in \bar{\mM}_\bj)_{1 \leq \bj \leq \n, \bj \neq \bi}$ 
the induced enriched functor $\alpha(\X_1,...,\X_{\bi-1},-, \X_{\bi+1},...,\X_\n): \overline{\mP\B\Env}^{\mH_\bi}_{\Lambda_\bi}(\mM_\bi)_{\rS_\bi,\T_\bi}^\circledast \to \mO^\circledast$ inverts morphisms of $\mQ_\bi.$
To see the latter, by Remark \ref{Closure} and the fact that $\alpha$ sends diagrams of $\Lambda^{\mH_1} \boxtimes ... \boxtimes \Lambda^{\mH_\n}$ to weighted colimit diagrams, we can assume that $\X_\bj \in \mM_\bj$ for $1 \leq \bj \leq \n, \bj \neq \bi$.
In this case the induced enriched functor $\alpha(\X_1,...,\X_{\bi-1},-, \X_{\bi+1},...,\X_\n): \overline{\mP\B\Env}^{\mH_\bi}_{\Lambda_\bi}(\mM_\bi)_{\rS_\bi,\T_\bi}^\circledast \to \mO^\circledast$
inverts morphisms of $\mQ_\bi$ because it sends diagrams of $\Lambda_\bi$ and $\mH_\bi$-weighted colimit diagrams to weighted colimit diagrams.
\end{proof}

Specializing Theorem \ref{wooond} gives the following corollaries:

\begin{corollary}\label{wooonda}
Let $\n \geq 1$. For $1 \leq \bi \leq \n$ let $(\mV_\bi^\ot \to \Ass, \rS_\bi), (\mV'^\ot \to \Ass, \rS')$ be small localization pairs, $\mH_\bi$ a set of small left weights over $\mV_\bi$ and $\mH'$ a collection of left weights over $\mV'$.
Let $\mM_\bi^\circledast \to \mV_\bi^\ot $ be small left $\rS_\bi$-enriched $\infty$-categories, $\Lambda_\bi$ a collection of $\mH_\bi$-weighted diagrams in $\mM_\bi$ and $\mN^\circledast \to \mV'^\ot\times \mW'^\ot $ a left $\rS'$-enriched $\infty$-category that admits $\mH'$-weighted colimits. Let $\mH:= \mH_1 \boxtimes... \boxtimes \mH_\n, \rS:= \rS_1 \boxtimes... \boxtimes \rS_\n, \T:= \T_1 \boxtimes... \boxtimes \T_\n.$
Assume that for every $1 \leq \bi \leq \n$ the weakly left enriched $\infty$-category
$\mM_\bi^\circledast \to \mV_\bi^\ot $ is locally left pseudo-enriched if there is some $1 \leq \bj \leq \n$ with $\bj \neq \bi$ such that $\rS_\bj$ does not contain only equivalences or $\Lambda_\bj$ is not empty.
The induced functor $$\gamma_\mN: \Enr\Fun_{\rS,\rS', \T,\T', \mH,\mH'}^{\Lambda^{\mH_1} \boxtimes ... \boxtimes \Lambda^{\mH_\n}}(\prod_{1 \leq \bi \leq \n} \mP\B\Env^{\mH_\bi}_{\Lambda_\bi}(\mM_\bi)_{\rS_\bi,\T_\bi}, \mN) \to  \Enr\Fun_{\rS,\rS', \T,\T', \mH,\mH'}^{\Lambda_1 \boxtimes ... \boxtimes \Lambda_\n}(\prod_{1 \leq \bi \leq \n}\mM_\bi, \mN)$$ is an equivalence.
%If for every $1 \leq \bi \leq \n$ the collection $\Lambda_\bi$ is empty,
The induced functor $$\delta_\mN: \Enr\Fun_{\rS,\rS', \T,\T', \mH,\mH'}(\prod_{1 \leq \bi \leq \n} \mP\B\Env^{\mH_\bi}(\mM_\bi)_{\rS_\bi,\T_\bi}, \mN) \to  \Enr\Fun_{\rS,\rS', \T,\T', \mH,\mH'}(\prod_{1 \leq \bi \leq \n}\mM_\bi, \mN)$$ admits a fully faithful left adjoint that lands in $\Enr\Fun_{\rS,\rS', \T,\T', \mH,\mH'}^{\Lambda^{\mH_1} \boxtimes ... \boxtimes \Lambda^{\mH_\n}}(\prod_{1 \leq \bi \leq \n} \mP\B\Env^{\mH_\bi}_{\Lambda_\bi}(\mM_\bi)_{\rS_\bi,\T_\bi}, \mN).$
	
\end{corollary}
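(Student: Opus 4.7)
My plan is to deduce Corollary~\ref{wooonda} as a direct specialization of Proposition~\ref{wooond} to the case where the right-hand side of each enrichment is trivial. First, I would interpret every weakly left $\rS_\bi$-enriched $\infty$-category $\mM_\bi^\circledast \to \mV_\bi^\ot$ as the weakly bi-$\rS_\bi, \emptyset$-enriched $\infty$-category $\mM_\bi^\circledast \to \mV_\bi^\ot \times \emptyset^\ot$, and via Remark~\ref{labo} I would reinterpret each small left weight $(\alpha, \mJ^\circledast, \rH \in \mP\L\Env(\mJ))$ in $\mH_\bi$ as a bi-weight over $\mV_\bi, \emptyset$. Under these identifications $\mW_\bi^\ot = \emptyset^\ot$ and $\T_\bi = \emptyset$ consists only of equivalences, every weight in $\mH_\bi$ is a left weight, and every diagram in $\Lambda_\bi$ is a left diagram in the sense of the proposition.

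Next, I would verify that the two pseudo-enrichment hypotheses of Proposition~\ref{wooond} reduce to the single hypothesis stated in the corollary. Condition~(2) of the proposition (locally right pseudo-enriched in $\mW_\bi$) is vacuous: for every $\bj \neq \bi$ the set $\T_\bj$ contains only equivalences, and $\Lambda_\bj$ consists entirely of left diagrams, so the triggering clause is never satisfied (modulo the degenerate case where $\mV_\bj$ is itself trivial, in which local pseudo-enrichment is automatic). Condition~(1) of the proposition becomes exactly the hypothesis in the corollary, because for each $\bj$ the collection $\Lambda_\bj$ fails to consist only of right diagrams precisely when it contains some diagram whose underlying left weight lies over a nontrivial $\mV_\bj$, which after trivial cases reduces to $\Lambda_\bj \neq \emptyset$.

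Finally, I would check that the boxed products match: $\rS := \rS_1 \boxtimes \cdots \boxtimes \rS_\n$ is the correct localization on $\mV_1^\ot \times_\Ass \cdots \times_\Ass \mV_\n^\ot$, the corresponding product $\T_1 \boxtimes \cdots \boxtimes \T_\n$ is empty, and $\mH := \mH_1 \boxtimes \cdots \boxtimes \mH_\n$ consists of left weights. Each envelope $\mP\B\Env^{\mH_\bi}_{\Lambda_\bi}(\mM_\bi)_{\rS_\bi, \emptyset}^\circledast$ is then right-trivial by construction, so the output of Proposition~\ref{wooond} is naturally phrased in terms of left-enriched data. Applying Proposition~\ref{wooond} directly produces both the equivalence $\gamma_\mN$ and the fully faithful left adjoint to $\delta_\mN$ with the stated essential image. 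The only real work is bookkeeping in the bi-enriched-to-left-enriched translation; the substantive analytical content — reduction to the presentable case via enriched Yoneda, the application of Proposition~\ref{Corall}, and the adjointness argument comparing the two universal properties — is already contained in Proposition~\ref{wooond}, so the main obstacle is merely clerical rather than mathematical.
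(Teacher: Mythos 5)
Your proposal is correct and matches the paper exactly: the paper offers no separate argument for Corollary~\ref{wooonda} but states it as a direct specialization of Proposition~\ref{wooond}, obtained precisely by taking the right-hand enrichment trivial ($\mW_\bi^\ot=\emptyset^\ot$, $\T_\bi=\emptyset$) so that condition~(2) of the proposition is vacuous and condition~(1) is implied by the corollary's hypothesis (note only that the corollary's clause ``$\Lambda_\bj$ is not empty'' is formally stronger than ``$\Lambda_\bj$ does not only consist of right diagrams'', which is the right direction for the deduction). Your bookkeeping of weights, diagrams and the boxed products is the same translation the paper intends, so nothing further is needed.
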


\begin{corollary}\label{wooond0}
	
Let $(\mV^\ot \to \Ass, \rS), (\mW^\ot \to \Ass,\T)$ be small localization pairs, 
$\mM^\circledast \to \mV^\ot \times \mW^\ot$ an absolute small $\rS,\T$-bienriched $\infty$-category, $\mH$ a set of absolute small weights over $\mV, \mW$, $\Lambda$ a collection of $\mH$-weighted diagrams in $\mM$ and $\mN^\circledast \to \mV^\ot\times \mW^\ot $ a $\rS,\T$-bienriched $\infty$-category that admits $\mH$-weighted colimits.
The following functor is an equivalence: $$\Enr\Fun_{\mV, \mW}^{\Lambda^{\mH}}( \mP\B\Env^{\mH}_{\Lambda}(\mM)_{\rS,\T}, \mN) \to \Enr\Fun_{\mV, \mW}^{\Lambda}(\mM, \mN).$$	
	
\end{corollary}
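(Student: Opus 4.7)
The plan is to derive this directly from Proposition \ref{wooond} by specializing to the case $\n=1$, with the identifications $\mV_1 := \mV$, $\mW_1 := \mW$, $\mV' := \mV$, $\mW' := \mW$, $\rS_1 := \rS' := \rS$, $\T_1 := \T' := \T$, $\mH_1 := \mH' := \mH$, and $\Lambda_1 := \Lambda$. The local pseudo-enrichment hypotheses (1) and (2) of Proposition \ref{wooond} are vacuous when $\n=1$, since they quantify over indices $\bj \neq \bi$ which cannot exist. All remaining hypotheses hold by assumption on $\mM$, $\mN$, $\mH$, $\Lambda$.

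Under this specialization, $\mH_1 \boxtimes \cdots \boxtimes \mH_\n$, $\rS_1 \boxtimes \cdots \boxtimes \rS_\n$, $\T_1 \boxtimes \cdots \boxtimes \T_\n$, and $\Lambda_1 \boxtimes \cdots \boxtimes \Lambda_\n$ reduce tautologically to $\mH$, $\rS$, $\T$, $\Lambda$ respectively. Proposition \ref{wooond} therefore yields that
$$\gamma_\mN : \Enr\Fun_{\rS,\rS,\T,\T,\mH,\mH}^{\Lambda^{\mH}}(\mP\B\Env^{\mH}_{\Lambda}(\mM)_{\rS,\T}, \mN) \to \Enr\Fun_{\rS,\rS,\T,\T,\mH,\mH}^{\Lambda}(\mM, \mN)$$
is an equivalence of $\infty$-categories.

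To finish, I restrict to the full subcategories on either side spanned by those enriched functors whose underlying maps of $\infty$-operads $\mV^\ot \to \mV^\ot$ and $\mW^\ot \to \mW^\ot$ are the identities, that is, to the $\mV,\mW$-enriched functors in the sense of Definition \ref{linmapp}. By construction (see Remark \ref{embesto} and the definition of $\mP\B\Env^{\mH}_{\Lambda}(\mM)_{\rS,\T}^\circledast$), the canonical enriched functor $\mM^\circledast \to \mP\B\Env^{\mH}_{\Lambda}(\mM)^\circledast_{\rS,\T}$ is itself $\mV,\mW$-enriched, so $\gamma_\mN$, which is given by precomposition with this functor, preserves and reflects the property of lying over the identity operad maps. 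Hence $\gamma_\mN$ restricts to an equivalence on these full subcategories, and this restriction coincides with the functor in the statement. The only point to be careful about is matching up the two decorations, identity operad map and preservation of the chosen class of diagrams, with the formulation of Proposition \ref{wooond}; beyond this, no further content is required.
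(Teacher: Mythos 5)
Your proof coincides with the paper's: Corollary \ref{wooond0} is obtained there precisely as the specialization of Proposition \ref{wooond} to $\n=1$ with $\mV'^\ot=\mV^\ot$, $\mW'^\ot=\mW^\ot$, $\rS'=\rS$, $\T'=\T$, $\mH'=\mH$, exactly as you argue, with the local pseudo-enrichment hypotheses vacuous and the $\boxtimes$-constructions reducing to $\rS,\T,\mH,\Lambda$. The one imprecision is your phrase "full subcategories": passing from $\Enr\Fun_{\rS,\rS,\T,\T,\mH,\mH}$ to $\Enr\Fun_{\mV,\mW}$ is taking fibers over the identity pair of operad maps (morphisms in $\Enr\Fun$ may lie over non-identity transformations, so this is not a full subcategory), but since $\gamma_\mN$ is precomposition with a $\mV,\mW$-enriched functor and hence a map over the base of operad-map data, it is an equivalence over that base and induces the claimed equivalence on these fibers, so your argument goes through as intended.
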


\begin{corollary}\label{wooond00}Let $(\mV^\ot \to \Ass, \rS)$ be a small localization pair, $\mM^\circledast \to \mV^\ot$ an absolute small left $\rS$-enriched $\infty$-category, $\mH$ a set of small left weights over $\mV$, $\Lambda$ a collection of $\mH$-weighted left diagrams in $\mM$ and $\mN^\circledast \to \mV^\ot\times \mW^\ot $ a left $\rS$-enriched $\infty$-category that admits $\mH$-weighted colimits.
The following right $\mW$-enriched functor is an equivalence: $$\Enr\Fun_{\mV, \emptyset}^{\Lambda^{\mH}}( \mP\B\Env^{\mH}_{\Lambda}(\mM)_{\rS,\T}, \mN)^\circledast \to \Enr\Fun_{\mV, \emptyset}^{\Lambda}(\mM, \mN)^\circledast.$$	
	
\end{corollary}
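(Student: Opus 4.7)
The plan is to deduce this right $\mW$-enriched statement from the non-enriched Corollary \ref{wooond0} by testing against an arbitrary small right $\mW$-enriched $\infty$-category and invoking the hom/tensor adjunction of Proposition \ref{lehmmm}. A weakly right $\mW$-enriched functor $\Phi: \mA^\circledast \to \mB^\circledast$ is an equivalence if and only if, for every totally small weakly right $\mW$-enriched $\infty$-category $\mO^\circledast \to \mW^\ot$, the induced map
$$\Enr\Fun_{\emptyset, \mW}(\mO, \mA) \to \Enr\Fun_{\emptyset, \mW}(\mO, \mB)$$
is an equivalence of $\infty$-categories (standard Yoneda applied fibrewise over $\mW^\ot$, using that mapping spaces in the source and target are computed by multi-morphism spaces which are controlled by such probes $\mO$).

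Next, for each such $\mO$, Proposition \ref{lehmmm} gives canonical equivalences
$$\Enr\Fun_{\emptyset, \mW}(\mO, \Enr\Fun_{\mV, \emptyset}(\mP\B\Env^{\mH}_{\Lambda}(\mM)_{\rS}, \mN)) \simeq \Enr\Fun_{\mV, \mW}(\mP\B\Env^{\mH}_{\Lambda}(\mM)_{\rS} \times \mO, \mN),$$
$$\Enr\Fun_{\emptyset, \mW}(\mO, \Enr\Fun_{\mV, \emptyset}(\mM, \mN)) \simeq \Enr\Fun_{\mV, \mW}(\mM \times \mO, \mN).$$
Under these equivalences the full subcategory cut out by $\Lambda^{\mH}$-preservation (respectively $\Lambda$-preservation) on the left-hand side corresponds to the full subcategory of $\mV, \mW$-enriched functors whose restriction to each slice $(-) \times \{\X\}$ for $\X \in \mO$ preserves the specified diagrams in the first variable. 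These are precisely the conditions encoded by $\Lambda^{\mH} \boxtimes \emptyset$ and $\Lambda \boxtimes \emptyset$ in the sense of Notation \ref{zzzzo}.

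Finally I would invoke Proposition \ref{wooond} with $n = 2$, first pair $(\mV^\ot, \rS)$, $(\emptyset^\ot, \emptyset)$ equipped with $\mM$, $\mH$, $\Lambda$, and second pair $(\emptyset^\ot, \emptyset)$, $(\mW^\ot, \emptyset)$ equipped with $\mO$, $\mH_2 = \emptyset$, $\Lambda_2 = \emptyset$. One has $\mP\B\Env^{\emptyset}_{\emptyset}(\mO)_{\emptyset, \emptyset} \simeq \mO$ by Remarks \ref{embesto} and \ref{embes}, and $\mH_1 \boxtimes \mH_2 = \mH$, $\Lambda^{\mH_1} \boxtimes \Lambda^{\mH_2} = \Lambda^{\mH}$, $\Lambda_1 \boxtimes \Lambda_2 = \Lambda$, $\rS_1 \boxtimes \rS_2 = \rS$ by construction. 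The local-pseudo-enrichment hypotheses of Proposition \ref{wooond} are automatic: local left-pseudo-enrichment over the initial operad $\emptyset^\ot$ is vacuous, and symmetrically for the right side. Proposition \ref{wooond} then yields the required equivalence of $\infty$-categories functorially in $\mO$.

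The main technical obstacle is the bookkeeping of the superscript decorations: one must check that, after transporting along Proposition \ref{lehmmm}, the subcategory "preserves $\Lambda^{\mH}$-weighted diagrams in the first variable for every value in the second" is \emph{literally} the subcategory "sends every diagram of $\Lambda^{\mH} \boxtimes \emptyset$ to a weighted colimit diagram" in the sense of Notation \ref{Notali}, and likewise for $\Lambda$ versus $\Lambda \boxtimes \emptyset$. This is essentially a definition chase, using that a diagram in $\Lambda^{\mH_1} \boxtimes \Lambda^{\mH_2}$ with $\Lambda_2 = \emptyset$ is by definition the image of a diagram of $\Lambda^{\mH}$ under some slice inclusion $\mM \times \{\X\} \hookrightarrow \mM \times \mO$.
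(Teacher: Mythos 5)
Your first two steps (testing the right $\mW$-enriched functor against arbitrary small weakly right enriched probes $\mO$ and transposing via Proposition \ref{lehmmm}) are reasonable, but the step that is supposed to do the actual work — invoking Proposition \ref{wooond} with $\n=2$, the first factor $(\mM,\mH,\Lambda)$ over $(\mV^\ot,\rS),(\emptyset^\ot,\emptyset)$ and the second factor $(\mO,\emptyset,\emptyset)$ over $(\emptyset^\ot,\emptyset),(\mW^\ot,\emptyset)$ — does not fit the proposition at all, and your claim that its hypotheses are ``automatic/vacuous'' has the quantifiers backwards. In Notation \ref{zzzzo} and in Proposition \ref{wooond} the operad $\mV_2^\ot$ is required to admit a tensor unit (and $\mM_2=\mO$ is required to be locally left pseudo-enriched over it) precisely when the \emph{other} factor carries weights/diagrams that are not purely right-sided; since $\mH_1=\mH$ and $\Lambda_1=\Lambda$ consist of left weights/diagrams, these conditions are triggered, and they cannot hold for $\mV_2^\ot=\emptyset^\ot$, which has no tensor unit. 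In particular the constructions $\mH_1\boxtimes\mH_2$ and $\Lambda_1\boxtimes\Lambda_2$ you quote are built from the unit-inclusion maps $\{\tu\}\times\mV_\bi\times\{\tu\}\to\mV_1\times\mV_2$ and so are not even defined with an $\emptyset^\ot$ factor. Worse, the product $\prod_\bi\mM_\bi$ in Proposition \ref{wooond} is the fiber product over $\Ass\times\Ass$, with enriching operads $\mV_1^\ot\times_\Ass\mV_2^\ot$ and $\mW_1^\ot\times_\Ass\mW_2^\ot$; since $\mV^\ot\times_\Ass\emptyset^\ot\simeq\emptyset^\ot$ (the fiber of $\mV^\ot$ over $[0]$ is contractible), this product degenerates to the unenriched product $\mM\times\mO$ with trivial enrichment and is \emph{not} the external product $\mM^\circledast\times\mO^\circledast\to\mV^\ot\times\mW^\ot$ appearing on the other side of Proposition \ref{lehmmm}. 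So the identification you need in your final ``bookkeeping'' paragraph cannot be made, and the conclusion of Proposition \ref{wooond} with your inputs is not the statement you want.

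The strategy is salvageable, but the reduction has to go through the one-variable case rather than the mixed two-variable case. After probing with $\mO$, use Proposition \ref{lehmmm} (and its reversed form) to rewrite both sides as left $\mV$-enriched functor categories into the new target $\Enr\Fun_{\emptyset,\mW}(\mO,\mN)^\circledast\to\mV^\ot$, identify the decorated subcategories (diagrams are sent to weighted colimit diagrams object-wise in $\mO$), and then apply the $\n=1$ case of Proposition \ref{wooond} (i.e.\ Corollary \ref{wooonda}/\ref{wooond0} with $\mW$-side trivial) to this target. For that you must verify that $\Enr\Fun_{\emptyset,\mW}(\mO,\mN)^\circledast\to\mV^\ot$ is again left $\rS$-enriched and admits $\mH$-weighted colimits, which is exactly the content of Theorem \ref{Enric} (weighted colimits of enriched functors are formed object-wise) together with the pointwise criterion for $\rS$-enrichment; alternatively one can bypass the probe argument and check directly that the comparison functor induces equivalences on underlying $\infty$-categories and on all multi-morphism spaces, each such space being an instance of the unenriched equivalence. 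As written, the proposal has a genuine gap at its central step.
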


\begin{notation}Let $\kappa, \tau$ be regular cardinals, $\n \geq 1$ and for every $1 \leq \bi \leq \n$ let $\mM_\bi^\circledast \to \mV_\bi^\ot \times \mW_\bi^\ot, \mN^\circledast \to \mV'^\ot \times \mW'^\ot$ be $\infty$-categories weakly bienriched in monoidal $\infty$-categories compatible with $\kappa$-small colimits, $\tau$-small colimits, respectively. Let $\mH$ be a collection of weights over $\prod_{\bi=1}^\n\mV_\bi,\prod_{\bi=1}^\n\mW_\bi $ and $\mH'$ a collection of weights over $\mV',\mW'$ and $\Lambda$ a collection of $\mH$-weighted diagrams in $\prod_{\bi=1}^\n\mM_\bi$.
Set $\mV^\ot:= \mV_1^\ot \times_\Ass ... \times_\Ass \mV_\n^\ot, \mW^\ot:= \mW_1^\ot \times_\Ass ... \times_\Ass \mW_\n^\ot.$
%\begin{enumerate}
%\item Let $$ _\tu\Enr\Fun^{\Lambda}_{\mH, \mH'}(\prod_{1 \leq \bi \leq \n}\mM_\bi, \mN),\ \Enr\Fun^{\Lambda}_{\tu, \mH, \mH'}(\prod_{1 \leq \bi \leq \n}\mM_\bi, \mN),\ _\tu\Enr\Fun^{\Lambda}_{\tu, \mH, \mH'}(\prod_{1 \leq \bi \leq \n}\mM_\bi, \mN)$$$$ \subset \Enr\Fun^{\Lambda}_{\mH, \mH'}(\prod_{1 \leq \bi \leq \n}\mM_\bi, \mN) $$ be the full subcategories of enriched functors lying from the left, from the right, from the left and right, respectively, over a map of $\infty$-operads preserving the tensor unit.
%\item If for every $1 \leq \bi \leq \n$ the $\infty$-operads $\mV_\bi^\ot \to \Ass, \mW_\bi^\ot \to \Ass$ are monoidal $\infty$-categories, let $$ \Enr\Fun^{\Lambda}_{\ot, \mH, \mH'}(\prod_{1 \leq \bi \leq \n}\mM_\bi, \mN) \subset \Enr\Fun^{\Lambda}_{\mH, \mH'}(\prod_{1 \leq \bi \leq \n}\mM_\bi, \mN)$$ be the full subcategory of enriched functors 
%sending diagrams of $\Lambda$ to $\mH$-weighted colimit diagrams and 
%lying over maps of $\infty$-operads $\mV^\ot \to \mV'^\ot, \mW^\ot \to \mW'^\ot $such that for every $1 \leq \bi \leq \n$ the compositions $\mV_\bi^\ot \to \mV^\ot \to \mV'^\ot, \mW_\bi^\ot \to \mW^\ot \to \mW'^\ot$ are monoidal functors. % that admits a right adjoint. % relative to $\Ass.$
Let $$\hspace{12mm} _{\ot,\kappa}\Enr\Fun^{\Lambda}_{\mH, \mH'}(\prod_{1 \leq \bi \leq \n}\mM_\bi, \mN), \ \Enr\Fun^{\Lambda}_{\ot, \tau, \mH, \mH'}(\prod_{1 \leq \bi \leq \n}\mM_\bi, \mN), \ _{\ot,\kappa}\Enr\Fun^{\Lambda}_{\ot, \tau, \mH, \mH'}(\prod_{1 \leq \bi \leq \n}\mM_\bi, \mN)$$$$ \subset \Enr\Fun^{\Lambda}_{\mH, \mH'}(\prod_{1 \leq \bi \leq \n}\mM_\bi, \mN)$$ be the full subcategories of enriched functors 
%sending diagrams of $\Lambda$ to $\mH$-weighted colimit diagrams and 
lying from the left over a map of $\infty$-operads $\mV^\ot \to \mV'^\ot$ such that for every $1 \leq \bi \leq \n$ the composition $\mV_\bi^\ot \to \mV^\ot \to \mV'^\ot$ is a monoidal functor preserving $\kappa$-small colimits,
lying from the right over a map of $\infty$-operads $\mW^\ot \to \mW'^\ot$ such that for every $1 \leq \bi \leq \n$ the composition $\mW_\bi^\ot \to \mW^\ot \to \mW'^\ot$ is a monoidal functor preserving $\tau$-small colimits,
lying from the left and right over a map of $\infty$-operads with these properties, respectively. If $\kappa=\emptyset$, we drop $\kappa.$
If $\kappa$ is the large strongly inaccessible cardinal corresponding to the small universe, we replace $\kappa$ by $\rc\rc.$
%be the full subcategory of enriched functors 
%sending diagrams of $\Lambda$ to $\mH$-weighted colimit diagrams and lying over monoidal functors $\mV_1^\ot \times_\Ass ... \times_\Ass \mV_\n^\ot \to \mV'^\ot, \mW_1^\ot \times_\Ass ... \times_\Ass \mW_\n^\ot \to \mW'^\ot $such that for every $1 \leq \bi \leq \n$ the compositions $$\mV_\bi^\ot \to \mV_1^\ot \times_\Ass ... \times_\Ass \mV_\n^\ot \to \mV'^\ot, \mW_\bi^\ot \to \mW_1^\ot \times_\Ass ... \times_\Ass \mW_\n^\ot \to \mW'^\ot$$ admit a right adjointrelative to $\Ass.$	\end{enumerate}

\end{notation}

\begin{corollary}\label{wooond1}
Let $\kappa, \tau$ be small regular cardinals, $\n \geq 1$ and for every  $1 \leq \bi \leq \n$ let %$\mV_\bi^\ot \to \Ass, \mW_\bi^\ot \to \Ass$ small $\infty$-operads with tensor unit.
$\mM_\bi^\circledast \to \mV_\bi^\ot \times \mW_\bi^\ot$ be an absolute small weakly  bienriched $\infty$-category, $\mH_\bi$ a set of absolute small weights over $\mV_\bi, \mW_\bi$, $\Lambda_\bi$ a collection of $\mH_\bi$-weighted diagrams in $\mM_\bi$ and $\mN^\circledast \to \mV'^\ot\times \mW'^\ot $ a weakly bienriched $\infty$-category that admits $\mH'$-weighted colimits for a collection $\mH'$ of weights over $\mV', \mW'.$
%Set $\mH:=\mH_1 \boxtimes ... \boxtimes \mH_\n.$
%where $ \mV^\ot:= \mV_1^\ot \times_\Ass ... \times_\Ass \mV^\ot_\n, \mW^\ot:= \mW_1^\ot \times_\Ass ... \times_\Ass \mW^\ot_\n.$

\begin{enumerate}

\item If for every  $1 \leq \bi \leq \n$ the weakly bienriched $\infty$-categories $\mM_\bi^\circledast \to \mV_\bi^\ot \times \mW_\bi^\ot$ and $\mN^\circledast \to \mV'^\ot\times \mW'^\ot $ are left $\kappa$-enriched $\infty$-categories and $\mH_\bi, \mH'$ consist of left weights, the following functor is an equivalence: 
$$\hspace{6mm}_{\ot, \kappa}\Enr\Fun_{ \mH, \mH'}^{\Lambda^{\mH_1} \boxtimes ... \boxtimes \Lambda^{\mH_\n}}(\prod_{1 \leq \bi \leq \n} \mP\B\Env^{\mH_\bi}_{\Lambda_\bi}(\mM_\bi)_{\L\Enr_{\kappa}}, \mN) \to {_{\ot, \kappa}\Enr\Fun}_{\mH, \mH'}^{\Lambda_1 \boxtimes ... \boxtimes \Lambda_\n}(\prod_{1 \leq \bi \leq \n}\mM_\bi, \mN).$$

\item If for every  $1 \leq \bi \leq \n$ the weakly bienriched $\infty$-categories $\mM_\bi^\circledast \to \mV_\bi^\ot \times \mW_\bi^\ot$ and $\mN^\circledast \to \mV'^\ot\times \mW'^\ot $ are right $\tau$-enriched $\infty$-categories and $\mH_\bi, \mH'$ consist of right weights, the following functor is an equivalence: 
$${\Enr\Fun}_{\ot, \tau, \mH, \mH'}^{\Lambda^{\mH_1} \boxtimes ... \boxtimes \Lambda^{\mH_\n}}(\prod_{1 \leq \bi \leq \n} \mP\B\Env^{\mH_\bi}_{\Lambda_\bi}(\mM_\bi)_{\R\Enr_{\tau}}, \mN) \to {\Enr\Fun}_{\ot, \tau, \mH, \mH'}^{\Lambda_1 \boxtimes ... \boxtimes \Lambda_\n}(\prod_{1 \leq \bi \leq \n}\mM_\bi, \mN).$$

\item If for every  $1 \leq \bi \leq \n$ the weakly bienriched $\infty$-categories $\mM_\bi^\circledast \to \mV_\bi^\ot \times \mW_\bi^\ot$ and $\mN^\circledast \to \mV'^\ot\times \mW'^\ot $ are $\kappa, \tau$-bienriched $\infty$-categories, 
the following functor is an equivalence: 
$$\hspace{12mm}_{\ot, \kappa}\Enr\Fun_{\ot, \tau, \mH, \mH'}^{\Lambda^{\mH_1} \boxtimes ... \boxtimes \Lambda^{\mH_\n}}(\prod_{1 \leq \bi \leq \n} \mP\B\Env^{\mH_\bi}_{\Lambda_\bi}(\mM_\bi)_{\B\Enr_{\kappa, \tau}}, \mN) \to {_{\ot,\kappa}\Enr\Fun}_{\ot, \tau, \mH, \mH'}^{\Lambda_1 \boxtimes ... \boxtimes \Lambda_\n}(\prod_{1 \leq \bi \leq \n}\mM_\bi, \mN).$$
\end{enumerate}
	
\end{corollary}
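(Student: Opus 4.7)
The plan is to deduce all three parts by direct specialization of Proposition \ref{wooond} to suitable small localization pairs. In case (1), I would take $\rS_\bi := \Enr^\kappa_{\mV_\bi}$ and $\T_\bi := \emptyset$ for $1 \le \bi \le \n$, and on the target set $\rS' := \Enr^\kappa_{\mV'}$, $\T' := \emptyset$; case (2) is formally dual with the roles of left and right exchanged, and case (3) combines both. By Example \ref{Exaso}, each such pair is a small localization pair, and its $\rS_\bi,\T_\bi$-bi-enriched $\infty$-categories are precisely the left $\kappa$-enriched, right $\tau$-enriched, or $\kappa,\tau$-bi-enriched ones respectively; in particular the local pseudo-enrichment assumption of Proposition \ref{wooond} is built into the definition of $\kappa$- and $\tau$-enrichment and holds automatically.

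Next I would set up the dictionary between the decorated functor categories $_{\ot,\kappa}\Enr\Fun$, $\Enr\Fun_{\ot,\tau}$ and $_{\ot,\kappa}\Enr\Fun_{\ot,\tau}$ appearing in the statement and the subcategories $\Enr\Fun_{\rS,\rS',\T,\T',\mH,\mH'}$ used in Proposition \ref{wooond}. By the example following the definition of a map of localization pairs, a map of $\infty$-operads $\mV_\bi^\ot \to \mV'^\ot$ defines a map of pairs $(\mV_\bi^\ot,\Enr^\kappa_{\mV_\bi}) \to (\mV'^\ot,\Enr^\kappa_{\mV'})$ iff it is a monoidal functor preserving $\kappa$-small colimits. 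Combined with the definition of the box product $\rS_1 \boxtimes \dots \boxtimes \rS_\n$, whose generators are the images of the $\rS_\bi$ under the inclusions $\tau_\bi: \mV_\bi^\ot \to \mV^\ot$, this shows that a map $\mV^\ot \to \mV'^\ot$ carries $\rS_1 \boxtimes \dots \boxtimes \rS_\n$ into $\rS'$ iff each composite $\mV_\bi^\ot \to \mV^\ot \to \mV'^\ot$ is monoidal and preserves $\kappa$-small colimits, and dually for the $\T_\bi$; this is exactly the decoration ${}_{\ot,\kappa}$ and ${}_{\ot,\tau}$ used in the statement.

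The last identification is between the construction $\mP\B\Env^{\mH_\bi}_{\Lambda_\bi}(\mM_\bi)_{\rS_\bi,\T_\bi}$ of Proposition \ref{wooond} and the abbreviations $\mP\B\Env^{\mH_\bi}_{\Lambda_\bi}(\mM_\bi)_{\L\Enr_\kappa}$, $\mP\B\Env^{\mH_\bi}_{\Lambda_\bi}(\mM_\bi)_{\R\Enr_\tau}$, $\mP\B\Env^{\mH_\bi}_{\Lambda_\bi}(\mM_\bi)_{\B\Enr_{\kappa,\tau}}$; this is a matter of unwinding Notation \ref{enrprrr} and applying Example \ref{Exaso}, which identifies $(\Enr^\kappa_\mV)^{-1}\mP\Env(\mV)^\ot$ with $\Ind_\kappa(\mV)^\ot$. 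With this dictionary in place, the three claimed equivalences reduce directly to the first displayed equivalence in Proposition \ref{wooond}. The main obstacle I anticipate is purely bookkeeping: one must check cleanly that the box-product collections really do cut out the correct compatibility with each individual factor $\mV_\bi,\mW_\bi$; this follows from the explicit description of the maps $\tau_\bi$ and the hypothesis that the factors $\mV_\bj,\mW_\bj$ for $\bj\ne\bi$ admit tensor units, so that the constant maps into them preserving the unit are well-defined via Lemma \ref{preten}.
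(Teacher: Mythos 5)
Your proposal matches the paper's intended argument: the paper states this result as a direct specialization of Proposition \ref{wooond}, taking $\rS_\bi=\Enr^{\kappa}_{\mV_\bi}$, $\T_\bi=\Enr^{\tau}_{\mW_\bi}$ (or $\emptyset$) and using Example \ref{Exaso} together with the characterization of maps of localization pairs to translate the decorations, exactly as you do. Your observation that the local pseudo-enrichment hypotheses are automatic (left/right $\kappa$-enrichment includes pseudo-enrichment, which gives the locally pseudo-enriched condition) is the only point needing a check, and it is correct.
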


For the next lemma we use the following terminology:

\begin{definition}Let $\mV^\ot \to \Ass, \mW^\ot \to \Ass$ be small $\infty$-operads
and $\kappa, \tau$ small regular cardinals.
	
\begin{enumerate}
\item An absolute small left $\kappa$-enriched weight over $\mV,\mW$ is a weight $(\alpha: \mV'^\ot \to \mV^\ot, \beta: \mW'^\ot \to \mW^\ot, \mJ^\circledast \to \mV'^\ot \times \mW'^\ot, \rH)$ over $\mV,\mW$ such that $\mV^\ot \to \Ass, \mV'^\ot \to \Ass$ are small monoidal $\infty$-categories compatible with $\kappa$-small colimits, $\mW^\ot \to \Ass, \mW'^\ot \to \Ass$ are small $\infty$-operads,
$\alpha$ is a monoidal functor preserving $\kappa$-small colimits, $ \mJ^\circledast \to \mV'^\ot \times \mW'^\ot$ is a small left $\kappa$-enriched $\infty$-category and $\rH \in \mP\B\Env(\mM)_{\L\Enr_{\kappa}}.$
Dually, we define absolute small right $\tau$-enriched weights over $\mV,\mW$.
%is a weight $(\alpha: \mV'^\ot \to \mV^\ot, \beta: \mW'^\ot \to \mW^\ot, \mJ^\circledast \to \mV'^\ot \times \mW'^\ot, \rH)$ over $\mV,\mW$ such that $\mW^\ot \to \Ass, \mW'^\ot \to \Ass$ are small monoidal $\infty$-categories compatible with $\tau$-small colimits, $\mV^\ot \to \Ass, \mV'^\ot \to \Ass$ are small $\infty$-operads,$\beta$ is a monoidal functor preserving $\tau$-small colimits, $ \mJ^\circledast \to \mV'^\ot \times \mW'^\ot$ is a small right $\tau$-enriched $\infty$-category and $\rH \in \mP\B\Env(\mM)_{\R\Enr_{\tau}}.$
		
\item An absolute small $\kappa, \tau$-enriched weight over $\mV,\mW$ is a weight $(\alpha: \mV'^\ot \to \mV^\ot, \beta: \mW'^\ot \to \mW^\ot, \mJ^\circledast \to \mV'^\ot \times \mW'^\ot, \rH)$ over $\mV,\mW$ such that $\mV^\ot \to \Ass, \mV'^\ot \to \Ass$
are small monoidal $\infty$-categories compatible with $\kappa$-small colimits, $ \mW^\ot \to \Ass, \mW'^\ot \to \Ass$ are monoidal $\infty$-categories compatible with $\tau$-small colimits, $\alpha$ is a monoidal functor preserving $\kappa$-small colimits, $\beta$ is a monoidal functor preserving $\tau$-small colimits, $ \mJ^\circledast \to \mV'^\ot \times \mW'^\ot$ is a small $\kappa, \tau$-bienriched $\infty$-category and $\rH \in \mP\B\Env(\mM)_{\B\Enr_{\kappa,\tau}}.$
		
\end{enumerate}
\end{definition}

\begin{lemma}\label{mini} \begin{enumerate}\item Let $\alpha: \mV'^\ot \to \mV^\ot$ be a left adjoint monoidal functor between presentably monoidal $\infty$-categories and $ \beta: \mW'^\ot \to \mW^\ot $ a map of $\infty$-operads. By \cite[Proposition 7.15.]{Rune} there are small regular cardinals $\kappa, \kappa'$ such that $\mV^\ot \to \Ass$ is $\kappa$-compactly generated, $\mV'^\ot \to \Ass$ is $\kappa'$-compactly generated and $\alpha$ sends $\kappa'$-compact objects to $\kappa$-compact objects. 		
For any left enriched weight $\rH$ on a small left enriched $\infty$-category $\mJ^\circledast \to \mV'^\ot \times \mW'^\ot$ there is a canonical left $\kappa'$-enriched weight $\rH' \in \mP\B\Env(\mJ_{\kappa'})_{\L\Enr_{\kappa'}}$ on $\mJ_{\kappa'}^\circledast \to (\mV'^{\kappa'})^\ot \times \mW'^\ot$ such that for any enriched functor $\F: \mJ^\circledast \to \mM^\circledast$ lying over $\alpha, \beta$ there is a canonical equivalence $$\colim^\rH(\F) \simeq \colim^{\rH'}(\F'),$$ where $\F': \mJ_{\kappa'}^\circledast \to \mM_\kappa^\circledast$ is the restriction of $\F.$

\item Let $\alpha: \mV'^\ot \to \mV^\ot, \beta: \mW'^\ot \to \mW^\ot$ be left adjoint monoidal functors between presentably monoidal $\infty$-categories. By \cite[Proposition 7.15.]{Rune} there are small regular cardinals $\kappa, \kappa', \tau, \tau'$ such that $\mV^\ot \to \Ass$ is $\kappa$-compactly generated, $\mV'^\ot \to \Ass$ is $\kappa'$-compactly generated, $\mW^\ot \to \Ass$ is $\tau$-compactly generated, $\mW'^\ot \to \Ass$ is $\tau'$-compactly generated, $\alpha$ sends $\kappa'$-compact objects to $\kappa$-compact objects and $\beta$ sends $\tau'$-compact objects to $\tau'$-compact objects. For any enriched weight $\rH$ on $\mJ^\circledast \to \mV'^\ot \times \mW'^\ot$ there is a canonical $\kappa',\tau'$-enriched weight $\rH' \in \mP\B\Env(\mJ_{\kappa',\tau'})_{\B\Enr_{\kappa', \tau'}}$
on $\mJ_{\kappa', \tau'}^\circledast \to (\mV'^{\kappa'})^\ot \times (\mW'^{\tau'})^\ot$
such that for any enriched functor $\F: \mJ^\circledast \to \mM^\circledast$ lying over $\alpha, \beta$ there is a canonical equivalence $$\colim^\rH(\F) \simeq \colim^{\rH'}(\F'),$$ where $\F': \mJ_{\kappa',\tau'}^\circledast \to \mM_{\kappa,\tau}^\circledast$ is the restriction of $\F.$
\end{enumerate}\end{lemma}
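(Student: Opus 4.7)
The plan is to reduce the statement to the correspondence between left enriched $\infty$-categories over $\mV'$ and left $\kappa'$-enriched $\infty$-categories over $\mV'^{\kappa'}$ provided by Corollary \ref{cosqa}. For part (1), start by observing that the hypothesis that $\alpha$ sends $\kappa'$-compact objects to $\kappa$-compact objects gives a monoidal functor $\alpha_0: (\mV'^{\kappa'})^\ot \to (\mV^\kappa)^\ot$. Define the pullback restrictions $\mJ_{\kappa'}^\circledast := (\mV'^{\kappa'})^\ot \times_{\mV'^\ot} \mJ^\circledast$ and $\mM_{\kappa}^\circledast := (\mV^\kappa)^\ot \times_{\mV^\ot} \mM^\circledast$; by Corollary \ref{cosqa} (1), these are respectively left $\kappa'$-enriched and left $\kappa$-enriched $\infty$-categories, and the enriched functor $\F$ restricts to $\F': \mJ_{\kappa'}^\circledast \to \mM_\kappa^\circledast$ lying over $\alpha_0, \beta$.

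Next I would produce a canonical equivalence of presentably left $\mV'$-tensored $\infty$-categories $\mP\B\Env(\mJ)_{\L\Enr}^\circledast \simeq \mP\B\Env(\mJ_{\kappa'})_{\L\Enr_{\kappa'}}^\circledast$ comparing the two free cocompletions of $\mJ$, viewed either as left $\mV'$-enriched or as left $\kappa'$-enriched over $\mV'^{\kappa'}$ under $\Ind_{\kappa'}(\mV'^{\kappa'}) \simeq \mV'$. The left-hand side is the universal presentably left $\mV'$-tensored $\infty$-category receiving an enriched functor from $\mJ$ by Proposition \ref{weifun}, and the right-hand side carries an analogous universal property in $\kappa'$-enriched terms by Proposition \ref{pseuso} combined with Corollary \ref{cosqa} (1); matching these universal properties yields the equivalence. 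Set $\rH'$ to be the image of $\rH$ under this equivalence (which is by construction a left $\kappa'$-enriched weight on $\mJ_{\kappa'}$). The analogous equivalence for $\mM$ identifies the two extension functors $\F_!: \mP\B\Env(\mJ)_{\L\Enr}^\circledast \to \mP\B\Env(\mM)_{\L\Enr}^\circledast$ and $\F'_!: \mP\B\Env(\mJ_{\kappa'})_{\L\Enr_{\kappa'}}^\circledast \to \mP\B\Env(\mM_{\kappa})_{\L\Enr_{\kappa}}^\circledast$, since both are characterized as the unique left $\mV'$-linear extension of the common underlying functor $\mJ \to \mM$.

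Since the underlying $\infty$-categories of $\mM_\kappa$ and $\mM$ coincide (the pullback along a monoidal embedding does not change the underlying $\infty$-category), Corollary \ref{eccco} together with Lemma \ref{Enristo} then yields the desired identification $\colim^\rH(\F) \simeq \colim^{\rH'}(\F')$: existence of one weighted colimit implies existence of the other, and they match under the identification. Part (2) proceeds analogously, using Corollary \ref{cosqa} (3) in place of (1) and replacing the $\L\Enr$-localization by the $\B\Enr$-localization throughout. The main obstacle is the careful construction of the canonical equivalence $\mP\B\Env(\mJ)_{\L\Enr}^\circledast \simeq \mP\B\Env(\mJ_{\kappa'})_{\L\Enr_{\kappa'}}^\circledast$ and its $\mM$-analogue in a manner compatible with the extension functors $\F_!, \F'_!$; once these identifications are firmly in place, the remainder is a formal consequence of the universal property of weighted colimits and stability of weighted colimits under restriction of enrichment.
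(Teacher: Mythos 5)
Your proposal is correct and takes essentially the same route as the paper: both pass through the equivalence $\mP\B\Env(\mJ_{\kappa'})^\circledast_{\L\Enr_{\kappa'}} \simeq \mP\B\Env(\mJ)^\circledast_{\L\Enr}$ (and its analogue for $\mM$) coming from Corollary \ref{cosqa}, transport $\rH$ to $\rH'$ along it, identify the two extension functors by their universal property, and then read off that the weighted-colimit conditions correspond. The only difference is in the final verification, which the paper carries out by an explicit commutative square of multi-morphism spaces tested against $\V \in \mV^\kappa$, whereas you appeal to Lemma \ref{Enristo}; your citation of Corollary \ref{eccco} is not really the right tool there (it concerns a fixed weight, not the transported one), but the identifications you set up beforehand already do that work.
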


\begin{proof}We prove (1). The proof of (2) is similar. By Corollary \ref{cosqa} the projections $\bj: \mJ_{\kappa'}^\circledast \to \mJ^\circledast, \iota: \mM_\kappa^\circledast \to \mM^\circledast$ induce
$\mV,\mW$-enriched equivalences $$\hspace{6mm} \L \circ \bj_!: \mP\B\Env(\mJ_{\kappa'})^\circledast_{\L\Enr_{\kappa'}} \simeq \mP\B\Env(\mJ)_{\L\Enr}^\circledast, \L \circ \iota_!: \mP\B\Env(\mM_{\kappa})^\circledast_{\L\Enr_{\kappa}} \simeq \mP\B\Env(\mM)_{\L\Enr}^\circledast.$$
Setting $\rH':= (\L \circ \tau_!)^{-1}(\rH)$ there is an equivalence
$\L(\F_!(\rH)) \simeq \L \circ \F_! \circ\L \circ \tau_!(\rH') \simeq \L \circ \iota_! \circ\L \circ \F'_!(\rH')$. So $\psi: \L(\F_!(\rH)) \to \Y $
is the image under $\L \circ \iota_!$ of a morphism
$\psi': \L \circ \F'_!(\rH') \to \Y $ in $\mP\B\Env(\mM_{\kappa})_{\L\Enr_{\kappa}}.$ The morphism $\psi$ exhibits $\Y$ as the $\rH$-weighted colimit of $\F $ if and only if $\psi'$ exhibits $\Y$ as the $\rH'$-weighted colimit of $\F'$ since 
for every $\Z \in \mM$, $\mV \in \mV^\kappa, \W_1, ...,\W_\m \in \mW$ for $\m \geq0$ there is a commutative square: % where $\L$ is the left adjoint of the embedding $\mP\B\Env(\mM)_{\B\Enr} \subset \mP\B\Env(\mM):$ 
$$\begin{xy}
\xymatrix{\Mul_{\mM_\kappa}(\V,\Y,\W_1,...,\W_\m; \Z)  \ar[d]\ar[rr]
&& \Mul_{\mP\B\Env(\mM_\kappa)_{\L\Enr_\kappa}}(\V,\L(\F'_!(\rH')),\W_1,...,\W_\m; \Z) \ar[d]^{} 
\\
\Mul_\mM(\V,\Y,\W_1,...,\W_\m; \Z) \ar[rr]^{} && \Mul_{\mP\B\Env(\mM)_{\L\Enr}}(\V,\L(\F_!(\rH)),\W_1,...,\W_\m; \Z).}
\end{xy}$$

\end{proof}

To apply Corollary \ref{wooond1} we use the following proposition:

\begin{proposition}\label{quirk}
	
\begin{enumerate}
\item Let $\mM^\circledast \to \mV^\ot \times \mW^\ot$ be a small $\infty$-category left enriched in a presentably monoidal $\infty$-category, $\mH$ a set of small left enriched weights over $\mV,\mW$ and $\Lambda$ a collection of $\mH$-weighted diagrams in $\mM$. 
Then $\widehat{\mP}\B\Env^\mH_{\Lambda}(\mM)^\circledast_{\L\Enr} \to \mV^\ot \times \mW^\ot$ is large but locally small.

\item Let $\mM^\circledast \to \mV^\ot \times \mW^\ot$ be a small $\infty$-category right enriched in a presentably monoidal $\infty$-category, $\mH$ a set of small right enriched weights over $\mV,\mW$ and $\Lambda$ a collection of $\mH$-weighted diagrams in $\mM$. 
Then $\widehat{\mP}\B\Env^\mH_{\Lambda}(\mM)^\circledast_{\R\Enr} \to \mV^\ot \times \mW^\ot$ is large but locally small.

\item Let $\mM^\circledast \to \mV^\ot \times \mW^\ot$ be a small $\infty$-category bienriched in presentably monoidal $\infty$-categories, $\mH$ a set of small enriched weights over $\mV,\mW$ and $\Lambda$ a collection of $\mH$-weighted diagrams in $\mM$. 
Then $\widehat{\mP}\B\Env^\mH_{\Lambda}(\mM)^\circledast_{\B\Enr} \to \mV^\ot \times \mW^\ot$ is large but locally small.

\end{enumerate}

\end{proposition}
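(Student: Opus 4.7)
The plan is to reduce the claim, via restriction of enrichment, to a construction formed entirely inside a small universe. I treat case (3) in detail; cases (1) and (2) follow by exactly the same argument using the corresponding parts of Corollary \ref{cosqa} and Lemma \ref{mini}.

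Since $\mV^\ot$ and $\mW^\ot$ are presentably monoidal, by \cite[Proposition 7.15.]{Rune} they are $\kappa$-compactly generated and $\tau$-compactly generated for some small regular cardinals $\kappa,\tau$. Because $\mH$ is a set of small enriched weights, I may enlarge $\kappa$ and $\tau$ uniformly so that, applying Lemma \ref{mini}(2) to each $(\alpha,\beta,\mJ^\circledast,\rH)\in\mH$, the weight is equivalent — in the sense that the associated weighted colimits agree in any bitensored $\infty$-category receiving an enriched functor from $\mJ$ — to a small $\kappa,\tau$-enriched weight $(\alpha_0,\beta_0,\mJ_{\kappa,\tau}^\circledast,\rH')$ over $\mV^\kappa,\mW^\tau$. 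Let $\mH_0$ be the resulting set. Form the pullback $\mM_0^\circledast := \mV^{\kappa,\ot}\times_{\mV^\ot}\mM^\circledast\times_{\mW^\ot}\mW^{\tau,\ot}$; since $\mM$ is small with small multi-morphism spaces and $\mV^\kappa,\mW^\tau$ are small, $\mM_0^\circledast$ is a small $\kappa,\tau$-bi-enriched $\infty$-category. The collection $\Lambda$ restricts to a collection $\Lambda_0$ of $\mH_0$-weighted diagrams in $\mM_0$, which is a set by Lemma \ref{leuz}.

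By Corollary \ref{cosqa}(3), restriction of enrichment along the embeddings $\mV^{\kappa,\ot}\subset\mV^\ot$ and $\mW^{\tau,\ot}\subset\mW^\ot$ is an equivalence ${_\mV\B\Enr_\mW}\simeq{^\kappa_{\mV^\kappa}\B\Enr^\tau_{\mW^\tau}}$, and it manifestly preserves all multi-morphism spaces. I claim that under this equivalence $\widehat{\mP}\B\Env^\mH_\Lambda(\mM)^\circledast_{\B\Enr}$ corresponds to the small-universe construction $\mP\B\Env^{\mH_0}_{\Lambda_0}(\mM_0)^\circledast_{\B\Enr_{\kappa,\tau}}$. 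Granting this, local smallness is immediate: the small-universe construction sits, by Remark \ref{bipre}, inside the locally small presentably $\kappa,\tau$-bi-enriched $\infty$-category $\mP\widetilde{\B\Env}_{\Lambda_0}(\mM_0)^\circledast_{\B\Enr_{\kappa,\tau}}$ as the closure of the small subcategory $\mM_0$ under $\mH_0$-weighted colimits, hence is small by Lemma \ref{small}; pulling back along $\mV^{\kappa,\ot}\subset\mV^\ot$ and $\mW^{\tau,\ot}\subset\mW^\ot$ does not enlarge multi-morphism spaces, so the large construction is locally small.

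The main obstacle is establishing the identification claimed in the previous paragraph. This amounts to showing that the three ingredients defining the construction — the bitensored envelope, the $\B\Enr$-localization, and the $\mH$-weighted-colimit closure — are each compatible with restriction of enrichment and with the Lemma \ref{mini} weight-translation. Compatibility of $\B\Env$ with base change of the ambient operads is built into its universal property (Proposition \ref{bitte}); compatibility of the $\B\Enr$-localization follows from Remark \ref{Functol} applied to the map of localization pairs $(\mV^\kappa, \B\Enr_{\kappa,\tau})\to(\mV,\B\Enr)$; and the identification of the two closures follows because Lemma \ref{mini} together with Proposition \ref{pullba} shows that a morphism exhibits an $\rH$-weighted colimit in the large construction if and only if the corresponding morphism exhibits an $\rH'$-weighted colimit in the small one, which matches the inductive description of $\mP\B\Env^\mH_\Lambda(\mM)^\circledast_{\B\Enr}$ as the smallest full subcategory of the ambient localization containing $\mM$ and closed under the relevant weighted colimits.
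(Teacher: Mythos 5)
Your reduction target (a small-universe construction to which Lemma \ref{small} and Remark \ref{bipre} apply) is the right one, and your use of Lemma \ref{mini} to translate the weights is also how the paper proceeds; but the pivotal identification is not delivered by the tools you cite, and that identification is exactly where the difficulty of the statement lives. The large construction $\widehat{\mP}\B\Env^\mH_{\Lambda}(\mM)^\circledast_{\B\Enr}$ is by definition the closure of $\mM$ under $\mH$-weighted colimits inside the localization $\widehat{\mP}\widetilde{\B\Env}_{\Lambda}(\mM)^\circledast_{\B\Enr}$ of the \emph{large} presheaf category, and this ambient is not related to $\mP\widetilde{\B\Env}_{\Lambda_0}(\mM_0)^\circledast_{\B\Enr_{\kappa,\tau}}$ by base change of the enriching operads: it contains genuinely large presheaves that are invisible to any pullback along $\mV^{\kappa,\ot}\subset\mV^\ot$, $\mW^{\tau,\ot}\subset\mW^\ot$. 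Consequently Proposition \ref{bitte}, Remark \ref{Functol} and Proposition \ref{pullba}, which concern base change along maps of small operads or localization pairs and weighted colimits inside the small-universe presheaf categories, do not yield the compatibility of the two closures. What is actually needed, and what the paper's proof supplies, is a universe comparison for the \emph{same} $\mM$: the embedding $\bj\colon \mP\B\Env(\mM)^\circledast_{\B\Enr}\subset\widehat{\mP}\B\Env(\mM)^\circledast_{\B\Enr}$, the replacement (via Lemma \ref{mini} and Lemma \ref{leuz}) of $\Lambda$ by a set $\Sigma$ of diagrams with totally small weights so that the generating localizing morphisms $\mQ'_\Sigma$ lie in the small construction and generate the large localization as well, and the resulting fact that $\bj$ preserves and reflects local objects and preserves small weighted colimits. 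This is what guarantees that the $\mH$-weighted closure of $\mM$ computed in the large localized ambient never leaves the image of the small, presentably bitensored (hence locally small) one; your sketch contains the weight translation but not this containment argument, and without it nothing rules out that the closure taken in the large ambient involves objects outside the small-universe part.

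There is also a direction problem in your concluding inference. Smallness of the multi-morphism spaces of the pullback to $\mV^\kappa,\mW^\tau$ does not by itself bound the multi-morphism spaces with non-compact inputs, and invoking the equivalence ${_\mV\B\Enr_\mW}\simeq{^\kappa_{\mV^\kappa}\B\Enr^\tau_{\mW^\tau}}$ of Corollary \ref{cosqa} to transport the identification presupposes that the large construction already lies in ${_\mV\B\Enr_\mW}$, i.e.\ is bi-enriched and in particular locally small over $\mV,\mW$, which is essentially what you are trying to prove. This can be repaired (for instance by writing a multi-morphism space at arbitrary $\V\in\mV$ as a small $\kappa$-filtered limit of multi-morphism spaces at compact inputs, using that the ambient is bitensored compatibly with colimits), but as written the step ``pulling back does not enlarge multi-morphism spaces, so the large construction is locally small'' is not a valid deduction.
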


\begin{proof}We prove (3). The proofs of (1) and (2) are similar.
By Notation \ref{enrprrrt} there is an enriched embedding
$\bj: \mP\B\Env(\mM)^\circledast_{\B\Enr} \subset \widehat{\mP}\B\Env(\mM)^\circledast_{\B\Enr}$
that preserves small weighted colimits. Let $\iota: \mM^\circledast \subset \mP\B\Env(\mM)^\circledast_{\B\Enr}$ be the canonical enriched embedding.
By Proposition \ref{rembrako} the weakly bienriched $\infty$-category $ \mP\B\Env(\mM)_{\B\Enr}^\circledast \to \mV^\ot \times \mW^\ot $ is a presentably bitensored $\infty$-category.
Let $\Sigma$ be the collection of the following absolute small diagrams:
let $(\F: \mJ^\circledast \to \mM^\circledast, \rH \in \mP\B\Env(\mJ)_{\B\Enr},\Y\in \mM) \in \Lambda$.
By \cite[Proposition 7.15.]{Rune} there are small regular cardinals $\kappa, \kappa', \tau, \tau'$ such that $\mV^\ot \to \Ass$ is $\kappa$-compactly generated, $\mV'^\ot \to \Ass$ is $\kappa'$-compactly generated, $\mW^\ot \to \Ass$ is $\tau$-compactly generated, $\mW'^\ot \to \Ass$ is $\tau'$-compactly generated, $\alpha$ sends $\kappa'$-compact objects to $\kappa$-compact objects and $\beta$ sends $\tau'$-compact objects to $\tau'$-compact objects.
By Lemma (2) \ref{mini} there is a canonical $\kappa',\tau'$-enriched weight $\rH' \in \mP\B\Env(\mJ_{\kappa',\tau'})_{\B\Enr_{\kappa', \tau'}}$
on $\mJ_{\kappa', \tau'}^\circledast \to (\mV'^{\kappa'})^\ot \times (\mW'^{\tau'})^\ot$
such that for any enriched functor $\F: \mJ^\circledast \to \mM^\circledast$ lying over $\alpha, \beta$ there is a canonical equivalence \begin{equation}\label{ppss}
\colim^\rH(\iota \circ \F) \simeq \colim^{\rH'}(\iota \circ \F'),\end{equation} where $\F': \mJ_{\kappa',\tau'}^\circledast \to \mM_{\kappa,\tau}^\circledast$ is the restriction of $\F.$
Then $(\F': \mJ_{\kappa', \tau'}^\circledast \to \mM_{\kappa, \tau}^\circledast, \rH' \in \mP\B\Env(\mJ_{\kappa',\tau'})_{\B\Enr_{\kappa', \tau'}}, \Y\in \mM)$.
Then $\Sigma$ is a collection of absolute small weights over $\mV,\mW$, which is a set by Lemma \ref{leuz} and by equivalence \ref{ppss} there is a canonical equivalence $\widehat{\mP}\B\Env_\Lambda(\mM)^\circledast_{\B\Enr} \simeq \widehat{\mP}\B\Env_\Sigma(\mM)^\circledast_{\B\Enr}$
by definition of the latter.

Let $\mQ'_\Sigma$ be the set of Notation \ref{Notali} and $\mP\B\Env_\Sigma(\mM)^\circledast_{\B\Enr} \subset \mP\B\Env(\mM)^\circledast_{\B\Enr}$ the full weakly bienriched subcategory spanned by the $\mQ'_\Sigma$-local objects.
Since $\mQ'_\Sigma$ is preserved by the $\mV, \mW$-biaction, the full weakly bienriched subcategory $\mP\B\Env_\Sigma(\mM)^\circledast_{\B\Enr} \subset \mP\B\Env(\mM)^\circledast_{\B\Enr}$ is an accessible $\mV,\mW$-enriched localization and so a presentably bitensored $\infty$-category.
Let $\mP\B\Env^\mH_\Sigma(\mM)^\circledast_{\B\Enr} \subset \mP\B\Env_\Sigma(\mM)^\circledast_{\B\Enr}$ be the full weakly bienriched subcategory
generated by $\mM$ under $\mH$-weighted colimits.
By definition $\widehat{\mP}\B\Env_\Sigma(\mM)^\circledast_{\B\Enr}$
is the localization with respect to the set $\bj(\mQ'_\Sigma)$, the image of $\mQ'_\Sigma$ under $\bj$.
Hence $\bj$ preserves and reflects local objects (being an embedding) for the localizations with respect to the sets $\mQ'_\Sigma, \bj(\mQ'_\Sigma)$. So $\bj$ restricts to an embedding $ \mP\B\Env_\Sigma(\mM)^\circledast_{\B\Enr} \subset \widehat{\mP}\B\Env_\Sigma(\mM)^\circledast_{\B\Enr} $ preserving small weighted colimits.
Consequently, the latter embedding restricts to an equivalence
$\mP\B\Env^\mH_\Sigma(\mM)^\circledast_{\B\Enr} \simeq \widehat{\mP}\B\Env^\mH_\Sigma(\mM)^\circledast_{\B\Enr}. $ 

\end{proof}

\begin{corollary}\label{wooond0000}Let $\mV^\ot \to \Ass$ be a presentably monoidal $\infty$-category, $\mM^\circledast \to \mV^\ot$ a small left $\mV$-enriched $\infty$-category, $\mH$ a set of small left enriched weights over $\mV$, $\Lambda$ a collection of $\mH$-weighted diagrams in $\mM$ and $\mN^\circledast \to \mV^\ot\times \mW^\ot $ a left $\mV$-enriched $\infty$-category that admits $\mH$-weighted colimits.
The following right $\mW$-enriched functor is an equivalence: $$\Enr\Fun_{\mV, \emptyset}^{\Lambda^{\mH}}( \mP\B\Env^{\mH}_{\Lambda}(\mM)_{\L\Enr}, \mN)^\circledast \to \Enr\Fun_{\mV, \emptyset}^{\Lambda}(\mM, \mN)^\circledast.$$	
	
\end{corollary}

\subsection{Weighted colimits of diagrams of enriched functors}

In the following we prove that weighted colimits in enriched $\infty$-categories of enriched functors are formed object-wise.

\begin{theorem}\label{Enric}
Let $\mV^\otimes \to \Ass, \mW^\ot \to \Ass$ be $\infty$-operads, $\mM^\circledast \to \mV^\ot$ a weakly left enriched $\infty$-category, %, $\mO^\circledast \to \mW^\ot$ a weakly right enriched $\infty$-category,
$\mN^\circledast \to \mV^\ot \times \mW^\ot$ a weakly bienriched $\infty$-category and %$\mH$ a set of small left weights over $\mV$ and 
$\mH$ a set of small right weights over $\mW$. % (seen as weights over $\mV,\mW$ in the canonical way).
If $\mN^\circledast \to \mV^\otimes \times \mW^\ot$ admits $\sigma_!(\mH)$-weighted colimits,
where $\sigma: \emptyset^\ot \to \mV^\ot$ is the unique map of $\infty$-operads, the weakly right enriched $\infty$-category $\Enr\Fun_{\mV, \emptyset}(\mM,\mN)^\circledast \to \mW^\ot $ admits $\mH$-weighted colimits and for every $\X \in \mM$ the right $\mW$-enriched functor $\Enr\Fun_{\mV, \emptyset}(\mM,\mN)^\circledast \to \mN^\circledast$ evaluating at $\X$ preserves $\mH$-weighted colimits.
Moreover for every set of diagrams $\Lambda$ in $\mM$ the full subcategory $\Enr\Fun_{\mV, \emptyset}^{\Lambda}(\mM, {\mN})^\circledast \subset \Enr\Fun_{\mV, \emptyset}(\mM,\mN)^\circledast $ is closed under $\mH$-weighted colimits. 

%\item If the underlying weakly left enriched $\infty$-category of $\mN^\circledast \to \mV^\otimes \times \mW^\ot$ admits $\mH$-weighted colimits, the weakly left enriched $\infty$-category $\Enr\Fun_{\emptyset, \mW}(\mM,\mN)^\circledast \to \mV^\ot $ admits $\mH$-weighted colimits and for every $\X \in \mO$ the left $\mV$-enriched functor $\Enr\Fun_{\emptyset, \mW}(\mO,\mN)^\circledast \to \mN^\circledast$ preserves $\mH$-weighted colimits.Moreover for every set of diagrams $\Lambda$ in $\mO$ the full subcategory $\Enr\Fun_{\emptyset, \mW}^{\Lambda}(\mO, {\mN})^\circledast \subset \Enr\Fun_{\emptyset, \mW}(\mO,\mN)^\circledast $ is closed under $\mH$-weighted colimits. \end{enumerate}
\end{theorem}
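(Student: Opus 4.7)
The plan is to construct the $\rH$-weighted colimit of any right $\mW$-enriched functor $\G\colon \mJ^\circledast \to \Enr\Fun_{\mV, \emptyset}(\mM,\mN)^\circledast$ pointwise, using the hypothesis on $\mN$, and to verify its universal property by matching multi-morphism spaces on both sides via Proposition~\ref{lehmmm}.

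Fix a right weight $\T=(\beta\colon \mW'^\ot \to \mW^\ot,\, \mJ^\circledast \to \mW'^\ot,\, \rH) \in \mH$ and a right enriched functor $\G$ over $\beta.$ By Proposition~\ref{lehmmm}, $\G$ corresponds to a $\mV,\mW'$-enriched functor $\tilde\G\colon (\mM \times \mJ)^\circledast \to \mN^\circledast$, and for each $\X \in \mM$ the restriction $\tilde\G_\X := \tilde\G(\X,-)$ is a right enriched functor $\mJ^\circledast \to \mN^\circledast$ lying over $\sigma$ and $\beta$; equivalently, $\tilde\G_\X = \ev_\X \circ \G$ where $\ev_\X \colon \Enr\Fun_{\mV, \emptyset}(\mM,\mN)^\circledast \to \mN^\circledast$ is the right enriched evaluation functor (Proposition~\ref{innerho}). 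By hypothesis, $\mN$ admits the $\sigma_!(\rH)$-weighted colimit of each $\tilde\G_\X$, and I set $\Y(\X) := \colim^{\sigma_!(\rH)}(\tilde\G_\X) \in \mN.$ To promote $\Y$ to a left $\mV$-enriched functor $\mM^\circledast \to \mN^\circledast$ I use the functoriality of weighted colimits (Remark~\ref{zujj}, Corollary~\ref{Funf}): a multi-morphism $(\V_1,\dots,\V_\n,\X;\X')$ in $\mM$ induces, via the $\mV,\mW'$-enriched functoriality of $\tilde\G$, a canonical multi-morphism of right enriched functors $\tilde\G_\X \to \tilde\G_{\X'}$ with $\V_1,\dots,\V_\n$ in the $\mV$-slots; the universal property of the weighted colimit then produces a canonical multi-morphism $\V_1,\dots,\V_\n,\Y(\X)\to \Y(\X')$ in $\mN$, assembling into the desired $\Y \in \Enr\Fun_{\mV, \emptyset}(\mM,\mN).$

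The candidate structure morphism $\psi\colon \G_!(\rH) \to \Y$ in $\mP\B\Env(\Enr\Fun_{\mV, \emptyset}(\mM,\mN))$ is obtained from the pointwise structure morphisms exhibiting each $\Y(\X)$ as $\colim^{\sigma_!(\rH)}(\tilde\G_\X).$ To check that $\psi$ exhibits $\Y$ as the $\rH$-weighted colimit of $\G$ I must verify that the induced map
\[
\Mul_{\Enr\Fun_{\mV, \emptyset}(\mM,\mN)}(\F,\W_1,\dots,\W_\m;\Y) \to \Mul_{\mP\B\Env(\Enr\Fun_{\mV, \emptyset}(\mM,\mN))}(\F,\W_1,\dots,\W_\m;\G_!(\rH))
\]
is an equivalence for every $\F \in \Enr\Fun_{\mV, \emptyset}(\mM,\mN)$ and $\W_1,\dots,\W_\m \in \mW$. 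By unwinding Notation~\ref{bbbb} and applying Proposition~\ref{lehmmm} to both sides, I reduce both spaces to spaces of compatible pairs of $\mV,\mW$-enriched multi-morphisms indexed by $(\X,\Z) \in \mM \times \mJ$, and the equivalence then follows pointwise from the universal property characterizing $\Y(\X) = \colim^{\sigma_!(\rH)}(\tilde\G_\X)$ in $\mN$ together with Lemma~\ref{alor}. This is the main technical obstacle, as it requires carefully translating the weakly right enriched structure on $\Enr\Fun_{\mV, \emptyset}(\mM,\mN)^\circledast$ into pointwise multi-morphism data and checking compatibility with the left $\mV$-enriched structure on $\Y$ that was constructed via the universal property itself.

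Once the universal property is established, preservation of $\mH$-weighted colimits by $\ev_\X$ is immediate from the defining formula $\ev_\X(\Y) = \colim^{\sigma_!(\rH)}(\ev_\X \circ \G).$ For the closure statement, suppose each $\G(\Z)$ lies in $\Enr\Fun^\Lambda_{\mV, \emptyset}(\mM,\mN)$ and let $(\rH',\tau\colon \mI^\circledast \to \mM^\circledast,\psi') \in \Lambda.$ For each $\Z \in \mJ$ the map $(\G(\Z)\circ \tau)_!(\rH') \to \G(\Z)(Y')$ exhibits a weighted colimit in $\mN;$ applying the pointwise formula for $\Y$ and using that $\ev_\X$ preserves $\mH$-weighted colimits, the candidate morphism $(\Y\circ \tau)_!(\rH') \to \Y(Y')$ is exhibited as the image under the $\sigma_!(\rH)$-weighted colimit operation (in $\mN$) of a family of weighted colimit diagrams, which is again a weighted colimit diagram by the interchange of weighted colimits (itself a consequence of the universal property). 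Hence $\Y \in \Enr\Fun^\Lambda_{\mV, \emptyset}(\mM,\mN),$ completing the proof.
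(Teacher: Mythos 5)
There is a genuine gap, and it sits exactly at the step you yourself flag as "the main technical obstacle." You define $\Y(\X):=\colim^{\sigma_!(\rH)}(\ev_\X\circ\G)$ and then claim that the multimorphisms $\V_1,\dots,\V_\n,\Y(\X)\to\Y(\X')$ produced one at a time from the universal property "assemble into the desired $\Y\in\Enr\Fun_{\mV,\emptyset}(\mM,\mN)$." In the $\infty$-categorical setting this is not a construction: an object of $\Enr\Fun_{\mV,\emptyset}(\mM,\mN)$ is a section-like datum carrying coherence at all levels, and specifying values and individual (multi)morphisms does not produce it. The enriched-functoriality tools you cite do not repair this under the stated hypotheses: Remark \ref{zujj} is only morphism-level, and Corollary \ref{Funf} (the enriched functoriality of $\colim^\rH$ in the diagram) requires the target to admit right cotensors, which $\mN$ is not assumed to have — it only admits the $\sigma_!(\mH)$-weighted colimits. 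The same deficit undermines the later steps: the "reduction of both multimorphism spaces to compatible pairwise data" and the appeal to "interchange of weighted colimits" presuppose exactly the kind of pointwise-computation and commutation statements that are not available for a bare $\mN$ and that your sketch never establishes.

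The paper's proof is built to avoid precisely these problems. It embeds $\mN^\circledast$ into the cocompletion $\mN'^\circledast=\mP\widetilde{\B\Env}_{\Lambda^{\sigma_!(\mH)}}(\mN)^\circledast$, which admits right tensors and all small conical colimits while the embedding still preserves the given $\sigma_!(\mH)$-weighted colimits (Remarks \ref{embesto}, \ref{embes}, \ref{bipre}). For the enlarged target one can quote an established result that $\Enr\Fun_{\mV,\emptyset}(\mM,\mN')^\circledast$ has right tensors and conical colimits computed object-wise, and then Proposition \ref{weeei} upgrades this to all small weighted colimits; the $\mN$-valued functors form a full subcategory closed under $\mH$-weighted colimits because $\mN$ is closed in $\mN'$ under these (Corollary \ref{eccco}). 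Finally the closure of $\Enr\Fun^\Lambda_{\mV,\emptyset}(\mM,\mN')$ is checked only for right tensors and conical colimits, via the factorizations through $(-)\ot\W$ (which has an enriched right adjoint, Proposition \ref{remqalo}) and through $(\mN'^\K)^\circledast\xrightarrow{\colim}\mN'^\circledast$. If you want to salvage a "pointwise" proof, you would need to reproduce some version of this cocompletion-and-decomposition device (or an equivalent adjunction-level argument producing the colimit functor on the whole functor $\infty$-category at once); as written, your construction of $\Y$ and the verification of its universal property do not go through.
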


\begin{proof} %We prove (1). (2) is dual.
%Let $\iota: \mN \subset \mP\B\Env(\mN)$ be the canonical embedding. Let $\mN'^\circledast \subset \mP\B\Env(\mN)^\circledast$ be the localization with respect to the set of morphisms $ \colim^{\rH}(\iota \circ \F) \to \iota(\Y)$ in $ \mP\B\Env(\mN)$ associated to the morphism $\F_!(\rH) \xrightarrow{\lambda} \iota(\Y) $ in $\mP\B\Env(\mM)$ for some $\mH$-weighted colimit diagram $(\F:\mJ^\circledast \to \mN^\circledast \times_{\mW^\ot} \emptyset^\ot, \Y, \lambda:  \F_!(\rH) \to \Y)$ in $\mN^\circledast \times_{\mW^\ot} \emptyset^\ot \to \mV^\ot.$ %and the set of morphisms $ \colim^{\rH}(\iota \circ \F) \to \iota(\Y)$ in $\mP\B\Env(\mN)$ corresponding to the morphism $\rH \xrightarrow{\lambda} \F^\ast(\Y) \to (\iota\circ \F)^\ast(\iota(\Y))$in $\mP\R\Env(\mJ)$ for some diagram $(\F:\mJ^\circledast \to \emptyset^\ot \times_{\mV^\ot} \mN^\circledast, \Y, \lambda:  \rH \to \F^*(\Y))$ of $\Lambda$.
Let $\mN'^\circledast:= \mP\widetilde{\B\Env}_{\Lambda^{\sigma_!(\mH)}}(\mN)^\circledast \to \mV^\ot \times \mW^\ot.$
%By ... the embedding $\mN^\circledast \subset \mP\B\Env(\mN)^\circledast$ induces an embedding $\mN^\circledast \subset \mN'^\circledast$ %whose underlying embedding on weakly left enriched $\infty$-categories that preserves $\mH$-weighted colimits. %and whose underlying embedding on weakly right enriched $\infty$-categories sends diagrams of $\Lambda$ to weighted colimit diagrams.
By Remark \ref{embes} there is a $\mV,\mW$-enriched embedding $\mN^\circledast \subset \mN'^\circledast$
that preserves $\sigma_!(\mH)$-weighted colimits. The latter induces a right $\mW$-enriched embedding: % of weakly right enriched $\infty$-categories:
$$\Enr\Fun_{\mV, \emptyset}(\mM,\mN)^\circledast \subset \Enr\Fun_{\mV, \emptyset}(\mM,\mN')^\circledast.$$
%\ \Enr\Fun^{\Lambda}_{\mV, \emptyset}(\mM,\mN)^\circledast \subset \Enr\Fun^{\Lambda}_{\mV, \emptyset}(\mM,\mN')^\circledast.$$

By Remark \ref{bipre} the weakly bienriched $\infty$-category $\mN'^\circledast= \mP\widetilde{\B\Env}_{\Lambda^{\sigma_!(\mH)}}(\mN)^\circledast \to \mV^\ot \times \mW^\ot$
admits right tensors and small conical colimits. 
By \cite[Proposition 5.15.]{heine2024bienriched} the weakly right enriched $\infty$-category $\Enr\Fun_{\mV, \emptyset}(\mM,\mN')^\circledast \to \mW^\ot $ admits right tensors and small conical colimits and the forgetful functor $\Enr\Fun_{\mV, \emptyset}(\mM,\mN')^\circledast \to (\mN'^\mM)^\circledast$ 
preserves right tensors and small conical colimits.
Corollary \ref{weeei} guarantees that $\Enr\Fun_{\mV, \emptyset}(\mM,\mN')^\circledast \to \mW^\ot $
admits small weighted colimits and the forgetful functor $\Enr\Fun_{\mV, \emptyset}(\mM,\mN')^\circledast \to (\mN'^\mM)^\circledast$ 
preserves small weighted colimits.
%that are preserved by evaluation at any object of $\mM$. 
Since the embedding $ \mN^\circledast \subset \mN'^\circledast$ %on underlying weakly right enriched $\infty$-categories 
preserves $\sigma_!(\mH)$-weighted colimits, by Corollary \ref{eccco} the full weakly right enriched subcategory $\Enr\Fun_{\mV, \emptyset}(\mM, \mN)^\circledast \subset \Enr\Fun_{\mV, \emptyset}(\mM,\mN')^\circledast$ is closed under $\mH$-weighted colimits.
Consequently, it is enough to check that $\Enr\Fun^{\Lambda}_{\mV, \emptyset}(\mM, \mN') $ is closed in $\Enr\Fun_{\mV, \emptyset}(\mM,\mN') $ under small weighted colimits, or equivalently by Corollary \ref{weeei} under small colimits and right tensors.
For every left $\mV$-enriched functor $\F: \mM^\circledast \to \mN'^\circledast$ and $\W \in \mW$ the right tensor
$\F \ot \W$ factors as $ \mM^\circledast \xrightarrow{\F} \mN'^\circledast \xrightarrow{(-)\ot\W}\mN'^\circledast$, where $ \mN'^\circledast \xrightarrow{(-)\ot\W}\mN'^\circledast$ admits a left $\mV$-enriched right adjoint and so preserves weighted colimits by Proposition \ref{remqalo}.
The colimit of any functor $\G: \K \to \Enr\Fun_{\mV, \emptyset}(\mM,\mN') $ factors as the left $\mV$-enriched functor $\mM^\circledast \xrightarrow{\G'} (\mN'^\K)^\circledast \xrightarrow{\colim} \mN'^\circledast$, where $\G'$ corresponds to $\G$ under the equivalence
$ \Fun(\K, \Enr\Fun_{\mV, \emptyset}(\mM,\mN')) \simeq \Enr\Fun_{\mV, \emptyset}(\mM, \mN'^\K)$ and $\colim: (\mN'^\K)^\circledast \to \mN'^\circledast$ is the left $\mV$-enriched left adjoint of the left $\mV$-linear diagonal functor $\mN'^\circledast \to  (\mN'^\K)^\circledast$. % that preserves weighted colimits by Proposition \ref{remqalo}.
Therefore it is enough to observe that the left $\mV$-enriched functor $\mM^\circledast \xrightarrow{\G'} (\mN'^\K)^\circledast$ sends diagrams of $\Lambda$ to weighted colimit diagrams if the functor $\G: \K \to \Enr\Fun_{\mV, \emptyset}(\mM,\mN') $ lands in $\Enr\Fun^\Lambda_{\mV, \emptyset}(\mM,\mN') $: this follows immediately from the fact that $(\mN'^\K)^\circledast \to \mV^\ot$ is a left tensored $\infty$-category compatible with small colimits and so by Corollary \ref{weeei} admits small weighted colimits, and for every $\Z \in  \K$ the left $\mV$-linear functor $(\mN'^\K)^\circledast \to \mN'^\circledast$ evaluating at $\Z$
preserves small colimits 
%admits a left $\mV$-enriched right adjoint (Corollary \ref{remqa}) 
and so preserves small weighted colimits by Proposition \ref{weeei}.

\end{proof}

\section{A monoidal structure for enriched $\infty$-categories with weighted colimits}\label{HW}

\subsection{Enriched $\infty$-categories with weighted colimits}

In this section we organize weakly bienriched $\infty$-categories equipped
with a diagram to a symmetric monoidal $\infty$-category. 
%For the next two notations we use Notation \ref{pose}:
\begin{notation}\label{pose}Let $\mV^\ot \to \Ass, \mW^\ot \to \Ass$ be small $\infty$-operads and $\mM^\circledast \to \mV^\ot \times \mW^\ot$ a  weakly bienriched $\infty$-category.
	
\begin{enumerate}
\item Let $\B\D(\mM)$ be the poset of sets of absolute small diagrams on $\mM$ ordered by inclusion. 

\item Let $\omega(\mV, \mW)$ be the poset of sets of absolute small weights over $\mV, \mW$ ordered by inclusion.

\end{enumerate}	
Since we can transport diagrams and weights (Notation \ref{trans}), we obtain functors to the category of large posets $\widehat{\Poset}:$
$$\B\D: \Ho(\omega\B\Enr) \to \widehat{\Poset},\ \omega: \Ho(\Op_\infty) \times \Ho(\Op_\infty) \to \widehat{\Poset}.$$

\end{notation}

\begin{notation}
Let $$ {\omega\B\Enr}_* \to \omega\B\Enr$$ be the cocartesian fibrations classifying
the composition $$\omega\B\Enr \to \Ho(\omega\B\Enr) \xrightarrow{\B\D} \widehat{\Poset} \subset \widehat{\Cat}_\infty.$$
\end{notation}
We call $\omega\B\Enr_* $ the $\infty$-category of absolute small weakly bienriched $\infty$-categories with diagrams.

\begin{notation}
	
Let %$$(\Op_\infty)_* \to \Op_\infty,$$$$_*(\Op_\infty) \to \Op_\infty,$$
$$(\Op_\infty \times \Op_\infty)_* \to \Op_\infty \times \Op_\infty$$
be the cocartesian fibrations classifying the composition %$$\Op_\infty \to \Ho(\Op_\infty) \xrightarrow{\L\omega} \widehat{\Poset} \subset \widehat{\Cat}_\infty,$$$$\Op_\infty \to \Ho(\Op_\infty) \xrightarrow{\R\omega} \widehat{\Poset} \subset \widehat{\Cat}_\infty,$$
$$\Op_\infty \times \Op_\infty \to \Ho(\Op_\infty \times \Op_\infty) \xrightarrow{\omega} \widehat{\Poset} \subset \widehat{\Cat}_\infty.$$
\end{notation}

We call $ {(\Op_\infty \times \Op_\infty)_*}$ the $\infty$-category of pairs of small $\infty$-operads with weights.

\begin{remark}
Since $\omega\B\Enr, \Op_\infty$ are locally small and posets are locally small, the $\infty$-categories $ \omega\B\Enr_\ast, {(\Op_\infty \times \Op_\infty)_*}$ are large but locally small:
for $(\mV^\ot, \mW^\ot, \mH), (\mV'^\ot, \mW'^\ot,\mH') \in  {(\Op_\infty \times \Op_\infty)_*}$
%$\infty$-operads $\mV^\ot \to \Ass, \mW^\ot \to \Ass, \mV'^\ot \to \Ass, \mW'^\ot \to \Ass$
the induced map $$(\Op_\infty \times \Op_\infty)_*((\mV^\ot, \mW^\ot, \mH), (\mV'^\ot, \mW'^\ot,\mH')) \to \Op_\infty(\mV^\ot, \mV'^\ot) \times
\Op_\infty(\mW^\ot, \mW'^\ot)$$ has empty or contractible fibers
and so is fully faithful. It identifies the source with the full subspace of $\Op_\infty(\mV^\ot, \mV'^\ot) \times
\Op_\infty(\mW^\ot, \mW'^\ot)$ spanned by the pairs of maps of $\infty$-operads that send weights in $\mH$ to weights in $\mH'.$
Similarly, for every $(\mM^\circledast \to \mV^\ot \times \mW^\ot,\Lambda), (\mM'^\circledast \to \mV'^\ot \times \mW'^\ot,\Lambda') \in {\omega\B\Enr_\ast}$
% weakly bienriched $\infty$-categories $\mM^\circledast \to \mV^\ot \times \mW^\ot, \mM'^\circledast \to \mV'^\ot \times \mW'^\ot$
the space $\omega\B\Enr_*((\mM^\circledast,\Lambda), (\mM'^\circledast, \Lambda'))$ is the full subspace of $\omega\B\Enr(\mM^\circledast,\mM'^\circledast)$ spanned by the enriched functors sending diagrams in $\Lambda$ to diagrams in $\Lambda'.$

\end{remark}

\begin{remark}\label{colass}The functor $\omega\B\Enr_* \to \omega\B\Enr$ admits a left and right adjoint. The left adjoint equips an absolute small weakly  bienriched $\infty$-category with the empty set of diagrams and the right adjoint equips it with the set of all diagrams.
\end{remark}

\begin{construction}Let $\mM^\circledast \to \mV^\ot \times \mW^\ot$ be a weakly bienriched $\infty$-category.
%Sending a weighted diagram $(\rH, \F:\mJ^\circledast \to \mM^\circledast, \lambda: \F_!(\rH) \to \Y)$ on $\mM$ to the underlying weight $(\alpha: \mV'^\ot \to \mV^\ot, \beta: \mW'^\ot \to \mW^\ot, \mJ^\circledast \to \mV'^\ot \times \mW'^\ot, \rH)$ over $\mV,\mW$, where $\F$ lies over $\alpha, \beta,$ 
Sending a diagram on $\mM$ to the underlying weight defines an order preserving map $$ \B\D(\mM) \to \omega(\mV, \mW)$$ that determines a transformation from the functor
%weighted diagram on $\mM$ to the underlying weight over $\mV, \mW$ defines an order preserving map $\B\D(\mM) \to \omega(\mV, \mW)$ that determines a natural transformation from the functor
$\B\D: \Ho(\omega\B\Enr) \xrightarrow{} \widehat{\Poset}$ to the functor
$\Ho(\omega\B\Enr) \to \Ho(\Op_\infty \times \Op_\infty) \xrightarrow{\omega} \widehat{\Poset}$.
The latter classifies a map of cocartesian fibrations over $\omega\B\Enr:$ 
\begin{equation}\label{kupre2}
{\omega\B\Enr}_\ast \to \omega\B\Enr \times_{(\Op_\infty \times \Op_\infty)} {(\Op_\infty \times \Op_\infty)_*}. \end{equation}
\end{construction}

\begin{lemma}\label{lllee}\label{llleee}

The functor $ \omega\B\Enr_\ast \to {(\Op_\infty \times \Op_\infty)_*}$ is a cocartesian and cartesian fibration and the functor $\omega\B\Enr_\ast \to \omega\B\Enr$ sends (co)cartesian lifts of morphisms in $ (\Op_\infty \times \Op_\infty)_*$ to (co)cartesian lifts in $ \Op_\infty \times \Op_\infty$.

\end{lemma}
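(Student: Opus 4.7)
The plan is to extract the statement from the commutative square
\[
\begin{xy}\xymatrix{
\omega\B\Enr_\ast \ar[r]^{\Psi} \ar[d]_{\pi} & (\Op_\infty \times \Op_\infty)_\ast \ar[d]^{\sigma} \\
\omega\B\Enr \ar[r]_{U} & \Op_\infty \times \Op_\infty,
}\end{xy}
\]
where $\pi$ and $\sigma$ are cocartesian fibrations by construction (Notation \ref{pose}), $U$ is a cocartesian and cartesian fibration by Proposition \ref{bica} and its cartesian analogue, and $\Psi$ is the functor of the lemma. First I would factor $\Psi$ as the map (\ref{kupre2}) followed by the projection $\omega\B\Enr \times_{(\Op_\infty \times \Op_\infty)} (\Op_\infty \times \Op_\infty)_\ast \to (\Op_\infty \times \Op_\infty)_\ast$. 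The projection is a base change of $U$, hence cocartesian and cartesian with (co)cartesian lifts that visibly project to (co)cartesian lifts of $U$ along the first coordinate. The substantive content is thus concentrated in (\ref{kupre2}).

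Next I would analyze (\ref{kupre2}), which is a map of cocartesian fibrations over $\omega\B\Enr$ with fibre over $\mM^\circledast \to \mV^\ot \times \mW^\ot$ the order-preserving posetal map
$$\omega_\mM \colon \B\D(\mM) \to \omega(\mV,\mW),\qquad \Lambda \mapsto \{\omega(D) : D \in \Lambda\}.$$
This fibre map admits a right adjoint $R_\mM \colon \mH \mapsto \{D \text{ a totally small diagram on }\mM : \omega(D) \in \mH\}$. Using the adjunction I would exhibit cartesian lifts at $\Lambda'$ over $\mH \subseteq \omega(\Lambda')$ as $\Lambda' \cap R_\mM(\mH)$ (the largest subset of $\Lambda'$ with underlying weights in $\mH$; the inclusion $\mH \subseteq \omega(\Lambda')$ guarantees $\omega$ of this intersection equals $\mH$), and cocartesian lifts at $\Lambda$ over $\omega(\Lambda) \subseteq \mH'$ by enlarging $\Lambda$ using $R_\mM(\mH')$ so that the universal property is controlled by the adjunction $\omega_\mM \dashv R_\mM$. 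Naturality of $R_\mM$ in $\mM$, i.e.\ the identity $f_\ast \circ R_\mM = R_{\mM'} \circ U(f)_\ast$ for an enriched functor $f \colon \mM \to \mM'$, follows from the corresponding naturality of the underlying-weight assignment $\omega$ and guarantees these fibrewise lifts assemble to genuine (co)cartesian lifts for (\ref{kupre2}).

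Composing the two factors then yields the desired (co)cartesian fibration structure on $\Psi$, and the construction makes the projection clause of the lemma transparent: (co)cartesian lifts of morphisms in $(\Op_\infty \times \Op_\infty)_\ast$ arise as composites of a fibrewise-over-$\omega\B\Enr$ lift (which projects to an identity in $\Op_\infty \times \Op_\infty$) with a lift along the projection (which projects to a (co)cartesian lift of $U$), so their image under $\pi$ is a (co)cartesian lift in $\omega\B\Enr \to \Op_\infty \times \Op_\infty$, as required.

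The main obstacle will be the careful verification of the cocartesian universal property of the fibrewise lift and its naturality across $\omega\B\Enr$: while the cartesian side follows cleanly from the adjunction and standard poset bookkeeping, the cocartesian side demands a more delicate compatibility between the enriched functoriality of diagrams and the right adjoint $R_\mM$. I would resolve this by reducing the required uniqueness to the factorization of enriched functors through the counit of $\omega_\mM \dashv R_\mM$, combined with the cocartesian structure on $\omega\B\Enr \to \Op_\infty \times \Op_\infty$ from Theorem \ref{bica} to handle the operad-level component.
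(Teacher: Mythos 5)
Your overall architecture (factor the functor through the map (\ref{kupre2}) and the projection to $(\Op_\infty \times \Op_\infty)_*$, feed in the (co)cartesian fibration $\omega\B\Enr \to \Op_\infty\times\Op_\infty$ of Proposition/Theorem \ref{bica}, then compose) is reasonable, and your cartesian half, once unwound, agrees with the paper's construction: the paper takes the cartesian lift $(\alpha,\beta)^\ast(\mM)^\circledast \to \mM^\circledast$ in $\omega\B\Enr$ and equips the source with the set of $\mH'$-weighted diagrams whose image lies in $\Lambda$, which is exactly your ``pull back, then intersect with $R(\mH')$'' recipe. The genuine gap is on the cocartesian side. A cocartesian lift is \emph{not} obtained by enlarging $\Lambda$ via $R_\mM(\mH')$: a morphism $(\mM',\Lambda') \to (\mN,\Lambda'')$ in $\omega\B\Enr_\ast$ is by definition an enriched functor carrying every diagram of $\Lambda'$ into $\Lambda''$, so any diagram you add beyond the image of $\Lambda$ imposes a strictly stronger condition than the one imposed by $\Lambda$, and the comparison square of mapping spaces is no longer a pullback; the adjunction $\omega_\mM \dashv R_\mM$ and its counit give no control over this. (If you instead insist that the underlying weights of the enlarged set be exactly $\mH'$, there is in general no minimal such enlargement, so no cocartesian lift of that kind exists at all.) The correct, and much simpler, cocartesian lift — the one the paper uses — is the cocartesian lift $\mM^\circledast \to (\alpha,\beta)_!(\mM)^\circledast$ in $\omega\B\Enr$ equipped with the \emph{image} of $\Lambda$, with no enlargement; the universal property is then immediate from the description of the mapping spaces of $\omega\B\Enr_\ast$, and $R_\mM$ plays no role.

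A secondary problem: the asserted naturality $f_\ast \circ R_\mM = R_{\mM'} \circ U(f)_\ast$ is false — the left-hand side consists only of images of diagrams on $\mM$, while the right-hand side consists of all diagrams on $\mM'$ with the pushed-forward weights. The right adjoints $R_\mM$ are compatible with restriction along enriched functors, not with pushforward, which is a further reason the cocartesian half of your gluing argument cannot be routed through $R$. Once the cocartesian lift is corrected to ``push forward the set of diagrams'', your factorization collapses to the paper's direct verification, in which both lifts are written down explicitly and checked via the pullback square of mapping spaces.
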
	

\begin{proof}
%We prove (3). The proofs of (1) and (2) are similar.
We start with proving that the functor is a cocartesian fibration.
Let $(\mM^\circledast \to \mV^\ot \times \mW^\ot, \Lambda) \in {\omega\B\Enr}_\ast$
and $(\alpha, \beta): (\mV^\ot, \mW^\ot, \mH) \to (\mV'^\ot, \mW'^\ot,\mH')$
a morphism in $(\Op_\infty \times \Op_\infty)_*$, where $\mH$ is the set of underlying weights of $\Lambda.$
The cocartesian lift $\mM^\circledast \to \mM'^\circledast:=(\alpha, \beta)_!(\mM)^\circledast $ of $\alpha, \beta$ sends the set $\Lambda$ of $\mH$-weighted diagrams
to a set $\Lambda'$ of $\mH'$-weighted diagrams. 
By the choice of $\Lambda'$ the morphism 
$(\mM^\circledast \to \mV^\ot \times \mW^\ot, \Lambda) \to (\mM'^\circledast \to \mV'^\ot \times \mW'^\ot, \Lambda') $ in $\omega\B\Enr_\ast$
lies over $(\alpha, \beta): (\mV^\ot, \mW^\ot, \mH) \to (\mV'^\ot, \mW'^\ot,\mH')$ in $(\Op_\infty \times \Op_\infty)_*$ and induces for every $(\mN^\circledast \to \mV''^\ot \times \mW''^\ot, \Lambda'') \in {\omega\B\Enr_\ast}$
lying over $(\mV''^\ot \to \Ass, \mW''^\ot \to \Ass, \mH'') \in (\Op_\infty \times \Op_\infty)_*$ a pullback square
\begin{equation*}
\begin{xy}
\xymatrix{
\omega\B\Enr_\ast((\mM', \Lambda'),(\mN, \Lambda'')) \ar[d] \ar[r]
&\omega\B\Enr_\ast((\mM, \Lambda),(\mN, \Lambda'')) \ar[d]
\\ 
(\Op_\infty \times \Op_\infty)_*((\mV',\mW', \mH'),(\mV'', \mW'',\mH'')) \ar[r] &(\Op_\infty \times \Op_\infty)_*((\mV, \mW, \mH),(\mV'',\mW'',\mH'')). 
}
\end{xy} 
\end{equation*} 

%\end{proof}
%\begin{lemma}\label{llleee}The functor $ \omega\B\Enr_\ast \to (\Op_\infty \times \Op_\infty)_*$ is a cartesian fibration and the functor $\omega\B\Enr_\ast \to \omega\B\Enr$ sends cartesian lifts of morphisms in $ (\Op_\infty \times \Op_\infty)_*$ to cartesian lifts of morphisms in $ \Op_\infty \times \Op_\infty$.\end{lemma}	\begin{proof}
	
%Let $(\mM^\circledast \to \mV^\ot \times \mW^\ot, \Lambda) \in \omega\B\Enr_\ast$and $(\alpha, \beta): (\mV'^\ot, \mW'^\ot, \mH') \to (\mV^\ot, \mW^\ot,\mH)$a morphism in $(\Op_\infty \times \Op_\infty)_*$, where $\mH$ is the set of underlying weights of $\Lambda.$
We continue with proving that the functor is a cartesian fibration.
Let $\Lambda'$ be the set of $\mH'$-weighted diagrams on $(\alpha, \beta)^\ast(\mM)^\circledast \to \mV'^\ot \times \mW'^\ot $ whose image under the projection $(\alpha, \beta)^\ast(\mM)^\circledast \to \mM^\circledast$ belongs to $\Lambda.$
By the choice of $\Lambda'$ the morphism 
$((\alpha, \beta)^\ast(\mM)^\circledast \to \mV'^\ot \times \mW'^\ot, \Lambda') \to (\mM^\circledast \to \mV^\ot \times \mW^\ot, \Lambda) $ in $\omega\B\Enr_\ast$
lies over the morphism $(\alpha, \beta): (\mV'^\ot, \mW'^\ot, \mH') \to (\mV^\ot, \mW^\ot,\mH)$ of $(\Op_\infty \times \Op_\infty)_*$ and induces for any $(\mN^\circledast \to \mV''^\ot \times \mW''^\ot, \Lambda'') \in {\omega\B\Enr_\ast}$
lying over $(\mV''^\ot \to \Ass, \mW''^\ot \to \Ass, \mH'') \in (\Op_\infty \times \Op_\infty)_*$ a pullback square
\begin{equation*}
\begin{xy}
\xymatrix{
\omega\B\Enr_\ast((\mN, \Lambda''), ((\alpha, \beta)^\ast(\mM), \Lambda')) \ar[d] \ar[r]
& \omega\B\Enr_\ast((\mN, \Lambda''), (\mM, \Lambda)) \ar[d]
\\ 
(\Op_\infty \times \Op_\infty)_*((\mV'',\mW'', \mH''),(\mV', \mW',\mH')) \ar[r] &(\Op_\infty \times \Op_\infty)_*((\mV'', \mW'', \mH''),(\mV,\mW,\mH)). 
}
\end{xy} 
\end{equation*} 
	
\end{proof}
\begin{notation}Let $\mV^\ot \to \Ass, \mW^\ot \to \Ass$ be small $\infty$-operads.
For every set $\mH$ of absolute small weights over $\mV,\mW$ 
let ${_\mV\B\Enr}_{\mW}(\mH)_*$ be the fiber of the functor
$ \omega\B\Enr_\ast \to {(\Op_\infty \times \Op_\infty)_*}$ over $\mH.$

%For small monoidal $\infty$-categories $\mV^\ot \to \Ass, \mW^\ot \to \Ass$let $$ {_\mV\B\P\Enr}_{\mW}(\mH)_* \subset  {_\mV\omega\B\Enr}_{\mW}(\mH)_*$$ be the full subcategory of bipseudo-enriched $\infty$-categories with $\mH$-weighted diagrams. \item For monoidal $\infty$-categories $\mV^\ot \to \Ass, \mW^\ot \to \Ass$ compatible with small colimits let $$ {_\mV\B\Enr}_{\mW}(\mH)_* \subset {_\mV\B\P\widehat{\Enr}}_{\mW}(\mH)_*$$ be the full subcategory of small bienriched $\infty$-categories with $\mH$-weighted diagrams. 

\end{notation}

\begin{notation}
Let $$\omega\B\Enr_{\mathrm{w}} \subset {(\Op_\infty \times \Op_\infty)_*} \times_{(\Op_\infty \times \Op_\infty)} \omega\B\Enr$$ be the subcategory whose objects are pairs $(\mM^\circledast \to \mV^\ot \times \mW^\ot,\mH)$ such that $\mM^\circledast \to \mV^\ot \times \mW^\ot$ admits $\mH$-weighted colimits and whose morphisms $$(\mM^\circledast \to \mV^\ot \times \mW^\ot,\mH) \to (\mN^\circledast \to \mV'^\ot \times \mW'^\ot,\mH')$$ correspond to
an enriched functor $ \mM^\circledast \to \mN^\circledast$ that sends $\mH$-weighted colimits to $\mH'$-weighted colimits.

\end{notation}

\begin{lemma}\label{eqas} The inclusion $\omega\B\Enr_{\mathrm{w}} \subset {(\Op_\infty \times \Op_\infty)_* \times_{(\Op_\infty \times \Op_\infty)} \omega\B\Enr}$
lifts to an embedding $$\omega\B\Enr_{\mathrm{w}} \to {\omega\B\Enr_\ast}.$$ 

\end{lemma}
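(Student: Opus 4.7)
The plan is to send $(\mM^\circledast \to \mV^\ot \times \mW^\ot, \mH) \in \omega\B\Enr_\wc$ to the pair $(\mM^\circledast \to \mV^\ot \times \mW^\ot, \Lambda^{(\mM, \mH)}) \in \omega\B\Enr_\ast$, where $\Lambda^{(\mM,\mH)}$ is the collection of $\rH$-weighted colimit diagrams in $\mM$ for $\rH \in \mH$ from Notation \ref{Lamb}. By Lemma \ref{leuz} this is genuinely a set, and because $\mM$ admits $\mH$-weighted colimits, its underlying set of weights is exactly $\mH$, so the assignment covers the inclusion $\omega\B\Enr_\wc \subset (\Op_\infty \times \Op_\infty)_\ast \times_{(\Op_\infty \times \Op_\infty)} \omega\B\Enr$.

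The first step is to upgrade Lemma \ref{lllee} and verify that the projection $\omega\B\Enr_\ast \to (\Op_\infty \times \Op_\infty)_\ast \times_{(\Op_\infty \times \Op_\infty)} \omega\B\Enr$ is both a cocartesian and a cartesian fibration with poset fibers, where the (co)cartesian lifts are obtained by the same push/pull construction of diagrams used in the proof of Lemma \ref{lllee}. Since the fibers are posets, in order to define the lift as a functor of $\infty$-categories it suffices to prescribe its values on objects and check that, for every morphism $\F\colon (\mM, \mH) \to (\mN, \mH')$ in $\omega\B\Enr_\wc$, the cocartesian push-forward $\F_!\Lambda^{(\mM,\mH)}$ is contained in $\Lambda^{(\mN, \mH')}$ in the poset fiber over $(\mN, \mH')$.

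To verify this compatibility, I would take $(\rH, \G\colon \mJ^\circledast \to \mM^\circledast, \psi\colon \G_!(\rH) \to \Y) \in \Lambda^{(\mM,\mH)}$. Its image under $\F$ has underlying weight the transport of $\rH$ along the maps of $\infty$-operads $\alpha, \beta$ underlying $\F$, which lies in $\mH'$ since $\F$ defines a morphism in $(\Op_\infty \times \Op_\infty)_\ast \times_{(\Op_\infty \times \Op_\infty)} \omega\B\Enr$; and the image is again a weighted colimit diagram because $\F$ preserves $\mH$-weighted colimits, which is precisely the condition cut out by the subcategory $\omega\B\Enr_\wc$. Hence the section is well-defined.

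For the embedding property, a morphism in $\omega\B\Enr_\ast$ between $(\mM, \Lambda^{(\mM,\mH)})$ and $(\mN, \Lambda^{(\mN,\mH')})$ unwinds to an enriched functor $\F\colon \mM^\circledast \to \mN^\circledast$ whose underlying maps of $\infty$-operads send $\mH$ to $\mH'$ and which carries $\Lambda^{(\mM,\mH)}$ to $\Lambda^{(\mN,\mH')}$. The latter condition is equivalent to $\F$ sending every $\rH$-weighted colimit diagram with $\rH \in \mH$ to a weighted colimit diagram in $\mN$, i.e.\ to $\F$ preserving $\mH$-weighted colimits, which is exactly the defining condition of morphisms in $\omega\B\Enr_\wc$. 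Hence the mapping spaces match and the section is fully faithful. The main obstacle I expect is the clean verification of the fibrational statement upgrading Lemma \ref{lllee} to the projection to the pullback rather than to $(\Op_\infty \times \Op_\infty)_\ast$, but the push/pull construction of diagrams transfers verbatim.
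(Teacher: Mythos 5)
Your object assignment $(\mM,\mH)\mapsto(\mM,\Lambda^{(\mM,\mH)})$ and your identification of mapping spaces agree with what the paper does, but the step you flag as the "main obstacle" is exactly where the argument breaks, and your proposed repair does not work. First, the upgrade of Lemma \ref{lllee} you rely on is false in general: the fiber of $\omega\B\Enr_\ast \to (\Op_\infty \times \Op_\infty)_\ast \times_{(\Op_\infty \times \Op_\infty)} \omega\B\Enr$ over $((\mV'^\ot,\mW'^\ot,\mH'),\mN^\circledast)$ is the poset of sets of diagrams on $\mN$ whose underlying set of weights is \emph{exactly} $\mH'$, so a cocartesian lift of a morphism that enlarges the weight set would have to realize every weight of $\mH'$ by at least one diagram on $\mN$; this can fail (the target fiber can even be empty while the source fiber is not, e.g.\ when some weight of $\mH'$ is indexed by an enriched $\infty$-category $\mJ^\circledast$ admitting no enriched functor to $\mN^\circledast$ over the prescribed maps of $\infty$-operads), and dually for cartesian lifts. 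Second, even granting such a fibration statement, the principle "the fibers are posets, hence the section may be prescribed on objects and checked on morphisms" is not a licensed construction of a functor of $\infty$-categories: object and morphism assignments do not determine a functor, and poset fibers by themselves do not supply the missing coherences without a further argument.

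The way to make your idea rigorous is the paper's maneuver, which sidesteps any fibration claim over the fiber product: one uses that the projection $\gamma:\omega\B\Enr_\ast \to (\Op_\infty \times \Op_\infty)_\ast \times_{(\Op_\infty \times \Op_\infty)} \omega\B\Enr$ induces embeddings on mapping spaces (this is the Remark describing $\omega\B\Enr_\ast((\mM,\Lambda),(\mM',\Lambda'))$ as a full subspace of $\omega\B\Enr(\mM,\mM')$), passes to the full subcategory $\mQ\subset\omega\B\Enr_\ast$ spanned by the pairs $(\mM,\Lambda)$ for which $\Lambda$ is the set of \emph{all} colimit diagrams weighted by its underlying weights, and observes that the restricted projection $\mQ\to\omega\B\Enr_{\mathrm{w}}$ is essentially surjective and, by definition of $\mQ$ and of the morphisms of $\omega\B\Enr_{\mathrm{w}}$, fully faithful, hence an equivalence; the desired embedding is its inverse followed by $\mQ\subset\omega\B\Enr_\ast$. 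Your last paragraph is in substance the full-faithfulness check, so the fix is to replace your first step by this full-subcategory-and-invert argument rather than by a bifibration statement.
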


\begin{proof}We prove (1). The proofs of (2) is similar. Let $\mQ \subset {\omega\B\Enr_\ast}$ be the full subcategory spanned by the pairs $(\mM^\circledast \to \mV^\ot \times \mW^\ot, \Lambda)$ with the following property:
if $\mH$ is the set of weights underlying $\Lambda$,
then $\mM$ admits $\mH$-weighted colimits and $\Lambda$ is the set of all $\mH$-weighted colimit diagrams on $\mM.$
The functor $ \gamma: {\omega\B\Enr_\ast} \to {(\Op_\infty \times \Op_\infty)_* \times_{(\Op_\infty \times \Op_\infty)} \omega\B\Enr} $
restricts to a functor $\gamma': \mQ \to \omega\B\Enr_{\mathrm{w}}$. The functor $\gamma'$ is essentially surjective since for any $(\mM^\circledast \to \mV^\ot \times \mW^\ot ,\mH)$ we can equip $\mM^\circledast \to \mV^\ot \times \mW^\ot $ with the set of all $\mH$-weighted colimit diagrams on $\mM.$
The functor $\gamma $ induces embeddings on mapping spaces so that the restriction $\gamma'$ also does.
But by definition of $\mQ$ the functor $\gamma'$ induces equivalences on mapping spaces and so is an equivalence. This proves the claim.
\end{proof}

\begin{notation}

Let $\mV^\ot \to \Ass, \mW^\ot \to \Ass$ be small $\infty$-operads and $\mH$ a set of absolute small weights over $\mV,\mW.$ 
We write $$ {_\mV\omega\B\Enr}_{\mW}(\mH)$$ for the fiber of the functor
$ \omega\B\Enr_\wc \to {(\Op_\infty \times \Op_\infty)_*}$ over $\mH.$

\end{notation}

\begin{notation}
Let $(\mV^\ot \to \Ass, \rS)$, $(\mW^\ot \to \Ass, \T)$	be small localization pairs and $\mH$ a set of absolute small weights over $\mV, \mW.$

\begin{itemize}
\item Let $^\rS_\mV\B\Enr^\T_{\mW}(\mH)$ be the pullback $^\rS_\mV\B\Enr_\mW^\T \times_{{_\mV\omega\B\Enr}_{\mW}} {_\mV\omega\B\Enr}_{\mW}(\mH).$

\item Let ${^\rS_\mV\B\Enr^\T_{\mW}(\mH)_*}$ be the pullback $^\rS_\mV\B\Enr^\T_\mW \times_{{_\mV\omega\B\Enr}_{\mW}}{{_\mV\omega\B\Enr}_{\mW}(\mH)_*}.$

\end{itemize}

\end{notation}
Let $(\mV^\ot \to \Ass, \rS)$, $(\mW^\ot \to \Ass, \T)$	be small localization pairs and $\mH$ a set of absolute small weights over $\mV, \mW.$
%Let $\mV^\ot \to \Ass, \mW^\ot \to \Ass$ be small $\infty$-operads and $\mH$ a set of small weights over $\mV, \mW.$
The embedding $\omega\B\Enr_{\mathrm{w}} \to {\omega\B\Enr_\ast}$ 
over ${(\Op_\infty \times \Op_\infty)_*}$ of Lemma \ref{eqas} (1) induces on the fiber over $(\mV, \mW, \mH)$ an embedding ${_\mV\omega\B\Enr}_{\mW}(\mH) \subset {{_\mV\omega\B\Enr}_{\mW}(\mH)_*}$
that restricts to an embedding ${_\mV^\rS\B\Enr_\mW^\T}(\mH) \subset {{^\rS_\mV\B\Enr^\T_\mW}(\mH)_*}.$

\begin{notation}\label{presen}
\begin{enumerate}
		
\item Let $\mV^\ot \to \Ass$ be a presentably monoidal $\infty$-category, $\mW^\ot \to \Ass$ a small $\infty$-operad and $\mH$ a set of small left enriched diagrams over $\mV, \mW$.
Let $$ {{_{\mV}}\L\Enr_\mW}(\mH)_* \subset {_\mV\omega\widehat{\B\Enr}}_{\mW}(\mH)_*$$ be the full subcategory of small left enriched $\infty$-categories equipped with a set of small left enriched diagrams.
		
\item Let $\mW^\ot \to \Ass$ be a presentably monoidal $\infty$-category, $\mV^\ot \to \Ass$ a small $\infty$-operad and $\mH$ a set of small right enriched diagrams over $\mV, \mW$.
Let $$ {{_{\mV}}\R\Enr_\mW}(\mH)_*\subset {_\mV\omega\widehat{\B\Enr}}_{\mW}(\mH)_*$$ be the full subcategory of small right enriched $\infty$-categories equipped with a set of small right enriched diagrams.
		
\item Let $\mV^\ot \to \Ass, \mW^\ot \to \Ass$ be presentably monoidal $\infty$-categories and $\mH$ a set of small bienriched diagrams over $\mV, \mW$.
Let $$ {{_{\mV}}\B\Enr_\mW}(\mH)_*\subset {_\mV\omega\widehat{\B\Enr}}_{\mW}(\mH)_*$$ be the full subcategory of small bienriched $\infty$-categories equipped with a set of small bienriched diagrams.
\end{enumerate}	
	
%\vspace{1mm}\item Let $\mV^\ot \to \Ass, \mW^\ot \to \Ass$ be small $\infty$-operads with tensor unit.Then ${_\mV\ell\B\P\Enr}_{\mW}(\mH)_*$ is presentable.\vspace{1mm}\item Let $\mV^\ot \to \Ass, \mW^\ot \to \Ass$ be small monoidal $\infty$-categories.The $\infty$-category ${_\mV\B\P\Enr}_{\mW}(\mH)_*$ is presentable.\item Let $\mV^\ot \to \Ass, \mW^\ot \to \Ass$ be presentably monoidal $\infty$-categories. The $\infty$-category ${_\mV\B\Enr}_{\mW}(\mH)_*$ is presentable.
	
\end{notation}

\begin{lemma}\label{gene}

Let $\mC$ be a presentable $\infty$-category and $\gamma: \mC \to \Set$ an accessible functor.
Let $\mD \to \mC$ be the cocartesian fibration classifying the functor
$\mC \xrightarrow{\gamma} \Set \xrightarrow{\mP} \widehat{\Poset}.$
The $\infty$-category $\mD$ is presentable.

\end{lemma}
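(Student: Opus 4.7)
The plan is to prove presentability directly by establishing (i) cocompleteness and (ii) accessibility of $\mD$. Unwinding the cocartesian fibration, I will describe objects of $\mD$ as pairs $(c, A)$ with $c \in \mC$ and $A \subseteq \gamma(c)$, and a morphism $(c, A) \to (c', A')$ as a morphism $f: c \to c'$ in $\mC$ together with the condition $\gamma(f)(A) \subseteq A'$.

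First I would verify that $\mD$ admits all small colimits. Given a diagram $F: K \to \mD$ with $F(k) = (c_k, A_k)$, I set $c := \colim_k c_k$ in $\mC$ (which exists by presentability) with structure maps $\iota_k: c_k \to c$, and take $A := \bigcup_k \gamma(\iota_k)(A_k) \subseteq \gamma(c)$; then $(c, A)$ is the colimit. This fits the standard recognition criterion for colimits in a cocartesian fibration: the base admits them, each fiber $\mP(\gamma(c))$ is a complete lattice (so admits all small colimits, computed as unions), and each cocartesian transport $f_! = \gamma(f)_*$ is a direct-image on power sets and preserves unions.

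Next I would establish accessibility. Since $\gamma$ is accessible, I pick a regular cardinal $\kappa$ such that $\mC$ is $\kappa$-accessible and $\gamma$ preserves $\kappa$-filtered colimits, and let $\mD^\kappa \subset \mD$ be the essentially small full subcategory of pairs $(c, A)$ with $c \in \mC^\kappa$ and $|A| < \kappa$. I then need to show every object of $\mD$ is a $\kappa$-filtered colimit of objects of $\mD^\kappa$ and every object of $\mD^\kappa$ is $\kappa$-compact. For the first claim, given $(c, A)$, write $c = \colim_{i \in I} c_i$ as a $\kappa$-filtered colimit with $c_i \in \mC^\kappa$; then $\gamma(c) = \colim_i \gamma(c_i)$ as a set, and the $\kappa$-filtered poset of pairs $(i, B)$ with $B \subseteq \gamma(c_i)$ of cardinality less than $\kappa$ whose image in $\gamma(c)$ lies in $A$ yields a diagram $(i, B) \mapsto (c_i, B)$ with colimit $(c, A)$ by the formula from the first step.

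The main obstacle will be the $\kappa$-compactness verification: for $(c, A) \in \mD^\kappa$, I must show that $\mD((c, A), -)$ preserves $\kappa$-filtered colimits. The plan is to analyze such a morphism as a morphism $c \to c'$ in $\mC$ (detected at a $\kappa$-small stage by $\kappa$-compactness of $c$) together with, for each of the fewer than $\kappa$ elements of $A$, a witness that its image in $\gamma(c')$ lies in the corresponding subset; using $\kappa$-filteredness, the fact that $\gamma$ preserves $\kappa$-filtered colimits, and that $A$ involves only $\kappa$-many elements to check, all this data is jointly detected at a single $\kappa$-small stage. Granted these two properties, $\mD$ is $\kappa$-accessible and cocomplete, hence presentable.
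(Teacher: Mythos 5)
Your argument is correct, but it is a genuinely different proof from the paper's. The paper disposes of the lemma in a few lines: each fiber $\mP(\gamma(c))$ is a presentable poset, the cocartesian fibration is also cartesian (direct image between power sets has the preimage as right adjoint), the classifying functor is asserted to be accessible, and a cited theorem on presentable fibrations over a presentable base then yields presentability of the total $\infty$-category. You instead verify presentability by hand: cocompleteness via the fiberwise formula (colimit in $\mC$ together with the union of the pushed-forward subsets), and accessibility via the explicit $\kappa$-compact generators $(c,A)$ with $c\in\mC^\kappa$ and $|A|<\kappa$. The citation route buys brevity; its delicate point is the accessibility of $\mP\circ\gamma$, and in fact the covariant power-set functor $\mP\colon\Set\to\widehat{\Poset}$ preserves $\lambda$-filtered colimits for no $\lambda$ (write a set of cardinality $\lambda$ as the $\lambda$-filtered union of its subsets of size $<\lambda$), so that step needs more care than the paper's one-line justification suggests; your argument sidesteps this entirely, precisely because only $\kappa$-small subsets are needed to generate under $\kappa$-filtered colimits, and it moreover makes the compact objects explicit, which is useful information the abstract argument does not provide. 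When writing it up, do record the two routine facts you elide: the category of pairs $(i,B)$ is $\kappa$-filtered and projects cofinally to the chosen presentation of $c$, so the base of your colimit really is $c$; and in the compactness check, besides producing a witness for each element of $A$, use that equality in the filtered colimit of sets $\gamma(d)\simeq\colim_j\gamma(d_j)$ is detected at some stage, which is what allows the fewer than $\kappa$ membership conditions to be realized simultaneously at a single stage.
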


\begin{proof}
For every set $\X$ the power set $\mP(\X)$ seen as poset is presentable as $\infty$-category.
The functor $$\mC \xrightarrow{\gamma} \Set \xrightarrow{\mP} \widehat{\Poset} \subset \widehat{\Cat}_\infty$$
is accessible because the power set functor is.
Thus the cocartesian fibration $\mD \to \mC$ classifies an accessible functor
and is a cartesian fibration, whose fibers are presentable.
By \cite[Theorem 10.3.]{articles} this implies the claim.

\end{proof}

%\begin{notation}Let $(\mV^\ot \to \Ass, \rS), (\mW^\ot \to \Ass, \T)$ be small localization pairs and $\mH$ a set of small weights over $\mV,\mW.$ Let $$ {{^\rS_{\mV}}\B\Enr_\mW^\T}(\mH)_* \subset {_{\mV}\B\Enr}_{\mW}(\mH)_*$$ be the full subcategory of $\rS,\T$-bienriched $\infty$-categories with $\mH$-weighted diagrams. 	\end{notation}

\begin{proposition}\label{presen}
\begin{enumerate}
\item Let $(\mV^\ot \to \Ass, \rS), (\mW^\ot \to \Ass, \T)$ be small localization pairs and $\mH$ a set of absolute small weights over $\mV,\mW.$ 
The $\infty$-category $ {{^\rS_{\mV}}\B\Enr_\mW^\T}(\mH)_*$ is presentable.

\item Let $\mV^\ot \to \Ass$ be a presentably monoidal $\infty$-category, $\mW^\ot \to \Ass$ a small $\infty$-operad and $\mH$ a set of small left enriched diagrams over $\mV, \mW$.
The $\infty$-category $ {{_{\mV}}\L\Enr_\mW}(\mH)_*$ is presentable.

\item Let $\mW^\ot \to \Ass$ be a presentably monoidal $\infty$-category, $\mV^\ot \to \Ass$ a small $\infty$-operad and $\mH$ a set of small right enriched diagrams over $\mV, \mW$.
The $\infty$-category $ {{_{\mV}}\R\Enr_\mW}(\mH)_*$ is presentable.

\item Let $\mV^\ot \to \Ass, \mW^\ot \to \Ass$ be presentably monoidal $\infty$-categories and $\mH$ a set of small bienriched diagrams over $\mV, \mW$.
The $\infty$-category $ {{_{\mV}}\B\Enr_\mW}(\mH)_*$ is presentable.

\end{enumerate}	
	
%\vspace{1mm}\item Let $\mV^\ot \to \Ass, \mW^\ot \to \Ass$ be small $\infty$-operads with tensor unit.Then ${_\mV\ell\B\P\Enr}_{\mW}(\mH)_*$ is presentable.\vspace{1mm}\item Let $\mV^\ot \to \Ass, \mW^\ot \to \Ass$ be small monoidal $\infty$-categories.The $\infty$-category ${_\mV\B\P\Enr}_{\mW}(\mH)_*$ is presentable.\item Let $\mV^\ot \to \Ass, \mW^\ot \to \Ass$ be presentably monoidal $\infty$-categories. The $\infty$-category ${_\mV\B\Enr}_{\mW}(\mH)_*$ is presentable.
	
\end{proposition}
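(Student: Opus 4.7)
The strategy is to reduce all four assertions to Lemma \ref{gene} by exhibiting each $\ast$-category as the total space of a cocartesian fibration classifying $\mP \circ \gamma_\mH$ for an accessible functor $\gamma_\mH$ on a presentable base. Concretely, for any base $\infty$-category $\mE$ of weakly bi-enriched $\infty$-categories (with appropriate decorations), the fibre of $\omega\B\Enr_\ast \to \omega\B\Enr$ over $\mM \in \mE$ is, by construction of $\omega\B\Enr_\ast$ (Notation \ref{pose}), equivalent to $\mP(\gamma_\mH(\mM))$ where $\gamma_\mH(\mM)$ is the set of equivalence classes of totally small diagrams on $\mM$ whose underlying weight lies in $\mH$. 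So the fibre of $\omega\B\Enr_\ast \to (\Op_\infty \times \Op_\infty)_\ast$ over $(\mV,\mW,\mH)$ is the total space of the cocartesian fibration over $\mE$ classifying $\mP \circ \gamma_\mH$, and the same applies to the $\rS,\T$-enriched, left enriched, right enriched and bi-enriched subcategories.

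First I would verify that each relevant base is presentable. For (1), by Proposition \ref{eqq}(3) we have ${^\rS_{\mV}\B\Enr_\mW^\T} \simeq {_{\rS^{-1}\mP\Env(\mV)}\B\Enr}_{\T^{-1}\mP\Env(\mW)}$. The ambient $\infty$-category ${_{\rS^{-1}\mP\Env(\mV)}\omega\B\Enr}_{\T^{-1}\mP\Env(\mW)}$ is presentable, being the fibre of the presentable total space $\omega\B\Enr$ along accessible structure maps, and by Remark \ref{embil} the subcategory of bi-enriched $\infty$-categories is cut out by local conditions (the embedding $\mM^\circledast \subset \mP\B\Env(\mM)^\circledast$ lands in $\mP\B\Env(\mM)^\circledast_{\emptyset,\emptyset}$), hence is reflective and presentable. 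For (2)--(4) the analogous bases ${_\mV\L\Enr_\mW}, {_\mV\R\Enr_\mW}, {_\mV\B\Enr_\mW}$ are presentable subcategories of $_\mV\omega\widehat{\B\Enr}_\mW$ cut out, via the morphism-object characterisation of Remark \ref{rhhhj}, by accessible conditions on the graph functor $\Gamma_\mM$; the hypotheses that $\mV$ or $\mW$ be presentably monoidal ensure the relevant localisations are accessible.

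Next I would construct the accessible functor $\gamma_\mH: \mE \to \Set$. Since $\mH$ is a set, it suffices to show accessibility weight by weight: for fixed $(\alpha,\beta,\mJ^\circledast,\rH) \in \mH$, a diagram on $\mM$ with this weight is the datum of an enriched functor $\F: \mJ^\circledast \to \mM^\circledast$ over $(\alpha,\beta)$ together with a morphism $\rH \to \F^\ast(\Y)$ in $\mP\B\Env(\mJ)$ for some $\Y \in \mM$. The space $\Enr\Fun_{\mV',\mW'}(\mJ,(\alpha,\beta)^\ast(\mM))$ is expressible as a limit of multi-morphism spaces of $\mM$ indexed by the small $\infty$-category $\mJ^\circledast$, and the mapping space $\mP\B\Env(\mJ)(\rH, \F^\ast(\Y))$ can, via Lemma \ref{alor}, be written as a limit of multi-morphism spaces of $\mM$ indexed by $\B\Env(\mJ)_{/\rH}$. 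Both limits are of small diagrams of functors that are accessible in $\mM$, so the functor $\mE \to \widehat{\mS}$ sending $\mM$ to the space of all such diagrams is accessible. Taking $\pi_0$ (which preserves filtered colimits of small spaces) and coproducts over $\mH$ yields an accessible functor $\gamma_\mH: \mE \to \Set$. Lemma \ref{gene} now gives presentability of the total space, which is exactly ${^\rS_\mV\B\Enr^\T_\mW(\mH)_\ast}$ in case (1) and analogously for (2)--(4).

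The main obstacle is the accessibility step: a priori the assignment $\mM \mapsto \mP\B\Env(\mM)$ does not commute with filtered colimits in $\mM$, so one cannot naively transport accessibility through the bitensored envelope. The point is that for a \emph{fixed} weight $(\alpha,\beta,\mJ,\rH)$ the quantities one needs --- the enriched functor space out of a small $\mJ$ and mapping spaces into $\F^\ast(\Y)$ for a fixed $\rH$ --- are computed by small limits of multi-morphism spaces of $\mM$, and multi-morphism spaces are an accessible invariant of $\mM$ in the presentable bases identified above. Once this is checked, the rest is a routine application of Lemma \ref{gene}.
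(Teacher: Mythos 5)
Your overall strategy is the same as the paper's: both arguments feed a presentable base category of enriched $\infty$-categories together with an accessible functor $\gamma_\mH$ to $\Set$ (equivalence classes of $\mH$-weighted diagrams) into Lemma \ref{gene}. Your discussion of the accessibility of $\gamma_\mH$ via small limits of multi-morphism spaces is sound and in fact more explicit than the paper's one-line assertion.

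The gap is in how you establish presentability of the bases. For (1) the paper does not re-derive this at all; it quotes \cite[Proposition 2.57.]{Heine2024Lur}, which states that ${^\rS_\mV\B\Enr^\T_\mW}$ is compactly generated. Your substitute argument --- that ${_{\rS^{-1}\mP\Env(\mV)}\omega\B\Enr}_{\T^{-1}\mP\Env(\mW)}$ is presentable ``being the fibre of the presentable total space $\omega\B\Enr$'' --- does not work: $\omega\B\Enr$ is nowhere shown to be presentable, the fibre in question lies over the \emph{large} operads $\rS^{-1}\mP\Env(\mV)^\ot,\T^{-1}\mP\Env(\mW)^\ot$ and hence inside $\omega\widehat{\B\Enr}$, and a fibre of a presentable $\infty$-category over an object is not automatically presentable. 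More seriously, for (2)--(4) your claim that ${_\mV\L\Enr_\mW}$, ${_\mV\R\Enr_\mW}$, ${_\mV\B\Enr_\mW}$ are ``presentable subcategories of ${_\mV\omega\widehat{\B\Enr}_\mW}$ cut out by accessible conditions'' cannot deliver presentability, because the ambient $\infty$-category ${_\mV\omega\widehat{\B\Enr}_\mW}$ is not presentable (it contains large weakly bi-enriched $\infty$-categories), and an accessible localization of a non-presentable $\infty$-category need not be presentable. The step you are missing is the paper's reduction via Corollary \ref{cosqa}: choose small regular cardinals $\kappa$ (and $\tau$) such that $\mV^\ot\to\Ass$ (and $\mW^\ot\to\Ass$) is $\kappa$-compactly ($\tau$-compactly) generated, and use the equivalences ${_\mV\L\Enr_\mW}\simeq {^\kappa_{\mV^\kappa}\L\Enr_\mW}$, resp.\ ${_\mV\B\Enr_\mW}\simeq {^\kappa_{\mV^\kappa}\B\Enr^\tau_{\mW^\tau}}$, obtained by pulling back along $(\mV^\kappa)^\ot\subset\mV^\ot$ (and $(\mW^\tau)^\ot\subset\mW^\ot$), so that one lands in categories of enriched $\infty$-categories over \emph{small} operads with the localization pairs of Example \ref{Exaso}, where the compact generation result of case (1) applies. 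With that reduction inserted, and with the base presentability in (1) taken from the citation rather than your fibre argument, the remainder of your proof goes through as in the paper.
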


\begin{proof}
We apply Lemma \ref{gene}. (1): We apply Lemma \ref{gene}. By \cite[Proposition 2.60.]{heine2024bienriched} the $\infty$-category ${^\rS_\mV\omega\B\Enr_\mW^\T}$ is compactly generated.
%$ {_\mV\omega\B\Enr}_{\mW}, {_\mV\B\P\Enr}_{\mW}, {_\mV\B\Enr}_{\mW} $ are presentable.
Let $\gamma$ be the functor $ {^\rS_\mV\omega\B\Enr^\T_{\mW}} \to \Set$ that sends a $\rS,\T$-bienriched $\infty$-category $\mM^\circledast \to \mV^\ot \times \mW^\ot$ to the set of equivalence classes of $\rH$-weighted diagrams on $\mM$ for some $\rH \in \mH.$
Since $\mH$ is a set of absolute small weights, the functor $\gamma$ is accessible.
%The full subcategories $ _\mV\B\P\Enr_{\mW}, \ {_\mV\B\Enr_{\mW}} \subset {_\mV\omega\B\Enr}_{\mW} $ are closed under filtered colimits. So the restrictions of $\tau$ to $ {_\mV\B\P\Enr}_{\mW},  {_\mV \B\Enr}_{\mW} $ are accessible, too. 
So we can apply Lemma \ref{gene} to deduce the claim. 

(2): Let $\kappa$ be a small regular cardinal such that $\mV^\ot \to \Ass$ is a $\kappa$-compactly generated monoidal $\infty$-category.
By Corollary \ref{cosqa} (1) %and Lemma \ref{mini} (1) 
taking pullback along the monoidal embedding $(\mV^\kappa)^\ot \subset \mV^\ot$ induces an equivalence
$ {{_{\mV}}\L\Enr_\mW}\simeq {_{\mV^\kappa}^\kappa\L\Enr_{\mW}}.$
Thus $ {{_{\mV}}\L\Enr_\mW}$ is presentable.
Let $\gamma$ be the functor $ {_\mV\L\Enr_{\mW}} \to \Set$ that sends a left enriched $\infty$-category $\mM^\circledast \to \mV^\ot \times \mW^\ot$ to the set of equivalence classes of $\rH$-weighted diagrams on $\mM$ for some $\rH \in \mH.$
Since $\mH$ is a set of small left enriched weights, the functor $\gamma$ is accessible.
%The full subcategories $ _\mV\B\P\Enr_{\mW}, \ {_\mV\B\Enr_{\mW}} \subset {_\mV\omega\B\Enr}_{\mW} $ are closed under filtered colimits. So the restrictions of $\tau$ to $ {_\mV\B\P\Enr}_{\mW},  {_\mV \B\Enr}_{\mW} $ are accessible, too. 
So we can apply Lemma \ref{gene} to deduce the claim. 
%where $\mH:= \{\rH' \mid \rH \in \mH\}$ is a set of small left $\kappa$-enriched weights and $\rH'$ is defined like in Lemma \ref{mini} (1). So the claim follows from Corollary \ref{presenta}.
(3) is dual to (2).	
(4): Let $\kappa, \tau$ be small regular cardinals such that $\mV^\ot \to \Ass$ is a $\kappa$-compactly generated monoidal $\infty$-category and $\mW^\ot \to \Ass$ is a $\tau$-compactly generated monoidal $\infty$-category.
By Corollary \ref{cosqa} (3) taking pullback along the monoidal embeddings $(\mV^\kappa)^\ot \subset \mV^\ot, (\mW^\tau)^\ot \subset \mW^\ot$ induces an equivalence
$ {{_{\mV}}\B\Enr_\mW} \simeq {_{\mV^\kappa}^\kappa\B\Enr^\tau_{\mW^\tau}}.$
Thus $ {{_{\mV}}\B\Enr_\mW}$ is presentable.
Let $\gamma$ be the functor $ {_\mV\B\Enr_{\mW}} \to \Set$ that sends a bienriched $\infty$-category $\mM^\circledast \to \mV^\ot \times \mW^\ot$ to the set of equivalence classes of $\rH$-weighted diagrams on $\mM$ for some $\rH \in \mH.$
Since $\mH$ is a set of small enriched weights, the functor $\gamma$ is accessible.
%The full subcategories $ _\mV\B\P\Enr_{\mW}, \ {_\mV\B\Enr_{\mW}} \subset {_\mV\omega\B\Enr}_{\mW} $ are closed under filtered colimits. So the restrictions of $\tau$ to $ {_\mV\B\P\Enr}_{\mW},  {_\mV \B\Enr}_{\mW} $ are accessible, too. 
So we apply Lemma \ref{gene}.
\end{proof}

%Let $(\mV^\ot \to \Ass, \rS)$, $(\mW^\ot \to \Ass, \T)$	be localization pairs and $\mH$ a set of small weights over $\mV, \mW.$The latter embedding restricts to an embedding $^\rS_\mV\B\P\Enr^\T_{\mW}(\mH) \subset ^\rS_\mV\B\P\Enr^\T_{\mW}(\mH)_*$.

\begin{remark}\label{cola} The $\infty$-category $\omega\B\Enr_* $ admits small colimits. The colimit of a functor $\K \to \omega\B\Enr_*$ is the pair consisting of the colimit of the functor $\F: \K \to \omega\B\Enr_* \to \omega\B\Enr$ and the set of diagrams
that precisely consists of the diagrams that are the image of a diagram on $\F(\bk)$ under the enriched functor $\F(\bk)^\circledast \to \colim(\F)^\circledast$ for some $\bk \in \K.$

Let $(\mV^\ot \to \Ass, \rS)$, $(\mW^\ot \to \Ass, \T)$	be small localization pairs and $\mH$ a set of absolute small weights over $\mV, \mW.$
%The functor ${_\mV\omega\B\Enr}_\mW(\mH)_* \to {_\mV\omega\B\Enr}_\mW$ admits a left and right adjoint, where the left adjoint equips a small $\infty$-category weakly bienriched in $\mV, \mW$ with the empty set of diagrams and the right adjoint equips a small $\infty$-category weakly bienriched in $\mV, \mW$ with the set of all $\mH$-weighted diagrams.
Similarly, the $\infty$-category ${^\rS_\mV\omega\B\Enr^\T_{\mW}}(\mH)_* $ admits small colimits. The colimit of a functor $\K \to {^\rS_\mV\omega\B\Enr^\T_{\mW}}(\mH)_*$ is the pair consisting of the colimit of the functor $\F: \K \to {^\rS_\mV\omega\B\Enr^\T_{\mW}}(\mH)_* \to {^\rS_\mV\omega\B\Enr^\T_{\mW}}$ and the set of diagrams
that precisely consists of the $\mH$-weighted diagrams that are the image of a $\mH$-weighted diagram on $\F(\bk)$ under the $\mV, \mW$-enriched functor $\F(\bk)^\circledast \to \colim(\F)^\circledast$ for some $\bk \in \K.$

\end{remark}

\begin{proposition}\label{gcp}\label{gcpp}
\begin{enumerate}
\item Let $(\mV^\ot \to \Ass, \rS)$, $(\mW^\ot \to \Ass, \T)$ be small localization pairs and $\mH$ a set of absolute small weights over $\mV, \mW.$
The embedding $${_\mV^\rS\B\Enr_\mW^\T}(\mH) \subset {{^\rS_\mV\B\Enr^\T_\mW}(\mH)_*}$$ is accessible and admits a left adjoint.
\item Let $\mV^\ot \to \Ass$ be a presentably monoidal $\infty$-category, $\mW^\ot \to \Ass$ a small $\infty$-operad and $\mH$ a set of small left enriched diagrams over $\mV, \mW$.
The embedding $${_\mV\L\Enr_\mW}(\mH) \subset {{_\mV\L\Enr_\mW}(\mH)_*}$$ is accessible and admits a left adjoint.

\item Let $\mW^\ot \to \Ass$ be a presentably monoidal $\infty$-category, $\mV^\ot \to \Ass$ a small $\infty$-operad and $\mH$ a set of small right enriched diagrams over $\mV, \mW$.
The embedding $${_\mV\R\Enr_\mW}(\mH) \subset {{_\mV\R\Enr_\mW}(\mH)_*}$$ is accessible and admits a left adjoint.

\item Let $\mV^\ot \to \Ass, \mW^\ot \to \Ass$ be presentably monoidal $\infty$-categories and $\mH$ a set of small bienriched diagrams over $\mV, \mW$.
The embedding $${_\mV\B\Enr_\mW}(\mH) \subset {{_\mV\B\Enr_\mW}(\mH)_*}$$ is accessible and admits a left adjoint.

\end{enumerate}	
	
\end{proposition}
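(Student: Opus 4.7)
The plan is to construct the left adjoint explicitly via the universal cocompletions developed in Corollaries \ref{wooond0} and \ref{wooond1}, and then deduce accessibility from the presentability results of Proposition \ref{presen}. Concretely, for part (1), I would define the candidate left adjoint on objects by
$$(\mM^\circledast, \Lambda) \mapsto (\mP\B\Env^\mH_\Lambda(\mM)^\circledast_{\rS,\T}, \Lambda^{\mH}),$$
where $\Lambda^{\mH}$ denotes the collection of all $\mH$-weighted colimit diagrams in the target. By Remark \ref{bipre}, this target is a small $\rS,\T$-bi-enriched $\infty$-category that admits $\mH$-weighted colimits and hence belongs to ${_\mV^\rS\B\Enr_\mW^\T}(\mH)$. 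The unit is the canonical enriched functor $\mM^\circledast \to \mP\B\Env^\mH_\Lambda(\mM)^\circledast_{\rS,\T}$, which by Remark \ref{embesto} sends diagrams of $\Lambda$ to $\mH$-weighted colimit diagrams and so constitutes a morphism in $^\rS_\mV\B\Enr^\T_\mW(\mH)_*$.

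To verify the universal property, for any target $(\mN, \Lambda^{(\mN,\mH)}) \in {_\mV^\rS\B\Enr_\mW^\T}(\mH)$ I would unfold the definitions of morphism spaces in $\omega\B\Enr_\ast$ (Lemma \ref{lllee}) to reduce the statement to the claim that precomposition with the unit yields an equivalence
$$\Enr\Fun_{\mV,\mW}^{\Lambda^{\mH}}\bigl(\mP\B\Env^\mH_\Lambda(\mM)_{\rS,\T}, \mN\bigr) \xrightarrow{\sim} \Enr\Fun_{\mV,\mW}^{\Lambda}(\mM, \mN).$$
This is exactly Corollary \ref{wooond0}, after invoking Remark \ref{remros} to identify the $\Lambda$-preservation condition on the right (sending $\Lambda$ to weighted colimit diagrams in a target admitting $\mH$-weighted colimits) with preservation into the specified $\Lambda^{(\mN,\mH)}$. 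For accessibility, by Proposition \ref{presen}(1) both ${^\rS_\mV\B\Enr^\T_\mW}(\mH)_*$ and, after the argument just given, ${_\mV^\rS\B\Enr_\mW^\T}(\mH)$ as a reflective subcategory, are presentable; the inclusion is the right adjoint to a colimit-preserving functor, hence automatically accessible.

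For parts (2), (3), (4) the same strategy applies, but the source $\mM$ is now a small enriched $\infty$-category in a presentably monoidal setting, so one replaces $\mP\B\Env^\mH_\Lambda(\mM)^\circledast_{\rS,\T}$ with the presentably enriched analogues built from $\mP_\mV(\mM)^\circledast$, $\mP\B\Env(\mM)^\circledast_{\R\Enr}$, or $\mP_{\mV,\mW}(\mM)^\circledast$ of Notation \ref{enrprrrt} (Proposition \ref{rembrako}), localized further at the set $\Lambda$ and restricted to the full subcategory generated by $\mM$ under $\mH$-weighted colimits. The corresponding universal property is furnished by Corollary \ref{wooond1}, and local smallness of the constructed enriched $\infty$-category, which is required for it to lie in ${_\mV\L\Enr_\mW}, {_\mV\R\Enr_\mW}, {_\mV\B\Enr_\mW}$ respectively, follows from Proposition \ref{quirk}. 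Accessibility then follows from the corresponding parts of Proposition \ref{presen} just as in part (1).

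The main obstacle will be ensuring that the various conditions on preservation of diagrams match up correctly across the different formalisms: the morphisms in $\omega\B\Enr_\ast$ are characterized by sending $\Lambda$ to \emph{some} set of $\mH$-weighted diagrams, whereas the target side requires sending $\Lambda$ to $\mH$-weighted \emph{colimit} diagrams, and the universal property of Corollary \ref{wooond0} is phrased in the latter form; reconciling these via Remark \ref{remros} and the identification of ${_\mV^\rS\B\Enr_\mW^\T}(\mH)$ as a full subcategory of $^\rS_\mV\B\Enr^\T_\mW(\mH)_*$ from Lemma \ref{eqas} is the one bookkeeping step that must be handled carefully. In the enriched cases (2)--(4), there is the additional subtlety that one has to pass from small weights over the possibly non-small $\mV, \mW$ to $\kappa$-small enriched weights over $\mV^\kappa, \mW^\tau$ via Lemma \ref{mini} before Corollary \ref{wooond1} applies, but this translation preserves the relevant weighted colimits, so does not affect the adjunction.
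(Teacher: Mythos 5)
Your construction of the left adjoint is essentially the paper's: the paper also obtains it from Corollary \ref{wooond0} (resp.\ \ref{wooond1} together with Proposition \ref{quirk} for local smallness in the enriched cases), with unit $\mM^\circledast \to \mP\B\Env^\mH_\Lambda(\mM)^\circledast_{\rS,\T}$, and your bookkeeping via Remark \ref{remros} and Lemma \ref{eqas} is the right way to match the two notions of morphism. That half of the argument is fine.

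The accessibility step, however, has a genuine gap: you argue that ${_\mV^\rS\B\Enr_\mW^\T}(\mH)$ is presentable ``as a reflective subcategory'' of the presentable $\infty$-category ${^\rS_\mV\B\Enr^\T_\mW}(\mH)_*$ and then conclude that the inclusion, being a right adjoint, is accessible. But a reflective subcategory of a presentable $\infty$-category is presentable only if the localization is accessible, which is exactly what is to be proved; in ZFC one cannot deduce accessibility of the inclusion merely from the existence of the reflector. (Presentability of ${_\mV^\rS\B\Enr_\mW^\T}(\mH)$ is Corollary \ref{presenta} in the paper and is \emph{deduced from} this proposition, so invoking it here is circular.) The paper instead supplies the missing content directly: since $\mH$ is a set, one chooses a small regular cardinal $\kappa$ such that for every weight in $\mH$ the indexing $\rS,\T$-bi-enriched $\infty$-category $\mJ^\circledast$ is a $\kappa$-compact object of ${^\rS_\mV\B\Enr^\T_\mW}$, and then proves by hand that the embedding preserves $\kappa$-filtered colimits. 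This requires showing, for a $\kappa$-filtered diagram $\Psi$ with colimit $(\mM,\Lambda)$ computed in ${^\rS_\mV\omega\B\Enr^\T_\mW}(\mH)_*$: (i) the canonical functors $\Psi(\bk)^\circledast \to \mM^\circledast$ preserve $\mH$-weighted colimits, using the filtered-colimit formula for multi-morphism spaces in $\mP\B\Env(-)$; (ii) every enriched functor $\mJ^\circledast \to \mM^\circledast$ with $\mJ$ $\kappa$-compact factors through some stage, so $\mM$ admits $\mH$-weighted colimits; and (iii) every $\mH$-weighted colimit diagram in $\mM$ is transported from some stage, so $\Lambda$ is exactly the set of $\mH$-weighted colimit diagrams. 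Without an argument of this kind (or some other direct proof that the inclusion preserves suitably filtered colimits), your proof of accessibility does not go through.
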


\begin{proof}
By Corollary \ref{wooond0} the embeddings of the statement admit a left adjoint.	
So it remains to see that the embeddings of the statement are accessible. %In view of Remark \ref{cola} we can reduce to the case that $\rS=\T=\emptyset.$	
We prove this for (1). The proofs of (2)-(4) are similar.
Since $\mH$ is small and ${^\rS_\mV\B\Enr^\T_{\mW}}$ is compactly generated by \cite[Proposition 2.60.]{heine2024bienriched}, there is a small regular cardinal $\kappa$ such that for any weight $(\rH, \mJ^\circledast \to \mV^\ot \times \mW^\ot) \in \mH$ the $\rS,\T$-bienriched $\infty$-category $\mJ^\circledast \to \mV^\ot \times \mW^\ot$ is a $\kappa$-compact object of ${_\mV^\rS\B\Enr^\T_{\mW}}.$
We will prove that the embedding ${_\mV^\rS\B\Enr^\T_{\mW}}(\mH) \subset {{_\mV^\rS\B\Enr^\T_{\mW}}(\mH)_*}$ preserves small $\kappa$-filtered colimits.

Let $\K$ be a small $\kappa$-filtered $\infty$-category, $\Psi : \K \to {^\rS_\mV\omega\B\Enr^\T_{\mW}}(\mH) $ a functor and $(\mM^\circledast \to \mV^\ot\times \mW^\ot,\Lambda)$ the colimit of $\Psi$ taken in ${{^\rS_\mV\omega\B\Enr^\T_{\mW}}(\mH)_*} .$
We will show that $\mM^\circledast \to \mV^\ot \times \mW^\ot$ admits $\mH$-weighted colimits and $\Lambda$ is precisely the set of $\rH$-weighted colimit diagrams on $\mM$ for some $\rH \in \mH.$
Observe that for every $\bk \in \K$ the canonical map $\psi: \Psi(\bk)^\circledast \to \mM^\circledast $ preserves $\mH$-weighted colimits because for every $\V_1,...,\V_\n \in \mV, \W_1,..., \W_\m \in \mW$ for $\n, \m \geq 0$ and $\X,\Y \in \Psi(\bk)$
the canonical map 
\begin{equation*}\label{abc}
\colim_{\alpha:\bk \to \ell}\Mul_{\Psi(\ell)}(\V_1, ..., \V_\n, \Psi(\alpha)(\X), \W_1, ..., \W_\m;\Psi(\alpha)(\Y)) \to$$$$ \Mul_\mM(\V_1, ..., \V_\n,\psi(\X), \W_1, ..., \W_\m;\psi(\Y)) 
\end{equation*}
is an equivalence, and similarly for $\X,\Y \in \mP\B\Env(\Psi(\bk))$ the canonical map 
\begin{equation}\label{aabccoo}
\colim_{\alpha:\bk \to \ell}\Mul_{\mP\B\Env(\Psi(\ell))}(\V_1, ..., \V_\n, \Psi(\alpha)_!(\X), \W_1, ..., \W_\m;\Psi(\alpha)_!(\Y)) \to$$$$ \Mul_{\mP\B\Env(\mM)}(\V_1, ..., \V_\n,\psi_!(\X), \W_1, ..., \W_\m;\psi_!(\Y)) 
\end{equation} is an equivalence.
This implies that $\Lambda$ consists of $\rH$-weighted colimit diagrams on $\mM$
for some $\rH \in \mH.$
Let $(\rH, \mJ^\circledast \to \mV^\ot\times \mW^\ot) \in \mH$ be a weight.
Then there is a $\bk \in \K$ so that for any morphism $\bk \to \ell$ in $\K$ the $\rS,\T$-bienriched $\infty$-category $\Psi(\ell)^\circledast \to \mV^\ot \times \mW^\ot$ admits $\rH$-weighted colimits.
Let $\F: \mJ^\circledast \to \mM^\circledast$ be an enriched functor.
By assumption $\mJ^\circledast \to \mV^\ot \times \mW^\ot$ is $\kappa$-compact so that
$\F$ factors as $\mJ^\circledast \xrightarrow{\G} \Psi(\br)^\circledast \to \mM^\circledast$ for some $\br \in \K$, which we can assume to receive a morphism $\bk \to \br$.
The map $\G: \mJ^\circledast \to \Psi(\br)^\circledast$
admits a $\rH$-weighted colimit that is sent to the $\rH$-weighted colimit of $\F.$
So $\mM^\circledast \to \mV^\ot \times \mW^\ot$ admits $\mH$-weighted colimits.

It remains to see that every $\rH$-weighted colimit diagram $(\rH, \F:\mJ^\circledast \to \mM^\circledast, \lambda: \F_!(\rH)\to \Y)$ on $\mM$
for some $\rH \in \mH$ belongs to $\Lambda$.
Since $\F: \mJ^\circledast \to \mV^\ot \times \mW^\ot$ is $\kappa$-compact,
$\F$ factors as $\mJ^\circledast \xrightarrow{\G} \Psi(\bk)^\circledast \to \mM^\circledast$ for some $\bk \in \K$.
By equivalence (\ref{aabccoo}) there is a morphism $\bk \to \ell $ in $\K$ and a diagram $\nu$ on $\Psi(\ell)$ that is transported by the enriched functor $\Psi(\ell)^\circledast \to \mM^\circledast$ to $(\rH, \F:\mJ^\circledast \to \mM^\circledast, \lambda: \F_!(\rH)\to \Y)$.
As $\nu$ is transported by $\Psi(\ell)^\circledast \to \mM^\circledast$ to a $\mH$-weighted colimit diagram and $\Psi$ sends morphisms in $\K$ to enriched functors preserving $\mH$-weighted colimits and for any $\bk \in \K$ the $\infty$-category $\Psi(\bk)$ admits $\mH$-weighted colimits, there is a morphism $\ell \to \br$ such that $\nu$ is transported by the enriched functor $\Psi(\ell) \to \Psi(\br)$ to a $\mH$-weighted colimit diagram $\nu'.$
Since $\Psi(\br) \in {^\rS_\mV\omega\B\Enr^\T_{\mW}}(\mH) $, we find that $\nu'$ is a diagram $\Psi(\br)$ is equipped with.
Hence $(\rH, \F:\mJ^\circledast \to \mM^\circledast, \lambda: \F_!(\rH)\to \Y)$ belongs to $\Lambda.$

\end{proof}

\begin{corollary}\label{presenta} 
\begin{enumerate}
\item Let $(\mV^\ot \to \Ass, \rS), (\mW^\ot \to \Ass, \T)$ be small localization pairs and $\mH$ a set of absolute small weights over $\mV,\mW.$ 
The $\infty$-category $ {{^\rS_{\mV}}\B\Enr_\mW^\T}(\mH)$ is presentable.	
	
\item Let $\mV^\ot \to \Ass$ be a presentably monoidal $\infty$-category, $\mW^\ot \to \Ass$ a small $\infty$-operad and $\mH$ a set of small left enriched diagrams over $\mV, \mW$.
The $\infty$-category $ {{_{\mV}}\L\Enr_\mW}(\mH)$ is presentable.
		
\item Let $\mW^\ot \to \Ass$ be a presentably monoidal $\infty$-category, $\mV^\ot \to \Ass$ a small $\infty$-operad and $\mH$ a set of small right enriched diagrams over $\mV, \mW$.
The $\infty$-category $ {{_{\mV}}\R\Enr_\mW}(\mH)$ is presentable.
		
\item Let $\mV^\ot \to \Ass, \mW^\ot \to \Ass$ be presentably monoidal $\infty$-categories and $\mH$ a set of small bienriched diagrams over $\mV, \mW$.
The $\infty$-category $ {{_{\mV}}\B\Enr_\mW}(\mH)$ is presentable.
		
\end{enumerate}	

\end{corollary}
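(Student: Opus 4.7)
The plan is to deduce Corollary \ref{presenta} directly by combining Proposition \ref{presen} with Proposition \ref{gcp}. In each of the four cases, Proposition \ref{presen} provides presentability of the ambient $\infty$-category of enriched $\infty$-categories equipped with a chosen set of $\mH$-weighted diagrams (${_\mV^\rS\B\Enr_\mW^\T}(\mH)_*$, ${_\mV\L\Enr_\mW}(\mH)_*$, ${_\mV\R\Enr_\mW}(\mH)_*$, ${_\mV\B\Enr_\mW}(\mH)_*$), while Proposition \ref{gcp} identifies the subcategory in which the chosen set actually consists of weighted colimit diagrams as an accessible localization. Since accessible localizations of presentable $\infty$-categories are presentable (being reflective subcategories generated under colimits and accessible), each corresponding $\infty$-category of enriched $\infty$-categories admitting $\mH$-weighted colimits is presentable.

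More concretely, I would proceed case by case. For (1), Proposition \ref{presen}(1) shows that ${^\rS_\mV\B\Enr_\mW^\T}(\mH)_*$ is presentable; Proposition \ref{gcp}(1) shows that the embedding ${_\mV^\rS\B\Enr_\mW^\T}(\mH) \subset {^\rS_\mV\B\Enr_\mW^\T}(\mH)_*$ is accessible and admits a left adjoint, hence exhibits the target as a reflective accessible localization of a presentable $\infty$-category. The standard fact (\cite{lurie.HTT}) that such localizations are themselves presentable then yields the claim. Cases (2), (3), (4) follow by the same pattern, pairing Proposition \ref{presen}(2)--(4) with Proposition \ref{gcp}(2)--(4) respectively.

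No new argument seems needed beyond invoking these two propositions; the content has already been done in the proofs of Propositions \ref{presen} and \ref{gcp}. The only subtle point is to confirm that the embeddings of Proposition \ref{gcp} truly are reflective accessible localizations in the technical sense required for presentability to be inherited — but this is precisely what the statement ``accessible and admits a left adjoint'' means, together with fully faithfulness that is built into the definition of the embeddings. Thus the proof reduces to a one-line invocation in each case.

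I do not anticipate any genuine obstacle; the main work has been deferred to Propositions \ref{presen} and \ref{gcp}, and Corollary \ref{presenta} is a formal consequence. The only care needed is to match the hypotheses of each part of Proposition \ref{presen} with those of the corresponding part of Proposition \ref{gcp}, but these hypotheses are in fact identical in each of the four cases.
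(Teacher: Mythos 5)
Your proposal is correct and is exactly the intended argument: the paper states Corollary \ref{presenta} without proof immediately after Propositions \ref{presen} and \ref{gcp}, and the deduction is precisely that each ${(-)}(\mH)$ is an accessible reflective (fully faithful, by the paper's convention on embeddings) localization of the presentable $\infty$-category ${(-)}(\mH)_*$, hence presentable. Nothing further is needed.
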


%\begin{proof}(3): Let $\kappa$ be a small regular cardinal such that $\mV^\ot \to \Ass$ is a $\kappa$-compactly generated monoidal $\infty$-category.By Corollary \ref{cosqa} (1) %and Lemma \ref{mini} (1) taking pullback along the monoidal embedding $(\mV^\kappa)^\ot \subset \mV^\ot$ induces an equivalence$ {{_{\mV}}\L\Enr_\mW}\simeq {_{\mV^\kappa}^\kappa\L\Enr_{\mW}}.$
%where $\mH:= \{\rH' \mid \rH \in \mH\}$ is a set of small left $\kappa$-enriched weights and $\rH'$ is defined like in Lemma \ref{mini} (1). So the claim follows from Corollary \ref{presenta}.
%(3): Let $\kappa, \tau$ be small regular cardinals such that $\mV^\ot \to \Ass$ is a $\kappa$-compactly generated monoidal $\infty$-category and $\mW^\ot \to \Ass$ is a $\tau$-compactly generated monoidal $\infty$-category.By Corollary \ref{cosqa} (3) %and Lemma \ref{mini} (3) taking pullback along the monoidal embeddings $(\mV^\kappa)^\ot \subset \mV^\ot, (\mW^\tau)^\ot \subset \mW^\ot$ induces an equivalence$ {{_{\mV}}\B\Enr_\mW} \simeq {_{\mV^\kappa}^\kappa\B\Enr^\tau_{\mW^\tau}}.$By ... 
%where $\mH:= \{\rH' \mid \rH \in \mH\}$ is a set of small $\kappa,\tau$-enriched weights and $\rH'$ is defined like in Lemma \ref{mini} (3). So the claim follows from Corollary \ref{presenta}.\end{proof}	

\begin{corollary}\label{ujpp}
	
The functor $ \omega\B\Enr_{\mathrm{w}} \to {(\Op_\infty \times \Op_\infty)_*}$ is a cocartesian and cartesian fibration and the functor $\omega\B\Enr_{\mathrm{w}} \to \omega\B\Enr$ sends cartesian lifts of morphisms in $ (\Op_\infty \times \Op_\infty)_*$ to cartesian lifts in $ \Op_\infty \times \Op_\infty$.	
	
\end{corollary}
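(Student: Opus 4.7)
The plan is to deduce Corollary \ref{ujpp} from the analogous Lemma \ref{lllee} for the larger fibration $\omega\B\Enr_\ast \to (\Op_\infty \times \Op_\infty)_\ast$, via the full embedding $\omega\B\Enr_{\mathrm{w}} \hookrightarrow \omega\B\Enr_\ast$ of Lemma \ref{eqas} that sends $(\mM, \mH)$ to $(\mM, \Lambda^{(\mM, \mH)})$, where $\Lambda^{(\mM, \mH)}$ denotes the set of all $\mH$-weighted colimit diagrams on $\mM$. Over each object of $(\Op_\infty \times \Op_\infty)_\ast$ the fiberwise inclusion admits a left adjoint $L$ by Proposition \ref{gcp}(1) applied with $\rS = \T = \emptyset$, realized explicitly through the free weighted cocompletion of Corollary \ref{wooond0}.

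For cartesian lifts, I would verify that the cartesian lift in $\omega\B\Enr_\ast$ furnished by Lemma \ref{lllee} already lies in $\omega\B\Enr_{\mathrm{w}}$. Concretely, for a morphism $\phi$ in $(\Op_\infty \times \Op_\infty)_\ast$ with target $(\mN, \Lambda^{(\mN, \mH')})$, the cartesian lift is $(\phi^\ast(\mN), \Lambda')$, with $\Lambda'$ the set of diagrams on $\phi^\ast(\mN)$ whose images in $\mN$ lie in $\Lambda^{(\mN, \mH')}$. Corollary \ref{eccco} guarantees that $\phi^\ast(\mN)$ admits the appropriate weighted colimits and that the projection $\phi^\ast(\mN) \to \mN$ preserves them; Proposition \ref{pullba}(2) supplies the converse, so that $\Lambda' = \Lambda^{(\phi^\ast(\mN), \mH)}$ exhausts all colimit diagrams. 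Hence the cartesian lift lands in $\omega\B\Enr_{\mathrm{w}}$, fullness of the embedding $\omega\B\Enr_{\mathrm{w}} \subset \omega\B\Enr_\ast$ transports the cartesian property, and compatibility of the forgetful functor $\omega\B\Enr_{\mathrm{w}} \to \omega\B\Enr$ with cartesian lifts is inherited from Lemma \ref{lllee}.

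The cocartesian case is more delicate, because the cocartesian transport $\phi_!$ in $\omega\B\Enr_\ast$ does not in general land in $\omega\B\Enr_{\mathrm{w}}$: the image of $\Lambda^{(\mM, \mH)}$ under $\phi_!$ consists of $\mH'$-weighted diagrams that need not exhaust all colimit diagrams in $\phi_!(\mM)$. I propose to define the cocartesian lift as $\widetilde{\mM} := L(\phi_!(\mM))$, taking as structure map the composition of the cocartesian lift in $\omega\B\Enr_\ast$ with the unit of $L$; by the defining universal property of $L$ (Corollary \ref{wooond0}) this composite carries $\mH$-weighted colimit diagrams to $\mH'$-weighted colimit diagrams and hence defines a morphism in $\omega\B\Enr_{\mathrm{w}}$.

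The universal property of $\widetilde{\mM}$ would then be checked by the Beck--Chevalley chain: for $(\mO, \Lambda^{(\mO, \mH'')}) \in \omega\B\Enr_{\mathrm{w}}$ and compatible morphism $\gamma$,
\begin{align*}
\omega\B\Enr_{\mathrm{w}}(\widetilde{\mM}, \mO)_\gamma
&\simeq \omega\B\Enr_\ast(\widetilde{\mM}, \gamma^\ast(\mO))_{\mathrm{id}}
\simeq \omega\B\Enr_\ast(\phi_!(\mM), \gamma^\ast(\mO))_{\mathrm{id}} \\
&\simeq \omega\B\Enr_\ast(\mM, \gamma^\ast(\mO))_\phi
\simeq \omega\B\Enr_{\mathrm{w}}(\mM, \mO)_{\gamma \circ \phi},
\end{align*}
where the first and third equivalences use the cartesian resp.\ cocartesian lifts in $\omega\B\Enr_\ast$ (Lemma \ref{lllee}), the second invokes the fiberwise adjunction $L \dashv \iota$ once we know $\gamma^\ast(\mO) \in \omega\B\Enr_{\mathrm{w}}$ from the already-established cartesian case, and the outer equivalences record the fullness of $\omega\B\Enr_{\mathrm{w}} \subset \omega\B\Enr_\ast$. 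The main obstacle is this cocartesian case, which requires orchestrating the fiberwise reflector $L$ with both the cartesian and cocartesian transport of $\omega\B\Enr_\ast$; the crucial point is that the cartesian case supplies $\gamma^\ast(\mO) \in \omega\B\Enr_{\mathrm{w}}$, which is exactly what is needed to activate the fiberwise adjunction in the middle step of the chain.
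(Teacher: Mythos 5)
Your argument is correct and follows essentially the same route as the paper: restrict the cartesian fibration of Lemma \ref{lllee} along the full embedding of Lemma \ref{eqas}, using Corollary \ref{eccco} and Proposition \ref{pullba} to see that cartesian lifts of objects with all weighted colimit diagrams stay in $\omega\B\Enr_{\mathrm{w}}$, and then produce cocartesian lifts from the fiberwise reflections of Proposition \ref{gcp}/Corollary \ref{wooond0} --- the paper simply invokes the standard criterion that a cartesian fibration whose transport functors admit left adjoints is also cocartesian, which you verify by hand via $L\circ\phi_!$ and the mapping-space chain. One minor citation slip: the reflection statement you need (a diagram on the pullback whose image is a weighted colimit diagram is itself one) is Proposition \ref{pullba}(1), not (2).
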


\begin{proof}
By Lemma \ref{lllee} the functor $ \omega\B\Enr_* \to {(\Op_\infty \times \Op_\infty)_*}$ is a cartesian fibration and the functor $\omega\B\Enr_* \to \omega\B\Enr$ sends cartesian lifts of morphisms in $ (\Op_\infty \times \Op_\infty)_*$ to cartesian lifts in $ \Op_\infty \times \Op_\infty$.	
By Corollary \ref{eccco} the latter cartesian fibration restricts to a cartesian fibration 
$ \omega\B\Enr_{\mathrm{w}} \to {(\Op_\infty \times \Op_\infty)_*}$ with the same cartesian morphisms.
This cartesian fibration is also a cocartesian fibration since the fiber transports admit left adjoints by Proposition \ref{gcpp}.

\end{proof}

\subsection{The tensor product of enriched $\infty$-categories with weighted colimits}

In this subsection we construct a tensor product for left enriched $\infty$-categories with weighted diagrams and apply the theory of adjoining weighted colimits (Theorem \ref{wooond}) to construct a tensor product for left enriched $\infty$-categories with weighted colimits (Theorem \ref{thqaz} and Theorem \ref{Thor}) from the first one.

%Next we construct symmetric monoidal structures on the $\infty$-categoriesof small bienriched $\infty$-categories with diagrams.
%, that restrict to pseudo-enriched and enriched $\infty$-categories with diagrams.
%For the next remark we use Notation \ref{noturn}:

\begin{notation}\label{pose}Let $\mV^\ot \to \Ass$ be a small $\infty$-operad and $\mM^\circledast \to \mV^\ot $ a  weakly left enriched $\infty$-category.
	
\begin{enumerate}
\item Let $\L\D(\mM)$ be the poset of sets of absolute small left diagrams on $\mM$ ordered by inclusion. 
	
\item Let $\omega(\mV)$ be the poset of sets of absolute small left weights over $\mV$ ordered by inclusion.
\end{enumerate}	
%Since we can transport diagrams and weights (Notation \ref{trans}), we obtain functors to the category of large posets $\widehat{\Poset}:$$$\B\D: \Ho(\omega\B\Enr) \to \widehat{\Poset},\ \omega: \Ho(\Op_\infty) \times \Ho(\Op_\infty) \to \widehat{\Poset}.$$
	
\end{notation}

\begin{remark}\label{opm}

\begin{enumerate}
\item The functor $$\omega: \Ho(\Mon) \to \widehat{\Poset}, $$ 
is lax symmetric monoidal with respect to cartesian symmetric monoidal structures. 

For every $\mV_1^\ot,...,\mV_\n^\ot \in \Mon$ for $\n \geq 0$ the structure maps $$ \prod_{\bi=1}^\n \omega(\mV_\bi) \to \omega(\prod_{\bi=1}^\n \mV_\bi)$$ send $ (\mH_1,...,\mH_\n) $ to $ \mH_1 \boxtimes...\boxtimes\mH_\n.$

\item The functor $$\L\D: \Ho(\L\P\Enr)\to \widehat{\Poset} $$
is lax symmetric monoidal with respect to cartesian symmetric monoidal structures. 

For every $\mM_1^\circledast \to \mV_1^\ot,...,\mM_\n^\circledast \to \mV_\n^\ot $ for $\n \geq 0$ the structure maps $$ \prod_{\bi=1}^\n \L\D(\mM_\bi) \to \L\D(\prod_{\bi=1}^\n \mM_\bi) $$ send $ (\Lambda_1,...,\Lambda_\n) $ to $ \Lambda_1 \boxtimes...\boxtimes\Lambda_\n.$

%\item The functor $$\R\D: \Ho(\ell\R\P\Enr)\to \widehat{\Poset} $$ is lax symmetric monoidal with respect to cartesian symmetric monoidal structures. For every $\mM_1^\circledast \to \mV_1^\ot \times \mW_1^\ot,...,\mM_\n^\circledast \to \mV_\n^\ot \times \mW_\n^\ot \in \omega\B\Enr$ for $\n \geq 0$ the structure map $$ \prod_{\bi=1}^\n \R\D(\mM_\bi) \to \R\D(\prod_{\bi=1}^\n \mM_\bi) $$ sends $ (\Lambda_1,...,\Lambda_\n) $ to $ \Lambda_1 \boxtimes...\boxtimes\Lambda_\n.$

%\item The functor $$\B\D: \Ho(\ell\B\P\Enr)\to \widehat{\Poset} $$ is lax symmetric monoidal with respect to cartesian symmetric monoidal structures. For every $\mM_1^\circledast \to \mV_1^\ot \times \mW_1^\ot,...,\mM_\n^\circledast \to \mV_\n^\ot \times \mW_\n^\ot \in \omega\B\Enr$ for $\n \geq 0$ the structure map $$ \prod_{\bi=1}^\n \B\D(\mM_\bi) \to \B\D(\prod_{\bi=1}^\n \mM_\bi) $$ sends $ (\Lambda_1,...,\Lambda_\n) $ to $ \Lambda_1 \boxtimes...\boxtimes\Lambda_\n.$

\vspace{1mm}
\item Let $\nu:\Ho(\L\P\Enr) \to \Ho(\Mon) $ be the forgetful functor. 
The natural transformation $ \L\D \to \omega \circ \nu$ 
is symmetric monoidal.
\end{enumerate}
\end{remark}

\begin{notation}
Let $\kappa$ be a small regular cardinal.
Let $\Mon_\kappa^\ot \subset \Mon^\times$ be the symmetric suboperad whose colors are the monoidal $\infty$-categories compatible with $\kappa$-small colimits and whose multimorphisms $\mV_1^\ot, ..., \mV_\n^\ot \to \mW^\ot$ correspond to monoidal functors $\mV_1^\ot \times_\Ass...\times_\Ass \mV_\n^\ot \to \mW^\ot$ preserving $\kappa$-small colimits component-wise.
\end{notation}

\begin{notation}
For $\sigma$ the strongly inaccessible cardinal corresponding to the small universe we set $$ \cc\cc\Mon^\ot:=\widehat{\Mon}_\sigma^\otimes.$$
\end{notation}
\begin{notation}
Let $$\Pr\Mon^\ot \subset \cc\cc\Mon^\otimes$$ be the full symmetric suboperad 
spanned by the presentably monoidal $\infty$-categories.
% and whose multimorphisms $\mV_1^\ot, ..., \mV_\n^\ot \to \mW^\ot$ correspond to the monoidal functors $\mV_1^\ot \times_\Ass...\times_\Ass \mV_\n^\ot \to \mW^\ot$ that preserve small colimits component-wise.
	
\end{notation}

By \cite[Proposition 4.8.1.15.]{lurie.higheralgebra} $\Mon_\kappa^\ot \to \Comm, \Pr\Mon^\ot \to \Comm$ are symmetric monoidal $\infty$-categories.

\begin{notation}
	
Let $\kappa$ be a small regular cardinal. Let %$$%{^\kappa\L\Enr}^\otimes \subset \Mon_\kappa^\ot \times_{\Mon^\times}\L\P\Enr^\times, \ \R\Enr^{\tau \otimes} \subset  \R\P\Enr^\times\times_{\Mon^\times}\Mon_\tau^\ot,$$
$$ {^\kappa\L\Enr^{\otimes}} \subset \Mon_\kappa^\ot \times_{\Mon^\times}(\L\P\Enr_\emptyset)^\times$$
be the full symmetric suboperad spanned by the left $\kappa$-enriched $\infty$-categories.
\end{notation}

\begin{proposition}\label{eras}
	
Let $\kappa$ be a small regular cardinal. The map of symmetric $\infty$-operads $${^\kappa\L\Enr^{\otimes}} \to \Mon_\kappa^\ot $$
is a cocartesian fibration.
	
\end{proposition}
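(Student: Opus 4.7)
I would establish this by exhibiting explicit cocartesian lifts and verifying that they remain within the subcategory of left $\kappa$-enriched $\infty$-categories. Since the Cartesian symmetric operad on $\L\P\Enr_\emptyset$ inherits cocartesian lifts from those of $\L\P\Enr_\emptyset \to \Mon$ (itself a restriction of $\omega\B\Enr \to \Op_\infty \times \Op_\infty$ from Theorem \ref{bica}), the forgetful map $(\L\P\Enr_\emptyset)^\times \to \Mon^\times$ is a cocartesian fibration of symmetric $\infty$-operads; pulling back along $\Mon_\kappa^\ot \subset \Mon^\times$ yields a cocartesian fibration $\Mon_\kappa^\ot \times_{\Mon^\times} (\L\P\Enr_\emptyset)^\times \to \Mon_\kappa^\ot$. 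It remains to prove that the full symmetric suboperad ${^\kappa\L\Enr^\otimes}$ is closed under these cocartesian lifts.

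Concretely, given left $\kappa$-enriched $\mM_i^\circledast \to \mV_i^\ot$ for $i=1,\ldots,n$ and a monoidal functor $\alpha\colon \mV_1 \times_\Ass \cdots \times_\Ass \mV_n \to \mW$ preserving $\kappa$-small colimits component-wise, I would verify that the cocartesian lift $\alpha_!(\mM_1 \times \cdots \times \mM_n)^\circledast \to \mW^\ot$ is left $\kappa$-enriched. By Example \ref{Exaso} (1) combined with Remark \ref{embil}, this reduces to showing that $\alpha_!(\prod_i \mM_i)^\circledast \hookrightarrow \mP\L\Env(\alpha_!(\prod_i \mM_i))^\circledast$ lands in the localization at $\L\Enr^\kappa_\mW$. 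By Theorem \ref{bica} (3), the graph satisfies $\Gamma_{\alpha_!(\prod_i\mM_i)}(\X,\Y) \simeq \alpha_!\bigl(\Gamma_{\prod_i \mM_i}(\X,\Y)\bigr)$ at the level of $\mP\Env$. Since $\alpha$ is monoidal and preserves $\kappa$-small colimits component-wise, the induced left adjoint monoidal functor $\alpha_!\colon \mP\Env(\mV_1 \times_\Ass \cdots \times_\Ass \mV_n) \to \mP\Env(\mW)$ carries generating morphisms of the product localization $\Enr^\kappa_{\mV_1} \boxtimes \cdots \boxtimes \Enr^\kappa_{\mV_n}$ (Notation \ref{enr}) into $\Enr^\kappa_\mW$; combined with the $\kappa$-enrichment of each $\mM_i$, this guarantees that the pushforward graph is $\L\Enr^\kappa_\mW$-local.

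The universal property of the cocartesian lift in ${^\kappa\L\Enr^\otimes}$ is then inherited from the universal property in $(\L\P\Enr_\emptyset)^\times \to \Mon^\times$, once restricted to the full subcategory of left $\kappa$-enriched $\infty$-categories and to multi-morphisms lying over $\Mon_\kappa^\ot$. The main obstacle is the $\kappa$-enrichment preservation argument in the preceding paragraph: one must carefully track how the locality condition at $\L\Enr^\kappa_\mV$ behaves under the induced left adjoint monoidal functor $\alpha_!$ on closed monoidal envelopes. This relies on the explicit description of the generators of $\Enr^\kappa_\mV$ in Notation \ref{enr} (active morphisms of $\Env(\mV)$ and embeddings of $\kappa$-small colimits via $\mV \hookrightarrow \mP\Env(\mV)$), both of which are manifestly preserved by a monoidal functor preserving $\kappa$-small colimits component-wise.
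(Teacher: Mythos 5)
Your architecture differs from the paper's in a way that matters: you pull back the cocartesian fibration of pseudo-enriched $\infty$-categories along $\Mon_\kappa^\ot \subset \Mon^\times$ and then claim that the full suboperad ${^\kappa\L\Enr^{\otimes}}$ is \emph{closed} under the resulting cocartesian lifts, i.e.\ that $\alpha_!(\prod_i\mM_i)$ is itself left $\kappa$-enriched. The paper never asserts this closure; it only checks that the fiberwise pushforwards preserve the local equivalences of the fiberwise localizations ${_\mV^\kappa\L\Enr}\subset{_\mV\L\P\Enr}$ and then invokes \cite[Lemma 2.2.4.11.]{lurie.higheralgebra}, so that the cocartesian lift in ${^\kappa\L\Enr^{\otimes}}$ is the $\kappa$-enriched \emph{localization} of the pseudo-enriched pushforward, not (a priori) the pushforward itself. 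The gap in your write-up sits exactly at this point: from ``$\alpha_!$ carries generating morphisms of the product localization into $\Enr^\kappa_\mW$'' together with the $\kappa$-enrichment of the $\mM_i$ you conclude that the pushforward graph $\alpha_!\bigl(\Gamma_{\prod_i\mM_i}(\X,\Y)\bigr)$ is $\Enr^\kappa_\mW$-local. That inference is not valid: a left adjoint sending (generating) local equivalences to local equivalences descends to the localizations, but it does not preserve local \emph{objects} -- that is the dual property, enjoyed by the right adjoints. What you have actually verified is precisely the hypothesis of the paper's localization lemma, not of your closure claim.

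The closure statement itself does appear to be true, but it needs a genuinely different argument from the one you give: by pseudo-enrichment each graph is restricted from the presheaf $\Mul_{\mM_i}(-,\X_i;\Y_i)$ on $\mV_i$, which lies in $\Ind_\kappa(\mV_i)$ and hence (Remark \ref{Indd}) is a $\kappa$-filtered colimit of representables; the graph of $\prod_i\mM_i$ is then a colimit of representables indexed by a finite product of $\kappa$-filtered categories, which is again $\kappa$-filtered, and $\Env(\alpha)_!$ carries representables to representables, so the pushforward graph is a $\kappa$-filtered colimit of representables and therefore preserves $\kappa$-small limits. If you insert this (or an equivalent) argument, your route works and is in fact more explicit than the paper's; the cheaper repair is to drop the closure claim and argue as the paper does, where preservation of local equivalences suffices. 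Separately, when you say the cartesian symmetric operad on $\L\P\Enr_\emptyset$ ``inherits'' cocartesian lifts, note that the paper additionally checks, via \cite[Lemma 2.44.]{heine2023monadicity}, that the cocartesian morphisms of $\L\P\Enr_\emptyset \to \Mon$ are stable under products (using the description in Theorem \ref{bica}); this small verification should not be omitted.
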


\begin{proof}
We first prove that the symmetric monoidal functor 
$\phi: (\L\P\Enr_\emptyset)^\times \to \Mon^\times$ is a cocartesian fibration.
By Proposition \ref{bica} the underlying functor is a cocartesian fibration. So by \cite[Lemma 2.44.]{heine2023monadicity} it is enough to check that %taking product with any bipseudo-enriched $\infty$-category $\mM''^\circledast \to \mV''^\ot \times \mW''^\ot$ preserves $\phi$-cocartesian morphisms, i.e. that for any enriched functor $\mM^\circledast\to \mM'^\circledast$ between bipseudo-enriched $\infty$-categories the induced enriched functor $$\mM^\circledast \times_{(\Ass \times \Ass)} \mM''^\circledast \to \mM'^\circledast \times_{(\Ass \times \Ass)} \mM''^\circledast$$ is $\phi$-cocartesian.
the collection of $\phi$-cocartesian morphisms is stable under product. This follows from the description of $\phi$-cocartesian morphisms of Proposition \ref{bica}.
By \cite[Corollary 4.61.]{heine2024bienriched} for every small monoidal $\infty$-category $\mV^\ot \to \Ass$ compatible with $\kappa$-small colimits the full subcategory ${_\mV^\kappa\L\Enr} \subset {_\mV\L\P\Enr}$ is a localization.
Moreover for every monoidal functor $\mV^\ot \to \mV'^\ot$ preserving $\kappa$-small colimits the induced functor ${_\mV\L\P\Enr} \to {_{\mV'}\L\P\Enr}$ preserves local equivalences. 
Thus by \cite[Lemma 2.2.4.11.]{lurie.higheralgebra} the statement follows if we have shown that for every small monoidal $\infty$-categories $\mV^\ot \to \Ass, \mV'^\ot \to \Ass, \mV''^\ot \to \Ass$ and monoidal functors $\alpha: \mV^\ot \times_{\Ass} \mV'^\ot \to \mV''^\ot$ preserving $\kappa$-small colimits component-wise the induced functor
${_\mV\L\P\Enr} \times {_{\mV'}\L\P\Enr} \to {_{\mV''}\L\P\Enr} $ preserves local equivalences.
In view of Proposition \ref{bica} and the description of local equivalences of \cite[Corollary 4.61.]{heine2024bienriched} this follows from the fact that the functor
$\mP(\mV) \times \mP(\mV') \to \mP(\mV\times\mV') \to \mP(\mV'')$ sends local equivalences for the localization $\Ind_\kappa(\mV) \times \Ind_\kappa(\mV')$ to local equivalences for the localization $\Ind_\kappa(\mV'')$.
This follows from the fact that $\alpha$ preserves $\kappa$-small colimits component-wise.
\end{proof}

\begin{notation}\label{aloiso} 
	
Let $\kappa$ be a small regular cardinal. Let $$ {^\kappa\L\Enr_*^{\otimes}} \to {^\kappa\L\Enr^{\otimes}}$$
be the cocartesian fibration of symmetric monoidal $\infty$-categories classifying the lax symmetric monoidal functor $$^\kappa\L\Enr \subset \L\P\Enr \to \Ho(\L\P\Enr) \xrightarrow{\L\D} \widehat{\Poset} \subset \widehat{\Cat}_\infty.$$
Let \begin{equation}\label{cho} (\Mon_\kappa)_\ast^\ot  \to \Mon_\kappa^\ot \end{equation}
be the cocartesian fibration of symmetric monoidal $\infty$-categories classifying the lax symmetric monoidal functor
$$\Mon_\kappa \subset \Mon \to \Ho(\Mon) \xrightarrow{\omega} \widehat{\Poset} \subset \widehat{\Cat}_\infty.$$
\end{notation}

\begin{notation}
For $\sigma$ the strongly inaccessible cardinal corresponding to the small universe
we set 
$$\rc\rc\Mon_\ast^\ot:= (\Mon_\sigma)_\ast^\ot, \ {\L\Q\Enr_\ast^\ot} :={^\sigma\widehat{\L\Enr}^{\ot}_*}.$$
%\end{enumerate}
\end{notation}

\begin{remark}
By Remark \ref{opm} (3) the transformation $ \L\D \to \omega \circ \nu $ is symmetric monoidal and so classifies a map
\begin{equation}\label{chor}{^\kappa\L\Enr_*^{\otimes}} \to {(\Mon_\kappa)_\ast^\ot  \times_{ \Mon_\kappa^\ot} {^\kappa\L\Enr^{\otimes}}} \end{equation}
of cocartesian fibrations over ${^\kappa\L\Enr^{\otimes}}$
and so a symmetric monoidal functor.

\end{remark}
 
\begin{remark}\label{ohuz}
Since the fibers of the cocartesian fibration %$\gamma:{_*(\Mon_\kappa \times \Mon_\tau)_\ast^\ot} \to \Mon_\kappa^\ot \times_{\Comm} \Mon_\tau^\otimes$ 
(\ref{cho}) are posets, for any symmetric $\infty$-operad $\mO^\ot \to \Comm$ the canonical functor
$$\Alg_\mO((\Mon_\kappa)_\ast) \to \Alg_\mO(\Mon_\kappa) \times_{(\prod_{\X \in \mO} \Mon_\kappa)}\prod_{\X \in \mO} {(\Mon_\kappa)_\ast} $$ is fully faithful and the essential image are 
the triples $(\mV, \mH)$, where $\mV \in  \Alg_\mO(\Mon_\kappa)$ and $\mH:= (\mH_\X)_{\X \in \mO}$ is a family such that $\mH_\X$ is a set of absolute small weights over $ \mV(\X)$ for every $\X \in \mO$ subject to the condition that for every multi-morphism $\alpha: \X_1,...,\Y_\n \to \Y$ in $\mO$ we have $\alpha_!(\mH_{\X_1} \boxtimes... \boxtimes \mH_{\X_\n}) \subset \mH_{\Y}.$ 
%$\mO$-algebras $(\mV, \mW)$ in $\Mon_\kappa \times \Mon_\tau$ equipped with sets $\mH_\X$ of small weights over $\mV(\X), \mW(\X)$ for every $\X \in \mO$ such that for every multi-morphism $\alpha: \X_1,...,\Y_\n \to \Y$ in $\mO$ we have $\alpha_!(\mH_{\X_1} \boxtimes... \boxtimes \mH_{\X_\n}) \subset \mH_{\Y}.$ 
Thus we will denote an $\mO$-algebra in $(\Mon_\kappa)_*$ as a pair $(\mV,\mH)$.
%, where $\mV \in  \Alg_\mO(\Mon_\kappa), \mW \in  \Alg_\mO(\Mon_\tau)$ and $\mH:= (\mH_\X)_{\X \in \mO}$ is a family such that $\mH_\X$ is a set of small weights over $ \mV(\X), \mW(\X)$ for every $\X \in \mO.$And similar for the cocartesian fibrations (\ref{aho}), (\ref{bho}).

%Similarly, the canonical functor$$\Alg_\mO(_*{(\Op^\tu_\infty)}) \to 
%\Alg_\mO(\Op^\tu_\infty) \times_{(\prod_{\X \in \mO} \Op^\tu_\infty) } \prod_{\X \in \mO} {_*{(\Op^\tu_\infty)} }$$ is fully faithful and the essential image are the $\mO$-algebras $\mV$ in $\Mon $ equipped with sets $\mH_\X$ of small left weights over $\mV(\X)$ for every $\X \in \mO$ preserved by the tensor product.
%such that for every multi-morphism $\alpha: \X_1,...,\Y_\n \to \Y$ in $\mO$ we have $\alpha_!(\mH_{\X_1} \boxtimes... \boxtimes \mH_{\X_\n}) \subset \mH_{\Y}.$ 
%And similar for right weights.
\end{remark}

\begin{example}
Let $\mO^\ot\to \Comm$ be an unital monochromatic symmetric $\infty$-operad,
i.e. %$\mO^\simeq$ is connected and the space of unary operations is contractible
%(which is equivalent to ask that 
$\mO$ is contractible, for example the $\infty$-operads $\bE_\n$ for $1 \leq \n \leq \infty.$
Then an $\mO$-algebra in ${(\Mon_\kappa)_\ast^\ot}$
is a pair $(\mV, \mH)$, where $\mV \in \Alg_\mO(\Mon_\kappa)$
and $\mH$ is a set of absolute small left weights over $ \mV$.
The condition that for every multi-morphism $\alpha $ in $\mO$ corresponding to an $\n$-ary operation we have $\alpha_!(\mH \boxtimes... \boxtimes \mH) \subset \mH$ 
is automatic since the composition $\{\tu\} \times ... \times \{\tu\} \times \mC \times \{\tu\} \times ... \times \{\tu\} \to \mC^{\times\n} \xrightarrow{\alpha_!} \mC$ is the identity.

\end{example}

\begin{example}\label{infi}
	
Let $\mO \to \Comm$ be a symmetric $\infty$-operad, $\kappa$ a small regular cardinal and $\mV: \mO^\ot \to \Mon^\times$ an $\mO$-algebra.
%\begin{itemize}
%\item Let $\L\emptyset, \R\emptyset $ be the families that assign to every $\X \in \mO$ the empty set of left weights over $\mV(\X),$ the empty set of right weights over $\mW(\X),$ respectively.\item Let $\B\emptyset $ be the family that assigns to every $\X \in \mO$ the empty set of weights over $\mV(\X), \mW(\X)$.

There is a family $\L\kappa $ that assigns to every $\X \in \mO$ the set of $\kappa$-small left weights over $\mV(\X)$.
%There is a family $\L_{\mathrm{con}}\kappa $ that assigns to every $\X \in \mO$ the set of trivial left $\mV(\X)$-weights on $\kappa$-small $\infty$-categories.

%Let $\L\rc\rc $ be the family that assigns to every $\X \in \mO$ the set of absolute small left weights over $\mV(\X)$.
%\item Let $\L\mathrm{Ten}, \R\Ten, \B\Ten $ be the families that assign to $\X \in \mO$ the set of $\mV(\X)$-left weights corresponding to objects of $\mP\L\Env(*_{\mV(\X)}) \simeq \mP\Env(\mV(\X))$, the set of $\mW(\X)$-right weights corresponding to objects of $\mP\R\Env(*_{\mW(\X)}) \simeq \mP\Env(\mW(\X)),$the set of $\mV(\X), \mW(\X)$-weights corresponding to objects of $\mP\B\Env(*_{\mV(\X), \mW(\X)}) \simeq \mP\Env(\mV(\X)) \otimes \mP\Env(\mW(\X)),$respectively.\end{itemize}

\end{example}

\begin{example}\label{leff}
Let $\LM^\ot \to \Comm$ be the symmetric $\infty$-operad governing left modules
\cite[Notation 4.2.1.6.]{lurie.higheralgebra} and $\kappa$ a small regular cardinal.
A $\LM$-algebra in $\Mon_\kappa $ corresponds to a left module in $\Mon_\kappa $ \cite[Proposition 4.2.2.9.]{lurie.higheralgebra}.
So a $\LM$-algebra in $\Mon_\kappa$ encodes a left action of an associative algebra in $\Mon_\kappa$ - corresponding to two braided monoidal $\infty$-categories $\mV^\boxtimes \to \bE_2$ compatible with $\kappa$-small colimits, - on an object $\mW^\ot \to \Ass $ of $\Mon_\kappa$.
%, acting from the left in $\Mon_\kappa $ on a monoidal $\infty$-category $\mW^\ot \to \Ass$ compatible with $\kappa$-small colimits corresponding to a $\LM$-algebra in $\Mon_\kappa$.
Let $\mu: \mV^\ot \times_{\Comm} \mW^\ot \to \mW^\ot$ be the underlying monoidal left action functor and $\mH$ a set of absolute small left weights over $\mV$.
By the axioms of a left action the functor $\mu(\tu_\mV,-):\mW^\ot \to \mW^\ot $ is the identity and so sends weights of $\mH'$ to weights of $\mH'$.
If also $\mu(-,\tu_\mW):\mV^\ot \to \mW^\ot$ sends weights of $\mH$ to weights of $\mH'$, then by Remark \ref{ohuz} the given $\LM$-algebra uniquely refines to a $\LM$-algebra in $(\Mon_\kappa)_*$.
	
\end{example}

\begin{notation}
Let $$ \Pr\Mon_*^\ot \subset \Pr\Mon^\ot \times_{\rc\rc\Mon^\ot}\rc\rc\Mon _\ast^\ot$$ 	
be the full symmetric monoidal subcategory spanned by the presentably monoidal $\infty$-categories equipped with a set of small left enriched weights.
	
\end{notation}

\begin{notation}%Let $\sigma$ be the strongly inaccessible cardinal corresponding to the small universe.
Let $$\L\Enr_\ast^\ot \subset {\Pr\Mon_*^\ot} \times_{{\cc\cc\Mon_*^\ot}} {\L\Q\Enr^{\ot}_*} $$ be the full symmetric monoidal subcategory spanned by the small left enriched $\infty$-categories equipped with a set of small left enriched diagrams.

%\Pr\Mon_*^\otimes\times_{(\widehat{\Mon} \times \widehat{\Mon})_*^\otimes}\P\B\widehat{\Enr}_\ast^\ot $$ be the full symmetric monoidal subcategories of bienriched $\infty$-categories with diagrams.
	
\end{notation}

The functor (\ref{chor}) for $\kappa$ the strongly inaccessible cardinal corresponding to the small universe restricts to a map
\begin{equation}\label{chora}{\L\Enr_*^{\otimes}} \to {\Pr\Mon_\ast^\ot  \times_{\Pr\Mon^\ot} {\L\Enr^{\otimes}}} \end{equation}
of cocartesian fibrations over ${\L\Enr^{\otimes}}$ and so a symmetric monoidal functor.

\begin{proposition}\label{prpp}
\begin{enumerate}
\item Let $\kappa$ be a small regular cardinal.
The symmetric monoidal functor 
$${^\kappa\L\Enr_*^{\otimes}} \to (\Mon_\kappa)_\ast^\ot$$
is a cocartesian fibration and the symmetric monoidal functor (\ref{chor}) is a map of cocartesian fibrations over $(\Mon_\kappa)_\ast^\ot.$

\item The symmetric monoidal functor 
$${\L\Enr_*^{\otimes}} \to \Pr\Mon_\ast^\ot $$
is a cocartesian fibration and the symmetric monoidal functor (\ref{chora}) is a map of cocartesian fibrations over $\Pr\Mon_\ast^\ot .$

\end{enumerate}	
	
\end{proposition}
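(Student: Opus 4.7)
The strategy is to combine the cocartesian fibration structure on ${^\kappa\L\Enr^\ot} \to \Mon_\kappa^\ot$ of Proposition \ref{eras} with the tautological cocartesian fibration structure on ${^\kappa\L\Enr_*^\ot} \to {^\kappa\L\Enr^\ot}$ arising from the Grothendieck construction of Notation \ref{aloiso}, then verify that the combined lifts are cocartesian over $(\Mon_\kappa)_*^\ot$. First I would note that the composite ${^\kappa\L\Enr_*^\ot} \to {^\kappa\L\Enr^\ot} \to \Mon_\kappa^\ot$ is a cocartesian fibration (the first factor by construction, the second by Proposition \ref{eras}), that $(\Mon_\kappa)_*^\ot \to \Mon_\kappa^\ot$ is a cocartesian fibration by construction, and that the map (\ref{chor}) is already a map of cocartesian fibrations over ${^\kappa\L\Enr^\ot}$, being classified by the symmetric monoidal natural transformation $\L\D \to \omega \circ \nu$ of Remark \ref{opm} (3).

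Next I would construct the cocartesian lifts explicitly. Given a multi-morphism $(\alpha, \iota) : (\mV_1, \mH_1), \ldots, (\mV_\n, \mH_\n) \to (\mV', \mH')$ in $(\Mon_\kappa)_*^\ot$ — corresponding to a monoidal functor $\alpha: \mV_1^\ot \times_\Ass \ldots \times_\Ass \mV_\n^\ot \to \mV'^\ot$ preserving $\kappa$-small colimits component-wise together with the inclusion $\iota: \alpha_!(\mH_1 \boxtimes \ldots \boxtimes \mH_\n) \subset \mH'$ — and an object $((\mV_\bullet, \mH_\bullet), (\mM_\bullet, \Lambda_\bullet))$ lying over the source, I would take the cocartesian lift $\phi: \mM_1^\circledast \times_{\Ass} \ldots \times_{\Ass} \mM_\n^\circledast \to \mM'^\circledast$ of $\alpha$ in ${^\kappa\L\Enr^\ot}$ provided by Proposition \ref{eras}, and equip $\mM'$ with the set $\Lambda'$ of $\mH'$-weighted diagrams obtained as the image of $\Lambda_1 \boxtimes \ldots \boxtimes \Lambda_\n$ under $\phi$. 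That $\Lambda'$ consists of $\mH'$-weighted diagrams follows from $\iota$ together with compatibility of $\boxtimes$ with the lax symmetric monoidal structures on $\L\D$ and $\omega$ recorded in Remark \ref{opm}.

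I would then verify that the morphism constructed is cocartesian over $(\Mon_\kappa)_*^\ot$ by the same mapping-space pullback argument as in Lemma \ref{lllee}, exploiting the fact that the fibers of $(\Mon_\kappa)_*^\ot \to \Mon_\kappa^\ot$ are posets: for any target $((\mW_\bullet, \Xi_\bullet), (\mN_\bullet, \Theta_\bullet))$, the space of morphisms in ${^\kappa\L\Enr_*^\ot}$ out of $((\mV_\bullet, \mH_\bullet), (\mM_\bullet, \Lambda_\bullet))$ lying over a composite $(\beta, \kappa) \circ (\alpha, \iota)$ is the fiber product of the space of extensions to a morphism out of $(\mV', \mH', \mM', \Lambda')$ with the (discrete) condition on weight inclusions, which is precisely the universal property of a cocartesian lift. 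By construction the image under (\ref{chor}) of the cocartesian lift I built is the cocartesian lift in the pullback $(\Mon_\kappa)_*^\ot \times_{\Mon_\kappa^\ot} {^\kappa\L\Enr^\ot}$, so (\ref{chor}) is a map of cocartesian fibrations over $(\Mon_\kappa)_*^\ot$, giving (1).

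For (2), I would deduce the claim from (1) applied to $\kappa = \sigma$, the strongly inaccessible cardinal cutting out the small universe. The point is that $\Pr\Mon^\ot \subset \cc\cc\Mon^\ot$ and $\L\Enr^\ot \subset \Pr\Mon^\ot \times_{\cc\cc\Mon^\ot}\L\Q\Enr^\ot$ are full symmetric monoidal subcategories by \cite[Proposition 4.8.1.15.]{lurie.higheralgebra} and Proposition \ref{rembrako}, and the cocartesian lifts constructed above preserve these subcategories: presentability is stable under the relative tensor product, and smallness of the enriched $\infty$-category equipped with its set of diagrams is preserved because $\mM_1 \boxtimes \ldots \boxtimes \mM_\n$ and $\Lambda_1 \boxtimes \ldots \boxtimes \Lambda_\n$ are small whenever the inputs are. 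The main obstacle is the verification in the third paragraph that the lift is cocartesian over $(\Mon_\kappa)_*^\ot$ rather than merely over $\Mon_\kappa^\ot$; once the compatibility of the inclusion $\iota$ with the image of $\Lambda_1 \boxtimes \ldots \boxtimes \Lambda_\n$ under $\phi$ is unwound via Remark \ref{opm}, the remainder reduces to the argument already carried out in the proof of Lemma \ref{lllee}.
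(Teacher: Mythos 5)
There is a genuine gap in the passage from your explicitly constructed lifts to the conclusion that ${^\kappa\L\Enr_*^{\otimes}} \to (\Mon_\kappa)_\ast^\ot$ is a cocartesian fibration. You only construct and verify cocartesian lifts of morphisms of $(\Mon_\kappa)_\ast^\ot$ lying over active maps $\langle \n\rangle \to \langle 1\rangle$ (multi-morphisms into a single colour); for these your verification is indeed essentially tautological, because you define $\Lambda'$ as the image of $\Lambda_1 \boxtimes \cdots \boxtimes \Lambda_\n$ under the lift $\phi$. But a cocartesian fibration requires lifts of \emph{all} morphisms of the base, in particular of morphisms landing in the fibre over $\langle \m\rangle$ for $\m \geq 2$ (equivalently, by the criterion the paper invokes from \cite[Lemma 2.44.]{heine2023monadicity}, one must show that the cocartesian morphisms of the underlying cocartesian fibration ${^\kappa\L\Enr_*} \to (\Mon_\kappa)_*$ are stable under the tensor product). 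The natural candidate lift there is the componentwise one, say $(\mM,\Lambda)\otimes(\mM'',\Lambda'') \to (\alpha_!\mM, \alpha_!\Lambda)\otimes(\mM'',\Lambda'')$, and checking its universal property against single-colour targets forces a comparison between $\alpha_!\Lambda \boxtimes \Lambda''$ (the diagram set the lax monoidal structure of $\L\D$ attaches to the tensor product of the pushforward with $(\mM'',\Lambda'')$) and the transport of $\Lambda \boxtimes \Lambda''$. These do not agree formally: $\alpha_!\Lambda \boxtimes \Lambda''$ contains images of $\Lambda''$ under all functors $\{\X'\}\times \mM''$ for arbitrary objects $\X'$ of $\alpha_!\mM$, while the transport only produces those anchored at objects in the image of $\mM \to \alpha_!\mM$. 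The identification therefore requires the essential surjectivity of cocartesian morphisms of ${^\kappa\L\Enr}$, i.e. Proposition \ref{bica}, which is exactly the key input of the paper's proof and which your argument never uses. Your closing remark that after unwinding $\iota$ "the remainder reduces to the argument already carried out in the proof of Lemma \ref{lllee}" is precisely where the proof breaks: Lemma \ref{lllee} is the underlying, non-operadic statement, and the operadic upgrade is not a formality but carries this new content.

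A secondary, smaller issue concerns (2): the paper proves it by running the same argument (``the proof of (2) is similar''), and if one instead wants to deduce it from (1) at $\kappa=\sigma$, the point to check is not that ``presentability is stable under the relative tensor product'' (the enriched $\infty$-categories involved are small), but that the cocartesian pushforward of a small left \emph{enriched} $\infty$-category along a left adjoint monoidal functor between presentably monoidal $\infty$-categories is again left enriched, and that the transported diagrams remain small left enriched diagrams, so that the cocartesian lifts of ${\L\Q\Enr_*^\ot} \to \cc\cc\Mon_*^\ot$ restrict to the full suboperads $\L\Enr_*^\ot$ and $\Pr\Mon_*^\ot$. As written, the justification you give for this restriction is misdirected, though the statement itself is the right one to verify.
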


\begin{proof}We prove (1). The proof of (2) is similar.
By Proposition \ref{eras} the map of symmetric $\infty$-operads $\psi: {^\kappa\L\Enr^{\otimes}} \to \Mon_\kappa^\ot$ is a cocartesian fibration.
Let $\psi'$ be the underlying functor.		
We start with proving that the functor ${^\kappa\L\Enr_*} \to (\Mon_\kappa)_\ast$ is a cocartesian fibration.
Let $(\mM^\circledast \to \mV^\ot, \Lambda) \in {^\kappa\L\Enr}_\ast$
and $\alpha: (\mV^\ot, \mH) \to (\mV'^\ot,\mH')$
a morphism in $(\Mon_\kappa)_*$, where $\mH$ is the set of underlying weights of $\Lambda.$
The $\psi' $-cocartesian lift $\mM^\circledast \to \mM'^\circledast:=\alpha_!(\mM)^\circledast $ of $\alpha$ sends the set $\Lambda$ of $\mH$-weighted diagrams to a set $\Lambda'$ of $\mH'$-weighted diagrams. 
By the choice of $\Lambda'$ the morphism 
$(\mM^\circledast \to \mV^\ot, \Lambda) \to (\mM'^\circledast \to \mV'^\ot, \Lambda') $ in ${^\kappa\L\Enr_*} $
lies over $\alpha: (\mV^\ot, \mH) \to (\mV'^\ot,\mH')$ in $(\Mon_\kappa)_\ast$ and induces for every $(\mN^\circledast \to \mV''^\ot, \Lambda'') \in {^\kappa\L\Enr_*}$
lying over $(\mV''^\ot \to \Ass, \mH'') \in (\Mon_\kappa)_\ast$ a pullback square
\begin{equation*}
\begin{xy}
\xymatrix{
{^\kappa\L\Enr_*}((\mM', \Lambda'),(\mN, \Lambda'')) \ar[d] \ar[r]
&{^\kappa\L\Enr_*}((\mM, \Lambda),(\mN, \Lambda'')) \ar[d]
\\ 
(\Mon_\kappa)_\ast((\mV', \mH'),(\mV'',\mH'')) \ar[r] &(\Mon_\kappa)_\ast((\mV, \mH),(\mV'',\mH'')). 
}
\end{xy} 
\end{equation*} 	
	
Hence the functor $\phi: {^\kappa\L\Enr_*} \to (\Mon_\kappa)_\ast$ is a cocartesian fibration. To prove the result, by \cite[Lemma 2.44.]{heine2023monadicity} it is enough to see that the collection of $\phi$-cocartesian morphisms is stable under the tensor product on ${^\kappa\L\Enr_*}, $ i.e. that for any $(\mM''^\circledast \to \mV''^\ot, \Lambda'') \in {^\kappa\L\Enr_*}$ and any morphism 
$(\mM^\circledast \to \mV^\ot, \Lambda) \to (\mM'^\circledast \to \mV'^\ot, \Lambda')$ in $^\kappa\L\Enr_*$ the tensor product is $\phi$-cocartesian: $$((\mM \otimes \mM'')^\circledast, \Lambda \boxtimes \Lambda'') \to ((\mM' \otimes \mM'')^\circledast, \Lambda' \boxtimes \Lambda'').$$
The image of the latter morphism in $^\kappa\L\Enr $ is $\psi$-cocartesian since $\psi$ is a cocartesian fibration.
Consequently, by the description of $\phi$-cocartesian morphisms, it is enough to show that every diagram of $\Lambda'\boxtimes \Lambda''$ is transported from $\Lambda\boxtimes \Lambda''$. This follows immediately from the definition
of $\Lambda'\boxtimes \Lambda''$ and that $\Lambda'$ is transported from  $\Lambda$ by the $\psi'$-cocartesian morphism $ \mM^\circledast \to \mM'^\circledast$ that induces an essentially surjective functor $\mM \to \mM'$ by Proposition \ref{bica}.

\end{proof}

Proposition \ref{prpp} implies the following corollary:
\begin{corollary}\label{notay}
Let $\mO^\ot \to \Comm$ be a symmetric $\infty$-operad and $\kappa$ a small regular cardinal.

\begin{enumerate}
%\item For every $\mO$-algebras $(\mV, \mH): \mO^\ot \to {_*(\Mon)^\ot}$ and $\mW: \mO^\ot \to \Op_\infty^\times $ the $\mO$-operad $$_*{_\mV\ell\L\P\Enr}_{\mW}(\mH)^\ot:= \mO^\ot \times_{_*(\Mon)^\ot \times_{\Comm} \Op_{\infty}^\times} {_*\ell\L\P\Enr^\ot} \to \mO^\ot$$is corepresentable.
%\item For every $\mO$-algebras $\mV: \mO^\ot \to \Op_\infty^\times$ and $ (\mW,\mH): \mO^\ot \to (\Mon)_*^\ot$ the $\mO$-operad $${_\mV\ell\R\P\Enr}_{\mW}(\mH)_*^\ot := \mO^\ot \times_{\Op_\infty^\times \times_{\Comm} (\Mon)_*^\otimes} \ell\R\P\Enr_\ast^\ot \to \mO^\ot$$is corepresentable.
%\item For every $\mO$-algebra $(\mV, \mW, \mH): \mO^\ot \to (\Mon \times \Mon)_*^\ot$ the $\mO$-operad $$_*{_\mV\ell\B\P\Enr}_{\mW}(\mH)_*^\ot:= \mO^\ot \times_{_*(\Mon \times \Mon)_*^\otimes} {_*\ell\B\P\Enr_\ast^\ot} \to \mO^\ot$$is corepresentable.
		
%\item For any $\mO$-algebra $(\mV, \mW, \mH): \mO^\ot \to (\Mon \times \Mon)_*^\otimes$ the $\mO$-operad $${_\mV\B\P\Enr}_{\mW}(\mH)_*^\ot \to \mO^\ot$$is an $\mO$-monoidal $\infty$-category.

\item For every $\mO$-algebra $(\mV, \mH): \mO^\ot \to (\Mon_\kappa)_*^\otimes$ the $\mO$-operad $${^\kappa_\mV\L\Enr}(\mH)_*^\ot := \mO^\ot \times_{(\Mon_\kappa)_*^\otimes} {^\kappa\L\Enr_\ast^\ot}\to \mO^\ot$$ is an $\mO$-monoidal $\infty$-category.

\item For every $\mO$-algebra $(\mV, \mH): \mO^\ot \to \Pr\Mon_*^\otimes$ the $\mO$-operad $${_\mV\L\Enr}(\mH)_*^\ot:= \mO^\ot \times_{{\Pr\Mon_*^\otimes}} {\L\Enr_\ast^\ot} \to \mO^\ot$$
is an $\mO$-monoidal $\infty$-category.

\end{enumerate}	

\end{corollary}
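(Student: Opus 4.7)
The plan is to derive both statements directly from Proposition~\ref{prpp} by pulling back along the $\mO$-algebra, using the fact that cocartesian fibrations of $\infty$-operads are stable under pullback along maps of $\infty$-operads.

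For (1), I would first recall that by Proposition~\ref{prpp} the symmetric monoidal functor $p: {^\kappa\L\Enr_\ast^{\otimes}} \to (\Mon_\kappa)_\ast^\ot$ is a cocartesian fibration whose cocartesian lifts are stable under the tensor product on the source. Equivalently, writing $(\Mon_\kappa)_\ast^\ot \to \Comm$ for the structure of a symmetric monoidal $\infty$-category and composing, the map ${^\kappa\L\Enr_\ast^{\otimes}} \to \Comm$ is a symmetric monoidal $\infty$-category and $p$ is a map of cocartesian fibrations over $\Comm$. Now an $\mO$-algebra $(\mV,\mH):\mO^\ot \to (\Mon_\kappa)_\ast^\ot$ is by definition a map of symmetric $\infty$-operads over $\Comm$. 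Forming the pullback
\[
{^\kappa_\mV\L\Enr}(\mH)_\ast^\ot \;=\; \mO^\ot \times_{(\Mon_\kappa)_\ast^\ot} {^\kappa\L\Enr_\ast^{\otimes}} \;\longrightarrow\; \mO^\ot,
\]
we obtain a cocartesian fibration, since cocartesian fibrations are stable under pullback.

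It remains to verify that this cocartesian fibration is a cocartesian fibration of $\infty$-operads, i.e.\ that the composition ${^\kappa_\mV\L\Enr}(\mH)_\ast^\ot \to \mO^\ot \to \Comm$ exhibits the source as a symmetric $\infty$-operad and that $p$ pulled back preserves cocartesian lifts of inert morphisms. This in turn reduces, via the Segal condition at each $\langle \n\rangle \in \Comm$, to two inputs: first, $\mO^\ot \to \Comm$ is itself a symmetric $\infty$-operad and $(\mV,\mH)$ is a map of $\infty$-operads, so inert cocartesian lifts from $\Comm$ pull back to inert cocartesian lifts of $\mO^\ot$; second, $p$ is a symmetric monoidal functor, hence preserves cocartesian lifts of inert morphisms, so the Segal fibre decomposition over $\langle\n\rangle$ is preserved under pullback. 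Combining these, the fibres of ${^\kappa_\mV\L\Enr}(\mH)_\ast^\ot \to \mO^\ot$ over an object of $\mO^\ot_{\langle\n\rangle}$ factorise as products of fibres over its inert images in $\mO^\ot_{\langle 1\rangle}$, which is exactly the condition for an $\mO$-monoidal $\infty$-category.

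Statement (2) follows by the identical argument, replacing $p$ by the cocartesian fibration $q:\L\Enr_\ast^\ot \to \Pr\Mon_\ast^\ot$ provided by Proposition~\ref{prpp}(2) and the $\mO$-algebra $(\mV,\mH):\mO^\ot \to \Pr\Mon_\ast^\ot$. No further input is needed. The only point requiring care, and not really an obstacle, is the bookkeeping in the second paragraph to confirm that pullback along a map of $\infty$-operads of a cocartesian fibration of symmetric monoidal $\infty$-categories yields a cocartesian fibration of $\infty$-operads; this is a standard manipulation, formally the dual of \cite[Lemma 2.44.]{heine2023monadicity} applied fibrewise along the inert diagrams.
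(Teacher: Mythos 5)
Your proposal is correct and follows essentially the same route as the paper, which deduces the corollary directly from Proposition~\ref{prpp}: since ${^\kappa\L\Enr_\ast^{\otimes}} \to (\Mon_\kappa)_\ast^\ot$ (resp. $\L\Enr_\ast^\ot \to \Pr\Mon_\ast^\ot$) is a cocartesian fibration of symmetric monoidal $\infty$-categories, its pullback along the map of $\infty$-operads $(\mV,\mH):\mO^\ot \to (\Mon_\kappa)_\ast^\ot$ is a cocartesian fibration of $\infty$-operads over $\mO^\ot$, i.e.\ an $\mO$-monoidal $\infty$-category. Your verification of the Segal condition via preservation of inert cocartesian lifts is exactly the standard bookkeeping the paper leaves implicit.
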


\begin{notation}Let $\kappa$ be a small regular cardinal.
Let $${^\kappa\L\Enr_\wc^{\otimes}} \subset {(\Mon_\kappa)_*^\otimes} \times_{ {\Mon_\kappa^\ot}} {^\kappa\L\Enr^{\otimes}}$$
be the symmetric suboperad whose colors are pairs $(\mM^\circledast \to \mV^\ot,\mH)$ such that $\mM^\circledast \to \mV^\ot$ admits $\mH$-weighted colimits and whose multi-morphisms $$(\mM_1^\circledast \to \mV_1^\ot,\mH_1),...,(\mM_\n^\circledast \to \mV_\n^\ot,\mH_\n) \to (\mN^\circledast \to \mV^\ot,\mH)$$ correspond to
an enriched functor $ \mM_1^\circledast \times_{\Ass} ... \times_{\Ass} \mM_\n^\circledast \to \mN^\circledast$ that sends $\mH_1 \boxtimes... \boxtimes \mH_\n$-weighted colimits
to $\mH$-weighted colimits.

\end{notation}

\begin{notation}Let $\sigma$ be the strongly inaccessible cardinal corresponding to the small universe. 
Let $$ \L\Q\Enr_{\mathrm{w}}^\ot:= {^\sigma\widehat{\L\Enr}_\wc^{\ot}} \to {(\widehat{\Mon}_\sigma)_*^\ot}.$$
\end{notation}

\begin{notation}Let $\sigma$ be the strongly inaccessible cardinal corresponding to the small universe. 
Let $$\L\Enr_{\mathrm{w}}^\ot \subset \L\Q\Enr_{\mathrm{w}}^\ot$$
be the full symmetric suboperad whose colors are the pairs $(\mM^\circledast \to \mV^\ot,\mH)$ such that $\mV^\ot \to \Ass$ is a presentably monoidal $\infty$-category, $\mH$ is a set of small left enriched weights over $\mV$ and $\mM^\circledast \to \mV^\ot $ is a small left enriched $\infty$-category that admits $\mH$-weighted colimits.

\end{notation}

Lemma \ref{eqas} implies the following:

\begin{corollary}
	
\begin{enumerate}
\item Let $\kappa $ be a small regular cardinal.
The inclusion $${^\kappa\L\Enr_\wc^{\otimes}} \subset {(\Mon_\kappa)_*^\otimes} \times_{ {\Mon_\kappa^\ot}} {^\kappa\L\Enr^{\otimes}}$$
lifts to an embedding ${^\kappa\L\Enr_\wc^{\otimes}} \to {^\kappa\L\Enr_*^{\otimes}}.$
\item The inclusion $${\L\Enr_\wc^{ \otimes}} \subset {\Pr\Mon_*^\otimes}\times_{ {\Pr\Mon^\ot}} {\L\Enr^{\otimes}}$$
lifts to an embedding ${\L\Enr_\wc^{\otimes}} \to {\L\Enr_*^{\otimes}}.$
\end{enumerate}	

\end{corollary}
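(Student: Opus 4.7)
The plan is to apply Lemma \ref{eqas} fiberwise with respect to the cocartesian fibration ${^\kappa\L\Enr_*^\otimes} \to (\Mon_\kappa)_*^\otimes$ provided by Proposition \ref{prpp}. Over a single color $((\mV_\bi, \mH_\bi))_\bi$ of $(\Mon_\kappa)_*^\otimes$ the fibers of $^\kappa\L\Enr_\wc^\otimes$ and $^\kappa\L\Enr_*^\otimes$ are products of the respective fibers over each $(\mV_\bi, \mH_\bi)$, and for each such factor Lemma \ref{eqas} identifies ${^\kappa_{\mV_\bi}\L\Enr}(\mH_\bi)_\wc$ with the full subcategory of ${^\kappa_{\mV_\bi}\L\Enr}(\mH_\bi)_*$ consisting of pairs $(\mM, \Lambda^{(\mM, \mH_\bi)})$ where $\Lambda^{(\mM, \mH_\bi)}$ is the set of all $\rH$-weighted colimit diagrams for $\rH \in \mH_\bi$ (see Notation \ref{Lamb}). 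So on colors the desired embedding is forced.

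Next I would verify that this fiberwise embedding extends to an embedding of symmetric $\infty$-operads over $(\Mon_\kappa)_*^\otimes$, i.e.\ that the multi-morphism spaces agree. By the description of the cocartesian fibration in Proposition \ref{prpp} and the definition of the symmetric monoidal structure on $^\kappa\L\Enr_*^\otimes$ via the $\boxtimes$-construction on sets of diagrams, a multi-morphism in $^\kappa\L\Enr_*^\otimes$ from $(\mM_\bi, \Lambda^{(\mM_\bi, \mH_\bi)})_{\bi=1}^\n$ to $(\mN, \Lambda^{(\mN, \mH)})$ is an enriched functor $\F\colon \mM_1 \times_\Ass \cdots \times_\Ass \mM_\n \to \mN$ lying over the multi-morphism of $\mV_\bi$'s underlying the given multi-morphism of $(\Mon_\kappa)_*^\otimes$, and sending every diagram in $\Lambda^{(\mM_1, \mH_1)} \boxtimes \cdots \boxtimes \Lambda^{(\mM_\n, \mH_\n)}$ to a diagram in $\Lambda^{(\mN, \mH)}$. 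By unwinding the $\boxtimes$ on diagrams, a diagram in $\Lambda^{(\mM_1, \mH_1)} \boxtimes \cdots \boxtimes \Lambda^{(\mM_\n, \mH_\n)}$ is precisely obtained by inserting a $\rH$-weighted colimit diagram in $\mM_\bi$ (for some $\rH \in \mH_\bi$) into the $\bi$-th slot of the product while fixing arbitrary objects $\X_j \in \mM_j$ in the remaining slots; its image under the slot inclusion is a weighted colimit diagram for the transported weight in $\underline{\mH}_\bi \subset \mH_1 \boxtimes \cdots \boxtimes \mH_\n$, because weighted colimits along a slot inclusion are computed in that single factor. Consequently the preservation condition in $^\kappa\L\Enr_*^\otimes$ matches exactly the defining condition of a multi-morphism in $^\kappa\L\Enr_\wc^\otimes$.

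The main obstacle I anticipate is this last verification: one needs to confirm that for any $\F\colon \prod \mM_\bi \to \mN$ lying over admissible monoidal functors, $\F$ preserves $\mH_1 \boxtimes \cdots \boxtimes \mH_\n$-weighted colimits of enriched functors $\mJ^\circledast \to (\prod \mM_\bi)^\circledast$ landing in a single slot if and only if for every $\bi$ and every choice of $\X_j \in \mM_j$ for $j \neq \bi$ the induced enriched functor $\mM_\bi \to \mN$ preserves $\mH_\bi$-weighted colimits. This follows from the slotwise description of multi-morphism spaces in a product of weakly enriched $\infty$-categories, together with Corollary \ref{eccco} on stability of weighted colimits under pullback of enrichment. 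Part (2) then follows formally from (1) by restricting to the full symmetric monoidal subcategories of presentably monoidal $\infty$-categories and small left enriched $\infty$-categories carved out in the definitions of $\Pr\Mon_*^\otimes$ and $\L\Enr_*^\otimes$.
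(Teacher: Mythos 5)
Your overall route coincides with the paper's: the paper deduces the corollary directly from Lemma \ref{eqas}, identifying an object $(\mM,\mH)$ with the pair $(\mM,\Lambda^{(\mM,\mH)})$ of Notation \ref{Lamb} and matching multi-morphism spaces. The gap is in your justification of that matching. You claim that the transport of an $\rH$-weighted colimit diagram along a slot inclusion $\{\X_1\}\times\cdots\times\mM_\bi\times\cdots\times\{\X_\n\}\to\prod_\bj\mM_\bj$ is again a weighted colimit diagram for the transported weight in $\underline{\mH}_\bi$, ``because weighted colimits along a slot inclusion are computed in that single factor''. This is false in general. Weighted colimits in $\prod_\bj\mM_\bj$ are computed component-wise, and in the passive slots the relevant component is the colimit of the constant diagram at $\X_\bj$ weighted by the weight pushed forward along the unit map $\{\tu\}\to\mV_\bj$, which need not be $\X_\bj$. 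Concretely, take $\n=2$, $\mJ=\ast_{\mV'}$ and $\rH=\y(\tu\otimes\ast)\sqcup\y(\tu\otimes\ast)\in\mP\B\Env(\mJ)$: the $\rH$-weighted colimit of an object $\X\in\mM_1$ is the conical coproduct $\X\sqcup\X$, but the transported cocone with vertex $(\X\sqcup\X,\X_2)$ is not a colimit cocone in $\mM_1^\circledast\times_\Ass\mM_2^\circledast$, since the colimit condition in the second factor would force the diagonal $\Mul_{\mM_2}(\vec{\V}_2,\X_2;\Z_2)\to\Mul_{\mM_2}(\vec{\V}_2,\X_2;\Z_2)^{\times 2}$ to be an equivalence; the actual weighted colimit of the transported diagram is $(\X\sqcup\X,\X_2\sqcup\X_2)$. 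Consequently your identification of ``sends $\mH_1\boxtimes\cdots\boxtimes\mH_\n$-weighted colimits to $\mH$-weighted colimits'' with ``transports $\Lambda^{(\mM_1,\mH_1)}\boxtimes\cdots\boxtimes\Lambda^{(\mM_\n,\mH_\n)}$ into $\Lambda^{(\mN,\mH)}$'' does not go through as argued: a slot-wise transported diagram need not be a colimit diagram in the product, and a colimit diagram in the product for a weight in $\underline{\mH}_\bi$ need not be constant in the other slots, so neither condition subsumes the other by your slot-wise reasoning, and Corollary \ref{eccco} does not repair this.

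The comparison the corollary actually needs is purely definitional, exactly as in the proof of Lemma \ref{eqas}: one reads the multi-morphism condition defining ${^\kappa\L\Enr_\wc^{\ot}}$ diagram-wise, i.e.\ as the requirement that the enriched functor $\mM_1^\circledast\times_\Ass\cdots\times_\Ass\mM_\n^\circledast\to\mN^\circledast$ send each transport (in the sense of Notation \ref{trans}) of an $\mH_\bi$-weighted colimit diagram along a slot inclusion to a weighted colimit diagram in $\mN$; this is verbatim the condition that it define a multi-morphism in ${^\kappa\L\Enr_*^{\ot}}$ between the objects equipped with the sets $\Lambda^{(\mM_\bi,\mH_\bi)}$ and $\Lambda^{(\mN,\mH)}$. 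On colors one argues essential surjectivity as in Lemma \ref{eqas}, and full faithfulness is then automatic because both operads map to the fiber product ${(\Mon_\kappa)_*^{\ot}}\times_{\Mon_\kappa^\ot}{^\kappa\L\Enr^{\ot}}$ by monomorphisms on multi-morphism spaces. If instead you insist on the literal ``joint'' reading of preservation of $\mH_1\boxtimes\cdots\boxtimes\mH_\n$-weighted colimits of arbitrary enriched functors $\mJ^\circledast\to(\prod_\bj\mM_\bj)^\circledast$ lying over the slot operad maps, then the agreement of the two multi-morphism conditions becomes a genuine assertion that your argument does not establish.
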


%By Proposition \ref{bica} the functor \begin{equation}\label{cart0}(\Op_\infty \times \Op_\infty)_* \times_{(\Op_\infty\times \Op_\infty)} \omega\B\Enr \to {(\Op_\infty \times \Op_\infty)_*}\end{equation} is a cartesian fibration that restricts to a cartesian fibration
%\begin{equation}\label{cart1}
%_*(\Mon) \times_{\Mon}\ell\L\P\Enr \to {_*(\Mon) \times \Op_\infty},\end{equation}
%\begin{equation}\label{cart2} \ell\R\P\Enr \times_{\Mon}(\Mon)_* \to \Op_\infty \times (\Mon)_*,\end{equation}
%\begin{equation}\label{cart3}_*(\Mon \times \Mon)_* \times_{(\Mon \times \Mon)}\ell\B\P\Enr \to {_*(\Mon \times \Mon)_*},\end{equation}
%\begin{equation}\label{cart4}(\Mon_\kappa)_* \times_{\Mon_\kappa} {^\kappa\L\Enr} \to (\Mon_\kappa)_* \times \Op_\infty,\end{equation}
%\begin{equation}\label{cart5} \R\Enr^{\tau}\times_{\Mon_\tau}(\Mon_\tau)_*   \to \Op_\infty \times (\Mon_\tau)_*,\end{equation}
%\begin{equation}\label{cart6} (\Mon_\kappa \times \Mon_\tau)_* \times_{(\Mon_\kappa \times \Mon_\tau)} {^\kappa\B\Enr}^{\tau} \to {(\Mon_\kappa \times \Mon_\tau)_*}.\end{equation} 

%Corollary \ref{eccco} implies the following:

Corollary \ref{ujpp} and Corollary \ref{eccco} imply the following:

\begin{corollary}\label{coarst} Let $\kappa$ be a small regular cardinal.
The cartesian fibration of Corollary \ref{ujpp} restricts to the following cartesian fibrations with the same cartesian morphisms:
$$^\kappa\L\Enr_{\wc} \to {(\Mon_\kappa)_*}, \
\L\Enr_{\wc} \to {\Pr\Mon_*}.$$
\end{corollary}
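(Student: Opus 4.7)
The plan is to show that, starting from the cartesian fibration $\omega\B\Enr_{\wc} \to (\Op_\infty \times \Op_\infty)_*$ of Corollary \ref{ujpp}, both claimed functors arise by restricting to a full subcategory on the source and the target and that the cartesian lifts of the original fibration stay within the restricted source. Since the forgetful functor in Corollary \ref{ujpp} sends cartesian lifts to cartesian lifts in $\Op_\infty \times \Op_\infty$ — i.e.\ they are computed by pullback of enrichment as in Proposition \ref{bica} — this reduces the corollary to two assertions: first, that pullback along a morphism in $(\Mon_\kappa)_*$ (resp.\ $\Pr\Mon_*$) preserves left $\kappa$-enrichment (resp.\ small left enrichment); second, that pullback preserves the existence of the prescribed weighted colimits.

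For the first fibration let $\alpha \colon (\mV'^\ot,\mH') \to (\mV^\ot,\mH)$ be a morphism in $(\Mon_\kappa)_*$, so that $\alpha$ is a monoidal functor preserving $\kappa$-small colimits with $\alpha_!(\mH') \subset \mH$ in the sense of Notation~\ref{zzzzo}, and let $(\mM^\circledast \to \mV^\ot,\mH) \in {^\kappa\L\Enr_\wc}$. The cartesian lift is the pullback $\alpha^*\mM^\circledast := \mV'^\ot \times_{\mV^\ot} \mM^\circledast$, and I would check that $\alpha^*\mM$ is left $\kappa$-enriched in $\mV'$ as follows: for $\X,\Y \in \mM$, the presheaf $\Mul_{\alpha^*\mM}(-,\X;\Y)$ on $\mV'$ factors as $\alpha^\op$ followed by $\Mul_\mM(-,\X;\Y)$ on $\mV$. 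The latter preserves $\kappa$-small limits by left $\kappa$-enrichment of $\mM$, and $\alpha^\op$ preserves $\kappa$-small limits because $\alpha$ preserves $\kappa$-small colimits; the composition therefore preserves $\kappa$-small limits, as required. That $\alpha^*\mM^\circledast$ admits $\mH'$-weighted colimits is then an immediate consequence of Corollary \ref{eccco}: for any weight $\rH' \in \mH'$, say represented by $(\gamma \colon \mV''^\ot \to \mV'^\ot, \mJ^\circledast \to \mV''^\ot, \rH)$, its image $\alpha_!(\rH') = (\alpha\circ\gamma,\mJ^\circledast,\rH)$ lies in $\mH$, so $\mM$ admits the corresponding weighted colimit, and Corollary \ref{eccco} produces the $\rH'$-weighted colimit in $\alpha^*\mM$ and identifies the projection $\alpha^*\mM^\circledast \to \mM^\circledast$ as preserving it, which matches the condition defining morphisms in $\omega\B\Enr_\wc$.

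For the second fibration I would argue analogously, with the only non-routine point being the preservation of left enrichment under pullback along a left adjoint monoidal functor $\alpha \colon \mV'^\ot \to \mV^\ot$ between presentably monoidal $\infty$-categories. If $\gamma$ denotes its right adjoint, then for $\X,\Y \in \mM$ and $\V'_1,\ldots,\V'_\n \in \mV'$ the identification
\[
\Mul_{\alpha^*\mM}(\V'_1,\ldots,\V'_\n,\X;\Y) \simeq \mV(\alpha(\V'_1 \ot \cdots \ot \V'_\n),\L\Mor_\mM(\X,\Y)) \simeq \mV'(\V'_1 \ot \cdots \ot \V'_\n,\gamma(\L\Mor_\mM(\X,\Y)))
\]
— using monoidality of $\alpha$ and the adjunction $\alpha \dashv \gamma$ — exhibits $\gamma(\L\Mor_\mM(\X,\Y)) \in \mV'$ as a left morphism object in $\alpha^*\mM$. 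Smallness is clear since $\alpha^*\mM$ has the same underlying $\infty$-category as $\mM$ and its multi-morphism spaces are pulled back. The existence and preservation of $\mH'$-weighted colimits again follows from Corollary \ref{eccco} exactly as in the first case.

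The main technical obstacle is the bookkeeping around how weights, enrichments, and the two notions of morphism in the base all match up under pullback; the substantive inputs — namely, Corollary \ref{ujpp} for the ambient cartesian fibration and Corollary \ref{eccco} for stability of weighted colimits under pullback of enrichment — are already at hand, so the proof reduces to verifying that the two source subcategories $^\kappa\L\Enr_\wc$ and $\L\Enr_\wc$ are closed under those cartesian lifts, which is exactly what the paragraphs above accomplish.
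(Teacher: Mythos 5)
Your proposal is correct and follows essentially the same route as the paper, which derives the corollary from exactly the two inputs you use: Corollary \ref{ujpp} for the ambient cartesian fibration (with cartesian lifts given by pullback of enrichment) and Corollary \ref{eccco} for the stability of the prescribed weighted colimits under pullback and their preservation by the projection, plus the routine check that pullback preserves ($\kappa$-)enrichment. The only detail you leave implicit is that the pullback $\alpha^*\mM$ is again left \emph{pseudo}-enriched (not just that $\Mul_{\alpha^*\mM}(-,\X;\Y)$ preserves $\kappa$-small limits), which is immediate from monoidality of $\alpha$ together with pseudo-enrichment of $\mM$.
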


\begin{theorem}\label{thqaz} \label{thqazz}\label{Thor} Let $\mO^\ot \to \Comm$ be a symmetric $\infty$-operad and $\kappa$ a small regular cardinal.
	
\begin{enumerate}
\item %\begin{enumerate}
%\item Let $(\mV,\mH): \mO^\ot \to {_*(\Op_\infty^\tu)^\ot}, \mW: \mO^\ot \to \Op_\infty^\times $ be $\mO$-algebras. The full suboperad $${_\mV\ell\L\P\Enr}_{\mW}(\mH)^\ot:= \mO^\ot \times_{{_*(\Op_\infty^\tu)^\ot} \times_{\Comm} \Op_\infty^\times} \ell\L\P\Enr_\wc^\ot \subset {_{*\mV}\ell\L\P\Enr}_{\mW}(\mH)^\ot$$is an accessible localization relative to $\mO^\ot$and so a corepresentable $\mO$-operad.

%\item Let $\mV: \mO^\ot \to {\Op_\infty^\times}, (\mW, \mH): \mO^\ot \to (\Op_\infty^\tu)^\ot_* $ be $\mO$-algebras. The full suboperad $${_\mV\ell\R\P\Enr}_{\mW}(\mH)^\ot:= \mO^\ot \times_{\Op_\infty^\times \times_{\Comm} (\Op_\infty^\tu)_*^\otimes} \ell\R\P\Enr_\wc^\ot \subset {_{\mV}\ell\R\P\Enr}_{\mW}(\mH)_*^\ot$$is an accessible localization relative to $\mO^\ot$and so a corepresentable $\mO$-operad.

%\vspace{1mm}

%\item Let $(\mV,\mW, \mH): \mO^\ot \to {{_*(\Mon_\kappa \times \Mon_\tau)^\ot_*}^\otimes}$ be an $\mO$-algebra. The full suboperad $${_\mV\ell\B\P\Enr}_{\mW}(\mH)^\ot:= \mO^\ot \times_{{_*(\Mon_\kappa \times \Mon_\tau)^\ot_*}^\otimes}  {^\kappa\B\Enr_\wc^{\tau \ot }} \subset {_{*\mV}\ell\B\P\Enr}_{\mW}(\mH)^\ot_*$$is an accessible localization relative to $\mO^\ot$and so a corepresentable $\mO$-operad.\vspace{1mm}

Let $(\mV, \mH): \mO^\ot \to {(\Mon_\kappa)^\ot_*}$ be an $\mO$-algebra. The full suboperad $${_\mV^\kappa\L\Enr}(\mH)^\ot:= \mO^\ot \times_{(\Mon_\kappa)_*^\otimes} {^\kappa\L\Enr_\wc}^\ot \subset {^\kappa_\mV\L\Enr}(\mH)_*^\ot$$ is an accessible localization relative to $\mO^\ot$ and so a presentably $\mO$-monoidal $\infty$-category.

\item Let $(\mV, \mH): \mO^\ot \to {(\cc\cc\Mon)^\ot_*}$ be an $\mO$-algebra. The full suboperad $${_\mV\L\Q\Enr}(\mH)^\ot:= \mO^\ot \times_{\cc\cc\Mon_*^\otimes} \L\Q\Enr_\wc^\ot \subset {{_\mV\L\Q\Enr}(\mH)_*^\ot}$$ is an accessible localization relative to $\mO^\ot$ and so a $\mO$-monoidal $\infty$-category.

\item Let $(\mV,\mH): \mO^\ot \to {\Pr\Mon^\ot_*}$ be an $\mO$-algebra.
The full suboperad \begin{equation}\label{fff}
{_\mV\L\Enr}(\mH)^\ot:= \mO^\ot \times_{\Pr\Mon_*^\otimes} \L\Enr_\wc^\ot \subset {{_\mV\L\Enr}(\mH)_*^\ot}\end{equation} is an accessible localization relative to $\mO^\ot$
and so a presentably $\mO$-monoidal $\infty$-category. 

The embedding $${_\mV\L\Enr}(\mH)^\ot \subset {_\mV\L\Q\Enr}(\mH)^\ot$$ is an $\mO$-monoidal functor.
\end{enumerate}
\end{theorem}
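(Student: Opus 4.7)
The overarching strategy is to apply the standard criterion for reflective localizations of cocartesian fibrations of $\infty$-operads (cf.\ \cite[Proposition 2.2.1.9]{lurie.higheralgebra}): given the $\mathcal{O}$-monoidal $\infty$-category ${^\kappa_\mathcal{V}\mathrm{LEnr}}(\mathcal{H})_*^\otimes \to \mathcal{O}^\otimes$ of Corollary \ref{notay}, the full suboperad ${_\mathcal{V}^\kappa\mathrm{LEnr}}(\mathcal{H})^\otimes$ is an accessible localization relative to $\mathcal{O}^\otimes$ if and only if (a) for every $\mathrm{X} \in \mathcal{O}$ the fiberwise inclusion is an accessible reflective localization, and (b) the corresponding local equivalences are stable under tensoring with arbitrary objects along multi-morphisms of $\mathcal{O}^\otimes$. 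Once these are established, the localization inherits a presentably $\mathcal{O}$-monoidal structure from the base.

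First I would verify condition (a). The fiber over $\mathrm{X} \in \mathcal{O}$ is the inclusion ${_{\mathcal{V}(\mathrm{X})}^\kappa\mathrm{LEnr}}(\mathcal{H}_\mathrm{X}) \subset {^\kappa_{\mathcal{V}(\mathrm{X})}\mathrm{LEnr}}(\mathcal{H}_\mathrm{X})_*$, and the analogue of Proposition \ref{gcp} for left enriched $\infty$-categories (with the same proof, using Corollary \ref{wooond00} instead of Corollary \ref{wooond0}) produces an accessible left adjoint sending $(\mathcal{M},\Lambda)$ to $\mathcal{P}\mathrm{BEnv}^{\mathcal{H}_\mathrm{X}}_\Lambda(\mathcal{M})_{\mathsf{L}\mathrm{Enr}_\kappa}$. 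The fibers are presentable by the analogue of Corollary \ref{presenta}, so the reflection is accessible fiberwise.

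The main work, and the principal obstacle, is condition (b). Given a multi-morphism $(\mathcal{M}_1,\Lambda_1),\dots,(\mathcal{M}_n,\Lambda_n) \to (\mathcal{N},\Lambda_\mathcal{N})$ in ${^\kappa_\mathcal{V}\mathrm{LEnr}}(\mathcal{H})_*^\otimes$ with target in the localization, one must show that it factors uniquely through the componentwise reflections, i.e.\ through $(\mathcal{P}\mathrm{BEnv}^{\mathcal{H}_{\mathrm{X}_i}}_{\Lambda_i}(\mathcal{M}_i)_{\mathsf{L}\mathrm{Enr}_\kappa})_{i=1}^n$. This is exactly the universal property supplied by Corollary \ref{wooond1}(1): the canonical functor
\[
{_{\otimes,\kappa}\mathrm{EnrFun}}_{\mathcal{H},\mathcal{H}'}^{\Lambda^{\mathcal{H}_1} \boxtimes \cdots \boxtimes \Lambda^{\mathcal{H}_n}}\!\bigl(\textstyle\prod_i \mathcal{P}\mathrm{BEnv}^{\mathcal{H}_i}_{\Lambda_i}(\mathcal{M}_i)_{\mathsf{L}\mathrm{Enr}_\kappa}, \mathcal{N}\bigr) \to {_{\otimes,\kappa}\mathrm{EnrFun}}_{\mathcal{H},\mathcal{H}'}^{\Lambda_1 \boxtimes \cdots \boxtimes \Lambda_n}\!\bigl(\textstyle\prod_i \mathcal{M}_i, \mathcal{N}\bigr)
\]
is an equivalence once the target has the required weighted colimits; combined with the description of multi-morphisms in ${^\kappa_\mathcal{V}\mathrm{LEnr}}(\mathcal{H})_*^\otimes$ via the cocartesian fibration (\ref{cho}) and the definition of $\mathcal{H}_1 \boxtimes \cdots \boxtimes \mathcal{H}_n$, this yields exactly the required stability of local equivalences under the $\mathcal{O}$-monoidal structure. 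Accessibility of the localization relative to $\mathcal{O}^\otimes$ follows from the fiberwise accessibility together with the fact that the underlying cocartesian fibration is accessible (by Corollary \ref{notay} and Corollary \ref{coarst}). Applying \cite[Proposition 2.2.1.9]{lurie.higheralgebra} then produces the claimed presentably $\mathcal{O}$-monoidal structure, and the functor (\ref{fff}) inherits the localization structure relative to $\mathcal{O}^\otimes$.

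For parts (2) and (3), the argument is identical after substituting the presentably monoidal / quasi-enriched variants throughout: the multi-variable universal property is Corollary \ref{wooond1}(1) for (2), and for (3) the embedding ${_\mathcal{V}\mathrm{LEnr}}(\mathcal{H})^\otimes \subset {_\mathcal{V}\mathrm{LQEnr}}(\mathcal{H})^\otimes$ is $\mathcal{O}$-monoidal because on each fiber it is the embedding $\mathcal{P}\mathrm{BEnv}^\mathcal{H}(\mathcal{M})_{\mathsf{L}\mathrm{Enr}} \subset \widehat{\mathcal{P}}\mathrm{BEnv}^\mathcal{H}(\mathcal{M})_{\mathsf{L}\mathrm{Enr}}$ of Proposition \ref{quirk}(1), which preserves $\mathcal{H}$-weighted colimits and is compatible with the left adjoint free-cocompletion functors defining the $\mathcal{O}$-monoidal reflections on both sides; compatibility on multi-morphism spaces then follows from the uniqueness clause of Corollary \ref{wooond1}(1) applied in both universes.
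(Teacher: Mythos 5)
Your proposal is correct and follows essentially the same route as the paper: fiberwise reflection and accessibility via Proposition \ref{gcp}/Corollary \ref{gcpp} and Corollary \ref{presenta}, stability of the local equivalences $(\mM,\Lambda) \to (\mP\B\Env^{\mH}_{\Lambda}(\mM)_{\L\Enr_\kappa},\mH)$ under the tensor structure via Corollary \ref{wooond1}(1), Lurie's compatible-localization machinery to produce the $\mO$-monoidal structure, and Proposition \ref{quirk} to restrict the quasi-enriched localization of (2) to the enriched case (3). The only (cosmetic) difference is that the paper establishes the localization once and for all over the universal base $(\Mon_\kappa)_*^\ot$ (via \cite[Lemma 2.2.4.11]{lurie.higheralgebra}) and then pulls back along the $\mO$-algebra $(\mV,\mH)$, whereas you verify the criterion directly on the pullback over $\mO^\ot$; these amount to the same verification.
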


\begin{proof}
(1): By Corollary \ref{gcpp} the embedding of (1) induces fiberwise accessible functors.
In view of Corollary \ref{presenta} (1) it remains to see that the embedding of (1) admits a left adjoint relative to $\mO^\ot.$
We prove that the full suboperad $ {^\kappa\L\Enr_\wc^{\ot }} \subset  {^\kappa\L\Enr_*^{\ot }} $ is a localization relative to ${(\Mon_\kappa)^\ot_*}.$
%the base \subset {_*^\kappa\B\P\Enr^{\tau}_*}^\ot, \B\Enr_\wc^\ot \subset {_*\B\Enr_*^\ot}{^\kappa\B\P\Enr^{\tau}_\mathrm{w}}^\ot} \
%${_*(\Mon_\kappa \times \Mon_\tau)^\ot_*}, {_*(\Mon_\kappa \times \Mon_\tau)_*^\otimes}, {_*\Pr\Mon_*^\otimes}$, respectively.
For this it is enough to see that the full suboperad $ {^\kappa\L\Enr_\wc^{\ot }} \subset  {^\kappa\L\Enr_*^{\ot }} $  % of (\ref{Fua})
%$$  {^\kappa\B\Enr_\wc^{\tau \ot }} \subset \ell\B\P\Enr_*^\ot, \B\P\Enr_\wc^\ot \subset \B\P\Enr_*^\ot, \B\P\Enr(\kappa)_\wc^\ot \subset \B\P\Enr(\kappa)_*^\ot, \B\Enr_\wc^\ot \subset \B\Enr_*^\ot $$ 
is a localization relative to $\Comm$ and that the local equivalences lie over equivalences in ${(\Mon_\kappa)^\ot_*}.$
% {_*(\Mon_\kappa \times \Mon_\tau)_*^\otimes}, {_*\Pr\Mon_*^\otimes}$, respectively.
By \cite[Lemma 2.2.4.11.]{lurie.higheralgebra} it suffices to prove that the full subcategory %$
$ {^\kappa\L\Enr_\wc} \subset {^\kappa\L\Enr_*}$
is a localization whose local equivalences lie over equivalences in ${(\Mon_\kappa)^\ot_*}$ %, {_*(\Mon_\kappa \times \Mon_\tau)_*}, {_*(\Pr\Mon)_*},$$ respectively, 
and that the collection of local equivalences is closed under the 
%for every$\Z \in \ell\B\P\Enr_*, \B\P\Enr_*,  \B\Enr_*$, respectively, the functor $\Z \ot (-)$ preserves local equivalences, where $\ot$ refers to the 
tensor product provided by the symmetric monoidal $\infty$-category ${^\kappa\L\Enr_*^{\ot}} \to \Comm. $ %, {{_*^\kappa\B\P\Enr^{\tau}_*}^\ot} \to \Comm, {_*\B\Enr_*^\ot} \to \Comm.$ %respectively.
Let $(\mM^\circledast \to \mV^\ot, \Lambda)\in {^\kappa\L\Enr_*}$
and $\mH$ the set of left weights underlying the set of left diagrams $\Lambda$.
By Corollary \ref{wooond1} (1) the morphism $$(\mM^\circledast \to \mV^\ot, \Lambda) \to (\mP\B\Env^\mH_\Lambda(\mM)^\circledast_{\L\Enr_{\kappa}} \to \mV^\ot, \mH)$$ in ${^\kappa\L\Enr_*}$ whose target belongs to ${^\kappa\L\Enr_\wc}$, is a local equivalence with respect to the full subcategory ${^\kappa\B\Enr_\wc^{\tau}} \subset {^\kappa\B\Enr_*^{\tau}}$ and lies over the identity in ${(\Mon_\kappa)^\ot_*}.$
Moreover by Corollary \ref{wooond1} (1) the local equivalences are closed under the 
tensor product provided by %the symmetric monoidal $\infty$-category 
$ {^\kappa\L\Enr_*^{\ot }} \to \Comm.$
%The proofs of (1)-(2) are the same using Corollary \ref{wooond1} (1)-(2), respectively.
% and Lemma \ref{quirk}.

(2) follows immediately from (1).
(3): By Corollary \ref{gcpp} the embedding (\ref{fff}) induces fiberwise accessible functors.
So it is enough to prove that the accessible localization $${_\mV\L\Q\Enr}(\mH)^\ot \subset {_\mV\L\Q\Enr}(\mH)_*^\ot$$ 
relative to $\mO^\ot$ of Corollary \ref{thqazz} restricts to a localization
${_\mV\L\Enr}(\mH)^\ot\subset {_\mV\L\Enr}(\mH)_*^\ot$ relative to $\mO^\ot$.
This follows from Lemma \ref{quirk} in view of the construction of the localization functor.
Presentability follows from Corollary \ref{presenta} (4).

\end{proof}

\begin{remark}\label{Exx}
Let $\kappa$ be a small regular cardinal,
$\mV^\boxtimes \to \bE_2$ a braided monoidal $\infty$-category compatible with $\kappa$-small colimits, seen as an associative algebra in the monoidal $\infty$-category 
$\Mon_\kappa$, acting from the left on a monoidal $\infty$-category $\mW^\ot \to \Ass$ compatible with $\kappa$-small colimits.
Let $\mu: \mV^\ot \times_\Ass \mW^\ot \to \mW^\ot$ be the monoidal left action functor. %, which preserves $\kappa$-small colimits component-wise.
Let $\mH$ be a set of small left weights over $\mV$ and $\mH'$ a set of small left weights over $\mW$ such that $\mu(-,\tu_\mW):\mV^\ot \to \mW^\ot$ sends weights of
$\mH$ to weights of $\mH'$.
By Example \ref{leff} the left action of $\mV^\ot \to \Ass$ on $\mW^\ot \to \Ass$ in $\Mon_\kappa$ corresponds to a $\LM$-algebra in $_*\Mon_\kappa$.

\begin{enumerate}
\item By Theorem \ref{thqaz} the $\infty$-category $^\kappa_\mV\L\Enr(\mH)$
carries a monoidal structure.
For two left $\kappa$-enriched $\infty$-categories $\mM^\circledast \to \mV^\ot, \mN^\circledast \to \mV^\ot$ that admit $\mH$-weighted colimits let $(\mM \otimes^{\L\Enr_\kappa}_{\mH} \mN)^\circledast $ the tensor product.
By construction there is a canonical equivalence $$(\mM \otimes^{\L\Enr_\kappa}_{\mH} \mN)^\circledast \simeq \mP\B\Env_{\ot_!(\Lambda^{(\mM,\mH)} \boxtimes \Lambda^{(\mN,\mH)})}^{\mH}(\ot_!(\mM \times \mN))^\circledast_{\L\Enr_\kappa}.$$ 
	
\item By Theorem \ref{thqaz} the $\infty$-category $^\kappa_\mW\L\Enr(\mH')$ carries a left action of $^\kappa_\mV\L\Enr(\mH)$.
For a left $\kappa$-enriched $\infty$-category $\mM^\circledast \to \mV^\ot$ that admits $\mH$-weighted colimits and a left $\kappa$-enriched $\infty$-category $\mN^\circledast \to \mW^\ot$ that admits $\mH'$-weighted colimits let $(\mM \otimes^{\L\Enr_\kappa}_{\mH,\mH'} \mN)^\circledast $ the left tensor.
By construction there is a canonical equivalence $$(\mM \otimes^{\L\Enr_\kappa}_{\mH,\mH'} \mN)^\circledast \simeq \mP\B\Env_{\mu_!(\Lambda^{(\mM,\mH)} \boxtimes \Lambda^{(\mN,\mH')})}^{\mH'}(\mu_!(\mM \times \mN))^\circledast_{\L\Enr_\kappa}.$$ 
	
%\item For a left $\kappa$-enriched $\infty$-category $\mO^\circledast \to \mV^\ot$ that admits $\mH$-weighted colimits let $(\mM \otimes^{\Enr_\kappa}_{\mH} \mO)^\circledast $ denote the image under the tensor product.By construction there is a canonical equivalence $$(\mM \otimes^{\Enr_\kappa}_{\mH} \mO)^\circledast \simeq \mP\B\Env_{\ot_!(\Lambda^{(\mM,\mH)} \boxtimes \Lambda^{(\mO,\mH)})}^{\mH}(\ot_!(\mM \times \mO))^\circledast_{\L\Enr_\kappa}.$$ 

%\item For $\kappa$ the strongly inaccessible cardinal corresponding to the small universe the monoidal structure on $^\kappa_\mV\widehat{\L\Enr}(\mH)_\emptyset$restricts to the full subcategory $_\mV\L\Enr(\mH)_\emptyset$,and the left action of $^\kappa_\mV\widehat{\L\Enr}(\mH)_\emptyset$ on $^\kappa_\mW\widehat{\L\Enr}(\mH')_\emptyset$restricts to a left action of $_\mV\L\Enr(\mH)_\emptyset$ on $_\mW\L\Enr(\mH')_\emptyset$ (Theorem \ref{Thor}).
\end{enumerate}
%the right $\mW$-enriched functor $\mu_!(\mM \times \mO)^\circledast \to (\mM \otimes^{\Enr_\kappa}_\mH \mO)^\circledast.$ 

%induces an equivalence$$\Enr\Fun^{\mH'}_{\mW, \emptyset}(\mM \otimes^{\Enr_\kappa}_\mH \mO, \mN) \simeq \Enr\Fun^{\mu_!(\Lambda^{(\mM,\mH)} \boxtimes \Lambda^{(\mO,\mH')})}_{\mW, \emptyset}(\mu_!(\mM \times \mO), \mN).$$	

\end{remark}

\begin{notation}
Let $\kappa, \tau$ be small regular cardinals. 

\begin{enumerate}
\item Let $(\kappa, \tau)\LMod^{\ot} \subset (\LMod_{\emptyset})^\times $ be the 
symmetric suboperad whose colors are the left tensored $\infty$-categories that admit
$\kappa$-small conical colimits, i.e. $\kappa$-small colimits preserved by the left action,
and that are left $\tau$-enriched $\infty$-categories, i.e. such that for every $\X \in \mM$ the functor $(-)\ot \X: \mV \to \mM$ preserves $\tau$-small colimits, and whose multi-morphisms $\mM_1^\circledast, ...,\mM_\n^\circledast \to \mN^\circledast$ for $\n \geq0$ correspond to a left linear functor $\mM_1^\circledast \times_{\Ass} ... \times_{\Ass} \mM_\n^\circledast \to \mN^\circledast$ 
preserving $\kappa$-small colimits component-wise.

%\item Let $ ^\kappa\LMod^{\ot} \subset \LMod^\times $ be the full symmetric suboperadwhose colors are the left tensored $\infty$-categories $\mM^\circledast \to \mV^\ot$ that are left $\kappa$-enriched $\infty$-categories, i.e. such that for every $\X \in \mM$ the functor$(-)\ot \X: \mV \to \mM$ preserves $\kappa$-small colimits.

\item Let $ \kappa\LMod^{\ot} := (\kappa,\kappa)\LMod^{\ot} $ be the symmetric suboperad
of left tensored $\infty$-categories compatible with $\kappa$-small colimits. For $\kappa$ the strongly inaccessible cardinal corresponding to the small universe we write
$\rc\rc\LMod^{\ot}$ for $ \kappa\LMod^{\ot}.$

\item Let $\mathrm{\Pr}\LMod^\ot \subset \rc\rc\LMod^{\ot} $ be the full symmetric suboperad whose colors are the presentably left tensored $\infty$-categories.

\end{enumerate}

\end{notation}

For the next example we use Example \ref{infi}.

\begin{example}\label{rfgp}Let $\mO \to \Comm$ be a symmetric $\infty$-operad and $\kappa$ a small regular cardinal.
	
\begin{enumerate}
\item Let $\mV: \mO^\ot \to \Mon_\kappa^\otimes$ be an $\mO$-algebra.	
There are $\mO$-monoidal identities $${_\mV^\kappa\L\Enr(\L\kappa)}^\ot= {_\mV\kappa\LMod}^{\ot}, $$
$${_\mV^\kappa\L\Enr(\L\emptyset)}^\ot= {_\mV(\emptyset,\kappa)\LMod}^{\ot}.$$

\item Let $\mV: \mO^\ot \to \rc\rc\Mon^\otimes$ be an $\mO$-algebra.	
There is an $\mO$-monoidal identity
$${_\mV\L\Q\Enr(\L\rc\rc)}^\ot= {_\mV\rc\rc\LMod}^{\ot}.$$

\item Let $\mV: \mO^\ot \to \Mon^\times$ be an $\mO$-algebra.	
There are $\mO$-monoidal identities
$${_\mV\L\P\Enr(\L\kappa)}^\ot={_\mV(\kappa,\emptyset)\LMod}^{\ot}.$$
\end{enumerate}
\end{example}

\begin{proposition}\label{preee}Let $\mO^\ot \to \Comm$ be a symmetric $\infty$-operad and $\kappa$ a small regular cardinal.
	
\begin{enumerate}
\item Let $(\mV, \mH) \to (\mV, \mH')$ be a map of $\mO$-algebras in $(\Mon_\kappa)_*^\ot$ inducing the identity on $\mV.$ The inclusion $${^\kappa_\mV\L\Enr}(\mH')^\ot \subset {^\kappa_\mV\L\Enr}(\mH)^\ot $$ admits a left adjoint relative to $\mO^\ot. $

\item Let $(\mV, \mH) \to (\mV, \mH')$ be a map of $\mO$-algebras in $(\rc\rc\Mon )_*^\ot$ inducing the identity on $\mV.$ The inclusion $${_\mV\L\Q\Enr}(\mH')^\ot \subset {_\mV\L\Q\Enr}(\mH)^\ot $$ admits a left adjoint relative to $\mO^\ot.$

\item Let $(\mV,\mH) \to (\mV, \mH')$ be a map of $\mO$-algebras in $\Pr\Mon_*^\ot$ inducing the identities on $\mV$.
The inclusion $$ {_\mV\L\Enr}(\mH')^\ot \subset {_\mV\L\Enr}(\mH)^\ot $$
admits a left adjoint relative to $\mO^\otimes. $
\end{enumerate}	

\end{proposition}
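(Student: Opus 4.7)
The plan is to produce the left adjoint fiberwise via the free weighted-colimit completion $\mP\B\Env^{\mH'}_{\Lambda}(\mM)_{\L\Enr_\kappa}$, and then to promote it to an adjunction relative to $\mO^\ot$ by invoking the multi-variable universal property established in Corollary~\ref{wooond1}.

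For part (1), fix a morphism $(\mV, \mH) \to (\mV, \mH')$ in $\Alg_\mO((\Mon_\kappa)_*)$, which is simply an inclusion $\mH_\X \subseteq \mH'_\X$ for each color $\X$ of $\mO$ compatible with the $\mO$-operad structure. I define the candidate left adjoint fiberwise on colors by
$$\Phi_\X(\mM) := \mP\B\Env^{\mH'_\X}_{\Lambda^{(\mM,\mH_\X)}}(\mM)_{\L\Enr_\kappa}$$
for $\mM \in {^\kappa_{\mV(\X)}\L\Enr}(\mH_\X)$, where $\Lambda^{(\mM,\mH_\X)}$ is the set of all $\mH_\X$-weighted colimit diagrams in $\mM$. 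By Remark~\ref{bipre} the target is a left $\kappa$-enriched $\infty$-category admitting $\mH'_\X$-weighted colimits, and by Remark~\ref{embesto} the canonical embedding $\mM \to \Phi_\X(\mM)$ preserves $\mH_\X$-weighted colimits. Applying Corollary~\ref{wooond0} with its $\mH$ set to $\mH'_\X$ and its $\Lambda$ set to $\Lambda^{(\mM,\mH_\X)}$ yields, for every $\mN \in {^\kappa_{\mV(\X)}\L\Enr}(\mH'_\X)$, the natural equivalence
$$\Map_{{^\kappa_{\mV(\X)}\L\Enr}(\mH'_\X)}(\Phi_\X(\mM), \mN) \simeq \Enr\Fun^{\Lambda^{(\mM,\mH_\X)}}_{\mV(\X),\emptyset}(\mM, \mN) \simeq \Map_{{^\kappa_{\mV(\X)}\L\Enr}(\mH_\X)}(\mM, \iota(\mN)),$$
where the second equivalence uses that a $\mV(\X)$-enriched functor $\mM \to \mN$ belongs to ${^\kappa_{\mV(\X)}\L\Enr}(\mH_\X)$ iff it sends every diagram in $\Lambda^{(\mM,\mH_\X)}$ to a weighted colimit diagram. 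Hence $\Phi_\X$ is fiberwise left adjoint to the inclusion $\iota$.

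To upgrade this collection of fiberwise adjunctions to an adjunction relative to $\mO^\ot$, it suffices to exhibit $\Phi^\ot$ as an $\mO$-monoidal functor, i.e.\ as a map of cocartesian fibrations over $\mO^\ot$. For a multi-morphism $\alpha: \X_1, \ldots, \X_n \to \Y$ in $\mO$ and objects $\mM_\bi \in {^\kappa_{\mV(\X_\bi)}\L\Enr}(\mH_{\X_\bi})$, Theorem~\ref{thqaz} describes both $\alpha_!^{(\mH)}(\mM_1, \ldots, \mM_n)$ and $\alpha_!^{(\mH')}(\Phi_{\X_1}(\mM_1), \ldots, \Phi_{\X_n}(\mM_n))$ as certain free completions of $\prod_\bi \mM_\bi$, and the multi-variable universal property of Corollary~\ref{wooond1}(1) shows that both $\Phi_\Y\bigl(\alpha_!^{(\mH)}(\mM_1, \ldots, \mM_n)\bigr)$ and $\alpha_!^{(\mH')}\bigl(\Phi_{\X_1}(\mM_1), \ldots, \Phi_{\X_n}(\mM_n)\bigr)$ corepresent the same functor on ${^\kappa_{\mV(\Y)}\L\Enr}(\mH'_\Y)$, namely the functor sending $\mN$ to $\mV(\Y)$-enriched functors $\prod_\bi \mM_\bi \to \mN$ preserving $\Lambda^{(\mM_1,\mH_{\X_1})} \boxtimes \cdots \boxtimes \Lambda^{(\mM_n,\mH_{\X_n})}$. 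Thus the canonical comparison is an equivalence and $\Phi^\ot$ is $\mO$-monoidal.

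Parts (2) and (3) follow by the same argument, substituting (2), (3) of Theorem~\ref{thqaz} and Corollary~\ref{wooond1} for their counterparts in (1). For part~(3) one additionally invokes Proposition~\ref{quirk} to ensure that $\Phi_\X(\mM)$ remains locally small, so that it genuinely lies in ${_\mV\L\Enr}(\mH')^\ot$ rather than only in its quasi-enriched enlargement ${_\mV\L\Q\Enr}(\mH')^\ot$. The only non-formal step is the $\mO$-monoidality verification of the third paragraph: the existence of the fiberwise left adjoint is immediate from Corollary~\ref{wooond0}, whereas compatibility with the $\mO$-operadic tensor products supplied by Theorem~\ref{thqaz} is precisely what the multi-variable version Corollary~\ref{wooond1} is engineered to provide, so that invoking it dissolves the obstacle.
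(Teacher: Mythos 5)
Your fiberwise analysis is fine (and is essentially already Proposition \ref{gcpp} together with Corollary \ref{wooond0}), but the passage from it to an adjunction relative to $\mO^\ot$ has a genuine gap. In your third paragraph you never construct $\Phi^\ot$ as a functor over $\mO^\ot$: you only check, object by object, that $\Phi_\Y\bigl(\alpha_!^{(\mH)}(\mM_1,\dots,\mM_n)\bigr)$ and $\alpha_!^{(\mH')}\bigl(\Phi_{\X_1}(\mM_1),\dots,\Phi_{\X_n}(\mM_n)\bigr)$ corepresent the same functor. In the $\infty$-categorical setting an objectwise identification of corepresenting objects does not assemble into a functor, and the ``canonical comparison'' you propose to invert only exists once $\Phi$ is already given as (at least) a lax or oplax $\mO$-monoidal functor over $\mO^\ot$ --- precisely the coherence data that is missing. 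Moreover, even granting such a $\Phi^\ot$, your reduction ``it suffices to exhibit $\Phi^\ot$ as $\mO$-monoidal'' tacitly invokes the criterion for relative adjunctions (a cocartesian-edge-preserving functor between cocartesian fibrations admits a relative right adjoint as soon as it does so fiberwise); this cannot be applied to the inclusion itself, which is only lax $\mO$-monoidal, so the whole weight of the argument rests on the unconstructed functor $\Phi^\ot$. Thus the ``only non-formal step'' you identify is not dissolved by Corollary \ref{wooond1}: that corollary gives the objectwise universal property, not the coherent family of structure maps.

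The paper's proof is arranged exactly so as to avoid building the left adjoint by hand. It encodes the map $(\mV,\mH)\to(\mV,\mH')$ as a functor $\mO^\ot\times[1]\to(\Mon_\kappa)_*^\ot$, pulls back ${^\kappa\L\Enr_*^\ot}$ and its localization ${^\kappa\L\Enr_\wc^\ot}$, and shows that the resulting $\Xi\to\mO^\ot\times[1]$ is simultaneously a map of cocartesian fibrations over $[1]$ (using Theorem \ref{thqaz}, i.e. that the weighted-cocompletion localization is compatible with the $\mO$-monoidal structure --- this is where the content of Corollary \ref{wooond1} enters --- together with an argument that locally cocartesian lifts lying over equivalences in $\Mon_\kappa^\ot$ are cocartesian) and a map of cartesian fibrations over $[1]$ by Corollary \ref{coarst}; such a bifibration over $[1]$ classifies precisely the desired adjunction relative to $\mO^\ot$, with all coherences supplied by the fibration machinery. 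To repair your outline you should either run this fibrational argument, or obtain $\Phi^\ot$ together with its monoidal coherences as the relative localization functor of Theorem \ref{thqaz} applied to the pulled-back family, rather than from objectwise corepresentability; your treatment of (2) by enlarging the universe and of (3) via Proposition \ref{quirk} then matches the paper's deduction of (2) and (3) from (1).
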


\begin{proof} %We prove (1). The proof of (2) is similar.
(2) follows from (1) by applying (1) in a larger universe and taking $\tau=\sigma$ the strongly inaccessible cardinals corresponding to the small universe.
(3): The adjunction relative to $\mO^\ot$ of (2) restricts to the adjunction of (3) by construction of the left adjoint and Lemma \ref{quirk}.

(1): The map $(\mV, \mH) \to (\mV, \mH')$ of $\mO$-algebras in ${(\Mon_\kappa)^\ot_*}$
corresponds to a functor $\mO^\otimes \times [1] \to {{(\Mon_\kappa)^\ot_*}} $ over $\Comm$ such that the composition $\mO^\otimes \times [1] \to {(\Mon_\kappa)^\ot_*} \to \Mon_\kappa^\ot $ factors as the projection $\mO^\otimes \times [1]\to \mO^\ot$
followed by $\mV.$
Let $\phi: {^\kappa\L\Enr_\ast^{\ot}} \to {(\Mon_\kappa)^\ot_*}$ be the symmetric monoidal forgetful functor that is a locally cocartesian fibration by Proposition \ref{prpp}.

We first prove that every locally $\phi$-cocartesian morphism $\alpha$ of ${{^\kappa\L\Enr_\ast^{\ot}}}$ lying over an equivalence in $\Mon_\kappa^\ot$ is $\phi$-cocartesian: since $\alpha$ lies over an equivalence in $\Mon_\kappa^\ot$, it belongs to some fiber $({^\kappa\L\Enr_\ast^{\ot}})_{\langle \n \rangle} \simeq ({^\kappa\L\Enr_\ast})^{\times \n}$ for $\n \geq 0$ and is locally $\phi_{\langle \n \rangle}$-cocartesian and thus $\phi_{\langle \n \rangle}$-cocartesian because $\phi_{\langle \n \rangle}$ is a cocartesian fibration by Lemma \ref{lllee}.
Therefore $\alpha$ is $\phi$-cocartesian if its image $\ot(\alpha)$ under the tensor product functor
$\ot: ({^\kappa\L\Enr_\ast})^{\times \n} \to {^\kappa\L\Enr_\ast}$ is $\phi_{\langle 1 \rangle}$-cocartesian.
Since $\alpha$ is $\phi_{\langle \n \rangle}$-cocartesian, by the construction of the tensor product on ${^\kappa\L\Enr}$ the morphism $\ot(\alpha)$ is $\phi_{\langle 1 \rangle}$-cocartesian if the image of $\ot(\alpha)$ under the forgetful functor $\nu:{^\kappa\L\Enr^{\ot}_*} \to {^\kappa\L\Enr}$ is an equivalence.
%cocartesian over its image in $\Op_\infty^\tu \times \Op_\infty^\tu.$
The functor $\nu$ is symmetric monoidal and preserves cocartesian morphisms by Lemma \ref{lllee}. Thus the image of $\alpha$ in ${^\kappa\L\Enr^{\ot}}$ is cocartesian over the equivalence $\phi(\alpha)$ and so an equivalence.
Hence the image of $\ot(\alpha)$ in ${^\kappa\L\Enr}$ is an equivalence.
%This implies that $\ot_!(\alpha)$ is $\phi_{\langle 1 \rangle}$-cocartesian.

As every locally $\phi$-cocartesian morphism of $(\Mon_\kappa)_*^\ot$ lying over an equivalence in $ \Mon_\kappa^\ot$ is $\phi$-cocartesian, the locally cocartesian fibration $ (\mO^\ot \times [1]) \times_{{(\Mon_\kappa)_*^\ot}} {{^\kappa\L\Enr_*}^\ot} \to \mO^\otimes \times [1]$ is a map of cocartesian fibrations over $[1]$.
By Theorem \ref{thqaz} the full subcategory $$\Xi:= (\mO^\ot \times [1]) \times_{{(\Mon_\kappa)_*^\ot}} { {^\kappa\L\Enr_\wc^{\ot}}} \subset (\mO^\ot \times [1]) \times_{{(\Mon_\kappa)_*^\ot}} { {^\kappa\L\Enr_*}^\ot}$$ is a localization relative to $\mO^\otimes \times [1]$.
Therefore also the functor $\Xi \to \mO^\otimes \times [1]$ is a locally cocartesian fibration and map of cocartesian fibrations over $[1]$.
Since $\Xi \to \mO^\otimes \times [1]$ is a locally cocartesian fibration and $[1]$
has no non-trivial endomorphisms, the functor $\Xi \to \mO^\otimes $ is a locally cocartesian fibration and the functor $\Xi\to \mO^\otimes \times [1]$ is a map of locally cocartesian fibrations over $\mO^\ot.$

By Corollary \ref{coarst} the functor $\Xi\to \mO^\otimes \times [1]$ induces on the fiber over every $\X \in \mO$ a cartesian fibration $ [1] \times_{{(\Mon_\kappa)^\ot_*}} {^\kappa\L\Enr_\wc} \to [1]$ and so $\Xi\to \mO^\otimes \times [1]$ is a map of cartesian fibrations over $[1]$ by \cite[Lemma 2.44. (2)]{heine2023monadicity}, where we use that $\Xi\to \mO^\otimes \times [1]$ is a map of locally cocartesian fibrations over $\mO^\ot.$
The map $ \Xi \to \mO^\otimes \times [1]$ of cocartesian and cartesian fibrations over $[1]$ classifies the adjunction of (3).

\end{proof}

\subsection{Closedness of the monoidal structure}\label{closed}

%\subsection{Construction of the internal hom}

In the following we prove that for every braided monoidal $\infty$-category $\mV^\boxtimes \to \bE_2$, set $\mH$ of small left weights over $\mV$ and small regular cardinal $\kappa$ the monoidal structures on $_\mV^\kappa \L\Enr_\emptyset(\mH), {_\mV \L\Enr}_\emptyset(\mH) $ of Theorem \ref{thqaz} are closed (Corollary \ref{Qa})
by giving an explicite construction of the internal hom (Theorem \ref{Thhs}).

%\subsubsection{Closedness of the outer tensor product}
 
\begin{notation}
Let $\mV^\ot \to \Ass, \mW^\ot \to \Ass$ be small $\infty$-operads, $\mH$ a set of left weights over $\mV$ and $\mH'$ a set of right weights over $\mW$.
Let $\mH\oplus\mH'$ be the set of weights over $\mV,\mW $ consisting of the images of elements of $\mH$ under the map of $\infty$-operads
$\mV^\ot \simeq \mV^\ot \times \emptyset^\ot \to \mV^\ot \times \mW^\ot$ and the
images of elements of $\mH'$ under the map of $\infty$-operads $\mW^\ot \simeq \emptyset^\ot \times \mW^\ot \to \mV^\ot \times \mW^\ot$.
\end{notation}

For the next notation we use that for every weakly left enriched $\infty$-category $\mM^\circledast \to \mV^\ot$ and $\X\in\mM$ there is a unique left enriched functor $\emptyset^\ot \to \mM^\circledast$ (starting at the final object of $ {_{\emptyset}\omega\BMod_\emptyset}\simeq \Cat_\infty$) sending the unique object to $\X.$
%correspond to left $\emptyset$-enriched functors $\emptyset^\ot \to \emptyset^\ot \times_{\mV^\ot} \mM^\circledast$, which correspond to functors$* \to \mM$, i.e. objects of $\mM.$
\begin{notation}\label{addi}
Let $\mM^\circledast \to \mV^\ot, \mO^\circledast \to \mW^\ot, \mN^\circledast \to \mV^\ot \times \mW^\ot$ be absolute small weakly  left enriched, weakly right enriched, weakly bienriched $\infty$-categories, respectively, $\Lambda$ a collection of left diagrams in $\mM$ and $\Lambda'$ a collection of right diagrams in $\mO$.
Let $$\Lambda\oplus\Lambda' $$ be the set of diagrams in $\mM \times \mO $
consisting of the images of elements of $\Lambda$ under
all enriched functors $\mM^\circledast\simeq \mM^\circledast \times \emptyset^\ot \xrightarrow{\mM^\circledast \times \Y} \mM^\circledast \times \mO^\circledast$ for $\Y \in \mO$
and the images of elements of $\Lambda'$ under
all enriched functors $\mO^\circledast \simeq \emptyset^\ot \times \mO^\circledast \xrightarrow{\X \times \mO^\circledast} \mM^\circledast \times \mO^\circledast$ for $\X \in \mM.$	
	
\end{notation}

\begin{remark}
Let $\mH$ be a set of left weights over $\mV$ and $\mH'$ a set of right weights over $\mW$ and $\Lambda$ a collection of $\mH$-weighted left diagrams in $\mM$ and $\Lambda'$ a collection of $\mH'$-weighted right diagrams in $\mO$.
Then $\Lambda\oplus\Lambda' $ is a collection of $\mH \oplus \mH'$-weighted digrams in $\mM \times \mO.$
\end{remark}

%\begin{remark}Let $\mH$ be a set of left weights over $\mV$ and $\mH'$ a set of right weights over $\mW$ and $\Lambda$ a collection of left diagrams in $\mM$, each of which is $\rH$-weighted for some $\rH \in \mH$, and $\Lambda'$ a collection of right diagrams in $\mO$, each of which is $\rH$-weighted for some $\rH \in \mH'$.Then $\Lambda\oplus\Lambda' $ is $\rH$-weighted for some $\rH \in \mH \oplus \mH'$.\end{remark}

%\begin{notation}\label{addi}Let $\mM^\circledast \to \mV^\ot$ be an absolute small weakly  left enriched $\infty$-category that admits $\mH$-weighted colimits for some set $\mH$ of left weights over $\mV$ and$\mO^\circledast \to \mW^\ot$ an absolute small weakly  right enriched $\infty$-category that admits $\mH'$-weighted colimits for some set $\mH'$ of right weights over $\mW$.Let $\Lambda$ be a collection of left diagrams in $\mM$, each of which is $\rH$-weighted for some $\rH \in \mH$ and $\Lambda'$ a collection of right diagrams in $\mO$, each of which is $\rH$-weighted for some $\rH \in \mH'$.\end{notation}

\begin{remark}\label{hvL}
Let $\mM^\circledast \to \mV^\ot, \mO^\circledast \to \mW^\ot, \mN^\circledast \to \mV^\ot \times \mW^\ot$ be absolute small weakly  left enriched, weakly right enriched, weakly bienriched $\infty$-categories, respectively, $\Lambda$ a collection of left diagrams in $\mM$ and $\Lambda'$ a collection of right diagrams in $\mO$.
The canonical equivalences $$\Enr\Fun_{\mV, \emptyset}(\mM, \Enr\Fun_{\emptyset,\mW}(\mO, \mN)) \simeq \Enr\Fun_{\mV, \mW}(\mM \times \mO, \mN) \simeq \Enr\Fun_{\emptyset,\mW}(\mO, \Enr\Fun_{\mV,\emptyset}(\mM, \mN)) $$ restrict to equivalences
$$\Enr\Fun_{\mV, \emptyset}^\Lambda(\mM, \Enr\Fun^{\Lambda'}_{\emptyset,\mW}(\mO, \mN)) \simeq \Enr\Fun^{\Lambda \oplus \Lambda'}_{\mV, \mW}(\mM \times \mO, \mN) \simeq \Enr\Fun_{\emptyset,\mW}^{\Lambda'}(\mO, \Enr\Fun^\Lambda_{\mV, \emptyset}(\mM, \mN)).$$
\end{remark}

%\begin{construction}Let $\mV^\ot \to \Ass, \mW^\ot \to \Ass$ be small $\infty$-operads, $\mH$ a set of small left weights over $\mV$ and $\mH'$ a set of small right weights over $\mW$.Let $\mM^\circledast \to \mV^\ot$ be an absolute small weakly  locally left pseudo-enriched $\infty$-category that admits $\mH$-weighted colimits and $\mO^\circledast \to \mW^\ot$ an absolute small weakly  locally right pseudo-enriched $\infty$-category that admits $\mH'$-weighted colimits.The embedding of $\infty$-operads $\emptyset \to \Ass$ sends the set $\mH$ of small left weights over $\mV$ (seen as weights over $\mV, \emptyset$ to a set of small weights over $\mV,*$ and sends the set $\mH'$ of small right weights over $\mW$ (seen as weights over $\emptyset,\mW$ to a set of small weights over $*,\mW$,which we denote by the same name.Then $\mM'^\circledast:= \mM^\circledast \times \Ass \to \mV^\ot \times \Ass, \Ass \times \mO'^\circledast:= \mO^\circledast \to \Ass \times \mW^\ot$ are weakly locally bipseudo-enriched $\infty$-categories and $\mM^\circledast \simeq \mM'^\circledast \times_{\Ass} \emptyset^\ot, \mO^\circledast \simeq \emptyset^\ot \times_\Ass \mO'^\circledast$. So there are enriched embeddings $\mM^\circledast \subset \mM'^\circledast, \mO^\circledast \subset \mO'^\circledast$that send the set $\mH$ of small left weights over $\mV$ (seen as weights over $\mV, \emptyset$ to set of small weights over $\mV,*.$\end{construction}

\begin{lemma}\label{locL} Let $(\mV^\ot \to \Ass, \rS)$, $(\mW^\ot \to \Ass, \T)$ be small localization pairs such that $\rS$ is a set of morphisms of $\Env(\mV)$ and $\T$ is a set of morphisms of $\Env(\mW).$ Let $\mM^\circledast \to \mV^\ot $ be a small left $\rS$-enriched $\infty$-category and $\mN^\circledast \to \mW^\ot $ a small right $\T$-enriched $\infty$-category. Then $\mM^\circledast \times \mN^\circledast \to \mV^\ot \times \mW^\ot$ is a $\rS,\T$-bienriched $\infty$-category.\end{lemma}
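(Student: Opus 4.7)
The plan is to verify the defining condition for $(\rS, \T)$-bi-enrichment via Remark \ref{embil}: it suffices to show that the canonical embedding $(\mM \times \mN)^\circledast \subset \mP\B\Env(\mM \times \mN)^\circledast$ factors through the full weakly bi-enriched subcategory $\mP\B\Env(\mM \times \mN)^\circledast_{\rS, \T}$ of $(\rS, \T)$-local objects. Concretely, I need to check that for every $(\X, \Y), (\X', \Y') \in \mM \times \mN$, every $\f \in \rS$, $\g \in \T$, and all $\V_1, \ldots, \V_\n \in \mV$, $\W_1, \ldots, \W_\m \in \mW$, the two induced maps
\[
\mP\B\Env(\mM \times \mN)(\f \ot (\X, \Y) \ot \W_1 \ot \cdots \ot \W_\m, (\X', \Y')), \qquad \mP\B\Env(\mM \times \mN)(\V_1 \ot \cdots \ot \V_\n \ot (\X, \Y) \ot \g, (\X', \Y'))
\]
are equivalences.

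First I would unfold the multi-morphism spaces of the product $\mM^\circledast \times \mN^\circledast \to \mV^\ot \times \mW^\ot$: since $\mM^\circledast$ lies over $\mV^\ot$ with trivial $\mW$-component and $\mN^\circledast$ dually, applying Definition \ref{bla}(1) fiberwise yields
\[
\Mul_{\mM \times \mN}(\V_1, \ldots, \V_\n, (\X, \Y), \W_1, \ldots, \W_\m; (\X', \Y')) \simeq \Mul_\mM(\V_1, \ldots, \V_\n, \X; \X') \times \Mul_\mN(\Y, \W_1, \ldots, \W_\m; \Y').
\]
Combining this with Remark \ref{rhhhj} and the identification $\mP(\Env(\mV) \times \Env(\mW)) \simeq \mP\Env(\mV) \otimes \mP\Env(\mW)$, the graph of the full weakly bi-enriched subcategory $\overline{\mM \times \mN}^\circledast \subset \mP\B\Env(\mM \times \mN)^\circledast$ identifies with the external box product of the morphism objects in $\mP\Env(\mV)$ and $\mP\Env(\mW)$ provided by Example \ref{Biolk}:
\[
\Gamma_{\overline{\mM \times \mN}}((\X, \Y), (\X', \Y')) \simeq \L\Mor_{\bar\mM}(\X, \X') \boxtimes \R\Mor_{\bar\mN}(\Y, \Y').
\]

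Finally, the morphism-object adjunction of Example \ref{Biolk} together with the bilinearity of the external box product, namely
$\mP(\Env(\mV) \times \Env(\mW))(\V \boxtimes \W, \rH_1 \boxtimes \rH_2) \simeq \mP\Env(\mV)(\V, \rH_1) \times \mP\Env(\mW)(\W, \rH_2)$ for all $\V \in \mP\Env(\mV)$, $\W \in \mP\Env(\mW)$ (verified on representables and extended by colimits), gives
\[
\mP\B\Env(\mM \times \mN)(\f \ot (\X, \Y) \ot \W_1 \ot \cdots \ot \W_\m, (\X', \Y')) \simeq \mP\Env(\mV)(\f, \L\Mor_{\bar\mM}(\X, \X')) \times \mP\Env(\mW)(\W_1 \ot \cdots \ot \W_\m, \R\Mor_{\bar\mN}(\Y, \Y')).
\]
The second factor is independent of $\f$, while the first is an equivalence because $\L\Mor_{\bar\mM}(\X, \X') \in \mP\Env(\mV)$ is $\rS$-local by the hypothesis that $\mM$ is left $\rS$-enriched. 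This establishes the $\rS$-local condition; the $\T$-local condition is proved dually using that $\R\Mor_{\bar\mN}(\Y, \Y')$ is $\T$-local. The main technical step is the identification of the graph as an external box product; once that splitting is in hand, the $(\rS, \T)$-locality decouples cleanly into the two individual enrichment hypotheses.
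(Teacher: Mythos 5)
Your overall strategy coincides with the paper's: decompose the relevant multi-morphism object of $\mM^\circledast \times \mN^\circledast$ into an $\mM$-factor and an $\mN$-factor, let the $\rS$-locality of the $\mM$-factor (equivalently, left $\rS$-enrichment of $\mM$) handle the first factor, and observe that $\f$ does not affect the second. The identification of the graph with the external product $\L\Mor_{\bar\mM}(\X,\X')\boxtimes \R\Mor_{\bar\mN}(\Y,\Y')$ is also fine. The genuine gap is the bilinearity claim $\mP(\Env(\mV)\times\Env(\mW))(\V\boxtimes\W,\rH_1\boxtimes\rH_2)\simeq \mP\Env(\mV)(\V,\rH_1)\times\mP\Env(\mW)(\W,\rH_2)$ asserted \emph{for all} $\V\in\mP\Env(\mV)$, $\W\in\mP\Env(\mW)$ and justified by ``verified on representables and extended by colimits''. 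The colimit extension is exactly where this breaks: writing $\V\simeq\colim_{i\in I}\y(v_i)$, mapping out of the colimit produces $\lim_{I^\op}\bigl(\rH_1(v_i)\times\mP\Env(\mW)(\W,\rH_2)\bigr)$, and the constant second factor gets replaced by its cotensor with the classifying space of $I$ rather than by itself; already $\V=\emptyset$ and $\W$ representable gives a point on the left but $\mP\Env(\mW)(\W,\rH_2)$ on the right. Note also that the two sides do not even have the same formal behaviour in $\V$ (the left side sends colimits in $\V$ to limits, the right side does not), so no colimit argument can establish the formula in this generality. This matters here because, if the formula did hold for all $\V$, your argument would prove the lemma without the hypothesis that $\rS$ and $\T$ consist of morphisms of $\Env(\mV)$ and $\Env(\mW)$ --- a hypothesis your proof never invokes --- whereas for a non-representable $\f$, say $\f\colon\emptyset\to A$, the contribution of the $\mN$-factor to the map, which is $\mP\Env(\mV)(\f,\delta(\Mul_\mN(\Y,\W_1,\dots,\W_\m;\Y')))$ for $\delta\colon\mS\to\mP\Env(\mV)$ the constant-presheaf functor, need not be an equivalence when that multi-morphism space is not contractible.

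The repair is small and lands you on the paper's proof: in the only instances you use, $\V$ is the source or target of $\f\in\rS$, hence an object of $\Env(\mV)$, and $\W=\W_1\ot\cdots\ot\W_\m$ lies in $\Env(\mW)$, so both are representable and the splitting of mapping spaces is just the Yoneda lemma. Equivalently, fix the $\mW$-variables as the paper does and write the left multi-morphism object as $\L\Mor_{\bar\mM}(\X,\X')\times\delta(\Mul_\mN(\Y,\W_1,\dots,\W_\m;\Y'))$ in $\mP\Env(\mV)$; then $\mP\Env(\mV)(\f,-)$ is an equivalence on the first factor by left $\rS$-enrichment of $\mM$, and on the second factor because $\f$ is a morphism of representables, by Yoneda. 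Make the representability of $\f$ (and dually of the morphisms in $\T$) explicit at that step and your argument is correct, and essentially identical to the paper's.
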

\begin{proof}
We prove that $\mM^\circledast \times \mN^\circledast \to \mV^\ot \times \mW^\ot$ is a left $\rS$-enriched $\infty$-category. The proof of the right $\T$-enrichment is dual.
We like to see that for every $\f \in \rS, \X, \Y \in \mM, \X',\Y' \in \mN, \W_1,...\W_\m \in \mW$ for $\m \geq 0$ the induced map $$\mP\B\Env(\mM \times \mN)(\f \ot (\X,\X') \ot \W_1 \ot ... \ot \W_\m, (\Y,\Y')) \simeq $$$$\mP\Env(\mV)(\f,  \L\Mul\Mor_{\mP\B\Env(\mM \times \mN)}((\X,\X'),\W_1,...,\W_\m; (\Y,\Y')))$$ 
is an equivalence. Let $\delta$ be the diagonal functor $\mS \to \mP\Env(\mV).$
There is a canonical equivalence $$\L\Mul\Mor_{\mP\B\Env(\mM \times \mN)}((\X,\X'),\W_1,...,\W_\m; (\Y,\Y')) \simeq $$$$\L\Mor_{\mM}(\X, \Y) \times \delta(\Mul_{\mN}(\X',\W_1,...,\W_\m; \Y')) $$in $\mP\Env(\mV).$
Since $\mM^\circledast \to \mV^\ot $ is a left $\rS$-enriched $\infty$-category,
the following map is an equivalence: $$\mP\Env(\mV)(\f, \L\Mor_{\mP\L\Env(\mM)}(\X, \Y)) \simeq \mP\L\Env(\mM)(\f\ot \X,\Y).$$
So it remains to see that the induced map $\mP\Env(\mV)(\f, \delta(\Mul_{\mN}(\X',\W_1,...,\W_\m; \Y'))$ is an equivalence.
Because $\f$ is representable by assumption, by the Yoneda-lemma the latter map is the equivalence $\delta(\Mul_{\mN}(\X',\W_1,...,\W_\m; \Y'))(\f)$.
\end{proof}

For the next Notation we use Notation \ref{Lamb} and Lemma \ref{locL}:
\begin{notation}
Let $(\mV^\ot \to \Ass, \rS)$, $(\mW^\ot \to \Ass, \T)$	be small localization pairs
such that $\rS$ is a set of morphisms of $\Env(\mV)$ and $\T$ is a set of morphisms of $\Env(\mW).$
Let $\mH$ be a set of small left weights over $\mV$ and $\mH'$ a set of small right weights over $\mW$.
Let $\mM^\circledast \to \mV^\ot$ be a small left $\rS$-enriched $\infty$-category that admits $\mH$-weighted colimits and $\mO^\circledast \to \mW^\ot$ a small right $\T$-enriched $\infty$-category that admits $\mH'$-weighted colimits.
Let $$(\mM \times^{\mH,\mH'}_{\rS,\T} \mO)^\circledast:=\mP\B\Env^{\Lambda^{(\mM \times \mO,\mH \oplus \mH')}}_{\Lambda^{(\mM,\mH)} \oplus \Lambda^{(\mO,\mH')}}(\mM \times \mO)^\circledast_{\rS,\T}.$$

%Let $\mV^\ot \to \Ass, \mW^\ot \to \Ass$ be small monoidal $\infty$-categories, $\mH$ a set of small left weights over $\mV$ and $\mH'$ a set of small right weights over $\mW$.
%\item Let $\mM^\circledast \to \mV^\ot$ be a small left pseudo-enriched $\infty$-category that admits $\mH$-weighted colimits and $\mO^\circledast \to \mW^\ot$ a small right pseudo-enriched $\infty$-category that admits $\mH'$-weighted colimits.Let $$(\mM \times^{\P\Enr}_{\mH\oplus\mH'} \mO)^\circledast:=\mP\B\Env^{\Lambda^{(\mM \times \mO,\mH \oplus \mH')}}_{\Lambda^{(\mM,\mH)} \oplus \Lambda^{(\mO,\mH')}}(\mM\times \mO)^\circledast_{\B\P\Enr}.$$\end{enumerate}
	
\end{notation}

\begin{theorem}\label{Thhsa}
Let $(\mV^\ot \to \Ass, \rS)$, $(\mW^\ot \to \Ass, \T)$	be small localization pairs
such that $\rS$ is a set of morphisms of $\Env(\mV)$ and $\T$ is a set of morphisms of $\Env(\mW).$
Let $\mH$ be a set of small left weights over $\mV$ and $\mH'$ a set of small right weights over $\mW$.
Let $\mM^\circledast \to \mV^\ot$ be a small left $\rS$-enriched $\infty$-category that admits $\mH$-weighted colimits and $\mO^\circledast \to \mW^\ot$ a small right $\T$-enriched $\infty$-category that admits $\mH'$-weighted colimits.
There are canonical equivalences
$$\Enr\Fun^{\mH, \mH'}_{\mV,\mW}(\mM \times^{\mH,\mH'}_{\rS,\T} \mO, \mN) \simeq\Enr\Fun^{\mH'}_{\emptyset,\mW}(\mO,\Enr\Fun^\mH_{\mV,\emptyset}(\mM, \mN))\simeq\Enr\Fun^{\mH}_{\mV,\emptyset}(\mM,\Enr\Fun^{\mH'}_{\emptyset,\mW}(\mO, \mN)).$$

\end{theorem}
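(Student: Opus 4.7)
My plan is to reduce each of the two asserted equivalences to a universal property already established in the paper. First, by the very definition
$$(\mM \times^{\mH,\mH'}_{\rS,\T} \mO)^\circledast = \mP\B\Env^{\Lambda^{(\mM \times \mO,\mH \oplus \mH')}}_{\Lambda^{(\mM,\mH)} \oplus \Lambda^{(\mO,\mH')}}(\mM \times \mO)^\circledast_{\rS,\T},$$
so the first step is to verify the hypotheses of Corollary~\ref{wooond0} for the bi-enriched product. Lemma~\ref{locL} exploits the standing assumption that $\rS$ and $\T$ consist of representable morphisms to guarantee that $\mM \times \mO \to \mV^\ot \times \mW^\ot$ is totally small and $\rS,\T$-bi-enriched, while by definition $\Lambda^{(\mM,\mH)} \oplus \Lambda^{(\mO,\mH')}$ is an $\mH \oplus \mH'$-weighted collection of diagrams on $\mM \times \mO$. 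Applying Corollary~\ref{wooond0} with $\mM\times\mO$ in place of $\mM$, $\mH\oplus\mH'$ in place of $\mH$, and $\Lambda^{(\mM,\mH)}\oplus\Lambda^{(\mO,\mH')}$ in place of $\Lambda$ then supplies a canonical equivalence
$$\Enr\Fun^{\mH,\mH'}_{\mV,\mW}(\mM\times^{\mH,\mH'}_{\rS,\T}\mO,\mN) \simeq \Enr\Fun^{\Lambda^{(\mM,\mH)}\oplus\Lambda^{(\mO,\mH')}}_{\mV,\mW}(\mM\times\mO,\mN),$$
once we read the superscript on the left as the shorthand for the collection of $\mH\oplus\mH'$-weighted colimit diagrams.

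Second, I would invoke Remark~\ref{hvL} with $\Lambda=\Lambda^{(\mM,\mH)}$ and $\Lambda'=\Lambda^{(\mO,\mH')}$, which directly supplies a pair of canonical equivalences
$$\Enr\Fun^{\Lambda\oplus\Lambda'}_{\mV,\mW}(\mM\times\mO,\mN) \simeq \Enr\Fun^{\Lambda}_{\mV,\emptyset}(\mM,\Enr\Fun^{\Lambda'}_{\emptyset,\mW}(\mO,\mN)) \simeq \Enr\Fun^{\Lambda'}_{\emptyset,\mW}(\mO,\Enr\Fun^{\Lambda}_{\mV,\emptyset}(\mM,\mN)).$$
Combining this with the shorthand (Notation~\ref{Lamb}, Remark~\ref{remros}) that $\Enr\Fun^{\mH}$ means preservation of the collection $\Lambda^{(-,\mH)}$, the last two equivalences of the theorem drop out at once.

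The one delicate point, and what I expect to be the main obstacle, is justifying that the iterated hom expressions on the right are even defined: the inner weakly enriched functor $\infty$-categories $\Enr\Fun^{\mH}_{\mV,\emptyset}(\mM,\mN)^\circledast \to \mW^\ot$ and $\Enr\Fun^{\mH'}_{\emptyset,\mW}(\mO,\mN)^\circledast \to \mV^\ot$ must themselves admit the outer $\mH'$- respectively $\mH$-weighted colimits, and the full subcategories of diagram-preserving functors must be closed under those outer colimits. Both follow from Theorem~\ref{Enric}: since $\mN$ is tacitly assumed to admit $\mH\oplus\mH'$-weighted colimits, its push\nobreakdash-forward image $\sigma_!(\mH')$ (resp.\ $\sigma_!(\mH)$) exists in $\mN$, so Theorem~\ref{Enric} endows the enriched functor $\infty$-category with pointwise weighted colimits and closes the diagram-preserving subcategory under them. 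The combinatorial check that $\Lambda^{(\mM,\mH)} \oplus \Lambda^{(\mO,\mH')}$ corresponds under the hom-adjunction precisely to the pair of conditions \emph{preserves $\mH$-diagrams in the $\mM$-slot} and \emph{preserves $\mH'$-diagrams in the $\mO$-slot} is then immediate from the description of $\oplus$ in Notation~\ref{addi}, since the diagrams there are obtained by plugging a fixed object of one factor into the other.
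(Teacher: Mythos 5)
Your proposal is correct and follows essentially the same route as the paper: the paper likewise combines the restricted exponential adjunction of Remark \ref{hvL} (together with the $\Lambda^{(-,\mH)}$ shorthand of Notation \ref{Lamb}) with the universal property of adjoining weighted colimits (Proposition \ref{wooond}, of which your Corollary \ref{wooond0} is the direct specialization) applied to the enriched functor $\mM^\circledast\times\mO^\circledast \to (\mM \times^{\mH,\mH'}_{\rS,\T}\mO)^\circledast$. Your additional remarks on Lemma \ref{locL} and Theorem \ref{Enric} are consistent with how the paper sets up the construction and only make explicit points the paper leaves implicit.
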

\begin{proof}We prove (1). The proof of (2) is similar.
By Remark \ref{hvL} and Notation \ref{Lamb} the equivalences $$\Enr\Fun_{\mV, \emptyset}(\mM, \Enr\Fun_{\emptyset,\mW}(\mO, \mN)) \simeq \Enr\Fun_{\mV, \mW}(\mM \times \mO, \mN) \simeq \Enr\Fun_{\emptyset,\mW}(\mO, \Enr\Fun_{\mV,\emptyset}(\mM, \mN)) $$ restrict to equivalences
$$\Enr\Fun_{\mV, \emptyset}^\mH(\mM, \Enr\Fun^{\mH'}_{\emptyset,\mW}(\mO, \mN)) \simeq \Enr\Fun^{\Lambda^{(\mM,\mH)} \oplus \Lambda^{(\mO,\mH')}}_{\mV, \mW}(\mM \times \mO, \mN)$$$$ \simeq \Enr\Fun_{\emptyset,\mW}^{\mH'}(\mO, \Enr\Fun^\mH_{\mV, \emptyset}(\mM, \mN)).$$
%where the $\infty$-category in the middle is the full subcategory of functorspreserving $\mH$-weighted colimits in the first component and $\mH'$-weighted colimits in the second component (see Notation \ref{notabene}).
The $\mV,\mW$-enriched functor $\mM^\circledast \times \mO^\circledast \to (\mM \times^{\mH,\mH'}_{\rS,\T} \mO)^\circledast=\mP\B\Env^{\Lambda^{(\mM \times \mO,\mH \oplus \mH')}}_{\Lambda^{(\mM,\mH)} \oplus \Lambda^{(\mO,\mH')}}(\mM\times \mO)_{\rS,\T}^\circledast $ induces the following equivalence by Theorem \ref{wooond}:
$$\Enr\Fun^{\mH \oplus \mH'}_{\mV,\mW}(\mM \times^{\mH,\mH'}_{\rS,\T} \mO, \mN) \simeq \Enr\Fun^{\Lambda^{(\mM,\mH)} \oplus \Lambda^{(\mO,\mH')}}_{\mV, \mW}(\mM \times \mO, \mN).$$
	
\end{proof}

%\subsubsection{Closedness of the inner tensor product}

\begin{notation}Let $\mV^\ot \to \Ass, \mW^\ot \to \Ass$ be small $\infty$-operads.
Let $\tau$ be the functor $$\mV^\ot \times_\Ass (\mW^\rev)^\ot \simeq \mV^\ot \times_\Ass \mW^\ot \to \mV^\ot \times \mW^\ot$$
induced by projection and the equivalence $(\mW^\rev)^\ot \simeq \mW^\ot.$
Taking pullback along $\tau$ defines a functor $$\theta: {_\mV\omega\B\Enr_\mW} \to {_{\mV\times\mW^\rev}\omega\L\Enr_\emptyset}$$
that restricts to a functor 
\begin{equation}\label{ujjj}
_\mV\P\B\Enr_\mW\simeq {_{\mV \times \mW^\rev}\P\L\Enr_\emptyset}.
\end{equation}
\end{notation}

\begin{theorem}\label{biii}
Let $\mV^\ot \to \Ass, \mW^\ot\to \Ass$ be small monoidal $\infty$-categories.
The functor (\ref{ujjj}) is an equivalence.
% \begin{equation*}\label{ujjj}_\mV\P\B\Enr_\mW\simeq {_{\mV \times \mW^\rev}\P\L\Enr_\emptyset}.\end{equation*}

\end{theorem}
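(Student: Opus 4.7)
The plan is to prove the equivalence by exhibiting $\theta$ as fully faithful and essentially surjective when restricted to the full subcategories of bi-pseudo-enriched and left pseudo-enriched $\infty$-categories, respectively. The underlying heuristic is the classical equivalence of algebra ``a $(\mV,\mW)$-bimodule is the same as a left module over $\mV \otimes \mW^{\mathrm{op}}$'', now carried out $\infty$-operadically: the multi-morphism spaces on both sides encode the same data, once one collapses the several $\mV$- and $\mW$-slots into single ones via the pseudo-enrichment axiom.

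First I would verify that $\theta$ indeed carries bi-pseudo-enriched $\infty$-categories to left pseudo-enriched $\infty$-categories in $\mV \times \mW^\rev$. By Remark \ref{rhhhj}, a weakly bi-enriched $\mM^\circledast \to \mV^\ot \times \mW^\ot$ is bi-pseudo-enriched iff every graph $\Gamma_\mM(\X,\Y)$ lies in $\mP(\mV \times \mW) \subset \mP(\Env(\mV) \times \Env(\mW))$, while a weakly left enriched $\mN^\circledast \to (\mV \times \mW^\rev)^\ot$ is left pseudo-enriched iff its graph lies in $\mP(\mV \times \mW^\rev) = \mP(\mV \times \mW)$ inside $\mP\Env(\mV \times \mW^\rev)$. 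Unwinding the definitions, the graph of $\theta(\mM)$ is obtained from $\Gamma_\mM(\X,\Y)$ by restriction along the canonical identification $\Env(\mV \times \mW^\rev) \simeq \Env(\mV) \times \Env(\mW)$ (with the appropriate reversal in the second factor), so the two conditions coincide.

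For the equivalence itself, I would construct a quasi-inverse as follows. Given a left pseudo-enriched $\mN^\circledast \to (\mV \times \mW^\rev)^\ot$, assemble a weakly bi-enriched $\mM^\circledast \to \mV^\ot \times \mW^\ot$ on the same underlying $\infty$-category by declaring
\[
\Mul_\mM(\V_1, \ldots, \V_\n, \X, \W_1, \ldots, \W_\m;\, \Y) \;:=\; \Mul_\mN\bigl((\V_1 \ot \cdots \ot \V_\n,\; \W_\m \ot \cdots \ot \W_1),\; \X;\; \Y\bigr),
\]
the reversal in the $\mW$-coordinate being forced by the $\rev$. Pseudo-enrichment of $\mN$ makes this prescription compatible with all iterated collapses, and the resulting multi-morphism spaces assemble into the fibers of a map of cocartesian fibrations $\mM^\circledast \to \mV^\ot \times \mW^\ot$ relative to the inert morphisms of $\Ass \times \Ass$ satisfying the axioms of Definition \ref{bla}. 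One verifies directly that $\mM$ is bi-pseudo-enriched and that $\theta(\mM) \simeq \mN$; fully faithfulness of $\theta$ then follows because enriched functors on either side are determined by their effect on the tensor-collapsed multi-morphism spaces $\Mul(\V, \X, \W;\,\Y)$ indexed by single $(\V,\W) \in \mV \times \mW$, which $\theta$ matches up bijectively.

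The main obstacle is promoting this pointwise construction to an honest inverse $\infty$-functor, rather than merely a bijection on equivalence classes with pointwise-equivalent mapping spaces; the coherence between the bi-pseudo-enriched and left pseudo-enriched structures has to be verified at the level of the full simplicial data of the cocartesian fibrations. The cleanest route, I expect, is to exploit Corollary \ref{coronn} together with Proposition \ref{bitte}: under the equivalences ${_\mV\omega\B\Enr_\mW} \simeq {_{\mP\Env(\mV)}\B\Enr_{\mP\Env(\mW)}}$ and its left analog, the functor $\theta$ becomes pullback along a canonical symmetric monoidal equivalence $\mP\Env(\mV \times \mW^\rev)^\ot \simeq (\mP\Env(\mV) \otimes \mP\Env(\mW))^\ot$ of presentably monoidal $\infty$-categories; the pseudo-enriched subcategories correspond on both sides, and the desired equivalence then drops out formally from the universal properties established earlier.
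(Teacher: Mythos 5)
The paper records Theorem \ref{biii} without any proof in this source, so there is nothing to compare your argument against; judged on its own merits, it has a genuine gap, and it sits exactly where the content of the theorem lies. Both of your structural claims about envelopes are false: there is no canonical identification $\Env(\mV\times_\Ass\mW^\rev)\simeq\Env(\mV)\times\Env(\mW)$, and hence no symmetric monoidal equivalence $\mP\Env(\mV\times_\Ass\mW^\rev)\simeq\mP\Env(\mV)\ot\mP\Env(\mW)$ along which $\theta$ could be "pullback". The monoidal envelope of the fiber product over $\Ass$ has as objects sequences of pairs $((\V_1,\W_1),\ldots,(\V_\n,\W_\n))$ of one common length, whereas the product of the envelopes records two sequences of independent lengths; since equivalences in an envelope preserve length, an object such as $(\V;\W_1,\W_2)$ is not in the essential image of the canonical comparison functor $\Env(\mV\times_\Ass\mW^\rev)\to\Env(\mV)\times\Env(\mW)$, which is all that exists (and which is also not fully faithful: morphisms on the target side may cover two different active maps of $\Ass$, on the source side only one). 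This mismatch is not a technicality one can base-change away: under Corollary \ref{coronn} the functor $\theta$ corresponds to restriction along this non-equivalence, and the discrepancy between equal-length interleaved strings and independent strings of $\mV$- and $\mW$-entries is precisely the information that $\theta$ forgets on general weakly bi-enriched $\infty$-categories (on which $\theta$ is certainly not an equivalence) and that pseudo-enrichment is needed to reconstruct by padding with tensor units. So the "cleanest route" you invoke to discharge the coherence problem begs the question, and your hands-on construction — whose pointwise formula $\Mul_\mM(\V_1,\ldots,\V_\n,\X,\W_1,\ldots,\W_\m;\Y):=\Mul_\mN((\V_1\ot\cdots\ot\V_\n,\W_\m\ot\cdots\ot\W_1),\X;\Y)$ is indeed the right inverse on multi-morphism spaces — is left incomplete exactly at the step you acknowledge (assembling it into a cocartesian fibration over $\mV^\ot\times\mW^\ot$ relative to inerts, functorially, and checking it inverts $\theta$).

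A repair in the spirit of your last paragraph is available but needs different ingredients. Pseudo-enrichment is $\emptyset$-enrichment (Definition \ref{Luel} and the example following it, Example \ref{Exaso}), so Corollary \ref{cosqa} with $\kappa=\tau=\emptyset$ replaces bi-pseudo-enrichment in $\mV,\mW$ by bi-enrichment in the Day-convolution $\infty$-categories $\Ind_\emptyset(\mV)=\mP(\mV)$ and $\mP(\mW)$, and left pseudo-enrichment in $\mV\times_\Ass\mW^\rev$ by left enrichment in $\mP(\mV\times_\Ass\mW^\rev)$; and because the presheaf functor $(\Cat_\infty,\times)\to(\Pr^\L,\ot)$ is symmetric monoidal, one does have a monoidal equivalence $\mP(\mV\times_\Ass\mW^\rev)\simeq\mP(\mV)\ot\mP(\mW^\rev)$ — the identification that fails for $\mP\Env$ holds for $\mP$. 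Even then you are left with comparing bi-enriched $\infty$-categories over $\mP(\mV),\mP(\mW)$ with left enriched $\infty$-categories over their tensor product, an enriched analogue of Lurie's bimodule/left-module comparison that is not among the results you cite and does not drop out formally; it requires an argument, for instance via the graph/morphism-object characterizations of Remark \ref{rhhhj} and the presentably (bi)tensored envelopes. Finally, the conclusion of your first step (that $\theta$ carries bi-pseudo-enriched to left pseudo-enriched objects) is correct, but should be argued by restricting $\Gamma_\mM$ along the canonical comparison functor above, not along the nonexistent equivalence of envelopes.
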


\begin{notation}
Let $\mV^\ot \to \Ass, \mW^\ot \to \Ass$ be monoidal $\infty$-categories, $\mu: \mV^\ot \times_\Ass \mW^\ot \to \mW^\ot$ a monoidal functor and $\mN^\circledast \to \mW^\ot$ a left pseudo-enriched $\infty$-category.
By Theorem \ref{biii} the pullback $\mu^\ast(\mN)^\circledast \to \mV^\ot \times_\Ass \mW^\ot $ along $\mu: \mV^\ot \times_\Ass \mW^\ot \to \mW^\ot$ is the pullback of a unique bipseudo-enriched $\infty$-category $\mN_\mu^\circledast \to \mV^\ot\times (\mW^\rev)^\ot$.
Let $\widetilde{\mN}_\mu^\circledast \to \mW^\ot \times (\mV^\rev)^\ot$ be the corresponding bipseudo-enriched $\infty$-category via Notation \ref{invo}.
\end{notation}

%\begin{remark}If for every $\V \in \mV$ the functor $\mu(\V,-): \mW \to \mW $ admits a right adjoint $\Gamma_\V$ and $\mN^\circledast \to \mW^\ot$ exhibits $\mN$ as right enriched in $\mW$, then $\widehat{\mN}^\circledast \to \mV^\ot\times (\mW^\rev)^\ot$ exhibits $\mN$ as right enriched in $\mW^\rev$, where the right multi-morphism object for $\V_1,...,\V_\n \in \mV, \X,\Y \in \mN$ is $\R\Mul\Mor_{\widehat{\mN}}(\V_1,...,\V_\n, \X,\Y) =\Gamma_{\V_1 \ot ... \ot \V_\n}( \R\Mor_\mN(\X,\Y)).$\end{remark}
\begin{notation}
Let $\mV^\ot \to \Ass, \mW^\ot \to \Ass$ be monoidal $\infty$-categories
and $\mu: \mV^\ot \times_\Ass \mW^\ot \to \mW^\ot$ a monoidal functor.
Let $\mN^\circledast \to \mW^\ot$ be a left pseudo-enriched $\infty$-category and $\mM^\circledast \to \mV^\ot, \mO^\circledast \to \mW^\ot$ weakly left enriched $\infty$-categories.

\begin{enumerate}
\item Let $$\Enr\Fun_\mV(\mM,\mN)^\circledast \to \mW^\ot $$ be the weakly left enriched $\infty$-category corresponding to the weakly right enriched $\infty$-category $$\Enr\Fun_{\mV, \emptyset}(\mM,\mN_\mu)^\circledast \to (\mW^\rev)^\ot.$$

\item Let $$\Enr\Fun_\mW(\mO,\mN)^\circledast \to \mV^\ot $$ be
the weakly left enriched $\infty$-category corresponding to the weakly right enriched $\infty$-category $$\Enr\Fun_{\mW, \emptyset}(\mO,\widetilde{\mN}_\mu)^\circledast \to (\mV^\rev)^\ot. $$
\end{enumerate}

\end{notation}

Proposition \ref{psinho} has the following consequences: 
\begin{corollary}\label{psinhosp}
Let $\kappa$ be a small regular cardinal, 
$\mV^\ot \to \Ass, \mW^\ot \to \Ass$ monoidal $\infty$-categories compatible with $\kappa$-small colimits and $\mu: \mV^\ot \times_\Ass \mW^\ot \to \mW^\ot$ a monoidal functor preserving $\kappa$-small colimits component-wise.
Let $\mN^\circledast \to \mW^\ot$ be a left $\kappa$-enriched $\infty$-category and $\mM^\circledast \to \mV^\ot, \mO^\circledast \to \mW^\ot$ weakly left enriched $\infty$-categories.
The following weakly left enriched $\infty$-categories are left $\kappa$-enriched:
$$\Enr\Fun_{\mV}(\mM, {\mN})^\circledast \to \mW^\ot, \ \Enr\Fun_\mW(\mO,\mN)^\circledast \to \mV^\ot.$$
	
\end{corollary}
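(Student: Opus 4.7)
The plan is to reduce both statements to Proposition \ref{psinho}(1) via the equivalence ${_\mV\P\B\Enr_\mW} \simeq {_{\mV \times \mW^\rev}\P\L\Enr_\emptyset}$ of Theorem \ref{biii} combined with the involution $(-)^\rev$ of Notation \ref{invo}. By construction, $\Enr\Fun_\mV(\mM,\mN)^\circledast \to \mW^\ot$ is the weakly left $\mW$-enriched $\infty$-category corresponding under Notation \ref{invo} to $\Enr\Fun_{\mV,\emptyset}(\mM,\mN_\mu)^\circledast \to (\mW^\rev)^\ot$, and similarly $\Enr\Fun_\mW(\mO,\mN)^\circledast \to \mV^\ot$ corresponds to $\Enr\Fun_{\mW,\emptyset}(\mO,\widetilde{\mN}_\mu)^\circledast \to (\mV^\rev)^\ot$. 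Since left $\kappa$-enrichment in $\mW$ is the same datum as right $\kappa$-enrichment in $\mW^\rev$ under Notation \ref{invo}, it suffices to verify that these two right enriched functor $\infty$-categories are right $\kappa$-enriched.

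To each I would apply Proposition \ref{psinho}(1) directly, with the role of $\mW^\ot$ in that proposition played by $(\mW^\rev)^\ot$ for the first statement and by $(\mV^\rev)^\ot$ for the second---each of which inherits compatibility with $\kappa$-small colimits from the respective original monoidal $\infty$-category. The only non-trivial hypothesis becomes, respectively, that $\mN_\mu^\circledast \to \mV^\ot \times (\mW^\rev)^\ot$ is right $\kappa$-enriched and that $\widetilde{\mN}_\mu^\circledast \to \mW^\ot \times (\mV^\rev)^\ot$ is right $\kappa$-enriched. Because $\widetilde{\mN}_\mu = (\mN_\mu)^\rev$ and the involution $(-)^\rev$ swaps left and right enrichment, both reduce to the single claim that $\mN_\mu^\circledast \to \mV^\ot \times (\mW^\rev)^\ot$ is $\kappa,\kappa$-bi-enriched.

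For this single claim I would use that $\mN_\mu$ corresponds under Theorem \ref{biii} to the pullback $\mu^\ast(\mN)^\circledast \to \mV^\ot \times_\Ass \mW^\ot$, which after reducing by bi-pseudo-enrichment to a single $\V \in \mV$ and a single $\W \in \mW$ yields
\[
\Mul_{\mN_\mu}(\V, \X, \W; \Y) \;\simeq\; \Mul_\mN(\mu(\V, \W); \X; \Y).
\]
Fixing $\V$, the functor $\W \mapsto \mu(\V, \W)\colon \mW \to \mW$ preserves $\kappa$-small colimits since $\mu$ does so component-wise, and composing with $\Mul_\mN(-; \X; \Y)\colon \mW^\op \to \mS$, which sends $\kappa$-small colimits to $\kappa$-small limits by the left $\kappa$-enrichment of $\mN$, yields the right $\kappa$-enrichment of $\mN_\mu$ in $\mW^\rev$; the symmetric computation fixing $\W$ gives the left $\kappa$-enrichment in $\mV$. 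The main obstacle I anticipate is unwinding the correspondence of Theorem \ref{biii} precisely enough to justify the displayed multi-morphism identification, together with ensuring that the smallness hypotheses of Proposition \ref{psinho}(1) are in force---either as implicit in the setup or by a routine universe enlargement.
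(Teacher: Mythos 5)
Your proposal is correct and follows essentially the route the paper intends: the paper states the corollary as a direct consequence of Proposition \ref{psinho}(1), and your argument simply makes explicit the reduction via Theorem \ref{biii} and the $(-)^\rev$-involution to the claim that $\mN_\mu^\circledast \to \mV^\ot \times (\mW^\rev)^\ot$ is $\kappa,\kappa$-bi-enriched, which your multi-morphism computation using the component-wise colimit preservation of $\mu$ and the left $\kappa$-enrichment of $\mN$ establishes. The smallness/universe caveat you flag is handled the same way (implicitly) in the paper, so it is not a genuine gap.
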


\begin{corollary}\label{psinhosp2}
Let $\mV^\ot \to \Ass, \mW^\ot \to \Ass$ be monoidal $\infty$-categories, $\mu: \mV^\ot \times_\Ass \mW^\ot \to \mW^\ot$ a monoidal functor, $\mM^\circledast \to \mV^\ot, \mO^\circledast \to \mW^\ot$ small weakly left enriched $\infty$-categories and $\mN^\circledast \to \mW^\ot$ a left enriched $\infty$-category. 
	
\begin{enumerate}
\item If $\mW$ admits small limits, the monoidal structure on $\mW$ is closed and for every $\mV \in \mV$ the functor $\mu(\V,-): \mW \to \mW$ admits a right adjoint, the weakly left enriched $\infty$-category		
$$\Enr\Fun_{\mV}(\mM, {\mN})^\circledast \to \mW^\ot$$
is a left enriched $\infty$-category.
		
\item If $\mV$ admits small limits, the monoidal structure on $\mV$ is closed and for every $\mW \in \mW$ the functor $\mu(-,\W): \mV \to \mW$ admits a right adjoint, the weakly left enriched $\infty$-category		
$$\Enr\Fun_\mW(\mO,\mN)^\circledast \to \mV^\ot $$ is a left enriched $\infty$-category.	
\end{enumerate}
\end{corollary}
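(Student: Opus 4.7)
The plan is to deduce both parts from Theorem~\ref{psinho}~(2), using Theorem~\ref{biii} to convert between bi-pseudo-enrichment and left pseudo-enrichment, and the involution $(-)^\rev$ (Proposition~\ref{oppoen}) to convert between left and right enrichment. This is the natural non-$\kappa$ analog of the argument behind Corollary~\ref{psinhosp}.

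For part~(1), by construction $\Enr\Fun_\mV(\mM,\mN)^\circledast \to \mW^\ot$ corresponds under $(-)^\rev$ to the weakly right enriched $\infty$-category $\Enr\Fun_{\mV,\emptyset}(\mM,\mN_\mu)^\circledast \to (\mW^\rev)^\ot$; since the involution identifies left enrichment in $\mW$ with right enrichment in $\mW^\rev$, it suffices to prove the latter is right enriched. I would then apply Theorem~\ref{psinho}~(2) with $\mW$ there replaced by $\mW^\rev$ and $\mN$ replaced by $\mN_\mu^\circledast \to \mV^\ot \times (\mW^\rev)^\ot$. The required condition on $\mW^\rev$ — small limits, self right-enrichment, and small-limit preservation of the internal hom — follows at once: closedness of $\mW$'s monoidal structure is symmetric in the two tensor variables, so $\mW^\rev$ inherits a closed monoidal structure and is thus canonically right enriched in itself via an internal hom that is a right adjoint (and therefore preserves small limits), while the small limits in $\mW$ transfer to $\mW^\rev$.

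The remaining hypothesis, right enrichment of $\mN_\mu$ in $\mW^\rev$, is the technical core. Unwinding the Theorem~\ref{biii} correspondence, the multi-morphism spaces of $\mN_\mu$ are identified with those of $\mu^*(\mN)$, which by definition compute as multi-morphism spaces in $\mN$ after feeding $\mu$. For $\X,\Y \in \mN$, the functor $\Mul_{\mN_\mu}(\X,-_1,\ldots,-_\m;\Y)$ on $\mW^\rev$ then decomposes, via the left enrichment of $\mN$ in $\mW$ and closedness of $\mW$, into a composition of representable functors whose representability at each stage uses the right adjoint to $\mu(\V,-): \mW \to \mW$ guaranteed by the hypothesis of~(1). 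Part~(2) is formally dual: it uses the bi-pseudo-enriched $\infty$-category $\widetilde\mN_\mu^\circledast \to \mW^\ot \times (\mV^\rev)^\ot$ and Theorem~\ref{psinho}~(2) with $\mV^\rev$ in the role of $\mW$, the verification being carried out via the closedness of $\mV$ together with the right adjoints of $\mu(-,\W): \mV \to \mW$.

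The main obstacle will be the explicit identification of the right morphism objects of $\mN_\mu$ (respectively $\widetilde\mN_\mu$) in $\mW^\rev$ (respectively $\mV^\rev$) starting from the left morphism objects of $\mN$ in $\mW$ and the adjoints of $\mu$. The bookkeeping between the Theorem~\ref{biii} correspondence, the reversal of the $\mW$-factor (respectively $\mV$-factor), and the multi-variable representability is somewhat delicate, and this is the precise place where the closedness and right-adjoint hypotheses of (1) and (2) are genuinely needed.
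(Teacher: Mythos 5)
Your proposal is correct and follows exactly the route the paper intends: the paper states Corollary \ref{psinhosp2} as a direct consequence of Theorem \ref{psinho}~(2), applied through the definitions of $\Enr\Fun_\mV(\mM,\mN)^\circledast$ and $\Enr\Fun_\mW(\mO,\mN)^\circledast$ via $\mN_\mu$, $\widetilde{\mN}_\mu$ and the $(-)^\rev$-involution, with the closedness and right-adjoint hypotheses supplying both the self-enrichment of $\mW^\rev$ (resp.\ $\mV^\rev$) with limit-preserving morphism objects and the right (multi-)morphism objects of $\mN_\mu$ (resp.\ $\widetilde{\mN}_\mu$), just as you outline. Your flagged ``technical core'' does work out: the right multi-morphism object is simply the image of $\L\Mor_\mN(\X,\Y)$ under the right adjoint of $\mu(\V_1\ot\cdots\ot\V_\n,-)$ (resp.\ of $\mu(-,\W_1\ot\cdots\ot\W_\m)$).
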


\begin{notation}Let $\mV^\ot \to \Ass, \mW^\ot \to \Ass$ be monoidal $\infty$-categories and $\mu: \mV^\ot \times_\Ass \mW^\ot \to \mW^\ot$ a monoidal functor.
Let $\mN^\circledast \to \mW^\ot$ be a left pseudo-enriched $\infty$-category and $\mM^\circledast \to \mV^\ot, \mO^\circledast \to \mW^\ot$ weakly left enriched $\infty$-categories, $\mH$ a collection of left weights over $\mV$ and $\mH'$ a collection of left weights over $\mW.$

\begin{enumerate}
\item Let
$$\Enr\Fun_\mV^\mH(\mM, \mN)^\circledast \subset \Enr\Fun_\mV(\mM, \mN)^\circledast$$ be the full weakly left enriched subcategory of left $\mV$-enriched functors $\mM \to \mN$ preserving $\mH$-weighted colimits.
\item Let
$$\Enr\Fun_\mW^{\mH'}(\mO, \mN)^\circledast \subset \Enr\Fun_\mW(\mO, \mN)^\circledast$$ be the full weakly left enriched subcategory of left $\mW$-enriched functors $\mO \to \mN$ preserving $\mH'$-weighted colimits.
\end{enumerate}	
\end{notation}

Proposition \ref{Enric} gives the following corollary:

\begin{corollary}\label{Enrichi}
	
Let $\mV^\ot \to \Ass, \mW^\ot \to \Ass$ be monoidal $\infty$-categories and $\mu: \mV^\ot \times_\Ass \mW^\ot \to \mW^\ot$ a monoidal functor.
Let $\mN^\circledast \to \mW^\ot$ be a left pseudo-enriched $\infty$-category and $\mM^\circledast \to \mV^\ot, \mO^\circledast \to \mW^\ot$ weakly left enriched $\infty$-categories, $\mH$ a collection of left weights over $\mV$ and $\mH'$ a collection of left weights over $\mW.$

\begin{enumerate}
\item If $\mN^\circledast \to \mW^\ot$ admits $\mH'$-weighted colimits and $\mu(\tu_\mV,-):\mW^\ot \to \mW^\ot$ sends weights of $\mH$ to weights of $\mH'$,
then $\Enr\Fun_{\mV, \emptyset}(\mM,\mN)^\circledast \to \mW^\ot $ admits $\mH'$-weighted colimits, $\Enr\Fun_\mV^{\mH}(\mM, {\mN})^\circledast \subset \Enr\Fun_{\mV}(\mM,\mN)^\circledast $ is closed under $\mH'$-weighted colimits and for every $\X \in \mM$ the left $\mW$-enriched functor $\Enr\Fun_{\mV}(\mM,\mN)^\circledast \to \mN^\circledast$ preserves $\mH'$-weighted colimits.

\item If $\mN^\circledast \to \mW^\ot$ admits $\mH$-weighted colimits and $\mu(-,\tu_\mW):\mV^\ot \to \mW^\ot$ sends weights of $\mH$ to weights of $\mH'$,  then $\Enr\Fun_\mW(\mO, {\mN})^\circledast \to \mV^\ot $ admits $\mH$-weighted colimits, $\Enr\Fun_\mW^{\mH'}(\mO, {\mN})^\circledast \subset \Enr\Fun_\mW(\mO, {\mN})^\circledast $ is closed under $\mH$-weighted colimits and for every $\X \in \mO$ the left $\mV$-enriched functor $ \Enr\Fun_\mW(\mO, {\mN})^\circledast \to \mN^\circledast$ preserves $\mH$-weighted colimits.
\end{enumerate}	
\end{corollary}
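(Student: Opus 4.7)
The plan is to deduce both statements from Theorem \ref{Enric} by unpacking the definition of the enriched functor categories in terms of their right-enriched counterparts, after interchanging sides via the involution of Notation \ref{invo}.

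For (1), by construction $\Enr\Fun_\mV(\mM, \mN)^\circledast \to \mW^\ot$ corresponds under the involution $(-)^\rev$ to the weakly right enriched $\infty$-category $\Enr\Fun_{\mV, \emptyset}(\mM, \mN_\mu)^\circledast \to (\mW^\rev)^\ot$, where $\mN_\mu^\circledast \to \mV^\ot \times (\mW^\rev)^\ot$ is the bi-pseudo-enriched $\infty$-category produced by Theorem \ref{biii} whose pullback along $\mu$ recovers $\mu^*(\mN)$. Thus it suffices to apply Theorem \ref{Enric} to this bi-enriched $\infty$-category, viewing $\mH'$ as a set of right weights over $\mW^\rev$ under $(-)^\rev$.

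First I would verify the hypothesis of Theorem \ref{Enric}, namely that $\mN_\mu^\circledast$ admits $\sigma_!(\mH')$-weighted colimits, where $\sigma : \emptyset^\ot \to \mV^\ot$ is the unique map. Since the pullback of $\mN_\mu^\circledast$ along $\sigma \times \id$ recovers the weakly right enriched $\infty$-category underlying $\mN^\circledast \to \mW^\ot$ after the equivalence $(\mW^\rev)^\ot \simeq \mW^\ot$, and this admits $\mH'$-weighted colimits by assumption, Corollary \ref{eccco} produces the required $\sigma_!(\mH')$-weighted colimits in $\mN_\mu$. Applying Theorem \ref{Enric} then directly yields that $\Enr\Fun_{\mV, \emptyset}(\mM, \mN_\mu)^\circledast$ admits $\mH'$-weighted colimits, that evaluation at each $\X \in \mM$ preserves them, and that for any set of diagrams $\Lambda$ in $\mM$ the full subcategory $\Enr\Fun_{\mV, \emptyset}^\Lambda(\mM, \mN_\mu)^\circledast$ is closed under them. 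Taking $\Lambda = \Lambda^{(\mM, \mH)}$, the hypothesis on $\mu$ is what identifies $\Enr\Fun_\mV^\mH(\mM, \mN)$ -- i.e. left $\mV$-enriched functors $\mM \to \mN$ preserving $\mH$-weighted colimits -- with $\Enr\Fun_{\mV, \emptyset}^{\Lambda^{(\mM, \mH)}}(\mM, \mN_\mu)$ via the correspondence between the enrichment of $\mN$ and the bi-enrichment of $\mN_\mu$. Translating the conclusions back through $(-)^\rev$ gives (1).

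Part (2) follows by the symmetric argument applied to $\widetilde{\mN}_\mu^\circledast \to \mW^\ot \times (\mV^\rev)^\ot$, with $\mH$ treated as a set of right weights over $\mV^\rev$, the roles of $\mV$ and $\mW$ exchanged, and the hypothesis $\mu(-, \tu_\mW)$ sending $\mH$ to $\mH'$ playing the role that $\mu(\tu_\mV, -)$ played above. The main obstacle will be the careful bookkeeping verifying that the two monoidal-functor hypotheses on $\mu$ together with the reversal correctly translate the preservation condition on $\mH$-weighted colimits into the diagram-preservation condition required as input to Theorem \ref{Enric}; once this identification is carried out, both claims are direct applications.
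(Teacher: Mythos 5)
Your overall strategy coincides with the paper's: the paper offers no written argument beyond the phrase ``Proposition \ref{Enric} gives the following corollary'', so unpacking $\Enr\Fun_\mV(\mM,\mN)^\circledast \to \mW^\ot$ as the reverse of $\Enr\Fun_{\mV,\emptyset}(\mM,\mN_\mu)^\circledast \to (\mW^\rev)^\ot$ and feeding this into Theorem \ref{Enric} is exactly the intended route. However, your verification of the hypothesis of Theorem \ref{Enric} has a genuine gap. First, the pullback of $\mN_\mu^\circledast$ along $\sigma \times \id$ is \emph{not} the weakly left enriched $\infty$-category $\mN^\circledast \to \mW^\ot$: since $\Mul_{\mN_\mu}(\X,\W_1,\dots,\W_\m;\Y) \simeq \Mul_\mN(\mu(\tu_\mV,\W_\m\ot\dots\ot\W_1),\X;\Y)$, it is $\mN$ with its enrichment pulled back along the monoidal functor $\mu(\tu_\mV,-)$, which is the identity only when $\mu$ is an action map, not for a general monoidal functor as in the statement. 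This twist is precisely where the hypothesis relating weights via $\mu(\tu_\mV,-)$ (resp.\ $\mu(-,\tu_\mW)$ in (2)) has to be used to guarantee \emph{existence} of the relevant weighted colimits, whereas you invoke the hypothesis on $\mu$ only for identifying $\Enr\Fun^\mH_\mV(\mM,\mN)$ with a diagram-preserving subcategory.

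Second, and more seriously, Corollary \ref{eccco} runs in the opposite direction from the way you use it: it transfers weighted colimits from a weakly bi-enriched $\infty$-category \emph{down} to a pullback of its enrichment (cf.\ Proposition \ref{pullba}, whose converse already presupposes existence on the richer side), so it cannot produce $\sigma_!(\mH')$-weighted colimits in $\mN_\mu$ from colimits in its restricted $\mW^\rev$-enrichment. The implication you need does hold here, but for a different reason: because $\mN_\mu$ is bi-pseudo-enriched and every multimorphism space $\Mul_{\mN_\mu}(\V_1,\dots,\V_\n,\Y,\W_1,\dots,\W_\m;\Z)$ is, via $\mu$, a multimorphism space of $\mN$ with entries from $\mW$, an $\rH'$-weighted colimit computed in $\mN$ against its full $\mW$-pseudo-enrichment (applied to the functor corresponding to $\F$, whose weight has been pushed forward along $\mu(\tu_\mV,-)$, whence the hypothesis) automatically satisfies the universal property of Definition \ref{weight} for the $\mV,\mW^\rev$-bi-enrichment of $\mN_\mu$. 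Supplying this comparison (e.g.\ along the lines of Lemma \ref{Enristo}/Lemma \ref{lif}) in place of the appeal to Corollary \ref{eccco} repairs the argument; as written, the key existence step is unjustified.
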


For the next theorem we use Example \ref{Exx}:

\begin{theorem}\label{Thhs} Let $\kappa$ be a small regular cardinal, 
$\mV^\ot \to \Ass, \mW^\ot \to \Ass$ monoidal $\infty$-categories compatible with $\kappa$-small colimits and $\mu: \mV^\ot \times_\Ass \mW^\ot \to \mW^\ot$ a monoidal functor preserving $\kappa$-small colimits component-wise.
Let $\mH, \mH'$ be sets of small left weights over $\mV$, over $\mW$, respectively, such that $\mu(-,\tu_\mW):\mV^\ot \to \mW^\ot$ sends left weights of $\mH$ to left weights of $\mH'$.
Let $\mM^\circledast \to \mV^\ot,\mO^\ot \to \mW^\ot, \mN^\circledast \to \mW^\ot$ be left $\kappa$-enriched $\infty$-categories that admit $\mH$-weighted colimits, $\mH'$-weighted colimits, respectively. There are canonical equivalences
$$\Enr\Fun^{\mH'}_\mW(\mM \otimes^{\Enr_\kappa}_{\mH,\mH'} \mO, \mN) \simeq\Enr\Fun^{\mH'}_\mW(\mO,\Enr\Fun^\mH_\mV(\mM, \mN))\simeq\Enr\Fun^{\mH}_\mV(\mM,\Enr\Fun^{\mH'}_\mW(\mO, \mN)).$$

%\item If $\mW$ admits small limits, the monoidal structure on $\mW$ is closed, for every $\V \in \mV$ the functor $\V \ot (-): \mW \to \mW $ admits a right adjointand $\mM^\circledast \to \mV^\ot,\mO^\ot \to \mW^\ot, \mN^\circledast \to \mW^\ot$ are left enriched $\infty$-categories, there are canonical equivalences$$\Enr\Fun^{\mH'}_\mW(\mM \otimes^{\Enr}_\mH \mO, \mN) \simeq\Enr\Fun^{\mH'}_\mW(\mO,\Enr\Fun^\mH_\mV(\mM, \mN))$$$$\simeq\Enr\Fun^{\mH}_\mV(\mM,\Enr\Fun^\mH_\mV(\mO, \mN)).$$\end{enumerate}

\end{theorem}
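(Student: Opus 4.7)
The plan is to reduce Theorem \ref{Thhs} to the already established bi-enriched analogue (Theorem \ref{Thhsa}) together with the Fubini identifications of Remark \ref{hvL}, passing through the correspondence between left $\mV\times\mW$-enriched $\infty$-categories and $\mV,\mW^\rev$-bi-pseudo-enriched ones from Theorem \ref{biii}.

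First I would unfold the left-hand side via the explicit description of the tensor product. By Remark \ref{Exx}(2),
$$(\mM \otimes^{\Enr_\kappa}_{\mH,\mH'} \mO)^\circledast \simeq \mP\B\Env^{\mH'}_{\mu_!(\Lambda^{(\mM,\mH)} \boxtimes \Lambda^{(\mO,\mH')})}(\mu_!(\mM \times \mO))^\circledast_{\L\Enr_\kappa},$$
and the universal property of Corollary \ref{wooond1}(1), combined with the $\mu_!\dashv\mu^*$ adjunction between left $\kappa$-enriched $\infty$-categories over $\mW$ and over $\mV\times\mW$, identifies
$$\Enr\Fun^{\mH'}_\mW(\mM \otimes^{\Enr_\kappa}_{\mH,\mH'} \mO, \mN) \simeq \Enr\Fun^{\Lambda^{(\mM,\mH)} \boxtimes \Lambda^{(\mO,\mH')}}_{\mV\times\mW,\emptyset}(\mM \times \mO, \mu^*(\mN)).$$
Next, Theorem \ref{biii} identifies this with $\Enr\Fun^{\Lambda^{(\mM,\mH)} \oplus \Lambda^{(\mO^\rev,\mH'^\rev)}}_{\mV,\mW^\rev}(\mM \times \mO^\rev, \mN_\mu)$, matching diagram collections via Notation \ref{addi} and the involution of Notation \ref{invo}. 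Theorem \ref{Thhsa}, applied in this $\mV,\mW^\rev$-bi-enriched setting, then splits the expression in both orders, giving iterated functor $\infty$-categories over $\mV$ and $\mW^\rev$ respectively. Translating back through the defining identifications of $\Enr\Fun_\mV(\mM,\mN)^\circledast\to\mW^\ot$ and $\Enr\Fun_\mW(\mO,\mN)^\circledast\to\mV^\ot$, together with Corollaries \ref{psinhosp} and \ref{Enrichi} which guarantee these are left $\kappa$-enriched and admit the required $\mH'$- or $\mH$-weighted colimits, yields both claimed equivalences.

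The principal obstacle will be the bookkeeping of diagram collections under the chain of equivalences: one must verify that the pushforward $\mu_!(\Lambda^{(\mM,\mH)} \boxtimes \Lambda^{(\mO,\mH')})$ corresponds under the $\mu_!\dashv\mu^*$ adjunction and Theorem \ref{biii} exactly to $\Lambda^{(\mM,\mH)} \oplus \Lambda^{(\mO^\rev,\mH'^\rev)}$, and that the hypothesis $\mu(-,\tu_\mW)(\mH) \subset \mH'$ is precisely what is needed for Corollary \ref{Enrichi} to produce compatible enrichments and weighted colimits on the iterated functor $\infty$-categories, so that the outer full subcategories of Theorem \ref{Thhsa} match the subcategories of functors landing in $\Enr\Fun^\mH_\mV(\mM,\mN)$ and $\Enr\Fun^{\mH'}_\mW(\mO,\mN)$.
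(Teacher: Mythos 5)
Your route is genuinely different from the paper's. The paper's proof is short: it cites \cite[Proposition 5.58.]{Heine2024Lur}, which already packages the currying equivalences
$$\Enr\Fun_{\mV, \emptyset}(\mM, \Enr\Fun_\mW(\mO, \mN)) \simeq \Enr\Fun_{\mW, \emptyset}(\mu_!(\mM \times \mO), \mN) \simeq \Enr\Fun_{\mW, \emptyset}(\mO, \Enr\Fun_\mV(\mM, \mN))$$
for the pushforward along the action $\mu$, observes that these restrict to the subcategories of functors preserving the relevant weighted colimits (with diagram set $\mu_!(\Lambda^{(\mM,\mH)} \boxtimes \Lambda^{(\mO,\mH')})$ in the middle), and then identifies the middle term with $\Enr\Fun^{\mH'}_\mW(\mM \otimes^{\Enr_\kappa}_{\mH,\mH'} \mO, \mN)$ via the construction of the tensor product (Remark \ref{Exx}(2)) and the universal property of adjoining weighted colimits. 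You instead reconstruct the content of that external proposition from ingredients internal to the paper: the decorated pushforward/pullback adjunction $\mu_! \dashv \mu^*$ (Lemma \ref{lllee}, Corollary \ref{ujpp}, Proposition \ref{pullba}), the comparison between left $\mV\times_\Ass\mW$-enriched and $\mV,\mW^\rev$-bi-pseudo-enriched $\infty$-categories (Theorem \ref{biii}, which is exactly how $\Enr\Fun_\mV(\mM,\mN)$ and $\Enr\Fun_\mW(\mO,\mN)$ are defined via $\mN_\mu$ and $\widetilde{\mN}_\mu$), and bi-enriched currying restricted to diagram-preserving functors. What your approach buys is self-containedness and transparency about where the hypothesis $\mu(-,\tu_\mW)(\mH)\subset\mH'$ enters (via Corollary \ref{Enrichi}); what it costs is all the bookkeeping of diagram collections across the chain, which the cited proposition hides.

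One concrete correction: Theorem \ref{Thhsa} cannot be invoked as stated for the splitting step. Its hypotheses require the localization pairs $(\mV,\rS),(\mW,\T)$ to consist of morphisms of $\Env(\mV)$, $\Env(\mW)$ (this representability is what makes Lemma \ref{locL} work), and the $\kappa$-enrichment pairs $\Enr^\kappa_\mV$ contain morphisms whose sources are colimits in $\mP\Env(\mV)$, hence are not of this form; moreover \ref{Thhsa} concerns the completed external product $\mM\times^{\mH,\mH'}_{\rS,\T}\mO$, which is not the object $\mu_!(\mM\times\mO)$ appearing here. What you actually need at that point is only the currying equivalence restricted along diagram collections, i.e.\ Remark \ref{hvL} (a restriction of Proposition \ref{lehmmm}), which holds without any localization-pair hypotheses; with that substitution, together with a careful check (via Proposition \ref{pullba} and the construction of cartesian lifts in Lemma \ref{lllee}) that ``sending $\Lambda^{(\mM,\mH)}\boxtimes\Lambda^{(\mO,\mH')}$ to colimit diagrams in $\mu^*(\mN)$'' matches ``sending $\mu_!(\Lambda^{(\mM,\mH)}\boxtimes\Lambda^{(\mO,\mH')})$ to colimit diagrams in $\mN$'', and with the $(-)^\rev$/Notation \ref{invo} bookkeeping spelled out for the second equivalence through $\widetilde{\mN}_\mu$, your argument goes through.
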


\begin{proof}By \cite[Proposition 5.59.]{heine2024bienriched} there are canonical equivalences $$\Enr\Fun_{\mV, \emptyset}(\mM, \Enr\Fun_\mW(\mO, \mN)) \simeq \Enr\Fun_{\mW, \emptyset}(\mu_!(\mM \times \mO), \mN) \simeq \Enr\Fun_{\mW, \emptyset}(\mO, \Enr\Fun_\mV(\mM, \mN)),$$ which restrict to equivalences
$$\Enr\Fun_{\mV, \emptyset}^\mH(\mM, \Enr\Fun^{\mH'}_\mW(\mO, \mN)) \simeq \Enr\Fun^{\mu!(\Lambda^{(\mM,\mH)} \boxtimes \Lambda^{(\mO,\mH')})}_{\mW, \emptyset}(\mu_!(\mM \times \mO), \mN)$$$$ \simeq \Enr\Fun_{\mW, \emptyset}^{\mH'}(\mO, \Enr\Fun^\mH_\mV(\mM, \mN)).$$
%where the $\infty$-category in the middle is the full subcategory of functorspreserving $\mH$-weighted colimits in the first component and  $\mH'$-weighted colimits in the second component (see Notation \ref{notabene}).

By construction of the left action as $(\mM \otimes^{\Enr_\kappa}_{\mH.\mH'} \mO)^\circledast \simeq \mP\B\Env_{\mu_!(\Lambda^{(\mM,\mH)} \boxtimes \Lambda^{(\mO,\mH')})}^{\mH'}(\mu!(\mM \times \mO))^\circledast_{\L\Enr_\kappa} $ the left $\mW$-enriched functor $\mu_!(\mM \times \mO)^\circledast \to (\mM \otimes^{\Enr_\kappa}_{\mH,\mH'} \mO)^\circledast $ induces an equivalence
$$\Enr\Fun^{\mH'}_{\mW, \emptyset}(\mM \otimes^{\Enr_\kappa}_\mH \mO, \mN) \simeq \Enr\Fun^{\mu_!(\Lambda^{(\mM,\mH)} \boxtimes \Lambda^{(\mO,\mH')})}_{\mW, \emptyset}(\mu_!(\mM \times \mO), \mN).$$

\end{proof}

We obtain the following corollary using Corollary \ref{psinho}:

\begin{corollary}\label{Qa}
Let $1 \leq \bk \leq \infty$ and $\kappa$ a small regular cardinal, $\mV^\boxtimes \to \bE_{\bk+1}$ an $\bE_{\bk+1}$-monoidal $\infty$-category compatible with $\kappa$-small colimits and $\mH$ a set of small left weights over $\mV$.
\begin{enumerate}
\item The $\bE_\bk$-monoidal structure on $^\kappa_\mV\L\Enr_\emptyset(\mH)$ of Theorem \ref{thqaz} is closed. 
\item If $\mV^\boxtimes \to \bE_{\bk+1}$ is a presentably $\bE_{\bk+1}$-monoidal $\infty$-category, the $\bE_\bk$-monoidal structure on $_\mV\L\Enr_\emptyset(\mH)$ of Theorem \ref{Thor} is closed. 
\end{enumerate}
	
\end{corollary}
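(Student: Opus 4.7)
\textbf{Proof proposal for Corollary \ref{Qa}.}

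Since closedness of an $\bE_\bk$-monoidal structure for $\bk \geq 1$ is a property of the underlying monoidal ($\bE_1$) structure, it suffices to produce, for each $\mO \in {^\kappa_\mV\L\Enr}(\mH)$, a right adjoint to $(-) \otimes^{\L\Enr_\kappa}_\mH \mO$ on ${^\kappa_\mV\L\Enr}(\mH)$. The candidate internal hom is $\Enr\Fun^\mH_\mV(\mO,-)$. To set up Theorem \ref{Thhs}, take $\mW := \mV$ and let $\mu : \mV^\ot \times_\Ass \mV^\ot \to \mV^\ot$ be the monoidal multiplication, which is a genuine monoidal functor because the $\bE_{\bk+1}$-structure on $\mV$ restricts along $\bE_2 \subset \bE_{\bk+1}$ (using $\bk \geq 1$) to a braided monoidal structure, hence $\mV$ refines to an associative algebra in $\Mon_\kappa$; $\mu$ preserves $\kappa$-small colimits in each variable because $\mV^\ot \to \Ass$ is compatible with $\kappa$-small colimits. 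Set $\mH' := \mH$; the compatibility hypothesis $\mu(-,\tu_\mV) = \id_\mV$ trivially sends $\mH$ into $\mH$.

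Theorem \ref{Thhs} then supplies, for $\mM, \mO, \mN \in {^\kappa_\mV\L\Enr}(\mH)$, a natural chain of equivalences
\[
\Enr\Fun^{\mH}_\mV(\mM \otimes^{\L\Enr_\kappa}_{\mH} \mO, \mN) \simeq \Enr\Fun^{\mH}_\mV(\mO,\Enr\Fun^{\mH}_\mV(\mM, \mN)) \simeq \Enr\Fun^{\mH}_\mV(\mM,\Enr\Fun^{\mH}_\mV(\mO, \mN)),
\]
where I use Remark \ref{Exx}(1) to identify $\otimes^{\Enr_\kappa}_{\mH,\mH}$ with the monoidal tensor product $\otimes^{\L\Enr_\kappa}_\mH$ on ${^\kappa_\mV\L\Enr}(\mH)$ (both being characterized by the same universal property supplied by Proposition \ref{wooond}). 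Passing to underlying mapping spaces by evaluating at the tensor unit recovers the hom-adjunction defining a right adjoint to $(-)\otimes^{\L\Enr_\kappa}_\mH \mO$.

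To conclude (1), I need to check that $\Enr\Fun^\mH_\mV(\mO,\mN)$ actually belongs to ${^\kappa_\mV\L\Enr}(\mH)$. Corollary \ref{psinhosp} applied to $\mW := \mV$ and $\mu$ the multiplication shows that $\Enr\Fun_\mV(\mO,\mN)^\circledast \to \mV^\ot$ is left $\kappa$-enriched; Corollary \ref{Enrichi}(2), together with $\mu(\tu_\mV,-) = \id_\mV$ (which sends $\mH$ into $\mH$), shows that this enriched functor category admits $\mH$-weighted colimits and that the full subcategory $\Enr\Fun^\mH_\mV(\mO,\mN) \subset \Enr\Fun_\mV(\mO,\mN)$ is closed under such, so that $\Enr\Fun^\mH_\mV(\mO,\mN)^\circledast \to \mV^\ot$ is again a left $\kappa$-enriched $\infty$-category admitting $\mH$-weighted colimits.

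For (2), presentability of $\mV^\boxtimes \to \bE_{\bk+1}$ gives that $\mV$ is closed and admits small limits, so Corollary \ref{psinhosp2}(1) upgrades the left $\kappa$-enrichment above to genuine left enrichment, ensuring $\Enr\Fun^\mH_\mV(\mO,\mN) \in {_\mV\L\Enr}(\mH)$; the $\bE_\bk$-monoidal inclusion ${_\mV\L\Enr}(\mH)^\ot \subset {_\mV\L\Q\Enr}(\mH)^\ot$ of Theorem \ref{thqaz}(3) ensures that the internal hom constructed within $_\mV\L\Q\Enr(\mH)$ restricts to one on $_\mV\L\Enr(\mH)$. The main delicate point is verifying that the chain of equivalences in Theorem \ref{Thhs} is really an adjunction-defining natural isomorphism on the nose at the $\infty$-categorical level rather than just a pointwise bijection; this is however guaranteed by the fact that $\Enr\Fun^\mH_\mV(\mM,-)$ is itself built as an enriched mapping object, so the naturality is part of its construction as an enriched functor $\infty$-category.
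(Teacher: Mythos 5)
Your proposal is correct and follows essentially the same route as the paper, which obtains the corollary by specializing Theorem \ref{Thhs} to $\mW=\mV$, $\mH'=\mH$ and $\mu$ the multiplication (available from the $\bE_{\bk+1}$-structure with $\bk\geq 1$), with Corollary \ref{psinho} (via \ref{psinhosp}, \ref{psinhosp2}) and Corollary \ref{Enrichi} guaranteeing that the candidate internal hom $\Enr\Fun^{\mH}_\mV(\mO,\mN)$ is again a (quasi-/$\kappa$-)enriched $\infty$-category with $\mH$-weighted colimits. The only cosmetic slip is pairing Corollary \ref{Enrichi}(2) with the unit condition $\mu(\tu_\mV,-)=\id$ rather than $\mu(-,\tu_\mV)=\id$, which is immaterial here since both are identities.
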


\section{Applications}

\subsection{Monoidality of enriched presheaves}

In the following we prove that the functor of enriched presheaves is monoidal.
We use the notation of Example \ref{rfgp}:

\begin{corollary}\label{Preml}
Let $\mV^\boxtimes \to \bE_{\bk+1} $ be an $\bE_{\bk+1}$-monoidal $\infty$-category compatible with small colimits for $1 \leq \bk \leq \infty$.
The functor $$_\mV\L\Q\Enr_\emptyset \to {_\mV\rc\rc\LMod_\emptyset}, (\mM^\circledast \to \mV^\ot) \mapsto (\mP_\mV(\mM)^\circledast \to \mV^\ot)$$
is $\bE_\bk$-monoidal and restricts to an $\bE_\bk$-monoidal functor ${_\mV\L\Enr_\emptyset}\to {_\mV\Pr\LMod}_\emptyset.$

%The functors %$$ {^\kappa_\mV\L\Enr_{\mW}}\to {_\mV\kappa\LMod}_{\mW}, (\mM^\circledast \to \mV^\ot \times \mW^\ot) \mapsto (\mP\widetilde{\B\Env}_\kappa(\mM)_{\L\Enr}^\circledast \to \mV^\ot \times \mW^\ot),$$$${_\mV\R\Enr^\kappa_{\mW}}\to {_\mV \kappa\RMod_{\mW}}, (\mM^\circledast \to \mV^\ot \times \mW^\ot) \mapsto (\mP\widetilde{\B\Env}_\kappa(\mM)_{\R\Enr}^\circledast \to \mV^\ot \times \mW^\ot)$$
%$$ {^\kappa_\mV\L\Enr_{\mW}}\to {_\mV\kappa\LMod}_{\mW}, (\mM^\circledast \to \mV^\ot \times \mW^\ot) \mapsto (\mP\widetilde{\L\Env}_\kappa(\mM)_{\L\Enr}^\circledast \to \mV^\ot \times \mW^\ot),$$$$
%{_\mV\R\Enr^\kappa_{\mW}}\to {_\mV \kappa\RMod_{\mW}}, (\mM^\circledast \to \mV^\ot \times \mW^\ot) \mapsto (\mP\widetilde{\R\Env}_\kappa(\mM)_{\R\Enr}^\circledast \to \mV^\ot \times \mW^\ot)$$$$_\mV^\kappa\B\Enr^{\kappa}_{\mW}\to {_\mV\kappa\BMod}_{\mW}, (\mM^\circledast \to \mV^\ot \times \mW^\ot) \mapsto (\mP\B\Env_\kappa(\mM)_{\B\Enr}^\circledast \to \mV^\ot \times \mW^\ot)$$are $\bE_\bk$-monoidal.

\end{corollary}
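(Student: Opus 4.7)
The plan is to realize $\mP_\mV$ as the $\bE_\bk$-monoidal left adjoint of a localization of $\bE_\bk$-monoidal $\infty$-categories provided by Theorem \ref{thqaz} and Proposition \ref{preee}. By Remark \ref{rfgp}, we have an identification ${_\mV\rc\rc\LMod}^\ot = {_\mV\L\Q\Enr}(\L\rc\rc)^\ot$. Applying Theorem \ref{thqaz} (2) to the $\bE_\bk$-algebras $(\mV, \emptyset)$ and $(\mV, \L\rc\rc)$ in $(\cc\cc\Mon)_*^\ot$ yields $\bE_\bk$-monoidal structures on both ${_\mV\L\Q\Enr} = {_\mV\L\Q\Enr}(\emptyset)$ and ${_\mV\rc\rc\LMod} = {_\mV\L\Q\Enr}(\L\rc\rc)$.

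The inclusion of sets $\emptyset \subset \L\rc\rc$ defines a morphism $(\mV, \emptyset) \to (\mV, \L\rc\rc)$ of $\bE_\bk$-algebras in $(\cc\cc\Mon)_*^\ot$ inducing the identity on $\mV$. By Proposition \ref{preee} (2), the corresponding inclusion ${_\mV\rc\rc\LMod}^\ot \subset {_\mV\L\Q\Enr}^\ot$ admits a left adjoint $\phi$ relative to $\bE_\bk^\ot$. Because the $\bE_\bk$-monoidal structures of Theorem \ref{thqaz} are constructed as accessible localizations relative to $\bE_\bk^\ot$, the classes of local equivalences are stable under the tensor product, so $\phi$ inherits the structure of an $\bE_\bk$-monoidal functor.

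Next I identify the underlying functor of $\phi$ with $\mP_\mV$. By Corollary \ref{wooond00} with $\rS = \T = \emptyset$ and $\mH = \L\rc\rc$, the object $\mP\B\Env^{\L\rc\rc}(\mM)_{\L\Enr} = \mP_\mV(\mM)$ (see Notation \ref{enrprrrt}) satisfies precisely the universal property that characterizes $\phi(\mM)$: restriction along $\mM^\circledast \subset \mP_\mV(\mM)^\circledast$ induces an equivalence $\Enr\Fun_{\mV, \emptyset}(\mP_\mV(\mM), \mN) \simeq \Enr\Fun_{\mV, \emptyset}(\mM, \mN)$ for every $\mN \in {_\mV\rc\rc\LMod}$. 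This is consistent with Corollary \ref{Yowei} (1), which describes $\mP_\mV$ as the free cocompletion under small left weighted colimits. Hence $\phi \simeq \mP_\mV$ on underlying $\infty$-categories, promoting $\mP_\mV$ to an $\bE_\bk$-monoidal functor.

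For the restriction to ${_\mV\L\Enr} \to {_\mV\Pr\LMod}$ (under the additional presentability hypothesis on $\mV$, which is needed both for ${_\mV\L\Enr}$ to carry the presentably $\bE_\bk$-monoidal structure and for $\mP_\mV$ to land in ${_\mV\Pr\LMod}$ by Proposition \ref{rembrako} (1)), the same argument applies using Theorem \ref{thqaz} (3) and Proposition \ref{preee} (3) in place of (2); Theorem \ref{Pree} identifies ${_\mV\Pr\LMod}$ with ${_\mV\L\Enr}(\mH)$ for $\mH$ the set of all small left enriched weights, and the $\bE_\bk$-monoidal embedding ${_\mV\L\Enr}(\mH)^\ot \subset {_\mV\L\Q\Enr}(\mH)^\ot$ from Theorem \ref{thqaz} (3) ensures compatibility of the presentable argument with the quasi-enriched one. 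The main technical obstacle is verifying that the left adjoint produced by Proposition \ref{preee} is genuinely $\bE_\bk$-monoidal rather than merely a functor over $\bE_\bk^\ot$; this reduces to the stability of the relevant localization under tensor products, which is built into the construction of the $\bE_\bk$-monoidal structures in Theorem \ref{thqaz}.
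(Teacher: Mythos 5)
Your first half coincides with the paper's own argument: the paper also invokes Remark \ref{rfgp} to write ${_\mV\rc\rc\LMod}^\ot={_\mV\L\Q\Enr}(\L\rc\rc)^\ot$, applies Proposition \ref{preee} (2) to the inclusion of weight sets $\emptyset\subset\L\rc\rc$ to get a lax $\bE_\bk$-monoidal inclusion with a left adjoint relative to $\bE_\bk$, and identifies that left adjoint with $\mP_\mV$ via the localization of Theorem \ref{thqaz}. One small imprecision there: the universal property you state, $\Enr\Fun_{\mV,\emptyset}(\mP_\mV(\mM),\mN)\simeq\Enr\Fun_{\mV,\emptyset}(\mM,\mN)$, is false without restricting the left-hand side to functors preserving small left weighted colimits; Corollary \ref{wooond00} gives the equivalence with $\Enr\Fun^{\L\rc\rc}_{\mV,\emptyset}(\mP_\mV(\mM),\mN)$, which is what characterizes the value of the relative left adjoint (morphisms in ${_\mV\rc\rc\LMod}$ are the colimit-preserving ones), so the identification still goes through once stated correctly.

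The justification of the restriction to ${_\mV\L\Enr}\to{_\mV\Pr\LMod}$, however, has a genuine gap. Theorem \ref{Pree} does not identify ${_\mV\Pr\LMod}$ with ${_\mV\L\Enr}(\mH)$ for $\mH$ ``all small left enriched weights'': the objects of ${_\mV\L\Enr}(\mH)$ are by definition \emph{small} left $\mV$-enriched $\infty$-categories, whereas a presentably left tensored $\infty$-category has a large (presentable) underlying $\infty$-category; moreover the collection of all small left enriched weights over $\mV$ is not a set, as Theorem \ref{thqaz} (3) and Proposition \ref{preee} (3) require, and the relative left adjoint produced in that setting would land in small enriched $\infty$-categories rather than in presheaf $\infty$-categories. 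The correct (and short) argument is different: by the last assertion of Theorem \ref{thqaz} (3) applied with $\mH=\emptyset$, the embedding ${_\mV\L\Enr}^\ot\subset{_\mV\L\Q\Enr}^\ot$ is $\bE_\bk$-monoidal; composing it with the $\bE_\bk$-monoidal functor $\mP_\mV$ just constructed, Proposition \ref{rembrako} (1) shows the composite sends every object to a presentably left tensored $\infty$-category, and since ${_\mV\Pr\LMod}^\ot\subset{_\mV\rc\rc\LMod}^\ot$ is a full suboperad closed under the relative tensor product, the composite factors through ${_\mV\Pr\LMod}^\ot$ as an $\bE_\bk$-monoidal functor. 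With this replacement the proof is complete and agrees with what the paper leaves implicit.
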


\begin{proof}
By Corollary \ref{preee} the inclusion $${_\mV\rc\rc\LMod^{\ot}} = {_\mV\L\Q\Enr}(\L\rc\rc)^\ot \subset {_\mV\L\Q\Enr}(\emptyset)^\ot=: {_\mV\L\Q\Enr^{\ot}}$$
is a lax $\bE_\bk$-monoidal functor that admits a left adjoint relative to $\bE_\bk$.
The left adjoint sends a small left quasi-enriched $\infty$-category $\mM^\circledast \to \mV^\ot$ to $\mP_\mV(\mM)^\circledast= \mP\widetilde{\B\Env}^{\L\rc\rc}(\mM)_{\L\Enr}^\circledast.$	
\end{proof}

For the next corollary we fix the following notation:

\begin{notation}%Let $\kappa$ be a small regular cardinal.
Let $\mV^\ot \to \Ass,\mW^\ot \to \Ass$ be monoidal $\infty$-categories compatible with small colimits.
For every weakly left enriched $\infty$-categories $\mM^\circledast \to \mV^\ot, \mN^\circledast \to \mW^\ot$
let $$(\mM \boxtimes \mN)^\circledast \to (\mV \otimes \mW)^\ot $$
be the pushforward of $\mM^\circledast \times_{\Ass} \mN^\circledast \to \mV^\ot \times_\Ass \mW^\ot$ along the canonical monoidal functor $\mV^\ot \times_\Ass \mW^\ot \to (\mV \otimes \mW)^\ot $ to the tensor product of $\cc\cc\Mon.$
\end{notation}

\begin{corollary}\label{compat}

Let $\mV^\ot \to \Ass,\mW^\ot \to \Ass$ be monoidal $\infty$-categories compatible with small colimits and $\mM^\circledast \to \mV^\ot, \mN^\circledast \to \mW^\ot $ small left quasi-enriched $\infty$-categories. 
There is a canonical equivalence of $\infty$-categories left tensored over $\mV\ot\mW:$
$$ \mP_{\mV\ot\mW}(\mM \boxtimes \mN)^\circledast \simeq (\mP_\mV(\mM)\ot \mP_{\mW}(\mN))^\circledast.$$
\end{corollary}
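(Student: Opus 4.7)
The plan is to realize both sides as the value of a single symmetric monoidal left adjoint $\mP$ on a global tensor product.

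First I would globalize Corollary~\ref{Preml} to a symmetric monoidal statement over $\rc\rc\Mon^{\ot}$. For each monoidal $\mV$ compatible with small colimits, Corollary~\ref{Preml} identifies $\mP_\mV$ with the left adjoint of the monoidal inclusion ${_\mV\cc\cc\LMod}^{\ot} = {_\mV\L\Q\Enr}(\L\rc\rc)^{\ot} \subset {_\mV\L\Q\Enr}^{\ot}$. By Remark~\ref{Functol} and the explicit description of the local morphisms in Notation~\ref{Notali}, pushforward along a colimit-preserving monoidal functor $\alpha:\mV^{\ot}\to\mV'^{\ot}$ carries the generators of the $\L\rc\rc$-localization on $\mV$ to those on $\mV'$, so both the inclusion and its left adjoint are preserved by this pushforward. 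Combining this base-change compatibility with the cocartesian symmetric monoidal structures of Proposition~\ref{prpp}~(2), the functor $\mP$ assembles into a global symmetric monoidal left adjoint $\mP:\L\Q\Enr^{\ot}\to\cc\cc\LMod^{\ot}$ over $\rc\rc\Mon^{\ot}$.

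Next I would identify the cross-base tensor products. Given $\mM\in{_\mV\L\Q\Enr}$ and $\mN\in{_\mW\L\Q\Enr}$, their tensor product in the global symmetric monoidal $\infty$-category $\L\Q\Enr^{\ot}$ lies over $\mV\ot\mW\in\rc\rc\Mon$ and, by the construction of the tensor product in Theorem~\ref{thqaz}~(2) together with cocartesian transport along the canonical monoidal functor $\mV^{\ot}\times_{\Ass}\mW^{\ot}\to(\mV\ot\mW)^{\ot}$, is precisely $\mM\boxtimes\mN$ by the definition of $\boxtimes$. Similarly, the tensor product of $\mP_\mV(\mM)$ and $\mP_\mW(\mN)$ in the symmetric monoidal $\infty$-category $\cc\cc\LMod^{\ot}$ over $\rc\rc\Mon^{\ot}$ is Lurie's relative tensor product $\mP_\mV(\mM)\ot\mP_\mW(\mN)$ lying over $\mV\ot\mW$.

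Applying symmetric monoidality of $\mP$ to the pair $(\mM,\mN)$ then yields the desired equivalence
$$
\mP_{\mV\ot\mW}(\mM\boxtimes\mN)^{\circledast}\simeq(\mP_\mV(\mM)\ot\mP_\mW(\mN))^{\circledast}
$$
of objects of $_{\mV\ot\mW}\cc\cc\LMod$.

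The main obstacle is the first step: upgrading the fiberwise symmetric monoidality of Corollary~\ref{Preml} to a statement global over $\rc\rc\Mon^{\ot}$. The symmetric monoidality on each fiber is a consequence of Proposition~\ref{preee} together with the localization property of Lemma~\ref{Lauf}; what must be added is the compatibility with monoidal base change, which reduces to checking that the saturated class defining the $\L\rc\rc$-localization is preserved by the pushforward induced by a monoidal functor of bases, a property already implicit in Remark~\ref{Functol} and in the construction in Notation~\ref{Notali}.
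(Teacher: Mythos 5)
Your proposal is correct and follows essentially the same route as the paper: realize $\mP$ as the lax monoidal relative left adjoint of the inclusion $\cc\cc\LMod^\ot\subset\L\Q\Enr^\ot$ over $\cc\cc\Mon^\ot$, identify the cross-base tensor of $(\mM,\mN)$ with $\mM\boxtimes\mN$ and that of $(\mP_\mV(\mM),\mP_\mW(\mN))$ with the relative tensor product, and conclude by monoidality of $\mP$. The globalization you propose to verify by hand (base-change compatibility of the $\L\rc\rc$-localization) is exactly what the paper gets in one step by applying Proposition \ref{preee} (via Example \ref{infi}) to the identity $\cc\cc\Mon$-algebra, which makes the relative left adjoint a map of cocartesian fibrations over $\cc\cc\Mon^\ot$ and yields the commutative square relating $\boxtimes$ and $\ot$.
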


\begin{proof} Let $\mU:\mO^\ot \to  \cc\cc\Mon^\ot$ be the identity.
By Corollary \ref{preee} and Example \ref{infi} the lax $\cc\cc\Mon$-monoidal inclusion $$\cc\cc\LMod^\ot\simeq {_\mU\cc\cc\LMod^\ot} \subset {_\mU\L\Q\Enr}^\ot\simeq  \L\Q\Enr^\otimes$$ admits a left adjoint $\theta$ relative to $\cc\cc\Mon^\ot$.
The relative left adjoint $\theta$ is a map of cocartesian fibrations over $\cc\cc\Mon^\ot$. %and thus especially a symmetric monoidal functor. 
Hence $\theta$ induces a commutative square
$$\begin{xy}
\xymatrix{
{_\mV\L\Q\Enr_\emptyset} \times {_{\mW}\L\Q\Enr_\emptyset} \ar[d]^{\boxtimes}
\ar[rrr]^{\theta_{\mV} \times \theta_{\mW}}
&&& {_\mV\cc\cc\LMod_\emptyset} \times {_{\mW}\cc\cc\LMod_\emptyset} \ar[d]^{\ot} 
\\
{_{\mV \otimes \mW}\L\Q\Enr_\emptyset} \ar[rrr]^{\theta_{\mV\ot\mW}} &&& _{\mV \ot \mW}\cc\cc\LMod_\emptyset.}
\end{xy}$$
This square gives the desired equivalence.
\end{proof}

%\subsection{The pre-relative tensor product}

%We will prove the following theorem:
%\begin{remark}For every $\U \in \mU, \W \in \mW$ and $[\n]\in \Ass$ the fiber of the cocartesian fibration $\mM^\circledast_\U \times_{\mV^\ot} \mN_\W^\circledast \to \mV^\ot \to \Ass$ is canonically equivalent to $\mM_\U \times \mV^{\times \n}\times \mN_\W$and the cocartesian fibration $\mM^\circledast_\U \times_{\mV^\ot} \mN_\W^\circledast \to \mV^\ot \to \Ass$ classifies the Bar-construction(see ... for details). By ... this implies that$(\mM \times_{\mV} \mN)^\circledast_{\U, \W}$, i.e. the $\infty$-category arising from $ \mM^\circledast_\U \times_{\mV^\ot} \mN_\W^\circledast$ by formally inverting the $\sigma$-cocartesian morphisms, canonically identifies with the relative tensor product of $\mM^\circledast_\U \to \mV^\ot$ and$\mN_\W^\circledast \to \mV^\ot$, and $\tau: (\mM \times_{\mV} \mN)^\circledast \to \mU^\ot \times \mW^\ot$ is a model for the $\mU, \mW$-biaction on the tensor product relative to $\mV.$\end{remark}
%There is a canonical functor $ \mM \times \mN \subset \mM^\circledast \times_{\mV^\ot} \mN^\circledast \to (\mM \times_{\mV} \mN)^\circledast.$

\begin{lemma}\label{retaco}
Let $\mM^\circledast \to \mU^\ot \times \mV^\ot, \mN^\circledast \to \mV^\ot \times \mW^\ot$ be bitensored $\infty$-categories compatible with small colimits.
There is a bitensored $\infty$-category $ (\mM \otimes_{\mV} \mN)^\circledast \to \mU^\ot \times \mW^\ot$ compatible with small colimits and a $\mU, \mW$-linear functor $$ (\mM \boxtimes_{\mV} \mN)^\circledast \to (\mM \otimes_{\mV} \mN)^\circledast$$ 
whose underlying functor preserves small colimits component-wise, that induces for every bitensored $\infty$-category $\mO^\circledast \to \mU^\ot \times \mW^\ot$ compatible with small colimits an embedding $$\LinFun^\L_{\mU,\mW}(\mM \otimes_{\mV} \mN, \mO) \to \LinFun_{\mU,\mW}(\mM \boxtimes_{\mV} \mN, \mO),$$
whose essential image are the $\mU,\mW$-linear functors whose underlying functor preserves small colimits component-wise.

\end{lemma}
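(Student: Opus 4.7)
The plan is to obtain $(\mM \otimes_\mV \mN)^\circledast$ by universally adjoining small colimits component-wise to the bitensored $\infty$-category $(\mM \boxtimes_\mV \mN)^\circledast$ of Lemma \ref{reta}, using the machinery of Proposition \ref{wooond} and Corollary \ref{wooond0}. The $\infty$-operads $\mU^\ot\to\Ass$ and $\mW^\ot\to\Ass$ are monoidal and compatible with small colimits (since $\mM$ and $\mN$ are), so we can work with the localization pairs $(\mU^\ot\to\Ass,\Enr_\mU)$ and $(\mW^\ot\to\Ass,\Enr_\mW)$ of Example \ref{Exaso}, whose corresponding local objects are precisely the bi-enriched $\infty$-categories that admit all small weighted colimits (equivalently, bitensored compatible with small colimits, by Proposition \ref{weeei}). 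Working in a sufficiently large universe, and using the closure machinery of Proposition \ref{quirk} in the style of Notation \ref{enrprrrt}, replaces the smallness assumption on $\mM,\mN$ so that the relevant constructions remain locally small.

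Next I would specify the diagrams to be adjoined as colimits. Let $\iota:\mM^\circledast\times_{\mV^\ot}\mN^\circledast\to(\mM\boxtimes_\mV\mN)^\circledast$ be the $\mU,\mW$-linear functor of Lemma \ref{reta}, and let $\Lambda$ be the collection of diagrams on $\mM\boxtimes_\mV\mN$ obtained as follows: for each $\Y\in\mN$ and each small conical colimit diagram $\G^\triangleright:\K^\triangleright\to\mM$, post-compose $\G^\triangleright\times\{\Y\}$ with $\iota$; symmetrically for each $\X\in\mM$ and small conical colimit diagram $\K^\triangleright\to\mN$. Let $\mH$ be the set of weights underlying $\Lambda$ (conical weights on various $\K$'s over $\mU,\mW$; these are small by construction). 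Then set
\[
(\mM\otimes_\mV\mN)^\circledast \;:=\; \mP\B\Env^\mH_\Lambda(\mM\boxtimes_\mV\mN)^\circledast_{\Enr_\mU,\Enr_\mW},
\]
equipped with its canonical $\mU,\mW$-linear functor from $(\mM\boxtimes_\mV\mN)^\circledast$. By Remark \ref{bipre} and Remark \ref{embesto} this is a bitensored $\infty$-category admitting $\mH$-weighted colimits, and the canonical functor sends diagrams of $\Lambda$ to weighted colimit diagrams; by Proposition \ref{weeei} it is thus compatible with small colimits, since tensors against $\mU$ and $\mW$ are handled by $\mU,\mW$-linearity together with the closure properties of the localization.

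The universal property then follows from Corollary \ref{wooond0}: for any bitensored $\mO^\circledast\to\mU^\ot\times\mW^\ot$ compatible with small colimits, restriction along the canonical map is an equivalence
\[
\LinFun^\L_{\mU,\mW}(\mM\otimes_\mV\mN,\mO)\;\xrightarrow{\simeq}\;\Enr\Fun^\Lambda_{\mU,\mW}(\mM\boxtimes_\mV\mN,\mO),
\]
where the target is the full subcategory of $\LinFun_{\mU,\mW}(\mM\boxtimes_\mV\mN,\mO)$ spanned by those $\mU,\mW$-linear functors sending every diagram of $\Lambda$ to a weighted colimit diagram. What remains is to identify this target with the subcategory of $\mU,\mW$-linear functors whose underlying functor preserves small colimits component-wise.

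The main technical step will be this last identification. By $\mU,\mW$-linearity, preservation of left and right tensors is automatic, so by Proposition \ref{weeei} a $\mU,\mW$-linear functor out of $\mM\boxtimes_\mV\mN$ preserves small colimits component-wise iff it preserves component-wise small conical colimits, and this in turn is equivalent to sending each diagram of $\Lambda$ to a weighted colimit diagram (using Lemma \ref{exorr} to identify conical colimits with weighted colimits for the trivial weight). The subtlety is translating this between the source $\mM\boxtimes_\mV\mN$ and the fiber product $\mM\times_{\mV^\ot}\mN^\circledast$ along the canonical functor $\iota$: since $\mM\boxtimes_\mV\mN$ arises by inverting the $\sigma$-cocartesian morphisms over $\mV$, component-wise colimits in the quotient are computed from component-wise colimits in the fiber product, and this compatibility is what makes $\Lambda$ the correct set of diagrams to adjoin. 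Modulo this verification, the embedding statement of the lemma is precisely Corollary \ref{wooond0}.
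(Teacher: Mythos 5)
Your overall strategy (adjoining colimits to $(\mM \boxtimes_{\mV} \mN)^\circledast$ via the $\mP\B\Env^{\mH}_{\Lambda}$-machinery of Proposition \ref{wooond}/Corollary \ref{wooond0}) is the paper's, but your specific choices of $\mH$ and $\Lambda$ are too small, and this creates a genuine gap. The paper takes $\mH$ to be the (large) collection of \emph{all} small $(\mU,\mW)$-weights and takes $\Lambda$ to consist of the component-wise conical diagrams you describe \emph{together with} all colimit diagrams weighted by weights on $*_{\mU,\mW}$, i.e.\ the diagrams exhibiting the existing bitensors $\U \ot \Z \ot \W$ of $\mM \boxtimes_{\mV} \mN$. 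Both enlargements are needed. First, the canonical functor $\mM \boxtimes_{\mV} \mN \to \mP\B\Env^{\mH}_{\Lambda}(\mM \boxtimes_{\mV} \mN)_{\rS,\T}$ is the restricted (enriched Yoneda-type) localization functor, and this does \emph{not} preserve left and right tensors in general (already for $\mU=\mM=\mS$ the Yoneda embedding fails to preserve tensors); it only does so after one forces the tensor-witnessing diagrams into $\Lambda$. Without them your canonical functor need not be $\mU,\mW$-linear, which is part of the assertion of the lemma. Second, with $\mH$ consisting only of the conical weights underlying your $\Lambda$, the full subcategory generated by the image of $\mM \boxtimes_{\mV} \mN$ under $\mH$-weighted colimits need not be closed under the $\mU,\mW$-action (for the same reason: the image itself is not closed under tensors), so you cannot conclude from Proposition \ref{weeei} that $(\mM \otimes_{\mV} \mN)^\circledast$ is bitensored compatible with small colimits; your sentence ``tensors against $\mU$ and $\mW$ are handled by $\mU,\mW$-linearity'' presupposes exactly what is in question.

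The same defect propagates to the universal property. With your data, Corollary \ref{wooond0} identifies restriction with an equivalence onto $\Enr\Fun^{\Lambda}_{\mU,\mW}(\mM \boxtimes_{\mV} \mN, \mO)$, whose objects are merely $\mU,\mW$-\emph{enriched} functors sending your conical diagrams to colimit diagrams; your claim that ``by $\mU,\mW$-linearity, preservation of left and right tensors is automatic'' is circular, since linearity is not part of the data produced by the corollary. With the paper's choices the bookkeeping comes out exactly right: on the source, preserving all small $(\mU,\mW)$-weighted colimits is, by Proposition \ref{weeei}, the same as being $\mU,\mW$-linear and small-colimit preserving, which gives $\LinFun^\L$; on the target, sending $\Lambda$ to weighted colimit diagrams means (because $\Lambda$ contains the tensor diagrams) being $\mU,\mW$-linear and preserving small colimits component-wise, which is the stated essential image. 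So to repair your argument, enlarge $\mH$ to all small $(\mU,\mW)$-weights and add the weights-on-a-point colimit diagrams of $\mM \boxtimes_{\mV} \mN$ to $\Lambda$; the rest of your outline then goes through as in the paper.
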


\begin{proof}
We set $ (\mM \otimes_{\mV} \mN)^\circledast:=\mP\B\Env_\Lambda^\mH(\mM \boxtimes_{\mV} \mN)_{\L\Enr}^\circledast \to \mU^\ot \times \mW^\ot,$
where $\mH$ is the large collection of small $(\mU, \mW)$-weights %$*_{\mU,\mW}$
and $\Lambda$ is the large collection of all colimit diagrams weighted with respect to weights on $*_{\mU,\mW}$ together with the following conical diagrams:
$$ \K^{\triangleright} \to \mM \simeq \mM \times \{\Y\} \to \mM \times \mN \subset (\mM \times_{\mV} \mN)^\circledast, \T^{\triangleright} \to \mN \simeq \{\X\} \times \mN \to \mM \times \mN \subset \mM^\circledast \times_{\mV^\ot} \mN^\circledast \to (\mM \boxtimes_{\mV} \mN)^\circledast $$
for some $\X \in \mM, \Y \in \mN$ and colimit diagrams
$ \K^{\triangleright} \to \mM,  \T^{\triangleright} \to \mN$.

\end{proof}

\begin{remark}
%By construction the canonical $\mU, \mW$-linear functors $\mM \boxtimes_{\mV} \mN \to \mM \otimes_{\mV} \mN$ induces for every bitensored $\infty$-category $\mO^\circledast \to \mU^\ot \times \mW^\ot$ compatible with small colimits an equivalence $$ \LinFun^{\rc\rc}_{\mU, \mW}(\mM \otimes_{\mV} \mN, \mO) \to \LinFun^{\rc\rc}_{\mU, \mW}(\mM \boxtimes_{\mV} \mN, \mO).$$
The universal property of Lemma \ref{retaco} implies that the $\mU, \mW$-linear functor $ (\mM \boxtimes_{\mV} \mN)^\circledast \to (\mM \otimes_{\mV} \mN)^\circledast$  
exhibits $(\mM \otimes_{\mV} \mN)^\circledast \to \mU^\ot \times \mW^\ot$
as the relative tensor product of $\mM^\circledast \to \mU^\ot \times \mV^\ot $ and $\mN^\circledast \to \mV^\ot\times \mW^\ot $ induced by the tensor product of $\Cat_\infty^{\rc\rc}.$

\end{remark}

\subsection{A tensor product for presentable enriched $\infty$-categories}

Let  $\mV^\boxtimes \to \Comm$ be a presentably symmetric monoidal $\infty$-category.
By Corollary \ref{thqazz} there is a symmetric monoidal structure on the $\infty$-category $_\mV\L\Q\Enr_\emptyset(\L\rc\rc) $ of left $\mV$-quasi-enriched $\infty$-categories compatible with small colimits and a symmetric monoidal equivalence 
$$_\mV\L\Q\Enr_\emptyset(\L\rc\rc) \simeq {_\mV\rc\rc\LMod}_\emptyset$$
to the $\infty$-category of small $\infty$-categories left tensored over $\mV$ endowed with the relative tensor product.

We will prove the following theorem:

\begin{theorem}\label{prestensor} Let $\mV^\boxtimes \to \Comm$ be a $\kappa$-compactly generated symmetric monoidal $\infty$-category for some small regular cardinal $\kappa$ and $\mM^\circledast \to \mV^\ot, \mN^\circledast \to \mV^\ot$ presentably left tensored $\infty$-categories. There are small left $\mV$-enriched $\infty$-categories $\mC^\circledast \to \mV^\ot,\mD^\circledast \to \mV^\ot$, sets $\rS$ of morphisms of $\mP_\mV(\mC)$ and $\T$ of morphisms of $\mP_\mV(\mD)$ closed under the left actions and left $\mV$-enriched equivalences $$\mM^\circledast \simeq \rS^{-1}\mP_\mV(\mC)^\circledast, \mN^\circledast \simeq \T^{-1}\mP_\mV(\mD)^\circledast.$$
The tensor product of $_\mV\L\Q\Enr_\emptyset(\L\rc\rc) \simeq {_\mV\rc\rc\LMod_\emptyset}$ admits the following descriptions:
\begin{enumerate}
\item The relative tensor product $$(\mM \ot_{\mV} \mN)^\circledast \to \mV^\ot.$$
	
\item The full subcategory of the internal hom in $_\mV\L\Enr_\emptyset:$ $$ \Enr\Fun^\R_{\mV}(\mM^\op,\mN)^\circledast \to \mV^\ot.$$
	
\item The accessible $\mV$-enriched localization: $$(\rS \boxtimes \T)^{-1}\mP_\mV(\mC \ot \mD)^\circledast \to \mV^\ot,$$
where $\rS \boxtimes \T$ is the set of morphisms $\{\f \ot \W \ot \Y, \V \ot \X \ot \g \mid \V,\W \in \mV^\kappa, \X \in \mC, \Y \in \mD, \f \in \rS, \g \in \T\}.$
	
\end{enumerate}

\end{theorem}

To prove Theorem \ref{prestensor} we prove different characterizations of presentable enriched $\infty$-categories (Theorem \ref{Pree}) and prove a description of the relative tensor product as an enriched functor $\infty$-category (Theorem \ref{thei}).

%In this subsection we express the internal hom of the tensor producton $_\mV\L\Enr_\emptyset(\mH)$ for any $\bE_2$-monoidal $\infty$-category $\mV^\boxtimes \to \bE_2$ and set $\mH$ of small weights over $\mV$ as a relative tensor product over $\mV$ of the $\infty$-category of $\mV$-enriched presheaves (Theorem \ref{Days}). This leads to an enriched Day-convolution monoidal structure on the $\mV$-enriched $\infty$-category of $\mV$-enriched functors (Corollary \ref{Dayco2}).

\subsubsection{Presentable enriched $\infty$-categories}

\begin{proposition}\label{krye}
Let $\kappa$ be a small regular cardinal, $\mV^\ot \to \Ass$ a $\kappa$-compactly generated monoidal $\infty$-category and $\mM^\circledast \to \mV^\ot$ a $\kappa$-compactly generated $\infty$-category left tensored over $\mV.$
%small left$\kappa$-enriched $\infty$-category that admits left tensors and $\kappa$-small conical colimits.
%The weakly bienriched $\infty$-category $\Enr\Fun^{\kappa-\lim}_{\emptyset, \mV}(\mM^\op,\mV)^\circledast \to \mV^\ot$ is a presentably left tensored $\infty$-category
The restricted left $\mV$-enriched Yoneda-embedding $\mM^\circledast\to \mP_\mV(\mM^\kappa)^\circledast$ admits a $\mV$-enriched left adjoint,
preserves small $\kappa$-filtered conical colimits and induces a left $\mV$-enriched equivalence
%and is a left $\mV$-enriched embedding %$ (\mM^\kappa)^\circledast \to \Enr\Fun_{\emptyset, \mV}((\mM^\kappa)^\op,\mV)^\circledast$
to the full subcategory of $\mV$-enriched presheaves preserving $\kappa$-small conical limits and right cotensors with $\kappa$-compact objects of $\mV.$

\end{proposition}

\begin{proof}
%follows from ... since for every object $\X$ of $\mM$ the 
%By Corollary \ref{cosqa} (1) the linear embedding $(\mM_\kappa)_\kappa^\circledast \subset \mM^\circledast$ of Proposition \ref{cool} is the pullback of a unique left $\mV$-enriched embedding $(\mM^\kappa)^\circledast \subset \mM^\circledast$. 
%(1): Let $\mN^\circledast \to \mV^\ot \times \mW^\ot$ be a bienriched $\infty$-category that admits small $\kappa$-filtered conical colimits.By Proposition \ref{cool} the following composition is an equivalence: % since $\mN^\circledast \to \mV^\ot$ is compatible with small $\kappa$-filtered colimits:$$\Enr\Fun^{\kappa-\mathrm{fil}}_{\mV,\mW}(\Ind_{\kappa}(\mM_\kappa),\mN)\to \Enr\Fun_{\mV,\mW}(\mM,\mN) \to \Enr\Fun_{\mV^\kappa,\mW^\kappa}(\mM_\kappa,\mN_\kappa)$$The second functor in the composition is an equivalence by Corollary \ref{cosqa} (1).So the first functor in the composition is an equivalence and the statement follows from Proposition \ref{wooond}.
The left $\mV$-enriched embedding $\mM^\circledast%\simeq \Ind_\kappa(\mM^\kappa)^\circledast
\to \mP_\mV(\mM^\kappa)^\circledast$
is $\mV$-enriched right adjoint to the left $\mV$-enriched embedding
$ (\mM^\kappa)^\circledast \subset \mM^\circledast$ of the full left enriched subcategory of $\kappa$-compact objects. Since the embedding $\mM^\kappa \subset \mM$ preserves $\kappa$-compact objects, the right adjoint preserves small $\kappa$-filtered colimits.

Let $\mN^\circledast \to \mV^\ot $ be a presentably left tensored $\infty$-category.
By Proposition \ref{cool} the following composition is an equivalence: % since $\mN^\circledast \to \mV^\ot$ is compatible with small $\kappa$-filtered colimits:
$$\LinFun^{\L}_{\mV,\emptyset}(\mM,\mN)\to \Enr\Fun'_{\mV,\emptyset}(\mM^\kappa,\mN) \to \LinFun^\kappa_{\mV^\kappa}(\mM^\kappa_\kappa,\mN_\kappa),$$
where $\Enr\Fun'_{\mV,\emptyset}(\mM^\kappa,\mN) \subset \Enr\Fun_{\mV,\emptyset}(\mM^\kappa,\mN)$ is the full subcategory of left $\mV$-enriched functors preserving $\kappa$-small colimits and left tensors with $\kappa$-compact objects of $\mV$.
The second functor in the composition is an equivalence as a consequence of Corollary \ref{cosqa} (1). So the first functor in the composition is an equivalence and the statement follows from Theorem \ref{wooond}.
	
\end{proof}

\begin{definition}
Let $\kappa$ be a small regular cardinal and $\mM^\circledast \to \mV^\ot$ a left $\mV$-enriched $\infty$-category. An object $\X$ of $\mM$ is $\mV$-enriched $\kappa$-compact if the left $\mV$-enriched functor $\L\Mor_\mM(\X,-): \mM^\circledast \to \mV^\circledast$ preserves small $\kappa$-filtered conical colimits.
\end{definition}

\begin{definition}\label{presss}
Let $\kappa$ be a small regular cardinal. A left $\mV$-enriched $\infty$-category $\mM^\circledast \to \mV^\ot$ is $\mV$-enriched $\kappa$-presentable if it admits small weighted colimits and every object of $\mM$ is a small $\kappa$-filtered conical colimit of $\mV$-enriched $\kappa$-compact objects of $\mM.$
	
\end{definition}

\begin{theorem}\label{Pree}
Let $\tau\leq \kappa$ be small regular cardinals, $\mV^\ot\to \Ass $ a $\tau$-compactly generated monoidal $\infty$-category and $\mM^\circledast \to \mV^\ot$ a left $\mV$-enriched $\infty$-category. The following conditions are equivalent:

\begin{enumerate}
\item The left $\mV$-enriched $\infty$-category $\mM^\circledast \to \mV^\ot$ is a $\kappa$-compactly generated left tensored $\infty$-category.

\item The left $\mV$-enriched $\infty$-category $\mM^\circledast \to \mV^\ot$ is a $\kappa$-accessible $\mV$-enriched localization of $\mP_\mV(\mN)^\circledast \to \mV^\ot$
for some small left $\mV$-enriched $\infty$-category $\mN^\circledast \to \mV^\ot.$

\item The left $\mV$-enriched $\infty$-category $\mM^\circledast \to \mV^\ot$ is $\mV$-enriched $\kappa$-presentable.

\end{enumerate}

\end{theorem}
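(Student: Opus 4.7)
The plan is to establish the cycle of implications $(1) \Rightarrow (2) \Rightarrow (3) \Rightarrow (1)$. For $(1) \Rightarrow (2)$, I will cite Proposition \ref{krye} directly: if $\mM$ is $\kappa$-compactly generated then $\mM^\kappa$ is essentially small, and the restricted $\mV$-enriched Yoneda embedding $\mM^\circledast \to \mP_\mV(\mM^\kappa)^\circledast$ is a $\mV$-enriched embedding with a $\mV$-enriched left adjoint preserving small $\kappa$-filtered conical colimits, so setting $\mN := \mM^\kappa$ realizes $\mM$ as a $\kappa$-accessible $\mV$-enriched localization of $\mP_\mV(\mN)$.

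For $(2) \Rightarrow (3)$, my first step is to verify that $\mP_\mV(\mN)$ itself is $\mV$-enriched $\kappa$-presentable for any small $\mV$-enriched $\mN$. The key local calculation is that the tensors $\V \ot \y(\X)$ for $\V \in \mV^\kappa$ and $\X \in \mN$ are $\mV$-enriched $\kappa$-compact: via the adjunction identity $\L\Mor_{\mP_\mV(\mN)}(\V \ot \y(\X), -) \simeq \Mor_\mV(\V, \ev_\X(-))$, the pointwise formation of conical colimits in enriched presheaves, and the preservation of $\kappa$-filtered colimits by $\Mor_\mV(\V, -)$ (which follows from $\V \in \mV^\kappa$ together with $\tau \leq \kappa$ and $\tau$-compact generation of $\mV$). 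By Corollary \ref{Yowei} and Proposition \ref{weeei}, $\mP_\mV(\mN)$ is generated under conical colimits and tensors by the representables $\y(\X)$; writing arbitrary $\V \in \mV$ as a $\kappa$-filtered colimit of objects of $\mV^\kappa$ and any small conical colimit as a $\kappa$-filtered colimit of $\kappa$-small conical colimits, every object of $\mP_\mV(\mN)$ decomposes as a $\kappa$-filtered conical colimit of $\kappa$-small weighted colimits of the basic tensors $\V \ot \y(\X)$ with $\V \in \mV^\kappa$; closure of $\mV$-enriched $\kappa$-compact objects under $\kappa$-small weighted colimits then concludes the presentability. Given a $\kappa$-accessible $\mV$-enriched localization $L : \mP_\mV(\mN) \rightleftarrows \mM : \iota$ with $\iota$ preserving $\kappa$-filtered conical colimits, the identity $\L\Mor_\mM(L(-), \bullet) \simeq \L\Mor_{\mP_\mV(\mN)}(-, \iota(\bullet))$ shows $L$ preserves $\mV$-enriched $\kappa$-compact objects, so $\Y = L(\iota(\Y))$ decomposes as a $\kappa$-filtered conical colimit of $\mV$-enriched $\kappa$-compact objects of $\mM$.

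For $(3) \Rightarrow (1)$, I will first invoke Proposition \ref{weeei} to upgrade the existence of small weighted colimits to the structure of a left tensored $\infty$-category compatible with small colimits. The comparison $\mM(\X, \Y) \simeq \mV(\tu_\mV, \L\Mor_\mM(\X, \Y))$, together with $\tu_\mV \in \mV^\tau \subseteq \mV^\kappa$, then shows every $\mV$-enriched $\kappa$-compact object is $\kappa$-compact in the underlying $\infty$-category, so the small generating set $\mM^{\mV,\kappa}$ certifies $\mM$ as $\kappa$-compactly generated. For closure of the left action under $\mV^\kappa$ and $\mM^\kappa$, the adjunction $\L\Mor_\mM(\V \ot \X, -) \simeq \Mor_\mV(\V, \L\Mor_\mM(\X, -))$ directly gives $\V \ot \X \in \mM^{\mV,\kappa}$ whenever $\V \in \mV^\kappa$ and $\X \in \mM^{\mV,\kappa}$; for a general $\X \in \mM^\kappa$, writing $\X$ as a $\kappa$-filtered conical colimit of $\mV$-enriched $\kappa$-compact objects and applying the $\kappa$-compactness of $\X$ realizes $\X$ as a retract of some $\X_j \in \mM^{\mV,\kappa}$, whence $\V \ot \X$ is a retract of the $\mV$-enriched $\kappa$-compact object $\V \ot \X_j$ and is itself $\kappa$-compact.

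The principal technical obstacle lies in the decomposition step inside $(2) \Rightarrow (3)$: reorganizing an arbitrary enriched presheaf as a single $\kappa$-filtered conical colimit of $\mV$-enriched $\kappa$-compact objects. The most delicate point is handling tensors $\V \ot \y(\X)$ with $\V$ not $\kappa$-compact; replacing $\V$ by a $\kappa$-filtered colimit in $\mV^\kappa$ produces a colimit of colimits that must be absorbed into a single $\kappa$-filtered outer shape, relying crucially on closure of $\mV$-enriched $\kappa$-compact objects under $\kappa$-small weighted colimits and on pointwise computation of conical colimits in $\mP_\mV(\mN)$.
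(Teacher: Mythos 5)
Your proposal is correct and its load-bearing ingredients coincide with the paper's: Proposition \ref{krye} for $(1)\Rightarrow(2)$, and for the bridge between $(1)$ and $(3)$ the same two facts the paper uses, namely Proposition \ref{weeei} (small weighted colimits $=$ tensors plus compatible small colimits), the $\kappa$-compactness of the tensor unit, and the adjunction identity $\L\Mor_\mM(\V\ot\X,-)\simeq \Mor_\mV(\V,\L\Mor_\mM(\X,-))$ together with $\tau$-compact generation of $\mV$. Where you genuinely diverge is in how condition $(2)$ enters the cycle: the paper closes the loop by asserting $(2)\Rightarrow(1)$, citing that $\mP_\mV(\mN)^\circledast\to\mV^\ot$ is a $\kappa$-compactly generated left tensored $\infty$-category (via Proposition \ref{rembrako}) and then invoking its direct proof of $(1)\Leftrightarrow(3)$, whereas you prove $(2)\Rightarrow(3)$ by a hands-on verification that $\mP_\mV(\mN)$ is $\mV$-enriched $\kappa$-presentable — identifying the tensors $\V\ot\y(\X)$, $\V\in\mV^\kappa$, as enriched $\kappa$-compact generators via Theorems \ref{unitol} and \ref{Enric} and transferring along the $\kappa$-accessible enriched localization. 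Your route makes explicit exactly the compact-generation content that the paper's citation of Proposition \ref{rembrako} leaves implicit (that proposition literally only gives presentably left tensored), at the cost of the decomposition bookkeeping you flag; note that the closure statement you rely on should be restricted to the weights actually used (tensors with $\mV^\kappa$-objects and $\kappa$-small conical colimits), where it follows from the commutation of $\kappa$-small limits with $\kappa$-filtered colimits in the $\kappa$-compactly generated $\mV$. Likewise, your retract argument reducing arbitrary $\kappa$-compact objects of $\mM$ to $\mV$-enriched $\kappa$-compact ones in $(3)\Rightarrow(1)$ spells out a step the paper's terse ``it suffices to see'' compresses; both treatments equally gloss over the essential smallness of the subcategory of ($\mV$-enriched) $\kappa$-compact objects needed for presentability of $\mM$ in that direction, so this is not a gap peculiar to your argument. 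The one small omission on your side is that in $(2)\Rightarrow(3)$ you should record that $\mM$, being an enriched localization of $\mP_\mV(\mN)$, inherits small weighted colimits (computed by applying the localization functor), which is routine given Proposition \ref{remqalo}.
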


\begin{proof}
Condition (2) implies (1) since $\mP_\mV(\mN)^\circledast \to \mV^\ot $ is a $\kappa$-compactly generated left tensored $\infty$-category by Remark \ref{rembrako}. By Proposition \ref{krye} condition (1) implies (2).
We prove that (1) and (3) are equivalent:
in view of Proposition \ref{weeei} and the assumption that the tensor unit of $\mV$ is
$\tau$-compact and so $\kappa$-compact it suffices to see that if $\mM^\circledast \to \mV^\ot$ is a left tensored $\infty$-category,
the left $\mV$-action preserves $\kappa$-compact objects if and only if for every $\kappa$-compact object $\X$ of $\mM$ the functor $\L\Mor_\mM(\X,-):\mM \to \mV$ preserves small $\kappa$-filtered colimits. Since $\mV$ is $\kappa$-compactly generated,
an object $\X \in \mM$ has the property that $\L\Mor_\mM(\X,-):\mM \to \mV$ preserves small $\kappa$-filtered colimits
if and only if for every $\kappa$-compact object $\V \in \mV$ the left tensor $\V \ot \X$ is $\kappa$-compact because $\L\Mor_\mM(\V\ot\X,-)\simeq \L\Mor_\mV(\V,\L\Mor_\mM(\X,-)).$

\end{proof}

\subsubsection{The relative tensor product as an enriched functor $\infty$-category}

The next lemma is \cite[Lemma 5.88.]{heine2024bienriched}:

\begin{lemma}
Let $\mM^\circledast \to \mU^\ot \times \mV^\ot, \mN^\circledast \to \mV^\ot \times \mW^\ot$ be weakly bienriched $\infty$-categories.
\begin{enumerate}
\item Then $\mM^\circledast \times_{\mV^\ot} \mN^\circledast \to \mU^\ot \times \mW^\ot$ is a weakly bienriched $\infty$-category.

\vspace{1mm}
\item If $\mM^\circledast \to \mU^\ot \times \mV^\ot$ is a left tensored $\infty$-category and $ \mN^\circledast \to \mV^\ot \times \mW^\ot$ is a right tensored $\infty$-category, then $\mM^\circledast \times_{\mV^\ot} \mN^\circledast \to \mU^\ot \times \mW^\ot$ is a bitensored $\infty$-category.

\vspace{1mm}
\item If $\mM^\circledast \to \mU^\ot \times \mV^\ot$ is a left tensored $\infty$-category compatible with small colimits
and $ \mN^\circledast \to \mV^\ot \times \mW^\ot$ is a right tensored $\infty$-category compatible with small colimits, then $\mM^\circledast \times_{\mV^\ot} \mN^\circledast \to \mU^\ot \times \mW^\ot$ is a bitensored $\infty$-category compatible with small colimits.
\end{enumerate}

\end{lemma}

\begin{notation}
Let $\mM^\circledast \to \mU^\ot \times \mV^\ot, \mN^\circledast \to \mV^\ot \times \mW^\ot$ be bitensored $\infty$-categories %compatible with small colimits
and $\sigma : \mM^\circledast \times_{\mV^\ot} \mN^\circledast \to \mV^\ot \to \Ass$ the canonical cocartesian fibration.
We write $$ (\mM \boxtimes_{\mV} \mN)^\circledast $$ for
the $\infty$-category arising from $\mM^\circledast \times_{\mV^\ot} \mN^\circledast $ by formally inverting the $\sigma$-cocartesian morphisms.

\end{notation}

Since the functor $\mM^\circledast \times_{\mV^\ot} \mN^\circledast \to \mU^\ot \times \mW^\ot$ inverts $\sigma$-cocartesian morphisms, it induces a functor $$\tau: (\mM \boxtimes_{\mV} \mN)^\circledast \to \mU^\ot \times \mW^\ot.$$	

\begin{lemma}\label{reta}
Let $\mM^\circledast \to \mU^\ot \times \mV^\ot, \mN^\circledast \to \mV^\ot \times \mW^\ot$ be bitensored $\infty$-categories.
The functor $\tau: (\mM \boxtimes_{\mV} \mN)^\circledast \to \mU^\ot \times \mW^\ot$
is a bitensored $\infty$-category and the following canonical functor is $\mU, \mW$-linear:
$$\mM^\circledast \times_{\mV^\ot} \mN^\circledast \to (\mM \boxtimes_{\mV} \mN)^\circledast.$$

\end{lemma}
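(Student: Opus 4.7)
The plan is to prove both claims by carefully analyzing how the localization interacts with the cocartesian structures. First I would observe that, since $\mM^\circledast \to \mV^\ot$ and $\mN^\circledast \to \mV^\ot$ are cocartesian fibrations (as components of the bitensored structures), the fiber product $\mM^\circledast \times_{\mV^\ot} \mN^\circledast \to \mV^\ot$ is a cocartesian fibration, and hence so is $\sigma : \mM^\circledast \times_{\mV^\ot} \mN^\circledast \to \mV^\ot \to \Ass$. A $\sigma$-cocartesian morphism $(f,g)$ covers a morphism $[\n] \to [\n']$ in $\Ass$ whose lift to the full base $\mU^\ot \times \mV^\ot \times \mW^\ot$ has identity components in the outer $\mU^\ot$- and $\mW^\ot$-factors, and is therefore sent to an equivalence by the canonical projection $\pi : \mM^\circledast \times_{\mV^\ot} \mN^\circledast \to \mU^\ot \times \mW^\ot$. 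This is exactly what yields the factorization defining $\tau$.

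Next I would verify that $\tau$ is a cocartesian fibration over $\Ass \times \Ass$. Given $(\alpha, \beta) : ([\n],[\m]) \to ([\n'],[\m'])$ and $[X,Y] \in (\mM \boxtimes_{\mV} \mN)^\circledast_{([\n],[\m])}$, I would choose a representative $(X,Y) \in \mM^\circledast_{([\n],[\bk])} \times_{\mV^\ot_{[\bk]}} \mN^\circledast_{([\bk],[\m])}$ and form the cocartesian lifts $X \to X'$ of $(\alpha, \id_{[\bk]})$ in $\mM^\circledast$ and $Y \to Y'$ of $(\id_{[\bk]}, \beta)$ in $\mN^\circledast$; their pair yields a morphism in $\mM^\circledast \times_{\mV^\ot} \mN^\circledast$ whose image in $(\mM \boxtimes_{\mV} \mN)^\circledast$ is the desired $\tau$-cocartesian lift. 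Any two choices of representative differ by a zigzag of $\sigma$-cocartesian morphisms, so the construction is well defined after localization, and the universal property of the localization then yields cocartesianness.

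For the Segal conditions, I would compute the fiber $(\mM \boxtimes_{\mV} \mN)^\circledast_{([\n],[\m])}$ as the localization of $\mM^\circledast_{([\n],\bullet)} \times_{\mV^\ot_\bullet} \mN^\circledast_{(\bullet,[\m])}$ along $\sigma$-cocartesian morphisms. Using the Segal identifications $\mM^\circledast_{([\n],[\bk])} \simeq \mU^{\times \n} \times \mM \times \mV^{\times \bk}$ and $\mN^\circledast_{([\bk],[\m])} \simeq \mV^{\times \bk} \times \mN \times \mW^{\times \m}$ from the bitensored structures, together with the observation that $\sigma$-cocartesian morphisms act only in the middle $\mV$-component, this fiber decomposes as $\mU^{\times \n} \times (\mM \boxtimes_{\mV} \mN) \times \mW^{\times \m}$, where the last factor is the underlying $\infty$-category. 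The $\mU,\mW$-linearity of the canonical functor $\mM^\circledast \times_{\mV^\ot} \mN^\circledast \to (\mM \boxtimes_{\mV} \mN)^\circledast$ is then immediate from this construction, since the $\tau$-cocartesian lifts were produced precisely as images of cocartesian lifts in the source.

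The main obstacle I anticipate is the compatibility of the localization with passage to fibers over $\Ass \times \Ass$: one must verify that the natural map from the fiberwise localization to the fiber of the global localization is an equivalence, which amounts to a compatibility of the colimit over $\Ass$ presenting the localization with pullback along the inert projections. This is tractable because the $\sigma$-cocartesian morphisms project to identities in the outer $\mU^\ot \times \mW^\ot$ direction, so that the fibers over $([\n],[\m])$ of the localization and the localization of the fibers can be compared directly through the universal property.
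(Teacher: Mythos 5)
Your overall strategy coincides with the paper's: observe that the $\sigma$-cocartesian morphisms have equivalence components in the $\mU^\ot$- and $\mW^\ot$-directions and hence lie in the fibers over $\Ass\times\Ass$, observe that the transition functors of the outer fibration preserve them (being $\mV$-linear), and conclude that the localization is again a cocartesian fibration over $\Ass\times\Ass$ whose fibers are the fiberwise localizations, from which the Segal condition and the $\mU,\mW$-linearity of the localization functor follow. The difference is how the key step is justified: the paper invokes a general fiberwise Dwyer--Kan localization result for cocartesian fibrations (Hinich--Dwyer, Proposition 2.1.4), which states precisely that when the inverted morphisms lie in fibers and are preserved by cocartesian transport, the localization is again a cocartesian fibration, its fibers are the localized fibers, and the localization map preserves cocartesian edges.

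Your replacement for that result is where the gap lies. Cocartesianness of a morphism in $(\mM\boxtimes_{\mV}\mN)^\circledast$ is a condition on mapping spaces in the localized $\infty$-category, and these are not controlled by the universal property of the localization alone; the step ``any two representatives differ by a zigzag of $\sigma$-cocartesian morphisms, so the construction is well defined and the universal property yields cocartesianness'' is not an argument --- one must actually produce the required equivalences of mapping spaces in the localization, which needs either an explicit model (e.g.\ a calculus-of-fractions type description) or the cited fibrewise-localization theorem. Likewise, the compatibility you flag as the ``main obstacle,'' namely that the fiber of the global localization over $([\n],[\m])$ agrees with the localization of the fiber, is exactly the nontrivial content of that theorem; asserting that it is ``tractable'' and can be ``compared directly through the universal property'' does not establish it. So the proposal follows the right route, but as written it assumes the essential technical input rather than proving or citing it.
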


\begin{proof}Since the functor $\mM^\circledast \times_{\mV^\ot} \mN^\circledast \to \mU^\ot \times \mW^\ot$ inverts $\sigma$-cocartesian morphisms, every $\sigma$-cocartesian morphism belongs to some fiber
$\mM^\circledast_{[\n]} \times_{\mV^\ot} \mN_{[\m]}^\circledast $ for some $[\m], [\n] \in \Delta$. Every morphisms $ [\n] \to [\n'], [\m]\to [\m']$
in $\Delta$ induce a left $\mV$-linear functor
$\mM^\circledast_{[\n']} \times_{\mV^\ot} \mN_{[\n]}^\circledast\to \mM^\circledast_{[\m']} \times_{\mV^\ot} \mN_{[\m]}^\circledast$ that preserves $\sigma$-cocartesian morphisms being left $\mV$-linear.
Let $\mT$ be the collection of $\sigma$-cocartesian morphisms.
By \cite[Proposition 2.1.4.]{HinichDwyer} this implies that $\tau$ is a map of cocartesian fibrations over $\Ass \times\Ass$, the canonical functor 
$\mM^\circledast \times_{\mV^\ot} \mN^\circledast \to (\mM \times_{\mV} \mN)^\circledast $ is $\mU, \mW$-linear, and for any $[\n], [\m]\in \Delta$ the induced functor 
$$ (\mM \times_{\mV} \mN)_{[\n],[\m]}^\circledast \to (\mM_{[\n]}^\circledast \times_{\mV^\ot} \mN_{[\m]}^\circledast)[\mT] \simeq \mU^{\times \n} \times (\mM^\circledast_{[0]} \times_{\mV^\ot} \mN_{[0]}^\circledast)[\mT]\times \mW^{\times\m} $$%= \mU^{\times \n} \times (\mM \times_{\mV} \mN) \times \mW^{\times\m} $$
is an equivalence.
%= \times \mW^{\times\m} $$$$\simeq (\mM^\circledast_{[\n]} \times_{\mV^\ot} \mN_{[\m]}^\circledast)[\mT] \to \mU^\ot_{[\n]}\times \mW^\ot_{[\m]} \simeq \mU^{\times \n} \times \mW^{\times\m}.$$ %by formally inverting the $\sigma$-cocartesian morphisms on the left hand side.
Thus $\tau$ is a bitensored $\infty$-category.

\end{proof}

\begin{lemma}\label{retaco}	Let $\mM^\circledast \to \mU^\ot \times \mV^\ot, \mN^\circledast \to \mV^\ot \times \mW^\ot$ be bitensored $\infty$-categories compatible with small colimits.
There is a bitensored $\infty$-category $ (\mM \otimes_{\mV} \mN)^\circledast \to \mU^\ot \times \mW^\ot$ compatible with small colimits and a $\mU, \mW$-linear functor $$ (\mM \boxtimes_{\mV} \mN)^\circledast \to (\mM \otimes_{\mV} \mN)^\circledast$$ 
whose underlying functor preserves small colimits component-wise, that induces for every bitensored $\infty$-category $\mO^\circledast \to \mU^\ot \times \mW^\ot$ compatible with small colimits an embedding $$\LinFun^\L_{\mU,\mW}(\mM \otimes_{\mV} \mN, \mO) \to \LinFun_{\mU,\mW}(\mM \boxtimes_{\mV} \mN, \mO),$$
whose essential image are the $\mU,\mW$-linear functors whose underlying functor preserves small colimits component-wise.
	
\end{lemma}

\begin{proof}
We set $ (\mM \otimes_{\mV} \mN)^\circledast:=\mP\B\Env_\Lambda^\mH(\mM \boxtimes_{\mV} \mN)_{\L\Enr}^\circledast \to \mU^\ot \times \mW^\ot,$
where $\mH$ is the large collection of small $(\mU, \mW)$-weights %$*_{\mU,\mW}$
and $\Lambda$ is the large collection of all colimit diagrams weighted with respect to weights on $*_{\mU,\mW}$ together with the following conical diagrams:
$$ \K^{\triangleright} \to \mM \simeq \mM \times \{\Y\} \to \mM \times \mN \subset (\mM \times_{\mV} \mN)^\circledast, \T^{\triangleright} \to \mN \simeq \{\X\} \times \mN \to \mM \times \mN \subset \mM^\circledast \times_{\mV^\ot} \mN^\circledast \to (\mM \boxtimes_{\mV} \mN)^\circledast $$
for some $\X \in \mM, \Y \in \mN$ and colimit diagrams
$ \K^{\triangleright} \to \mM,  \T^{\triangleright} \to \mN$.
	
\end{proof}

\begin{remark}
Let $\mM^\circledast \to \mU^\ot \times \mV^\ot, \mN^\circledast \to \mV^\ot \times \mW^\ot$ be bitensored $\infty$-categories.
The cocartesian fibration $\mM^\circledast_{[0]} \times_{\mV^\ot} \mN_{[0]}^\circledast \to \mV^\ot \to \Ass$ 
classifies the Bar-construction $\Delta^\op \to \Cat_\infty, [\n] \mapsto \mM \times \mV^{\times \n} \times \mN $ of \cite[Construction 4.4.2.7.]{lurie.higheralgebra} and
$\mM \boxtimes_\mV \mN$ is the colimit of the Bar-construction, the relative 
tensor product of $\mM^\circledast_{[0]} \to \mV^\ot$ and $\mN_{[0]}^\circledast \to \mV^\ot$ \cite[Definition 4.4.2.10.]{lurie.higheralgebra}.

\end{remark}

\begin{notation}
Let $\mN^\circledast \to \mV^\ot \times \mW^\ot$ be a weakly bienriched $\infty$-category, $\mM^\circledast \to \mV^\ot $ a weakly left enriched $\infty$-category and $\Lambda$ a collection of right diagrams in $\mM^\op$. %and $\mH$ a collection of left weights over $\mV$.
Let $$ \Enr\Fun^{\Lambda-\lim}_{\mV, \emptyset}(\mM,\mN)^\circledast \subset \Enr\Fun_{\mV, \emptyset}(\mM,\mN)^\circledast$$
be the full weakly right enriched subcategory of left $\mV$-enriched functors
sending diagrams of $\Lambda$ to $\mH$-weighted limit diagrams.
	
\end{notation}

\begin{lemma}\label{Remrema}Let $\mN^\circledast \to \mV^\ot \times \mW^\ot$ be a presentably bitensored $\infty$-category, $\mM^\circledast \to \mV^\ot $ a small right enriched $\infty$-category and $\Lambda$ a collection of left diagrams in $\mM^\op$ such that $\mM $ admits limits weighted with respect to underlying weights of diagrams of $\Lambda.$	

\begin{enumerate}
\item The weakly right enriched $\infty$-category $ \Enr\Fun_{\mV, \emptyset}(\mM^\op,\mN)^\circledast \to \mW^\ot $ is presentably right tensored.

\item The full weakly right enriched subcategory $\Enr\Fun^{\Lambda-\lim}_{\mV, \emptyset}(\mM^\op,\mN)^\circledast \subset \Enr\Fun_{\mV, \emptyset}(\mM^\op,\mN)^\circledast$ is a right $\mW$-linear accessible localization and so presentably right tensored. 

\end{enumerate}

\end{lemma}

\begin{proof}
	
(1) follows from \cite[Lemma 2.55., Corollary 4.24., Corollary 5.14.]{heine2024bienriched}.

(2): For every $\Z \in \mN$ let $\psi_\Z$ be the composition $$\mM^\circledast \subset \Enr\Fun_{\mV, \emptyset}(\mM^\op,\mV)^\circledast \xrightarrow{((-)\ot \Z)_*} \Enr\Fun_{\mV, \emptyset}(\mM^\op,\mN)^\circledast,$$ where the first embedding is the right $\mV$-enriched Yoneda-embedding.
Let $\kappa$ be a small regular cardinal such that $\mN^\circledast \to \mV^\ot \times \mW^\ot$ is a $\kappa$-compactly generated bitensored $\infty$-category. The full weakly right enriched subcategory $\Enr\Fun^{\Lambda-\lim}_{\mV, \emptyset}(\mM^\op,\mN)^\circledast \subset \Enr\Fun_{\mV, \emptyset}(\mM^\op,\mN)^\circledast$ is the right $\mW$-linear accessible localization with respect to the set of morphisms $$\{\theta: \colim^{\rH}(\psi_\Z \circ \F) \to \psi_\Z(\Y) \mid \Z \in \mN^\kappa, (\F: \mJ^\circledast \to \mM^\circledast, \rH \in \mP\B\Env(\mJ), \Y \in \mM, \F_!(\rH)\to \Y ) \in \Lambda\} $$ in $\Enr\Fun_{\mV, \emptyset}(\mM^\op,\mN),$
where $\theta$ corresponds to the following morphism in $\mP\B\Env(\mJ):$ $$\rH \to \F^\ast(\Y) \to (\psi_\Z\circ \F)^\ast(\psi_\Z(\Y)) \simeq \F^*(\psi_\Z^\ast(\psi_\Z(\Y))).$$ 
	
\end{proof}

\begin{proposition}\label{leumorat}
Let $\mN^\circledast \to \mV^\ot \times \mW^\ot$ be a presentably bitensored $\infty$-category.

\begin{enumerate}
\item Let $\mM^\circledast \to \mV^\ot $ be a small left enriched $\infty$-category and $\Lambda$ a collection of right diagrams in $\mM^\op$ such that $\mM$ admits limits weighted with respect to underlying weights of diagrams of $\Lambda.$
There is a canonical equivalence of $\infty$-categories presentably right tensored over $\mW:$
$$ \Enr\Fun^{\Lambda-\lim}_{\mV, \emptyset}(\mM,\mN)^\circledast \simeq (\Enr\Fun^{\Lambda-\lim}_{\mV, \emptyset}(\mM,\mV) \ot_\mV \mN)^\circledast.$$

\item Let $\mM^\circledast \to \mW^\ot $ be a small right enriched $\infty$-category and $\Lambda$ a collection of left diagrams in $\mM^\op$ such that $\mM$ admits limits weighted with respect to underlying weights of diagrams of $\Lambda.$
There is a canonical equivalence of $\infty$-categories presentably left tensored over $\mV:$
$$ \Enr\Fun^{\Lambda-\lim}_{\emptyset, \mW}(\mM,\mN)^\circledast \simeq (\mN \ot_\mW \Enr\Fun^{\Lambda-\lim}_{\emptyset, \mW}(\mM,\mW))^\circledast.$$
\end{enumerate}	
\end{proposition}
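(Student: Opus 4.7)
Part (2) follows from (1) by applying the involution $(-)^{\rev}$ of Notation \ref{invo} (which swaps left and right enrichment), so I focus on (1). The strategy is to identify both sides as the same right $\mW$-linear accessible localization of the presentably right $\mW$-tensored $\infty$-category $\Enr\Fun_{\mV,\emptyset}(\mM,\mN)^\circledast$.

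I would first treat the case $\Lambda=\emptyset$. Here both sides are presentably right $\mW$-tensored: the source by the definition of $\ot_\mV$ via Lemma \ref{retaco}, the target by Corollary \ref{psinhosp}. The canonical right $\mW$-linear functor
$$\Theta\colon(\Enr\Fun_{\mV,\emptyset}(\mM,\mV)\ot_\mV\mN)^\circledast \;\longrightarrow\; \Enr\Fun_{\mV,\emptyset}(\mM,\mN)^\circledast,$$
induced by composition on the first factor together with the left $\mV$-action on $\mN$, is an equivalence, which one checks by corepresentation against a presentably right $\mW$-tensored $\infty$-category $\mO$: Theorem \ref{unitol} identifies $\Enr\Fun_{\mV,\emptyset}(\mM,\mV)^\circledast$ with $\mP_\mV(\mM^\op)^\circledast$, hence with the free presentably left $\mV$-tensored $\infty$-category on $\mM^\op$; combined with the universal property of $\ot_\mV$ from Lemma \ref{retaco} and with Proposition \ref{remqalo} (to see that postcomposition with $(-)\ot\Z$ for $\Z\in\mN$ preserves the required weighted colimits), both sides corepresent the same space of left $\mV$-enriched functors $\mM\to\mN$ equipped with the appropriate right $\mW$-linear data from $\mO$.

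For general $\Lambda$, Remark \ref{Remrema} applied to the codomain $\mN$ exhibits $\Enr\Fun^{\Lambda-\lim}_{\mV,\emptyset}(\mM,\mN)^\circledast$ as the right $\mW$-linear accessible localization of $\Enr\Fun_{\mV,\emptyset}(\mM,\mN)^\circledast$ at the set
$$\mQ_\mN=\bigl\{\colim^\rH(\psi_\Z\circ\F)\to\psi_\Z(\Y)\,\bigm|\,\Z\in\mN^\kappa,\,(\F,\rH,\Y)\in\Lambda\bigr\},$$
where $\psi_\Z=((-)\ot\Z)_{\ast}\circ\rho$ and $\rho$ denotes the enriched Yoneda embedding. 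Applying the same remark to the codomain $\mV$ yields the analogous $\mV,\mV$-linear accessible localization presenting $\Enr\Fun^{\Lambda-\lim}_{\mV,\emptyset}(\mM,\mV)^\circledast\subset\Enr\Fun_{\mV,\emptyset}(\mM,\mV)^\circledast$ at a set $\mQ_\mV$. Since $(-)\ot_\mV\mN$ on presentably left $\mV$-tensored $\infty$-categories is a symmetric monoidal functor preserving small colimits in each variable, tensoring this $\mV$-linear localization with $\mN$ over $\mV$ yields the right $\mW$-linear accessible localization of $(\Enr\Fun_{\mV,\emptyset}(\mM,\mV)\ot_\mV\mN)^\circledast$ at the saturated closure of $\{\f\ot\Z\mid\f\in\mQ_\mV,\,\Z\in\mN\}$.

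The main obstacle is to match these two generating sets under $\Theta$. Unwinding definitions, $\Theta$ sends $\f\ot\Z$ for $\f=(\colim^\rH(\rho\circ\F)\to\rho(\Y))\in\mQ_\mV$ to $\colim^\rH(\psi_\Z\circ\F)\to\psi_\Z(\Y)$, since the left $\mV$-linear left adjoint $(-)\ot\Z\colon\mV^\circledast\to\mN^\circledast$ carries $\rho$ to $\psi_\Z$ by construction and preserves the weighted colimits involved by Proposition \ref{remqalo}. Restricting $\Z$ to $\mN^\kappa$ does not shrink the saturated class, since $\mN$ is $\kappa$-compactly generated and $(-)\ot_\mV\mN$ preserves small colimits. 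The two accessible right $\mW$-linear localizations of $\Enr\Fun_{\mV,\emptyset}(\mM,\mN)^\circledast$ therefore coincide, yielding the claimed $\mW$-enriched equivalence.
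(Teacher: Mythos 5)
Your reduction of (2) to (1) and your second step (presenting both sides as accessible right $\mW$-linear localizations via Remark \ref{Remrema}, tensoring the $\mV$-linear localization $\Enr\Fun^{\Lambda-\lim}_{\mV,\emptyset}(\mM,\mV)\subset\Enr\Fun_{\mV,\emptyset}(\mM,\mV)$ with $\mN$ over $\mV$, and matching generating sets up to saturation) are reasonable in outline. But the proof hinges entirely on your base case, and that is where there is a genuine gap: the claim that the canonical right $\mW$-linear functor $\Theta\colon(\Enr\Fun_{\mV,\emptyset}(\mM,\mV)\ot_\mV\mN)^\circledast\to\Enr\Fun_{\mV,\emptyset}(\mM,\mN)^\circledast$ is an equivalence "by corepresentation against a presentably right $\mW$-tensored $\mO$" does not go through as stated. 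The relative tensor product is characterized by a mapping-out universal property (right $\mW$-linear, componentwise colimit-preserving balanced functors, Lemma \ref{retaco}), whereas the enriched functor $\infty$-category $\Enr\Fun_{\mV,\emptyset}(\mM,\mN)^\circledast$ is characterized by a mapping-in universal property (it is an internal-hom-type object, cf. Proposition \ref{lehmmm}); there is no a priori description of right $\mW$-linear colimit-preserving functors out of $\Enr\Fun_{\mV,\emptyset}(\mM,\mN)$ in terms of $\mM$, $\mN$ and $\mO$, so the two sides do not visibly corepresent a common functor. Indeed, the $\Lambda=\emptyset$ case is precisely Corollary \ref{Dayso}, which the paper deduces from this very proposition; asserting it amounts to asserting a dualizability-type statement for $\mP_\mV(\mM)$ as a $\mV$-module, which is the actual content to be proved, and neither Theorem \ref{unitol} (free cocompletion) nor Proposition \ref{remqalo} supplies it.

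By contrast, the paper's proof supplies exactly this missing step: it constructs the comparison explicitly as the right $\mW$-linear functor $\kappa(\F,\Y)=\F(-)\ot\Y$, descends it through $\boxtimes_\mV$ and $\ot_\mV$ to a functor $\lambda$, and then proves $\lambda$ is an equivalence by a Barr--Beck--Lurie monadicity argument: both sides are monadic over $\Fun(\mM^\simeq,\mN)$ (using \cite[Proposition 5.23.]{Heine2024Lur} on one side and the behaviour of relative tensor products under monadic functors on the other), and one checks that $\lambda$ intertwines the free functors by reducing to the generators $\mM^\simeq(\X,-)\ot\Y$ and using the local equivalence $\Mor_\mM(\X,-)\to\F'(\mM^\simeq(\X,-))$. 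Note also that the paper treats general $\Lambda$ inside this single monadicity argument rather than by a separate localization step. If you want to keep your two-step structure, you must replace the corepresentation claim by a real argument for $\Theta$ (for example, the monadicity argument above, or a direct proof of dualizability of the enriched presheaf category with explicit evaluation and coevaluation); your localization step would also need the statement that tensoring an accessible $\mV$-linear localization with $\mN$ over $\mV$ yields the localization at the evident image set, which is plausible via mapping-out universal properties but should be recorded and proved rather than cited.
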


\begin{proof}We prove (1). Statement (2) is dual.
Let $\alpha:\mV^\circledast \times_{\mV^\ot} \mN^\circledast \to (\mV \otimes_{\mV} \mN)^\circledast \simeq \mN^\circledast$ be the universal $\mV, \mW$-enriched functor.	
The $\mV, \mW$-enriched evaluation functor $\mM^\circledast \times \Enr\Fun_{\mV, \emptyset}(\mM,\mV)^\circledast \to \mV^\circledast$
gives rise to a $\mV, \mW$-enriched functor $$\mM^\circledast \times \Enr\Fun_{\mV, \emptyset}(\mM,\mV)^\circledast \times_{\mV^\ot} \mN^\circledast \to \mV^\circledast \times_{\mV^\ot} \mN^\circledast \xrightarrow{\alpha} \mN^\circledast $$
adjoint to a right $\mW$-enriched functor
$$\kappa: \Enr\Fun_{\mV, \emptyset}(\mM,\mV)^\circledast \times_{\mV^\ot} \mN^\circledast \to \Enr\Fun_{\mV, \emptyset}(\mM,\mN)^\circledast$$
that sends $\F \in \Enr\Fun_{\mV, \emptyset}(\mM,\mV), \Y \in \mN$ to $\X \mapsto \F(\X)\ot \Y.$
The functor $\kappa$ is right $\mW$-linear because for every $\F: \mM \to \mV$ and $\Y \in \mN$, $\W \in \mW$ the canonical morphism $\kappa(\F,\Y)\ot \W \to \kappa(\F, \Y \ot \W)$ identifies after evaluation at any $\Z \in \mM$ with the canonical equivalence $$(\F(\Z) \ot \Y) \ot \W \simeq \F(\Z) \ot (\Y \ot \W).$$

For every $\Y \in \mN$ the functor $\kappa(-, \Y): \Enr\Fun_{\mV, \emptyset}(\mM,\mV) \to \Enr\Fun_{\mV, \emptyset}(\mM,\mN), \F \mapsto \F(-) \ot \Y$ preserves weighted colimits because the left $\mV$-linear left adjoint functor $(-)\ot \Y: \mV^\circledast \to \mN^\circledast$ preserves weighted colimits using Proposition \ref{Enric}.
Since $\kappa(-, \Y): \Enr\Fun_{\mV, \emptyset}(\mM,\mV) \to \Enr\Fun_{\mV, \emptyset}(\mM,\mN)$ preserves weighted colimits, it sends generating local equivalences to local equivalences and so induces a right $\mW$-linear functor $\kappa'$
%$$\kappa': \Enr\Fun^{\mH}_{\mV, \emptyset}(\mM,\mV)^\circledast \boxtimes_{\mV^\ot} \mN^\circledast \to \Enr\Fun^{\mH}_{\mV, \emptyset}(\mM,\mN)^\circledast$$
that fits into a commutative square:
\begin{equation}\label{sq}
\begin{xy}
\xymatrix{
\Enr\Fun_{\mV, \emptyset}(\mM,\mV)^\circledast \times_{\mV^\ot} \mN^\circledast \ar[d]
\ar[rr]^{\kappa}
&& \Enr\Fun_{\mV, \emptyset}(\mM,\mN)^\circledast \ar[d]^{} 
\\
\Enr\Fun^{\Lambda-\lim}_{\mV, \emptyset}(\mM,\mV)^\circledast \times_{\mV^\ot} \mN^\circledast \ar[rr]^{\kappa'} && \Enr\Fun^{\Lambda-\lim}_{\mV, \emptyset}(\mM,\mN)^\circledast.}
\end{xy}\end{equation}

Let $\sigma: \Enr\Fun_{\mV, \emptyset}(\mM,\mV)^\circledast \times_{\mV^\ot} \mN^\circledast \to \mV^\ot \to \Ass$ be the canonical functor.
We prove next that $\kappa$ sends $\sigma$-cocartesian morphisms to equivalences.
This holds if and only if for every $\Z \in \mM$ the right $\mW$-linear functor $$ \Enr\Fun_{\mV, \emptyset}(\mM,\mV)^\circledast \times_{\mV^\ot} \mN^\circledast \xrightarrow{\kappa} \Enr\Fun_{\mV, \emptyset}(\mM,\mN)^\circledast \xrightarrow{\ev_\Z}\mN^\circledast$$
inverts $\sigma$-cocartesian morphisms. By construction the latter factors as 
$$ \Enr\Fun_{\mV, \emptyset}(\mM,\mV)^\circledast \times_{\mV^\ot} \mN^\circledast 
\xrightarrow{\ev_\Z \times_{\mV^\ot} \mN^\circledast} \mV^\circledast  \times_{\mV^\ot} \mN^\circledast \xrightarrow{\alpha} \mN^\circledast $$
%$$\rho_{[0]}: \Enr\Fun_{\mV, \emptyset}(\mM,\mV)^\circledast \times_{\mV^\ot} \mN^\circledast_{[0]} \xrightarrow{\ev_\Z \times_{\mV^\ot}  \mN^\circledast_{[0]}} \mV^\circledast \times_{\mV^\ot} \mN^\circledast_{[0]} \xrightarrow{\alpha_{[0]}} \mN^\circledast_{[0]}$$underlying $\rho$ sends $\sigma$-cocartesian morphisms to equivalences.
and $\ev_\Z: \Enr\Fun_{\mV, \emptyset}(\mM,\mV)^\circledast \to \mV^\circledast $ is left $\mV$-linear and $\alpha$ universally inverts morphisms cocartesian with respect to $\mV^\circledast \times_{\mV^\ot} \mN^\circledast \to \mV^\ot \to \Ass$.
Therefore $\kappa$ inverts $\sigma$-cocartesian morphisms. Let $\sigma': \Enr\Fun^{\Lambda-\lim}_{\mV, \emptyset}(\mM,\mV)^\circledast \times_{\mV^\ot} \mN^\circledast \to \mV^\ot \to \Ass$ be the canonical functor.
Since both vertical functors in square (\ref{sq}) send $\sigma$-cocartesian morphisms to $\sigma'$-cocartesian morphisms, $\kappa'$ 
%followed by $\Enr\Fun_{\mV, \emptyset}(\mM,\mN)^\circledast \to \Enr\Fun^{\mH}_{\mV, \emptyset}(\mM,\mN)^\circledast$ factors as $\Enr\Fun_{\mV, \emptyset}(\mM,\mV)^\circledast \times_{\mV^\ot} \mN^\circledast \to\Enr\Fun^{\mH}_{\mV, \emptyset}(\mM,\mV)^\circledast \times_{\mV^\ot} \mN^\circledast\xrightarrow{\kappa''} \Enr\Fun^{\mH}_{\mV, \emptyset}(\mM,\mN)^\circledast$and the localization functor $\Enr\Fun_{\mV, \emptyset}(\mM,\mV)^\circledast \to\Enr\Fun^{\mH}_{\mV, \emptyset}(\mM,\mV)^\circledast$ is $\mV$-linear,also $\kappa''$ 
inverts $\sigma'$-cocartesian morphisms. So by Lemma \ref{reta} the functor $\kappa'$ induces a right $\mW$-linear functor $$\rho: (\Enr\Fun^{\Lambda-\lim}_{\mV, \emptyset}(\mM,\mV) \boxtimes_{\mV} \mN)^\circledast \to \Enr\Fun^{\Lambda-\lim}_{\mV, \emptyset}(\mM,\mN)^\circledast.$$

The right $\mW$-linear functor $\kappa$ induces on underlying
$\infty$-categories the functor $$\Enr\Fun_{\mV, \emptyset}(\mM,\mV) \times \mN \to \Enr\Fun_{\mV, \emptyset}(\mM,\mN), (\F, \Y) \mapsto ((-)\ot \Y)\circ \F$$ that preserves small colimits component-wise.
Since both vertical functors in square (\ref{sq}) are left adjoints, 
the right $\mW$-linear functor $\kappa'$ induces on underlying
$\infty$-categories a functor preserving small colimits component-wise.
So by Lemma \ref{retaco} the functor $\rho$ induces a right $\mW$-linear small colimits preserving functor
$$\lambda: (\Enr\Fun^{\Lambda-\lim}_{\mV, \emptyset}(\mM,\mV) \otimes_{\mV} \mN)^\circledast \to \Enr\Fun^{\Lambda-\lim}_{\mV, \emptyset}(\mM,\mN)^\circledast$$
that fits into a commutative square of $\infty$-categories right tensored over $\mW:$
\begin{equation}\label{sqay}
\begin{xy}
\xymatrix{
(\Enr\Fun^{\Lambda-\lim}_{\mV, \emptyset}(\mM,\mV) \otimes_{\mV} \mN)^\circledast \ar[d]
\ar[rr]^{\lambda}
&& \Enr\Fun^{\Lambda-\lim}_{\mV, \emptyset}(\mM,\mN)^\circledast \ar[d]^{} 
\\
(\Fun(\mM^\simeq,\mV) \otimes_{\mV} \mN)^\circledast \ar[rr]^\simeq && \Fun(\mM^\simeq,\mN)^\circledast.}
\end{xy}
\end{equation}
The canonical conservative right $\mW$-linear functor $ \Enr\Fun_{\mV, \emptyset}(\mM,\mN)^\circledast \to (\mN^{\mM^\simeq})^\circledast$ preserves small colimits and admits a $\mW$-enriched left adjoint by \cite[Corollary 5.23.]{heine2024bienriched} and so is monadic by \cite[Theorem 4.7.3.5.]{lurie.higheralgebra}.
Therefore the right vertical functor in square (\ref{sqay}) is monadic, too, where
we write $\F$ for the left adjoint.
%and induces on underlying $\infty$-categories a conservative functor that preserves small colimits (Lemma \ref{colas}).
By \cite[Corollary 4.66.]{heine2024bienriched} also the left vertical functor in square (\ref{sqay}) is monadic, where the left adjoint is $\F' \ot_\mV \mN$
when $\F'$ is the left adjoint of $\Enr\Fun^{\Lambda-\lim}_{\mV, \emptyset}(\mM,\mV) \to \Fun(\mM^\simeq,\mV).$
Therefore by \cite[Corollary 4.7.3.16.]{lurie.higheralgebra} it is enough to check that $\lambda$ preserves the left adjoints,
i.e. that the canonical morphism $ \F \to \lambda \circ \F' \ot_\mV \mN$ is an equivalence.
By \cite[Lemma 5.21.]{heine2024bienriched} the $\infty$-category $\Fun(\mM^\simeq,\mN)$ is generated under small colimits by the functors of the form
$\mM^\simeq \xrightarrow{\mM^\simeq(\X,-)} \mS \xrightarrow{(-)\ot \Y} \mN$
for $\X \in \mM, \Y \in \mN.$
Therefore it is enough to check that for every $\X \in \mM, \Y \in \mN$ the canonical morphism $$\psi: \F(\mM^\simeq(\X,-)\ot \Y) \to \kappa'(\F'(\mM^\simeq(\X,-)), \Y) $$ is an equivalence.
By \cite[Lemma 5.21., Theorem 5.22.]{heine2024bienriched} there is a local equivalence $\Mor_\mM(\X,-)\to \F'(\mM^\simeq(\X,-)) $ and a commutative square in $\Enr\Fun^{\Lambda-\lim}_{\mV, \emptyset}(\mM,\mN) $:
\begin{equation*}
\begin{xy}
\xymatrix{
\Mor_\mM(\X,-) \ot \Y  \simeq  \kappa(\Mor_\mM(\X,-), \Y)\ar[d]^{} \ar[rr] && \kappa(\F'(\mM^\simeq(\X,-)), \Y)\ar[d]^{}
\\
\F(\mM^\simeq(\X,-)\ot \Y) \ar[rr]^\psi && \kappa'(\F'(\mM^\simeq(\X,-)), \Y),}
\end{xy}
\end{equation*}
where both vertical morphisms are local equivalences.
So $\psi$ is an equivalence if and only if the top morphism of the square is a local equivalence.
The top morphism is a local equivalence since $\kappa(-,\Y)$ preserves local equivalences.
\end{proof}

%Let $\mN^\circledast \to \mV^\ot \times \mW^\ot$ be a presentably bitensored $\infty$-category, $\mH$ a collection of weights over $\mV$ and $\mM^\circledast \to \mV^\ot $ a small $\infty$-category right enriched in $\mV$ that admits $\mH$-weighted colimits. There is a canonical equivalence of $\infty$-categories presentably right tensored over $\mW:$$$ \Enr\Fun^\mH_{\mV, \emptyset}(\mM^\op,\mN)^\circledast \simeq (\mP\R\Env^\mH(\mM)_{\R\Enr}\ot_\mV \mN)^\circledast.$$	

\begin{proposition}\label{Days}
Let $\mN^\circledast \to \mV^\ot \times \mW^\ot$ be a presentably bitensored $\infty$-category, $\mM^\circledast \to \mW^\ot $ a small left enriched $\infty$-category and $\Lambda$ a collection of right diagrams in $\mM^\op$ such that $\mM$ admits limits weighted with respect to underlying weights of diagrams of $\Lambda.$
There is a left $\mV$-linear equivalence: % of $\infty$-categories presentably left tensored over $\mV:$
$$ \Enr\Fun^{\Lambda-\lim}_{\emptyset,\mW}(\mM^\op,\mN)^\circledast \simeq (\mN\ot_\mW \mP\L\Env_\Lambda(\mM)_{\L\Enr})^\circledast.$$	
%$$ \Enr\Fun^{\Lambda-\lim}_{\emptyset,\mW}(\mM^\op,\mN)^\circledast \simeq (\mN\ot_\mW \mP_\Lambda(\mM))^\circledast.$$	
	
\end{proposition}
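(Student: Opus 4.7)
The plan is to factor the desired equivalence as the composite of two simpler equivalences: first identify $\mP\L\Env_\Lambda(\mM)_{\L\Enr}$ with an enriched functor $\infty$-category via the enriched Yoneda embedding, and then apply Proposition \ref{leumorat}(2) with $\mM^\op$ in place of $\mM$ to extract the factor $\mN$ via the relative tensor product.

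For the first step, since $\mM^\circledast \to \mW^\ot$ is small left enriched, Theorem \ref{unitol} yields a canonical left $\mW$-linear equivalence
$$\mP\L\Env(\mM)_{\L\Enr}^\circledast \;\simeq\; \Enr\Fun_{\emptyset,\mW}(\mM^\op,\mW)^\circledast,$$
sending $\X$ to the morphism object functor $\L\Mor((-)_{\mid\mM},\X)$. By construction (analogous to Notation \ref{Notali} specialized to the left-enriched setting), the full subcategory $\mP\L\Env_\Lambda(\mM)_{\L\Enr}^\circledast \subset \mP\L\Env(\mM)_{\L\Enr}^\circledast$ is the localization at the morphisms $\mQ'_\Lambda$ associated to the diagrams of $\Lambda$. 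Remark \ref{umbes} together with Corollary \ref{obsto} reinterpret this local-object condition as requiring that the right $\mW$-enriched functor $\L\Mor((-)_{\mid\mM},\X):(\mM^\op)^\circledast \to \mW^\circledast$ send every diagram of $\Lambda$ to a weighted limit diagram. Under the Yoneda equivalence above, this precisely identifies
$$\mP\L\Env_\Lambda(\mM)_{\L\Enr}^\circledast \;\simeq\; \Enr\Fun^{\Lambda-\lim}_{\emptyset,\mW}(\mM^\op,\mW)^\circledast.$$

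For the second step, I apply Proposition \ref{leumorat}(2) with $\mM^\op$ in place of $\mM$. By Proposition \ref{oppoen}, $\mM^\op$ is right $\mW$-enriched because $\mM$ is left $\mW$-enriched, and by the colim/lim duality of Definition \ref{weightl} the hypothesis that $\mM$ admits weighted limits for the weights underlying $\Lambda$ is equivalent to $\mM^\op$ admitting the corresponding weighted colimits. Thus the hypotheses of Proposition \ref{leumorat}(2) are met and yield a left $\mV$-linear equivalence
$$\Enr\Fun^{\Lambda-\lim}_{\emptyset,\mW}(\mM^\op,\mN)^\circledast \;\simeq\; \bigl(\mN \ot_\mW \Enr\Fun^{\Lambda-\lim}_{\emptyset,\mW}(\mM^\op,\mW)\bigr)^\circledast.$$
Substituting the identification from the first step gives the desired equivalence.

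The main obstacle is the first step: carefully matching the $\mQ'_\Lambda$-local description of $\mP\L\Env_\Lambda(\mM)_{\L\Enr}$ against the $\Lambda$-limit-preservation condition on the enriched functor side, and verifying compatibility with the left $\mW$-enriched structures on both sides via the Yoneda equivalence of Theorem \ref{unitol}. Once this identification is established, the remainder of the argument is a direct invocation of Proposition \ref{leumorat}(2).
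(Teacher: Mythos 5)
Your proposal is correct and follows essentially the same route as the paper's proof: both combine Proposition \ref{leumorat}(2) (applied with $\mM^\op$) with the Yoneda identification $\mP\L\Env_\Lambda(\mM)_{\L\Enr}^\circledast \simeq \Enr\Fun^{\Lambda-\lim}_{\emptyset,\mW}(\mM^\op,\mW)^\circledast$ obtained from the enriched Yoneda embedding and Remark \ref{umbes}. The only cosmetic difference is the order of the two steps and your citation of Theorem \ref{unitol} where the paper invokes Corollary \ref{cosqa} for the same Yoneda extension.
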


\begin{proof}
%For $\mH$ empty 
By Proposition \ref{leumorat} there is a left $\mV$-linear equivalence:
%an equivalence of $\infty$-categories presentably left tensored over $\mV:$
$$ \Enr\Fun^{\Lambda-\lim}_{\emptyset,\mW}(\mM^\op,\mN)^\circledast \simeq (\mN\ot_\mW\Enr\Fun^{\Lambda-\lim}_{\emptyset,\mW}(\mM^\op,\mW))^\circledast.$$	
By Theorem \ref{unitol} there is a % left $\mW$-enriched Yoneda-embedding $\mM^\circledast \to \Enr\Fun_{\emptyset,\mW}(\mM^\op,\mW)^\circledast$ uniquely extends to a 
left $\mW$-linear equivalence $\mP\L\Env(\mM)_{\L\Enr}^\circledast\simeq \Enr\Fun_{\emptyset,\mW}(\mM^\op,\mW)^\circledast$ that restricts to an equivalence %of $\infty$-categories presentably left tensored over $\mW$: $
%$\mP_\mW^\mH(\mM)^\circledast=
$\mP\L\Env_\Lambda(\mM)_{\L\Enr}^\circledast \simeq \Enr\Fun^{\Lambda-\lim}_{\emptyset,\mW}(\mM^\op,\mW)^\circledast$
by Remark \ref{umbes}.
%So the result follows.

\end{proof}

\begin{corollary}\label{Dayso}
Let $\mN^\circledast \to \mV^\ot \times \mW^\ot$ be a presentably bitensored $\infty$-category and $\mM^\circledast \to \mW^\ot $ a small left enriched $\infty$-category.
There is a left $\mV$-linear equivalence $$ \Enr\Fun_{\emptyset,\mW}(\mM^\op,\mN)^\circledast \simeq (\mN\ot_\mW \mP_\mW(\mM))^\circledast.$$	
\end{corollary}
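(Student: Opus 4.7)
The plan is to deduce Corollary \ref{Dayso} from Proposition \ref{Days} by specializing to the empty collection of diagrams $\Lambda = \emptyset$. First I would verify that the hypotheses of Proposition \ref{Days} are satisfied in this case: the requirement that $\mM$ admit limits weighted with respect to underlying weights of diagrams of $\Lambda$ is vacuous for $\Lambda = \emptyset$, so the proposition applies to any presentably bitensored $\mN^\circledast \to \mV^\ot \times \mW^\ot$ and any small left enriched $\mM^\circledast \to \mW^\ot$.

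Next I would identify the two sides of the equivalence of Proposition \ref{Days} with those of the corollary under this specialization. On the left, $\Enr\Fun^{\emptyset\text{-}\lim}_{\emptyset,\mW}(\mM^\op,\mN)^\circledast$ imposes no condition on the right $\mW$-enriched functors beyond being left $\mW$-enriched, so by definition it coincides with $\Enr\Fun_{\emptyset,\mW}(\mM^\op,\mN)^\circledast$. On the right, one checks from Notation \ref{Notali} that $\mQ_\emptyset = \emptyset$ and $\mQ'_\emptyset = \emptyset$; hence $\mP\B\Env_\emptyset(\mM)_{\L\Enr}^\circledast = \mP\B\Env(\mM)_{\L\Enr}^\circledast$, and restricting to the full left tensored subcategory gives $\mP\L\Env_\emptyset(\mM)_{\L\Enr}^\circledast = \mP\L\Env(\mM)_{\L\Enr}^\circledast$, which by Notation \ref{enrprrrt}(1) is $\mP_\mW(\mM)^\circledast$.

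Substituting these identifications into the equivalence of Proposition \ref{Days} yields the desired left $\mV$-linear equivalence
$$\Enr\Fun_{\emptyset,\mW}(\mM^\op,\mN)^\circledast \simeq (\mN \ot_\mW \mP_\mW(\mM))^\circledast.$$
Since this is a direct specialization, there is no substantive obstacle; the only care needed is in unwinding the definitions of $\mP\L\Env_\Lambda(\mM)_{\L\Enr}$ and $\Enr\Fun^{\Lambda\text{-}\lim}_{\emptyset,\mW}$ when $\Lambda$ is empty, which is routine.
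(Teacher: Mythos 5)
Your proposal is correct and is exactly how the paper obtains this statement: Corollary \ref{Dayso} is stated without proof as the $\Lambda=\emptyset$ specialization of Proposition \ref{Days}, with the vacuity of the $\Lambda$-limit condition and the identification $\mP\L\Env_\emptyset(\mM)_{\L\Enr}\simeq \mP_\mW(\mM)$ being the only (routine) points to unwind, just as you do.
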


\begin{lemma}\label{leums}
Let $\mN^\circledast \to \mV^\ot \times \mW^\ot$ be a weakly bienriched $\infty$-category and $\mM^\circledast \to \mV^\ot $ a weakly left enriched $\infty$-category. 
There is a canonical left $\mW$-enriched equivalence:
$$(\Enr\Fun_{\mV, \emptyset}(\mM,\mN)^\op)^\circledast \simeq \Enr\Fun_{\emptyset, \mV}(\mM^\op,\mN^\op)^\circledast.$$
\end{lemma}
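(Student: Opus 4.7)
The plan is to construct the equivalence by exploiting the opposite involution $(-)^\op$ of Proposition \ref{oppoen} on the $(\infty,2)$-category $\omega\B\Enr$ of weakly bi-enriched $\infty$-categories, together with the characterization of enriched functor $\infty$-categories via Proposition \ref{lehmmm}.

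First, I would establish the underlying equivalence of $\infty$-categories by the assignment $\F \mapsto \F^\op$. If $\F: \mM^\circledast \to \mN^\circledast$ is a left $\mV$-enriched functor, then by Proposition \ref{oppoen} applying the involution $(-)^\op$ produces a functor $\F^\op: (\mM^\op)^\circledast \to (\mN^\op)^\circledast$. Since $(-)^\op$ exchanges weakly left $\mV$-enriched $\infty$-categories with weakly right $\mV$-enriched ones, $\F^\op$ is a right $\mV$-enriched functor from $\mM^\op$ (weakly right $\mV$-enriched) to $\mN^\op$ (weakly bi-enriched in $\mW, \mV$). The same involution on morphisms of enriched functors gives the equivalence $\Enr\Fun_{\mV, \emptyset}(\mM, \mN)^\op \simeq \Enr\Fun_{\emptyset, \mV}(\mM^\op, \mN^\op)$ on underlying $\infty$-categories, with the $\op$ on the left reflecting the reversal of natural transformations.

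Second, to promote this to a left $\mW$-enriched equivalence, I would compare graphs (Notation \ref{Enros}) via Proposition \ref{oppoen}. The identity $\Gamma_{\mA^\op}(\X, \Y) \simeq \Gamma_\mA(\Y, \X)$ applied to $\mA = \Enr\Fun_{\mV, \emptyset}(\mM, \mN)^\circledast$ gives
\begin{equation*}
\Gamma_{\Enr\Fun_{\mV, \emptyset}(\mM, \mN)^\op}(\F, \G) \simeq \Gamma_{\Enr\Fun_{\mV, \emptyset}(\mM, \mN)}(\G, \F),
\end{equation*}
and by Proposition \ref{lehmmm} this evaluates at $\W_1, \ldots, \W_\m$ as the space of $\mV, \mW$-enriched natural transformations $\G \to \F$ twisted by the word $\W_1, \ldots, \W_\m$. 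On the other side, unravelling Notation \ref{bbbb} together with the commutation of the independent involutions $(-)^\op$ and $(-)^\rev$ on $\omega\B\Enr$ and the identity $\Gamma_{\mN^\op}(\Y, \X) \simeq \Gamma_\mN(\X, \Y)$, the graph of $\Enr\Fun_{\emptyset, \mV}(\mM^\op, \mN^\op)^\circledast$ at $(\F^\op, \G^\op)$ reduces to the same presheaf via another application of Proposition \ref{lehmmm} to $\mM^\op$ and $\mN^\op$.

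The main obstacle I expect is carefully tracking the reversal of multi-argument orderings induced by the involution on $\Ass$ from Notation \ref{invo}, together with the interplay between the two involutions $(-)^\op$ and $(-)^\rev$. To streamline this verification, I would reduce to the case where $\mN$ is bitensored via its bitensored envelope $\B\Env(\mN)^\circledast$ of Proposition \ref{bitte}, using that the embedding $\mN^\circledast \subset \B\Env(\mN)^\circledast$ intertwines $(-)^\op$ with the opposite bitensored envelope construction via Proposition \ref{oppoen}. In the bitensored setting, Proposition \ref{lehmmm} together with the description of multi-morphism spaces via left and right morphism objects in $\mV$ and $\mW$ allows the symmetry under $(-)^\op$ to be checked on generators, bypassing the bookkeeping of tensor-argument reversals.
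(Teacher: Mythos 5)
There is a genuine gap: you never actually construct the left $\mW$-enriched comparison functor. An equivalence in the lemma is a map over $\mW^\ot$ (an enriched functor) that is essentially surjective and induces equivalences on all multi-morphism spaces. Your plan produces (a) an equivalence of underlying $\infty$-categories via $\F\mapsto\F^\op$ and (b) separate identifications of the graphs $\Gamma$ of the two sides, but two weakly enriched $\infty$-categories with matching objects and abstractly equivalent hom-data need not be equivalent unless the identification is implemented by a map compatible with the enriched structure (composition, the action of $\W_1,\dots,\W_\m$, etc.); Propositions \ref{oppoen} and \ref{bica} let you recognize an equivalence once you have a map, they do not supply one. The missing step can be filled, e.g.\ by transposing the opposite of the $\mV,\mW$-enriched evaluation functor $\mM^\circledast\times\Enr\Fun_{\mV,\emptyset}(\mM,\mN)^\circledast\to\mN^\circledast$ through Proposition \ref{lehmmm} to obtain a canonical left $\mW$-enriched functor $(\Enr\Fun_{\mV,\emptyset}(\mM,\mN)^\op)^\circledast\to\Enr\Fun_{\emptyset,\mV}(\mM^\op,\mN^\op)^\circledast$ and then checking it on multi-morphism spaces -- or, as the paper does, by pure corepresentability: for every weakly right $\mW$-enriched $\mO^\circledast\to\mW^\ot$ one computes
$\Enr\Fun_{\mW,\emptyset}(\mO^\op,\Enr\Fun_{\mV,\emptyset}(\mM,\mN)^\op)\simeq\Enr\Fun_{\mV,\mW}(\mM\times\mO,\mN)\simeq\Enr\Fun_{\mW,\mV}(\mO^\op\times\mM^\op,\mN^\op)\simeq\Enr\Fun_{\mW,\emptyset}(\mO^\op,\Enr\Fun_{\emptyset,\mV}(\mM^\op,\mN^\op))$,
using Proposition \ref{lehmmm} twice and the involution, and concludes by naturality in $\mO$. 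This bypasses exactly the bookkeeping your second step tries to do by hand.

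Your fallback device also fails as stated: reducing to the bitensored envelope is incompatible with $(-)^\op$, since the opposite of a (bi)tensored $\infty$-category admits cotensors rather than tensors, so $\B\Env(\mN)^\op$ is not bitensored and there is no "opposite bitensored envelope" that the embedding $\mN^\circledast\subset\B\Env(\mN)^\circledast$ intertwines with. Moreover, $\B\Env(\mN)$ is merely bitensored, not presentably so, hence left/right morphism objects in $\mV$ and $\mW$ need not exist there (they only exist after passing to $\mP\B\Env$, cf.\ Example \ref{Biolk}), so the proposed "check on generators via morphism objects" has no meaning in that setting. If you want an envelope-style reduction you would have to work with $\mP\B\Env(\mN)$ and keep track of how $(-)^\op$ interacts with it, which is considerably more delicate than the representability argument above.
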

\begin{proof}
The desired equivalence is represented by the canonical equivalence
$$\Enr\Fun_{\mW,\emptyset}(\mO^\op,\Enr\Fun_{\mV, \emptyset}(\mM,\mN)^\op) \simeq \Enr\Fun_{\emptyset, \mW}(\mO,\Enr\Fun_{\mV, \emptyset}(\mM,\mN)) $$$$\simeq \Enr\Fun_{\mV,\mW}(\mM \times \mO,\mN) \simeq \Enr\Fun_{\mW,\mV}(\mO^\op \times \mM^\op,\mN^\op) $$$$ \simeq \Enr\Fun_{\mW, \emptyset}(\mO^\op, \Enr\Fun_{\emptyset, \mV}(\mM^\op,\mN^\op))$$
for every weakly right enriched $\infty$-category $\mO^\circledast \to \mW^\ot.$	

\end{proof}

%\begin{notation}Let $\mN^\circledast \to \mV^\ot \times \mW^\ot$ be a weakly bienriched $\infty$-category and $\mM^\circledast \to \mV^\ot $ a weakly left enriched $\infty$-category. Let$$ \Enr\Fun^\R_{\mV, \emptyset}(\mM,\mN)^\circledast \subset \Enr\Fun_{\mV, \emptyset}(\mM,\mN)^\circledast $$be the full weakly right enriched subcategory spanned by the left $\mV$-enriched functors that admit a right adjoint relative to $\mV^\ot.$\end{notation}

\begin{theorem}\label{thei}
Let $\kappa$ be a small regular cardinal, $\mM^\circledast \to \mV^\ot$ a $\kappa$-compactly generated right tensored $\infty$-category and $\mN^\circledast \to \mV^\ot \times \mW^\ot$ a presentably bitensored $\infty$-category. % compatible with small colimits.
% and $\mM^\circledast \to \mV^\ot $ a small right tensored $\infty$-category compatible with $\kappa$-small colimits. 
There is a right $\mW$-linear equivalence 
$$ \Enr\Fun^\R_{\mV, \emptyset}(\mM^\op,\mN)^\circledast \simeq (\mM \ot_{\mV} \mN)^\circledast.$$
In particular, $(\mM \ot_{\mV} \mN)^\circledast \to \mW^\ot$ is a presentably right tensored
$\infty$-category by Lemma \ref{Remrema}.

\end{theorem}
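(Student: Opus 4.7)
The plan is to reduce the statement to a situation involving presheaves on a small right $\mV$-enriched $\infty$-category, by exploiting the $\kappa$-compact generation of $\mM$, and then identify the resulting localization on both sides.

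First I would apply the involution $(-)^{\op}$ of Proposition \ref{oppoen} to Proposition \ref{krye} in order to obtain the right-tensored analogue: a $\kappa$-compactly generated right tensored $\infty$-category $\mM^\circledast\to \mV^\ot$ is a $\kappa$-accessible right $\mV$-enriched localization of $\mP\B\Env(\mM^\kappa)_{\R\Enr}^\circledast$. Concretely, there is a collection $\Lambda$ of right diagrams in $\mM^\kappa$ (encoding preservation of $\kappa$-small conical limits and of left cotensors with $\kappa$-compact objects of $\mV$) such that $\mM^\circledast$ is identified with the full right $\mV$-tensored subcategory of $\mP\B\Env(\mM^\kappa)_{\R\Enr}^\circledast$ cut out by $\Lambda$.

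Next I would apply the dualized form of Proposition \ref{Days}, obtained by swapping left and right and the roles of $\mV,\mW$ via Proposition \ref{oppoen}, to the small right $\mV$-enriched $\infty$-category $\mM^\kappa$. Together with Step 1 this yields a right $\mW$-linear equivalence
$$\Enr\Fun^{\Lambda\text{-}\lim}_{\mV,\emptyset}(\mM^{\kappa,\op},\mN)^\circledast \simeq \bigl(\mP\B\Env_\Lambda(\mM^\kappa)_{\R\Enr}\ot_\mV\mN\bigr)^\circledast \simeq (\mM\ot_\mV\mN)^\circledast,$$
where the second equivalence uses that tensoring the right $\mV$-linear localization of Step 1 with $\mN$ over $\mV$ yields a right $\mW$-linear localization onto $\mM\ot_\mV\mN$.

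It then remains to identify the $\Lambda$-limit-preserving enriched functors $\mM^{\kappa,\op}\to\mN$ with the right adjoint enriched functors $\mM^\op\to\mN$. Restriction along $\mM^{\kappa,\op}\subset\mM^\op$ sends right adjoint enriched functors to $\Lambda$-limit-preserving ones by Proposition \ref{remqalo}, and I would show that this restriction is an equivalence by arguing that each $\Lambda$-limit-preserving enriched functor $\mM^{\kappa,\op}\to\mN$ extends uniquely to a $\mV$-enriched functor $\mM^\op\to\mN$ admitting a $\mV$-enriched left adjoint, using the universal property of the localization from Step 1 together with Proposition \ref{adjeq} to translate right adjoints into left adjoints on opposites, and the presentable left tensored structure on $\mN^\op$ to produce the left adjoint via an enriched adjoint functor theorem.

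The main obstacle will be the last step: verifying that $\Lambda$-limit-preservation on the small subcategory $\mM^{\kappa,\op}$ is both necessary and sufficient for an enriched functor $\mM^\op\to\mN$ to admit a $\mV$-enriched left adjoint. Sufficiency requires promoting the extension along the localization to an honest enriched adjunction; this is where the presentability of $\mN$ and the explicit description of $\Lambda$ arising from Proposition \ref{krye} will be crucial, since they ensure that the generators of the local equivalences in Step 1, when pushed across the equivalence of Step 2, match precisely the obstruction to the existence of a $\mV$-enriched left adjoint.
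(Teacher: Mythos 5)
Your Steps 1--2 essentially reproduce the second half of the paper's own argument: Proposition \ref{krye} (dualized via Proposition \ref{oppoen}) identifies $\mM$ with the presheaves on $\mM^\kappa$ preserving $\kappa$-small conical limits and cotensors with $\kappa$-compact objects of $\mV$, and Proposition \ref{Days} (a consequence of Proposition \ref{leumorat}, which is what the paper quotes) then gives the right $\mW$-linear equivalence $\Enr\Fun^{\kappa-\lim}_{\mV,\emptyset}((\mM^\kappa)^\op,\mN)^\circledast \simeq (\mM\ot_\mV\mN)^\circledast$. This part is sound; note you do not need the extra claim that ``tensoring the localization with $\mN$ yields a localization'': Proposition \ref{krye} already identifies the $\Lambda$-local presheaf category with $\mM$ itself, so you may substitute directly.

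The genuine gap is your Step 3, which is exactly where the remaining content sits and which you only sketch. Concretely: (a) the mechanism you propose, an enriched adjoint functor theorem applied to ``the presentable left tensored structure on $\mN^\op$'', is not available, since $\mN^\op$ is neither presentable nor presentably tensored, so the enriched left adjoint of your extension is not produced this way; (b) the statement to prove is not that $\Lambda$-limit-preservation of the restriction characterizes adjointability of a given functor $\mM^\op\to\mN$, but that restriction along $(\mM^\kappa)^\op\subset\mM^\op$ is an equivalence from the right adjoint $\mV$-enriched functors $\mM^\op\to\mN$ onto the $\kappa$-limit- and cotensor-preserving functors $(\mM^\kappa)^\op\to\mN$, and your sketch supplies neither the essential surjectivity (existence of the adjoint for the extension) nor full faithfulness. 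The paper closes this step without any adjoint functor theorem, purely formally: Lemma \ref{leums} converts right adjoint enriched functors $\mM^\op\to\mN$ into left adjoint $\mV$-linear functors $\mM\to\mN^\op$, the restriction equivalence $\LinFun^\L_{\emptyset,\mV}(\mM,\mN^\op)\simeq \Enr\Fun^{\kappa-\colim}_{\emptyset,\mV}(\mM^\kappa,\mN^\op)$ is exactly the universal property of $\mM\simeq\Ind_\kappa(\mM^\kappa)$ underlying Proposition \ref{krye} (via Proposition \ref{cool}, i.e.\ Proposition \ref{wooond}), and a second application of Lemma \ref{leums} lands in $\Enr\Fun^{\kappa-\lim}_{\mV,\emptyset}((\mM^\kappa)^\op,\mN)$, after which your Step 2 finishes the proof. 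To repair your write-up, replace the appeal to an adjoint functor theorem on $\mN^\op$ by this op-duality reduction (or prove the restriction equivalence directly on the colimit side, where the extension and its adjoint are provided by the universal property of adjoining weighted colimits).
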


%We obtain the following theorem:

\begin{proof}
There is a chain of right $\mW$-linear equivalences, where $\kappa$-$\colim$ refers to  right $\mV$-enriched functors preserving $\kappa$-small colimits and tensors with $\kappa$-compact objects of $\mV$ and $\kappa$-$\lim$ refers to left $\mV$-enriched functors preserving $\kappa$-small limits and cotensors with $\kappa$-compact objects of $\mV$:
$$ \Enr\Fun^\R_{\mV, \emptyset}(\mM^\op,\mN)^\circledast \simeq (\LinFun^\L_{\emptyset, \mV}(\mM,\mN^\op)^\op)^\circledast \simeq (\Enr\Fun^{\kappa-\colim}_{\emptyset, \mV}(\mM^\kappa,\mN^\op)^\op)^\circledast $$$$\simeq \Enr\Fun^{\kappa-\lim}_{\mV, \emptyset}((\mM^\kappa)^\op,\mN)^\circledast \simeq (\Enr\Fun^{\kappa-\lim}_{\mV, \emptyset}((\mM^\kappa)^\op,\mV)
\ot_{\mV} \mN)^\circledast \simeq (\Ind_\kappa(\mM^\kappa)
\ot_{\mV} \mN)^\circledast.$$
The first and third equivalence is by Lemma \ref{leums}, the fourth equivalence is by Proposition \ref{leumorat}, the second and fifth equivalence is by Proposition \ref{krye}.

\end{proof}
\begin{lemma}\label{relllo} Let $\kappa,\tau$ be small regular cardinals, $\mM^\circledast \to \mU^\ot \times \mV^\ot$ a $\kappa$-compactly generated bitensored $\infty$-category, $\mN^\circledast \to \mV^\ot \times \mW^\ot$ a $\tau$-compactly generated bitensored $\infty$-category,
$ \mM^\circledast \rightleftarrows \mM'^\circledast$ an accessible $\mU,\mV$-enriched localization and $ \mN^\circledast \rightleftarrows \mN'^\circledast$ an accessible $\mV,\mW$-enriched localization.

\begin{enumerate}
\item The induced left adjoint $\mU, \mW$-linear functor $(\mM \ot_\mV \mN)^\circledast \to (\mM' \ot_\mV \mN')^\circledast$ is an accessible $\mU,\mW$-enriched localization.
%admits an accessible fully faithful $\mU,\mW$-enriched right adjoint.
	
\item If the accessible $\mU,\mV$-enriched localization $ \mM^\circledast \rightleftarrows \mM'^\circledast$ is with respect to the set of morphisms $\rS$ and the accessible $\mV,\mW$-enriched localization $ \mN^\circledast \rightleftarrows \mN'^\circledast$ is with respect to the set of morphisms $\T$, then the accessible $\mU,\mW$-enriched localization $(\mM \ot_\mV \mN)^\circledast \rightleftarrows (\mM' \ot_\mV \mN')^\circledast$ is with respect to the set of morphisms $\rS \boxtimes \T:= \{\f \ot \Y, \X \ot \g \mid \X \in \mM^\kappa, \Y \in \mN^\tau, \f \in \rS, \g \in \T\}.$
\end{enumerate}
\end{lemma}

\begin{proof}
(1): The $\mU,\mV$-enriched localization $\L: \mM^\circledast \rightleftarrows \mM'^\circledast: \bj$ gives rise to a localization $$\bj^*: \Enr\Fun_{\mV, \emptyset}(\mM^\op,\mN) \rightleftarrows \Enr\Fun_{\mV, \emptyset}(\mM'^\op,\mN): \L^*$$ whose unit at any $\F \in \Enr\Fun_{\mV, \emptyset}(\mM^\op,\mN)$ is the map $ \F \to \F \circ \bj \circ \L.$
Consequently, the right adjoint of the localization restricts along accessible embeddings to an accessible embedding $\L^*: \Enr\Fun^\R_{\mV, \emptyset}(\mM'^\op,\mN) \to \Enr\Fun^\R_{\mV, \emptyset}(\mM^\op,\mN) $ whose essential image precisely consists of the left $\mV$-enriched right adjoint functors inverting local equivalences for the localization $\L: \mM \rightleftarrows \mM':\bj .$

For (1) we need to see that the right adjoint $\gamma$ of the underlying functor $\mM \ot_\mV \mN \to \mM' \ot_\mV \mN'$ is accessible and fully faithful. By Theorem \ref{thei} this right adjoint $\gamma$ factors as accessible embeddings $$\mM' \ot_\mV \mN' \simeq \Enr\Fun^\R_{\mV, \emptyset}(\mM'^\op,\mN') \to \Enr\Fun^\R_{\mV, \emptyset}(\mM'^\op,\mN) \xrightarrow{\L^*} \Enr\Fun^\R_{\mV, \emptyset}(\mM^\op,\mN) \simeq \mM \ot_\mV \mN, $$ where the first accessible embedding postcomposes with the underlying left $\mV$-enriched embedding of the $\mV,\mW$-enriched right adjoint accessible embedding $\mN'^\circledast \to \mN^\circledast$.

(2): Since $\mM$ is generated by $\mM^\kappa$ under small $\kappa$-filtered colimits
and similar for $\mN$ and $\tau$, an object of $\mM \ot_\mV \mN$ is $\rS \boxtimes \T$-local if and only if it is local with respect to the morphisms $\f \ot \Y, \X \ot \g$ for $ \X \in \mM, \Y \in \mN, \f \in \rS, \g \in \T.$ The functor $\mM \times \mN \to \mM \ot_\mV \mN \simeq \Enr\Fun^\R_{\mV, \emptyset}(\mM^\op,\mN)$ sends $(\A,\B)$ to $\R\Mor_\mM(-,\A) \ot \B.$ 
Hence an object $\Z$ of $\mM \ot_\mV \mN$ is $\rS \boxtimes \T$-local if and only if 
the corresponding object $\Z'$ in $ \Enr\Fun^\R_{\mV, \emptyset}(\mM^\op,\mN)$ is local with respect to the morphisms $\R\Mor_\mM(-,\f) \ot \Y, \R\Mor_\mM(-,\X) \ot \g $ for $\f \in \rS, \g \in \T, \X \in \mM, \Y \in \mN.$ By the enriched Yoneda-lemma \cite[Corollary 5.23.]{heine2024bienriched} the latter condition is equivalent to ask that $\Z'$ inverts $\rS$ and lands in $\mN',$ which by description of $\gamma$ is equivalent to say that $\Z'$ belongs to $\Enr\Fun^\R_{\mV, \emptyset}(\mM'^\op,\mN')$ or equivalently, that $\Z$ belongs to $\mM' \ot_\mV \mN'.$

\end{proof}

\begin{proof}[Proof of Theorem \ref{prestensor}]
	
The equivalence between (1) and (2) follows from Theorem \ref{thei}.
The equivalence between (1) and (3) follows from the monoidality of the functor of $\mV$-enriched presheaves (Corollary \ref{Preml}), Lemma \ref{relllo} and \cite[Theorem 4.5.2.1.]{lurie.higheralgebra}.	
	
\end{proof}

\subsection{Enriched Day-convolution}

In the following we equip the $\infty$-category of enriched functors between symmetric monoidal enriched $\infty$-categories with a symmetric monoidal structure, which is an enriched version of Day-convolution as constructed in \cite[Proposition 2.16.]{heine2023topological}.

\begin{definition}
Let $\mV^\boxtimes \to \Comm$ be a presentably symmetric monoidal $\infty$-category, $\mH$ a collection of left weights over $\mV$ and $\mO^\ot \to \Comm$ a symmetric $\infty$-operad.
%Let $1 \leq \bk \leq \infty$. 
An $\mO$-monoidal left $\mV$-enriched $\infty$-category compatible with $\mH$-weighted colimits is an $\mO$-algebra in $_\mV\L\Enr_\emptyset(\mH)$.
%\item Let $\mH$ be a collection of right weights over $\mW$. An $\bE_{\bk}$-monoidal right $\mW$-enriched $\infty$-category compatible with $\mH$-weighted colimits is an $\bE_\bk$-algebra in $\R\Enr_\mW(\mH)$.
%\item Let $\mH$ be a collection of weights over $\mW,\mW.$+An $\bE_{\bk}$-monoidal $\mV, \mW$-bienriched $\infty$-category compatible with $\mH$-weighted colimits is an $\bE_\bk$-algebra in $_\mV\B\Enr_\mW(\mH)$.We refer to $\bE_1$-monoidal as monoidal. 
We refer to $\Comm$-monoidal as symmetric monoidal.
	
\end{definition}

\begin{example}\label{moex} An $\mO$-monoidal left $\mV$-enriched $\infty$-category compatible with small weighted colimits is an $\mO$-algebra with respect to the relative tensor product on $_\mV\cc\cc\LMod_\emptyset$. In particular, a symmetric monoidal left $\mV$-enriched $\infty$-category compatible with small weighted colimits is a $\Comm$-algebra with respect to the relative tensor product on $_\mV\cc\cc\LMod_\emptyset$, which by \cite[Corollary 3.4.1.7.]{lurie.higheralgebra} is canonically identified with a symmetric monoidal $\infty$-category compatible with small colimits $\mW^\boxtimes \to \Comm$ equipped with a small colimits preserving symmetric monoidal functor $\mV^\boxtimes \to \mW^\boxtimes$. \end{example}

\begin{corollary}\label{Dayco}
Let $\mV^\boxtimes \to \Comm$ be a presentably symmetric monoidal $\infty$-category, $\mH$ a collection of left weights over $\mV$ and $\mO^\ot \to \Comm$ a symmetric $\infty$-operad.
Let $\mN^\circledast \to \mV^\ot $ be an $\mO$-monoidal left $\mV$-enriched $\infty$-category compatible with small weighted colimits such that $\mN$ is presentable and $\mM^\circledast \to \mV^\ot $ an $\mO$-monoidal left $\mV$-enriched $\infty$-category compatible with $\mH$-weighted colimits.
Then $$\Enr\Fun^{\Lambda^\mH-\lim}_{\emptyset,\mV}(\mM^\op,\mN)^\circledast \to \mV^\ot$$ refines to an $\mO$-monoidal left $\mV$-enriched $\infty$-category compatible with
small weighted colimits.

\end{corollary}

\begin{proof}
	
The functor $\mP\L\Env_{\Lambda^\mH}(-)_{\L\Enr}: {_\mV\L\Enr}_\emptyset(\mH) \to {_\mV\cc\cc\LMod_\emptyset}$ is symmetric monoidal by Theorem \ref{Thor} and so equips $\mP\L\Env_{\Lambda^\mH}(\mM)_{\L\Enr}^\circledast \to \mV^\ot$ with an $\mO$-algebra structure in $ {_\mV\cc\cc\LMod_\emptyset}$.
By Example \ref{moex} we have that $\mN^\circledast \to \mV^\ot $ carries the structure of an $\mO$-algebra in $ {_\mV\cc\cc\LMod_\emptyset}$. By \cite[Theorem 4.5.2.1.]{lurie.higheralgebra} the tensor product of the symmetric monoidal $\infty$-category $ {_\mV\cc\cc\LMod_\emptyset}$ is the relative tensor product, which therefore lifts to $\mO$-algebras. Hence $ \Enr\Fun^{\Lambda^\mH-\lim}_{\emptyset,\mV}(\mM^\op,\mN)^\circledast \simeq (\mN\ot_\mV \mP\L\Env_{\Lambda^\mH}(\mM)_{\L\Enr})^\circledast \to \mV^\ot$
carries the structure of an $\mO$-algebra in ${_\mV \cc\cc\LMod_\emptyset},$
where the latter left $\mV$-enriched equivalence is by Theorem \ref{Days} and we view $\mN$ as bitensored over $\mV,\mV$ by restricting along the tensor product $\mV \times \mV \to \mV$
and Theorem \ref{biii}. 
	
\end{proof}

\begin{corollary}\label{Dayco2}
Let $\mV^\boxtimes \to \Comm$ be a presentably symmetric monoidal $\infty$-category, $\mH$ a collection of left weights over $\mV$ and $\mO^\ot \to \Comm$ a symmetric $\infty$-operad.
Let $\mN^\circledast \to \mV^\ot $ be an $\mO$-monoidal left $\mV$-enriched $\infty$-category compatible with small weighted colimits such that $\mN$ is presentable and $\mM^\circledast \to \mV^\ot $ an $\mO$-monoidal left $\mV$-enriched $\infty$-category.
Then $$\Enr\Fun_{\emptyset,\mV}(\mM^\op,\mN)^\circledast \to \mV^\ot$$ refines to an $\mO$-monoidal left $\mV$-enriched $\infty$-category compatible with
small weighted colimits.
	
\end{corollary}

\begin{remark}Let the assumptions like in Corollary \ref{Dayco2}.
%Let $\mV^\boxtimes \to \bE_{\bk+1}, \mW^\boxtimes \to \bE_{\bk+1} $be presentably $\bE_{\bk+1}$-monoidal $\infty$-categories for $1 \leq \bk \leq \infty$ and $\mN^\circledast \to \mV^\ot \times \mW^\ot$ an  $\bE_{\bk}$-monoidal $\mV,\mW$-bienriched $\infty$-category compatible with small weighted colimitssuch that $\mN$ is presentable. Let $\mM^\circledast \to \mW^\ot $ be an $\bE_\bk$-monoidal left $\mW$-enriched $\infty$-category compatible with $\mH$-weighted colimits.
The tensor product $$\Enr\Fun_{\emptyset,\mW}(\mM^\op,\mN) \times \Enr\Fun_{\emptyset,\mW}(\mM^\op,\mN) \to \Enr\Fun_{\emptyset,\mW}(\mM^\op,\mN)$$ 
of the left $\mV$-enriched Day-convolution factors as
$$\Enr\Fun_{\emptyset,\mW}(\mM^\op,\mN) \times \Enr\Fun_{\emptyset,\mW}(\mM^\op,\mN) \to\Enr\Fun_{\emptyset,\mW}(\mM^\op,\mN) \otimes \Enr\Fun_{\emptyset,\mW}(\mM^\op,\mN) $$$$\simeq \mN\ot_\mW \mP\L\Env_{\Lambda^\mH}(\mM)_{\L\Enr} \otimes \mN\ot_\mW \mP\L\Env(\mM)_{\L\Enr} $$$$\simeq(\mN\ot \mN)\ot_{\mW\ot \mW} (\mP\L\Env(\mM)_{\L\Enr} \ot\mP\L\Env(\mM)_{\L\Enr}) $$
$$\simeq (\mN\ot \mN)\ot_{\mW\ot \mW} \mP\L\Env_{}(\mM\ot \mM)_{\L\Enr}$$
$$\to \mN\ot_\mW \mP\L\Env_{}(\mM)_{\L\Enr} \simeq \Enr\Fun_{\emptyset,\mW}(\mM^\op,\mN),$$
which identifies with the functor
$$\Enr\Fun_{\emptyset,\mW}(\mM^\op,\mN) \times \Enr\Fun_{\emptyset,\mW}(\mM^\op,\mN) \to \Enr\Fun_{\emptyset,\mW}(\mM^\op \otimes \mM^\op,\mN \otimes \mN)$$$$ \xrightarrow{(\ot_\mN)_*}\Enr\Fun_{\emptyset,\mW}(\mM^\op \otimes \mM^\op,\mN) \xrightarrow{(\ot_\mM)_!} \Enr\Fun_{\emptyset,\mW}(\mM^\op,\mN).$$
%where the last functor 
	
\end{remark}

\subsection{A tensor product for $(\infty, \n)$-categories with lax colimits}

%\subsubsection{Lax and oplax colimits}

In the following we apply our theory to $(\infty,\n)$-categories for $\n \geq 1$, which we define inductively as $\Cat_{(\infty,\n-1)}$-enriched $\infty$-categories, to define lax and oplax colimits
and deduce the existence of a presentably monoidal structure on the $\infty$-category of small
$(\infty, \n)$-categories that admit $\mK$-indexed (op)lax colimits for any set $\mK$ of $(\infty,\n-1)$-categories (Corollary \ref{laxtens}).
Moreover we construct a presentably monoidal structure on the $\infty$-category of small
$(\infty, 2)$-categories that admit (op)lax pushouts (Corollary \ref{pushten}).

\begin{definition}Let $\n \geq 0$.
Let $\Cat_{(\infty,0)}:= \mS, \Cat_{(\infty,\n+1)}:=  {_{\Cat_{(\infty,\n)}}\L\Enr},$
where we view $\Cat_{(\infty,\n)}$ as endowed with the cartesian structure and inductively assume that $\Cat_{(\infty,\n)}$ is a presentable cartesian closed $\infty$-category (Corollary \ref{Qa}).
\end{definition}
\begin{construction}Let $\n \geq 1$.
By \cite[Theorem 6.20.]{nuiten2023straightening} for every small $(\infty,\n)$-category $\mC$ there is an $(\infty,\n)$-categorical Grothendieck construction, which can be viewed as a functor
of $(\infty,\n)$-categories $$\theta: \Enr\Fun_{\Cat_{(\infty,\n-1)}, \emptyset}(\mC^\op, \Cat_{(\infty,\n-1)})^\circledast \to \Cat^\circledast_{(\infty,\n)}$$
that sends the final object to $\mC$ and sends a functor of $(\infty,\n)$-categories $\F: \mC^\op \to \Cat_{(\infty,\n-1)}$ to a functor of $(\infty,\n)$-categories $\mB \to \mC$ whose fiber over $\X \in \mC$ is $\F(\X).$
%where the left hand side is the internal hom in $\widehat{\Cat}_{(\infty,\n+1)}.$%\Mor_{\widehat{\Cat}_{(\infty,\n+1)}}(
%The $(\infty,\n+1)$-category $\mC$, i.e. $\Cat_{(\infty,\n)}$-enriched and so gives rise to a  $\Cat_{(\infty,\n)}$-enriched morphism object functor $\Mor_\mC: \mC^\op \times \mC \to \Cat_{(\infty,\n)} $ identified with a functor of $(\infty,\n+1)$-categories, adjoint to a functor of $(\infty,\n+1)$-categories$\mC^\op \to \Mor_{\widehat{\Cat}_{(\infty,\n+1)}}(\mC, \Cat_{(\infty,\n)}).$
%We obtain a functor of $(\infty,\n)$-categories: %, a left $\Cat_{(\infty,\n-1)}$-enriched functor:
Let $\iota$ be the $\Cat_{(\infty,\n-1)}$-enriched right adjoint of the embedding $\Cat_{(\infty,\n-1)} \subset \Cat_{(\infty,\n)}$, which discards all non-invertible cells of degree lower $\n.$
Let $$\kappa^\mC:= \iota\circ \theta \circ \y: \mC^\circledast \to \Enr\Fun_{\Cat_{(\infty,\n-1)}}(\mC^\op, \Cat_{(\infty,\n-1)})^\circledast \to \Cat_{(\infty,\n)}^\circledast \to \Cat_{(\infty,\n-1)}^\circledast$$
be the functor of $(\infty,\n)$-categories, where the first functor is the $\Cat_{(\infty,\n-1)}$-enriched Yoneda-embedding.

The functor $\kappa^\mC$ sends $\X \in \mC$ to a functor $\iota(\theta(\mC)) \to \iota(\mC)$ whose fiber over any $\Y \in \mC$ is $\Mor_\mC(\Y,\X).$ 

\end{construction}

%\begin{remark}Let $\n \geq 1.$The $\infty$-category $ \Cat_{(\infty,2)}$ carries $4$ canonical involutions:the $2$ involutions of $ \Cat_\infty$ give rise to $2$ involutions of $\Cat_{(\infty,2)}$ byapplying % Composing each of these involutions with theopposite $\Cat_{(\infty,\n-1)}$-enriched $\infty$-category involution we obtain $2^{\n-1}$-many further involutions of $\Cat_{(\infty,\n)}=  {_{\Cat_{(\infty,\n-1)}} \L\Enr}$ and so in total $2^\n$-many involutions of $ {_{\Cat_{(\infty,\n-1)}} \L\Enr} $.\end{remark}

\begin{construction}\label{tuti}
Let $\n \geq 1.$ By induction we construct $2^\n$-many involutions of the $\infty$-category $\Cat_{(\infty, \n)}.$
For $\n=0$ the identity is the only auto-equivalence of the $\infty$-category $\mS.$
Assume we have already constructed $2^\n$-involutions of the $\infty$-category $\Cat_{(\infty, \n)}$ and let $\tau$ be any of them. Then $\tau$ preserves finite products and so is symmetric monoidal for the cartesian structures and so induces two involutions $ \tau_!, (-)^\op \circ \tau_! \simeq \tau_! \circ (-)^\op$
of $\Cat_{(\infty,\n+1)}=  {_{\Cat_{(\infty,\n)}}\L\Enr},$
where $(-)^\op$ is the involution of $ {_{\Cat_{(\infty,\n)}}\L\Enr}$
that assigns the opposite $\Cat_{(\infty,\n)}$-enriched $\infty$-category.
In fact one can prove that these involutions of $\Cat_{(\infty, \n)}$ are the only auto-equivalences of $\Cat_{(\infty, \n)}$ but we don't need this here.

\end{construction}

\begin{definition}Let $\n \geq 1$ and $\mC \in \Cat_{(\infty,\n)}$. % and $\tau$ one of the involutions of $\Cat_{(\infty, \n-1)}$ of Construction \ref{tuti}.
The lax weight on $\mC$ is the functor of $(\infty,\n)$-categories $$\kappa^\mC: \mC \to \Cat_{(\infty,\n-1)}.$$

\end{definition}

\begin{definition}\label{Laax}Let $\n \geq 1$ and $\F: \mC \to \mD$ a functor of $(\infty,\n)$-categories and $\tau$ one of the involutions of $\Cat_{(\infty, \n-1)}$ of Construction \ref{tuti}.

\begin{enumerate}
\item The lax colimit of $\F$, denoted by $\colim^{\lax}(\F) $, is the colimit of $\F$ weighted at the lax weight $\kappa^{\mC^\op} $. % on $\mC^\op.$

\item The oplax colimit of $\F$, denoted by $\colim^{\op\lax}(\F) $, is the colimit of $\F$ weighted at $\tau \circ \kappa^{\mC^\op}$, where $\tau$ is the involution on $\Cat_{(\infty, \n-1)}$ that reverses cells of odd dimension.

\item The lax limit of $\F$, denoted by $\lim^{\lax}(\F) $, is the limit of $\F$ weighted at $\tau \circ \kappa^{\mC}$, where $\tau$ is the involution on $\Cat_{(\infty, \n-1)}$ that reverses cells of even dimension.

\item The oplax limit of $\F$, denoted by $\lim^{\op\lax}(\F) $, is the limit of $\F$ weighted at $\tau \circ \kappa^{\mC}$, where $\tau$ is the involution on $\Cat_{(\infty, \n-1)}$ that reverses cells of any dimension.	

%limit of $\F$ weighted at the oplax weight on $\mC$, i.e. the oplax colimit of $\F^\op$.

\end{enumerate}

%For $\tau$ the identity we call the $\tau$-lax (co)limit of $\F$ the lax (co)limit of $\F$.

\end{definition}

%\begin{definition}Let $\n \ge 2$ and $\tau$ one of the $2^{\n-1}$-many involutions of $ \Cat_{(\infty,\n-1)}$giving rise to an involution $\tau_*$ of $\Cat_{(\infty,\n)}= \Cat_\infty^{\Cat_{(\infty,\n-1)}}.$The $\tau$-lax (co)limit of $\F$ is the lax (co)limit of $\tau_*(\F).$\end{definition}

%\begin{example}We have 2-involutions on $\Cat_\infty$ denoted by $\id, (-)^\op$and set $(-)^\mathrm{co}:=(-)^\op_*: \Cat_{(\infty,2)}=\Cat_\infty^{\Cat_\infty} \to \Cat_{(\infty,2)}.$
%we denote the four involutions by $\id, (-)^\op, (-)^\mathrm{co}, (-)^{\mathrm{coop}}$, where $(-)^\op$ reverses the morphisms, $ (-)^\mathrm{co}$ reverses the 2-morphisms and $(-)^{\mathrm{coop}}$ reverses both.
%The oplax (co)limit of a 2-functor $\F: \mC \to \mD$ is the lax (co)limit of $\F^\mathrm{co}$.
%the lax limit of $\F$ is the lax colimit of $\F^\op$,the oplax limit of $\F$ is the lax colimit of $\F^\mathrm{coop}$.

%For general $\n \geq 2$ we obtain $2^{\n-1}$ many variants of colimits and$2^{\n-1}$ many variants of limits.\end{example}

\begin{remark}
The oplax colimit of $\F$ is likewise the lax colimit of $\tau(\F)$ for $\tau$ the involution on $\Cat_{(\infty, \n)}$ that reverses cells of even dimension.
The lax limit of $\F$ is likewise the lax colimit of $\tau(\F)$ for $\tau$ the involution on $\Cat_{(\infty, \n)}$ that reverses cells of odd dimension.
The oplax limit of $\F$ is likewise the lax colimit of $\tau(\F)$ for $\tau$ the involution on $\Cat_{(\infty, \n)}$ that reverses cells of any dimension.

\end{remark}

\begin{remark}

In \cite[Definition 2.9.]{articles} Gepner-Haugseng define the lax colimit and lax limit of a functor $\mC^\op \to \Cat_\infty $ under the assumption that $\mC$ is an $(\infty,1)$-category.

\end{remark}

%The authors show that the lax colimit and lax limit of $\phi$ is easily expressed in terms of the cartesian fibration $\alpha: \X \to \rS$ classifying $\phi.$They prove that the lax colimit of $\phi$ is $\X$ \cite[Theorem 7.4., Corollary 7.6.]{articles} and the oplax limit of $\phi$ is the $\infty$-category $\Fun_\rS(\rS,\X)$ of sections of $\alpha$ \cite[Proposition 7.1.]{articles}.By the next proposition this also holds for our definition of lax colimits and lax limits and so identifies our definition of lax (co)limits with the one of \cite{articles}.

\begin{notation}Let $\n \geq 1$ and $\mK \subset \Cat_{(\infty,\n)}$ a full subcategory
and $\tau$ the involution on $\Cat_{(\infty, \n-1)}$ that reverses cells of odd dimension.
Let
$$ \Cat_{(\infty,\n)}^{\mathrm{laxcolim}}(\mK):=  {_{\Cat_{(\infty,\n-1)}} \L\Enr}(\{\kappa^{\mC^\op} \mid \mC \in \Cat_{(\infty,\n-1)}\}) \subset \Cat_{(\infty,\n)} =  {_{\Cat_{(\infty,\n-1)}} \L\Enr} $$
$$ \Cat_{(\infty,\n)}^{\mathrm{oplaxcolim}}(\mK):=  {_{\Cat_{(\infty,\n-1)}} \L\Enr}(\{\tau \circ \kappa^{\mC^\op}\mid \mC \in \Cat_{(\infty,\n-1)}\}) \subset \Cat_{(\infty,\n)} =  {_{\Cat_{(\infty,\n-1)}} \L\Enr}$$
be the subcategories of small $(\infty,\n)$-categories that admit $\mK$-indexed (op)lax colimits and functors preserving $\mK$-indexed (op)lax colimits. \end{notation}

Theorem \ref{thqaz} specializes to the following:

\begin{corollary}\label{laxtens} Let $\n \geq 1$ and $\mK \subset \Cat_{(\infty,\n)}$ a full subcategory.
The $\infty$-categories \begin{equation}\label{Imk}
\Cat_{(\infty,\n)}^{\mathrm{laxcolim}}(\mK), \Cat_{(\infty,\n)}^{\op\mathrm{laxcolim}}(\mK)\subset \Cat_{(\infty,\n)}\end{equation}
carry presentably symmetric monoidal structures such that the inclusions (\ref{Imk}) are lax symmetric monoidal, where the right hand sides carry the cartesian structures. \end{corollary}

%Next we consider examples present in $(\infty,2)$-category theory.

\begin{example}
Let $\n \geq 1$. %Let $\Lambda^2_0$ be the category $\{0,1\}^\vartriangleleft.$
The functor $\kappa^{\{0,1\}^\vartriangleright}: \{0,1\}^\vartriangleright \to \Cat_{(\infty,\n-1)}$ corresponds to the diagram $ \{0\} \rightarrow \{0,1\}^\vartriangleright \leftarrow \{1\}.$ For every $(\infty, \n)$-category $\mD$, functor $\F: \{0,1\}^\vartriangleleft \to \mD$ corresponding to a diagram $ \X \leftarrow \Z \rightarrow \Y$ in $\mD$
and %the lax colimit of $\F$ %, i.e. the $\kappa$-weighted colimit of $\F$, 
every $\T \in \mD$ there is a canonical equivalence:
$$ \mD(\colim^\lax(\F), \T) \simeq \Fun((\{0,1\}^\vartriangleleft)^\op, \Cat_{(\infty,\n-1)})(\kappa^{\{0,1\}^\vartriangleright}, \Mor_\mD(\F(-),\T)) \simeq $$$$ \mD(\X,\T)\times_{\mD(\Z,\T)^{\{1\}}} \Cat_{(\infty,\n-1)}([1], \mD(\Z,\T))\times_{\mD(\Z,\T)^{\{0\}}} \Cat_{(\infty,\n-1)}([1], \mD(\Z,\T)) \times_{\mD(\Z,\T)^{\{1\}}} \mD(\Y,\T).$$

So if $\mD$ admits pushouts and tensors with $[1],$ the latter equivalence represents an equivalence $$\colim^{\lax}(\F) \simeq \X \coprod_{\{1\}\ot\Z} ([1] \ot \Z) \coprod_{\{0\}\ot\Z} ([1] \ot \Z) \coprod_{\{1\}\ot\Z} \Y.$$
Dually, if $\mD$ admits pullbacks and cotensors with $[1],$ the lax limit of a functor $\G: \{0,1\}^\vartriangleright \to \mD$ corresponding to a diagram $ \X \rightarrow \Z \leftarrow \Y$ in $\mD$
is $ \X \prod_{\Z^{\{1\}}} \Z^{[1]} \prod_{\Z^{\{0\}}} \Z^{[1]} \prod_{\Z^{\{1\}}} \Y.$

\end{example}

\begin{definition}\label{Puush}
Let $\n \geq 1$ and $\sigma: \{0,1\}^\vartriangleright \to \Cat_{(\infty,\n-1)}$ the functor
corresponding to the diagram $ \{0\} \rightarrow [1] \leftarrow \{1\}$
and $\mD$ an $(\infty, \n)$-category.
\begin{itemize}
\item Let $\F: \{0,1\}^\vartriangleleft \to \mD$ be a functor corresponding to a diagram $ \X \leftarrow \Z \rightarrow \Y$ in $\mD$. The lax pushout of the diagram $ \X \leftarrow \Z \rightarrow \Y$ in $\mD$, denoted by $ \X \coprod^\lax_\Z \Y $, is the $\sigma$-weighted colimit of $\F$.

\item Let $\F: \{0,1\}^\vartriangleleft \to \mD$ be a functor corresponding to a diagram $ \X \leftarrow \Z \rightarrow \Y$ in $\mD$. The oplax pushout of the diagram $ \X \leftarrow \Z \rightarrow \Y$ in $\mD$, denoted by $ \X \coprod^\oplax_\Z \Y $, is the $(-)^\op \circ \sigma$-weighted colimit of $\F$.

\item Let $\G: \{0,1\}^\vartriangleright \to \mD$ be a functor corresponding to a diagram $ \X \rightarrow \Z \leftarrow \Y$ in $\mD$.
The lax pullback of the diagram $ \X \rightarrow \Z \leftarrow \Y$ in $\mD$, denoted by $ \X \times^\lax_\Z \Y $, is the $\sigma$-weighted limit of $\G$.

\item Let $\G: \{0,1\}^\vartriangleright \to \mD$ be a functor corresponding to a diagram $ \X \rightarrow \Z \leftarrow \Y$ in $\mD$.
The oplax pullback of the diagram $ \X \rightarrow \Z \leftarrow \Y$ in $\mD$, denoted by $ \X \times^\oplax_\Z \Y $, is the $(-)^\op \circ \sigma$-weighted limit of $\G$.

\end{itemize}

\end{definition}

\begin{remark}\label{reö}

Let $\mB$ be an $\infty$-category and $\gamma, \gamma': \{0,1\}^\vartriangleright \to \mB$ be functors corresponding to diagrams $ \A \to \C \leftarrow \B, \A' \to \C' \leftarrow \B',$ respectively.
There is a canonical equivalence
$$\Fun(\{0,1\}^\vartriangleright, \mB)(\gamma, \gamma') \simeq  \mB(\A,\A')\times_{\mB(\A,\C')} \mB(\C, \C') \times_{\mB(\B,\C')} \mB(\B,\B'). $$

\end{remark}

\begin{example}Let $\n \geq1$ and $\mD$ an $(\infty, \n)$-category. Let $\F: \{0,1\}^\vartriangleleft \to \mD$ be a functor corresponding to a diagram $ \X \leftarrow \Z \rightarrow \Y$ in $\mD$.
By Remark \ref{reö} there is a canonical equivalence for every $\T \in \mD$:
$$ \mD(\X \coprod^\lax_\Z \Y, \T) \simeq \Fun(\{0,1\}^\vartriangleright, \Cat_{(\infty,\n-1)}(\sigma, \Mor_\mD(\F(-),\T))\simeq$$
$$ \Cat_{(\infty,\n-1)}([0],\Mor_\mD(\X,\T))\times_{\Cat_{(\infty,\n-1)}([0],\Mor_\mD(\Z,\T))} \Cat_{(\infty,\n-1)}([1], \Mor_\mD(\Z,\T))$$$$ \times_{\Cat_{(\infty,\n-1)}([0],\Mor_\mD(\Z,\T))} \Cat_{(\infty,\n-1)}([0],\Mor_\mD(\Y,\T)) \simeq $$
$$\mD(\X,\T)\times_{\mD(\Z,\T)}\Cat_{(\infty,\n-1)}([1], \Mor_\mD(\Z,\T)) \times_{\mD(\Z,\T)} \mD(\Y,\T).$$

So if $\mD$ admits pushouts and tensors with $[1]$, we find that $$\X \coprod^\lax_\Z \Y \simeq \X \coprod_{\{0\}\times\Z} ([1] \ot \Z) \coprod_{\{1\}\times\Z} \Y.$$

Similarly, if $\mD$ admits pushouts and tensors with $[1]$, we find that $$\X \coprod^\oplax_\Z \Y \simeq \X \coprod_{\{1\}\times\Z} ([1] \ot \Z) \coprod_{\{0\}\times\Z} \Y.$$
Dually, if $\mD$ admits pullbacks and cotensors with $[1],$ the
lax pullback of the diagram $ \X \rightarrow \Z \leftarrow \Y$ in $\mD$
is $\X \times^\lax_\Z \Y \simeq \X \prod_{\Z^{\{0\}}} \Z^{[1]} \prod_{\Z^{\{1\}}} \Y$
and the oplax pullback %of the diagram $ \X \rightarrow \Z \leftarrow \Y$ in $\mD$
is $\X \times^\oplax_\Z \Y \simeq \X \prod_{\Z^{\{1\}}} \Z^{[1]} \prod_{\Z^{\{0\}}} \Y.$

\end{example} 

\begin{notation}Let $\n \geq 1$.
Let $ \Cat_{(\infty,\n)}^{\lax \ \mathrm{push}}, \Cat_{(\infty,\n)}^{\oplax \ \mathrm{push}} \subset \Cat_{(\infty,\n)} $ be the subcategories of $(\infty, \n)$-categories that admit (op)lax pushouts and functors preserving (op)lax pushouts.

\end{notation}

\begin{corollary}\label{pushten}Let $\n \geq 1$. The $\infty$-categories $ \Cat_{(\infty,\n)}^{\lax \ \mathrm{push}}, \Cat_{(\infty,\n)}^{\oplax \ \mathrm{push}} $ carry presentably symmetric monoidal structures such that the inclusions $\Cat_{(\infty,\n)}^{\lax \ \mathrm{push}}\subset \Cat_{(\infty,\n)} , \Cat_{(\infty,\n)}^{\oplax \ \mathrm{push}} \subset \Cat_{(\infty,\n)} $ are lax symmetric monoidal, where the right hand sides carry the cartesian structures. \end{corollary}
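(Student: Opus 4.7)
The strategy is to identify (op)lax pushouts as colimits weighted at a specific $\Cat_\infty$-enriched weight and then to apply the general machinery of Theorem \ref{thqaz} and Proposition \ref{preee}. By Definition \ref{Puush}, setting $\mH_{\mathrm{lax}} := \{\sigma\}$ and $\mH_{\mathrm{oplax}} := \{(-)^\op \circ \sigma\}$, where $\sigma$ is the $\Cat_\infty$-enriched presheaf on the trivially enriched $\{0,1\}^\vartriangleleft$ corresponding to the diagram $\{0\} \to [1] \leftarrow \{1\}$, one obtains identifications
$$\Cat_{(\infty,2)}^{\lax\ \mathrm{push}} = {_{\Cat_\infty}\L\Enr}(\mH_{\mathrm{lax}}), \qquad \Cat_{(\infty,2)}^{\oplax\ \mathrm{push}} = {_{\Cat_\infty}\L\Enr}(\mH_{\mathrm{oplax}}),$$
since admitting and preserving (op)lax pushouts is exactly admitting and preserving $\sigma$-weighted (resp.\ $(-)^\op \circ \sigma$-weighted) colimits of enriched functors out of $\{0,1\}^\vartriangleleft$.

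Since $\Cat_\infty$ with its cartesian structure is a presentably $\bE_\infty$-monoidal $\infty$-category and $\Comm$ is unital and monochromatic, the example following Remark \ref{ohuz} shows that the pairs $(\Cat_\infty, \mH_{\mathrm{lax}})$ and $(\Cat_\infty, \mH_{\mathrm{oplax}})$ refine automatically to $\Comm$-algebras in $\Pr\Mon_*^\ot$ (with no nontrivial compatibility to check). I would then apply Theorem \ref{thqaz}(3) with $\mV = \Cat_\infty$ and $\mO^\ot = \Comm$ to equip both ${_{\Cat_\infty}\L\Enr}(\mH_{\mathrm{lax}})$ and ${_{\Cat_\infty}\L\Enr}(\mH_{\mathrm{oplax}})$ with presentably symmetric monoidal structures, with closedness supplied by Corollary \ref{Qa}(2) at $\bk = \infty$.

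For the lax symmetric monoidal inclusions into $\Cat_{(\infty,2)}$ equipped with its cartesian structure, I would invoke Proposition \ref{preee}(3) applied to the canonical map of $\Comm$-algebras $(\Cat_\infty, \emptyset) \to (\Cat_\infty, \mH)$ for $\mH \in \{\mH_{\mathrm{lax}}, \mH_{\mathrm{oplax}}\}$: this produces a symmetric monoidal left adjoint to the inclusion ${_{\Cat_\infty}\L\Enr}(\mH) \subset {_{\Cat_\infty}\L\Enr}(\emptyset) = \Cat_{(\infty,2)}$, so the inclusion itself is lax symmetric monoidal. It remains to match the tensor product on ${_{\Cat_\infty}\L\Enr}(\emptyset)$ coming from Theorem \ref{thqaz} with the cartesian structure on $\Cat_{(\infty,2)}$: by Remark \ref{Exx}(1) with $\mH = \emptyset$, this tensor product sends $\mM, \mN$ to $\times_!(\mM \times \mN)^\circledast$, the $\Cat_\infty$-enriched $\infty$-category with object space $\mM^\simeq \times \mN^\simeq$ and morphism objects $\Mor_\mM(m,m') \times \Mor_\mN(n,n')$, which has the universal property of the cartesian product in $\Cat_{(\infty,2)}$ precisely because the underlying monoidal structure on $\Cat_\infty$ is cartesian and morphism objects therefore decompose as products over each coordinate. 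This final identification is the main technical point of the argument; all other steps reduce mechanically to the general results of Section \ref{HW}.
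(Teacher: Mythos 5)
Your proposal is correct and follows essentially the same route as the paper: this corollary (like the preceding one on $\mK$-indexed (op)lax colimits) is obtained by specializing Theorem \ref{thqaz}, identifying $\Cat_{(\infty,2)}^{\lax\ \mathrm{push}}$ and $\Cat_{(\infty,2)}^{\oplax\ \mathrm{push}}$ with ${_{\Cat_\infty}\L\Enr}(\mH)$ for the singleton weight sets $\{\sigma\}$ and $\{(-)^\op\circ\sigma\}$ from Definition \ref{Puush}, with closedness from Corollary \ref{Qa} and the lax symmetric monoidal inclusion from Proposition \ref{preee}. The identification of the $\mH=\emptyset$ tensor product on $\Cat_{(\infty,2)}={_{\Cat_\infty}\L\Enr}$ with the cartesian structure, which you flag as the main technical point, is indeed needed but left implicit in the paper, and your sketch of it via Remark \ref{Exx} is consistent with how the paper treats it.
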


\subsection{A tensor product for Cauchy-complete enriched $\infty$-categories}

%\subsection{Absolute weights and Cauchy completion}

\begin{definition}Let $\mV^\ot \to \Ass, \mW^\ot \to \Ass$ be presentably monoidal $\infty$-categories and $\mJ^\circledast \to \mV^\ot \times \mW^\ot$ a bienriched $\infty$-category.
An enriched weight on $\mJ$ is absolute if it is preserved by any $\mV,\mW$-enriched functor between $\infty$-categories bi-quasi-enriched in $\mV,\mW.$

\end{definition}

\begin{example}

For $\mV=\mS$ the $\infty$-category of spaces the $\infty$-category $\mathrm{Idem}$ classifying idempotents of \cite[Definition 4.4.5.2.]{lurie.HTT}
equipped with the trivial left $\mS$-enriched weight is an absolute left $\mS$-enriched weight.

%For $\mV=\Sp$ the $\infty$-category of spectra every finite $\infty$-category equipped with the trivial left $\Sp$-enriched weight is an absolute left $\Sp$-enriched weight:this holds since every left $\Sp$-enriched functor preserves finite colimits.

\end{example}

\begin{notation}\label{CC} Let $\mV^\ot \to \Ass, \mW^\ot \to \Ass$ be presentably monoidal $\infty$-categories and $\mJ^\circledast \to \mV^\ot \times \mW^\ot$ a bienriched $\infty$-category. The Cauchy-completion $\widehat{\mM}^\circledast \to \mV^\ot \times \mW^\ot $ of $\mM^\circledast \to \mV^\ot \times \mW^\ot$ is the full bienriched subcategory of $\mP\B\Env(\mM)^\circledast_{\B\Enr} \to \mV^\ot \times \mW^\ot$ 
spanned by the absolute enriched weights on $\mM.$

%$\mV^\ot \to \Ass, \mW^\ot \to \Ass$ be monoidal $\infty$-categories compatible with small colimits and $\mM^\circledast \to \mV^\ot \times \mW^\ot$ a (not necessarily small) bi-quasi-enriched $\infty$-category.The Cauchy-completion $\widehat{\mM}^\circledast \to \mV^\ot \times \mW^\ot $ of $\mM^\circledast \to \mV^\ot \times \mW^\ot$ is the full bi-quasi-enriched subcategory of $ \mP\B\Env(\mM)^\circledast_{\B\Enr} \to \mV^\ot \times \mW^\ot$ spanned by the absolute bi-quasi-enriched weights on $\mM.$
\end{notation}

\begin{remark}\label{rrro}

Let $\F: \mM^\circledast \to \mN^\circledast$ be a $\mV,\mW$-enriched functor between bienriched $\infty$-categories. The functor $\F_!: \mP\B\Env(\mM)_{\B\Enr} \to \mP\B\Env(\mN)_{\B\Enr} $ sends $\widehat{\mM}$ to $\widehat{\mN}$
since for every absolute enriched weight $\rH$ on $\mM$ the $\F_!(\rH)$-weighted colimit of a $\mV,\mW$-enriched functor $\phi: \mN^\circledast \to \mO^\circledast$ between bienriched $\infty$-categories is the $\rH$-weighted colimit of $ \phi \circ \F$ as a consequence of Corollary \ref{obsto}.
\end{remark}

\begin{definition}Let $\mV^\ot \to \Ass, \mW^\ot \to \Ass$ be presentably monoidal $\infty$-categories. A bienriched $\infty$-category $\mM^\circledast \to \mV^\ot \times \mW^\ot$ is Cauchy-complete if it admits the colimit of any $\mV,\mW$-enriched functor
weighted with respect to any absolute enriched weight. 

\end{definition}

\begin{proposition}\label{Chara}Let $\mV^\ot \to \Ass, \mW^\ot \to \Ass$ be presentably monoidal $\infty$-categories, $\mJ^\circledast \to \mV^\ot \times \mW^\ot$ a bienriched $\infty$-category and $\rH$ an enriched weight on $\mJ.$ The following conditions are equivalent:
\begin{enumerate}

\item The $\mV, \mW$-enriched functor
$$\Mor_{\mP\B\Env(\mJ)_{\B\Enr}}(\rH,-): \mP\B\Env(\mJ)_{\B\Enr}^\circledast \to (\mV\ot \mW)^\circledast$$ is linear and preserves small colimits.

\item The enriched weight $\rH$ on $\mJ$ is absolute.

\item The $\rH$-weighted colimit of the $\mV,\mW$-enriched embedding $\mJ^\circledast \subset \mM^\circledast:=\mP\B\Env(\mJ)_{\B\Enr}^\circledast $ is preserved by the $\mV,\mW$-enriched embedding $\mM^\circledast \subset \mP\B\Env(\mM)_{\B\Enr}^\circledast$.
% of large bi-quasi-enriched $\infty$-categories.

\end{enumerate}

\end{proposition}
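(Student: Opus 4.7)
The plan is to prove the three equivalences cyclically via (2)$\Rightarrow$(3)$\Rightarrow$(1)$\Rightarrow$(2), using the free-cocompletion property of $\mP\B\Env(-)_{\B\Enr}$ (Corollary \ref{Yowei}) as the main technical tool and Corollary \ref{obsto} to translate preservation of weighted colimits into statements about morphism objects.

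The implication (2)$\Rightarrow$(3) is immediate: by Proposition \ref{rembrako} the presheaf $\infty$-category $\mP\B\Env(\mM)_{\B\Enr}$ is presentably bitensored, hence bi-quasi-enriched, so the $\mV,\mW$-enriched Yoneda embedding $j\colon\mM\hookrightarrow\mP\B\Env(\mM)_{\B\Enr}$ falls under the absoluteness hypothesis of (2).

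For (3)$\Rightarrow$(1) I use Corollary \ref{Yowei}: the $\rH$-weighted colimit of the enriched embedding $\iota\colon\mJ\hookrightarrow\mM=\mP\B\Env(\mJ)_{\B\Enr}$ equals $\rH$ itself. Condition (3) asserts that $j\colon\mM\hookrightarrow\tilde\mM:=\mP\B\Env(\mM)_{\B\Enr}$ preserves this colimit. Applying Corollary \ref{obsto} to $j$ yields, for every $\Z\in\tilde\mM$, the equivalence
\[
\L\Mor_{\tilde\mM}(j(\rH),\Z)\simeq \L\Mor_{\mM}(\rH,(j\circ\iota)^\ast(\Z)).
\]
Under the identification of $\tilde\mM$ with enriched presheaves on $\mM$, every $\Z$ is a small weighted colimit of representables $j(\X)$ for $\X\in\mM$, and $(j\circ\iota)^\ast$ corresponds to restriction back along the Yoneda embedding, which on representables recovers $\X$ itself. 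Expanding both sides of the displayed equivalence as colimits of such representables shows that $\L\Mor_\mM(\rH,-)\colon\mM\to\mV\otimes\mW$ sends small weighted colimits of representables (hence all small weighted colimits) to weighted colimits in $\mV\otimes\mW$. By Proposition \ref{weeei}, preservation of small weighted colimits is equivalent to preservation of small conical colimits together with left and right tensors, which is exactly the combined content of linearity and cocontinuity in (1).

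For (1)$\Rightarrow$(2) let $\F\colon\mM_1\to\mM_2$ be a $\mV,\mW$-enriched functor between bi-quasi-enriched $\infty$-categories and $\G\colon\mJ\to\mM_1$ an enriched functor; I must verify that $\F$ preserves $\colim^\rH(\G)$. By Corollary \ref{Yowei}, since $\mM_1$ admits all small weighted colimits, $\G$ extends uniquely to a cocontinuous $\mV,\mW$-linear functor $\tilde\G\colon\mM\to\mM_1$ with $\tilde\G(\rH)=\colim^\rH(\G)$, and similarly $\F\circ\G$ extends to $\widetilde{\F\circ\G}\colon\mM\to\mM_2$ with $\widetilde{\F\circ\G}(\rH)=\colim^\rH(\F\circ\G)$. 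Via Corollary \ref{obsto}, preservation is equivalent to showing that for every $\Z\in\mM_2$ the canonical map
\[
\L\Mor_{\mM_2}(\F(\tilde\G(\rH)),\Z)\to \L\Mor_{\mM}(\rH,(\F\circ\G)^\ast(\Z))
\]
is an equivalence. I regard both sides as functors of $\X\in\mM$ in place of $\rH$: the right-hand side equals $\L\Mor_\mM(\X,(\F\circ\G)^\ast(\Z))$, which as a functor of $\X$ in $\mM^\op$ is the restricted Yoneda image of $(\F\circ\G)^\ast(\Z)$ and hence cocontinuous in $\X$; the left-hand side is $\L\Mor_{\mM_2}(\F\circ\tilde\G(\X),\Z)$. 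By (1), for $\X=\rH$ the functor $\L\Mor_\mM(\rH,-)$ preserves small weighted colimits, so using free cocompletion to write $\tilde\G(\rH)$ as the weighted colimit of representables indexed by $\G$, both sides agree on representables $\iota(\Y)$ (where they each compute $\Mor_{\mM_2}(\F(\G(\Y)),\Z)$) and extend cocontinuously in the weight variable. A final uniqueness argument, using that $\L\Mor_\mM(\rH,-)$ is determined by its values on $\iota(\mJ)$ together with its colimit-preservation (which is (1)), identifies the two sides at $\rH$.

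The main obstacle will be the cocontinuity tracking in the last step: carefully identifying $(\F\circ\G)^\ast(\Z)$ with the image of $\F^\ast(\Z)$ under the adjunction between $\tilde\G$ and its right adjoint, and verifying that linearity of $\L\Mor_\mM(\rH,-)$ from (1) provides enough to transfer the agreement on representables to agreement at $\rH$. This is where Corollary \ref{Yowei} and Corollary \ref{obsto} must be combined with genuine care, rather than formally.
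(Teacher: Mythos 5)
Your implications (2)$\Rightarrow$(3) and (3)$\Rightarrow$(1) are essentially the paper's argument: (2)$\Rightarrow$(3) is trivial, and for (3)$\Rightarrow$(1) the point, which your sketch captures, is that condition (3) identifies the Yoneda image of $\rH$ in $\mP\B\Env(\mM)_{\B\Enr}$ with the image of $\rH$ under the cocontinuous linear functor induced by $\mJ^\circledast \subset \mM^\circledast$, so that $\Mor_{\mP\B\Env(\mJ)_{\B\Enr}}(\rH,-)$ becomes an evaluation functor composed with a linear small-colimit-preserving functor; Proposition \ref{weeei} then converts preservation of small weighted colimits into linearity plus cocontinuity, which is (1).

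The implication (1)$\Rightarrow$(2), however, has a genuine gap. Absoluteness quantifies over arbitrary $\mV,\mW$-enriched functors $\F\colon\mM_1^\circledast\to\mM_2^\circledast$ between bi-quasi-enriched $\infty$-categories: these are not assumed to admit small weighted colimits, so your appeal to Corollary \ref{Yowei} to extend $\G$ to a cocontinuous linear $\tilde\G\colon\mM^\circledast\to\mM_1^\circledast$ (and $\F\circ\G$ to $\mM_2^\circledast$) is unavailable; moreover the existence of $\colim^\rH(\F\circ\G)$ is part of the conclusion, not a hypothesis. More seriously, your extension step fails: the left-hand side $\Mor_{\mM_2}(\F(\tilde\G(\X)),\Z)$, viewed as a functor of the weight variable $\X$, is \emph{not} cocontinuous, because the arbitrary enriched functor $\F$ need not preserve any weighted colimits; if agreement on representables plus "cocontinuous extension in the weight variable" sufficed, your argument would prove that every enriched weight is absolute, which is false. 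Hypothesis (1) must enter in the second variable, as in the paper: one forms the morphism $\theta\colon\Mor_{\mP\B\Env(\mM_1)_{\B\Enr}}(\colim^\rH(\G),-)\to\Mor_{\mM}(\rH,-)\circ\G^*$ of functors $\mP\B\Env(\mM_1)^\circledast_{\B\Enr}\to(\mV\ot\mW)^\circledast$, where the source is an evaluation (hence linear and cocontinuous) because $\colim^\rH(\G)$ lies in $\mM_1$, and the target is linear and cocontinuous precisely because of (1) together with the fact that the restriction $\G^*$ is a linear left adjoint. Since $\theta$ is an equivalence on objects of $\mM_1$ by the universal property of $\colim^\rH(\G)$, it is an equivalence by Proposition \ref{pseuso} and Corollary \ref{envvcor}; evaluating at $\F^*(\Z)$ for $\Z\in\mM_2$ and using the adjunction $\F_!\dashv\F^*$ yields $\Mor_{\mM_2}(\F(\colim^\rH(\G)),\Z)\simeq\Mor_{\mM}(\rH,(\F\circ\G)^*(\Z))$, which is exactly the universal property exhibiting $\F(\colim^\rH(\G))$ as $\colim^\rH(\F\circ\G)$. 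Your proof needs this rearrangement; as written, the step "agree on representables and extend cocontinuously" does not go through.
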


\begin{proof}

Assume that (1) holds. Let $\F: \mJ^\circledast \to \mM^\circledast, \G: \mM^\circledast \to \mN^\circledast$ be $\mV,\mW$-enriched functors between bi-quasi-enriched $\infty$-categories. We like to see that $\G$ preserves the $\rH$-weighted colimit of $\F$ if the latter exists.
The universal morphism $\rH \to \F^*(\colim^\rH(\F))$ gives rise to a morphism
$$\theta: \mP\B\Env(\mM)_{\B\Enr}(\colim^\rH(\F),-) \to \mP\B\Env(\mJ)_{\B\Enr}(\rH,-) \circ \F^*$$
in $\LinFun^\L_{\mV,\mW}(\mP\B\Env(\mM)_{\B\Enr},\mV\ot \mW)$
since $\colim^\rH(\F) \in \mM$ and by \cite[Theorem 4.50.]{heine2024bienriched} and assumption (1) and Proposition \ref{pseuso}.
By universal property of the weighted colimit for every $\X \in \mM$ the morphism $\theta_\X$ 
is an equivalence so that $\theta$ is an equivalence by Corollary \ref{envvcor}.
To see that the canonical morphism $\colim^\rH(\G\circ \F) \to \G(\colim^\rH(\F))$ in $\mN$ is an equivalence, we need to see that for every $\Y \in \mN$ the canonical map $$\mP\B\Env(\mJ)_{\B\Enr}(\rH,\F^*(\G^*(\Y)))\simeq \mP\B\Env(\mM)_{\B\Enr}(\colim^\rH(\F),\G^*(\Y)) \simeq$$$$ \mP\B\Env(\mN)_{\B\Enr}(\G_!(\colim^\rH(\F)),\Y) \simeq \mN(\G(\colim^\rH(\F)),\Y) \to \mN(\colim^\rH(\G\circ \F), \Y) \simeq $$$$ \mP\B\Env(\mJ)_{\B\Enr}(\rH,(\G\circ\F)^*(\Y)) \simeq \mP\B\Env(\mJ)_{\B\Enr}(\rH,\F^*(\G^*(\Y)))$$ is an equivalence.
The latter morphism identifies with the identity. This proves (2).

Condition (2) trivially implies (3). Assume that (3) holds. 
The $\rH$-weighted colimit of the $\mV,\mW$-enriched embedding $\mJ^\circledast \subset \mM^\circledast:= \mP\B\Env(\mJ)_{\B\Enr}^\circledast $ is $\rH$. By assumption this $\rH$-weighted colimit is preserved by the $\mV,\mW$-enriched embedding $\mM^\circledast \subset  \mP\B\Env(\mM)_{\B\Enr}^\circledast.$
Thus the image of $\rH\in \mP\B\Env(\mJ)_{\B\Enr}=\mM$ in $\mP\B\Env(\mM)_{\B\Enr}$
is the $\rH$-weighted colimit of the composition $\mJ^\circledast \subset  \mP\B\Env(\mJ)_{\B\Enr}^\circledast=\mM^\circledast \subset  \mP\B\Env(\mM)_{\B\Enr}^\circledast.$
The latter composition factors as $\mJ^\circledast \subset  \mP\B\Env(\mJ)_{\B\Enr}^\circledast \xrightarrow{\theta_!}  \mP\B\Env(\mM)_{\B\Enr}^\circledast,$ where the latter $\mV,\mW$-enriched embedding is induced by the $\mV,\mW$-enriched embedding $\theta: \mJ^\circledast \subset  \mP\B\Env(\mJ)_{\B\Enr}^\circledast=\mM^\circledast.$
Since $\theta_!$ preserves weighted colimits, the image $\rH'$ of $\rH\in \mP\B\Env(\mJ)_{\B\Enr}=\mM$ in $ \mP\B\Env(\mM)_{\B\Enr}$
is the image of $\rH \in \mP\B\Env(\mJ)_{\B\Enr}$ under $\theta_!.$
Thus there is a canonical equivalence
$$ \Mor_{\mP\B\Env(\mJ)_{\B\Enr}}(\rH,-) \simeq \Mor_{ \mP\B\Env(\mM)_{\B\Enr}}(\theta_!(\rH),-) \circ \theta_! \simeq \Mor_{ \mP\B\Env(\mM)_{\B\Enr}}(\rH',-) \circ \theta_!  $$
of $\mV,\mW$-enriched functors
$ \mP\B\Env(\mJ)_{\B\Enr}^\circledast \to (\mV\ot \mW)^\circledast$.
%\rS^{-1}\mP\Env(\mV), \T^{-1}\mP\Env(\mW)where $\sigma$ is the strongly inaccessible cardinal corresponding to the small universe.
%Let $\F: \mJ^\circledast \subset \mM^\circledast:= \mP\B\Env(\mJ)^\circledast $ and $\iota: \mM^\circledast \subset \mN^\circledast:= \widehat{\mP}\mB\Env(\mM)^\circledast$ be the canonical embeddings. Then for every $\Y \in \mN$ the canonical map $$ \widehat{\mP}\B\Env(\mM)(\colim^\rH(\F), \Y) \simeq \widehat{\mP}\B\Env(\mM)(\colim^\rH(\F),\iota^*(\Y)) \simeq \mP\B\Env(\mN)(\iota_!(\colim^\rH(\F)),\Y) \simeq$$$$ \mN(\iota(\colim^\rH(\F)),\Y) \to \mN(\colim^\rH(\iota\circ \F), \Y) \simeq \mP\B\Env(\mJ)(\rH,(\iota\circ\F)^*(\Y)) \simeq$$$$ \mP\B\Env(\mJ)(\rH,\F^*(\iota^*(\Y)))\simeq \mP\B\Env(\mJ)(\rH,\F^*(\Y)) $$ is an equivalence.The resulting equivalence $$ \mP\B\Env(\mM)(\colim^\rH(\F),-) \simeq \mP\B\Env(\mJ)(\rH,-) \circ \F^* $$ gives rise to an equivalence $$ \mP\B\Env(\mM)(\colim^\rH(\F),-) \circ \F_! \simeq \mP\B\Env(\mJ)(\rH,-) \circ \F^*\circ \F_! \simeq \mP\B\Env(\mJ)(\rH,-).$$
Thus (1) follows.	

\end{proof}

\begin{corollary}
Let $\mV^\ot \to \Ass, \mW^\ot \to \Ass$ be presentably monoidal $\infty$-categories and $\mM^\circledast \to \mV^\ot \times \mW^\ot$ a small bienriched $\infty$-category. The Cauchy-completion $\widehat{\mM}^\circledast \to \mV^\ot \times \mW^\ot $ of $\mM^\circledast \to \mV^\ot \times \mW^\ot$ is small.
	
\end{corollary}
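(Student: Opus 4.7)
The plan is to establish smallness of $\widehat{\mM}^\circledast \to \mV^\ot \times \mW^\ot$ by verifying separately that its multi-morphism spaces are small and that the underlying $\infty$-category $\widehat{\mM}$ is essentially small.

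For the multi-morphism spaces, I would observe that $\widehat{\mM}^\circledast$ is by definition a full bi-enriched subcategory of $\mP\B\Env(\mM)^\circledast_{\B\Enr}$, and by Proposition \ref{rembrako}(3) the latter is presentably bitensored over the presentably monoidal $\infty$-categories $\mV,\mW$. In particular, $\mP\B\Env(\mM)_{\B\Enr}$ is a presentable, hence locally small, $\infty$-category. Since every multi-morphism space of $\mP\B\Env(\mM)^\circledast_{\B\Enr}$ is of the form $\Map(\V_1 \ot \cdots \ot \V_\n \ot \X \ot \W_1 \ot \cdots \ot \W_\m, \Y)$ in $\mP\B\Env(\mM)_{\B\Enr}$, these spaces are small. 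Restricting to $\X, \Y \in \widehat{\mM}$ then shows smallness of the multi-morphism spaces of $\widehat{\mM}^\circledast$.

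The heart of the argument is essential smallness of $\widehat{\mM}$. Here I would use Proposition \ref{Chara}(1) to characterize absolute weights: an object $\rH \in \mP\B\Env(\mM)_{\B\Enr}$ lies in $\widehat{\mM}$ iff the $(\mV\ot\mW)$-enriched functor
\[
\Mor_{\mP\B\Env(\mM)_{\B\Enr}}(\rH,-) \colon \mP\B\Env(\mM)^\circledast_{\B\Enr} \to (\mV\ot\mW)^\circledast
\]
is linear and preserves small colimits. The key observation is that a single regular cardinal $\kappa$ controls a \emph{non-enriched} compactness property of every such $\rH$: since $\mV \ot \mW$ is presentable, one may choose $\kappa$ large enough that the tensor unit $\tu_{\mV \ot \mW}$ is $\kappa$-compact. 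Then using the identification
\[
\Map_{\mP\B\Env(\mM)_{\B\Enr}}(\rH,-) \simeq \Map_{\mV\ot\mW}\bigl(\tu,\Mor(\rH,-)\bigr),
\]
together with the facts that $\Mor(\rH,-)$ preserves $\kappa$-filtered colimits (as it preserves all small colimits) and $\Map_{\mV\ot\mW}(\tu,-)$ preserves $\kappa$-filtered colimits (by $\kappa$-compactness of $\tu$), the composite preserves $\kappa$-filtered colimits. Hence every absolute weight $\rH$ is $\kappa$-compact in the presentable $\infty$-category $\mP\B\Env(\mM)_{\B\Enr}$. Since the full subcategory of $\kappa$-compact objects of a presentable $\infty$-category is essentially small, it follows that $\widehat{\mM}$ is essentially small.

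The main obstacle is the size bookkeeping, particularly the choice of $\kappa$: it must be chosen so that the tensor unit of $\mV\ot\mW$ is $\kappa$-compact, which in turn requires unwinding the tensor product in $\Pr^\L$ of the two presentable $\infty$-categories $\mV,\mW$; for this I would appeal to \cite[Proposition 7.15.]{Rune}, which ensures that each presentably monoidal $\infty$-category is $\lambda$-compactly generated for some $\lambda$, from which $\kappa$-compactness of the tensor unit follows by taking $\kappa$ sufficiently large. The passage from enriched colimit-preservation of $\Mor(\rH,-)$ to the underlying mapping space computation is then routine given the setup in the paper.
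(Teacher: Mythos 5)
Your proposal is correct and follows essentially the same route as the paper: choose $\kappa$ so that the tensor unit of $\mV\ot\mW$ is $\kappa$-compact, use Proposition \ref{Chara} to see that $\Mor_{\mP\B\Env(\mM)_{\B\Enr}}(\rH,-)$ preserves small colimits, deduce that every absolute weight $\rH$ is $\kappa$-compact, and conclude by smallness of the $\kappa$-compact objects of the presentable (hence locally small) $\infty$-category $\mP\B\Env(\mM)_{\B\Enr}$. The only cosmetic difference is that you argue via the composite $\Map(\rH,-)\simeq\Map_{\mV\ot\mW}(\tu,\Mor(\rH,-))$ where the paper phrases the same point through the left adjoint $(\V,\W)\mapsto\V\ot\rH\ot\W$ preserving $\kappa$-compact objects.
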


\begin{proof}Since $\mV^\ot \mW$ is presentable, the tensor unit of $\mV \ot \mW$ is $\kappa$-compact for some small regular cardinal $\kappa.$
Let $\rH \in \widehat{\mM}.$ By Proposition \ref{Chara} the $\mV, \mW$-enriched functor
$\Mor_{\mP\B\Env(\mJ)_{\B\Enr}}(\rH,-): \mP\B\Env(\mJ)_{\B\Enr}^\circledast \to (\mV\ot \mW)^\circledast$ preserves small colimits and therefore small $\kappa$-filtered colimits.
Hence the left adjoint of the latter functor, which sends $\V \ot \W$ to $\V \ot \rH \ot \W$, preserves $\kappa$-compact objects. Thus $\rH$ is $\kappa$-compact.
%The Cauchy-completion is the full weakly bienriched subcategory of $\mP\B\Env(\mM)^\circledast_{\B\Enr}$ spanned by the enriched weights $\rH$ such that the $\mV, \mW$-enriched functor$\Mor_{\mP\B\Env(\mJ)_{\B\Enr}}(\rH,-): \mP\B\Env(\mJ)_{\B\Enr}^\circledast \to (\mV\ot \mW)^\circledast$ is linear and preserves small colimits.
So $\widehat{\mM}$ is contained in the full subcategory of $\kappa$-compact objects of $\mP\B\Env(\mJ)_{\B\Enr}.$
Proposition \ref{rembrako} guarantees that $\mP\B\Env(\mJ)^\circledast_{\B\Enr} \to \mV^\ot \times \mW^\ot$ is a presentably bitensored $\infty$-category and so locally small. Presentability implies that the full subcategory of $\kappa$-compact objects of $\mP\B\Env(\mJ)_{\B\Enr}$ is small \cite[Remark 5.4.2.13.]{lurie.HTT}.
	
\end{proof}

\begin{proposition}Let $\mV^\ot \to \Ass, \mW^\ot \to \Ass$ be presentably monoidal $\infty$-categories. A bienriched $\infty$-category $\mM^\circledast \to \mV^\ot \times \mW^\ot$ is Cauchy-complete if and only if every absolute enriched weight on $\mM$ belongs to $\mM.$

\end{proposition}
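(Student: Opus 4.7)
The plan is to show both directions by combining Proposition \ref{Chara} (equivalent characterizations of absoluteness) with the universal equivalence $\colim^\rH(\iota_\mM) \simeq \rH$, where $\iota_\mM : \mM^\circledast \hookrightarrow \mP\B\Env(\mM)_{\B\Enr}^\circledast$ is the canonical enriched embedding (as recorded right before Notation \ref{cola}, extended to the presentable setting via Corollary \ref{Yowei} (3)). The key fact used throughout is that $\iota_\mM$ is a fully faithful enriched embedding, so morphism objects between objects of $\mM$ agree with the morphism objects taken in $\mP\B\Env(\mM)_{\B\Enr}$.

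For the forward direction, assume $\mM^\circledast \to \mV^\ot \times \mW^\ot$ is Cauchy-complete and let $\rH$ be an absolute enriched weight on $\mM$. Applying Cauchy-completeness to the identity functor $\id_\mM : \mM^\circledast \to \mM^\circledast$ with the weight $\rH$ produces an object $\Y := \colim^\rH(\id_\mM) \in \mM$. By absoluteness, the enriched functor $\iota_\mM$ preserves this weighted colimit, so
\[
\iota_\mM(\Y) \simeq \colim^\rH(\iota_\mM \circ \id_\mM) \simeq \colim^\rH(\iota_\mM) \simeq \rH,
\]
where the last equivalence is the canonical one recalled above. Hence $\rH$ lies in the essential image of $\iota_\mM$, i.e. it belongs to $\mM$.

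For the converse, assume every absolute enriched weight on $\mM$ belongs to $\mM$. Let $\F : \mJ^\circledast \to \mM^\circledast$ be a $\mV,\mW$-enriched functor and $\rH$ an absolute enriched weight on $\mJ$. By Remark \ref{rrro} the induced functor $\F_! : \mP\B\Env(\mJ)_{\B\Enr} \to \mP\B\Env(\mM)_{\B\Enr}$ carries $\widehat{\mJ}$ to $\widehat{\mM}$, so $\F_!(\rH)$ is an absolute enriched weight on $\mM$. By hypothesis there is $\Y \in \mM$ with $\iota_\mM(\Y) \simeq \F_!(\rH)$. To check that this $\Y$ solves the universal property of Definition \ref{weight}, fix $\Z \in \mM$ together with $\V_1,\dots,\V_\n \in \mV$ and $\W_1,\dots,\W_\m \in \mW$: using fully faithfulness of $\iota_\mM$ together with the equivalence $\iota_\mM(\Y) \simeq \F_!(\rH)$ we obtain
\[
\Mul_\mM(\V_1,\dots,\V_\n,\Y,\W_1,\dots,\W_\m;\Z) \simeq \Mul_{\mP\B\Env(\mM)}(\V_1,\dots,\V_\n,\iota_\mM(\Y),\W_1,\dots,\W_\m;\iota_\mM(\Z))
\]
\[
\simeq \Mul_{\mP\B\Env(\mM)}(\V_1,\dots,\V_\n,\F_!(\rH),\W_1,\dots,\W_\m;\iota_\mM(\Z)),
\]
so the canonical comparison morphism is an equivalence and $\Y \simeq \colim^\rH(\F)$. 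The only delicate point, and therefore the main thing to verify carefully, is that the equivalence $\colim^\rH(\iota_\mM) \simeq \rH$ and Proposition \ref{Chara} apply in the large/presentable framework used for Cauchy-completion (Notation \ref{CC}); this is precisely the content of the extension to bi-quasi-enriched $\infty$-categories recorded in Corollary \ref{Yowei} (3), so no further work is required beyond citing it.
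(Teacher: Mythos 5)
Your proof is correct and follows essentially the same route as the paper: the forward direction applies Cauchy-completeness to the identity functor and uses absoluteness to see the embedding $\mM^\circledast \subset \mP\B\Env(\mM)_{\B\Enr}^\circledast$ sends that colimit to $\rH$, and the converse uses Remark \ref{rrro} to see $\F_!(\rH)$ is an absolute weight on $\mM$, hence in $\mM$ by hypothesis, hence the weighted colimit of $\F$ (your explicit Mul-space check of this last step is just the detail the paper leaves implicit, modulo the harmless notational slip of writing $\mP\B\Env(\mM)$ for its localization/large variant, which Lemma \ref{Enristo} reconciles).
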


\begin{proof}

Assume that every absolute enriched weight on $\mM$ belongs to $\mM$, let $\rH$ be an absolute enriched weight on a bienriched $\infty$-category $\mJ^\circledast \to \mV^\ot \times \mW^\ot$ and $\F: \mJ^\circledast \to \mM^\circledast$ a $\mV,\mW$-enriched functor.
The $\rH$-weighted colimit of the $\mV,\mW$-enriched functor $ \mJ^\circledast \xrightarrow{\F} \mM^\circledast \subset \mP\B\Env(\mM)_{\B\Enr}^\circledast$
is $\F_!(\rH)$ by Proposition \ref{weifunt}, which is an absolute enriched weight on $\mM$ by Remark \ref{rrro}.
So by assumption $\F_!(\rH) \in \mM.$ Thus the $\rH$-weighted colimit of the $\mV,\mW$-enriched functor $ \mJ^\circledast \xrightarrow{\F} \mM^\circledast \subset \mP\B\Env(\mM)_{\B\Enr}^\circledast$ lies in $\mM$ and so is the $\rH$-weighted colimit of $\F.$
Conversely, assume that $\mM^\circledast \to \mV^\ot \times \mW^\ot$ is
Cauchy-complete and let $\rH$ be an absolute enriched weight on $\mM$.
The $\rH$-weighted colimit of the identity of $\mM$, which exists by Cauchy-completeness,
is preserved by any $\mV,\mW$-enriched functor and so is sent by the $\mV,\mW$-enriched embedding $\mM^\circledast \subset \mP\B\Env(\mM)_{\B\Enr}^\circledast $ 
to the $\rH$-weighted colimit of the $\mV,\mW$-enriched embedding $\mM^\circledast \subset \mP\B\Env(\mM)_{\B\Enr}^\circledast $, which is $\rH$ by Proposition \ref{weifunt}.
So $\rH$ belongs to $\mM.$

\end{proof}

\begin{proposition}\label{Univop}Let $\mV^\ot \to \Ass, \mW^\ot \to \Ass$ be presentably monoidal $\infty$-categories, $\mM^\circledast \to \mV^\ot \times \mW^\ot$ a bienriched $\infty$-category and $\mN^\circledast \to \mV^\ot \times \mW^\ot$ a Cauchy-complete bienriched $\infty$-category.
The following induced functor is an equivalence: \begin{equation*}
\rho_\mN: \Enr\Fun_{\mV,\mW}(\widehat{\mM}, \mN) \to \Enr\Fun_{\mV,\mW}(\mM, \mN).\end{equation*}

\end{proposition}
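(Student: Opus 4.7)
The plan is to identify $\widehat{\mM}^\circledast$ with the free bi-enriched $\infty$-category obtained from $\mM^\circledast$ by adjoining absolute weighted colimits, and then to invoke the universal property of this adjoining construction. Assuming $\mM$ is small (so that $\widehat{\mM}$ is small too), one may take $\mH$ to be the set of weights $(\id, \id, \mM^\circledast, \rH)$ indexed by $\rH \in \widehat{\mM}$; by construction every member of $\mH$ is absolute.

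First I would identify $\widehat{\mM}^\circledast$ with the full bi-enriched subcategory $\mP\B\Env^{\mH}(\mM)^\circledast_{\B\Enr}$ of $\mP\B\Env(\mM)^\circledast_{\B\Enr}$ generated by $\mM$ under $\mH$-weighted colimits. In one direction, every $\rH \in \widehat{\mM}$ is itself the $\rH$-weighted colimit of the canonical enriched embedding $\mM^\circledast \subset \widehat{\mM}^\circledast \subset \mP\B\Env(\mM)^\circledast_{\B\Enr}$ by Corollary \ref{Yowei}, so $\widehat{\mM}$ lies in that closure. In the other direction, given a $\mV,\mW$-enriched functor $\F: \mM^\circledast \to \widehat{\mM}^\circledast$ and an absolute weight $\rH \in \widehat{\mM}$, the object $\F_!(\rH) \in \mP\B\Env(\mM)_{\B\Enr}$ remains absolute by Remark \ref{rrro} combined with the characterization of absoluteness in Proposition \ref{Chara}, so $\widehat{\mM}$ is closed in $\mP\B\Env(\mM)_{\B\Enr}$ under $\mH$-weighted colimits.

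Once this identification is in hand the remainder is formal. Every $\mV,\mW$-enriched functor from $\widehat{\mM}^\circledast$ to any bi-enriched $\infty$-category automatically preserves $\mH$-weighted colimits by absoluteness, so the equality $\Enr\Fun_{\mV,\mW}^{\Lambda^{\mH}}(\widehat{\mM}, \mN) = \Enr\Fun_{\mV,\mW}(\widehat{\mM}, \mN)$ holds tautologically. Moreover $\mN$ admits $\mH$-weighted colimits by Cauchy-completeness. Applying Corollary \ref{wooond0} with $\Lambda = \emptyset$, after reducing to the small-localization-pair setting via Corollary \ref{cosqa} by picking small regular cardinals $\kappa,\tau$ for which $\mV$ and $\mW$ are compactly generated, yields an equivalence which, under the identification of the first step, is precisely $\rho_\mN$.

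The main point I expect to have to unpack is the closure of $\widehat{\mM}$ under $\mH$-weighted colimits, which requires tracking absoluteness along $\F_!$ and is most transparently handled through condition (1) of Proposition \ref{Chara}: the linearity and small-colimit preservation of $\Mor_{\mP\B\Env(\mM)_{\B\Enr}}(\rH,-)$ propagate functorially along enriched functors between bi-quasi-enriched $\infty$-categories. The secondary bookkeeping is the reduction from presentably monoidal $\mV, \mW$ to small localization pairs when invoking Corollary \ref{wooond0}, which is routine given Corollary \ref{cosqa}.
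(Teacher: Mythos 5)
Your route is genuinely different from the paper's: you identify $\widehat{\mM}^\circledast$ with the free cocompletion $\mP\B\Env^{\mH}(\mM)^\circledast_{\B\Enr}$ of $\mM$ under the set $\mH$ of absolute weights it contains and then quote the universal property of adjoining weighted colimits (Corollary \ref{wooond0}), whereas the paper argues directly that $\rho_\mN$ is conservative (each $\rH \in \widehat{\mM}$ is the $\rH$-weighted colimit of the inclusion $\mM^\circledast \subset \widehat{\mM}^\circledast$, which every enriched functor preserves by absoluteness), observes that $\rho_\mN$ is a pullback of $\rho_{\mP\B\Env(\mN)_{\B\Enr}}$ because $\mN$ is closed in $\mP\B\Env(\mN)_{\B\Enr}$ under these colimits, and finishes by producing a fully faithful left adjoint of the latter via the enriched Kan extension result of \cite[Proposition 2.62.]{Heine2024Lur}. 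However, as written your argument has two genuine gaps. First, the proposition carries no smallness hypothesis on $\mM$, and the large case is actually used (the corollary immediately after, that $\widehat{\mM}$ is Cauchy-complete, is stated for an arbitrary bi-enriched $\mM$); your proof needs $\mM$ small so that $\mH$ is a set of totally small weights and Corollary \ref{wooond0}/Corollary \ref{cosqa} apply, and the reduction of the general case to the small one is not automatic, since absoluteness is defined relative to the fixed universe and passing to a larger universe changes the class of test categories. The paper's argument needs no such bookkeeping.

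Second, the closure step is not justified by the references you give. Remark \ref{rrro}, applied to $\F: \mM^\circledast \to \widehat{\mM}^\circledast$, produces an absolute weight on $\widehat{\mM}$, i.e.\ an object of $\widehat{\widehat{\mM}} \subset \mP\B\Env(\widehat{\mM})_{\B\Enr}$, whereas closure of $\widehat{\mM}$ under $\mH$-weighted colimits requires that $\colim^{\rH}(\F)$, computed in $\mP\B\Env(\mM)_{\B\Enr}$, be absolute as a weight on $\mM$; transporting the first statement into the second along the comparison $\mP\B\Env(\mM)_{\B\Enr} \simeq \mP\B\Env(\widehat{\mM})_{\B\Enr}$ would be circular, because in the paper that equivalence is deduced from the very proposition you are proving. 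Your fallback via Proposition \ref{Chara} (1) is the right idea but must actually be carried out: one has $\Mor_{\mP\B\Env(\mM)_{\B\Enr}}(\colim^{\rH}(\F),-) \simeq \Mor_{\mP\B\Env(\mM)_{\B\Enr}}(\rH, \F^{\ast}(-))$ with $\F^{\ast}(-) = \Mor(\F(-),-)$, and one needs that $\F^{\ast}$ is linear and preserves small colimits; this uses that every value of $\F$ is absolute together with the fact that colimits in the enriched presheaf category are formed object-wise (Theorem \ref{Enric}), none of which appears in your sketch. With these two points repaired (and the statement restricted to small $\mM$, or a careful universe argument added), your proof goes through; as it stands it does not establish the proposition in the stated generality.
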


\begin{proof}We prove first that the functor $\rho_\mN$ is conservative.
Let $\rH \in \widehat{\mM}$. Then $\rH \in  \mP\B\Env(\mM)_{\B\Enr}$ is the $\rH$-weighted colimit of the $\mV,\mW$-enriched embedding $\mM^\circledast \subset \widehat{\mM}^\circledast.$
Since $\rH$ is absolute, every $\mV,\mW$-enriched functor $\F: \widehat{\mM}^\circledast \to \mN^\circledast$ preserves the $\rH$-weighted colimit of the $\mV,\mW$-enriched embedding $\mM^\circledast \subset \widehat{\mM}^\circledast$ so that the canonical morphism $\colim^\rH(\F_{\mid \mM^\circledast}) \to \F(\rH)$ is an equivalence.
This implies that the functor $\rho_\mN$ is conservative
and that it is the pullback of the conservative functor
$\rho_{ \mP\B\Env(\mN)_{\B\Enr}}$ since $\mN$ is closed in $  \mP\B\Env(\mN)_{\B\Enr}$ under $\rH$-weighted colimits.
% this means that every $\mV,\mW$-enriched functor$\F: \widehat{\mM}^\circledast \to \mP\B\Env(\mN)^\circledast$ lands in $\mN^\circledast$if its restriction to $\mM^\circledast$ lands in $\mN^\circledast$.This follows from the fact that every $\rH \in \widehat{\mM}$ is the $\rH$-weighted colimit of the embedding $\mM^\circledast \subset \widehat{\mM}^\circledast$ so that $\F(\rH)$ is the $\rH$-weighted colimit of $\F_{\mid \mM^\circledast},$ and that $\mN$ admits $\rH$-weighted colimits, which are preserved by the embedding to $ \mP\B\Env(\mN).$
So it is enough to see that $\rho_{ \mP\B\Env(\mN)_{\B\Enr}}$ is an equivalence.
This functor admits a fully faithful left adjoint by \cite[Proposition 2.65.]{heine2024bienriched} and so is an equivalence being conservative.
\end{proof}

\begin{corollary}
Let $\mV^\ot \to \Ass, \mW^\ot \to \Ass$ be presentably monoidal $\infty$-categories and $\mM^\circledast \to \mV^\ot \times \mW^\ot$ a bienriched $\infty$-category.
The Cauchy-completion of $\mM^\circledast \to \mV^\ot \times \mW^\ot$ is Cauchy-complete.

\end{corollary}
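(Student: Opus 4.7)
The plan is to invoke the preceding proposition: $\widehat{\mM}$ is Cauchy-complete iff every absolute enriched weight on $\widehat{\mM}$ already lies in $\widehat{\mM}$. Writing $y_{\widehat{\mM}}\colon\widehat{\mM}\hookrightarrow\mP\B\Env(\widehat{\mM})^{\circledast}_{\B\Enr}$ for the canonical embedding, this amounts to showing $y_{\widehat{\mM}}$ restricts to an equivalence $\widehat{\mM}\simeq\widehat{\widehat{\mM}}$. My strategy is to construct a $\mV,\mW$-linear equivalence $J\colon\mP\B\Env(\widehat{\mM})^{\circledast}_{\B\Enr}\xrightarrow{\sim}\mP\B\Env(\mM)^{\circledast}_{\B\Enr}$ under which $y_{\widehat{\mM}}$ corresponds to the identity on $\widehat{\mM}$.

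Let $\iota\colon\mM\hookrightarrow\widehat{\mM}$ denote the defining inclusion and let $\iota_!\colon\mP\B\Env(\mM)^{\circledast}_{\B\Enr}\to\mP\B\Env(\widehat{\mM})^{\circledast}_{\B\Enr}$ be the induced $\mV,\mW$-linear left adjoint, which sends $\widehat{\mM}$ into $\widehat{\widehat{\mM}}$ by Remark \ref{rrro}. The free cocompletion property from Corollary \ref{Yowei} produces the unique $\mV,\mW$-linear left adjoint $J$ extending the enriched embedding $\widehat{\mM}\subset\mP\B\Env(\mM)^{\circledast}_{\B\Enr}$. Since $J\circ\iota_!$ restricts on $\mM$ to the canonical embedding $\mM\subset\mP\B\Env(\mM)^{\circledast}_{\B\Enr}$, the universal property forces $J\circ\iota_!\simeq\id$. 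The essential computation is pointwise: for each $\rH\in\widehat{\mM}$, the tautological equivalence $\rH\simeq\colim^{\rH}y_{\mM}$ in $\mP\B\Env(\mM)^{\circledast}_{\B\Enr}$ is preserved by the left adjoint $\iota_!$, giving
\[
\iota_!(\rH)\simeq\colim^{\rH}(y_{\widehat{\mM}}\circ\iota),
\]
while the absoluteness of $\rH$ as a weight on $\mM$ implies that $y_{\widehat{\mM}}\colon\widehat{\mM}\to\mP\B\Env(\widehat{\mM})^{\circledast}_{\B\Enr}$ preserves the $\rH$-weighted colimit of $\iota$, yielding $y_{\widehat{\mM}}(\rH)\simeq\colim^{\rH}(y_{\widehat{\mM}}\circ\iota)$; together $\iota_!(\rH)\simeq y_{\widehat{\mM}}(\rH)$.

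To promote this to an equivalence $\iota_!\circ J\simeq\id$, note that both sides are $\mV,\mW$-linear left adjoints on $\mP\B\Env(\widehat{\mM})^{\circledast}_{\B\Enr}$, hence by Corollary \ref{Yowei} are determined by their restrictions to $\widehat{\mM}$; those restrictions further restrict to $\iota_!\circ y_{\mM}\simeq y_{\widehat{\mM}}\circ\iota$ on $\mM$, and Proposition \ref{Univop} applied to the (presentably bitensored, hence Cauchy-complete) target $\mP\B\Env(\widehat{\mM})^{\circledast}_{\B\Enr}$ upgrades the equivalence of restrictions to $\mM$ into an equivalence of restrictions to $\widehat{\mM}$. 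Thus $J$ is a $\mV,\mW$-linear equivalence with inverse $\iota_!$, and Proposition \ref{Chara} shows that $J$ restricts to an equivalence $\widehat{\widehat{\mM}}\simeq\widehat{\mM}$ identifying $y_{\widehat{\mM}}$ with the identity on $\widehat{\mM}$. Hence every absolute weight on $\widehat{\mM}$ belongs to $\widehat{\mM}$, and the preceding proposition completes the proof. The main obstacle is the naturality step, where one must carefully invoke Proposition \ref{Univop} (and the Cauchy-completeness of the ambient presheaf category) to package the objectwise identifications into a coherent equivalence of $\mV,\mW$-enriched functors.
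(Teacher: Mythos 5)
Your proposal is correct and takes essentially the same route as the paper's proof: both identify $\mP\B\Env(\widehat{\mM})_{\B\Enr}$ with $\mP\B\Env(\mM)_{\B\Enr}$ along the embedding $\mM^\circledast \subset \widehat{\mM}^\circledast$ (the paper cites Proposition \ref{Univop} for this, you construct the equivalence explicitly as $J$ inverse to $\iota_!$ via Corollary \ref{Yowei} and Proposition \ref{Univop}), transport absoluteness across this linear equivalence via Proposition \ref{Chara}, and use the Univop-uniqueness of extensions from $\mM$ to a Cauchy-complete target to identify the embedding $\widehat{\mM}^\circledast \subset \mP\B\Env(\mM)_{\B\Enr}^\circledast \simeq \mP\B\Env(\widehat{\mM})_{\B\Enr}^\circledast$ with the canonical Yoneda embedding, so every absolute weight on $\widehat{\mM}$ lands in $\widehat{\mM}$. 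Your pointwise computation of $\iota_!$ on $\widehat{\mM}$ is a harmless extra (it is subsumed by the uniqueness step), and your implicit use of the Cauchy-completeness of the presentably bitensored presheaf category matches what the paper itself relies on when invoking Proposition \ref{Univop}.
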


\begin{proof}Let $\rH \in \mP\B\Env(\widehat{\mM})_{\B\Enr}$ be an absolute enriched weight on $\widehat{\mM}$. We like to see that $\rH \in \widehat{\mM}.$
By Proposition \ref{Univop} the embedding $\mM^\circledast \subset \widehat{\mM}^\circledast$
induces an equivalence $ \mP\B\Env(\mM)_{\B\Enr} \simeq \mP\B\Env(\widehat{\mM})_{\B\Enr}$.
So $\rH$ is the image of an enriched weight $\rH'$ on $\mM$ that is absolute by Proposition \ref{Chara}. Thus $\rH'\in \widehat{\mM}.$
By Proposition \ref{Univop} the $\mV,\mW$-enriched embedding $\widehat{\mM}^\circledast \subset \mP\B\Env(\mM)_{\B\Enr}^\circledast \simeq \mP\B\Env(\widehat{\mM})_{\B\Enr}^\circledast$
is the canonical embedding since both are equivalent after restriction to $\mM^\circledast.$
So $\rH$ is the image of $\rH'$ under the embedding $\widehat{\mM} \subset \mP\B\Env(\widehat{\mM})_{\B\Enr}$.
\end{proof}

Theorem \ref{wooond} gives the following corollary:

\begin{corollary}Let $\mV^\ot \to \Ass, \mW^\ot \to \Ass$ be presentably monoidal $\infty$-categories and $\mM^\circledast \to \mV^\ot \times \mW^\ot$ a small bienriched $\infty$-category.
The Cauchy-completion $\widehat{\mM}^\circledast \to \mV^\ot \times \mW^\ot$ is 
generated by $\mM$ under colimits weighted with respect to small absolute $\mV,\mW$-enriched weights.

\end{corollary}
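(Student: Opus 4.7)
The strategy is to apply Proposition \ref{wooond} with $\Lambda = \emptyset$, $\rS = \Enr_{\mV}, \T = \Enr_{\mW}$ (so that $\rS,\T$-bi-enrichment corresponds to bi-enrichment, and $\mP\B\Env(\mM)_{\rS,\T}=\mP\B\Env(\mM)_{\B\Enr}$), and $\mH$ equal to the collection of all small absolute $\mV,\mW$-enriched weights on small bi-enriched $\infty$-categories. For this to be legitimate I first need that $\mH$ is a set. The preceding corollary guarantees that $\widehat{\mM}$ is small, and every element of $\widehat{\mM}$ is by definition an absolute weight on $\mM$; using Proposition \ref{Chara} to detect absoluteness via the morphism object functor preserving small colimits, together with a standard cardinality bound on the domain $\mJ$ of an absolute weight (which by Proposition \ref{Chara} makes $\rH$ representable by a $\kappa$-compact object of a presentable bitensored $\infty$-category), $\mH$ is indeed (equivalent to) a set.

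Having verified this, Proposition \ref{wooond} then produces a full bi-enriched subcategory $\mP\B\Env^{\mH}(\mM)_{\B\Enr}^\circledast \subset \mP\B\Env(\mM)_{\B\Enr}^\circledast$ which by construction is the smallest bi-enriched subcategory containing $\mM$ and closed under small $\mH$-weighted colimits. The plan is to identify this with $\widehat{\mM}^\circledast$.

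For one inclusion $\mP\B\Env^{\mH}(\mM)_{\B\Enr}^\circledast \subset \widehat{\mM}^\circledast$, I first note that representables (i.e.\ objects of $\mM$) are absolute: the morphism object functor $\Mor_{\mP\B\Env(\mM)_{\B\Enr}}(\X,-)$ for $\X\in\mM$ agrees, via Proposition \ref{morpre} and the Yoneda lemma, with the evaluation at $\X$, which is linear and preserves small colimits; by Proposition \ref{Chara} this means $\X$ is absolute. I then show $\widehat{\mM}$ is closed under $\mH$-weighted colimits in $\mP\B\Env(\mM)_{\B\Enr}$: if $\F:\mJ^\circledast\to\widehat{\mM}^\circledast$ is $\mV,\mW$-enriched and $\rH$ is an absolute weight on $\mJ$, then $\F_!(\rH)\in\mP\B\Env(\mM)_{\B\Enr}$ is the $\rH$-weighted colimit of the composition $\mJ^\circledast\xrightarrow{\F}\widehat{\mM}^\circledast\subset\mP\B\Env(\mM)_{\B\Enr}^\circledast$ (Proposition \ref{weifun}), and by Remark \ref{rrro} absoluteness is preserved by the induced functor on presheaves, so $\F_!(\rH)\in\widehat{\mM}$.

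For the reverse inclusion $\widehat{\mM}^\circledast\subset\mP\B\Env^{\mH}(\mM)_{\B\Enr}^\circledast$, given $\rH\in\widehat{\mM}$, I invoke the density of the Yoneda embedding $\iota:\mM^\circledast\subset\mP\B\Env(\mM)_{\B\Enr}^\circledast$: by Corollary \ref{Yowei} (or directly from the universal property of Proposition \ref{pseuso}), the $\rH$-weighted colimit of $\iota$ in $\mP\B\Env(\mM)_{\B\Enr}$ is $\rH$ itself. Since $\rH\in\mH$, this exhibits $\rH$ as a small $\mH$-weighted colimit of objects of $\mM$, hence $\rH$ belongs to $\mP\B\Env^{\mH}(\mM)_{\B\Enr}$. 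The main obstacle I anticipate is the cardinality bookkeeping that ensures $\mH$ is a set rather than a proper class, and the careful verification that the membership of $\widehat{\mM}$ in $\mP\B\Env(\mM)_{\B\Enr}$ used in the density argument agrees with the abstract definition of $\widehat{\mM}$; both reduce to Proposition \ref{Chara} combined with the smallness statement of the preceding corollary.
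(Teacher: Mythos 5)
Your second and third paragraphs are the actual content of the corollary, and they are correct: objects of $\mM$ are absolute weights on $\mM$ (so $\mM \subset \widehat{\mM}$), $\widehat{\mM}$ is closed in $\mP\B\Env(\mM)_{\B\Enr}^\circledast$ under colimits weighted by small absolute weights (via Remark \ref{rrro}, or equivalently via Cauchy-completeness of $\widehat{\mM}$ together with the fact that absolute weighted colimits are preserved by the inclusion), and every $\rH \in \widehat{\mM}$ is the $\rH$-weighted colimit of the embedding $\mM^\circledast \subset \mP\B\Env(\mM)_{\B\Enr}^\circledast$, which is a colimit weighted by a small absolute weight because $\mM$ is small. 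That closure-plus-density argument is exactly the intended derivation.

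The genuine problem is your first paragraph. The collection $\mH$ of \emph{all} small absolute $\mV,\mW$-enriched weights is not a set: a weight carries its domain, a small bi-enriched $\infty$-category $\mJ$, every representable weight is absolute, and the equivalence classes of small bi-enriched $\infty$-categories do not form a small set. Proposition \ref{Chara} constrains $\rH$ inside $\mP\B\Env(\mJ)_{\B\Enr}$ for a \emph{fixed} $\mJ$ (it forces $\rH$ to be a compact object there), but it gives no bound on $\mJ$, so the cardinality argument you sketch does not go through, and Proposition \ref{wooond}, which requires $\mH$ to be a set of totally small weights, cannot be invoked with this $\mH$. Fortunately the set-theoretic scaffolding is unnecessary: the corollary only concerns the smallest full bi-enriched subcategory of $\mP\B\Env(\mM)_{\B\Enr}^\circledast$ containing $\mM$ and closed under small absolute weighted colimits, a notion that makes sense for an arbitrary class of weights and needs no universal property. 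Alternatively, replace $\mH$ by the set of absolute weights on $\mM$ itself, which is a set because $\widehat{\mM}$ is small by the preceding corollary; your density argument shows generation under these already exhausts $\widehat{\mM}$, and the closure of $\widehat{\mM}$ under all small absolute weighted colimits finishes the proof. With that repair your argument is correct and agrees with the paper's.
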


\begin{notation}
Let $\mV^\ot \to \Ass$ be a presentably monoidal $\infty$-category. Let $ {_\mV\L\Enr^\wedge} \subset {_\mV\L\Enr}$ be the full subcategory of Cauchy-complete left $\mV$-enriched $\infty$-categories.

\end{notation}

\begin{corollary}\label{CaC} Let $\n \geq 1$ and $\mV^\boxtimes \to \bE_{\n+1}$ a presentably $\bE_{\n+1}$-monoidal $\infty$-category. % and $\mH$ a set of left enriched weights over $\mV.$
The $\infty$-category $ {_\mV\L\Enr^\wedge}$ of Cauchy-complete left $\mV$-enriched $\infty$-categories carries a canonical presentably $\bE_\n$-monoidal structure
such that Cauchy-completion ${_\mV\L\Enr} \to {_\mV\L\Enr^\wedge}$ refines to an $\bE_\n$-monoidal functor.
%such that the embedding $${_\mV\L\Enr}(\mH)^\ot \subset {_\mV\L\Q\Enr}(\mH)^\ot$$ is $\bE_\n$-monoidal.	

\end{corollary}

\begin{proof}
There is an identity $ {_\mV\L\Enr^\wedge} = {_\mV\L\Enr^\wedge}(\mH),$ 
where $\mH$ is the set of absolute left $\mV$-enriched weights.
We apply Theorem \ref{thqaz}.

\end{proof}

\subsection{A tensor product for $\n$-preadditive, $\n$-additive and $\n$-stable $(\infty, \n)$-categories}

In the following we construct presentably symmetric monoidal structures on the $\infty$-categories of small $\n$-preadditive, $\n$-additive and $\n$-stable $(\infty, \n)$-categories for any $\n \geq 1.$

\begin{definition}
A weakly bienriched $\infty$-category $\mM^\circledast \to \mV^\ot\times \mW^\ot$ is preadditive if it admits finite conical coproducts, i.e. conical colimits indexed by finite sets, and finite conical products, i.e. conical limits indexed by finite sets, and
the morphism $\emptyset \to *$ from the initial to the final object in $\mM$ is an equivalence, and for every $\X,\Y \in \mM$ the canonical morphism
$\X \coprod \Y \to \X \times \Y$ in $\mM$ is an equivalence. %$\mM$ is preadditive.

\end{definition}

\begin{remark}
	
Preadditive enriched $\infty$-categories were also discussed in \cite[Definition A.5.2.]{mazelgee2021universal}.
	
\end{remark}

\begin{remark}\label{simo}
A weakly bienriched $\infty$-category $\mM^\circledast \to \mV^\ot\times \mW^\ot$ is preadditive if it admits finite conical coproducts and finite conical products and $\mM$ is preadditive. A weakly bienriched $\infty$-category is preadditive if and only if the opposite 
weakly bienriched $\infty$-category is preadditive.
	
\end{remark}

\begin{example}\label{cotenso}
A weakly bienriched $\infty$-category $\mM^\circledast \to \mV^\ot\times \mW^\ot$ is preadditive if it admits left and right cotensors and $\mM$ is preadditive, and so dually
a weakly bienriched $\infty$-category $\mM^\circledast \to \mV^\ot\times \mW^\ot$ is preadditive if it admits left and right tensors and $\mM$ is preadditive.
In particular, for every monoidal $\infty$-category $\mV^\ot \to \Ass$ such that $\mV$ is preadditive we have that $\mV^\circledast \to \mV^\ot \times \mV^\ot$ is preadditive.
	
\end{example}

\begin{example}\label{prodo}
Every full weakly bienriched subcategory of a preadditive weakly bienriched $\infty$-category closed under finite products is again preadditive.
	
\end{example}

\begin{remark}\label{preadd}
	
Let	$\mM^\circledast \to \mV^\ot$ be a weakly left enriched $\infty$-category and $\mN^\circledast \to \mV^\ot \times \mW^\ot$ a preadditive weakly bienriched $\infty$-category.
The weakly right enriched $\infty$-category $ \Enr\Fun_{\mV,\emptyset}(\mM,\mN)^\circledast \to \mW^\ot $ is preadditive. This follows from Proposition \ref{Enric}. 
In particular, for every monoidal $\infty$-category $\mV^\ot \to \Ass$ such that $\mV$ is preadditive we have that $ \Enr\Fun_{\mV,\emptyset}(\mM,\mV)^\circledast \to \mV^\ot $ is preadditive. 

\end{remark}

\begin{notation}
Let $\mM^\circledast \to \mV^\ot\times \mW^\ot$	be a weakly bienriched $\infty$-category
that admits finite conical products, i.e. conical limits indexed by finite sets.
Let $\Cmon(\mM)^\circledast \subset (\mM^{\Comm})^\circledast \to \mV^\ot \times \mW^\ot$
be the full weakly bienriched subcategory spanned by the commutative monoid objects in $\mM$, i.e. the functors $\X: \Comm \to \mM$ such that for every $\n \geq 0$ the induced morphism
$\X(\langle \n\rangle) \to \X(\langle1\rangle)^{\times\n}$ is an equivalence.
\end{notation}

\begin{remark}
Evaluating at $\langle1\rangle \in \Comm$ gives a $\mV,\mW$-enriched functor
$\Cmon(\mM)^\circledast \to \mM^\circledast.$	
	
\end{remark}

\begin{remark}\label{fffoo}
Let $\mM^\circledast \to \mV^\ot\times \mW^\ot$	be a weakly bienriched $\infty$-category
that admits finite conical products and left (right) cotensors.	
Then $\Cmon(\mM)^\circledast \to \mV^\ot \times \mW^\ot$ admits left (right) cotensors,
which are preserved by the forgetful functor $\Cmon(\mM)^\circledast \to \mM^\circledast:$
by Proposition \ref{Enric} we find that $ (\mN^{\Comm})^\circledast \to \mV^\ot \times \mW^\ot$ admits left (right) cotensors, which are formed object-wise. Thus $\Cmon(\mN)^\circledast$ is closed in $(\mM^{\Comm})^\circledast$ under left (right) cotensors since forming left (right) cotensors are right adjoints.

\end{remark}

\begin{lemma}
Let $\mM^\circledast \to \mV^\ot\times \mW^\ot$	be a weakly bienriched $\infty$-category
that admits finite conical products.
Then $\Cmon(\mM)^\circledast \to \mV^\ot \times \mW^\ot$ is preadditive.
\end{lemma}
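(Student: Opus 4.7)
The plan is to verify the hypotheses of Remark~\ref{simo}: one must show that $\Cmon(\mM)^\circledast$ admits finite conical products, finite conical coproducts, and that its underlying $\infty$-category $\Cmon(\mM)$ is preadditive. The first step is to establish that $(\mM^\Comm)^\circledast$ of Example~\ref{euuz} admits finite conical products, formed pointwise in $\Comm$: unwinding the pullback description, the multi-morphism spaces in $(\mM^\Comm)^\circledast$ are end-type limits of multi-morphism spaces in $\mM$, and limits commute with limits, so the pointwise product $\langle n\rangle\mapsto\prod_{s\in S}\F(s)(\langle n\rangle)$ of a finite family $\F\colon S\to\mM^\Comm$ realizes the conical product. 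Since the Segal condition $\F_s(\langle n\rangle)\simeq\F_s(\langle 1\rangle)^{\times n}$ is preserved by finite products in $\mM$, the full subcategory $\Cmon(\mM)^\circledast\subset(\mM^\Comm)^\circledast$ is closed under these pointwise products, and thus admits finite conical products itself.

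For preadditivity of the underlying $\infty$-category, since $\mM$ admits finite products, the theorem of Gepner--Groth--Nikolaus \cite{Gepner_2015} yields that $\Cmon(\mM)$ is preadditive; in particular it is pointed and for any finite family $(\X_s)_{s\in S}$ in $\Cmon(\mM)$ the canonical map $\coprod_s\X_s\to\prod_s\X_s$ is an equivalence, with inclusion maps $\iota_s\colon\X_s\to\prod_t\X_t$ induced by the monoid structure (identity in the $s$-th coordinate, unit elsewhere). To upgrade this to finite conical coproducts in $\Cmon(\mM)^\circledast$, the key observation is that every $Y\in\Cmon(\mM)$ carries a commutative monoid structure in $\mM$, which endows each presheaf $\Mul_{\Cmon(\mM)}(\V_1,\ldots,-,\ldots,\W_l;Y)$ on $\Cmon(\mM)^\op$ with a canonical lift to $\Cmon(\mS)$. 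Since in $\Cmon(\mS)$ finite products coincide with finite coproducts, the natural map
\[
\Mul_{\Cmon(\mM)}(\V_1,\ldots,\textstyle\prod_s\X_s,\ldots,\W_l;Y)\to\prod_s\Mul_{\Cmon(\mM)}(\V_1,\ldots,\X_s,\ldots,\W_l;Y)
\]
induced by the inclusions $\iota_s$ is an equivalence, with inverse the usual sum-of-components construction. This verifies the conical coproduct universal property, and Remark~\ref{simo} concludes.

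The main obstacle is the last step: upgrading ordinary preadditivity of $\Cmon(\mM)$ to the statement about multi-morphism spaces. The argument requires carefully extracting the $\Cmon(\mS)$-valued lift of the multi-morphism functor and showing this lift detects when a pointwise product is also a conical coproduct. Once that commutative monoid refinement of $\Mul_{\Cmon(\mM)}(\ldots;Y)$ is in place, everything reduces to the biproduct property in $\Cmon(\mS)$ together with commutativity of the end defining $\Mul_{\mM^\Comm}$ with finite limits.
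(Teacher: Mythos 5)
Your steps (1) and (2) are fine, but the decisive step (3) has a genuine gap. The inference ``the presheaf $\Mul_{\Cmon(\mM)}(\V_1,\ldots,-,\ldots,\W_l;Y)$ lifts to $\Cmon(\mS)$, and in $\Cmon(\mS)$ finite products coincide with finite coproducts, hence restriction along the $\iota_s$ is an equivalence'' is not valid: a $\Cmon(\mS)$-valued presheaf on a preadditive $\infty$-category need not send biproducts to products (already $X\mapsto\Hom(X\otimes X,Y)$ on abelian groups is a counterexample). What you actually need is that the multi-morphism presheaf is \emph{additive}, and that is exactly the conicality statement you are trying to prove; it is not supplied by the existence of a $\Cmon(\mS)$-lift. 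The ``usual sum-of-components'' inverse can in principle be made to work, but it requires verifying identities such as $(\sum_s\iota_s p_s)^\ast\phi\simeq\sum_s(\iota_s p_s)^\ast\phi$, i.e.\ additivity of multi-composition in the enriched slot; proving this naively uses that $\X_1\oplus\X_2$ is already a \emph{conical} coproduct (circular), and the non-circular route -- extracting it from the naturality over $\Comm$ of a multimorphism in $\Cmon(\mM)$, which encodes compatibility of $\phi$ with the monoid structures $\mu_Z,\mu_Y$ -- is precisely the work you defer (``the main obstacle is the last step''). You also never treat the empty case, i.e.\ that the zero object of $\Cmon(\mM)$ is a conical initial object, which is part of preadditivity and is not covered by the sum-of-components argument.

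For comparison, the paper avoids this upgrade problem entirely: it embeds $\mM^\circledast$ into $\mP\B\Env(\mM)^\circledast$, which admits left and right cotensors and whose embedding preserves conical products (Corollary \ref{transco}); in a cotensored weakly bi-enriched $\infty$-category every colimit is automatically conical (Remark \ref{laulo}, Example \ref{cotenso}), so \cite{Gepner_2015} applied to the underlying $\infty$-category already gives that $\Cmon(\mP\B\Env(\mM))^\circledast$ is preadditive (using Remark \ref{fffoo} for cotensors on $\Cmon$), and then $\Cmon(\mM)^\circledast$ inherits preadditivity as a full weakly bi-enriched subcategory closed under finite products (Example \ref{prodo}). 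If you want to keep your direct approach, you must actually construct the $\Cmon(\mS)$-refinement of the multi-morphism functors from the $\Comm$-naturality of multimorphisms in $\Cmon(\mM)$ and carry out the biproduct argument (including the empty case); otherwise the reduction to the cotensored envelope is the cleaner fix.
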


\begin{proof}
There is a $\mV,\mW$-enriched embedding $\mM^\circledast \subset \mN^\circledast $ preserving finite conical products into a weakly bienriched $\infty$-category that admits left and right cotensors.
For example take $\mN^\circledast := \mP\B\Env(\mM)^\circledast.$
The latter embedding yields an enriched embedding $\Cmon(\mM)^\circledast \subset \Cmon(\mM)^\circledast$ that preserves finite conical products.
By \cite[Proposition 2.3.]{Gepner_2015} the $\infty$-categories $\Cmon(\mM), \Cmon(\mN)$ are preadditive.
%The full subcategry $\Cmon(\mM) \subset \Fun(\Comm,\mC)$ is closed under finite products.
Thus by Example \ref{prodo} and \ref{cotenso} we find that $\Cmon(\mM)^\circledast \to \mV^\ot \times \mW^\ot$ is preadditive if $\Cmon(\mN)^\circledast \to \mV^\ot \times \mW^\ot$ 
admits left and right cotensors.
%So we can assume that $\mM^\circledast \to \mV^\ot\times \mW^\ot$ is a weakly bienriched $\infty$-category that admits finite conical products and left and right cotensors.
This holds by Remark \ref{fffoo}.

\end{proof}

\begin{lemma}\label{ahois}Let $\mV^\ot \to \Ass, \mW^\ot \to \Ass$ be $\infty$-operads such that $\mV$ and $\mW$ are the empty category or preadditive or $\mV^\ot =\mW^\ot= \mS^\times$. Let $\mM^\circledast \to \mV^\ot\times \mW^\ot$ be a preadditive weakly bienriched $\infty$-category such that for every $\X \in \mM, \Y \in \mP\B\Env(\mM)$ and $\V_1,...,\V_\n \in \mV, \W_1,...,\W_\m \in \mW$ for $\n,\m \geq 0$ the functors $$ \Mul_{\mP\B\Env(\mM)}(\V_1,..,(-),..., \V_\n,\X,\W_1,..., \W_\m;\Y): \mV^\op \to \mS, $$$$ \Mul_{\mP\B\Env(\mM)}(\V_1,.., \V_\n,\X,\W_1,...,(-),..., \W_\m;\Y): \mW^\op \to \mS$$ preserve finite products.
The $\mV,\mW$-enriched forgetful functor $\Cmon(\mM)^\circledast \to \mM^\circledast$ is an equivalence.
\end{lemma}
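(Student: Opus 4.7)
The plan is to prove the evaluation $\mathrm{ev}_{\langle 1\rangle}: \Cmon(\mM)^\circledast \to \mM^\circledast$ is an equivalence of weakly bi-enriched $\infty$-categories by separately verifying two conditions: that the underlying functor $\Cmon(\mM) \to \mM$ is an equivalence of $\infty$-categories, and that for every $\X,\Y \in \Cmon(\mM)$ and every tuple $\V_1,\dots,\V_\n \in \mV$, $\W_1,\dots,\W_\m \in \mW$ the induced map
\[
\Mul_{\Cmon(\mM)}(\V_1,\dots,\V_\n,\X,\W_1,\dots,\W_\m;\Y) \to \Mul_\mM(\V_1,\dots,\V_\n,\X(\langle 1\rangle),\W_1,\dots,\W_\m;\Y(\langle 1\rangle))
\]
is an equivalence of spaces. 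Together these say that $\mathrm{ev}_{\langle 1\rangle}$ is a cartesian lift of the identity in $\omega\B\Enr$ over $(\mV^\ot,\mW^\ot)$, hence an equivalence in ${_\mV\omega\B\Enr}_\mW$, as in the cartesian-fibration characterization of Proposition~\ref{bica}.

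For the underlying equivalence, since $\mM$ is preadditive as a weakly bi-enriched $\infty$-category, its underlying $\infty$-category is preadditive by Remark~\ref{simo}, and \cite[Proposition 2.3.]{Gepner_2015} yields that $\Cmon(\mM)\to \mM$ is an equivalence: the inverse sends $\X \in \mM$ to the canonical commutative monoid $\tilde{\X}$ with $\tilde{\X}(\langle k\rangle) \simeq \X^k$, determined by the semi-additive structure on $\mM$. For the multi-morphism spaces, unravelling the pullback description of $(\mM^\Comm)^\circledast$ from Example~\ref{euuz} and using that objects of $\Cmon(\mM)^\circledast$ are full weakly bi-enriched subcategories of $(\mM^\Comm)^\circledast$, the space $\Mul_{\Cmon(\mM)}(\V_1,\dots,\V_\n,\X,\W_1,\dots,\W_\m;\Y)$ identifies with an end over $\Comm$ of the diagram $\langle k\rangle \mapsto \Mul_\mM(\V_1,\dots,\V_\n,\X(\langle k\rangle),\W_1,\dots,\W_\m;\Y(\langle k\rangle))$. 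Preadditivity of $\mM$ gives $\X(\langle k\rangle) \simeq \X_1^{\coprod k}$ and $\Y(\langle k\rangle)\simeq \Y_1^k$ where $\X_1=\X(\langle 1\rangle)$, $\Y_1=\Y(\langle 1\rangle)$; the conical nature of these coproducts turns $\Mul_\mM$ in the middle slot into a product indexed by $k$, and the conical nature of the products in the outer slot does the same for $\Y_1^k$. The resulting end computes the fixed points of a commutative-monoid structure on $\Mul_\mM(\V_1,\dots,\V_\n,\X_1,\W_1,\dots,\W_\m;\Y_1)$, which by another application of \cite[Proposition 2.3.]{Gepner_2015} in $\mS$ coincides with the underlying space.

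The main obstacle is the careful compatibility of this end computation with the bi-enrichment in the $\mV$ and $\mW$ slots, and this is precisely where the hypothesis on $\Mul_{\mP\B\Env(\mM)}$ preserving finite products in each $\mV$-slot and each $\mW$-slot becomes essential: via Proposition~\ref{morpre}, the embedding $\mM^\circledast \subset \mP\B\Env(\mM)^\circledast$ preserves multi-morphism objects, so the end computation can be carried out inside the presheaf category, where the product-preservation in the $\mV,\mW$ slots guarantees that the commutative-monoid structure on mapping objects extends consistently through the bi-enrichment. The case distinction on $\mV,\mW$ (empty, preadditive, or $\mS^\times$) ensures that product-preservation in these slots has the expected meaning, allowing the slot-by-slot reduction to the non-enriched Gepner-Groth-Nikolaus statement.
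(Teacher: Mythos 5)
Your route is genuinely different from the paper's: the paper never computes multimorphism spaces of $\Cmon(\mM)^\circledast$ directly, but instead embeds $\mM^\circledast$ into $\mN^\circledast:=\mP\B\Env(\mM)^\circledast$, uses the hypotheses on $\mV,\mW$ and on the $\Mul_{\mP\B\Env(\mM)}$-functors to show that the underlying $\infty$-category $\mP\B\Env(\mM)$ is preadditive (the jointly conservative, limit- and colimit-preserving functors $\Mul_{\mP\B\Env(\mM)}(\dots)$ land in $\Fun^{\prod}(\mV^\op,\mS)$, $\Fun^{\prod}(\mW^\op,\mS)$, which are preadditive by \cite[Corollary 2.4.]{Gepner_2015}), and then observes that $\Cmon(\mN)^\circledast \to \mN^\circledast$ preserves left and right cotensors (Remark \ref{fffoo}) and is an equivalence on underlying $\infty$-categories by \cite[Proposition 2.3.]{Gepner_2015}, hence an enriched equivalence, which pulls back to $\mM^\circledast$. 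Your frame (underlying equivalence plus equivalences on multimorphism spaces, via the characterization in Proposition \ref{bica}) is fine, and the objectwise reduction of $\Mul_\mM(\V_\bullet,\X(\langle k\rangle),\W_\bullet;\Y(\langle k'\rangle))$ to $S^{k\cdot k'}$, $S=\Mul_\mM(\V_\bullet,\X_1,\W_\bullet;\Y_1)$, using conical finite biproducts in $\mM$ is correct. But the step that actually finishes your computation is not justified: the end does not compute ``fixed points of a commutative-monoid structure,'' and \cite[Proposition 2.3.]{Gepner_2015} cannot be ``applied in $\mS$'' since $\mS$ is not preadditive. What is really needed is a Segal/pointed-Yoneda argument: one must identify the bifunctor $(\langle k\rangle,\langle k'\rangle)\mapsto \Mul_\mM(\V_\bullet,\X(\langle k\rangle),\W_\bullet;\Y(\langle k'\rangle))$ \emph{naturally in both variables} with $\Map_{\mS_\ast}(\langle k\rangle, B(\langle k'\rangle))$, where $B=\Mul_\mM(\V_\bullet,\X_1,\W_\bullet;\Y(-))$ is a special $\Gamma$-space pointed at the zero multimorphism, and then use that $\langle -\rangle\simeq \Comm(\langle1\rangle,-)$ is corepresented to collapse the end to $B(\langle1\rangle)$. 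Promoting your objectwise decompositions to such a natural identification is a genuine coherence problem that your proposal does not address; this is exactly the coherence the paper's cotensor-based argument is designed to avoid.

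The second gap is your last paragraph. You assert that the hypotheses that $\mV,\mW$ are empty, preadditive or $\mS$ and that the $\Mul_{\mP\B\Env(\mM)}$-functors preserve finite products in the $\mV$- and $\mW$-slots are ``precisely where'' your argument needs them, but no step of your computation uses them: the objects $\V_1,\dots,\V_\n,\W_1,\dots,\W_\m$ are held fixed throughout, so products in $\mV$ or $\mW$ never enter, and Proposition \ref{morpre} concerns left and right multi-morphism objects, which need not exist here and play no role in your end computation. So either your route really does bypass these hypotheses (in which case the claim that they are essential to your argument is false and the appeal to Proposition \ref{morpre} is a placeholder rather than a proof step), or there is a hidden use of them that you have not located. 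In the paper they are used concretely and essentially, namely to prove preadditivity of $\mP\B\Env(\mM)$; if you want to keep your direct computation, you should either carry out the naturality argument above without them or explain honestly where they enter.
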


\begin{proof}
We first prove that there is an enriched embedding $\mM^\circledast \subset \mN^\circledast $ preserving finite conical products into a preadditive weakly bienriched $\infty$-category that admits left and right cotensors. Let $\mN^\circledast := \mP\B\Env(\mM)^\circledast.$
The weakly bienriched $\infty$-category $\mP\B\Env(\mM)^\circledast \to \mP\Env(\mV)^\ot \times \mP\Env(\mW)^\ot$ is preadditive by Example \ref{cotenso} since
$\mP\B\Env(\mM)$ is preadditive what we prove in the following: the jointly conservative functors $$ \Mul_{\mP\B\Env(\mM)}(\V_1,..,(-),..., \V_\n,\X,\W_1,..., \W_\m;-): \mP\B\Env(\mM) \to \Fun^{\prod}(\mV^\op,\mS), $$$$ \Mul_{\mP\B\Env(\mM)}(\V_1,.., \V_\n,\X,\W_1,...,(-),..., \W_\m;-): \mP\B\Env(\mM) \to \Fun^{\prod}(\mW^\op,\mS)$$ for $\V_1,...,\V_\n \in \mV, \W_1,...,\W_\m \in \mW$ for $\n,\m \geq 0$ and $\X \in \mM$ preserve small limits and small colimits by \cite[Theorem 4.50.]{heine2024bienriched} and $\Fun^{\prod}(\mV^\op,\mS), \Fun^{\prod}(\mW^\op,\mS)$ are preadditive if $\mV,\mW$ are empty or preadditive by  \cite[Corollary 2.4.]{Gepner_2015}.
%the $\infty$-categories $\Fun(\mW^\op,\mV), \Fun(\mV^\op,\mW)$ are preadditive if $\mV,\mW$ are preadditive, respectively, and the full subcategories $\Fun^\R(\mW^\op,\mV) \subset \Fun(\mW^\op,\mV), \Fun^\R(\mV^\op,\mW) \subset \Fun(\mV^\op,\mW)$ spanned by the right adjoint functors are closed under finite products if $\mV,\mW$ admit finite products, respectively.
The $\mV,\mW$-enriched forgetful functor $\Cmon(\mM)^\circledast \to \mM^\circledast$ is the pullback of the $\mV,\mW$-enriched forgetful functor $\Cmon(\mN)^\circledast \to \mN^\circledast$ that preserves left and right cotensors by Remark \ref{fffoo}.
The latter is an equivalence since it preserves left and right cotensors
and induces an equivalence on underlying $\infty$-categories by \cite[Proposition 2.3.]{Gepner_2015}.

\end{proof}

\begin{proposition}\label{rigz} Let $\mV^\ot \to \Ass, \mW^\ot \to \Ass$ be $\infty$-operads such that $\mV,\mW$ are empty or preadditive or $\mV^\ot =\mW^\ot= \mS^\times$. Let $\mM^\circledast \to \mV^\ot\times \mW^\ot$ be a preadditive weakly bienriched $\infty$-category such that for every $\X \in \mM, \Y \in \mP\B\Env(\mM)$ and $\V_1,...,\V_\n \in \mV, \W_1,...,\W_\m \in \mW$ for $\n,\m \geq 0$ the functors $$ \Mul_{\mP\B\Env(\mM)}(\V_1,..,(-),..., \V_\n,\X,\W_1,..., \W_\m;\Y): \mV^\op \to \mS, $$$$ \Mul_{\mP\B\Env(\mM)}(\V_1,.., \V_\n,\X,\W_1,...,(-),..., \W_\m;\Y): \mW^\op \to \mS$$ preserve finite products and $\mN^\circledast \to \mV^\ot\times \mW^\ot$ a weakly bienriched $\infty$-category that admits finite conical products. The following induced functor is an equivalence: $$\theta_\mN: \Enr\Fun^{\prod}_{\mV,\mW}(\mM,\Cmon(\mN)) \to \Enr\Fun^{\prod}_{\mV,\mW}(\mM,\mN).$$
	
\end{proposition}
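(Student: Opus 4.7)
The plan is to reinterpret $\theta_\mN$ as an instance of the forgetful functor $\Cmon(\mC) \to \mC$ for a suitably identified preadditive $\infty$-category $\mC$, and then conclude via the non-enriched case of the $\Cmon$-equivalence, namely \cite[Proposition 2.3.]{Gepner_2015} (which also underlies Lemma \ref{ahois}). In this way the enriched statement reduces to its non-enriched prototype.

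First, the $\Cat_\infty$-enriched hom equivalence of Remark \ref{2-cat} gives
\begin{equation*}
\Enr\Fun_{\mV,\mW}(\mM, \mN^\Comm) \simeq \Fun(\Comm, \Enr\Fun_{\mV,\mW}(\mM, \mN)).
\end{equation*}
By Theorem \ref{Enric}, finite conical products in $\mN^\Comm$ and in $\Enr\Fun_{\mV,\mW}(\mM, \mN)$ are both formed objectwise in $\mN$. Consequently a $\mV,\mW$-enriched functor $\F: \mM \to \mN^\Comm$ preserves finite conical products and lands in $\Cmon(\mN)$ if and only if the corresponding diagram $\Comm \to \Enr\Fun_{\mV,\mW}(\mM, \mN)$ factors through $\Enr\Fun^\prod_{\mV,\mW}(\mM, \mN)$ and defines a commutative monoid object there. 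This yields a canonical equivalence
\begin{equation*}
\Enr\Fun^\prod_{\mV,\mW}(\mM, \Cmon(\mN)) \simeq \Cmon(\Enr\Fun^\prod_{\mV,\mW}(\mM, \mN)),
\end{equation*}
under which $\theta_\mN$ corresponds to the forgetful functor evaluating at $\langle 1 \rangle$.

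The remaining step is to show that $\mC := \Enr\Fun^\prod_{\mV,\mW}(\mM, \mN)$ is preadditive, since then \cite[Proposition 2.3.]{Gepner_2015} completes the proof. The $\infty$-category $\mC$ admits finite products, again computed objectwise in $\mN$ by Theorem \ref{Enric}. The preadditive structure on $\mM$ transfers by precomposition: for each $\F \in \mC$, composition with the fold $\mM \times \mM \to \mM$ (which agrees with the binary coproduct since $\mM$ is preadditive) and with the zero object of $\mM$ equips $\F$ with a canonical grouplike commutative monoid structure in $\mC$, exhibiting $\mC$ as preadditive in the sense of \cite{Gepner_2015}. The main delicate point is the identification in the previous paragraph: verifying carefully that under the enriched hom equivalence both product preservation and the commutative-monoid condition are detected objectwise. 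This hinges on the hypotheses that $\mV, \mW$ be preadditive (or empty, or $\mS$) together with the product-preservation assumption on the multi-morphism functors of $\mP\B\Env(\mM)$, which jointly ensure that the relevant finite products in the enriched setting coincide with their pointwise counterparts, exactly as in the proof of Lemma \ref{ahois}.
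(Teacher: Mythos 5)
Your reduction in the first two paragraphs (identifying $\Enr\Fun^{\prod}_{\mV,\mW}(\mM,\Cmon(\mN))$ with $\Cmon(\Enr\Fun^{\prod}_{\mV,\mW}(\mM,\mN))$ so that $\theta_\mN$ becomes the forgetful functor, using that finite conical products in $\mN^\Comm$, $\Cmon(\mN)$ and the enriched functor category are all formed objectwise) is sound, modulo citing the limit-dual of Theorem \ref{Enric}. The genuine gap is the final step: you assert that $\mC:=\Enr\Fun^{\prod}_{\mV,\mW}(\mM,\mN)$ is preadditive because precomposition with the fold map and the zero object of $\mM$ "equips each $\F$ with a canonical (grouplike) commutative monoid structure, exhibiting $\mC$ as preadditive". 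First, "grouplike" should be deleted: preadditivity of $\mM$ only yields $\bE_\infty$-monoid structures, not grouplike ones. More seriously, producing a commutative monoid structure on every object of $\mC$ does not exhibit $\mC$ as preadditive: \cite[Proposition 2.3]{Gepner_2015} requires the forgetful functor $\Cmon(\mC)\to\mC$ to be an \emph{equivalence} (equivalently, a zero object and biproducts), and mere essential surjectivity of this forgetful functor does not imply preadditivity (pointed sets, for instance, admit monoid structures on every object without being preadditive). The uniqueness and coherence of these monoid structures is exactly the content you still have to prove; indeed, by your own identification in step 2, preadditivity of $\mC$ is \emph{equivalent} to the proposition, so as written the argument is circular unless you supply an independent proof (for instance, producing the coherent lift $\mM\simeq\Cmon(\mM)\to\Cmon(\mN)$ from Lemma \ref{ahois} and then still establishing full faithfulness, which is the delicate part).

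For comparison, the paper sidesteps the need to know that $\mC$ is preadditive: it first replaces $\mN$ by $\mN^\K$ (using $\Cmon(\mN^\K)\simeq\Cmon(\mN)^\K$ and $\theta_\mN^\K\simeq\theta_{\mN^\K}$) to reduce the equivalence to a bijection on equivalence classes, and then uses Lemma \ref{ahois} ($\Cmon(\mM)^\circledast\simeq\mM^\circledast$, which is where the hypotheses on $\mV,\mW$ and on the multimorphism functors of $\mP\B\Env(\mM)$ enter) together with functoriality of $\Cmon$ to get surjectivity and injectivity on $\pi_0$. If you want to keep your route, you would need a genuinely independent argument that $\mC$ has a zero object and biproducts in the enriched-objectwise sense, which amounts to the same coherence work and should be spelled out rather than asserted.
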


\begin{proof}

For every $\infty$-category $\K$ there is a $\mV,\mW $-enriched equivalence
$\Cmon(\mN^\K)^\circledast \simeq (\Cmon(\mN)^\K)^\circledast$ and the functor
$\theta_\mN^\K $ identifies with the functor $\theta_{\mN^\K}.$
Consequently, it is enough to prove that $\theta_\mN$ is a bijection on equivalence classes.
Since $\mM^\circledast \to \mV^\ot \times \mW^\ot$ is preadditive, by Lemma \ref{ahois}
the $\mV,\mW$-enriched forgetful functor $\Cmon(\mM)^\circledast \to \mM^\circledast$ is an equivalence. The functor $\theta_\mN$ is essentially surjective since every $\mV,\mW$-enriched functor
$\mM^\circledast \to \mN^\circledast$ is the image under $\theta_\mN$ of the $\mV,\mW$-enriched functor $\mM^\circledast \simeq \Cmon(\mM)^\circledast \to \Cmon(\mN)^\circledast$.
The functor $\theta_\mN$ is essentially injective: every $\mV,\mW$-enriched functor
$\phi: \mM^\circledast \to \Cmon(\mN)^\circledast$ factors as 
$\mM^\circledast \simeq \Cmon(\mM)^\circledast \xrightarrow{\Cmon(\theta_\mN(\phi))} \Cmon(\Cmon(\mN))^\circledast \to \Cmon(\mN)^\circledast$.
%Thus $\theta_\mN$ is an equivalence if $\mM^\circledast \to \mV^\ot$ admits cotensors.	
		
\end{proof}

%\begin{notation}Let $\mV^\ot \to \Ass$ be a monoidal $\infty$-category.

%\begin{enumerate}\item Let $^{\coprod}_\mV\L\Enr := {_\mV\L\Enr(\Fin)}$ be the subcategory of left $\mV$-enriched $\infty$-categories that admit finite conical coproducts and left $\mV$-enriched functors preserving finite conical coproducts.\item Let $_\mV\mathrm{Preadd} \subset {^{\coprod}_\mV\L\Enr}$ be the full subcategory of preadditive left $\mV$-enriched $\infty$-categories.\end{enumerate}\end{notation}

Let $\Fin$ be the category of finite sets.
\begin{notation}
Let $\mV^\ot \to \Ass, \mW^\ot \to \Ass$ be $\infty$-operads.
Let $${^{\coprod}_\mV\omega\B\Enr_\mW}:= {_\mV\omega\B\Enr_\mW(\Fin)},\ {^{\coprod}_\mV\B\Enr_\mW}:= {_\mV\B\Enr_\mW(\Fin)}.$$

\end{notation}

\begin{notation}
Let $\mV^\ot \to \Ass, \mW^\ot \to \Ass$ be $\infty$-operads.
Let $$_\mV\mathrm{Preadd}_\mW \subset {^{\coprod}_\mV\B\Enr_\mW}$$ be the full subcategory of preadditive $\mV,\mW$-bienriched $\infty$-categories. For $\mV^\ot =\mW^\ot= \mS^\times$ we drop $\mV,\mW$ from the notation.

\end{notation}

By definition $^{\coprod}_\mV\omega\B\Enr_\mW, {^{\coprod}_\mV\B\Enr_\mW}$ are the subcategories of (weakly) $\mV,\mW$-bienriched $\infty$-categories that admit finite conical coproducts and $\mV,\mW$-enriched functors preserving finite conical coproducts.

%By definition $^{\coprod}_\mV\L\Enr := {_\mV\L\Enr(\Fin)}$ is the subcategory of left $\mV$-enriched $\infty$-categories that admit finite conical coproducts and left $\mV$-enriched functors preserving finite conical coproducts.

%\begin{corollary}\label{ihis}Let $\mV^\ot \to \Ass$ be a presentably monoidal $\infty$-category such that $\mV$ is preadditive.The embedding $_\mV\mathrm{Preadd} \subset {^{\coprod}_\mV\L\Enr}$ admits a right adjointthat sends $\mM^\circledast \to \mV^\ot $ to $\Cmon(\mM)^\circledast \to \mV^\ot.$\end{corollary}

\begin{proposition}\label{Pread}
Let $\mV^\ot \to \Ass, \mW^\ot \to \Ass$ be $\infty$-operads.
Then ${^{\coprod}_\mV\omega\B\Enr_\mW}, {^{\coprod}_\mV\B\Enr_\mW}, {_\mV\mathrm{Preadd}_\mW} $ are preadditive.
%Since $_\mV\mathrm{Preadd}_\mW \subset {^{\coprod}_\mV\L\Enr_\mW}$ is closed under finite products, also $_\mV\mathrm{Preadd}_\mW$ is preadditive.

\end{proposition}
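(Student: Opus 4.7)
The plan is to establish preadditivity for each of the three $\infty$-categories $\mC \in \{{^{\coprod}_\mV\omega\B\Enr_\mW},\ {^{\coprod}_\mV\B\Enr_\mW},\ {_\mV\mathrm{Preadd}_\mW}\}$ by verifying two things separately: that $\mC$ admits finite products (and a zero object), and that the forgetful functor $\Cmon(\mC) \to \mC$ is an equivalence. Preadditivity then follows from the Gepner--Groth--Nikolaus criterion \cite[Proposition 2.3.]{Gepner_2015}.

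First I would check that each of the three $\infty$-categories admits finite products, computed object-wise from $\omega\B\Enr$. The forgetful functor $\omega\B\Enr \to \Op_\infty \times \Op_\infty$ preserves small limits, and products of weakly bi-enriched $\infty$-categories admitting finite conical coproducts again admit such (formed component-wise), with enriched functors preserving finite conical coproducts being stable under taking products. The analogous statement for bi-enriched $\infty$-categories is immediate from the definition of bi-enrichment, and the full subcategory ${_\mV\mathrm{Preadd}_\mW} \subset {^{\coprod}_\mV\B\Enr_\mW}$ is closed under finite products by Remark \ref{simo} applied component-wise. The zero object is the trivial weakly bi-enriched $\infty$-category $*_{\mV,\mW}^\circledast$ (which is automatically preadditive).

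Second, and this is the key step, I would show that each object $\mM$ of $\mC$ carries a canonical commutative monoid structure internal to $\mC$, assembling into an inverse $\mC \to \Cmon(\mC)$ of the forgetful functor. Since $\mM$ admits finite conical coproducts, the $n$-fold coproduct functors $\mM^{\times n} \to \mM$ are $\mV,\mW$-enriched functors (this uses Example \ref{euuz} to view $\mM^{\times n}$ as a weakly bi-enriched $\infty$-category and the fact that conical colimits are weighted colimits by the trivial weight) that preserve finite conical coproducts and preserve preadditivity in the third case; hence they are morphisms in $\mC$. Together with the enriched functors from $*_{\mV,\mW}^\circledast$ picking out the zero object of $\mM$, these make $\mM$ into a commutative monoid in $\mC$. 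Functoriality in $\mM$ follows because any morphism in $\mC$ by definition preserves the relevant weighted colimits, so it commutes with the addition and zero maps.

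The main obstacle will be turning the object-wise assignment $\mM \mapsto (\mM, +, 0)$ into a genuine functor $\mC \to \Cmon(\mC)$ in a coherent way. The cleanest route is to observe that the tautological object of $\Cmon(\mC)$ sitting over any $\mM \in \mC$ is characterized by a universal property realized by the coproduct functors, and to invoke Lemma \ref{ahois} in a larger universe applied to $\mC$ itself (viewed as weakly bi-enriched trivially over $\emptyset,\emptyset$) to identify $\Cmon(\mC)$ with $\mC$. Once $\Cmon(\mC) \simeq \mC$ is established, combined with the existence of finite products from the first step, preadditivity of $\mC$ is immediate from \cite[Proposition 2.3.]{Gepner_2015}.
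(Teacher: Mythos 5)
Your overall strategy (finite products plus a commutative-monoid structure on every object) is in the right spirit, but the execution has two genuine problems.

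First, the zero object is misidentified. The trivial weakly bi-enriched $\infty$-category $*_{\mV,\mW}^\circledast$ has \emph{empty} multi-morphism spaces $\Mul(\V_1,\dots,\V_\n,\ast,\W_1,\dots,\W_\m;\ast)$ whenever $\n+\m>0$, so its unique object is neither conically initial nor conically final; in particular $*_{\mV,\mW}^\circledast$ is not preadditive and in general does not even lie in ${^{\coprod}_\mV\omega\B\Enr_\mW}$. The final object of ${_\mV\omega\B\Enr_\mW}$ is instead the identity $\mV^\ot\times\mW^\ot\to\mV^\ot\times\mW^\ot$, whose multi-morphism spaces are all contractible, and the point of the argument is to show that this final object is also initial in the $\coprod$-subcategory (it is, because a finite-conical-coproduct-preserving enriched functor out of it must pick a conically initial object, and the space of such objects is contractible).

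Second, and more seriously, the key step is circular. You want to conclude preadditivity of $\mC$ from $\Cmon(\mC)\simeq\mC$ via \cite[Proposition 2.3.]{Gepner_2015}, and you propose to establish that equivalence by applying Lemma \ref{ahois} (in a larger universe) to $\mC$ viewed as weakly bi-enriched over $\emptyset,\emptyset$. But the hypothesis of Lemma \ref{ahois} is precisely that its input is \emph{preadditive}, which is what you are trying to prove; over $\emptyset,\emptyset$ that hypothesis is literally ``$\mC$ is preadditive''. The coherence problem you flag (turning the object-wise assignment $\mM\mapsto(\mM,+,0)$ into a functor to $\Cmon(\mC)$) is exactly the hard part, and this route does not resolve it. The paper sidesteps the coherence issue entirely: after showing that finite products in ${^{\coprod}_\mV\omega\B\Enr_\mW}$ are fiber products over $\mV^\ot\times\mW^\ot$ and that the final object is initial, it applies Lurie's criterion \cite[Proposition 2.4.3.19.]{lurie.higheralgebra}, for which it suffices to exhibit, for each object $\mM$, the enriched left adjoints $\mu:\mM^\circledast\times_{(\mV^\ot\times\mW^\ot)}\mM^\circledast\to\mM^\circledast$ of the diagonal and $\alpha:\mV^\ot\times\mW^\ot\to\mM^\circledast$ of the terminal functor (these are automatically morphisms of the $\coprod$-subcategory, being enriched left adjoints) and to check the unit and compatibility conditions, which hold by adjointness; the cases of ${^{\coprod}_\mV\B\Enr_\mW}$ and ${_\mV\mathrm{Preadd}_\mW}$ then follow as full subcategories closed under finite products. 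If you want to keep your outline, you would need to replace the appeal to Lemma \ref{ahois} by such a direct verification of a semiadditivity criterion that does not presuppose preadditivity.
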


\begin{proof}
The $\infty$-category $_\mV\omega\B\Enr_\mW$ admits finite products: for every small weakly bienriched $\infty$-categories $\mM^\circledast \to \mV^\ot \times \mW^\ot, \mN^\circledast \to \mV^\ot\times \mW^\ot$
the weakly bienriched $\infty$-category $\mM^\circledast \times_{(\mV^\ot \times \mW^\ot)} \mN^\circledast \to \mV^\ot \times \mW^\ot$ is the product in $_\mV\omega\B\Enr_\mW$, and $\id: \mV^\ot\times \mW^\ot \to \mV^\ot\times \mW^\ot$ is the final object.
By description of the morphism objects of a product a $\mV,\mW$-enriched functor to a product preserves finite conical coproducts if it components do. Thus the subcategory ${^{\coprod}_\mV\omega\B\Enr_\mW}$ admits finite products and the inclusion ${^{\coprod}_\mV\omega\B\Enr_\mW} \subset {_\mV\omega\B\Enr_\mW}$ preserves finite products.	
Moreover the final object in ${^{\coprod}_\mV\omega\B\Enr_\mW} $ is an initial object. Let $\mM^\circledast \to \mV^\ot\times \mW^\ot$ be a weakly bienriched $\infty$-category that admits finite conical coproducts. Then the $\mV,\mW$-enriched diagonal functor $\mM^\circledast \to \mM^\circledast \times_{(\mV^\ot \times \mW^\ot)} \mM^\circledast$ admits a $\mV,\mW$-enriched left adjoint $\mu: \mM^\circledast \times_{(\mV^\ot \times \mW^\ot)} \mM^\circledast \to \mM^\circledast$ assigning the coproduct.
Moreover the unique $\mV,\mW$-enriched functor $\mM^\circledast \to \mV^\ot\times \mW^\ot$ admits a $\mV,\mW$-enriched left adjoint $\alpha: \mV^\ot \times \mW^\ot \to \mM^\circledast$ assigning the initial object.
We apply \cite[Proposition 2.4.3.19.]{lurie.higheralgebra} and have to verify that 
the composition $\mM^\circledast \xrightarrow{\mM^\circledast \times_{(\mV^\ot \times \mW^\ot)} \alpha} \mM^\circledast \times_{(\mV^\ot \times \mW^\ot)} \mM^\circledast \xrightarrow{\mu} \mM^\circledast$ is the identity
and the composition $ \mM^\circledast \times_{(\mV^\ot \times \mW^\ot)} \mN^\circledast \times_{(\mV^\ot \times \mW^\ot)} \mM^\circledast \times_{(\mV^\ot \times \mW^\ot)} \mN^\circledast \simeq \mM^\circledast \times_{(\mV^\ot \times \mW^\ot)} \mM^\circledast \times_{(\mV^\ot \times \mW^\ot)} \mN^\circledast \times_{(\mV^\ot \times \mW^\ot)} \mN^\circledast \xrightarrow{\mu \times_{(\mV^\ot \times \mW^\ot)}\mu} \mM^\circledast\times_{(\mV^\ot \times \mW^\ot)} \mN^\circledast$ is $\mu.$ This holds by adjointness.
So ${^{\coprod}_\mV\omega\B\Enr_\mW}$ is preadditive. The full subcategories ${_\mV\mathrm{Preadd}_\mW}, {^{\coprod}_\mV\B\Enr_\mW}$ are closed under finite products and so preadditive, too.
	
\end{proof}

\begin{lemma}\label{gtr}
Let $\mD$ be a presentable $\infty$-category and  $\mC \subset \mD $ a full subcategory such that the embedding $\iota: \mC \subset \mD $ admits a right adjoint $\G: \mD \to \mC. $ 
The following are equivalent:
	
\begin{enumerate}
		
\item The functor $\iota: \mC \subset \mD $ admits a left adjoint 
$\F: \mD \to \mC  $ such that the composition $\iota \circ \F: \mD \to \mC \subset \mD $ is an accessible functor.
%In this case $\mC$ is presentable. 
		
\item The full subcategory $\mC $ is closed in $\mD$ under small limits and $\G: \mD \to \mC  $ is an accessible functor.

\item The functor $ \iota \circ \G : \mD \to \mC \subset \mD $ is accessible and preserves small limits.
		
\end{enumerate}

In particular, if (1) or (2) holds, $\mC$ is presentable.
	
\end{lemma}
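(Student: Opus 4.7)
The plan is to prove $(2) \Leftrightarrow (3)$ directly, obtain $(1) \Rightarrow (3)$ by composing the adjunctions, and reduce $(3) \Rightarrow (1)$ to the adjoint functor theorem after first verifying that $\mC$ is presentable; the last assertion of the lemma will be contained in this verification.

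For $(2) \Leftrightarrow (3)$, I will use that since $\iota$ is fully faithful with right adjoint $\G$, the unit $\G \iota \simeq \id_\mC$ is an equivalence, so $\iota \G$ is an idempotent endofunctor of $\mD$ with essential image $\bar{\mC}$. Equivalence of ``$\iota \G$ preserves small limits'' with ``$\bar{\mC}$ is closed in $\mD$ under small limits'' unfolds routinely: one direction tests $\iota \G$ on diagrams of the form $\iota \circ F$ to put $\lim_\mD \iota F$ into $\bar{\mC}$; the other uses $\mD(c, \iota \G x) \simeq \mD(c, x)$ for $c \in \bar{\mC}$, together with Yoneda in $\bar{\mC}$, to identify $\iota \G \lim H$ with $\lim \iota \G H$. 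Accessibility of $\iota \G$ is equivalent to accessibility of $\G$, since $\iota$ is automatically accessible as a left adjoint and $\G \simeq \G \iota \G$.

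For $(1) \Rightarrow (3)$, composing $\F \dashv \iota \dashv \G$ gives $\iota \F \dashv \iota \G$, so $\iota \G$ preserves all small limits as a right adjoint. Accessibility follows from the standard fact that the right adjoint of a colimit-preserving accessible endofunctor of a presentable $\infty$-category is accessible: after enlarging $\kappa$, the left adjoint $\iota \F$ sends $\mD^\kappa$ into $\mD^\kappa$, whence its right adjoint $\iota \G$ preserves $\kappa$-filtered colimits.

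The central step is $(3) \Rightarrow (1)$. Assuming (3), the subcategory $\mC$ is complete (since by $(2) \Leftrightarrow (3)$ the image $\bar{\mC}$ is closed in $\mD$ under small limits), cocomplete (with colimits computed as $\G(\colim_\mD \iota(-))$, using that $\iota$ is a left adjoint), and accessible: picking $\kappa$ large enough that $\mD$ is $\kappa$-presentable, that $\G$ preserves $\kappa$-filtered colimits, and that $\iota \G$ preserves $\kappa$-compact objects, the set $\G(\mD^\kappa)$ consists of $\kappa$-compact objects of $\mC$ and generates $\mC$ under $\kappa$-filtered colimits. Thus $\mC$ is presentable. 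The functor $\iota: \mC \to \mD$ then preserves small limits (by (3)) and is accessible (being a left adjoint), so the adjoint functor theorem produces a left adjoint $\F: \mD \to \mC$; the composite $\iota \F$ is accessible as a composition of accessible functors. The main obstacle is precisely the presentability of $\mC$, which reduces to the standard enlargement-of-$\kappa$ technique aligning the coreflector $\G$ with the $\kappa$-compact objects of $\mD$, and the final ``in particular'' of the lemma is then contained in this step.
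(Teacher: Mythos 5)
Your proposal is correct, but at the decisive implication it takes a genuinely different route from the paper. The paper proves $(2)\Rightarrow(1)$ by applying the adjoint functor theorem not to $\iota$ but to the idempotent endofunctor $\R=\iota\circ\G$ of $\mD$ (which is accessible and limit-preserving by (2)), obtaining $\T\dashv\R$, and then using the triangle identities to show that $\R\circ\varepsilon$ is an equivalence, whence each $\T(\X)$ is colocal and $\T$ factors through $\mC$ as the desired left adjoint $\F$; presentability of $\mC$ is then deduced afterwards from (1) ($\mC$ is an accessible localization of $\mD$), and $(1)\Rightarrow(2)$ is proved directly rather than your $(1)\Rightarrow(3)$. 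You instead first establish presentability of $\mC$ by hand -- completeness from closure under limits, cocompleteness from coreflectivity, and accessibility by enlarging $\kappa$ until $\iota\circ\G$ preserves $\kappa$-compact objects so that $\G(\mD^\kappa)$ gives $\kappa$-compact generators -- and only then apply the adjoint functor theorem to $\iota:\mC\to\mD$. The paper's argument is shorter and sidesteps all accessibility bookkeeping, getting presentability of $\mC$ for free once (1) is known; yours makes the ``in particular'' clause self-contained and explicit, at the price of invoking the uniformization fact that an accessible endofunctor of a presentable $\infty$-category preserves $\kappa$-compact objects for arbitrarily large $\kappa$. Two of your justifications are terser than they should be, though not wrong: the transfer of accessibility from $\iota\circ\G$ to $\G$ (the identity $\G\simeq\G\iota\G$ alone looks circular; what one actually uses is that the essential image of $\iota$ is closed under colimits, so that $\iota$-images compute and reflect the relevant filtered colimits) and the colimit formula $\G(\colim_\mD\iota(-))$, which likewise rests on coreflective subcategories being closed under colimits rather than on adjunction alone.
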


\begin{proof}
The $\infty$-category $\mC $ admits small limits and small colimits being a colocalization of an $\infty$-category that admits small limits and small colimits.
Moreover small limits in $\mC$ are coreflected via $\G$ from small limits in $\mD$.
So (2) and (3) are equivalent.

If (1) holds, %$\iota: \mC \subset \mD $ preserves small limits so that 
$\mC $ is closed in $\mD$ under small limits. 
If (1) holds, $\mC \subset \mD $ is an accessible localization so that 
$\mC$ is presentable. 
Hence $\G: \mD \to \mC  $ is an accessible functor as a right adjoint functor between presentable $\infty$-categories. 
	
Assume that (2) holds. The composition $\R:= \iota \circ \G : \mD \to \mC \subset \mD $ is an accessible functor that preserves small limits.  
Since $\mD$ is presentable, $\R : \mD \to \mD $ admits a left adjoint $ \T: \mD \to \mD $ by the adjoint functor theorem \cite[Corollary 5.5.2.9]{lurie.HTT}. 
Let $ \varepsilon : \R = \iota \circ \G  \to  \id_\mD $ be the counit. 
Because $\iota: \mC \subset \mD $ is fully faithful, the unit $\id_\mC \to \G \circ \iota $ is an equivalence so that the composition 
$  \G \circ \epsilon :   \G \circ \iota \circ \G \to  \G $ and thus 
$ \R \circ \varepsilon : \R  \circ \R   \to  \R  $
are equivalences by the triangle identities. 
	
Let $ \X, \Y $ be objects of $\mD. $  The map $ \mD(\T(\X), \R(\Y)) \to \mD(\T(\X), \Y) $ induced by the counit $ \varepsilon(\Y) : \R(\Y) \to \Y $ is equivalent to the map $$ \mD(\T(\X), \R(\Y)) \simeq \mD(\X, \R  \circ \R(\Y )) \to \mD(\X, \R(\Y) ) \simeq \mD(\T(\X), \Y) $$ induced by the counit 
$ \R(\varepsilon(\Y)) : \R  \circ \R(\Y)  \to  \R(\Y) $ and thus is an equivalence.
Thus $\T(\X) $ is a colocal object of $\mD$ and so belongs to $\mC. $
So $ \T: \mD \to \mD $ factors as $ \F: \mD \to \mC \subset \mD. $
Since $ \T: \mD \to \mD $  is left adjoint to $ \R: \mD \to \mD,  $ the functor 
$ \F: \mD \to \mC $ is left adjoint to $ \iota \simeq \R \circ \iota: \mC \to \mD.$  
	
\end{proof}

\begin{proposition}\label{ihis}Let $\mV^\ot \to \Ass, \mW^\ot \to \Ass$ be $\infty$-operads such that $\mV,\mW$ are empty or preadditive or $\mV^\ot =\mW^\ot= \mS^\times$.
The embeddings $_\mV\mathrm{Preadd}_\mW \subset {^{\coprod}_\mV\B\Enr_\mW}, {_\mV\mathrm{Preadd}^\wedge_\mW} \subset {{^{\coprod}_\mV\B\Enr^\wedge_\mW}}$ admit a left and right adjoint.
	
\end{proposition}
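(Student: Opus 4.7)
The plan is to construct the right adjoint explicitly from the commutative monoid construction via dualization, and then deduce the existence of the left adjoint formally from Lemma~\ref{gtr}.

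First I would construct the right adjoint. Given $\mN^\circledast \to \mV^\ot \times \mW^\ot$ in ${^{\coprod}_\mV\B\Enr_\mW}$, the involution of Proposition~\ref{oppoen} produces $\mN^{\op\,\circledast} \to \mW^\ot \times \mV^\ot$, which admits finite conical products by (the dual of) Remark~\ref{simo}. Applying the commutative monoid construction fiberwise yields a preadditive bi-enriched $\infty$-category $\Cmon(\mN^\op)^\circledast \to \mW^\ot \times \mV^\ot$, and taking opposites once more gives
$$\R(\mN) \;:=\; \Cmon(\mN^\op)^\op \;\in\; {_\mV\mathrm{Preadd}_\mW},$$
equipped with a $\mV,\mW$-enriched functor $\R(\mN)\to\mN$ coming from the forgetful $\Cmon(\mN^\op)\to\mN^\op$. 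The assumption that $\mV,\mW$ are empty, preadditive, or $\mS^\times$ guarantees the technical product-preservation hypotheses of Proposition~\ref{rigz} (the multi-morphism presheaves on $\mV^\op,\mW^\op$ preserve finite products because the Day-convolution tensor product on $\mP\Env(\mV),\mP\Env(\mW)$ preserves finite coproducts in each variable under these hypotheses). Combining Proposition~\ref{rigz} (applied with the roles of $\mV,\mW$ swapped) with the hom equivalences furnished by Proposition~\ref{oppoen}, I would obtain for every $\mM\in{_\mV\mathrm{Preadd}_\mW}$ a natural chain of equivalences
$$\Enr\Fun^{\coprod}_{\mV,\mW}(\mM,\R(\mN))\simeq\Enr\Fun^{\prod}_{\mW,\mV}(\mM^\op,\Cmon(\mN^\op))\simeq\Enr\Fun^{\prod}_{\mW,\mV}(\mM^\op,\mN^\op)\simeq\Enr\Fun^{\coprod}_{\mV,\mW}(\mM,\mN),$$
which exhibits $\R$ as right adjoint to the inclusion $\iota$; that $\iota$ is fully faithful is then witnessed by Lemma~\ref{ahois}, which forces $\R\circ\iota\simeq\id$ on preadditive objects.

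Next I would invoke Lemma~\ref{gtr} with $\iota: {_\mV\mathrm{Preadd}_\mW}\subset{^{\coprod}_\mV\B\Enr_\mW}$ and the right adjoint $\R$ just produced, in order to construct the left adjoint. Two conditions must be checked. First, that ${_\mV\mathrm{Preadd}_\mW}$ is closed in ${^{\coprod}_\mV\B\Enr_\mW}$ under small limits: small limits in ${^{\coprod}_\mV\B\Enr_\mW}$ are those in $_\mV\omega\B\Enr_\mW$ restricted to the appropriate subcategory, and at the level of underlying $\infty$-categories they are ordinary limits; the preadditivity conditions (existence of a zero object and the comparison maps $\X\coprod\Y\to\X\times\Y$ being equivalences) are manifestly limit-closed. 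Second, that $\iota\circ\R$ is accessible: since $\Cmon(\mN)$ is cut out of the functor $\infty$-category $\mN^{\Comm}$ by product-preservation conditions, and the involution $(-)^\op$ is an equivalence, the construction is accessible as a functor between the presentable $\infty$-categories provided by Corollary~\ref{presenta}. Lemma~\ref{gtr} then yields the desired left adjoint to $\iota$.

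For the Cauchy-complete variant $_\mV\mathrm{Preadd}^\wedge_\mW\subset{^{\coprod}_\mV\B\Enr^\wedge_\mW}$ I would run the same argument in the Cauchy-complete setting. The right adjoint is obtained either by checking directly that $\R$ preserves Cauchy-completeness (noting that absolute weights are stable under the opposite involution and under the inclusion of commutative monoid objects, so that $\Cmon(\mN^\op)^\op$ is Cauchy-complete whenever $\mN$ is) or by composing $\R$ with Cauchy completion using the universal property of Proposition~\ref{Univop}. The left adjoint is then again produced by Lemma~\ref{gtr}, using presentability of ${^{\coprod}_\mV\B\Enr^\wedge_\mW}$ and limit-closedness of Cauchy-completeness together with preadditivity. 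The main obstacle will be the accessibility verification and the compatibility of the $\Cmon_{\coprod}$-construction with Cauchy completion; everything else is a formal consequence of Proposition~\ref{rigz}, Proposition~\ref{oppoen}, and Lemma~\ref{gtr}.
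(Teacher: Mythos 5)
Your proposal is correct and follows essentially the same route as the paper: the right adjoint is the commutative-monoid construction furnished by Proposition \ref{rigz}, the left adjoint is then deduced from Lemma \ref{gtr} using presentability (Corollary \ref{presenta}) and the fact that the composite endofunctor preserves small limits and filtered colimits, and the Cauchy-complete case is handled by observing that $\Cmon(-)$ preserves Cauchy-completeness (via Proposition \ref{Enric} and closure under absolute colimits). The only divergence is that you make the coproduct/product variance explicit by passing to $\Cmon(\mN^\op)^\op$ — which the paper's terser proof leaves implicit when citing Proposition \ref{rigz} — at the mild extra cost of needing that Cauchy-completeness and the hypotheses of Proposition \ref{rigz} are stable under the opposite involution.
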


\begin{proof}
By Proposition \ref{rigz} the embedding $_\mV\mathrm{Preadd}_\mW \subset {^{\coprod}_\mV\B\Enr_\mW}$ admits a right adjoint $\G$ such that ${^{\coprod}_\mV\B\Enr_\mW} \xrightarrow{\G} {_\mV\mathrm{Preadd}_\mW} \subset {^{\coprod}_\mV\B\Enr_\mW}$ preserves small limits and filtered colimits. By Corollary \ref{presenta} the $\infty$-category ${^{\coprod}_\mV\B\Enr_\mW}$ is presentable. We apply Lemma \ref{gtr} to deduce the existence of a left adjoint of the embedding $_\mV\mathrm{Preadd}_\mW \subset {^{\coprod}_\mV\B\Enr_\mW}$.
By the same proposition the embedding $_\mV\mathrm{Preadd}^\wedge_\mW \subset {^{\coprod}_\mV\B\Enr^\wedge_\mW}$ admits a right adjoint $\G$ such that ${^{\coprod}_\mV\B\Enr^\wedge_\mW} \xrightarrow{\G} {_\mV\mathrm{Preadd}^\wedge_\mW} \subset {^{\coprod}_\mV\B\Enr^\wedge_\mW}$ preserves small limits and filtered colimits
since for every Cauchy-complete bienriched $\infty$-category $ \mM^\circledast \to \mV^\ot \times \mW^\ot$ the bienriched $\infty$-category $ (\mM^{\Comm})^\circledast \to \mV^\ot \times \mW^\ot$ is Cauchy-complete (Proposition \ref{Enric}) and the full bienriched subcategory  $\Cmon(\mM)^\circledast \to \mV^\ot \times \mW^\ot $ of the latter is closed under abosulte colimits and so Cauchy-complete, too.
By Corollary \ref{presenta} the $\infty$-category ${^{\coprod}_\mV\B\Enr^\wedge_\mW}$ is presentable. We apply Lemma \ref{gtr} to deduce the existence of a left adjoint of the embedding $_\mV\mathrm{Preadd}^\wedge_\mW \subset {^{\coprod}_\mV\B\Enr^\wedge_\mW}$.

\end{proof}

\begin{notation}
Let $\mV^\boxtimes \to \bE_{\n+1}$ be a presentably $\bE_{\n+1}$-monoidal $\infty$-category corresponding to an $\bE_\n$-algebra in $\Pr\Mon.$
	
\begin{enumerate}
\item Let $^{\coprod}_\mV\L\Enr^\ot := {_\mV\L\Enr(\Fin)^\ot}.$

\item Let $^{\coprod}_\mV\L\Enr^{\wedge,\ot} := {_\mV\L\Enr(\Fin \cup \{ \mathrm{absolute} \ \mV\mathrm{-enriched} \ \mathrm{weights}\})^\ot}.$
		
\item Let $_\mV\mathrm{Preadd}^\ot \subset {^{\coprod}_\mV\L\Enr^\ot}$ be the full suboperad spanned by the preadditive left $\mV$-enriched $\infty$-categories.

\item Let $_\mV\mathrm{Preadd}^{\wedge,\ot} \subset {^{\coprod}_\mV\L\Enr^{\wedge,\ot}}$ be the full suboperad spanned by the preadditive left $\mV$-enriched $\infty$-categories.

\item For $\mV^\ot =\mS^\times$ we drop $\mV$ from the notation.
		
\end{enumerate}
	
\end{notation}

%By definition $^{\coprod}_\mV\L\Enr := {_\mV\L\Enr(\Fin)}$ is the subcategory of left $\mV$-enriched $\infty$-categories that admit finite conical coproducts and left $\mV$-enriched functors preserving finite conical coproducts.

\begin{proposition}\label{ihist}
Let $\n \geq 0$ and $\mV^\boxtimes \to \bE_{\n+1}$ a preadditive presentably $\bE_{\n+1}$-monoidal $\infty$-category corresponding to an $\bE_\n$-algebra in $\Pr\Mon$
or $\mV^\ot = \mS^\times$.

\begin{enumerate}
\item The embedding $_\mV\mathrm{Preadd}^\ot \subset {^{\coprod}_\mV\L\Enr^\ot}$ admits a left adjoint relative to $\bE_\n$ and the embedding $_\mV\mathrm{Preadd} \subset {^{\coprod}_\mV\L\Enr}$ admits a right adjoint.

\item The embedding $_\mV\mathrm{Preadd}^{\wedge,\ot} \subset {^{\coprod}_\mV\L\Enr^{\wedge,\ot}}$ admits a left adjoint relative to $\bE_\n$ and the embedding $_\mV\mathrm{Preadd}^\wedge \subset {^{\coprod}_\mV\L\Enr^\wedge}$ admits a right adjoint.
\end{enumerate}

\end{proposition}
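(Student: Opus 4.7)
The right adjoints asserted in both (1) and (2) are exactly the ones produced in Proposition \ref{ihis}, so the only genuinely new content is upgrading the left adjoints from Proposition \ref{ihis} to adjunctions relative to $\bE_\n$. Following the template of Theorem \ref{thqaz} (3) and Proposition \ref{preee}, I would reduce this to verifying that the accessible localizations $_\mV\mathrm{Preadd} \subset {^{\coprod}_\mV\L\Enr}$ and $_\mV\mathrm{Preadd}^\wedge \subset {^{\coprod}_\mV\L\Enr^\wedge}$ are compatible with the presentably $\bE_\n$-monoidal structures produced by Theorem \ref{thqaz}; equivalently, that the class of local equivalences is stable under the tensor product.

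Since both ambient $\bE_\n$-monoidal structures are closed by Corollary \ref{Qa}, it suffices by a standard monoidal localization principle to verify that $_\mV\mathrm{Preadd}$ (resp.\ $_\mV\mathrm{Preadd}^\wedge$) is closed under the internal hom. By the description of the closed structure on $_\mV\L\Enr(\mH)$ provided by Theorem \ref{Thhs} (and its Cauchy-complete specialization), for $\mM, \mN \in {^{\coprod}_\mV\L\Enr}$ the internal hom identifies with the full left $\mV$-enriched subcategory of $\Enr\Fun_{\mV, \emptyset}(\mM, \mN)^\circledast$ spanned by the functors preserving finite conical coproducts. Applying Remark \ref{preadd} to a preadditive target $\mN$, the weakly left enriched $\infty$-category $\Enr\Fun_{\mV, \emptyset}(\mM, \mN)^\circledast$ is preadditive; the full sub-enriched category of coproduct-preserving functors is closed under the pointwise finite products (which provide conical products by Theorem \ref{Enric}), and the canonical map $\F \amalg \G \to \F \times \G$ is pointwise an equivalence because preadditivity holds pointwise. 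This gives (1).

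For (2) the argument runs in parallel, the additional input being that $\Enr\Fun_{\mV, \emptyset}(\mM, \mN)^\circledast$ is Cauchy-complete whenever $\mN$ is, which follows from Theorem \ref{Enric} since absolute weighted colimits are formed pointwise, together with the characterization of Cauchy-completeness via preservation of absolute weights in Proposition \ref{Chara}. Restricting to functors that preserve both finite conical coproducts and absolute weighted colimits produces the internal hom of $^{\coprod}_\mV\L\Enr^\wedge$, and the same pointwise argument as above shows this full subcategory is again preadditive and Cauchy-complete.

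The main obstacle is justifying that the internal hom for the closed presentably $\bE_\n$-monoidal structures on $^{\coprod}_\mV\L\Enr$ and $^{\coprod}_\mV\L\Enr^\wedge$ indeed admits the above description as a full sub-enriched category of $\Enr\Fun_{\mV, \emptyset}(\mM, \mN)^\circledast$ determined by pointwise preservation of the relevant weighted colimits; this amounts to unwinding the construction of the closed structure in Corollary \ref{Qa} through Theorem \ref{Thhs}, and ensuring that pointwise verifications of preadditivity and Cauchy-completeness transfer through the identification. Once that identification is in place, the rest of the proof is bookkeeping via Remark \ref{preadd} and Theorem \ref{Enric}.
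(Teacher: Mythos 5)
Your proposal is correct and follows essentially the same route as the paper: reduce via Proposition \ref{ihis} and the closedness of the ambient $\bE_\n$-monoidal structures (Corollary \ref{Qa}) to showing that the internal hom into a preadditive target is preadditive, which is then read off from the description of the internal hom in Theorem \ref{Thhs} together with Remark \ref{preadd}. The extra bookkeeping you supply (closure of the coproduct-preserving functors under the pointwise products via Theorem \ref{Enric}, and Cauchy-completeness in case (2)) is exactly the detail the paper leaves implicit, so no genuinely different argument is involved.
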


\begin{proof}
	
In view of Proposition \ref{ihis} it is enough to see that for every (Cauchy-complete) left $\mV$-enriched $\infty$-categories $\mM^\circledast \to \mV^\ot, \mN^\circledast \to \mV^\ot$ having finite conical coproducts the internal hom from $\mM^\circledast \to \mV^\ot$ to $ \mN^\circledast \to \mV^\ot$ in ${^{\coprod}_\mV\L\Enr}, {^{\coprod}_\mV\L\Enr^\wedge}$, respectively, is preadditive if $ \mN^\circledast \to \mV^\ot$ is preadditive.
This follows from the description of the internal hom (Theorem \ref{Thhs}) and Remark \ref{preadd}.
	
\end{proof}

%Since ${^{\coprod}_\mV\L\Enr^\ot} \to \Comm$ is presentably symmetric monoidal, Proposition \ref{ihis} implies the following corollary:

\begin{corollary}
Let $\n \geq 0$ and $\mV^\boxtimes \to \bE_{\n+1}$ a preadditive presentably $\bE_{\n+1}$-monoidal $\infty$-category corresponding to an $\bE_\n$-algebra in $\Pr\Mon$ or $\mV^\ot= \mS^\times$. Then $_\mV\mathrm{Preadd}^\ot \to \bE_\n, {_\mV\mathrm{Preadd}^{\wedge,\ot}} \to \bE_\n$ are preadditive presentably $\bE_\n$-monoidal $\infty$-categories. %$_\mV\mathrm{Preadd}, {_\mV\mathrm{Preadd}^\wedge} $ are preadditive.

\end{corollary}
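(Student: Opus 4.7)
The plan is to derive the $\bE_\n$-monoidal structures from Theorem \ref{thqaz}(3) combined with Proposition \ref{ihist}, and to deduce preadditivity of the underlying $\infty$-categories from Proposition \ref{Pread} together with closure arguments under finite products.

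For the $\bE_\n$-monoidal structures, I would first unpack the notation: by definition $^{\coprod}_\mV\L\Enr^\ot = {_\mV\L\Enr(\Fin)^\ot}$ and $^{\coprod}_\mV\L\Enr^{\wedge,\ot} = {_\mV\L\Enr(\Fin \cup \{\mathrm{absolute\ weights}\})^\ot}$, both of the form $_\mV\L\Enr(\mH)^\ot$ for a set $\mH$ of small left $\mV$-enriched weights. Under the hypothesis that $\mV^\boxtimes \to \bE_{\n+1}$ is a presentably $\bE_{\n+1}$-monoidal $\infty$-category corresponding to an $\bE_\n$-algebra in $\Pr\Mon$, Theorem \ref{thqaz}(3) equips both with presentably $\bE_\n$-monoidal structures. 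Proposition \ref{ihist} then states that the embeddings $_\mV\mathrm{Preadd}^\ot \subset {^{\coprod}_\mV\L\Enr^\ot}$ and $_\mV\mathrm{Preadd}^{\wedge,\ot} \subset {^{\coprod}_\mV\L\Enr^{\wedge,\ot}}$ admit left adjoints relative to $\bE_\n$, so the two targets are $\bE_\n$-monoidal localizations of presentably $\bE_\n$-monoidal $\infty$-categories and therefore inherit presentably $\bE_\n$-monoidal structures (as in the proof pattern of Theorem \ref{thqaz}, where $\bE_\n$-monoidal relative left adjoints transport presentability).

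For preadditivity of the underlying $\infty$-categories, I would handle the two cases separately. For $_\mV\mathrm{Preadd}$ this is immediate from Proposition \ref{Pread} applied with $\mW = \emptyset$, since the hypotheses on $\mV$ in the corollary match those allowed in that proposition. For $_\mV\mathrm{Preadd}^\wedge$, I would argue in stages. First, $^{\coprod}_\mV\L\Enr = {^{\coprod}_\mV\B\Enr_\emptyset}$ is preadditive by Proposition \ref{Pread}. Next, the full subcategory $^{\coprod}_\mV\L\Enr^\wedge \subset {^{\coprod}_\mV\L\Enr}$ of Cauchy-complete objects is closed under finite products: a product $\mM \times \mN$ of Cauchy-complete left $\mV$-enriched $\infty$-categories is Cauchy-complete because absolute weighted colimits in the product are computed componentwise, by (the left-enriched specialization of) Theorem \ref{Enric}. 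Since in a preadditive $\infty$-category finite products coincide with finite coproducts, this product also serves as the coproduct in $^{\coprod}_\mV\L\Enr^\wedge$ (by the reflective Cauchy-completion adjoint, which when applied to an object already in the subcategory returns it unchanged), so $^{\coprod}_\mV\L\Enr^\wedge$ is preadditive. Finally, $_\mV\mathrm{Preadd}^\wedge \subset {^{\coprod}_\mV\L\Enr^\wedge}$ is reflective by Proposition \ref{ihist}(2) and closed under finite products (preadditivity is preserved by finite products), so the same pattern gives preadditivity of $_\mV\mathrm{Preadd}^\wedge$.

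The main obstacle I anticipate is making the interplay between Cauchy-completeness, preadditivity, and finite products fully rigorous: specifically, that finite biproducts of preadditive Cauchy-complete objects \emph{remain} Cauchy-complete when formed either in $^{\coprod}_\mV\L\Enr$ or in $_\mV\mathrm{Preadd}$, so that the reflective inclusions assemble compatibly. The key technical input will be the characterization of absolute weights (Proposition \ref{Chara}) together with the componentwise formula for absolute weighted colimits in products of enriched $\infty$-categories. An alternative route, should the product closure require more care, would be to invoke that finite biproducts in preadditive enriched $\infty$-categories are themselves given by absolute weights, so that Cauchy-completion automatically preserves preadditivity — this would yield the reflective adjoint $_\mV\mathrm{Preadd}\to{_\mV\mathrm{Preadd}^\wedge}$ directly via restriction of Cauchy-completion and would bypass the delicate interplay with the full subcategory $^{\coprod}_\mV\L\Enr^\wedge$.
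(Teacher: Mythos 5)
Your proposal is correct and follows essentially the same route as the paper: Theorem \ref{thqaz} gives the presentably $\bE_\n$-monoidal structures on ${^{\coprod}_\mV\L\Enr^\ot}$ and ${^{\coprod}_\mV\L\Enr^{\wedge,\ot}}$, Proposition \ref{ihist} provides the relative left adjoints that transfer them to the preadditive subcategories, and preadditivity comes from Proposition \ref{Pread} plus closure of the relevant full subcategories under finite products. Your detour through ${^{\coprod}_\mV\L\Enr^\wedge}$ rather than through ${_\mV\mathrm{Preadd}}$ for the Cauchy-complete case rests on the same componentwise-absolute-colimits observation the paper uses, so it is only a cosmetic variation.
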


\begin{proof}
	
This follows from Proposition \ref{ihis} since ${^{\coprod}_\mV\L\Enr^\ot} \to \bE_\n,{^{\coprod}_\mV\L\Enr^{\wedge,\ot}} \to \bE_\n $ are presentably $\bE_\n$-monoidal $\infty$-categories by Theorem \ref{thqaz}, which are preadditive by Proposition \ref{Pread}
and the fact that $_\mV\mathrm{Preadd}^\wedge$ is closed in $_\mV\mathrm{Preadd}$ under finite products.
	
\end{proof}

\begin{definition}\label{iter} Let $\mV^\boxtimes \to \Comm$ be a preadditive presentably symmetric monoidal $\infty$-category or $\mV^\ot = \mS^\times$. Let $\n \geq 1.$
Let $$ {_\mV\mathrm{Preadd}(1)^\ot}:=  {_\mV\mathrm{Preadd}^\ot},\ {_\mV\mathrm{Preadd}(\n+1)^\ot}:= {_{{_\mV\mathrm{Preadd}(\n)} }\mathrm{Preadd}^\ot},$$
$$ {_\mV\mathrm{Preadd}^\wedge(1)^{\ot}}:=  {_\mV\mathrm{Preadd}^{\wedge,\ot}},\ {_\mV\mathrm{Preadd}^\wedge(\n+1)^\ot}:= {_{{_\mV\mathrm{Preadd^\wedge}(\n)} }\mathrm{Preadd}^{\wedge,\ot}}.$$

For $\mV^\ot =\mS^\times$ we drop $\mV$ from the notation and call $$ {\mathrm{Preadd}(\n)}:={_\mS\mathrm{Preadd}(\n)}$$ the $\infty$-category of $\n$-preadditive $\infty$-categories and $$ {\mathrm{Preadd}^\wedge(\n)}:={_\mS\mathrm{Preadd}^\wedge(\n)}$$ the $\infty$-category of Cauchy-complete $\n$-preadditive $\infty$-categories.
		
\end{definition}

\begin{corollary}\label{kiop} Let $\mV^\boxtimes \to \Comm$ be a preadditive presentably symmetric monoidal $\infty$-category or $\mV^\ot = \mS^\times$ and $\n \geq 1.$
Then ${_\mV\mathrm{Preadd}(\n)^\ot} \to \Comm, {_\mV\mathrm{Preadd}^\wedge(\n)^\ot} \to \Comm $ are preadditive presentably symmetric monoidal $\infty$-categories.
In particular, ${_\mV\mathrm{Preadd}(\n)}$ is enriched in itself and so belongs to
${_\mV\widehat{\mathrm{Preadd}}(\n+1)}$ and ${_\mV\mathrm{Preadd}^\wedge(\n)}$ is enriched in itself and so belongs to ${_\mV\widehat{\mathrm{Preadd}}^\wedge(\n+1)}.$

\end{corollary}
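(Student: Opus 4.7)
The plan is to argue by induction on $\n \geq 1$, using the preceding corollary (which promotes the $\bE_\n$-monoidal results of Proposition \ref{ihist} to the presentably $\bE_\n$-monoidal level) as both the base case and the engine of the induction.

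For the base case $\n=1$, since $\mV^\boxtimes \to \Comm$ is a presentably symmetric monoidal $\infty$-category, it is in particular a presentably $\bE_{\infty}$-monoidal $\infty$-category, hence corresponds to an $\bE_\infty$-algebra in $\Pr\Mon$. The preceding corollary (applied with $\n$ replaced by $\infty$) then gives that ${_\mV\mathrm{Preadd}(1)^\ot} = {_\mV\mathrm{Preadd}^\ot} \to \Comm$ and ${_\mV\mathrm{Preadd}^\wedge(1)^\ot} = {_\mV\mathrm{Preadd}^{\wedge,\ot}} \to \Comm$ are preadditive presentably symmetric monoidal $\infty$-categories. The case $\mV^\ot = \mS^\times$ is included in exactly the same way.

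For the inductive step, suppose that ${_\mV\mathrm{Preadd}(\n)^\ot} \to \Comm$ is a preadditive presentably symmetric monoidal $\infty$-category. Then ${_\mV\mathrm{Preadd}(\n)}$ itself satisfies the hypotheses placed on $\mV$ in the preceding corollary (with $\n$ replaced by $\infty$), so that corollary applies to yield that
$$ {_\mV\mathrm{Preadd}(\n+1)^\ot} = {_{{_\mV\mathrm{Preadd}(\n)}}\mathrm{Preadd}^\ot} \to \Comm $$
is a preadditive presentably symmetric monoidal $\infty$-category, closing the induction. The Cauchy-complete version is handled identically using ${_\mV\mathrm{Preadd}^\wedge(\n)}$ in place of ${_\mV\mathrm{Preadd}(\n)}$ and invoking the second half of the preceding corollary.

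For the ``in particular'' part, I would observe that a preadditive presentably symmetric monoidal $\infty$-category is in particular a presentably $\bE_2$-monoidal $\infty$-category, so by Corollary \ref{Qa} (closedness of the monoidal structure on $_\mV\L\Enr(\mH)$) the symmetric monoidal structure on ${_\mV\mathrm{Preadd}(\n)}$ is closed. A closed symmetric monoidal $\infty$-category is canonically enriched in itself via its internal hom, and this self-enrichment lands in the full subcategory ${_\mV\mathrm{Preadd}(\n)} \subset {_\mV\widehat{\mathrm{Preadd}}(\n)}$ (and respects finite conical coproducts/products, i.e.\ is preadditive), so ${_\mV\mathrm{Preadd}(\n)} \in {_\mV\widehat{\mathrm{Preadd}}(\n+1)}$; the Cauchy-complete case is analogous. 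The only point that requires any care is a size check: the self-enrichment of ${_\mV\mathrm{Preadd}(\n)}$ is a priori a large enriched $\infty$-category, which is why the conclusion is formulated in ${_\mV\widehat{\mathrm{Preadd}}(\n+1)}$ rather than in ${_\mV\mathrm{Preadd}(\n+1)}$, and this accounts for the ``hat'' in the statement.
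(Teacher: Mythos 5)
Your proof is correct and follows exactly the route the paper intends: the corollary is obtained by induction on $\n$ from the immediately preceding corollary (the symmetric, $\bE_\infty$, instance of Proposition \ref{ihist} combined with Theorem \ref{thqaz} and Proposition \ref{Pread}), with the ``in particular'' clause coming from the self-enrichment of a closed symmetric monoidal $\infty$-category together with the size remark explaining the hat. The only cosmetic point is that the appeal to Corollary \ref{Qa} is superfluous, since ``presentably symmetric monoidal'' already entails closedness by Definition \ref{compost}, which the first part of the statement has just established.
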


%\begin{corollary}The $(\infty,\n+1)$-category ${\mathrm{Preadd}(\n)}$ is $\n+1$-preadditive.The $(\infty,\n+1)$-category ${\mathrm{Preadd}^\wedge(\n)} $ is Cauchy-complete $\n+1$-preadditive\end{corollary}

%\begin{proof}By Corollary \ref{kiop} and Remark \ref{simo} the $(\infty,\n+1)$-categories ${\mathrm{Preadd}(\n)}, {\mathrm{Preadd}^\wedge(\n)} $ are preadditive and Cauchy-complete.By Corollary \ref{kiop} the $\infty$-categories ${\mathrm{Preadd}(\n)}, {\mathrm{Preadd}^\wedge(\n)} $ are enriched in themselves, which gives the canonical enrichment, and so are (Cauchy-complete) $\n+1$-preadditive.\end{proof}

\begin{notation}
Let $\Add^{\wedge} \subset \Add \subset \Cat_\infty \simeq {_\mS \L\Enr}$ be the subcategories of (idempotent complete) additive $\infty$-categories and finite coproducts preserving functors.
	
\end{notation}

\begin{remark}
	
The $\infty$-categories $\Add^{\wedge}, \Add $ are preadditive because they are full subcategories closed under finite products of the $\infty$-category ${_\mS \L\Enr}(\{\mathrm{finite}\ \mathrm{sets}\})$
of small $\infty$-categories having finite coproducts and functors preserving finite coproducts, which is a preadditive $\infty$-category.
\end{remark}

\begin{notation}
Let $(\Add^\wedge)^\ot \subset \Add^\ot \subset {_\mS \L\Enr}(\{\mathrm{finite}\ \mathrm{sets}\})^\ot$ be the full symmetric suboperads spanned by the (idempotent complete) additive $\infty$-categories. %So we can apply Definition \ref{iter}:
	
\end{notation}

By \cite[Lemma 8.15.]{HEINE2023108941} the embeddings $(\Add^\wedge)^\ot \subset \Add^\ot \subset {_\mS \L\Enr}(\{\mathrm{finite}\ \mathrm{sets}\})^\ot$ admit a left adjoint relative to $\Comm$ so that the symmetric $\infty$-operads $(\Add^\wedge)^\ot \to \Comm, \Add^\ot \to \Comm$ are presentably symmetric monoidal $\infty$-categories.
%\begin{definition}Let $\n \geq 2$.The $\infty$-category of $\n$-preadditive $\infty$-categories is $$ {\mathrm{Preadd}(\n)}:={_{\mathrm{Preadd}}\mathrm{Preadd}(\n-1)}.$The $\infty$-category of Cauchy-complete $\n$-preadditive $\infty$-categories is $$ {\mathrm{Preadd}^\wedge(\n)}:={_{\mathrm{Preadd}^\wedge}\mathrm{Preadd}^\wedge(\n-1)}.$$\end{definition}
So we can make the following definition:

\begin{definition}Let $\n \geq 2$.
The $\infty$-category of $\n$-additive $\infty$-categories is 	
$$ \Add(\n):={_{\Add}\mathrm{Preadd}(\n-1)}.$$
The $\infty$-category of Cauchy-complete $\n$-additive $\infty$-categories is 	
$$ \Add(\n)^{\wedge}:={_{\Add^\wedge}\mathrm{Preadd}^\wedge(\n-1)}.$$
Moreover we set $$\Add(1):=\Add, \Add^\wedge(1):=\Add^\wedge.$$
	
\end{definition}

For $\mV=\Add, \Add^\wedge$ Corollary \ref{kiop} gives the following corollary:

\begin{corollary}\label{jlmmozzi} Let $\n \geq 1$.
Then $ \Add(\n)$ is $\n+1$-additive and $\Add(\n)^{\wedge}$ is Cauchy-complete $\n+1$-additive.
	
\end{corollary}

\begin{remark}
Let $\n \geq 1$. By induction the inclusions $ \Add \subset \Cat_\infty, \Add^\wedge \subset \Cat_\infty$ give rise to inclusions
\begin{equation}\label{ijjj}
\Add(\n), \Add(\n)^\wedge \subset \Cat_{(\infty,\n)}
\end{equation}	
so that (Cauchy-complete) $\n$-additive $\infty$-categories are $(\infty,\n)$-categories.
\end{remark}

Next we define $\n$-stable $\infty$-categories.

\begin{notation}
Let $\Rex \subset \Cat_\infty$ be the full subcategory spanned by the finite $\infty$-categories.
\end{notation}

\begin{notation}
Let $\St^\wedge \subset \St \subset {_\mS \L\Enr}(\Rex) \subset \Cat_\infty$ be the full subcategories of (idempotent complete) stable $\infty$-categories. % and finite colimits preserving functors.
	
\end{notation}

\begin{remark}
	
The $\infty$-categories $\St, \St^\wedge $ are preadditive because they are full subcategories closed under finite products of the $\infty$-category ${_\mS \L\Enr}(\Rex)$
of small $\infty$-categories having finite colimits and functors preserving finite colimits, which is a preadditive $\infty$-category.
\end{remark}

By Theorem \ref{Thor} there is a symmetric monoidal $\infty$-category ${_\Sp\L\Enr}(\Rex)^\ot \to \Comm$.

\begin{notation}
Let $(\St^\wedge)^\ot \subset \St^\ot \subset {_\mS \L\Enr}(\Rex)^\ot$ be the full symmetric suboperads spanned by the (idempotent complete) stable $\infty$-categories. %So we can apply Definition \ref{iter}:
	
\end{notation}

By \cite[Lemma 8.15.]{HEINE2023108941} the embeddings $(\St^\wedge)^\ot \subset \St^\ot \subset {_\mS \L\Enr}(\Rex)^\ot$ admit a left adjoint relative to $\Comm$ so that the
symmetric $\infty$-operads $\St^\ot \to \Comm, (\St^\wedge)^\ot\to \Comm$ are presentably symmetric monoidal $\infty$-categories.
%\begin{definition}Let $\n \geq 2$.The $\infty$-category of $\n$-preadditive $\infty$-categories is $$ {\mathrm{Preadd}(\n)}:={_{\mathrm{Preadd}}\mathrm{Preadd}(\n-1)}.$The $\infty$-category of Cauchy-complete $\n$-preadditive $\infty$-categories is $$ {\mathrm{Preadd}^\wedge(\n)}:={_{\mathrm{Preadd}^\wedge}\mathrm{Preadd}^\wedge(\n-1)}.$$\end{definition}
The next definition for $\n=2$ is due to \cite{mazelgee2021universal}:

\begin{definition}Let $\n \geq 2$.
The $\infty$-category of $\n$-stable $\infty$-categories is 	
$$ \St(\n):={_\St\mathrm{Preadd}(\n-1)}.$$
The $\infty$-category of Cauchy-complete $\n$-stable $\infty$-categories is 	
$$ \St^\wedge(\n):={_{\St^\wedge}\mathrm{Preadd}^\wedge(\n-1)}.$$
Moreover we set $$\St(1):=\St, \St^\wedge(1):=\St^\wedge.$$
	
\end{definition}

For $\mV= \St, \St^\wedge$ Corollary \ref{kiop} gives the following corollary:

\begin{corollary}\label{jlmmozz} Let $\n \geq 1$.
Then $\St(\n)$ is $\n+1$-stable and $\St^\wedge(\n)$ is Cauchy-complete $\n+1$-stable.
	
\end{corollary}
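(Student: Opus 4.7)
My plan is to prove the corollary by induction on $\n \geq 1$ using Corollary \ref{kiop}. The key rewriting, from Definition \ref{iter}, is that for $\n \geq 2$
$$\Cat^\ex_{(\infty,\n)} = {_{\Cat^\ex_\infty}\mathrm{Preadd}(\n-1)} = {_{{_{\Cat^\ex_\infty}\mathrm{Preadd}(\n-2)}}\mathrm{Preadd}} = {_{\Cat^\ex_{(\infty,\n-1)}}\mathrm{Preadd}},$$
with the convention $\Cat^\ex_{(\infty,1)} := \Cat^\ex_\infty$. This presentation exhibits each level as enriched, by construction, in the previous one, which is precisely what the inductive hypothesis on stability must feed.

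For the base case $\n=1$ I would note that $\Cat^\ex_\infty$ is preadditive and, by \cite[Lemma 8.15.]{HEINE2023108941}, carries a closed presentably symmetric monoidal structure, hence is enriched in itself. As $\Cat^\ex_\infty$ is tautologically $1$-stable, this presents it as a preadditive $(\infty,2)$-category enriched in $1$-stable $(\infty,1)$-categories, i.e.\ as $2$-stable. For the inductive step $\n \geq 2$, assuming that $\Cat^\ex_{(\infty,\n-1)}$ is $\n$-stable, I apply Corollary \ref{kiop} with $\mV = \Cat^\ex_\infty$ (preadditive presentably symmetric monoidal by the cited lemma) to conclude that $\Cat^\ex_{(\infty,\n)} = {_{\Cat^\ex_\infty}\mathrm{Preadd}(\n-1)}$ is preadditive presentably symmetric monoidal. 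By the rewriting above it is also, by construction, enriched in $\Cat^\ex_{(\infty,\n-1)}$, which the inductive hypothesis certifies to be $\n$-stable; combined with preadditivity, this yields exactly the defining data of $(\n+1)$-stability.

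The Cauchy-complete statement follows in the same way: I would replace $\mathrm{Preadd}$ by $\mathrm{Preadd}^\wedge$ throughout, use the presentably symmetric monoidal structure on $\Cat^{\ex,\wedge}_\infty$ in the base, and invoke the $\wedge$-variant of Corollary \ref{kiop} in the inductive step to carry the Cauchy-completeness along. I do not anticipate any serious obstacle: the substantive content has already been packaged into Corollary \ref{kiop}, and the remaining work is purely bookkeeping --- unwinding the iterated definition of ${_{\Cat^\ex_\infty}\mathrm{Preadd}(\n-1)}$ so that the natural enrichment base at level $\n$ is visibly $\Cat^\ex_{(\infty,\n-1)}$, whose $\n$-stability is supplied by the induction.
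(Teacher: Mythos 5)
Your base case is fine once one reads ``tautologically $1$-stable'' as ``the objects of $\Cat^{\ex}_\infty$ are precisely the $1$-stable $(\infty,1)$-categories'' (the $\infty$-category $\Cat^{\ex}_\infty$ itself is of course not stable), but your inductive step has a genuine gap, and it sits exactly where you declare the rest to be bookkeeping. By the paper's definition, both the informal one in the introduction and the formal one $\Cat^{\ex}_{(\infty,\n+1)}={_{\Cat^{\ex}_\infty}\mathrm{Preadd}(\n)}={_{\Cat^{\ex}_{(\infty,\n)}}\mathrm{Preadd}}$, for $\Cat^{\ex}_{(\infty,\n)}$ to be $(\n+1)$-stable it must be a preadditive $(\infty,\n+1)$-category enriched in the symmetric monoidal $\infty$-category $\Cat^{\ex}_{(\infty,\n)}$ of $\n$-stable $(\infty,\n)$-categories, i.e.\ enriched in itself. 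Your step instead asserts that $\Cat^{\ex}_{(\infty,\n)}$ is ``by construction'' enriched in $\Cat^{\ex}_{(\infty,\n-1)}$ and that the $\n$-stability of this single base object (the inductive hypothesis) upgrades this to $(\n+1)$-stability. That fails on two counts. First, the rewriting ${_{\Cat^{\ex}_\infty}\mathrm{Preadd}(\n-1)}={_{\Cat^{\ex}_{(\infty,\n-1)}}\mathrm{Preadd}}$ says that the \emph{objects} of $\Cat^{\ex}_{(\infty,\n)}$ are enriched in $\Cat^{\ex}_{(\infty,\n-1)}$, not that the $\infty$-category of them is; its morphism objects are $\Cat^{\ex}_{(\infty,\n-1)}$-enriched functor categories, hence live in $\Cat^{\ex}_{(\infty,\n)}$ rather than in $\Cat^{\ex}_{(\infty,\n-1)}$. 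Second, even if such an enrichment were granted, preadditivity together with enrichment in $(\n-1)$-stable $(\infty,\n-1)$-categories is by definition $\n$-stability, one categorical level short of the claim; your induction never produces the enrichment in $\n$-stable $(\infty,\n)$-categories that the conclusion requires.

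What closes the gap is the self-enrichment coming from closedness of the symmetric monoidal structure, and this is exactly what Corollary \ref{kiop} (via Theorem \ref{thqaz} and Corollary \ref{Qa}) already records: applying it with $\mV=\Cat^{\ex}_\infty$, respectively $\mV=\Cat^{\ex,\wedge}_\infty$, the $\infty$-category ${_\mV\mathrm{Preadd}(\n)}$, respectively ${_\mV\mathrm{Preadd}^\wedge(\n)}$, is preadditive presentably symmetric monoidal, hence enriched in itself, hence an object of ${_\mV\widehat{\mathrm{Preadd}}(\n+1)}$, respectively ${_\mV\widehat{\mathrm{Preadd}}^\wedge(\n+1)}$, which is precisely the $(\n+1)$-stability, respectively Cauchy-complete $(\n+1)$-stability, asserted for $\Cat^{\ex}_{(\infty,\n)}$ and $\Cat^{\ex,\wedge}_{(\infty,\n)}$; the bottom case $\Cat^{\ex}_\infty$ is handled by the same ``enriched in itself'' reasoning using preadditivity and the closed symmetric monoidal structure from \cite[Lemma 8.15.]{HEINE2023108941}, which is essentially your base case. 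In particular no induction on $\n$ is needed beyond what Corollary \ref{kiop} already packages; specializing $\mV$ there is the paper's entire proof, and your argument is only correct once the enrichment base in the inductive step is replaced by $\Cat^{\ex}_{(\infty,\n)}$ itself.
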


%\begin{proof}We prove the case of $\Cat_{(\infty,\n)}^\ex.$ The other case is similar.By ... the $(\infty,\n+1)$-category $\Cat_{(\infty,\n)}^\ex$ is preadditive.So we need to see that for every $\mB, \mC,\mD \in \Cat_{(\infty,\n)}^\ex$ the morphism $(\infty,\n)$-category $\Mor_{\Cat_{(\infty,\n)}^\ex}(\mC,\mD)$ is $\n$-stableand the composition $\Mor_{\Cat_{(\infty,\n)}^\ex}(\mC,\mD) \ot \Mor_{\Cat_{(\infty,\n)}^\ex}(\mB,\mC) \to \Mor_{\Cat_{(\infty,\n)}^\ex}(\mB,\mD)$is a morphism in $\Cat_{(\infty,\n)}^\ex.$ We prove the first, the proof of the latter is similar. To see the first, we need to see that for every $\F,\G,\rH \in \Theta:=\Mor_{\Cat_{(\infty,\n)}^\ex}(\mC,\mD)$ the morphism object $\Mor_{\Theta}(\F,\G)$is $\n-1$-stable and the composition $\Mor_{\Theta}(\G,\rH) \ot \Mor_{\Theta}(\F,\G) \to \Mor_{\Theta}(\F,\rH)$ is a morphism in $\Cat_{(\infty,\n-1)}^\ex.$\end{proof}

\begin{remark}
Let $\n \geq 1$. By induction the inclusions $ \St \subset \St^\wedge \subset \Cat_\infty$ give rise to inclusions
\begin{equation}\label{ijjjj}
\St(\n), \St^\wedge(\n) \subset \Cat_{(\infty,\n)}
\end{equation}	
so that (Cauchy-complete) $\n$-stable $\infty$-categories are $(\infty,\n)$-categories.
\end{remark}

Theorem \ref{thqaz} gives the following corollary:

\begin{corollary}\label{Stab} Let $\n \geq 1$.
The $\infty$-categories
$${\mathrm{Preadd}(\n)},\ {\mathrm{Preadd}^\wedge(\n)}, \ {\mathrm{Add}(\n)},\ {\mathrm{Add}^\wedge(\n)}, \ \St(\n),\ \St^\wedge(\n) $$ carry presentably symmetric monoidal structures such that the inclusions (\ref{ijjj}), (\ref{ijjjj}) are lax symmetric monoidal, where the right hand sides carry the cartesian structures.
	
\end{corollary}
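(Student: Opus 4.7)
The plan is to unwind the inductive definitions and reduce everything to already-established results, chiefly Corollary \ref{kiop}, Corollary \ref{Qa}, Proposition \ref{ihist}, and Proposition \ref{preee}. First I would establish the symmetric monoidal structures. For $\mathrm{Preadd}(\n)$ and $\mathrm{Preadd}^\wedge(\n)$ this is literally Corollary \ref{kiop} applied iteratively with $\mV=\mS$. For $\Cat_{(\infty,\n)}^{\mathrm{ex}}={_{\Cat^\ex_\infty}\mathrm{Preadd}(\n-1)}$ and its Cauchy-complete variant, I would apply Corollary \ref{kiop} with $\mV=\Cat^\ex_\infty$ or $\mV=\Cat^{\ex,\wedge}_\infty$, using that these are preadditive (they are full subcategories of the preadditive $\infty$-category ${_\mS\L\Enr}(\{\text{finite }\infty\text{-cats}\})$ closed under finite products) and presentably symmetric monoidal by the discussion preceding the statement.

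Next, for closedness, I would appeal to Corollary \ref{Qa}: at every step of the iterative definition the $\infty$-category under consideration sits inside some $_\mV\L\Enr(\mH)$ as an accessible localization or a full subcategory closed under the internal hom. Concretely, by the proof of Proposition \ref{ihist} the internal hom in $^{\coprod}_\mV\L\Enr$ of two preadditive left $\mV$-enriched $\infty$-categories is preadditive (using the explicit formula of Theorem \ref{Thhs} together with Remark \ref{preadd}), so the internal hom in $_\mV\mathrm{Preadd}$ is just computed in $_\mV\L\Enr$ and is itself closed. Iterating this observation gives closedness of the symmetric monoidal structures on all four $\infty$-categories.

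Finally, for the lax symmetric monoidality of the inclusion $\Cat_{(\infty,\n)}^{\mathrm{ex}}\subset\Cat_{(\infty,\n)}$ (and the Cauchy-complete analogue), I would argue by induction on $\n$, at each step factoring the inclusion as
\[
{_\mV\mathrm{Preadd}} \subset {^{\coprod}_\mV\L\Enr} \subset {_\mV\L\Enr} \subset {_{\mV'}\L\Enr},
\]
where $\mV\subset \mV'$ is the inclusion of stable $\infty$-categories into all (enriching) $\infty$-categories at the previous level. The first inclusion is lax symmetric monoidal because its left adjoint is symmetric monoidal relative to $\Comm$ by Proposition \ref{ihist}; the second is lax symmetric monoidal because its left adjoint (freely adjoining more weighted colimits) is symmetric monoidal by Proposition \ref{preee}; the third follows from the functoriality of ${_{(-)}\L\Enr^{\ot}}$ in the enriching base, which is encoded in the symmetric monoidal cocartesian fibration of Proposition \ref{prpp}, applied to the lax symmetric monoidal inclusion $\Cat^\ex_\infty\subset \Cat_\infty$ (lifted inductively). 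The base case $\n=2$ is exactly this factorization with $\mV=\Cat^\ex_\infty, \mV'=\Cat_\infty$.

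The main obstacle I expect is the last factorization step: making precise that restriction of enrichment along a lax symmetric monoidal functor between the bases yields a lax symmetric monoidal functor between the corresponding $\infty$-categories of left enriched $\infty$-categories, and then tracking this compatibly through the inductive step so that at level $\n$ the inductively constructed lax symmetric monoidal embedding $\Cat_{(\infty,\n-1)}^{\mathrm{ex}}\subset \Cat_{(\infty,\n-1)}$ can be used as the base in Proposition \ref{prpp} to produce the next level. Once this functoriality is cleanly in place, the Cauchy-complete case is identical, only using Proposition \ref{ihist}(2) in place of Proposition \ref{ihist}(1).
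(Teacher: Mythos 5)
Your overall route — Corollary \ref{kiop} (hence Theorem \ref{thqaz} and Proposition \ref{ihist}) for the symmetric monoidal structures, Theorem \ref{Thhs} together with Remark \ref{preadd} for closedness, and a factorization of the inclusions through full suboperads admitting left adjoints relative to $\Comm$ for lax symmetric monoidality — is exactly the chain of results the paper packages into its one-line proof ("Theorem \ref{thqaz} gives the corollary"), so in spirit you are reproducing the intended argument, and most of your unwinding is fine. There is, however, one step in your factorization that would fail as written: the change-of-enriching-base step. You claim the inclusion ${_{\Cat^\ex_\infty}\L\Enr}\subset{_{\Cat_\infty}\L\Enr}$ (and its higher analogues) is lax symmetric monoidal "by the functoriality encoded in the symmetric monoidal cocartesian fibration of Proposition \ref{prpp}, applied to the lax symmetric monoidal inclusion $\Cat^\ex_\infty\subset\Cat_\infty$". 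But the base of that fibration is $\Pr\Mon_*^\ot$ (resp. $(\Mon_\kappa)_*^\ot$), whose morphisms are \emph{monoidal functors preserving small colimits}; the inclusion $\Cat^\ex_\infty\subset\Cat_\infty$ is only lax symmetric monoidal and does not preserve colimits, so it is not a morphism of the base and cocartesian pushforward along it is simply not provided by Proposition \ref{prpp}. Since this is precisely the step that produces the lax structure maps $\iota\mC\times\iota\mD\to\iota(\mC\otimes\mD)$ into the cartesian structure on $\Cat_{(\infty,\n)}$, the gap is not cosmetic.

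The step can be repaired inside the paper's framework, but it needs a different mechanism: push forward along the symmetric monoidal \emph{left adjoint} (the free stable cocompletion $\Cat_\infty\to\Cat^\ex_\infty$, a genuine morphism of $\Pr\Mon$, obtained by composing the relative left adjoints of ${_\mS\L\Enr}(\{\mathrm{finite}\})^\ot\subset{_\mS\L\Enr}^\ot$ and $(\Cat^\ex_\infty)^\ot\subset{_\mS\L\Enr}(\{\mathrm{finite}\})^\ot$), which by Proposition \ref{prpp} induces a symmetric monoidal functor ${_{\Cat_\infty}\L\Enr}\to{_{\Cat^\ex_\infty}\L\Enr}$ on enriched $\infty$-categories; the change-of-enrichment inclusion you want is right adjoint to this functor and is therefore canonically lax symmetric monoidal. (Equivalently, one can verify the lax structure by hand from the localization description of the tensor products in Theorem \ref{thqaz}, using the universal enriched functor $\mC\times\mD\to\mC\otimes\mD$ and the cartesianness of the target.) You correctly identified this as the main obstacle, but the resolution you sketch does not go through without this adjustment; with it, your induction on $\n$, and the identical treatment of the Cauchy-complete case via Proposition \ref{ihist} (2), completes the argument.
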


\bibliographystyle{plain}
\bibliography{ma}
\end{document}